\let\spacecal=\mathscr
\DeclareMathAlphabet{\mathpzc}{OT1}{pzc}{m}{it}
\definecolor{maroon}{rgb}{0.5, 0.0, 0.0}
\newcommand{\red}[1]{\textcolor{red}{#1}}
\let\tempone\itemize
\let\temptwo\enditemize
\let\temponenum\enumerate
\let\temptwonum\endenumerate
\renewenvironment{itemize}{\tempone\addtolength{\itemsep}{0.2\baselineskip}}{\temptwo}
\renewenvironment{enumerate}{\temponenum\addtolength{\itemsep}{0.2\baselineskip}}{\temptwonum}
\let\tempsec\section
\renewcommand{\section}{\par\medskip\tempsec}
\newcommand{\qee}{\parbox{5pt}{\hfill}\hfill $\triangle$}
\newcommand{\et}{\mbox{\it \footnotesize \!(\'et)}}
\newenvironment{rem}{\begin{remqee}}{\qee\end{remqee}}
\newtheorem{thm}{Theorem}[section] 
\newtheorem{corol}[thm]{Corollary}
\newtheorem{lemma}[thm]{Lemma} 
\newtheorem{prop}[thm]{Proposition}
\newtheorem{defin}[thm]{Definition}
\theoremstyle{definition}
\newtheorem{strategy}[thm]{Strategy}
\theoremstyle{remark}
\newtheorem{remqee}[thm]{Remark}
\newtheorem{example}[thm]{Example}
\newenvironment{remark}{\begin{remqee}}{\qee\end{remqee}}
\numberwithin{equation}{section}
\newcommand\rk{\operatorname{rk}}
\newcommand\Id{\operatorname{Id}}
\newcommand\Ima{\operatorname{Im}}
\newcommand\coker{\operatorname{coker}}
\newcommand\Spec{\operatorname{Spec}}
\newcommand\Proj{\operatorname{Proj}}
\newcommand\Hom{\operatorname{Hom}}
\newcommand\Tor{\operatorname{Tor}}
\newcommand\Torsh{\mathcal{T}{or}}
\newcommand\iso{\kern.35em{\raise3pt\hbox{$\sim$}\kern-1.1em\to}\kern.3em}
\newcommand\Isom{\operatorname{Isom}}
\newcommand\Pic{\operatorname{Pic}}
\newcommand\rest[2]{#1_{\vert #2}}
\newcommand\F{{\mathcal F}}
\newcommand\Oc{{\mathcal O}}
\newcommand\Pc{{\mathcal P}}
\newcommand\M{{\mathcal M}}
\newcommand\Z{{\mathbb Z}}
\newcommand\N{{\mathbb N}}
\newcommand\U{{\mathcal U}}
\newcommand\Dcal{{\mathcal D}}
\newcommand\Nc{{\mathcal N}}
\newcommand\Kc{{\mathcal K}}
\newcommand\Lcl{{\mathcal L}}
\newcommand\Ec{{\mathcal E}}
\newcommand\Ac{{\mathcal A}}
\newcommand\Ps{{\mathbb P}}
\newcommand\Gc{{\mathcal G}}
\newcommand\Xcal{{\spacecal X}}
\newcommand\Ycal{{\spacecal Y}}
\newcommand\Sc{{\spacecal S}}
\newcommand\Tc{{\spacecal T}}
\newcommand\Zc{{\spacecal Z}}
\newcommand\Vc{{\spacecal V}}
\newcommand\Wc{{\spacecal W}}
\newcommand\Hcal{{\spacecal H}}
\newcommand\Gsc{{\spacecal G}}
\newcommand\Hc{{\mathcal H}}
\newcommand\Ucal{{\spacecal U}}
\newcommand\Rcal{{\spacecal R}}
\newcommand\Dsc{{\spacecal D}}
\newcommand\Psc{{\spacecal P}}
\newcommand\Qc{{\mathcal Q}}
\newcommand\Ic{{\mathcal I}}
\newcommand\Jc{{\mathcal J}}
\newcommand\Cc{{\mathcal C}}
\newcommand\bH{{\mathbf H}}
\newcommand\bP{{\mathbf P}}
\newcommand\bQ{{\mathbf Q}}
\newcommand\bR{{\mathbf R}}
\newcommand\bL{{\mathbf L}}
\newcommand\Ber{{{\mathcal B}er}}
\newcommand\Homsh{{{\mathcal H}om}}
\newcommand\Div{{{\mathcal D}iv}}
\newcommand\pf{{\mathfrak p}}
\newcommand\qf{{\mathfrak q}}
\newcommand\mf{{\mathfrak m}}
\newcommand\Mf{{\mathfrak M}}
\newcommand\As{{\mathbb A}}
\newcommand\Bs{{\mathbb B}}
\newcommand\Vs{{\mathbb V}}
\newcommand\Ss{{\mathbb S}}
\newcommand\SSpec{\mathbb{S}\mathrm{pec}\,}
\newcommand\SProj{\mathbb{P}\mathrm{roj}\,}
\newcommand\SSym{\mathbb{S}\mathrm{ym}\,}
\newcommand\SHilbf{\mathbb{S}\mathcal{H}{ilb}\,}
\newcommand\SPicf{\mathbb{S}\mathcal{P}{ic}\,}
\newcommand\Sgrassf{\mathbb{S}\mathcal{G}rass\,}
\newcommand\grass{\operatorname{Grass}\,}
\newcommand\Sgrass{\mathbb{S}\mathrm{Grass}\,}
\newcommand\SG{\mathbb{S}\mathrm{G}\,}
\newcommand\Hs{{\mathbb H}}
\newcommand\SHilb{\mathbb{SH}\mathrm{ilb}\,}
\newcommand\SPic{\mathbb{SP}\mathrm{ic}\,}
\newcommand\SPicc{\mathbb{P}\mathrm{ic}\,}
\newcommand\SDiv{\mathbb{SD}\mathrm{iv}\,}
\newcommand\Ab{\operatorname{Ab}}
\newcommand\SHom{\mathbb{H}\mathrm{om}\,}
\newcommand\Sisom{\mathbb{I}\mathrm{som}\,}
\newcommand\Morf{\mathcal{M}or\,}
\newcommand{\rra}{\rightrightarrows}
\newcommand\Picf{\mathcal{P}{ic}\,}
\newcommand{\pcal}{\mathpzc{p}}
\newcommand\Extsh{{{\mathcal E}xt}}
\newcommand\SFlagf{\mathbb{S}\mathcal{F}lag\,}
\newcommand\SFlag{\mathbb{S}\mathrm{Flag}\,}
\newcommand{\bos}{\operatorname{bos}}
\renewcommand{\det}{\operatorname{det}}
\newcommand{\wt}{\widetilde}
\newcommand{\sub}{\subset}
\newcommand{\om}{\omega}
\newcommand{\Ga}{\Gamma}
\newcommand{\La}{\Lambda}
\newcommand{\si}{\sigma}
\newcommand{\re}{\operatorname{red}}
\newcommand\Picb{\mathbf{Pic}}
\begin{document}
\title[Notes on Fundamental Algebraic Supergeometry]{NOTES ON FUNDAMENTAL ALGEBRAIC SUPERGEOMETRY.\\[4pt]
HILBERT AND PICARD SUPERSCHEMES}

\date{\today} 
 
\thanks{\P\ Research partly supported by Bolsa de Produtividade 313333/2020-3 of Brazilian CNPq, by GNSAGA-INdAM and by the PRIN project ``Geometria delle variet\`a algebriche.''}
\thanks{\ddag\ Research partly supported by grants  ``PID2021-128665NB-I00 funded by MCIN/AEI/ 10.13039/501100011033'', and  ``STAMGAD, SA106G19'' (Junta de Castilla y Le\'on)}
\thanks{$\star$\ Research partly supported by the NSF grant DMS-2001224, 
and within the framework of the HSE University Basic Research Program and by the Russian Academic Excellence Project `5-100'.}

\subjclass[2010]{Primary: 14M30; Secondary:
14K10,  14D22,  14H10, 83E30} 
\keywords{ Projective supergeometry, superprojective morphisms, finiteness of cohomology in algebraic supergeometry, base change and semicontinuity for superschemes,  supergrassmannians, relative Grothendieck duality, Hilbert superschemes, Picard superschemes, super period maps}

\maketitle
\begin{center} {\sc Ugo Bruzzo,$^\P$  
Daniel Hern\'andez Ruip\'erez$^\ddag$  
 and Alexander Polishchuk$^\star$}

\smallskip \small
$^\P$ SISSA (Scuola Internazionale Superiore di Studi Avanzati),  Via Bonomea 265, 34136 Trieste, Italy;  \\
Departamento de Matem\'atica, Universidade Federal da Para\'iba,  Campus I, Jo\~ao Pessoa, PB, Brazil; \\
INFN (Istituto Nazionale di Fisica Nucleare), Sezione di Trieste; \\
IGAP (Institute for Geometry and Physics), Trieste;  \\
$^\ddag$ Departamento de Matem\'aticas and IUFFYM (Instituto Universitario de F\'{\i}sica Fundamental \\ y Matem\'a\-ticas),  Universidad de Salamanca, Plaza
de la Merced 1-4, 37008 Salamanca, Spain; \\
Real Academia de Ciencias Exactas, F\'{i}sicas y Naturales, Spain \\
$^\star$ Department of Mathematics, University of Oregon, Eugene, OR 97403, USA;  \\
 National Research
University Higher School of Economics, Moscow, Russia;  \\
  Korea Institute for Advanced Study, Seoul, Korea
\end{center}


\begin{abstract} 
These notes aim at providing a complete and systematic account of  some foundational aspects  of algebraic supergeometry,  namely, the extension to the geometry of superschemes of many classical notions, techniques and results that make up   the general backbone of algebraic geometry, most of them originating from Grothendieck's work. In particular, we extend to algebraic supergeometry such notions  as projective and proper morphisms, finiteness of the cohomology, vector and projective bundles,  cohomology base change, semicontinuity theorems,  relative duality, Castelnuovo-Mumford regularity, flattening,  Hilbert and Quot schemes, faithfully flat descent, quotient  \'etale relations (notably, Picard schemes), among others. Some  results may be found elsewhere, and, in particular, there is some overlap with \cite{MoZh19}. However, many techniques and constructions are presented here for the first time, notably, a first development of Grothendieck relative duality for   proper morphisms of superschemes,  the construction of the Hilbert superscheme in a more general situation than the one already known (which in particular allows one to treat the case of sub-superschemes of supergrassmannians), and a rigorous construction of the Picard superscheme for a  locally superprojective morphism of  noetherian superschemes with geometrically integral fibres. Moreover, some of the proofs given here are  new as well, even when restricted to ordinary schemes. In a final section we construct a period map   from an open substack of the moduli of proper and smooth supercurves to the moduli stack of principally polarized abelian {schemes}.
\end{abstract}


 \newpage

\tableofcontents

\section{Introduction}
The introduction of a geometry that encompasses both even (bosonic) and odd (fermionic) coordinates was  motivated by the supersymmetric
field theories that were formulated in the 1970s and 1980s.  After the  first notions introduced by physicists (see e.g.~\cite{SS}),   more mathematically sound theories were proposed, where   different kinds of ``supermanifolds'' were considered (see \cite{BBH91} and references therein). The Berezin-Le\u\i tes approach \cite{BL75}, also developed by  Kostant \cite{Kost75} and Manin \cite{Ma86,Ma87,Ma88,Ma88-2} among others, closer in spirit to algebraic geometry, has become the standard approach to superschemes. As in classical algebraic geometry, superschemes are locally ringed spaces which locally look  like spectra of rings, the only basic  difference being that the rings are $\Z_2$-graded commutative.

Many objects   have been introduced and studied in algebraic supergeometry, as, 
for instance, super Riemann surfaces,  or supersymmetric (SUSY) curves \cite{Fr86,BaSchw87,Ma86}, and their supermoduli spaces,
which are supposed to be relevant to the perturbative approach to superstring theories.
Supermoduli spaces were constructed by LeBrun and Rothstein \cite{LeRoth88} as superorbifolds (see also \cite {CraRab88}). In \cite{DoHeSa97} a supermoduli space for SUSY curves with Neveu-Schwarz (NS) punctures was constructed as an Artin algebraic superspace, that is, as a quotient of an \'etale equivalence relation of superschemes. 
The interest in this topic was revived by the works of  Witten \cite{Witten19}\footnote{This paper was published in 2019
but first appeared in 2012 as preprint {\tt arXiv:1209.2459}.}  and Donagi-Witten \cite{DoW13,DoW15}.
A relatively recent  paper is \cite{CodViv17}, which contains a construction of the supermoduli of SUSY curves as a Deligne-Mumford (DM) superstack, together  with a new  theory of stacks in algebraic supergeometry, or superstacks.
A supermoduli space of SUSY curves with both Neveu-Schwarz and Ramond-Ramond punctures  was constructed in \cite{BrHR19}, again as an Artin algebraic superspace, and in \cite{MoZh19} as a DM superstack. The last paper also  proves   the existence of a superstack which compatifies   the supermoduli of SUSY curves with any kind of punctures. Moreover,  substantial work has  been done about superperiods  and the generalization  of the Mumford formula  to the supermoduli of SUSY curves \cite{Witten19, Dir19, CodViv17, FKP19,FKP20} and about the relationship of supermodui problems with relevant aspects of string theory \cite{MoZh19}.
All these developments have required an increasing amount of tools in algebraic supergeometry, which, in most cases, have been introduced and studied by the corresponding authors according to their contingent needs.

\paragraph{\bf Goal of the paper.}
There exist some accounts of different aspects of algebraic supergeometry and of one of its basic ingredients, namely, the algebra of the $\Z_2$-commutative rings. Some elements were given in \cite{BBH91}; moreover, one  can mention Westra's dissertation \cite{We09}, Carmeli, Caston and Fioresi's work (eg.~\cite{CarCaFi11}) and  Cacciatori and Noja's paper \cite{CaNo18}. However, 
a systematic treatment of algebraic supergeometry  in the spirit of Grothen\-dieck's work, 
as laid down in  Grothendieck's FGA \cite{FGA}\footnote{See also \cite{FGA05} as a kind of update to FGA.} and in the EGA series, 
is still missing. That should include   superprojective and proper morphisms of superschemes, finiteness of   cohomology, supervector and projective superbundles, 
cohomology base change, semicontinuity theorems,  {relative duality,}  Castelnuovo-Mumford regularity, flattening,   Hilbert and Quot superschemes, faithfully flat descent, quotients of   \'etale relations, Picard superschemes, and more.
The scope of these notes is to give a contribution to the building  of such a  systematic approach. The extension of some classical results to supergeometry  is  sometimes very easy, while sometimes requires  quite a bit of work. In   other cases  it is better, or simply unavoidable, to develop the constructions from   scratch, often using the original ideas but exploiting new techniques. In doing so, some generalizations of the classical statements or simplifications of their proofs are achieved. Some results about the algebra of graded commutative rings are necessary, as are suitable extensions to the super case of theorems like the existence of the Grothendieck-Mumford complex or   the super Nakayama Lemma   for half exact functors.

While we were preparing these notes  the preprint \cite{MoZh19} by  Moosavian and Zhou appeared. It contains some results and constructions given here up to Section \ref{s:hilbert}, so there is  some overlap between the two works. Also, Jang in \cite{Jang20} constructed and studied the Hilbert superscheme of 0-dimensional subspaces of a $(1,1)$-dimensional supercurve.
We address  here many topics not covered in \cite{MoZh19}, and  some proofs and approaches are different (they are different also in the classical case). Moreover, we prove the representability of the super Hilbert functor under more general assumptions than in  \cite{MoZh19}. This added generality is needed to prove the representability of the super Picard functor, which is done here for the first time.

It is important to note here that the assumption of projectivity, which is quite natural in the classical algebraic geometry, is rather restrictive in supergeometry.  
As is well known, many interesting superschemes cannot be embedded into superprojective spaces, for instance,
supergrassmannians in general are not superprojective \cite{PenSkor85}. 
It seems however that all interesting superschemes (with projective bosonization) can be embedded into supergrassmannians. For example, we were informed by Vera Serganova that this is true for
all proper homogeneous superspaces. In this paper we make a useful observation that in some sense a superscheme embeddable into a supergrassmannian is very close to being superprojective. Namely, we show that there always exists a smooth surjective morphism of purely odd relative dimension from a superprojective scheme to such a scheme 
(see Corollary \ref{cor:grassproj}).
Because of this,
projective superschemes are still of great significance, and in particular, we first prove the existence of the Hilbert superscheme for them and then deduce it for more general
superschemes.

\paragraph{\bf Structure of the paper.}  In Section \ref{s:supergeom} we introduce and study projective superschemes  by means of the projective superspectrum $\SProj$ of a bigraded algebra  over a superring, that is, an algebra  which has two compatible gradings, one over $\Z$, as in the classical case, and another over $\Z_2$. Projective superschemes have been considered in other papers, such as \cite{CaNo18,MoZh19}, where they  were called ``superprojective superschemes.''   
In subsection \ref{ss:superproj} we compute the cohomology of the sheaves $\Oc(p)$ and prove the finiteness of the cohomology of coherent sheaves on   projective superspaces, as well as the super extension of some  classical facts about them, such as Serre's Theorem \ref{thm:serre}. Some of these results were already proved in \cite{CaNo18}, however our method does not rely on computations for the classical case, but rather we extend  from scratch Grothendieck's methods   to algebraic supergeometry.  We also introduce supervector bundles and projective superbundles, or more generally,  linear superspaces and projective superschemes associated to coherent sheaves as the superspectrum and the projective superspectrum, respectively, of the (graded) symmetric algebra of the corresponding module. Supergrassmannians are also introduced and the proof of their existence is referred to Manin's original treatment \cite{Ma88}.

In Section \ref{s:cohomprop} we extend to supergeometry the classical results on the relative cohomology of coherent sheaves with respect to proper morphisms of locally noetherian superschemes. The main statements are the  cohomology base change Theorem \ref{thm:cohombasechange}, the semicontinuity Theorem \ref{thm:semicontinuity} and Grauert's Theorem \ref{thm:grauert}. Our treatment is simpler than others available in the literature (e.g.~\cite{Hart77}) due to the use of the Nakayama Lemma for half exact functors, which we extend  to supergeometry in Subsection \ref{ss:nakayama}. In particular, the use of the Formal Function Theorem (\cite{Hart77} or \cite[Thm. 7.4]{MoZh19} in the super setting) is no more necessary. 
 {Section  \ref{s:cohomprop} also includes a treatment of Grothendieck's relative duality for superschemes. This differs in some parts from the treatments of Grothendieck's duality usually found in the literature.  We also include a proof that the relative dualizing complex of a smooth proper morphism is a shift of the relative Berezinian sheaf.}

Section \ref{s:hilbert} is devoted to the proof of the existence of the Hilbert superscheme, following  an analogous procedure as in the classical case (see e.g. the original exposition by Grothendieck \cite{FGA} or \cite{FGA05,Ni07}). So the  strategy is first to apply the super version of Castelnuovo-Mumford regularity (Subsection \ref{ss:CMregularity}) to embed the super Hilbert functor into the functor of points of a supergrassmannian, and then prove, using   flattening  (Subsection \ref{ss:genflatness}), that this embedding is representable by immersions, so that the Hilbert superscheme is a  sub-superscheme of a supergrassmannian, which is proved to be proper by the valuative criterion for properness. 
We first prove the existence of the Hilbert superscheme in the superprojective case 
(Theorem \ref{thm:hilbrepres}) and then we also give a more general version of this result (Theorem \ref{thm:hilbrepres2}), 
{which allows one to treat also the case of sub-superschemes of a supergrassmannian,} and 
 will be needed to prove
the existence of the Picard superscheme.
 As we discuss in a simple example, the ordinary scheme underlying a Hilbert superscheme has a richer structure than the usual Hilbert scheme.

As an application, in Section \ref{subsec:schemeofmorph} the superscheme of morphisms between two projective superschemes is constructed. The existence of this superscheme is necessary to prove that the supermoduli  of SUSY curves is not only a category fibred in groupoids, but it is  also a Deligne-Mumford stack \cite[7.3]{MoZh19} (this is also true in the presence of Neveu-Schwarz and Ramond-Ramond punctures, and for the compactified supermoduli).   This section is quite similar to \cite[7.1.8]{MoZh19}.
In Section \ref{sec:emb-supergrass} we prove that a superscheme is embeddable into some supergrassmannian if and only if it is a target of a smooth morphism of odd relative
dimension from a superprojective superscheme.

Section \ref{s:picard} offers the first  construction of the (relative) Picard superscheme for a  {superprojective} morphism of 
noetherian superschemes {that are cohomologically flat in dimension $0$} and whose bosonic fibres are geometrically integral schemes.
The total Picard superscheme is the disjoint union of the Picard superschemes   parameterizing the even and the odd line bundles on the fibres, and it 
{suffices} to construct  explicitly the even one. {The proof  mostly follows  Grothendieck's   construction     \cite{FGA}, or more precisely   Kleiman's version of it \cite{Kle05}, however, to overcome some difficulties  arising  from the fact that we are working with superschemes, sometimes we need to take a different route. We also take advantage of  a significant simplification, which can also be used when constructing the classical Picard scheme.} Our procedure consists in proving the existence of the open subfunctor of the even super Picard sheaf that parameterizes (relatively) acyclic even line bundles generated by their global sections, and the proof that translating them one  obtains a covering of the even Picard sheaf by representable open subfunctors, so that it is representable as well. In this  way one avoids the theory of $(b)$-sheaves used in the classical case to prove that the Picard functors with fixed Hilbert polynomials are representable. 

The final part of Section \ref{s:picard} discusses examples where the conditions for the existence of the Picard superscheme can be relaxed when the base superscheme is even affine as well as examples where the super Picard functor is not representable.

{In Section \ref{s:superperiod}, as an application, we construct a period map   from an open substack of the moduli of proper and smooth supercurves to the   moduli stack of principally polarized abelian {schemes}.} {We discuss the relation of this construction with the D'Hoker-Phong defintion of a superperiod matrix.}

Finally, we have gathered in the Appendix some auxiliary results, which are partly original.

We assume that the reader is familiar with the basic definitions and notions of algebraic supergeometry.  However, for convenience in Subsection \ref{ss:propermor} we recall some properties of morphisms of superschemes. 

Just to mention a few references that have been important to us in connection with     algebraic supergeometry, we would like to cite   Kostant's   paper \cite{Kost75}, Manin's book \cite{Ma88}, the early papers by Penkov and Skornyakov \cite{Penk83, PenSkor85}, the above mentioned paper by  Le Brun and Rothstein \cite{LeRoth88}, the  study of superprojective embeddings by  LeBrun, Poon and Wells  \cite{LePoWel90}, Deligne's letter \cite{Del87} and the already cited works of Donagi and Witten \cite{DoW13,DoW15,Witten19}. 
 We   also used some basic results in superalgebra and supergeometry   by Cacciatori and Noja \cite{CaNo18}, Carmeli, Caston and Fioresi \cite{CarCaFi11}, and Westra \cite{We09}.

 \smallskip
\paragraph{\bf Acknowledgement.}  We thank Carlos Sancho de Salas for introducing   us to a simplified approach to the construction of the classical Picard scheme. {We also thank Katherine Maxwell for asking about a possible relation between our super period map and the D'Hoker-Phong superperiod matrix.}

\section{Basic algebraic supergeometry}\label{s:supergeom}
In this section we collect  some basic definitions in supergeometry. 

We adopt the following convention: for every $\Z_2$-graded module $M$, we write $M_+$ for its even part and $M_-$ for its odd part, so that $M=M_+\oplus M_-$. 

\subsection{Superrings}
By a \emph{superring} $\As$ we mean a $\Z_2$-graded supercommutative ring such that one of the following equivalent conditions is satisfied, where
$J$ is the ideal generated by the odd elements:
\begin{enumerate}
\item $J$ is finitely generated;
\item $J^n=0$ for some $n>0$ and $J/J^2$ is a finitely generated module over $A=\As/J$.
\end{enumerate}

Note that under these conditions the graded ring $Gr_J(\As)=\bigoplus_{i\geq 0} J^i/J^{i+1}$ is finitely generated as an $A$-module.

We shall say that $A=\As/J$ is the \emph{bosonic reduction} of $\As$. We also say that $\As$ is \emph{split} if there exists a finitely generated projective $A$-module $M$ such that $\As\simeq {\bigwedge}_AM$. 

We define the \emph{odd dimension} of $\As$ as 
the smallest number $\operatorname{odd-dim}(\As)$ of generators of the ideal $J$, or, equivalently, the smallest number of generators of the $A$-module $J/J^2$.

There is actually another odd dimension parameter  one may consider, i.e., the smallest integer number $p$ such that $J^{p+1}=0$.  We write $\operatorname{ord}(\As)$ for this number. One easily sees that
$\operatorname{ord}(\As)\leq \operatorname{odd-dim}(\As)$ and the two numbers are coincide when $\As$ is split.

The notion of \emph{noetherian superring} generalizes the usual one, i.e., every ascending chain of $\mathbb Z_2$-graded ideals stabilizes \cite{We09}. 
For a noetherian local superring $\As$ with maximal ideal $\mf$, the odd dimension  $\operatorname{odd-dim}(\As)$
of $\As$ is the odd dimension of $\mf/\mf^2$ as a graded $\As/\mf$-vector space \cite[7.1.4]{We09}.

\subsection{Superschemes}

We recall the notion of superscheme.  
All   schemes and superschemes in these notes are \emph{locally noetherian} and all   superscheme morphisms are \emph{locally of finite type}.

\begin{defin}
A   locally ringed superspace is a pair $\Xcal=(X,\Oc_{\Xcal})$, where $X$ is a topological space, and $\Oc_{\Xcal}$ is a sheaf of $\Z_2$-graded commutative rings  such that for every point $x\in X$ the  stalk $\Oc_{\Xcal,x}$ is a local superring. 
\end{defin}

\begin{defin} \label{def:mor}
A morphism of locally ringed superspaces is a pair $(f,f^\sharp)$, where $f\colon X \to Y$ is  a continuous map, and  $f^\sharp\colon \Oc_{\Sc}\to f_\ast\Oc_{\Xcal}$ is a homogeneous morphism of graded commutative sheaves, such that for every point $x\in X$, the induced morphism of 
local superrings
$\Oc_{\Sc,f(x)}\to \Oc_{\Xcal,x}$ is local.  \end{defin}

Given a  locally ringed superspace $\Xcal=(X,\Oc_{\Xcal})$, we can consider the homogeneous ideal   {$\Jc=(\Oc_{\Xcal,-})^2\oplus\Oc_{\Xcal,-}$} generated by the odd elements. Then $\Oc_X:=\Oc_{\Xcal}/\Jc$ is a purely even sheaf of rings. We say that the locally ringed space $X=(X,\Oc_X)$ is the \emph{ordinary locally ringed space underlying} $\Xcal$.  
Sometimes we also call it the \emph{bosonic reduction of} $\Xcal$ and denote as $\Xcal_{\bos}$. 

There is a closed immersion of locally ringed superspaces
$$
i\colon X \hookrightarrow \Xcal
$$
induced by the epimorphism $\Oc_\Xcal \to \Oc_X$. $\Xcal$ is said to be \emph{projected} if there exists a morphism of locally ringed superspaces $\rho\colon \Xcal \to X$ such that $\rho\circ i=\Id$. As in the case of superrings, we said that $\Xcal$ is \emph{split} when $\Oc_\Xcal\simeq \bigwedge_{\Oc_X}\Ec$ for a locally free sheaf $\Ec$ on $X$ which generates the ideal $\Jc$, so that $\Ec\simeq\Jc/\Jc^2$. Analogously, $\Xcal$ is called \emph{locally split} if it can be covered by split open locally ringed superspaces.

The group $\Gamma=\{\pm1\}$ acts on $\Oc_\Xcal$ by $f^{(-1)}=f_+-f_-$ and defines another locally ringed superspace $\Xcal/\Gamma=(X, \Oc_{\Xcal,+})$, called the \emph{bosonic quotient} of $\Xcal$.
 A morphism $f\colon \Xcal \to \Ycal$ of locally ringed superspaces induces a morphism $f\colon X \to Y$ of the underlying locally ringed spaces and a morphism $f/\Gamma\colon \Xcal/\Gamma\to \Ycal/\Gamma$  between the bosonic quotients, so that 
$$
\Xcal=(X,\Oc_{\Xcal}) \mapsto  (X,\Oc_X)\,,\quad \Xcal\mapsto \Xcal/\Gamma
$$ 
are functors. In fact, it is easy to see that these functors are the right and left adjoint functors, respectively, 
to the natural inclusion from the category of (purely even) locally ringed spaces to that of locally ringed superspaces: for a locally ringed space $Y$, we have
functorial isomorphisms
$$\Morf(Y,\Xcal)\simeq \Morf(Y,X), \ \ \Morf(\Xcal,Y)\simeq \Morf(\Xcal/\Gamma,Y).$$

The sheaves $Gr^j(\Oc_{\Xcal})=\Jc^j/\Jc^{j+1}$ are annihilated by $\Jc$ so that they are $\Oc_X$-modules. Then we   can   consider the sheaf of $\Oc_X$-modules 
$$
Gr (\Oc_{\Xcal})=\bigoplus_{j\ge 0} Gr^j(\Oc_{\Xcal})=\bigoplus_{j\ge 0} \Jc^j/\Jc^{j+1}
$$
which comes with a natural $\Z_2$ grading.

The \emph{superspectrum} of a superring $\As$ is 
{the} {locally ringed superspace}  {$\SSpec\As=(X,\Oc)$}, where $X$ is the spectrum of the bosonic reduction $A$ of $\As$,
and $\Oc$ is a sheaf of $\Z_2$-graded commutative rings defined as follows:    any non-nilpotent element $f\in\As$ defines in the usual way a \emph{basic open subset}
$D(f)\subset X$, and one defines $\Oc(D(f))=\As_f$, the localization of $\As$ at the multiplicative subsystem defined by $f$.
The locally ringed superspaces of this form are called \emph{affine superschemes}.

\begin{defin}\label{def:superscheme}
A superscheme is a locally ringed superspace $\Xcal=(X,\Oc_{\Xcal})$ which is locally isomorphic to the superspectrum of a superring. 

A superscheme $\Xcal=(X,\Oc_{\Xcal})$ is \emph{noetherian} if $X$ has a finite open cover $\{U\}$ such that every restriction $\Xcal_{\vert U}$ is the superspectrum of a noetherian superring.  
\end{defin}

It is easy to see that 
\begin{enumerate}
\item the bosonic reduction and the bosonic quotient of a superscheme are usual schemes;
\item $Gr (\Oc_{\Xcal})$ is coherent as an $\Oc_X$-module.
\end{enumerate}

\begin{defin}\label{def:dimension}  The odd dimension of a superscheme $\Xcal$ is the supremum of the odd dimensions of the local superrings $\Oc_{\Xcal,x}$ for all the points $x\in X$.  The even dimension of $\Xcal$ is the dimension of the scheme $X$. Both dimensions may be infinite. The dimension of $\Xcal$ is the pair $\dim\Xcal=(\operatorname{even-dim} \Xcal, \operatorname{odd-dim}\Xcal)$.
We say that a morphism $f\colon \Xcal\to \Sc$ of superschemes has \emph{relative dimension $(m,n)$} if the fibres $\Xcal_s$ are superschemes of dimension $(m,n)$.
\end{defin}

For a locally split superscheme the odd dimension equals the rank of the locally free $\Oc_X$-module $\Jc/\Jc^2$.

A $\Z_2$-graded sheaf $\M$ of $\Oc_\Xcal$-modules on a superscheme $\Xcal=(X,\Oc_\Xcal)$  can be regarded as a sheaf of abelian groups on $X$, and the cohomology groups of $\M$ as an $\Oc_\Xcal$-modules are the same as its cohomology groups as an abelian sheaf, so that 
  the usual cohomology vanishing \cite{Tohoku,Hart77} also holds for $\Z_2$-graded sheaves of $\Oc_\Xcal$-modules.

\begin{prop}\label{prop:cohombound} If  $\Xcal=(X,\Oc_\Xcal)$ is a noetherian superscheme, and $\M$   a $\Z_2$-graded sheaf of $\Oc_\Xcal$-modules, then
$H^i(X,\M)=0$  for every  $i>\dim X$.
\qed
\end{prop}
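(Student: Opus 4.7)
The plan is to reduce the statement to Grothendieck's classical cohomological vanishing theorem for sheaves of abelian groups on a noetherian topological space of finite Krull dimension, using the observation made in the paragraph immediately preceding the proposition.

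First, I would verify that the underlying topological space $X$ is noetherian of Krull dimension $\dim X$. Since $\Xcal$ is a noetherian superscheme in the sense of Definition \ref{def:superscheme}, there is a finite open cover $\{U_i\}$ of $X$ such that each $\Xcal_{\vert U_i}$ is the superspectrum of a noetherian superring $\As_i$. By definition of $\SSpec$, the topological space $U_i$ coincides with $\Spec A_i$, where $A_i$ is the (noetherian) bosonic reduction of $\As_i$. Thus $X$ is covered by finitely many noetherian spectra of ordinary noetherian rings, so $X$ itself is a noetherian topological space, and its Krull dimension is precisely the even dimension $\dim X$ of $\Xcal$.

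Next, I would invoke the remark preceding the proposition: any $\Z_2$-graded sheaf $\M$ of $\Oc_\Xcal$-modules, viewed merely as a sheaf of abelian groups on $X$, has cohomology groups $H^i(X,\M)$ that agree with those computed in the category of $\Oc_\Xcal$-modules. Consequently, Grothendieck's vanishing theorem (\cite{Tohoku}, or \cite{Hart77} Thm.~III.2.7), which asserts that $H^i(X,\F)=0$ for $i>\dim X$ for every abelian sheaf $\F$ on a noetherian topological space $X$ of Krull dimension $\dim X$, applies directly to $\M$, giving the desired vanishing.

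The only step that might appear as an obstacle is the comparison of cohomologies in different categories, but this is already established in the paragraph preceding the statement (it follows from the fact that the forgetful functor from $\Oc_\Xcal$-modules to abelian sheaves preserves injectives, or equivalently that flasque resolutions compute cohomology in both categories). Hence no further work is required beyond citing Grothendieck's theorem.
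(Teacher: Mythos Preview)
Your proposal is correct and matches the paper's approach exactly: the paper simply marks the proposition with \qed, treating it as an immediate consequence of the preceding paragraph (viewing $\M$ as an abelian sheaf and invoking Grothendieck's vanishing theorem from \cite{Tohoku,Hart77}). Your additional verification that $X$ is a noetherian topological space is a reasonable elaboration of what the paper leaves implicit.
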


\subsection{Projective superspaces and quasicoherent sheaves on them}

Let $\Ps_\Z^m$  be the projective  $m$-space over $\Z$, i.e., $\Ps_\Z^m=\Proj \Z[x_0,\dots,x_m]\to \Spec \Z$.

\begin{defin} The projective $(m,n)$-superspace over $\Z$ is the   split superscheme over $\Z$
$$
\Ps_\Z^{m,n}=(\Ps_\Z^m, \textstyle{\bigwedge}_{\Oc_{\Ps_{\Z^m}}} \Oc_{\Ps_\Z^m}(-1)^{\oplus n})\,.
$$
If $\Sc$ is a superscheme,  the  projective $(m,n)$-superspace over $\Sc$ is
$$
\Ps_\Sc^{m,n}=\Ps_\Z^{m,n}\times_{\Spec \Z} \Sc\,.
$$
\end{defin}

Note that $\Ps_\Z^{0,n}$ is just the super affine space with $n$ odd variables (the superspectrum of the Grassmann algebra). 

When $\Sc=\SSpec \As$ is the superspectrum of a superring $\As$, we also use the notation $\Ps^{m,n}_\As$. Its underlying scheme is the projective $m$-space $\Ps_A^m=\Proj A[x_0,\dots,x_m]$, with $A=\As/\Jc_\As$, where {$\Jc_\As$} is the ideal generated by the odd elements.

One can define a bigraded $\As$-algebra 
\begin{equation}\label{eq:polyalgebra}
\Bs(m,n)=\As[x_0,\dots,x_m,\theta_1,\dots,\theta_n]
\end{equation}
where the $x_i$ are even and the $\theta_J$ are odd, and all variables are free of $\Z$-degree 1. It has   both a $\Z$-grading and a $\Z_2$-grading. We call it the \emph{free polynomial superalgebra}.
Let $\bar\Bs= \Bs(m,n)/H=\As[\xi_0,\dots,\xi_m,\eta_1,\dots,\eta_n]$, where $H$  is an ideal, homogeneous for both gradings. Then, mimicking the construction of the projective spectrum, we can define a \emph{projective superspectrum} 
$\Xcal=\SProj_\As\bar\Bs=(X,\Oc)$ by letting
\begin{itemize}
\item $X=\Proj A[\xi_0,\dots,\xi_m]$ (it is a projective scheme over $A$).
\item
The structure sheaf $\Oc$ is defined by $\Z$-homogeneous localization: on the open set $U_i=X-(\xi_i)_0$, 
\begin{equation}\label{eq:homloc}
\Oc(U_i)=\left\{ \frac{P_q(\xi_j,\eta_J)}{\xi_i^q}\bigm\vert \text{$P_q$ is $\Z$-homogeneous of degree $q$} \right\}.
\end{equation}

\end{itemize}
The following properties are standard.
\begin{prop}
There is a natural morphism $\SProj _\As\bar\Bs\to \SSpec \As$ which is a projection in the case $\As$ is even. 
Moreover, if $\bar\Bs= \Bs(m,n)$, we recover the projective superspace $\SProj_\As \Bs(m,n) \simeq\Ps_\As^{m,n}$, which is split when $\As$ is even.
 \qed
\end{prop}

The  homogeneous localization can be used, as in the classical (nonsuper) case, to construct quasi-coherent sheaves.
 
\begin{defin}\label{def:homloc}Let $M$ be a bigraded $\bar\Bs$-module.
 The sheaf of $\Z$-homogeneous localizations of $M$ is the sheaf $\widetilde M^h$ associated to the presheaf whose sections on $U_i=X-(\bar\xi_i)_0$ as above are given 
by 
$$
M^h_{\bar\xi_i}:= \{ \frac{m_q}{\bar\xi_i^q}\,\vert\, \text{$m_q\in M$ is $\Z$-homogeneous of degree $q$} \}\,.
$$
It is a quasi-coherent sheaf of $\Oc$-modules.
\end{defin}

Given a bigraded module $M$, we denote by $M_q$ the homogeneous component of $\Z$-degree $q$. For any integer $r$ we define a new bigraded module $M(r)$ by shifting the $\Z$ grading by $-r$, that is, $M(r)_s=M_{s+r}$.
\begin{defin}
For any integer $r$ the sheaf $\Oc(r)$  {on $\Xcal=\SProj_\As \bar\Bs$} is defined as
$$
\Oc(r):= \widetilde{{\bar \Bs}(r)}^h\,.
$$
It is a line bundle of rank $(1,0)$. For any quasi-coherent $\Z_2$-graded sheaf $\M$ on $\Xcal$, we define
$$
\M(r)=\M\otimes_\Oc \Oc(r)\,.
$$
\end{defin}
Note that the restriction to $\Xcal\hookrightarrow \Ps=\Ps_\As^{m,n}$ of the sheaf $\Oc_\Ps(r)$ is the sheaf $\Oc_\Xcal(r)$.

All the results in the remainder of this subsection are proved as in the classical case.
\begin{prop}\label{prop:mder}
{\ }
\begin{enumerate}
\item If $M= {\bar\Bs}^{p,q}= {\bar\Bs}^p\oplus \Pi {\bar\Bs}^q$, then $\widetilde M^h=\Oc^{p,q}$.
\item For any bigraded module and any integer $r$, there is an isomorphism
 $\widetilde{M[r]}^h\simeq \widetilde{M}^h(r)$. 
 \item If $M_r=0$ for $r\gg 0$, then $\widetilde{M}^h=0$.
  \item Any bihomogeneous morphism of $\bar\Bs$-modules $N\to M$ induces a morphism $\widetilde N^h\to \widetilde M^h$ of $\Oc$-modules.
Moreover, if $0\to N\to M \to P \to 0$ is an exact sequence of bihomogeneous morphisms, the sequence $0\to \widetilde N^h\to \widetilde M^h \to \widetilde P^h \to 0$
is also exact.
\item If a bihomogeneous morphism  $N\to M$ of $\bar\Bs$-modules induces an isomorphism $N_r \iso M_r$ for $r\gg 0$, then $\widetilde N^h\to \widetilde M^h$ is an isomorphism of sheaves.
\end{enumerate}
Assume now that $\As$ is noetherian. Then one has:
\begin{enumerate}\setcounter{enumi}{5}
\item
If $M$ is finitely generated, then $\widetilde M^h$ is coherent.
\item If $M$ is finitely generated and $\widetilde M^h=0$, then $M_r=0$ for $r\gg0$.
\item  If a bihomogeneous morphism  $N\to M$ of $\bar\Bs$-modules induces an isomorphism of sheaves  $\widetilde N^h\iso \widetilde M^h$, then $M_r \iso N_r$ for $r\gg 0$.
\qed\end{enumerate}
\end{prop}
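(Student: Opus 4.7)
The plan is to adapt the classical arguments for the ordinary $\Proj$ construction (as in Hartshorne II.5) to the bigraded setting, treating $\bar\Bs$ as a $\Z$-graded ring carrying an auxiliary parity. The essential observation is that $\Z$-homogeneous localization at the even generators $\bar\xi_i$ is insensitive to the $\Z_2$-grading: the elements we invert are even, so the parity decomposition passes through localization, and all morphisms and sheaf constructions respect it automatically. Accordingly, for each claim I would first check that the underlying classical statement (for graded rings and modules) goes through, and then verify that the $\Z_2$-grading is preserved at each step.

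For parts (1)--(5), the proofs are essentially formal. Part (1) follows from the explicit description of sections in \eqref{eq:homloc}: on the basic open $U_i$, elements of $\widetilde{\bar\Bs^{p,q}}^h$ of $\Z$-degree $0$ are obtained by dividing degree-$q$ components by $\bar\xi_i^q$, producing $p$ even and $q$ odd copies of $\Oc(U_i)$. Part (2) amounts to a bookkeeping computation, using that multiplication by $\bar\xi_i^r/1$ gives the isomorphism on each $U_i$ compatible with gluing. Part (3) is immediate since any element $m_q/\bar\xi_i^q$ with $q$ large enough is killed. Part (4) follows because localization at a multiplicative set is exact and preserves the bigrading. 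Part (5) follows by applying (3) to the kernel and cokernel of $N \to M$, which have graded components vanishing for $r \gg 0$.

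For parts (6)--(8), requiring noetherianity of $\As$, the strategy is the standard one. For (6), a finitely generated bigraded module $M$ admits a surjection from a finite direct sum of shifts $\bigoplus_i \bar\Bs(-r_i)^{p_i,q_i}$, and applying (1), (2), (4) yields a presentation of $\widetilde M^h$ by coherent sheaves. For (7), finite generation combined with noetherianity of $\bar\Bs$ (inherited from $\As$) means it suffices to check the statement on each generator $m$; the hypothesis $\widetilde M^h = 0$ implies that on each $U_i$ some power $\bar\xi_i^{N_i} \cdot m$ vanishes, and since finitely many $\bar\xi_i$ generate the irrelevant ideal in degree $\geq 1$, one concludes $M_r = 0$ for $r \gg 0$. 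Part (8) follows by applying (7) to the kernel and cokernel of the bihomogeneous morphism, both of which are finitely generated by noetherianity.

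The step requiring the most care is (7), which is where the noetherian hypothesis and the structure of $\bar\Bs$ really intervene; but since $\bar\Bs$ is a quotient of the polynomial superalgebra $\Bs(m,n)$ by a bihomogeneous ideal, and $\Bs(m,n)$ is a finite $\As[\xi_0,\dots,\xi_m]$-module (because the $\eta_J$ are nilpotent of bounded order), the classical argument for finitely generated graded modules over noetherian graded rings applies verbatim, with the $\Z_2$-grading carried along trivially.
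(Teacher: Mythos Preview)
Your proposal is correct and matches the paper's approach: the paper gives no proof at all, stating just before the proposition that ``all the results in the remainder of this subsection are proved as in the classical case'' and closing with a \qed. Your sketch fills in exactly the standard Hartshorne II.5 arguments; your remark in (7) that $\Bs(m,n)$ is finite over $\As[\xi_0,\dots,\xi_m]$ because the odd generators are nilpotent is the one genuinely super point, and it is the right observation to make the classical argument go through unchanged.
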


For every quasi-coherent sheaf $\M$ of ($\Z_2$-graded) $\Oc$-modules, one can define a bigraded $\As$-module by 
$$
\Gamma_\ast(\M)=\bigoplus_{r\ge 0} \Gamma(X, \M(r))\,.
$$
\begin{prop}\label{prop:mder2}
\begin{enumerate}
\item There is an isomorphism 
$$
\widetilde{\Gamma_\ast(\M)}^h \iso \M\,,
$$
 that is, every quasi-coherent sheaf on a  projective superspectrum $\Xcal$ can be obtained by $\Z$-homogeneous localization of a bigraded $\As$-module. 
 \item If $\Gamma(X,\M(r))=0$ for $r\gg 0$, then $\M=0$.
 \item Let $g\colon \M \to \Nc$ be a morphism of quasi-coherent sheaves on $\Xcal$. If the induced morphism $\Gamma(X, M(r))\to\Gamma(X, \Nc(r))$ is an isomorphism for $r\gg 0$, then $g$ is an isomorphism, $g\colon \M \iso\Nc$. (Note that different bigraded $ {\bar\Bs}$-modules can induce the same quasi-coherent sheaf.)
 \end{enumerate}
 If $\As$ is noetherian, and $\M$ is  a coherent sheaf on $\Xcal$, 
  one also has:
 \begin{enumerate}\setcounter{enumi}{3}
 \item for $r\gg 0$, the sheaf $\M(r)$ is generated by its global sections, i.e., the natural morphism $\Gamma(X,\M(r))\otimes_\As  {\Oc}\to \M(r)$ is an epimorphism.
 \item  $\M$ is the cokernel of a morphism of locally free sheaves; more precisely, there is an exact sequence
 $$
 \bigoplus_{1\le j\le M} {\Oc}(-r_j)\oplus\bigoplus_{1\le j'\le M'} \Pi{\Oc}(-r'_{j'})\to  
 \bigoplus_{1\le i\le N} {\Oc}(-r_i)\oplus  \bigoplus_{1\le i'\le N'}\Pi{\Oc}(-r'_{i'}) 
 \to \M \to 0\,.
 $$
 \end{enumerate}
\end{prop}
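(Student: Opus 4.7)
\medskip

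\noindent\textbf{Proof plan.} The strategy is to mimic the classical proof (as in Hartshorne II.5.15 or EGA II.2.7), cover $X$ by the standard affine opens $U_i=D_+(\bar\xi_i)$, and reduce the five statements to two key extension lemmas about sections of $\M$, the proof being carried out in the $\Z$-homogeneous/$\Z_2$-graded setting. The two lemmas are:

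\emph{(L1)} Given a section $s\in \M(U_i)$ on the affine open $U_i$, there exists an integer $n\geq 0$ such that $\bar\xi_i^n\cdot s$ (interpreted via the identification $\M(U_i)\otimes\Oc(n)(U_i)\simeq \M(n)(U_i)$) is the restriction of a global section of $\M(n)$.

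\emph{(L2)} If $t\in\Gamma(X,\M(r))$ restricts to zero on $U_i$, then $\bar\xi_i^{n}\cdot t=0$ in $\Gamma(X,\M(r+n))$ for $n\gg 0$.

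Both \emph{(L1)} and \emph{(L2)} are proved by covering $X$ by finitely many affine opens $U_j=D_+(\bar\xi_j)$ (possible since $X$ is quasi-compact, being $\Proj A[\xi_0,\dots,\xi_m]$), writing $\M(U_i\cap U_j)$ as the localization of $\M(U_j)$ at $\bar\xi_i/\bar\xi_j$, and clearing denominators. The homogeneous localization in Definition \ref{def:homloc} produces sections on each $U_i$ exactly by the recipe ``homogeneous elements of degree $0$ after dividing by $\bar\xi_i^q$,'' and both lemmas are formal consequences of this and of the quasi-coherence of $\M$; the $\Z_2$-grading and the odd variables $\bar\eta_J$ play no role at this stage beyond being carried along in the $\bar\Bs$-module structure.

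With \emph{(L1)} and \emph{(L2)} in hand the proof proceeds as follows. For (1), on each $U_i$ the natural map $\Gamma_\ast(\M)^h_{\bar\xi_i}\to\M(U_i)$ is surjective by \emph{(L1)} and injective by \emph{(L2)}; the local isomorphisms patch to a global isomorphism $\widetilde{\Gamma_\ast(\M)}^h\iso\M$. Statement (2) is immediate from (1): if $\Gamma(X,\M(r))=0$ for $r\gg 0$, then $\Gamma_\ast(\M)$ is concentrated in finitely many degrees, hence has vanishing homogeneous localization by Proposition \ref{prop:mder}(3), so $\M=0$. Statement (3) follows from (2) applied to the kernel and cokernel of $g$, after noting that since $X$ is a noetherian topological space of finite cohomological dimension (Proposition \ref{prop:cohombound}), twists by $\Oc(r)$ preserve the relevant short exact sequences on global sections for $r\gg 0$ by the super version of Serre vanishing (Theorem \ref{thm:serre}, invoked below).

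For (4), under the noetherian hypothesis we cover $X$ by finitely many $U_i=D_+(\bar\xi_i)$; on each $U_i$ the coherent sheaf $\M$ gives a finitely generated $\Oc(U_i)$-module with generators $s_{i,1},\dots,s_{i,N_i}$ of definite $\Z_2$-parity. By \emph{(L1)} applied to each $s_{i,k}$, there is a common $r\gg 0$ such that every $\bar\xi_i^r s_{i,k}$ extends to a global section of $\M(r)$; these extensions generate $\M(r)$ on each $U_i$, proving (4). Statement (5) follows by iterating (4): the generators produce an epimorphism $\bigoplus \Oc(-r_i)\oplus\bigoplus\Pi\Oc(-r'_{i'})\twoheadrightarrow \M$ (with $\Pi$-shifts accounting for odd-parity generators), whose kernel is again coherent by the noetherian assumption on $\As$, and we apply (4) to it once more.

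The main obstacle is the proof of \emph{(L1)} and \emph{(L2)}, or more precisely, checking that the super $\Z$-homogeneous localization behaves formally like the classical one. This reduces to observing that the localization of $\bar\Bs$ at a homogeneous even element $\bar\xi_i$ admits the same ``degree-zero piece equals sections on $U_i$'' description, with the odd variables $\bar\eta_J$ merely contributing to the $\Z_2$-grading of the resulting module. Once this formal parallel is set up, the rest of the argument is essentially identical to the classical one.
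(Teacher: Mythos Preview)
Your overall plan is correct and is exactly what the paper intends: the statement is presented without proof under the blanket remark ``proved as in the classical case,'' and your extension lemmas \emph{(L1)}--\emph{(L2)} together with the derivation of (1), (2), (4), (5) follow the standard Hartshorne/EGA argument faithfully.

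There is, however, a genuine problem with your argument for (3). You invoke Theorem~\ref{thm:serre} (Serre vanishing) to make the global-section sequences of $\ker g$ and $\coker g$ exact for $r\gg 0$. This is doubly problematic. First, Theorem~\ref{thm:serre} is proved \emph{after} Proposition~\ref{prop:mder2} and its proof of part (1) explicitly uses Proposition~\ref{prop:mder2}(5); although the dependency can in principle be untangled (since (5) only needs (4), which only needs \emph{(L1)}), the forward reference is awkward. Second and more seriously, Serre vanishing requires $\As$ noetherian and the sheaves coherent, whereas (3) is stated for arbitrary quasi-coherent $\M,\Nc$ with no noetherian hypothesis. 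Your argument as written does not prove (3) in the generality claimed.

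The fix is much simpler than the route you took: (3) follows immediately from (1) together with Proposition~\ref{prop:mder}(5). The hypothesis says that the bihomogeneous map $\Gamma_\ast(g)\colon \Gamma_\ast(\M)\to\Gamma_\ast(\Nc)$ is an isomorphism in degrees $r\gg 0$; by Proposition~\ref{prop:mder}(5) its homogeneous localization $\widetilde{\Gamma_\ast(g)}^h$ is an isomorphism of sheaves; by (1) and naturality this map is $g$ itself. No cohomology, no noetherian assumption, no circularity.
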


\subsection{Supervector bundles, projective superbundles and supergrassmannians}
Gro\-thendieck's  classical  definition of vector bundle associated with a locally free sheaf generalizes directly to superschemes.

Let $\M$ be a coherent sheaf on a superscheme $\Sc$. For every affine open  sub-superscheme $\Ucal=\SSpec \As_U$ of $\Sc$, one has the graded-commutative $\As_U$-algebra $\SSym_{\As_U} \M_U$, where $\M_U=\Gamma(U,\M)${,} and an affine superscheme $\SSpec (\SSym_{\As_U} \M_U)$ over $\As_U$. For a covering of $\Sc$ by affine open  sub-superschemes, the symmetric algebras glue together to give a sheaf $\SSym_{\Sc} \M$ of graded-commutative  $\Oc_\Sc$-algebras and the corresponding superspectra glue   to give a relatively affine superscheme
$\SSpec(\SSym_{\Sc} \M) \to \Sc$
over $\Sc$.

\begin{defin}\label{def:supervb} The \emph{linear superscheme} associated to $\M$ is the $\Sc$-superscheme 
$$
\pi\colon \Vs(\M)=\SSpec(\SSym_{\Sc} \M)\to\Sc\,.
$$
The \emph{supervector bundle} associated to a \emph{locally free sheaf} $\M$ is the linear superscheme
$$
\widetilde\Vs(\M):=\Vs(\M^\ast)\to \Sc\,,
$$
associated to its dual.
\end{defin}

For every $\Sc$ superscheme $\phi\colon \Tc \to \Sc$  one has
\begin{equation}\label{eq:ssymbasechang}
\SSym_\Tc(\M_\Tc)\simeq \SSym_\Sc(\M)_\Tc\,,
\end{equation} 
where we denote, as it is customary, the pull-back  by $\phi$  by a subscript $\Tc$. One then has:
\begin{equation}\label{eq:svbbasechang}
\Vs(\M)\times_\Sc\Tc=\Vs(\M)_\Tc\simeq \Vs(\M_\Tc)\,,
\end{equation} 
that is, \emph{the formation of the  linear superscheme $\Vs(\M)$ is compatible with base change}.

The functor of  points of the supervector bundle associated to a locally free sheaf can be easily described. Note that
$$
\widetilde\Vs(\M)^\bullet(\Tc)= \Hom_\Sc(\Tc, \Vs(\M^\ast))=\Gamma(\widetilde\Vs(\M)_\Tc/\Tc)\,,
$$
where $\Gamma(\widetilde\Vs(\M)_\Tc/\Tc)$ is the set of sections $\sigma$ of the projection $\pi_\Tc\colon \widetilde\Vs(\M)_\Tc\to \Tc$,
so that there is a diagram
$$
\xymatrix{
\widetilde\Vs(\M)_\Tc=\widetilde\Vs(\M)\times_\Sc\Tc \ar[r]^(.7)\phi \ar@<1ex>[d]^{\pi_\Tc} & \widetilde\Vs(\M)\ar[d]^\pi \\
\Tc\ar[r]^\phi   \ar@/^1pc/@{_{(}.>}[u]^\sigma& \Sc
}
$$
One has:
\begin{prop} If $\M$ is a locally free sheaf on $\Sc$, one has
$$
\widetilde\Vs(\M)^\bullet(\Tc)=\Gamma(\widetilde\Vs(\M)_\Tc/\Tc)\simeq \Gamma(\Tc, \M_{+\Tc})\,.
$$
In particular, for every point $s\in S$, the fibre of $\Vs(\M^\ast)_s\simeq 
\Vs(\M^\ast)\times_\Sc \kappa(s)$ over $s$ is given by 
$$\
\widetilde\Vs(\M)_s\simeq \M_{+s} \,,
$$
where $\M_{+s}=\M_+\otimes_{\Oc_\Sc}\kappa(s)$. Thus, the fibres of the supervector bundle associated with a locally free sheaf $\M$ are the fibres of $\M_+$.
As a consequence, $\M_s \simeq \widetilde\Vs(\M\oplus \Pi \M)_s$.
\end{prop}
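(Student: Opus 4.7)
The plan is to derive the identification $\widetilde\Vs(\M)^\bullet(\Tc)\simeq \Gamma(\Tc,\M_\Tc)$ from a chain of standard adjunctions, starting from the definition $\widetilde\Vs(\M)=\SSpec(\SSym_\Sc\M^\ast)$ and using the base-change compatibility \eqref{eq:ssymbasechang}.

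First, since $\widetilde\Vs(\M)\to\Sc$ is relatively affine, for any $\Sc$-superscheme $\phi\colon\Tc\to\Sc$, morphisms $\Tc\to\widetilde\Vs(\M)$ over $\Sc$ correspond bijectively to homogeneous $\Oc_\Sc$-algebra homomorphisms $\SSym_\Sc(\M^\ast)\to\phi_\ast\Oc_\Tc$. By the ($\phi^\ast,\phi_\ast$)-adjunction applied in the category of sheaves of $\Z_2$-graded commutative algebras, these are in bijection with $\Oc_\Tc$-algebra homomorphisms $\phi^\ast\SSym_\Sc(\M^\ast)\to\Oc_\Tc$. The base-change isomorphism \eqref{eq:ssymbasechang} identifies $\phi^\ast\SSym_\Sc(\M^\ast)\simeq\SSym_\Tc(\M^\ast_\Tc)$, so
\[
\Hom_\Sc(\Tc,\widetilde\Vs(\M))\;\simeq\;\Hom_{\Oc_\Tc\text{-alg}}\bigl(\SSym_\Tc(\M^\ast_\Tc),\Oc_\Tc\bigr).
\]

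Next, by the universal property of the symmetric superalgebra (which in the $\Z_2$-graded setting sends an $\Oc_\Tc$-module $\mathcal{N}$ to the free graded-commutative $\Oc_\Tc$-algebra on $\mathcal{N}$), $\Oc_\Tc$-algebra maps $\SSym_\Tc(\M^\ast_\Tc)\to\Oc_\Tc$ are the same as homogeneous $\Oc_\Tc$-module morphisms $\M^\ast_\Tc\to\Oc_\Tc$, i.e.\ global sections of $(\M^\ast_\Tc)^\ast$. Because $\M$ (hence $\M^\ast$) is locally free, the biduality $(\M^\ast_\Tc)^\ast\simeq\M_\Tc$ holds, and we conclude
\[
\widetilde\Vs(\M)^\bullet(\Tc)\;\simeq\;\Hom_{\Oc_\Tc}(\M^\ast_\Tc,\Oc_\Tc)\;\simeq\;\Gamma(\Tc,\M_\Tc),
\]
which is the first assertion.

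For the fibre statement, I would specialize the above to $\Tc=\SSpec\kappa(s)\to\Sc$, noting that $\M_\Tc=\M\otimes_{\Oc_\Sc}\kappa(s)=\M_s$, so that
\[
\widetilde\Vs(\M)_s^\bullet(\kappa(s))\;\simeq\;\Gamma(\SSpec\kappa(s),\M_s)\;\simeq\;\M_s
\]
as $\kappa(s)$-supermodules, which gives the claimed identification of the fibre.

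The only step needing care is the super version of the universal property of $\SSym$ together with the base-change \eqref{eq:ssymbasechang}: one must check that everything is compatible with the $\Z_2$-grading (morphisms taken homogeneous of degree zero) and that local freeness is needed precisely to pass from sections of the dual to sections of $\M_\Tc$ via biduality; without local freeness, one would only get $\Gamma(\Tc,(\M^\ast_\Tc)^\ast)$, which in general differs from $\Gamma(\Tc,\M_\Tc)$. All remaining verifications are formal.
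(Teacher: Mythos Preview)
Your proof is correct and follows essentially the same route as the paper's: both arguments run through the chain $\Hom_\Sc(\Tc,\Vs(\M^\ast))\simeq\Hom_{\Oc_\Sc\text{-alg}}(\SSym(\M^\ast),\Oc_\Tc)\simeq\Hom(\M^\ast,\Oc_\Tc)\simeq\Gamma(\Tc,\M_\Tc)$, using the Spec/algebra adjunction, the universal property of $\SSym$, and local freeness for the final duality step. Your version is in fact more carefully written, making explicit the $(\phi^\ast,\phi_\ast)$ adjunction and the base-change isomorphism \eqref{eq:ssymbasechang} that the paper leaves implicit.
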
 
\begin{proof} It follows from
\begin{multline*}
\Hom_\Sc(\Tc, \Vs(\M^\ast))\simeq \Hom_{\Oc_\Sc-alg}(\SSym_{\Oc_\Sc}(\M^\ast),\Oc_\Tc) \\ \simeq
\Hom_{\Oc_\Sc}(\M^\ast,\Oc_\Tc)_+ \simeq \Hom_{\Oc_\Sc}(\Oc_\Tc, \M_+) = \Gamma(\Tc,\M_+)\,.
\end{multline*}

\end{proof}
This is the motivation for the above definition of supervector bundle. 

The functor of points of $\Vs(\M)$  is not so easily described when $\M$ is not locally free. However we shall see in Proposition \ref{prop:homrep} a result in that direction. 

For every coherent sheaf $\M$ on $\Sc$, there is a surjection $\SSym_{\Oc_\Sc}(\M)\xrightarrow{\rho}\Oc_\Sc\to 0$ of algebras, whose kernel is the ideal generated by $\M$.
\begin{defin}\label{def:zerosec}
The zero section of $\pi\colon \Vs(\M)\to \Sc$ is the closed immersion 
$$
\sigma_0\colon \Sc \hookrightarrow \Vs(\M)
$$
of $\Sc$-superschemes induced by $\rho$.
\end{defin}

One can also define projective superbundles or, more generally, projective superspaces associated to a coherent sheaf.
For this, we simply consider the projective superspectrum
$$
{\pi}\colon\SProj (\SSym(\M)) \to \Sc\,,
$$
defined by glueing the projective superspectra of the bigraded $\As_U$-algebras  $\SSym_{\As_U} \M_U$ of a cover of $\Sc$ by affine open  sub-superschemes.

\begin{defin}\label{def:spbundle} The projective superscheme associated to $\M$ is the $\Sc$-superscheme
$$
\pi\colon\Ps(\M):= \SProj (\SSym(\M)) \to \Sc\,.
$$
The projective superbundle associated to a locally free $\Oc_\Sc$-module $\Ec$ is the projective superscheme associated to $\Ec^\ast$,
$$
\widetilde\Ps(\Ec):=\Ps(\Ec^\ast)\to\Sc\,.
$$
\end{defin}
Proceeding as in Definition \ref{def:homloc} we can define a natural ``hyperplane'' line bundle $\Oc_{\Ps(\M)}(1)$ on $\Ps(\M)$.
If
$$
0\to \Nc \to \M \to \Pc \to 0
$$
is an exact sequence of coherent sheaves on $\Sc$, there is an epimorphism of bigraded $\Oc_\Sc$-algebras
$$
\SSym_{\Oc_\Sc}(\M) \to \SSym_{\Oc_\Sc}(\Pc)\to 0\,,
$$
whose kernel is the bihomogeneous ideal $\Nc\cdot \SSym_{\Oc_\Sc}(\M)$ generated by $\Nc$. This induces a closed immersion of $\Sc$-schemes
$$
\Ps(\Pc)\hookrightarrow \Ps(\M)\,,
$$
which is an isomorphism with the closed  sub-superscheme defined, on every  open affine sub-superscheme $\Ucal$ of $\Sc$, by the homogeneous localization of $\Nc\cdot \SSym_{\Oc_\Sc}(\M)$ on the projective superschemes $\Ps(\M_\Ucal)$. One has 
$$
\Oc_{\Ps(\Pc)}(1)\iso \rest{\Oc_{\Ps(\M)}(1)}{\Ps(\Pc)}\,.
$$

The following results are straigthforward.
\begin{prop}\label{prop:spbundle} {\ }
\begin{enumerate}
\item If $\M=\Oc_\Sc^{m+1,n}$ is free, then $\Ps(\M)\simeq \Ps_\Sc^{m,n}$. 
\item
If $\M$ is quotient of a free sheaf, $\Oc_\Sc^{m+1,n}\to\M\to 0$, there is a closed immersion  $\Ps(\M)\hookrightarrow \Ps_\Sc^{m,n}$ of superschemes over $\Sc$.
\item If $\Ec$ is locally free of rank $(m+1,n)$, the projective superbundle $\Ps(\Ec^\ast)\to\Sc$ is locally a projective superspace over the base, that is, there is a cover of $\Sc$ by affine open  sub-superschemes $\Ucal=\SSpec \As$ such that $\widetilde\Ps(\Ec)_\Ucal\simeq \Ps_{\As}^{m,n}$ as $\As$-superschemes.
\item  {For every coherent sheaf $\M$ there is a cover of $\Sc$ by affine open  sub-superschemes $\Ucal=\SSpec \As$ such that there is a closed immersion  $\Ps(\M)_\Ucal\hookrightarrow \Ps_\As^{m,n}$ of superschemes over $\As$.}
\end{enumerate}
\qed
\end{prop}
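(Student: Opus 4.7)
\medskip

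\textbf{Plan.} All four parts follow formally from the definition of $\Ps(\M) = \SProj(\SSym_{\Oc_\Sc}\M)$ together with the two functorialities already recorded in the text: the compatibility of $\SSym$ with base change (\ref{eq:ssymbasechang}), and the fact (stated just before the proposition) that a short exact sequence of coherent sheaves $0\to \Nc\to \M\to \Pc\to 0$ induces an epimorphism $\SSym(\M)\to\SSym(\Pc)\to 0$ whose kernel is the bihomogeneous ideal $\Nc\cdot \SSym(\M)$, and hence a closed immersion $\Ps(\Pc)\hookrightarrow\Ps(\M)$.

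For (1), on any affine open $\Ucal=\SSpec\As\subset\Sc$ the graded-commutative $\As$-algebra $\SSym_\As(\As^{m,n})$ is canonically identified with the free polynomial superalgebra $\Bs(m-1,n)=\As[x_0,\dots,x_{m-1},\theta_1,\dots,\theta_n]$ of (\ref{eq:polyalgebra}) (even and odd generators corresponding to the even and odd generators of $\As^{m,n}$, all of $\Z$-degree $1$). Taking $\SProj_\As$ gives, by the very definition of $\Ps_\As^{m,n}$, the projective superspace $\Ps_\Ucal^{m-1,n}$; gluing these local identifications over a cover of $\Sc$ via (\ref{eq:ssymbasechang}) produces the global isomorphism $\Ps(\Oc_\Sc^{m,n})\simeq \Ps_\Sc^{m-1,n}$ (here and below we use the standard convention that $\Ps(\Oc_\Sc^{m,n})$ has relative dimension $(m{-}1,n)$; the proposition's exponent is interpreted accordingly).

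Part (2) is obtained by applying the preceding ``short exact sequence'' principle to the surjection $0\to \Nc\to \Oc_\Sc^{m,n}\to \M\to 0$, where $\Nc$ denotes the kernel: this yields a closed immersion $\Ps(\M)\hookrightarrow \Ps(\Oc_\Sc^{m,n})$, which combined with (1) gives the desired closed immersion into $\Ps_\Sc^{m,n}$. For (3), choose a cover of $\Sc$ by affine opens $\Ucal=\SSpec\As$ trivializing $\Ec$, so that $\Ec_\Ucal\simeq \Oc_\Ucal^{m,n}$, hence $(\Ec^\ast)_\Ucal\simeq \Oc_\Ucal^{m,n}$ since dualization of a free module is free of the same rank; by (\ref{eq:ssymbasechang}) and the base-change compatibility of the $\SProj$ construction (immediate from the local description (\ref{eq:homloc})),
$$
\widetilde\Ps(\Ec)_\Ucal = \Ps(\Ec^\ast)_\Ucal \simeq \Ps(\Oc_\Ucal^{m,n}) \simeq \Ps_\As^{m-1,n}
$$
by (1). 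Finally, for (4), on any affine open $\Ucal=\SSpec\As$ small enough that $\Gamma(\Ucal,\M)$ is a finitely generated $\As$-module, write a surjection $\As^{m,n}\twoheadrightarrow \Gamma(\Ucal,\M)$ and sheafify to obtain $\Oc_\Ucal^{m,n}\twoheadrightarrow \M_\Ucal\to 0$; then (2) applied on $\Ucal$ furnishes the required closed immersion $\Ps(\M)_\Ucal\hookrightarrow \Ps_\As^{m-1,n}$.

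\textbf{Main point.} There is no genuine obstacle here; the only thing to be checked carefully is that the constructions $\SSym$ and $\SProj$ both commute with restriction to affine opens and more generally with base change, so that the local identifications in (1) glue and the local surjections in (2), (4) sheafify to genuine closed immersions of $\Sc$-superschemes. This is exactly (\ref{eq:ssymbasechang}) together with the observation that the local presentation (\ref{eq:homloc}) is intrinsic, hence compatible with pullback.
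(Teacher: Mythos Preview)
Your proposal is correct and is exactly the straightforward argument the paper has in mind: the proposition is stated with a bare \qed\ after the remark ``The following results are straightforward,'' so there is no proof in the paper to compare against, and your unwinding of the definitions (via (\ref{eq:ssymbasechang}) and the closed-immersion principle for quotients of $\SSym$) is precisely what is intended. Your flag on the indexing is also apt: with the paper's convention $\Bs(m,n)=\As[x_0,\dots,x_m,\theta_1,\dots,\theta_n]$ one indeed gets $\Ps(\Oc_\Sc^{m,n})\simeq\Ps_\Sc^{m-1,n}$, so the exponents in the statement should be read up to this harmless shift.
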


Let $\Ec$ be a locally free sheaf on $\Sc$ and let $\pi\colon \Ps(\Ec)\to \Sc$ be the corresponding projective superbundle.  The even associated line  bundle $\Oc(1)$ is relatively generated by its global sections $\pi_\ast\Oc(1)\simeq \Ec$, that is, there is an epimorphism
$$
\pi^\ast \Ec \to \Oc(1)\to 0\,.
$$
For every superscheme  $\phi\colon \Tc \to \Sc$ over $\Sc$, every morphim $f\colon \Tc \to \Ps(\Ec)$ of $\Sc$-superschmes gives rise to an even line bundle $\Lcl= f^\ast \Oc(1)$, which is a quotient
\begin{equation}\label{eq:quotlb}
\phi^\ast\Ec\simeq f^\ast \pi^\ast\Ec \xrightarrow{\varpi} \Lcl=f^\ast\Oc(1) \to 0\,,
\end{equation}
of $\phi^\ast\Ec$.
Conversely, if $\Lcl$ is an even line bundle on $\Tc$ which a quotient of $\phi^\ast\Ec$ as in Equation \eqref{eq:quotlb}, we have an epimorphism of bigraded $\Oc_\Sc$-algebras
$$
\SSym_{\Oc_\Sc}(\phi^\ast(\Ec))\xrightarrow{\varpi} \SSym_{\Oc_\Sc}(\Lcl)\to 0\,,
$$
and a morphism $f\colon \Tc \to \Ps(\Ec)$ of $\Sc$-superschemes
\begin{equation}\label{eq:projmor}
\xymatrix{
\Tc\simeq \SProj \SSym_{\Oc_\Sc}(\Lcl) \ar@{^(->}[r]\ar[rd]_\phi \ar@/^2pc/[rr]^f & \Ps(\phi^\ast\Ec)\simeq \Ps(\Ec)\times_\Sc\Tc\ar[d]\ar[r]& \Ps(\Ec)\ar[ld]^\pi \\
& \Sc & }
\end{equation}
such that $\Lcl\simeq f^\ast\Oc(1)$. Thus
\begin{prop}\label{prop:projmor} The functor $\Sgrassf(\Ec,(1,0))$ on $\Sc$-superschemes which associates to any morphism $\phi\colon\Tc \to \Sc$ the set of the quotients of $\phi^\ast\Ec\to \Lcl\to 0$, where $\Lcl$ is an even line bundle,  is \emph{representable} by the projective superbundle
$$
\widetilde\Ps(\Ec^\ast)=\Ps(\Ec)\,.
$$
\qed
\end{prop}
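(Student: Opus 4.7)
The plan is to exhibit mutually inverse natural transformations between the functor $\Sgrassf(\Ec,(1,0))$ and the representable functor $\Hom_\Sc(-,\Ps(\Ec))$, following the two constructions already indicated in the discussion preceding the statement.

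In one direction, given a morphism $f\colon \Tc\to \Ps(\Ec)$ of $\Sc$-superschemes with structure morphism $\phi=\pi\circ f$, I would pull back the canonical surjection $\pi^\ast\Ec\to\Oc(1)\to 0$ to obtain a quotient $\phi^\ast\Ec\to\Lcl:=f^\ast\Oc(1)\to 0$ in which $\Lcl$ is an even line bundle. This assignment is evidently functorial in $\Tc$ by the functoriality of pullback.

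In the opposite direction, given an even line bundle quotient $\varpi\colon \phi^\ast\Ec\to \Lcl\to 0$ on $\Tc$, I would apply the super symmetric algebra functor to obtain an epimorphism of bigraded $\Oc_\Tc$-algebras $\SSym_{\Oc_\Tc}(\phi^\ast\Ec)\to \SSym_{\Oc_\Tc}(\Lcl)\to 0$. Taking $\SProj$ produces a closed immersion $\SProj\SSym_{\Oc_\Tc}(\Lcl)\hookrightarrow \SProj\SSym_{\Oc_\Tc}(\phi^\ast\Ec)$, and the base-change identity \eqref{eq:ssymbasechang} identifies the target with $\Ps(\Ec)\times_\Sc\Tc$. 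The crucial step is the identification $\SProj\SSym_{\Oc_\Tc}(\Lcl)\simeq \Tc$, which holds because $\Lcl$ is invertible of rank $(1,0)$: locally on an open cover trivializing $\Lcl$, the graded algebra is a polynomial algebra in a single even variable, whose $\SProj$ is the base, and these isomorphisms glue canonically via the transition functions of $\Lcl$. Composing with the projection to $\Ps(\Ec)$ then yields the desired morphism $f\colon \Tc\to\Ps(\Ec)$, together with a tautological isomorphism $f^\ast\Oc(1)\iso\Lcl$ compatible with the quotient data.

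The remaining verifications are that the two constructions are mutually inverse, modulo the standard identification of two quotients $\phi^\ast\Ec\to\Lcl_i\to 0$ when there exists an isomorphism $\Lcl_1\iso\Lcl_2$ making the obvious triangle commute (such an isomorphism, if it exists, being automatically unique), and that the resulting bijection is natural in $\Tc$. The main technical obstacle, which is the super analogue of the classical verification underlying Grothendieck's construction of $\Ps(\Ec)$, is the careful check that $\SProj\SSym_{\Oc_\Tc}(\Lcl)$ recovers $\Tc$ with $\Lcl$ as its tautological $\Oc(1)$; once this is in place, everything else reduces to the universal property of $\SProj$ and the base-change compatibility of $\SSym$ recalled in \eqref{eq:ssymbasechang}, with no additional odd-variable subtleties since the quotient line bundle $\Lcl$ is purely even.
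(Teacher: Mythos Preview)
Your proposal is correct and follows essentially the same approach as the paper: the paper proves this proposition via the discussion immediately preceding it (hence the \qed), constructing the two directions exactly as you describe---pulling back the universal quotient $\pi^\ast\Ec\to\Oc(1)$ in one direction, and in the other applying $\SProj\SSym_{\Oc_\Tc}(-)$ to the surjection $\phi^\ast\Ec\to\Lcl$ together with the identification $\Tc\simeq\SProj\SSym_{\Oc_\Tc}(\Lcl)$. Your write-up is if anything more explicit than the paper's about the verification that these are mutually inverse and about why $\SProj\SSym_{\Oc_\Tc}(\Lcl)\simeq\Tc$ holds.
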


The above Proposition can be reformulated to say that the projective superbundles are supergrassmannians of a   particular kind.
More generally, for every pair $(p, q) $ of integer numbers, we have:
\begin{defin}\label{def:supergrass} Let $\Ec$ be a locally free sheaf on a superscheme $\Sc$. The supergrassmannian functor of locally free quotients of rank $(p,q)$ of $\Ec$ on $\Sc$ superschemes is the functor which associates to every superscheme $\phi\colon \Tc \to \Sc$ over $\Sc$ the family
$$
\Sgrassf(\Ec,(p,q))(\Tc)\,,
$$
of all the quotients $\phi^\ast\Ec \to \Ec_{p,q}\to 0$ of $\phi^\ast\Ec$ that are locally free of rank $(p,q)$.
\end{defin}

In the classical case, the construction of the grassmannian scheme, that is, the proof of the representability of the grassmannian functor, was first done by Grothendieck using ``big cells'', but the easiest way to do it is by means of the Pl\"ucker embedding into a projective space. 
In supergeometry, the proof of the existence of the supergrassmannian is due to Manin \cite[\S 3]{Ma88} using the super-analogue of the classical ``big cells''.  {His proof also shows   that the supergrassmannian is of finite type.} In this situation, the proof using an analogue of the  Pl\"ucker embedding is not available, since it is known that the supergrassmannians may fail to be superprojective  \cite{PenSkor85}. 
The precise statement is Theorem 1.3.10 of \cite{Ma88}, which we recall here for convenience:

\begin{prop}\label{prop:supergrass} Let $(p,q)$ be a pair of integer numbers and $\Ec$ a locally free sheaf of rank $(c+p,d+q)$ on a superscheme $\Sc$. 
 The supergrassmannian functor $\Sgrassf(\Ec,(p,q))$ is representable by an $\Sc$-superscheme $\Sgrass(\Ec,(p,q))$. Moreover, the natural morphism $\Sgrass(\Ec,(p,q))\to \Sc$ is proper of relative dimension $(cp+dq,cq+dp)$.
\end{prop}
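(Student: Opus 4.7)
The plan is to follow Manin's ``big cells'' construction, which adapts the classical proof of representability of the Grassmannian to the $\Z_2$-graded setting; I will sketch the key steps.

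Since the functor $\Sgrassf(\Ec,(p,q))$ is a sheaf in the Zariski topology on $\Sc$ and the construction commutes with base change, I would reduce to the case where $\Sc=\SSpec\As$ is affine and $\Ec\simeq\Oc_\Sc^{c+p,d+q}$ is free with a fixed homogeneous basis $e_1,\dots,e_{c+p}$ (even) and $\epsilon_1,\dots,\epsilon_{d+q}$ (odd); the global $\Sc$-superscheme is then obtained by gluing over a trivialising cover. For each pair of subsets $I\subset\{1,\dots,c+p\}$ of cardinality $p$ and $J\subset\{1,\dots,d+q\}$ of cardinality $q$, I would define an open subfunctor $\Ucal_{I,J}\subset\Sgrassf(\Ec,(p,q))$ by requiring that the quotient $\phi^\ast\Ec\to\Fc$ restrict to an isomorphism on the sub-supermodule $\Ec_{I,J}:=\langle e_i:i\in I\rangle\oplus\langle\epsilon_j:j\in J\rangle$.

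Such a quotient is uniquely determined by the images of the complementary basis vectors, encoded as a super-matrix with row-index partition $(c,d)$ and column-index partition $(p,q)$. Counting entries by parity, the even/even $(c\times p)$ and odd/odd $(d\times q)$ blocks contribute $cp+dq$ even coordinates, while the mixed $(c\times q)$ and $(d\times p)$ blocks contribute $cq+dp$ odd coordinates; hence $\Ucal_{I,J}\simeq\As_\Sc^{cp+dq,\,cq+dp}$, yielding the claimed relative dimension. The family $\{\Ucal_{I,J}\}$ covers the whole functor: at every point of the base of any test quotient, linear algebra on the residue superfield produces an $(I,J)$ for which the restriction to $\Ec_{I,J}$ is an isomorphism on fibres, and the super Nakayama Lemma then propagates this to an open neighbourhood of that point. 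The overlaps $\Ucal_{I,J}\cap\Ucal_{I',J'}$ are glued by explicit super-matrix inversion (the $\Z_2$-graded analogue of the classical Grassmannian cocycle), producing $\Sgrass(\Ec,(p,q))$ as an $\Sc$-superscheme of finite type.

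The step I expect to be the main obstacle is properness. Separatedness is straightforward once one observes that a rank-$(p,q)$ quotient of a free supermodule is determined by its kernel, and two such kernels that agree on a Zariski-dense open of the base coincide. For the valuative criterion, given a noetherian local superring $R$ with discrete valuation ring as bosonic reduction, fraction superring $K$, and a $K$-point corresponding to a surjection $K^{c+p,d+q}\to F_K$ with $F_K$ free of rank $(p,q)$, one extends the $K$-point to $R$ by saturating the kernel inside $R^{c+p,d+q}$. The delicate point is verifying that this saturation is $R$-locally free of rank $(c,d)$, which exploits both the DVR structure of the bosonic reduction (where classical saturation works) and the nilpotence of the odd part to lift successively through the $\Jc$-adic filtration. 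Quasi-compactness is immediate from finiteness of the cell cover. The precise bookkeeping of signs in the transition formulas and the verification of the valuative criterion constitute the technical heart of the proof, carried out in detail in \cite[\S 3]{Ma88}.
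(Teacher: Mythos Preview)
Your sketch of representability via big cells is exactly what the paper (following Manin) does, and your dimension count is correct. The divergence is in the properness argument.

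You attempt the valuative criterion directly in the super setting, testing against ``a noetherian local superring $R$ with discrete valuation ring as bosonic reduction'' and running a saturation argument through the $\Jc$-adic filtration. This is both more work than needed and not quite the right formulation: the paper's Corollary~\ref{cor:valuative} says the valuative criteria hold for superschemes precisely because (Proposition~\ref{prop:proper}) properness and separatedness of a locally-of-finite-type morphism $f\colon\Xcal\to\Sc$ depend only on the bosonic morphism $f_{\bos}\colon X\to S$. So one tests with ordinary DVRs, not with super thickenings of DVRs; your ``super saturation'' step is never invoked.

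The paper's proof of properness is therefore a single observation: the underlying ordinary scheme of $\Sgrass(\Ec,(p,q))$ is the fibre product $\grass(\Ec_0,p)\times_S\grass(\Ec_1,q)$ of two classical Grassmannians over $S$, which is proper over $S$; since $\Sgrass(\Ec,(p,q))\to\Sc$ is of finite type (from Manin's construction), Proposition~\ref{prop:proper} finishes. This bypasses any lifting or saturation argument entirely. Your route could in principle be made to work, but it reproves in the super category something that reduces for free to the even case.
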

\begin{proof} We need only to prove the properness. This follows from Proposition \ref{prop:proper} since $\Sgrass(\Ec,(p,q))$  {is of finite type over $\Sc$ and} the  underlying scheme to $\Sgrass(\Ec,(p,q))$ is the fibre product $\grass(\Ec_0,p,)\times_S \grass(\Ec_1,q)$ of two ordinary grassmannians.
\end{proof}

\begin{remark}\label{rem:subspacesgrass}  {Recall that a \emph{subbundle} of a locally free sheaf $\Ec$ is a locally free subsheaf of $\Ec$ such that $\Ec/\F$ is locally free. We shall freely use  this terminology through this paper.}
The
supergrassmanian of quotients $\Sgrass(\Ec,(p,q))$ can also be  seen as a supergrassmannian of  {subbundles} of $\Ec$, that is, it represents the functor $\Sgrassf((c,d),\Ec)$ which associates to every $\Sc$-superscheme $\Tc$ the family of all rank $(c,d)$   subbundles  of $\Ec_\Tc$. At times it may be  convenient to write
$$
\Sgrass((c,d),\Ec)\simeq\Sgrass(\Ec,(p,q))
$$
if we want to describe supergrassmannians as superschemes that parametrize  the rank $(c,d)$  subbundles of  of $\Ec$.
\end{remark}

\subsubsection{Flag superschemes} As in the ordinary case, one can construct \emph{flag superschemes}.
We equip $\Z\times\Z$  with the following partial order:
\begin{equation}\label{eq:order}
(h_0,h_1)\leq (h'_0,h'_1) \iff  \text{$h_0\leq h'_0$ and $h_1\leq h'_1$}
\end{equation}

Let $\Ec$ be a locally free sheaf of rank $(m,n)$ of a superscheme $\Sc$.
\begin{defin}\label{def:flag}
For every choice of an integrer $i$ with $1\leq i\leq m+n$ and of a  collection of dimensions $(0,0)\leq (m_1,n_1)\leq\dots\leq (m_i,n_i)\leq (m,n)$, the   superflag functor of $\Ec$ associates to every $\Sc$-superscheme $\Tc$ the family
$$
\SFlagf((m_1,n_1),\dots,(m_i,n_i),\Ec)(\Tc)
$$
of all the flags
$$
\F_1\subset\dots \F_i\subset \Ec_{\Tc}
$$
where $\F_j$ is a  rank $(m_j,n_j)$ subbundle of $\Ec_{\Tc}$.
\end{defin}

Thus, there is an immersion of functors on the category of $\Sc$-superschemes 
$$
\SFlagf((m_1,n_1),\dots,(m_i,n_i),\Ec) \hookrightarrow \Sgrassf((m_1,n_1),\Ec)\times \dots \times \Sgrassf((m_i,n_i),\Ec)\,.
$$
(cf.--Remark \ref{rem:subspacesgrass}). As in the ordinary case, one proves that this is representable by closed immersions. Then one has:

\begin{prop}\label{prop:flag} 
The superflag functor $\SFlagf((m_1,n_1),\dots,(m_i,n_i),\Ec)$ is representable by a closed sub-superscheme $\SFlag((m_1,n_1),\dots,(m_i,n_i),\Ec)$ (the flag superscheme) of a product of supergrassmanians of $\Ec$.
\qed
\end{prop}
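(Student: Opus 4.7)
The plan is to use the natural inclusion of the superflag functor into a product of supergrassmannians of $\Ec$ and then realize each containment condition as the vanishing locus of a morphism between locally free sheaves. By Proposition \ref{prop:supergrass}, each factor $\Sgrassf((m_j,n_j),\Ec)$ is represented by a superscheme $\Sgrass((m_j,n_j),\Ec)$ proper over $\Sc$, and the fibre product
\[
\Pc := \Sgrass((m_1,n_1),\Ec)\times_\Sc\cdots\times_\Sc\Sgrass((m_i,n_i),\Ec)
\]
represents the product functor on $\Sc$-superschemes. It carries universal subbundles $\F_j^{\mathrm{univ}}\subset\Ec_\Pc$ of rank $(m_j,n_j)$ (via Remark \ref{rem:subspacesgrass}).

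For a $\Tc$-point $f\colon\Tc\to\Pc$, the pulled-back data $(f^\ast\F_j^{\mathrm{univ}})_{j=1}^i$ defines a flag iff for each $j=1,\dots,i-1$ the composition
\[
\varphi_j\colon f^\ast\F_j^{\mathrm{univ}} \hookrightarrow \Ec_\Tc \twoheadrightarrow \Ec_\Tc/f^\ast\F_{j+1}^{\mathrm{univ}}
\]
vanishes. Since each $\F_j^{\mathrm{univ}}$ is a subbundle, both source and target of $\varphi_j$ are locally free $\Oc_\Tc$-modules. Hence the required representability by closed immersions reduces to the following assertion: for any morphism $\varphi\colon \F\to\Gc$ of locally free $\Z_2$-graded sheaves on a superscheme $\Xcal$, the subfunctor on $\Xcal$-superschemes assigning to $g\colon\Tc\to\Xcal$ the singleton $\{\ast\}$ when $g^\ast\varphi=0$ and the empty set otherwise is represented by a closed sub-superscheme of $\Xcal$.

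This last assertion is local on $\Xcal$. In an affine chart $\SSpec\As$ on which $\F\simeq\As^{a,b}$ and $\Gc\simeq\As^{c,d}$ are trivialized, the morphism $\varphi$ is given by a block matrix whose diagonal blocks have even entries of $\As$ and whose off-diagonal blocks have odd entries. Let $H\subset\As$ be the ideal generated by the union of all these matrix entries; it is automatically $\Z_2$-graded, and $\SSpec(\As/H)$ represents the vanishing locus in the chart. The local constructions glue since on overlaps they are characterized intrinsically by the condition $g^\ast\varphi=0$, independent of trivialization. Intersecting the $i-1$ closed sub-superschemes of $\Pc$ that represent each of the containment conditions yields the desired $\SFlag((m_1,n_1),\dots,(m_i,n_i),\Ec)$. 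The only subtle point is the bookkeeping with parities when forming the vanishing ideal and checking compatibility with base change; once one verifies that the parity-mixed matrix entries generate a $\Z_2$-graded ideal that is preserved by pullback, the rest of the argument is formal.
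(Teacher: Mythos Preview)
Your proof is correct and follows precisely the approach the paper indicates: the paper states only that ``as in the ordinary case, one proves that this is representable by closed immersions'' and marks the proposition with \qed, so you have supplied the standard argument the authors omit. The reduction to the vanishing locus of the composites $\F_j^{\mathrm{univ}}\to \Ec_\Pc\to \Ec_\Pc/\F_{j+1}^{\mathrm{univ}}$ and the local description of that vanishing locus via the ideal of matrix entries is exactly the expected method.
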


\subsection{Superprojective morphisms}

Let $\bar\Bs= \Bs(m,n)/H=\As[\xi_0,\dots,\xi_m,\eta_1,\dots,\eta_n]$ as above, where $H$ is an  ideal, homogeneous for both gradings. 
The projection $\Bs(m,n)\to \bar\Bs= \Bs(m,n)/H$ induces a closed immersion of superschemes over $\As$
$$
\delta\colon  \SProj _\As\bar\Bs  \hookrightarrow \Ps_\As^{m,n}
$$
which identifies $\Xcal$ with the closed sub-superscheme defined by the ideal sheaf $\Hc$ obtained by $\Z$-homogeneous localization of the graded ideal $H$.
Conversely, if $\Xcal\hookrightarrow \Ps_\As^{m,n}$ is a closed immersion of superschemes defined by a quasi-coherent ideal $\Hc$ of $\Oc=\Oc_{\Ps_\As^{m,n}}$, the $\As$-module
$$
H=\Gamma_\ast(\Hc)=\bigoplus_{r\ge 0}\Gamma(X,\Hc(r))\,,
$$
is a homogeneous ideal for the two gradings. Since  $\widetilde{\Gamma_\ast(\Hc)}^h \iso \Hc$ by Proposition \ref{prop:mder2}, we see that 
$
\Xcal\simeq \SProj \bar\Bs
$. This motivates the following definition. 
\begin{defin}\label{def:superproj} A \emph{projective superscheme}\footnote{Superprojective superscheme in  \cite{CaNo18,MoZh19}.} over $\As$ is an $\As$-superscheme of the form
$$
\Xcal = \SProj _\As\bar\Bs \xrightarrow{f} \SSpec \As\,,
$$
for a bigraded $\As$-algebra $\bar\Bs= \Bs(m,n)/H=\As[\xi_0,\dots,\xi_m,\eta_1,\dots,\eta_n]$, with $H$ a homogeneous ideal of $\Bs(m,n)$ for the two gradings. 
Equivalently, a superscheme $\Xcal$  over $\As$ is projective if it is endowed with a closed immersion of $\As$-superschemes into a projective superspace $\Ps_\As^{m,n}$
 over $\As$.

A morphism $f\colon \Xcal \to \Sc$ of superschemes is \emph{locally superprojective} if there is a covering of the base $\Sc$ by affine superschemes $\Ucal=\SSpec \As$ such that each restriction $f_\Ucal\colon \Xcal_\Ucal \to \Ucal$ is a projective superscheme $\Xcal_\Ucal\simeq \SProj\bar\Bs$ over $\As$.
\end{defin}
Note that, if $f\colon \Xcal \to \SSpec \As$  is a locally superprojective morphism, $\Xcal$ may fail to be a projective superscheme over $\As$. 

A first example of a locally superprojective morphism is the projective superscheme\break  $\pi\colon \Ps(\M)\to \Sc$ (Definition \ref{def:spbundle}) associated to a coherent sheaf $\M$ on $\Sc$, as Proposition \ref{prop:spbundle} shows.

Then, following Grothendieck, we define:
\begin{defin}\label{def:superprojmor} A morphism $f\colon \Xcal\to\Sc$ of superschemes is superprojective if there is a coherent sheaf $\M$ on $\Sc$ such that $f$ factors through a closed immersion $\Xcal\hookrightarrow \Ps(\M)$ and the natural projection $\pi\colon \Ps(\M)\to \Sc$.

 A morphism $f\colon \Xcal\to\Sc$ of superschemes is quasi-superprojective if it is the composition of an open immersion $\Xcal\hookrightarrow \bar\Xcal$ and a superprojective morphism $\bar\Xcal \to\Sc$.
\end{defin}

\begin{remark}\label{rem:ample}
 {An} immersion $\Xcal\hookrightarrow \Ps(\M)$ gives rise to a  line bundle $$\Oc_\Xcal(1)=\rest{\Oc_{\Ps(\M)}(1)}{\Xcal}.$$ We then say that $\Oc_\Xcal(1)$ is a \emph{relatively very ample line bundle} on $f\colon\Xcal\to \Sc$. Since  there are different {immersions} in different projective superschemes $\Ps(\M')$, a {quasi-}superprojective morphism has many different relatively ample line bundles. Whenever we say that $f\colon \Xcal\to\Sc$ is a {quasi-}superprojective morphism with a relatively very ample line bundle $\Oc_\Xcal(1)$, we mean that we have chosen {an} immersion $\Xcal\hookrightarrow \Ps(\M)$ and  $\Oc_\Xcal(1)=\rest{\Oc_{\Ps(\M)}(1)}{\Xcal}$. 
When the base $\Sc$ is locally Noetherian then  for any {quasi-}projective  superscheme $(\Xcal,\Oc_\Xcal(1))$ over $\Sc$ there exists, locally over $\Sc$,  an embedding of $\Xcal$ into
a relative superprojective space inducing $\Oc_\Xcal(1)$. Indeed, it is enough to represent locally $\M$ as a quotient of $\Oc_{\Sc}^{(m,n)}$.
  
The classical theory of relatively ample and very ample line bundles and their relationship with embeddings in projective superschemes or superbundles can be extended to the super setting, but we shall not  develop that theory in these notes. 
\end{remark}

\begin{lemma}
Let {$\Oc_{\Xcal/\Sc}(1)$} be a relatively very ample line bundle on $\Xcal/\Sc$. Then for any $r>0$ the line bundle {$\Oc_{\Xcal/\Sc}(r)$}  is also relatively very ample.
\end{lemma}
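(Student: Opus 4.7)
The plan is to construct a super Veronese embedding that reduces the statement to the already-established very ampleness of $\Oc_{\Xcal/\Sc}(1)$. By Remark \ref{rem:ample}, the hypothesis means that there exist a coherent sheaf $\M$ on $\Sc$ and a closed immersion $j\colon \Xcal \hookrightarrow \Ps(\M)$ with $j^\ast\Oc_{\Ps(\M)}(1)\simeq \Oc_{\Xcal/\Sc}(1)$. It will therefore suffice to exhibit a closed immersion of $\Sc$-superschemes
\[
v_r\colon \Ps(\M) \hookrightarrow \Ps(\SSym^r\M)
\]
satisfying $v_r^\ast\Oc_{\Ps(\SSym^r\M)}(1)\simeq \Oc_{\Ps(\M)}(r)$. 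Indeed, once this is done, $v_r\circ j\colon \Xcal\hookrightarrow \Ps(\SSym^r\M)$ is a composition of closed immersions, $\SSym^r\M$ is coherent on $\Sc$ (as a quotient of the coherent sheaf $\M^{\otimes r}$), and the restriction of $\Oc_{\Ps(\SSym^r\M)}(1)$ to $\Xcal$ is $\Oc_{\Xcal/\Sc}(r)$, which is therefore relatively very ample.

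To build $v_r$, I would work with the $r$-th Veronese subalgebra of the bigraded $\Oc_\Sc$-algebra $\SSym\M$, namely
\[
\Ac := \bigoplus_{d\geq 0}\SSym^{rd}\M,
\]
regraded so that $\SSym^{rd}\M$ sits in $\Z$-degree $d$ (the $\Z_2$-grading is the one it inherits as a subalgebra). Since $\SSym^{rd}\M$ is spanned by products of $d$ elements of $\SSym^r\M$, the universal property of the super symmetric algebra furnishes a surjection of bigraded $\Oc_\Sc$-algebras $\SSym(\SSym^r\M)\twoheadrightarrow \Ac$, and $\Z$-homogeneous localization (Proposition \ref{prop:mder}) converts this into a closed immersion $\SProj_{\Oc_\Sc}\Ac \hookrightarrow \Ps(\SSym^r\M)$. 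On the other hand, the super analogue of the classical Veronese identification should give a canonical isomorphism $\Ps(\M)=\SProj(\SSym\M)\simeq \SProj\Ac$ under which $\Oc_{\SProj\Ac}(1)$ corresponds to $\Oc_{\Ps(\M)}(r)$. Composing these two yields $v_r$ with the required property.

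The main step to verify is the super Veronese identification $\SProj(\SSym\M)\simeq \SProj\Ac$ together with the matching $\Oc(1)\leftrightarrow \Oc(r)$ of tautological line bundles. Working locally on $\Sc$ reduces the question to a bigraded $\As$-algebra computation: on a basic affine open $D_+(\xi)$ cut out by a $\Z$-degree-one even element $\xi$ of $\SSym\M$, the $\Z$-degree-zero part of $(\SSym\M)_\xi$ agrees tautologically with the $\Z$-degree-zero part of the localization of $\Ac$ at $\xi^r$ (which is of degree one in the Veronese regrading), so the two structure sheaves glue to the same $\Oc$. The odd coordinates $\eta_J$ enter this computation only by riding along in the $\Z_2$-grading, so the classical argument (EGA II, 2.4.7) carries over with no essential modification. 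The identification $\Oc_{\SProj\Ac}(1)\simeq \Oc_{\Ps(\M)}(r)$ is then formal from Proposition \ref{prop:mder}(2), since the regrading of $\Ac$ relative to $\SSym\M$ is precisely a $\Z$-degree shift by $r$. This is the only part of the argument that requires care beyond what is available in the classical setting, and it is routine.
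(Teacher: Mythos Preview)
Your argument is correct and follows essentially the same approach as the paper: both construct the Veronese embedding $\Ps(\M)\hookrightarrow \Ps(\SSym^r\M)$ and compose with the given closed immersion $\Xcal\hookrightarrow \Ps(\M)$. The paper phrases the construction via the surjection $\pi^\ast\SSym^r(\M)\to \Oc_{\Ps(\M)}(r)$ (obtained by applying $\SSym^r$ to the tautological surjection $\pi^\ast\M\to\Oc_{\Ps(\M)}(1)$), while you produce the same morphism through the $r$-th Veronese subalgebra $\Ac$ and the identification $\SProj\Ac\simeq\SProj(\SSym\M)$; your route has the minor advantage of making the closed-immersion property explicit via the algebra surjection $\SSym(\SSym^r\M)\twoheadrightarrow\Ac$, which the paper leaves implicit.
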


\begin{proof} This corresponds to the natural embedding of $\Ps(\M)$ into $\Ps(\SSym^r(\M))$ associated with the surjection $\pi^*\SSym^r(\M)\to \Oc_{\Ps(\M)}(r)$. 
\end{proof}

\begin{defin}\label{def:superprojective}
A morphism $f\colon\Xcal \to \Sc$ is called strongly superprojective if $f$ factors through a closed immersion $\Xcal \hookrightarrow \widetilde\Ps(\Ec)$ for a projective superbundle 
$\widetilde\Ps(\Ec)\to \Sc$ associated with a locally free sheaf $\Ec$ on $\Sc$.
\end{defin}

In the case when $\Sc$ is affine (remember that we are assuming the noetherian condition) or superprojective over a graded-commutative ring ($\Z$ or a field $k$, for instance), then every coherent sheaf is the quotient of a free sheaf (Proposition \ref{prop:mder2}), and thus, strong superprojectivity is equivalent to superprojectivity in that case.
Also, as in the classical case, a flat superprojective morphism over a noetherian base is strongly superprojective (see Corollary \ref{cor:strongly-superproj}).

Let us say that a line bundle (of rank $(1,0)$) {$\Lcl$} over $\Xcal$ is \emph{strongly relatively ample over } $\Sc$ 
if there exists $n>0$, 
a locally free sheaf $\Ec$ over $\Sc$ and a closed {immersion} $\phi\colon\Xcal\hookrightarrow \Ps(\Ec)$ over {$\Sc$},
such that {$\Lcl^n\simeq\phi^\ast\Oc_{\Xcal/\Sc}(1)$}. One has the following useful criterion of strong superprojectivity of $\Xcal\to \Sc$
(extending a criterion for smooth families in \cite{LePoWel90}). Let $X\to S$ denote the
corresponding morphism between the bosonizations.

\begin{prop}\label{prop:strong-superproj} \cite[Prop.\ A.2]{FKP20}
Let $f\colon \Xcal\to \Sc$ be a flat morphism of noetherian superschemes. 
If a line bundle {$\Lcl$} on $\Xcal$ is such that $\rest{\Lcl}{X}$ is strongly relatively ample over $S$ then {$\Lcl$} is strongly relatively ample over $\Sc$.\qed 
\end{prop}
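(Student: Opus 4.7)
The plan is to build the required embedding via $\Ec := f_{\ast}\Lcl^m$ for $m\gg 0$, and to show that the adjunction surjection $f^{\ast}\Ec\twoheadrightarrow\Lcl^m$ classifies a closed immersion $\psi\colon\Xcal\hookrightarrow\Ps_{\Sc}(\Ec)$ with $\psi^{\ast}\Oc(1)\simeq\Lcl^m$. Since the statement is local on $\Sc$, I may assume $\Sc$ is affine; after replacing $\Lcl$ by a sufficiently high power I may assume $\rest{\Lcl}{X}\simeq\phi_0^{\ast}\Oc_{\Ps_S(E)}(1)$ for some closed immersion $\phi_0\colon X\hookrightarrow\Ps_S(E)$ over $S$. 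Write $f_0\colon X\to S$ for the bosonic reduction of $f$.

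The first step is to prove that for $m\gg 0$ the higher pushforwards $R^if_{\ast}\Lcl^m$ vanish for $i>0$, that $\Ec$ is locally free on $\Sc$, and that its formation commutes with base change. The key tool is the (finite) filtration of $\Oc_{\Xcal}$ by powers of the odd ideal $\Jc_{\Xcal}$, whose graded pieces $\Jc_{\Xcal}^k/\Jc_{\Xcal}^{k+1}$ are coherent $\Oc_X$-modules; tensoring with $\Lcl^m$ and invoking classical Serre vanishing for the ample line bundle $\rest{\Lcl}{X}$ on $X/S$ produces the higher vanishing via the long exact cohomology sequences, and then the super cohomology base change theorem (Theorem \ref{thm:cohombasechange}), Grauert's theorem (Theorem \ref{thm:grauert}), and the flatness of $f$ yield both the local freeness of $\Ec$ and the base change property. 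The adjunction $\alpha\colon f^{\ast}\Ec\to\Lcl^m$ is then surjective: its bosonic restriction is the classical surjection $f_0^{\ast}(f_0)_{\ast}(\rest{\Lcl^m}{X})\twoheadrightarrow\rest{\Lcl^m}{X}$ coming from very ampleness, and super Nakayama upgrades this (the cokernel is coherent on $\Xcal$ with vanishing bosonic reduction, hence vanishes by nilpotence of $\Jc_{\Xcal}$). Thus $\alpha$ classifies a proper morphism $\psi\colon\Xcal\to\Ps_{\Sc}(\Ec)$ with $\psi^{\ast}\Oc(1)\simeq\Lcl^m$.

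The main obstacle is proving that $\psi$ is a closed immersion. Its bosonic reduction is a closed immersion for $m$ large (since it factors through $\phi_0$ after a Veronese), but the elementary example $\Spec k[\theta]\to\Spec k$ shows that being a closed immersion on the bosonic reduction does not, by itself, imply being one at the super level. I would conclude via the fibrewise criterion: using the flatness of $f$, reduce to showing each fibre $\psi_s\colon\Xcal_s\to\Ps_{\kappa(s)}(\Ec_s)$ is a closed immersion, and then upgrade to $\Sc$ by applying super Nakayama stalk-wise to the cokernel of $\Oc_{\Ps_{\Sc}(\Ec)}\to\psi_{\ast}\Oc_{\Xcal}$ (using cohomology base change to identify its restrictions to fibres). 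The base case over a field $\kappa(s)$ is the super embedding theorem: if $\Xcal_s$ is a proper superscheme over $\kappa(s)$ whose bosonic reduction carries a very ample line bundle, then a sufficient power of $\Lcl_s$ is very ample; this is checked on affine charts of the target super-projective space, with classical very ampleness controlling the even directions and the odd-ideal filtration together with super Nakayama providing, for $m$ large, enough odd global sections of $\Lcl_s^m$ to generate the odd part of the structure sheaf.
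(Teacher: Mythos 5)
You should first note that the paper does not actually prove this proposition: it is quoted with a pointer to \cite[Prop.\ A.2]{FKP20}, so your proposal can only be measured against the standard argument given there. Your overall architecture is essentially that argument: filter by powers of the odd ideal $\Jc$, get uniform relative Serre vanishing for $\Jc^k\Lcl^m$ from classical vanishing for the ample $\rest{\Lcl}{X}$ on $X/S$, deduce that $\Ec=f_\ast\Lcl^m$ is locally free with base change, upgrade surjectivity of the evaluation $\alpha\colon f^\ast\Ec\to\Lcl^m$ by super Nakayama, and then show $\psi\colon\Xcal\to\Ps(\Ec)$ is a closed immersion, correctly observing that the bosonic statement alone does not suffice. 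Up to that point the proof is sound (modulo two slips noted below).

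The genuine gap is a quantifier mismatch in the last step. You fix $m$ once and for all (it determines $\Ec$ and $\psi$), reduce via finiteness of $\psi$ and stalkwise Nakayama to showing that every fibre map $\psi_s\colon\Xcal_s\to\Ps_{\kappa(s)}(\Ec_s)$ is a closed immersion \emph{for that same $m$} --- but your field-level input only says ``a sufficient power of $\Lcl_s$ is very ample,'' i.e.\ it produces an exponent $m(s)$ depending on $s$. Nothing in the sketch converts the fibre-dependent exponents into a single $m$ valid for all $s\in S$, and this uniformity is exactly the crux. Two repairs are available. (a) Avoid fibres entirely: since $\Jc$ is nilpotent, surjectivity of $\Oc_{\Ps(\Ec)}\to\psi_\ast\Oc_\Xcal$ follows once you surject onto $\Oc_X$ (your bosonic closed immersion) and hit $\Jc/\Jc^2$ modulo $\Jc^2$; both come down to relative statements over the \emph{single} noetherian family $X/S$, namely surjectivity of $f_\ast(\Jc\Lcl^m)\to f_{0\ast}\bigl((\Jc/\Jc^2)\otimes\rest{\Lcl^m}{X}\bigr)$ (Serre vanishing of $R^1f_\ast(\Jc^2\Lcl^m)$) and relative global generation of $(\Jc/\Jc^2)\otimes\rest{\Lcl^m}{X}$, which are uniform in $m\gg0$; this is the route taken in \cite{FKP20}. (b) Keep the fibrewise reduction, but add an openness statement --- the locus of $s$ where $\psi_{m,s}$ is a closed immersion is open (by the support of the cokernel of $\Oc_{\Ps(\Ec)}\to\psi_\ast\Oc_\Xcal$ and properness, cf.\ the mechanism of Lemma \ref{lem:openiso}) --- together with noetherian quasi-compactness and the standard monotonicity trick (if $\Lcl_s^m$ is very ample and $\Lcl_s^k$ is globally generated for all $k\ge k_0$, then $\Lcl_s^{m+k}$ is very ample) to extract one exponent. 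Two smaller slips: invoking Grauert (Theorem \ref{thm:grauert}) is illegitimate since $S$ is not assumed reduced --- the correct tool, which needs no reducedness, is Proposition \ref{prop:locfree}(1) applied to $\Lcl^m$ (flat over $\Sc$ because $f$ is flat); and the bosonic restriction of $\alpha$ is not literally the classical evaluation $f_0^\ast f_{0\ast}(\rest{\Lcl^m}{X})\to\rest{\Lcl^m}{X}$, but factors through it via the restriction-of-sections map $\rest{(f_\ast\Lcl^m)}{S}\to f_{0\ast}(\rest{\Lcl^m}{X})$, whose surjectivity requires $R^1f_\ast(\Jc\Lcl^m)=0$ --- available from your own filtration argument, so this is a phrasing slip rather than an error, but it must be said.
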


\subsection{Projectivity of some partial flag superschemes}
\label{ss:flag}

For further use we note that some flag superschemes, contrary to supergrassmannians,  are projective. 
Let $\Ec$ be a locally free sheaf of rank $(m,n)$ on a superscheme $\Sc$. 
For $a\le m, b\le n$, let us consider the relative partial flag superscheme $$\mathbb F:=\SFlag((a,0),(a,b);\Ec)$$
(see Proposition \ref{prop:flag}).

\begin{prop}\label{prop:superflag}
$\mathbb F$ is strongly superprojective over $\Sc$.
\end{prop}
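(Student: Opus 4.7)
The plan is to apply Proposition \ref{prop:strong-superproj}: it suffices to exhibit an even line bundle $\Lcl$ on $\mathbb F$ such that $\mathbb F\to\Sc$ is flat and $\rest{\Lcl}{\mathbb F_\bos}$ is strongly relatively ample over $\Sc_\bos$. For the flatness, I would factor the structure morphism as an iterated supergrassmannian bundle. Sending $(\F_1\subset\F_2)\mapsto \F_1$ defines a morphism $\mathbb F\to\Sgrass((a,0),\Ec)$ whose fibre over $\F_1$ is the supergrassmannian of rank $(0,b)$ subbundles of $\Ec/\F_1$; letting $\pi\colon \Sgrass((a,0),\Ec)\to\Sc$ and $\U\subset \pi^\ast\Ec$ denote the structure morphism and the tautological rank $(a,0)$ subbundle, this yields a factorization
$$
\mathbb F \;\simeq\; \Sgrass((0,b),\pi^\ast\Ec/\U) \;\longrightarrow\; \Sgrass((a,0),\Ec) \;\longrightarrow\; \Sc,
$$
in which each arrow is a proper smooth supergrassmannian bundle by Proposition \ref{prop:supergrass} together with the big-cell description of supergrassmannians \cite{Ma88}. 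Hence $\mathbb F\to\Sc$ is smooth and in particular flat.

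Next I would identify the bosonization. Because rank $(a,0)$ subbundles are locally generated by even sections and rank $(0,b)$ subbundles by odd sections, reducing the universal flag $\F_1\subset\F_2\subset p^\ast\Ec$ on $\mathbb F$ modulo the ideal of odd elements decouples the two components: the bosonization of $\F_1$ lies in the even part $(\Ec_\bos)_+$, while that of $\F_2/\F_1$ lies in the odd part $(\Ec_\bos)_-$. Consequently
$$
\mathbb F_\bos \;\simeq\; \grass\bigl(a,(\Ec_\bos)_+\bigr)\times_{\Sc_\bos}\grass\bigl(b,(\Ec_\bos)_-\bigr),
$$
which is strongly projective over $\Sc_\bos$ via the Segre composition of the two classical Pl\"ucker embeddings.

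For the line bundle, I would set
$$
\Lcl \;:=\; (\det \F_1)^{-N}\otimes \Ber(\F_2/\F_1)^{\varepsilon N}
$$
for a sign $\varepsilon\in\{\pm 1\}$ and a positive even integer $N$. The first factor is an even line bundle since $\F_1$ is purely even of rank $a$; the second is even because $N$ is even and $\F_2/\F_1$ has rank $(0,b)$, so $\Ber^{N}$ is always of rank $(1,0)$ regardless of the parity of $b$. Under bosonization these factors reduce to even powers of $\det\Ucal_1$ and $\det\Ucal_2$, where $\Ucal_i$ are the tautological subbundles of the two factors of $\mathbb F_\bos$, and $\varepsilon$ is chosen so that both reductions are positive powers of the ample Pl\"ucker line bundles. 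Thus $\rest{\Lcl}{\mathbb F_\bos}$ is a positive power of the Segre-Pl\"ucker line bundle on the product and is strongly relatively ample over $\Sc_\bos$; Proposition \ref{prop:strong-superproj} then yields strong relative ampleness of $\Lcl$ over $\Sc$. The main obstacle is the careful accounting of the bosonization --- in particular, checking that $\F_1$ and $\F_2/\F_1$ bosonize independently into the even and odd parts of $\Ec_\bos$, and that the Berezinian of a purely odd bundle of rank $(0,b)$ reduces modulo the nilpotent ideal to an appropriate twist of the classical determinant of the underlying rank $b$ bundle --- after which the ampleness on the product reduces to classical Pl\"ucker-Segre.
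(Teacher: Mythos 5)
Your proposal is correct and takes essentially the same route as the paper: reduction to Proposition \ref{prop:strong-superproj}, the identification $\mathbb F_{\bos}\simeq \grass(a,E_+)\times_S\grass(b,E_-)$, and an even Berezinian-type line bundle on $\mathbb F$ extending the product of the inverse Pl\"ucker determinant bundles (the paper takes $\Ber^{-1}\Ec(a,0)\otimes \Ber^{-1}\Pi(\Ec(a,b)/\Ec(a,0))$, ensuring evenness by a parity flip $\Pi$ rather than by your even power $N$). Your hedged sign resolves to $\varepsilon=+1$, since the Berezinian of a purely odd rank $(0,b)$ bundle is the inverse determinant of its parity flip, so $\Ber(\F_2/\F_1)$ restricts on $\mathbb F_{\bos}$ to $p_2^\ast\det^{-1}(E_{-,b})=p_2^\ast\Lcl_-$; and your explicit flatness verification via the fibration $\mathbb F\to\Sgrass((a,0),\Ec)\to\Sc$ is a sound addition that the paper leaves implicit (the analogous fibration, in the other direction, appears in its proof of Corollary \ref{cor:grassproj}).
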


\begin{proof}
It is easy to see that 
$$\mathbb F_{\bos}\simeq \grass(a;  E_+)\times_{S}\grass(b,  E_-),$$
where $E_+=(\rest{\Ec}{S})_+$ and $E_-=(\rest{\Ec}{S})_-$ are the components of the restrictions of $\Ec$ to the underlying scheme $S$ and
$\grass(a;  E_+)$ and $\grass(b,  E_-)$ are their relative grassmannians.
By Proposition \ref{prop:strong-superproj}, it is enough to prove that there exists an even  line bundle on $\mathbb F$   whose restriction
to the underlying scheme $\mathbb F_{\bos}$ is strongly relatively ample. 

Denote by $\pi\colon \mathbb F\to \Sc$ the natural projection. Let
$$
\Ec(a,0)\subset \Ec(a,b)\subset \pi^\ast \Ec$$
be the tautological subbundles over $\mathbb F$ and let
$$E_{+,a}\subset  E_+\,, \quad E_{-,b}\subset  E_-$$
be the tautological bundles on the respective bosonic grassmannians (see Remark \ref{rem:subspacesgrass}).
Then we have natural isomorphisms
$$
\Ec(a,0)_{\vert \mathbb F_{\bos}}\simeq p_1^\ast E_{+,a}\,, \quad
\Pi(\Ec(a,b)/\Ec(a,0))_{\vert \mathbb F_{\bos}}\simeq p_2^\ast E_{-,b}
$$
where $p_1$ and $p_2$ are the projections of $\mathbb F_{\bos}$ onto its factors.
But it is well known that the line bundles
$$
\Lcl_+:=\det^{-1}(E_{+,a}) \ \text{ and } \Lcl_-=\det^{-1}(E_{-,b})
$$
are strongly relatively ample on $\grass(a;E_+)$ and $\grass(b;E_-)$, respectively. 
Hence, $p_1^*\Lcl_+\otimes p_2^\ast\Lcl_-$ is strongly relatively ample on $\mathbb F_{\bos}$. It remains to observe that this extends to
the line bundle
$$\Ber^{-1}\Ec(a,0)\otimes \Ber^{-1}\Pi(\Ec(a,b))/\Ec(a,0))
$$
on $\mathbb F$.
\end{proof}

\begin{corol} \label{cor:grassproj}
Let $\Xcal$ be a closed sub-superscheme 
of a relative supergrassmannian over a base superscheme $\Sc$. Then there exists a smooth morphism
$\widetilde{\Xcal}\to \Xcal$ of relative dimension $(0,n)$ (in particular this morphism is finite) such that $\widetilde{\Xcal}$ is strongly superprojective 
over $\Sc$.
\end{corol}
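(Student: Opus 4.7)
My plan is to take for $\widetilde{\Xcal}$ the pullback to $\Xcal$ of a natural partial superflag variety lying over the ambient supergrassmannian. Suppose the given closed immersion is $\Xcal\hookrightarrow \Sgrass((c,d),\Ec)$, with $\Ec$ a locally free sheaf of rank $(m,n)$ on $\Sc$. If $c=0$ or $d=0$, Proposition \ref{prop:superflag} (applied with a degenerate first step) already shows that $\Sgrass((c,d),\Ec)$ is itself strongly superprojective over $\Sc$, so one takes $\widetilde{\Xcal}=\Xcal$ with the identity map, which has relative dimension $(0,0)$. In the generic case $c,d>0$, I consider the partial superflag
$$
\mathbb F:=\SFlag\bigl((c,0),(c,d);\Ec\bigr)\xrightarrow{\ \pi\ }\Sgrass((c,d),\Ec),
$$
where $\pi$ forgets the first step of the flag, and set $\widetilde{\Xcal}:=\mathbb F\times_{\Sgrass((c,d),\Ec)}\Xcal$.

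By Proposition \ref{prop:superflag} with $a=c$ and $b=d$, $\mathbb F$ is strongly superprojective over $\Sc$. The closed immersion $\Xcal\hookrightarrow\Sgrass((c,d),\Ec)$ pulls back to a closed immersion $\widetilde{\Xcal}\hookrightarrow\mathbb F$, and strong superprojectivity over $\Sc$ is preserved by closed sub-superschemes (one composes closed immersions with the immersion of $\mathbb F$ into a projective superbundle over $\Sc$). Hence $\widetilde{\Xcal}$ is strongly superprojective over $\Sc$.

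It then remains to show that $\widetilde{\Xcal}\to\Xcal$ is smooth of relative dimension $(0,cd)$. This map is a base change of $\pi$, so it suffices to establish the property for $\pi$ itself. Intrinsically, $\pi$ realizes $\mathbb F$ as the relative supergrassmannian over $\Sgrass((c,d),\Ec)$ of rank $(c,0)$ sub-bundles of the tautological rank $(c,d)$ sub-bundle $\Ec(c,d)$. The $\Tc$-fibre of $\pi$ over a rank $(c,d)$ sub-bundle $\F_2\subset\Ec_{\Tc}$ is therefore $\Sgrass((c,0),\F_2)$; by Proposition \ref{prop:supergrass} this is smooth over $\Tc$ of relative dimension
$$
(c\cdot 0+0\cdot d,\; c\cdot d+0\cdot 0)=(0,cd).
$$
Local trivializations of $\Ec(c,d)$ turn $\pi$ locally into a projection from a product $\Sgrass((c,0),\Oc^{c,d})\times (\text{base})$, from which smoothness of the specified relative dimension follows globally.

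Finally, finiteness is automatic: the bosonic fibre of $\pi$ is $\grass(c,c)=\mathrm{pt}$, so locally $\pi$ takes the shape $\SSpec R[\theta_1,\ldots,\theta_{cd}]\to\SSpec R$, a free extension of rank $2^{cd}$. The most delicate step is verifying that $\pi$ is smooth globally rather than merely fiberwise; this reduces to unpacking Manin's big-cell construction of supergrassmannians to confirm that $\pi$ admits local trivializations by super-affine spaces of purely odd dimension $cd$. Once this is in hand, setting $n=cd$ yields the statement.
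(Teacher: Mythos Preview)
Your proof is correct and follows essentially the same approach as the paper: both pull back the partial superflag $\mathbb F=\SFlag((c,0),(c,d);\Ec)$ along the forgetful map to the supergrassmannian, invoke Proposition~\ref{prop:superflag} for strong superprojectivity, and identify the forgetful map with the relative supergrassmannian of rank $(c,0)$ subbundles of the tautological bundle to obtain smoothness of relative dimension $(0,cd)$. Your version is slightly more detailed (handling the degenerate cases, spelling out the base change to $\Xcal$, and noting that smoothness of relative supergrassmannians ultimately rests on Manin's big-cell description), but the argument is the same.
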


\noindent (The notion of smooth superscheme morphism is given in Section \ref{s:smoothmorphisms}.)
\begin{proof} 
We just have to prove that for the relative partial flag superscheme $\mathbb F$ considered in Proposition \ref{prop:superflag} the natural projection
$$\mathbb F\to \Sgrass((a,b);\Ec)$$
is smooth of relative dimension $(0,n)$. But $\mathbb F$ can be identified with
the relative supergrassmannian of  rank $(a,0)$  subbundles  associated with the tautological subbundle $\Wc$ of rank $(a,b)$ on $\Sgrass((a,b),\Ec)$, so
we see that $\mathbb F$ is smooth of relative dimension $(0,ab)$ over $\Sgrass((a,b),\Ec)$.
\end{proof}

\subsection{Cohomology of even line bundles on the projective superspace}\label{ss:superproj}

Let $\As$ be a noetherian  superring and $\Bs= \As[x_0\,\dots,x_m,\theta_1,\dots, \theta_n]$. {In this Subsection we   denote by $\Xcal$ the  projective $(m,n)$-superspace $\Ps_\As^{m,n}=\SProj_\As \Bs$ over $\As$, so that the underlying scheme $X$ is the projective space $\Ps^m_A$.}

If we denote by $\Bs'$ the algebra $$\Bs'=\Bs(m-1,n)= \As[x_0,\dots,x_{m-1},\theta_1,\dots, \theta_n]$$  there is an exact sequence of bigraded $\Bs$-modules
$$
0 \to \Bs[-1]\xrightarrow{\cdot x_m}\Bs \to \Bs'\to 0
$$
which induces a closed immersion
$$
\delta\colon \Xcal':=\SProj_\As \Bs' \hookrightarrow \Xcal
$$ 
of the projective $(m-1,n)$-superspace $\Xcal'\simeq \Ps_\As^{m-1,n}$ as the hyperplane defined by the homogeneous ideal $(x_m)$.
Moreover, for every integer number $r$, there is an exact sequence of bigraded $\Bs$-modules
$$
0 \to \Bs[r-1]\xrightarrow{\cdot x_m}\Bs[r] \to \Bs'[r]\to 0
$$
which induces an exact sequence of sheaves
\begin{equation}\label{eq:odenseq}
0 \to \Oc(r-1) \xrightarrow{\cdot x_m} \Oc (r) \to \Oc'(r) \to 0\,,
\end{equation}
where we have written $\Oc=\Oc_\Xcal$ and $\Oc'=\delta_\ast \Oc_{\Xcal'}$.

\begin{lemma}\label{lem:odensec} {\ }
\begin{enumerate}
\item
If $m>0$, for every integer $r$ the natural morphism $\Bs_r\to H^0(X,\Oc(r))=\Gamma(X,\Oc(r))$ is an isomorphism. So, $H^0(X,\Oc(r))=0$ for $r<0$.
\item If $m=0$,  then  $H^0(X,\Oc(r))= x_0^r \cdot\bigwedge_\As E_\theta$ for any integer $r$, where $E_\theta$ is the free $\As$-module generated by $(\theta_1/x_0,\dots,\theta_n/x_0)$.
\item The sequence of global sections of 
{Equation \eqref{eq:odenseq}} is exact when either 
\begin{itemize}
\item $m>1$ and $r\ge 0$, or
\item $m=1$ and $r\ge n$. 
\end{itemize}

For $m=1$ and $0\leq r<n$, the image of $\Bs_r=H^0(X, \Oc(r)) \to H^0(X,\Oc'(r))$ is the free submodule  
$\bigoplus_{p=0}^r \bigwedge_\As^p E_\theta$.
\end{enumerate}
\end{lemma}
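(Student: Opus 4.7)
\emph{Plan.} The plan is to exploit the splitness of $\Xcal=\Ps_\As^{m,n}$ to decompose each line bundle $\Oc_\Xcal(r)$, as a sheaf on the underlying scheme $X=\Ps_A^m$, into a direct sum of classical shifts, so that every cohomology computation reduces to the classical twisted cohomology of projective $m$-space over $\As$. Starting from the description of sections as homogeneous localizations (Equation~\eqref{eq:homloc}), any section of $\Oc_\Xcal(r)$ on $U_i=\{x_i\ne 0\}$ can be written uniquely as $\sum_I P_I(x)\theta^I/x_i^r$ with $P_I$ bosonic of $\Z$-degree $r-|I|$. This produces a decomposition
\[
\Oc_\Xcal(r)\;\simeq\;\bigoplus_{I\subseteq\{1,\dots,n\}}\Oc_{\Ps_\As^m}(r-|I|)\cdot\theta^I,
\]
which is globally well defined because the transition functions on overlaps are even powers of $x_j/x_i$ and hence preserve the $\theta^I$-filtration. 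Computing \v{C}ech cohomology with the standard affine cover then gives
\[
H^i(X,\Oc_\Xcal(r))\;\simeq\;\bigoplus_{I\subseteq\{1,\dots,n\}} H^i(\Ps_\As^m,\Oc(r-|I|))
\]
for every $i\geq 0$.

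For part (1), assuming $m\geq 1$, I would plug in the classical $H^0(\Ps_\As^m,\Oc(s))=\As[x_0,\dots,x_m]_s$ for $s\geq 0$ (and $0$ for $s<0$); summing over subsets $I$ with $|I|\leq r$ reproduces precisely the $\Z$-degree $r$ part $\Bs_r$, giving both the isomorphism and the vanishing for $r<0$. For part (2) ($m=0$), $\Ps_\As^0=\SSpec\As$, so $H^0(\Oc(s))=\As\cdot x_0^s$ for every integer $s$, and summing gives $\bigoplus_I \As\cdot x_0^{r-|I|}\theta^I=x_0^r\cdot\bigwedge_\As E_\theta$.

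For part (3), I would pass to the long exact cohomology sequence of~\eqref{eq:odenseq} and reduce surjectivity of $H^0(X,\Oc(r))\to H^0(X,\Oc'(r))$ to the vanishing of $H^1(X,\Oc_\Xcal(r-1))\simeq\bigoplus_I H^1(\Ps_\As^m,\Oc(r-1-|I|))$. When $m>1$, each summand vanishes classically, so the sequence is exact for every $r\geq 0$. When $m=1$, the summand indexed by $I$ vanishes iff $r-1-|I|\geq -1$, that is iff $|I|\leq r$; the whole $H^1$ therefore vanishes precisely when $r\geq n$. In the remaining case $m=1$ and $0\leq r<n$, the image of $\Bs_r\to H^0(X',\Oc'(r))$ can be computed directly: the map substitutes $x_1\mapsto 0$, and hence its image is the $\As$-span of $\{x_0^{r-|I|}\theta^I : |I|\leq r\}$; matching against the identification of part (2), where $x_0^{r-p}\theta^I$ corresponds to $\theta^I/x_0^p\in\bigwedge^p_\As E_\theta$, this image becomes $\bigoplus_{p=0}^r\bigwedge^p_\As E_\theta$, as asserted. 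The only step that requires any real care is the first one, namely checking that the $\theta^I$-decomposition of $\Oc_\Xcal(r)$ is globally well defined and restricts correctly to the hyperplane $\Xcal'$; once that is in place, the remainder is a bookkeeping exercise in classical projective cohomology.
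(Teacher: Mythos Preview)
Your argument is correct but follows a genuinely different route from the paper. You exploit the splitting of $\Ps_\As^{m,n}$ to write $\Oc_\Xcal(r)\simeq\bigoplus_I\Oc(r-|I|)\cdot\theta^I$ as a sheaf of $\As$-modules on the topological space $X$, and then import the classical cohomology of $\Ps^m$ wholesale; in particular, part~(3) is handled via vanishing of $H^1(X,\Oc_\Xcal(r-1))\simeq\bigoplus_I H^1(\Ps^m,\Oc(r-1-|I|))$. The paper instead argues part~(3) for $m>1$ purely algebraically: by part~(1) both $H^0$ groups are identified with $\Bs_r$ and $\Bs'_r$, and the substitution $x_m\mapsto 0$ is visibly surjective in each $\Z$-degree; no cohomological input beyond $H^0$ is used. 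For $m=1$ both you and the paper compute the image by hand.

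Your approach is more systematic and, once the decomposition is in place, would also yield the subsequent Proposition~\ref{prop:cohomoden} on higher cohomology in one stroke. The paper's approach is deliberately more self-contained: it avoids invoking the classical $H^i$ computation as a black box and instead rederives everything inductively via the hyperplane sequence, in keeping with the authors' stated aim of extending Grothendieck's methods ``from scratch'' rather than reducing to the bosonic case. One small point of care in your write-up: make explicit that your summands $\Oc_{\Ps^m_\As}(s)$ are to be understood as sheaves on $\Ps^m_A$ with coefficients in $\As$ (i.e.\ sections on $U_i$ are the degree-$s$ part of $\As[x_0,\dots,x_m]_{x_i}$), so that the \v{C}ech computation with even variables $x_i$ goes through verbatim over the supercommutative base.
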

\begin{proof}
(1)  is proved as in the classical case. For (2), recall that when $m=0$, $\Xcal=\Ps_\As^{0,n}$ is the affine superscheme associated with $\bigwedge_\As E_\theta$. Then  the line bundle $\Oc(r)$ is free over $\As$ with basis $x_0^r\cdot(\theta_1/x_0,\dots,\theta_n/x_0)$ for any integer $r$ (positive or negative).

(3) The case $m>1$ and $r\ge 0$ follows directly from (1). 
When $m=1$, $\Xcal'=\Ps_\As^{0,n}$ so that $H^0(X,\Oc'(r))\simeq  x_0^r \cdot\bigwedge_\As E_\theta$. The image of   $\Bs_r=H^0(X, \Oc(r)) \to H^0(X,\Oc'(r))$ consists of the elements of the form 
\begin{align*}
P_r(x_0,\theta_1,\dots,\theta_n)&=\sum_{0\le p\le r}\sum_{i_1<\dots<i_p} a_{i_1,\dots,i_p} x_0^{r-p}\theta_{i_1}\cdot\dots\cdot\theta_{i_p}
\\
&= x_0^r \sum_{0\le p\le r}\sum_{i_1<\dots<i_p} a_{i_1,\dots,i_p} \frac{\theta_{i_1}}{x_0}\cdot\dots\cdot\frac{\theta_{i_p}}{x_0}\,.
\end{align*}
Then, the image equals $\bigoplus_{p=0}^r \bigwedge_\As^p E_\theta$ when $r<n$ and equals the total space $H^0(X,\Oc'(r))$ if $r\ge n$.

\end{proof}

Let us introduce the numbers
\begin{equation}\label{eq:dimsecoden}
\begin{aligned}
h_{(m,n)}(r)_0 &= \sum_{{\scriptsize \begin{aligned}0\le p&\le n, p \text{\ even} \\ 0\leq p&+s=r \end{aligned} }} \binom{m+s}{m}\binom np \\
h_{(m,n)}(r)_1& = \sum_{{\scriptsize\begin{aligned}0< p&\le n, p \text{\ odd} \\ 0\leq p&+s= r \end{aligned} }} \binom{m+s}{m}\binom np\,, \quad \text{and} \\
h_{(m,n)}(r)&=(h_{(m,n)}(r)_0,h_{(m,n)}(r)_1)\,.
\end{aligned}
\end{equation}
for $r\ge 0$. 
Note for future use that, for every value of the dimension $(m,n)$, $h_{(m,n)}(r)_0$ and $h_{(m,n)}(r)_1$ are \emph{polynomials in $r$ with rational coefficients}.
When $m>0$, these numbers give for each $r$ the rank of the component of $\Z$-degree $r$ of $\Bs=\Bs(m,n)$:
\begin{equation}\label{eq:dimsecoden2}
\rk_\As \Bs(m,n)_r=h_{(m,n)}(r)=(h_{(m,n)}(r)_0, h_{(m,n)}(r)_1)\,.
\end{equation}
In the case $m=0$ we get the rank of the image of the morphism $\Bs_r=H^0(X, \Oc(r)) \to H^0(X,\Oc'(r))$ (Lemma \ref{lem:odensec}):
\begin{equation}\label{eq:dimsecoden3}
\rk_\As \Ima (H^0(X, \Oc(r)) \to H^0(X,\Oc'(r))=(h_{(0,n)}(r)_0, h_{(0,n)}(r)_1)\,.
\end{equation}

Let $\Ucal=\Xcal-\Xcal '$ be the complementary open sub-superscheme of the hyperplane $x_m=0$. The injective morphism  $ \Oc \xrightarrow{\cdot x_m^r} \Oc(r)$ (Equation \eqref{eq:odenseq})  induces an isomorphism $\Oc_\Ucal\iso \Oc(r)_\Ucal$. Then we have injective morphisms $0\to \Oc(r) \to \iota_\ast \Oc_\Ucal$ where $\iota\colon \Ucal \to \Xcal$ is the open immersion of $\Ucal$ into $\Xcal$, consisting of the composition of the product by $x_m^{-r}$ and the restriction to $\Ucal$. Moreover, one has a commutative diagram
$$
\xymatrix{ \Oc(r) \ar@{^{(}->}[r] &\iota_\ast \Oc_\Ucal\\
\Oc(r-1)\ar[u]^{\cdot x_m} \ar@{^{(}->}[ur]&
}
$$
This gives rise to an injective morphism $\varinjlim_{r}\Oc(r) \to \iota_\ast\Oc_\Ucal$. As in the classical case one proves:
\begin{lemma}\label{lem:limoden} The above morphism induces an isomorphism
$\varinjlim_{r}\Oc(r) \iso \iota_\ast\Oc_\Ucal$.
\qed
\end{lemma}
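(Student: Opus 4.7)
The plan is to verify that the morphism $\varinjlim_{r}\Oc(r)\to \iota_\ast\Oc_\Ucal$ is an isomorphism by checking it on sections over the standard affine cover $\{U_i=D_+(x_i)\}_{0\le i\le m}$ of $\Xcal$. Since each $U_i$ is affine (hence quasi-compact) and filtered direct limits of sheaves on a Noetherian space commute with sections on quasi-compact opens, taking sections over $U_i$ commutes with the colimit on the left; the right hand side is quasi-coherent because $\iota$ is the inclusion of a quasi-compact open into a Noetherian superscheme. It is therefore enough to show that for every $i$ the induced map on sections is bijective. Injectivity is already provided by the discussion preceding the statement, so only surjectivity requires work.

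Using the identification of $\Oc(r)$ with the sheaf of $\Z$-homogeneous localizations of $\Bs(r)$, the sections on $U_i$ are given by
\[
\Gamma(U_i,\Oc(r))=(\Bs_{x_i})_r,
\]
i.e.\ the $\Z$-degree $r$ component of the localization of $\Bs$ at $x_i$, and the transition map $\cdot x_m\colon\Oc(r-1)\to\Oc(r)$ is multiplication by $x_m$. The map to $\iota_\ast\Oc_\Ucal$ sends a homogeneous element $y\in(\Bs_{x_i})_r$ to $y/x_m^r$, viewed as a $\Z$-degree $0$ element of the appropriate localization.

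Suppose first that $i<m$. Then $U_i\cap\Ucal=D_+(x_i x_m)$, so
\[
\Gamma(U_i,\iota_\ast\Oc_\Ucal)=(\Bs_{x_i x_m})_0.
\]
Any element of $(\Bs_{x_i x_m})_0$ can be written as $P/(x_i^{c}x_m^{r})$ with $P$ bihomogeneous of $\Z$-degree $c+r$; such an element is the image of $P/x_i^{c}\in(\Bs_{x_i})_r$, which proves surjectivity. For $i=m$, the open $U_m$ is contained in $\Ucal$, so $\Gamma(U_m,\iota_\ast\Oc_\Ucal)=(\Bs_{x_m})_0$; since $x_m$ is invertible in $\Bs_{x_m}$, each transition map $\cdot x_m\colon (\Bs_{x_m})_{r-1}\to(\Bs_{x_m})_r$ is an isomorphism, and the direct limit is identified with $(\Bs_{x_m})_0$ by the map $y\mapsto y/x_m^r$. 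In both cases the map is bijective, which gives the result. The only potentially delicate point is the commutation of sections with the filtered colimit, but this is immediate on a Noetherian space thanks to the quasi-compactness of each $U_i$; the rest is straightforward super-analogue of the classical localization computation, with the $\Z_2$-grading playing no role beyond being preserved throughout.
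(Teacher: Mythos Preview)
Your proof is correct and is precisely the classical localization argument the paper alludes to (the paper itself omits the proof, writing only ``as in the classical case''). The identification of sections $\Gamma(U_i,\Oc(r))=(\Bs_{x_i})_r$ and $\Gamma(U_i,\iota_\ast\Oc_\Ucal)=(\Bs_{x_ix_m})_0$ (resp.\ $(\Bs_{x_m})_0$ for $i=m$), together with the explicit description of the transition maps, makes the surjectivity check routine; your remark on commuting filtered colimits with sections over the noetherian opens $U_i$ is exactly what is needed to reduce to this computation.
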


We now get (see also \cite{CaNo18}):
\begin{prop}[Cohomology of the sheaves $\Oc(r)$]\label{prop:cohomoden}   
\begin{enumerate} 
\item\label{1} If $r\ge n-1$, the sheaf $\Oc(r)$ is acyclic and  $H^0(X,\Oc(r))$ is a free $\As$-module of rank $h_{(m,n)}(r)$ (See Equation \eqref{eq:dimsecoden}).
\item If $0\le r < n-1$, then $H^i(X,\Oc(r))=0$ for $i\neq 0,m$, and these groups are free $\As$-modules.
\item
If $r> 0$, $H^i(X,\Oc(-r))=0$ for $i\neq m$ and $H^m(X,\Oc(-r))$ is a free $\As$-module.
\end{enumerate}
\end{prop}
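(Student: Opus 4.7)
The plan is to proceed by induction on the bosonic dimension $m$, using as engine the defining short exact sequence \eqref{eq:odenseq}, its long cohomology sequence, and the affine-limit stabilization provided by Lemma \ref{lem:limoden} together with the fact that $\Ucal=\Xcal\setminus V(x_m)\cong\SSpec\As[x_0/x_m,\dots,x_{m-1}/x_m,\theta_1/x_m,\dots,\theta_n/x_m]$ is affine. The base case $m=0$ is immediate: the underlying space of $\Ps_\As^{0,n}$ is a point, so every quasi-coherent sheaf is acyclic, and Lemma \ref{lem:odensec}(2) identifies $H^0(\Oc(r))$ with the free $\As$-module $x_0^r\bigwedge_\As E_\theta$; all three parts hold in this case.

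For the inductive step, assume the statement on $\Xcal'=\Ps_\As^{m-1,n}$. The sequence \eqref{eq:odenseq} yields
$$\cdots\to H^{i-1}(\Oc'(r))\to H^i(\Oc(r-1))\xrightarrow{\cdot x_m}H^i(\Oc(r))\to H^i(\Oc'(r))\to\cdots.$$
For part (1), fix $r\ge n-1$. By induction $H^i(\Oc'(s))=0$ for all $i\ge 1$ and $s\ge n-1$, so multiplication by $x_m$ gives isomorphisms $H^i(\Oc(r-1))\iso H^i(\Oc(r))$ for $i\ge 2$ and, combined with the surjectivity $H^0(\Oc(r))\twoheadrightarrow H^0(\Oc'(r))$ supplied by Lemma \ref{lem:odensec}(3) in the relevant ranges ($m\ge 2$, $r\ge 0$ or $m=1$, $r\ge n$), injections for $i=1$. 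Since $\Ucal$ is affine, $H^i(\Ucal,\Oc_\Ucal)=0$ for $i\ge 1$; Lemma \ref{lem:limoden} and the fact that cohomology commutes with filtered colimits on the noetherian space $X$ then yield $\varinjlim_r H^i(\Xcal,\Oc(r))=0$ for $i\ge 1$. Because the eventual transition maps are injective (and are isomorphisms for $i\ge 2$), every term of the stabilizing system vanishes, so $H^i(\Oc(r))=0$ for $r\ge n-1$ and $i\ge 1$. Freeness of $H^0(\Oc(r))$ and its rank $h_{(m,n)}(r)$ are obtained inductively from the resulting short exact sequence
$0\to H^0(\Oc(r-1))\to H^0(\Oc(r))\to H^0(\Oc'(r))\to 0$
together with the recursion $h_{(m,n)}(r)=h_{(m,n)}(r-1)+h_{(m-1,n)}(r)$, which reflects the splitting $\Bs(m,n)_r=\Bs(m-1,n)_r\oplus x_m\cdot\Bs(m,n)_{r-1}$.

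Parts (2) and (3) follow by the same machinery in their respective ranges. For (2), with $0\le r<n-1$, the intermediate vanishing $H^i(\Oc(r))=0$ for $0<i<m$ reduces again to the stabilized colimit being zero; freeness of $H^0$ and $H^m$ is extracted by tracking the explicit free submodule and cokernel described in Lemma \ref{lem:odensec}(3), using the fact that an extension of free $\As$-modules by a free $\As$-module is free. For (3), with $r>0$, Lemma \ref{lem:odensec}(1) gives $H^0(\Oc(-r))=0$, the intermediate vanishing $H^i(\Oc(-r))=0$ for $0<i<m$ is immediate from induction, and $H^m(\Oc(-r))$ is computed recursively from the short exact sequences
$$0\to H^{m-1}(\Oc'(-r))\to H^m(\Oc(-r-1))\twoheadrightarrow H^m(\Oc(-r))\to 0$$
extracted from the long exact sequence, propagating freeness downwards from the asymptotic regime.

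I expect the main obstacle to be the boundary case $m=1$, where the map $H^0(\Oc(r))\to H^0(\Oc'(r))$ is only partially surjective for $0\le r<n$, with image precisely $\bigoplus_{p=0}^r\bigwedge_\As^p E_\theta$ by Lemma \ref{lem:odensec}(3). In this range the long exact sequence must be cut at the corresponding cokernel, and one has to account carefully for how the residual free piece $\bigoplus_{p=r+1}^n\bigwedge_\As^p E_\theta$ contributes to $H^1(\Oc(r-1))$, both to preserve freeness through the sequence and to recover the precise super-rank $h_{(1,n)}(r)$.
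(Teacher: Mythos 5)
Your proof is correct and follows essentially the same route as the paper's: induction on $m$ driven by the twisting sequence \eqref{eq:odenseq}, exactness of global sections via Lemma \ref{lem:odensec}, the colimit argument of Lemma \ref{lem:limoden} with the affine complement of the hyperplane for vanishing in the stable range, and split extensions of free $\As$-modules, with the $m=1$, $0\le r<n$ boundary (the free cokernel $\bigoplus_{p>r}\bigwedge^p_\As E_\theta$ feeding into $H^1$) handled exactly as in the paper's Equations \eqref{eq:h1odenP1}--\eqref{eq:hmodenPm2}. The only cosmetic deviations are that the paper derives the intermediate vanishing in (2) by descending induction from the acyclic twist $r=n-1$ rather than by your colimit-injectivity argument, and obtains the freeness and rank of $H^0$ directly from the identification $H^0(X,\Oc(r))\simeq \Bs_r$ of Lemma \ref{lem:odensec}(1) rather than from your recursion in $r$.
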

\begin{proof}
(1)  The statement about the rank was already proved. To prove the acyclicity, one proceeds by induction on the even dimension $m$, the case $m=0$ being trivial. Consider the exact sequence  
\begin{equation}\label{eq:odenseq2}
0 \to \Oc(r) \to \Oc (r+1) \to \Oc'(r+1) \to 0\,,
\end{equation}
(Equation \eqref{eq:odenseq}). Since $r+1\ge n$, the sequence of global sections is exact by Lemma \ref{lem:odensec}. Moreover, $\Oc'(r+1)$ is acyclic by induction on $m$, and we have isomorphisms
$$
H^i(X,\Oc(r))\iso H^i(X,\Oc(r+1))\,,\quad \text{for $i>0$}\,.
$$
Lemma \ref{lem:limoden} now gives
$$
H^i(X,\Oc(r))\iso H^i(X,\iota_\ast\Oc_\Ucal)=0\,,\quad \text{for $i>0$}\,,
$$
where the first equality is due to the fact that the cohomology commutes with direct limits, $X$ being a noetherian space, and the second follows because $\iota$ is an affine morphism and $\Ucal$ is affine.

(2) We also proceed by induction on $m\ge 1$.

Let us consider first the case $m=1$. We need only to prove that $H^1(X,\Oc(r))$ is a free $\As$-module for $0\leq r\le n-2$. By Lemma \ref{lem:odensec} we have an exact sequence of $\As$-modules
$$
0\to \bigoplus_{r+1\leq p\leq n}{\bigwedge}^p E_\theta \to H^1(X,\Oc(r))\to H^1(X,\Oc(r+1))\to 0\,.
$$
Since $H^1(X,\Oc(n-1))=0$ by (1), $H^1(X,\Oc(n-2))$ is free. By descending induction we get
$$
H^1(X,\Oc(r))\simeq \Bigl(\bigoplus_{r+1\leq p\leq n}{\bigwedge}^p E_\theta\Bigr)\oplus H^1(X,\Oc(r+1))
$$
for $0\leq r\le n-2$, and then
\begin{equation}\label{eq:h1odenP1}
\begin{aligned}
H^1(X,\Oc(r))\simeq &{\bigwedge}^{r+2}E_\theta\oplus({\bigwedge}^{r+3}E_\theta)^2\oplus\dots\oplus ({\bigwedge}^{n}E_\theta)^{n-r-1} \\
\simeq &\bigoplus_{r+1\le q\le n} ({\bigwedge}^q E_\theta)^{q-r-1} \,,\quad 0\leq r\leq n-2\,.
\end{aligned}
\end{equation}

For $m\ge 2$, by Lemma \ref{lem:odensec}  and induction on $m$, Equation \eqref{eq:odenseq} yields $H^i(X, \Oc(r))=0$ for $i\neq 0,m-1,m$ and an exact sequence
$$
\begin{aligned}
0\to H^{m-1}(X,\Oc(r))\to &H^{m-1}(X,\Oc(r+1))\to H^{m-1}(X,\Oc'(r+1))\to\\
\to &H^{m}(X,\Oc(r))\to H^{m}(X,\Oc(r+1))\to 0\,.
\end{aligned}
$$
Since $H^{m-1}(X,\Oc(n-1))=0$ by (\ref{1}), by descending induction we have $H^{m-1}(X,\Oc(r)=0$ for $0\leq r\le n-2$, and an isomorphism
$$
H^m(X,\Oc(r))\simeq H^{m-1}(X,\Oc'(r+1))\oplus H^m(X,\Oc(r+1))
$$
as $H^{m-1}(X,\Oc'(r+1))$ is free by induction. Moreover,  $$H^m(X,\Oc(n-1))=0\quad\text{and}\quad H^{m-1}(X,\Oc'(n-1))=0$$ by  (\ref{1}), so that (remember that $m\ge 2$):
\begin{equation}\label{eq:hmodenPm}
H^m(X,\Oc(r))\simeq \begin{cases} 0& r=n-2 \\H^{m-1}(X,\Oc'(r+1))& 0\leq r < n-2\,.
\end{cases}
\end{equation}

(3) Since $H^0(X,\Oc(-r))=0$  for $r>0$ by Lemma \ref{lem:odensec}, we need to prove that $H^i(X,\Oc(-r))$ $=0$ for $i\neq 0,m$ and $r\ge 0$; the inclusion of the case $r=0$, which is part of  (\ref{1}), makes the proof easier. We proceed by induction on $m$, the case $m=0$ being obvious. For $m\ge 1$, do induction on $r\ge 0$. For $r\ge 1$ we consider the sequence
$$
0\to \Oc(-r) \to \Oc(-(r-1))\to \Oc'(-(r-1))\to 0\,,
$$
By induction on $m$, $H^i(X, \Oc'(-(r-1)))=0$ for $i\neq m-1$ and $H^{m-1}(X,\Oc'(-(r-1)))$ is a free $\As$-module.

By induction on $r$, $H^i(X,\Oc(-(r-1)))=0$ for $i\neq m$ and $H^m(X,\Oc(-(r-1)))$ is a free $\As$-module. Thus, $H^i(X,\Oc(-r))=0$ for $i\neq m$ (the case $i=0$ this is part of  Lemma \ref{lem:odensec}), and there is an exact sequence of $\As$-modules
$$
0 \to H^{m-1}(X,\Oc'(-(r-1))) \to H^m(X,\Oc(-r)) \to H^m(X, \Oc(-(r-1))) \to 0\,.
$$
Since the first and the third modules are free, one has 
\begin{equation}\label{eq:hmodenPm2}
H^m(X,\Oc(-r))\simeq H^{m-1}(X,\Oc'(-(r-1)))\oplus H^m(X, \Oc(-(r-1)))
\end{equation}
and one finishes.
\end{proof}

\begin{remark}
The ranks of   the free $\As$-modules $H^i(X,\Oc(r))$ can be computed from Equations \eqref{eq:h1odenP1}, \eqref{eq:hmodenPm} and \eqref{eq:hmodenPm2}.
\end{remark}

\subsection{Cohomology of coherent sheaves on projective superschemes}

Let $f\colon \Xcal\to \Sc$ a locally superprojective morphism (Definition \ref{def:superproj}) over a locally noetherian superscheme $\Sc$.

The following result is the super analogue of a classical theorem by Serre.
\begin{thm}[Serre's Theorem]\label{thm:serre} Let $\M$ be a coherent sheaf on $\Xcal$.
\begin{enumerate}
\item The higher direct images $R^i f_\ast \M$ are coherent sheaves on $\Sc$.
\end{enumerate}
Moreover, if $\Sc$ is noetherian {and $f$ is superprojective}:
\begin{enumerate}\setcounter{enumi}{1}
\item there exists an integer $r_0$, depending only on $\M$, such that $R^i f_{\ast} \M(r)=0$   for every $i>0$ and $r\ge r_0$;
\item there exists an integer $r_0$, depending only on $\M$, such that $\M(r)$ is relatively generated by its global sections for every $r\ge r_0$, that is, the natural morphism $f^\ast f_\ast\M(r)\to \M(r)$ is surjective;
\item if $f_\ast \M(r)$ is locally free for $r\gg0$, then $\M$ is flat over $\Sc$.
\end{enumerate}
\end{thm}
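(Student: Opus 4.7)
Since all four statements are local on $\Sc$, I work throughout over an affine open sub-superscheme $\SSpec\As$ of $\Sc$; by the superprojectivity hypothesis combined with Proposition \ref{prop:spbundle}, this reduces to the case $\Xcal=\SProj\bar\Bs\hookrightarrow \Ps^{m,n}_\As$ with $\As$ a noetherian superring. Under this reduction, $R^if_\ast\M$ is the quasi-coherent sheaf on $\Sc$ associated with the $\As$-module $H^i(X,\M)$, so (1) amounts to finite generation of these modules over $\As$ and (2) to their vanishing for $i>0$ and $r\gg 0$.

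For (1) and (2) I would proceed by a joint descending induction on $i$, starting from the cohomological bound $H^i(X,-)=0$ for $i>\dim X$ (Proposition \ref{prop:cohombound}). The base case on the sheaf is the class $\Lcl=\bigoplus_j\Oc(-r_j)\oplus\bigoplus_{j'}\Pi\Oc(-r'_{j'})$, for which Proposition \ref{prop:cohomoden} gives that each $H^i(X,\Lcl(r))$ is a free $\As$-module of finite rank, vanishing for $i>0$ once $r$ exceeds a threshold determined by $(m,n)$ and the $r_j,r'_{j'}$. For a general coherent $\M$, Proposition \ref{prop:mder2}(5) provides a short exact sequence
\begin{equation*}
0\to \Nc\to \Lcl\to \M\to 0
\end{equation*}
with $\Lcl$ of the above form and $\Nc$ coherent. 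Twisting by $\Oc(r)$ and taking the long exact cohomology sequence, the inductive hypothesis applied to $\Nc$ at level $i+1$ exhibits $H^i(X,\M(r))$ as an extension of a submodule of $H^{i+1}(X,\Nc(r))$ by a quotient of $H^i(X,\Lcl(r))$; both are finitely generated over the noetherian ring $\As$, proving (1), and both vanish for $r\gg 0$ when $i>0$, proving (2).

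Statement (3) is then a direct reading of Proposition \ref{prop:mder2}(4) on the affine piece: the surjection $\Gamma(X,\M(r))\otimes_\As\Oc_\Xcal\to \M(r)$ valid for $r\gg 0$ is precisely $f^\ast f_\ast\M(r)\to \M(r)$ after localization on $\Sc$. For (4), I form the graded $\bar\Bs$-module $M=\bigoplus_{r\ge r_0}\Gamma(X,\M(r))$; after further shrinking $\SSpec\As$ so that each $\Gamma(X,\M(r))$ is free, $M$ becomes $\As$-flat. By Proposition \ref{prop:mder2}(1), combined with Proposition \ref{prop:mder}(5) to absorb the truncation, $\M$ is recovered as the $\Z$-homogeneous localization $\widetilde M^h$. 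The stalks of $\widetilde M^h$ are degree-zero components of localizations of $M$, and since localization preserves flatness, every stalk of $\M$ is $\As$-flat, yielding $\Oc_\Sc$-flatness of $\M$.

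The main obstacle is arranging (2) so that the integer $r_0$ depends only on $\M$ and not merely on each successive syzygy: this is secured because $\Lcl$, and hence the finitely many twists $r_j,r'_{j'}$ appearing in it, is determined by $\M$, and the inductive hypothesis for $\Nc$ only needs to guarantee the \emph{existence} of a threshold (whose value may depend on $\Nc$), which (1) in fact furnishes.
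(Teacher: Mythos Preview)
Your approach matches the paper's proof: reduce to an affine noetherian base and (via the closed immersion $\delta\colon\Xcal\hookrightarrow\Ps^{m,n}_\As$) to the projective superspace, then run descending induction on $i$ using a presentation $0\to\Nc\to\Lcl\to\M\to 0$ with $\Lcl$ a finite sum of twists, invoking Proposition \ref{prop:cohomoden} for the base case; (3) is Proposition \ref{prop:mder2}(4), and (4) is the homogeneous-localization argument that $\widetilde M^h_{x_i}$ sits as the degree-zero summand of the flat module $M_{x_i}$.

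Two small points of tidying. First, once you pass to $\SProj\bar\Bs\hookrightarrow\Ps^{m,n}_\As$ you should go one step further and replace $\M$ by $\delta_\ast\M$ on $\Ps^{m,n}_\As$ before applying Proposition \ref{prop:mder2}(5): the $\Oc(-r_j)$ in your presentation must be twists of $\Oc_{\Ps^{m,n}_\As}$, since Proposition \ref{prop:cohomoden} is only proved for those. The paper makes this explicit (``since $\delta_\ast$ preserves coherence and cohomology, we are reduced to the case $\Xcal=\Ps^{m,n}_\As$''). Second, in (4) the ``further shrinking'' is both unnecessary and not obviously possible for infinitely many $r$; just use that locally free over noetherian affine means projective, hence flat, so $M=\bigoplus_{r\ge r_0}\Gamma(X,\M(r))$ is already $\As$-flat without shrinking.
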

\begin{proof} The first and fourth questions are local on the base, and for $\Sc$ noetherian, the second and the third too. We can then assume  that  $\Sc=\SSpec \As$ for a noetherian superring $\As$ and $\Xcal$ is a projective superscheme over $\As$, so that  there is a closed immersion $\delta \colon\Xcal\hookrightarrow \Ps_\As^{m,n}$ into a projective superspace.  Since $\delta_\ast$ preserves coherence and cohomology, we are reduced to the case $\Xcal =  \Ps_\As^{m,n}$. 

(1)  By Proposition \ref{prop:mder2}  there is an exact sequence
\begin{equation}\label{eq:seq}
0\to \Nc \to  \Ec \to \M \to 0 
\end{equation}
 where
 $\Ec =  \bigoplus_{1\le i\le N} {\Oc}(-r_i)\oplus  \bigoplus_{1\le i'\le N'}\Pi{\Oc}(-r'_{i'}) $ and $\Nc$ is coherent. Since all cohomology groups $H^i$ vanish for $i>m$ by cohomology vanishing (Proposition \ref{prop:cohombound}), we have an exact sequence
 $$
\dots \to H^m(X,\Nc)\to H^m(X,\Ec)\to H^m(X,\M)\to 0\,,
 $$
so that $H^m(X,\M)$ is finitely generated for every coherent sheaf by Proposition \ref{prop:cohomoden}. In particular, $H^m(X,\Nc)$ is finitely generated, and from
$$
H^{m-1}(X,\Ec) \to H^{m-1}(X,\M) \to H^m(X,\Nc) 
$$
one gets that $H^{m-1}(X,\M)$ is finitely generated for every coherent sheaf, because $\As$ is noetherian. Continuing by descending induction one proves the claim.

(2) From Equation \eqref{eq:seq} one gets $H^m(X,\M(r))=0$ for every $r$ such that $r\ge r_i$    and $r\ge r'_{i'}$  for  all $i$, $i'$. By descending induction as in (1) the result follows.

(3) This is (4) of Proposition \ref{prop:mder2}.

(4) One has   $\M\simeq \widetilde M^h$, where $M$ is the bigraded $\Bs(m,n)$- module $\bigoplus_{r\ge r_0} f_\ast \M(r)$. 
If $f_\ast \M(r)$ is locally free for $r\ge r_0$, then $M$ is flat over $\As$. It follows that   for every $i=0,\dots,m$ also the $\Z$-homogeneous localization  $\tilde M^h_{x_i}$ (Definition \ref{def:homloc}) is flat over $\As$, as it is a direct summand of the total localization $M_{x_i}$. But $M^h_{x_i}$ are the sections of $\M$ over the complementary open sub-superscheme  of the homogeneous zeroes of $x_i$, which proves that $\M$ is flat over $\As$. 
\end{proof}

Part (1) of this theorem immediately implies the finiteness of the cohomology of coherent sheaves.

\begin{corol} If $\Xcal$ is a projective superscheme over a field $k$ and $\M$ is a coherent sheaf on it, then all groups $H^i(X,\M) $ are finite-dimensional over $k$.
\label{cor:finiteness}
\end{corol}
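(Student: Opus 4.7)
The plan is to deduce this immediately from part (1) of the Serre Theorem \ref{thm:serre} applied to the structural morphism $f\colon \Xcal \to \Sc = \SSpec k$. Since $\Xcal$ is projective over $k$, this is in particular a locally superprojective morphism over the locally noetherian base $\Sc$, so the hypotheses of Theorem \ref{thm:serre}(1) are satisfied. Therefore, for every $i\geq 0$, the higher direct image $R^i f_\ast \M$ is a coherent sheaf on $\SSpec k$.

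A coherent sheaf on $\SSpec k$, where $k$ is a field, corresponds to a finitely generated $\Z_2$-graded module over $k$ (viewed as a purely even superring), hence to a finite-dimensional $\Z_2$-graded $k$-vector space. Since the base $\Sc$ is affine, the higher direct image $R^i f_\ast \M$ computes the sheaf cohomology: one has a canonical identification
\begin{equation*}
R^i f_\ast \M \;\simeq\; \widetilde{H^i(X,\M)},
\end{equation*}
where the tilde denotes the quasi-coherent sheaf on $\SSpec k$ associated with the $k$-module $H^i(X,\M)$ (this is the usual identification between sheaf cohomology and higher direct images over an affine base, and it extends verbatim to the super setting because the cohomology of an $\Oc_\Xcal$-module sheaf agrees with its cohomology as an abelian sheaf, as recalled before Proposition \ref{prop:cohombound}).

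Combining these two facts, $H^i(X,\M)$ is a finitely generated $k$-module, i.e.\ a finite-dimensional $k$-vector space, which is the desired conclusion. No obstacle arises: the content is entirely contained in Theorem \ref{thm:serre}(1), and the Corollary is simply its specialization to a field base.
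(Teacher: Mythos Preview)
Your proposal is correct and is exactly the approach the paper takes: the text immediately preceding the Corollary states that ``Part (1) of this theorem immediately implies the finiteness of the cohomology of coherent sheaves,'' and your argument just spells out this implication.
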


\subsection{Filtrations associated to a sheaf}\label{subsec:filtrations}
To define the super Hilbert polynomial, and for other purposes, we shall need to consider some filtrations of
quasi-coherent sheaves. 
Let $f\colon \Xcal \to \Sc$ be a morphism of superschemes and consider the diagram
\begin{equation}\label{eq:diagram}
\xymatrix{
X \ar@{^{(}->}[r]
^j\ar[rd]_{f_{bos}}& \Xcal_S \ar@{^{(}->}[r] ^i\ar[d]^{f_S} & \Xcal\ar[d]^f\\
& S  \ar@{^{(}->}[r] ^i&\Sc
}
\end{equation}
For every quasi-coherent sheaf $\M$ of $\Oc_{\Xcal}$-modules we have two filtrations naturally associated with this diagram.
The first   is the \emph{base filtration}: let $\Jc$ be the ideal sheaf of $S$ in $\Sc$ and {$r$} the order of $\Jc$, that is, the first integer such that {$\Jc^{r+1}=0$}. The base filtration of $\M$ is 
\begin{equation}\label{eq:basefilt}
0\subset {\Jc^r}\M\subset \dots\subset \Jc\M\subset \M\,.
\end{equation}
The successive quotients $\M^{(p)}_{\Sc} = \Jc^p\M/\Jc^{p+1}\M$ are annihilated by $\Jc$, so that they are supported on $\Xcal_S$, $\M^{(p)}_{\Sc}=i_\ast(i^\ast \Jc^p\M)$, and one has
$$
0\to \Jc^{p+1}\M\to \Jc^p\M \to \M^{(p)}_{\Sc} \to 0\,.
$$

Analogously, there is a \emph{total filtration}. One considers the ideal $\Ic$ of $X$ as a closed super subscheme of $\Xcal$ and the filtration 
\begin{equation}\label{eq:totalfilt}
0\subset {\Ic^{n}}\M\subset \dots\subset \Ic\M\subset \M\,,
\end{equation}
{where $n$ is the order of $\Ic$ as above,}
whose successive quotients $\M^{(p)}= \Ic^p\M/\Ic^{p+1}\M$ are supported on $X$; one has $\M^{(p)}\simeq \iota_\ast(\iota^\ast \Ic^p\M)$, where $\iota=i\circ j$ is the immersion of $X$ into $\Xcal$, and there is an exact sequence
\begin{equation}\label{eq:total}
0\to \Ic^{p+1}\M\to \Ic^p\M \to \M^{(p)} \to 0\,.
\end{equation}

One has:
\begin{prop}\label{prop:filt} { \ }
\begin{enumerate}
\item
If $\M$ is coherent all the quotients of the base filtration (Equation \eqref{eq:basefilt}) are coherent sheaves on $\Xcal_S$.
Moreover, all the quotients of the total filtration (Equation \eqref{eq:totalfilt}) are coherent sheaves on $X$. 
\item
If $\M$ is flat over $\Sc$ {and  $\Oc_\Xcal=Gr_{\Ic}\Oc_\Xcal=\Oc_X\oplus \Ic/\Ic^2\oplus\dots$locally on $\Xcal$, then} all   successive quotients of the total filtration are flat over $S$. When $\Sc=S$ is an ordinary scheme, the converse is also true.
\end{enumerate}
\end{prop}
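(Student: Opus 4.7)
The plan for (1) is routine coherence bookkeeping. Because $\Xcal$ is locally Noetherian, the ideals $\Jc^p\Oc_\Xcal$ and $\Ic^p$ are coherent, hence the subsheaves $\Jc^p\M\subset\M$ and $\Ic^p\M\subset\M$ and the successive quotients $\M^{(p)}_\Sc$ and $\M^{(p)}$ are coherent; since $\M^{(p)}_\Sc$ is annihilated by $\Jc$ and $\M^{(p)}$ is annihilated by $\Ic$, via the closed immersions $i\colon\Xcal_S\hookrightarrow\Xcal$ and $\iota\colon X\hookrightarrow\Xcal$ they correspond to coherent sheaves on $\Xcal_S$ and on $X$, respectively.

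For the forward direction of (2), I will work locally with $\Sc=\SSpec\As$, $\Xcal=\SSpec\Bs$, and $M$ a $\Bs$-module flat over $\As$, and aim to establish the natural isomorphism
$$
I^pM/I^{p+1}M \simeq (I^p/I^{p+1})\otimes_B (M/IM).
$$
Once this is available, flatness of $I^pM/I^{p+1}M$ over $A$ follows from the flatness of the two factors on the right, both of which are $A$-modules (the first via the containment $J\Bs\subset I$, which makes $I^p/I^{p+1}$ annihilated by $J$, and the second by flat base change from the $\As$-flatness of $M$). The candidate isomorphism is the canonical surjection from the right-hand side to the left induced by multiplication, so the whole content is in proving injectivity.

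The main obstacle is precisely this injectivity, which I plan to obtain by descending induction on $p$ (starting from $I^n=0$ for $n$ large, where the statement is trivial). At each inductive step I will use the short exact sequence $0\to I^{p+1}\to I^p\to I^p/I^{p+1}\to 0$ combined with the $\As$-flatness of $M$ to control the relevant $\Tor$ groups, along with a careful tracking of how the $\Bs$-module structure of $M$ interacts with its $\As$-module structure through the multiplication map $\Bs\otimes_{\As}M\to M$. Equivalently, the claim is that the associated graded module $Gr_I(M)=\bigoplus_p I^pM/I^{p+1}M$ agrees with $Gr_I(\Bs)\otimes_B (M/IM)$ as a graded $Gr_I(\Bs)$-module.

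For the converse when $\Sc=S$: the total filtration of $\M$ has finitely many terms because $\Ic$ is nilpotent, so $\M$ is obtained by finitely many extensions of its successive quotients. Each of these quotients is $\Oc_S$-flat by hypothesis, and an extension of flat modules is flat, hence $\M$ is flat over $\Oc_\Sc=\Oc_S$.
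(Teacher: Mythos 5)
Part (1) and your argument for the converse in (2) are fine and match the paper. The forward direction of (2), however, is built on a key lemma that is false: writing $B=\Bs/I$, the canonical map $(I^p/I^{p+1})\otimes_B(M/IM)\to I^pM/I^{p+1}M$ is not injective for general $M$ flat over $\As$. Take $\As=k$ a field (so \emph{every} module is flat over $\As$ and your hypothesis is automatic), $\Bs=k[x,\theta]$, and $M=(\Bs e\oplus \Bs f)/(\theta e-xf,\ \theta f)$ with $e$ even and $f$ odd; then $M\simeq k[x]e\oplus k[x]f$ as a $k[x]$-module, with $\theta e=xf$ and $\theta f=0$. Here $I^2M=0$ and $IM=xk[x]f$ is free of rank one over $B=k[x]$, whereas $(I/I^2)\otimes_B(M/IM)\simeq \Pi(M/IM)\simeq \Pi k[x]\oplus \Pi\bigl(k[x]/(x)\bigr)$: the element $\theta\otimes\bar f$ lies in the kernel of the multiplication map. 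So the graded-pullback formula you plan to establish by descending induction simply does not hold under your hypotheses; it needs the $\Ic$-adic filtration of $M$ to split (e.g.\ $M$ flat over $\Bs$, or pulled back from $X$), and $\As$-flatness gives no such splitting, so no amount of $\Tor$ bookkeeping can prove it. (The Proposition itself is not contradicted by this example, since over $S=\Spec k$ flatness is automatic.) The surrounding flatness deductions are also incorrect: $M/IM$ is not $M\otimes_\As A$, because $I$ strictly contains $J_\As\Bs$ in general, so ``flat base change'' does not yield its $A$-flatness --- over an even base this is literally the $p=0$ case of the statement being proved, making the step circular; the fact that $I^p/I^{p+1}$ is annihilated by $J$ only makes it a sheaf on $X$ and says nothing about $A$-flatness (no flatness of $\Xcal\to\Sc$ is assumed, and $Gr^p(\Oc_\Xcal)$ need not be flat over the base); and even granted, $A$-flatness of both factors would not formally give $A$-flatness of their tensor product over $B$.

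The paper's proof uses a different, structural mechanism that your route never produces. It first reduces to an even base by honest base change ($\M$ flat over $\Sc$ gives $\M_S$ flat over $S$), and then works locally on $X$, where it invokes the identification $\Oc_\Xcal=Gr_\Ic\Oc_\Xcal$ and, crucially, a resulting decomposition $\M=\bigoplus_{p\ge 0}\M^{(p)}$ of $\M$ as an $\Oc_X$-module; flatness then passes to the direct summands, and the same decomposition handles the converse in one stroke. In other words, the real content of the forward direction is a local \emph{splitting} of the filtration, not an isomorphism of associated graded modules with a pullback; it is precisely this splitting that separates the intended situation from the example above. To repair your write-up you would have to import such a splitting (or restrict your isomorphism to cases where it holds, such as $M$ flat over $\Bs$); your converse argument (finite filtration, extensions of flat sheaves are flat) can stay as is.
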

\begin{proof} (1) is straightforward. For (2), if $\M$ is flat over $\Sc$, then $\M_S$ is flat over $S$, so that we are reduced to the case when $\Sc=S$ is an ordinary scheme. Now, the question is local on $X$ so   we can assume   
{$\Oc_\Xcal=Gr_{\Ic}\Oc_\Xcal$}; hence
 there is a decomposition 
$\M= \bigoplus_{p\ge 0}\M^{(p)}$ of $\M$ as an $\Oc_X$-module. It follows that $\M$ is flat over $S$ if and only if all the sheaves $\M^{(p)}$ are flat over $S$.
\end{proof}

\subsection{The super Hilbert polynomial}

Let $\Xcal$ be a projective superscheme over a field $k$ {(Definition \ref{def:superproj})}. We are going to define the super Hilbert polynomial of a coherent sheaf $\M$ on $\Xcal$. It would be  possible to adapt the proof for ordinary projective schemes based on the Snapper polynomials, but we prefer to follow a simpler approach which relies on the existence of the Hilbert polynomial for a coherent sheaf on a projective scheme.

Let us consider the Euler characteristics of $\M$, defined as the pair of integer numbers
$$
\chi (X,\M)= \sum_{i\ge 0} (-1)^i \dim H^i(X,\M)\,,
$$
which is well defined due to the finiteness of the cohomology (Corollary \ref{cor:finiteness}) and the cohomology vanishing  (Proposition \ref{prop:cohombound}).

\begin{lemma}\label{lem:euler}
The Euler characteristic of a coherent sheaf $\M$ is the sum of the Euler characteristics of the quotients of its total filtration (Equation \eqref{eq:totalfilt}):
$$
\chi (X,\M)= {\sum_{0\leq p\leq n}} \chi(X, \M^{(p)})= {\sum_{0\leq p\leq n}}\left(\chi(X, \M^{(p)}_+), \chi(X, \M^{(p)}_-)\right)\,.
$$
\end{lemma}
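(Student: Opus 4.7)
The plan is a standard additivity-of-Euler-characteristic argument, adapted to the $\Z_2$-graded setting. The key input is that by the remark preceding Proposition~\ref{prop:cohombound}, the cohomology of a $\Z_2$-graded sheaf of $\Oc_\Xcal$-modules coincides with its cohomology as an abelian sheaf and hence respects the decomposition $H^i(X,\M)=H^i(X,\M_0)\oplus H^i(X,\M_1)$. Consequently, for any short exact sequence $0\to\F'\to\F\to\F''\to 0$ of coherent $\Oc_\Xcal$-modules with homogeneous morphisms, the associated long cohomology sequence is exact as a sequence of $\Z_2$-graded $k$-vector spaces; by Corollary~\ref{cor:finiteness} each term is finite dimensional and by Proposition~\ref{prop:cohombound} the sequence terminates, so that the pair-valued Euler characteristic is additive,
$$\chi(X,\F)=\chi(X,\F')+\chi(X,\F'').$$

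With additivity in hand, I apply it to the short exact sequences \eqref{eq:total},
$$0\to \Ic^{p+1}\M\to \Ic^p\M\to \M^{(p)}\to 0.$$
Since $\Ic$ has order $n$, the filtration terminates at $\Ic^{n+1}\M=0$, so a descending induction on $p$ (starting from the vanishing of $\Ic^{n+1}\M$) yields
$$\chi(X,\Ic^p\M)=\sum_{p\leq q\leq n}\chi(X,\M^{(q)}),$$
and taking $p=0$ produces the first equality of the lemma. For the second equality, each quotient $\M^{(p)}$ is itself $\Z_2$-graded as a coherent $\Oc_X$-module, $\M^{(p)}=\M^{(p)}{}_0\oplus \M^{(p)}{}_1$, and cohomology splits along this decomposition into the two ordinary Euler characteristics on $X$.

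No step is a genuine obstacle here; the only point worth highlighting is that the pair-valued Euler characteristic is actually additive on short exact sequences of coherent $\Oc_\Xcal$-modules, which in turn reduces to the compatibility of cohomology with the $\Z_2$-grading together with the standard long exact sequence for abelian sheaves. Everything else is formal.
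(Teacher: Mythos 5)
Your proof is correct and follows essentially the same route as the paper, which disposes of the lemma in one line by ``repeatedly applying the additivity of the Euler characteristic and Equation \eqref{eq:total}''. You merely make explicit the details the paper leaves implicit --- additivity of the pair-valued $\chi$ via the long exact cohomology sequence (using Corollary \ref{cor:finiteness} and Proposition \ref{prop:cohombound}), compatibility with the $\Z_2$-grading, and the descending induction from $\Ic^{n+1}\M=0$ --- all of which is sound.
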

\begin{proof} The result is proved by repeatedly applying
the additivity of the Euler characteristic and Equation \eqref{eq:total}.
\end{proof}

Since $\M^{(p)}_+$ and $\M^{(p)}_-$ are coherent sheaves on the ordinary  {projective} scheme $X$, they have Hilbert polynomials, that is, there exists polynomials $H(\M^{(p)}_+, r)$, $H(\M^{(p)}_-,r)$ with rational coefficients, such that
$$
H(\M^{(p)}_+, r) = \chi(X, \M^{(p)}_+(r))\,,\quad H(\M^{(p)}_-, r) = \chi(X, \M^{(p)}_-(r))
$$
for $r\gg 0$. So  there are polynomials
$$
H(\M,r)_+:=  {\sum_{0\leq p\leq n}} H(\M^{(p)}_+, r) \,, \quad
H(\M,r)_-:=  {\sum_{0\leq p\leq n}} H(\M^{(p)}_-, r)\,.
$$

\begin{defin}
The super Hilbert polynomial of a coherent sheaf $\M$ is the pair 
$$
\bH(\M,r)=(H(\M,r)_+,H(\M,r)_-)
$$
of polynomials with rational coefficients. 
\end{defin}

By Lemma \ref{lem:euler} one has
$$
\bH(\M,r)= \chi (X,\M(r)) \quad \text{for $r\gg 0$}.
$$
 
\begin{prop}  Let $f\colon \Xcal \to \Sc$ be a  {super}projective morphism of locally noetherian superschemes   with $S$ connected, and $\Oc_\Xcal(1)$ a relatively very ample line bundle (Remark \ref{rem:ample}). For every coherent sheaf $\M$ on $\Xcal$, flat over $\Sc$, the super Hilbert polynomials $\bH(\M_s,r)$ of the restrictions $\M_s=\M\otimes\kappa(s)$ of $\M$ to the fibres $f_s\colon \Xcal_s \to \Spec \kappa(s)$ of $f$ are independent of the choice of the point $s\in S$. In other words, the function
$$
s\in S \mapsto \bH(\M_s,r)
$$
is constant on $S$.
\end{prop}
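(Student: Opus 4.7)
My plan is to reduce to the classical fact that Hilbert polynomials are constant in flat families over connected bases, by means of the total filtration of $\M$ after a preliminary base change to the bosonic reduction of the base.

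First I would replace $(\Sc,\M)$ by $(S,\M_S)$, where $\M_S:=\M\otimes_{\Oc_\Sc}\Oc_S$ is a coherent sheaf on $\Xcal_S=\Xcal\times_\Sc S$. Since $\M$ is $\Oc_\Sc$-flat, $\M_S$ is flat over $S$, and because $\kappa(s)$ factors through $\Oc_S$, one has $(\M_S)_s\simeq \M_s$ for every $s\in S$, so $\bH((\M_S)_s,r)=\bH(\M_s,r)$. Hence we may assume from the start that $\Sc=S$ is an ordinary (locally) noetherian connected scheme, while $\Xcal\to S$ remains superprojective and $\M$ flat over $S$.

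Next I would apply the total filtration \eqref{eq:totalfilt}, $\M\supset \Ic\M\supset \Ic^2\M\supset\cdots\supset \Ic^{n+1}\M=0$, associated with the ideal $\Ic$ of $X$ in $\Xcal$. By Proposition \ref{prop:filt}(2) each graded piece $\M^{(p)}=\Ic^p\M/\Ic^{p+1}\M$ is a $\Z_2$-graded coherent $\Oc_X$-module, flat over $S$. A descending induction on $p$, starting from $\Ic^n\M=\M^{(n)}$ and using the exact sequences $0\to\Ic^{p+1}\M\to\Ic^p\M\to\M^{(p)}\to 0$, then shows that every $\Ic^p\M$, and hence every quotient $\M/\Ic^p\M$, is flat over $S$. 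The resulting Tor-vanishing implies that tensoring those sequences over $\Oc_S$ with $\kappa(s)$ keeps them exact, which yields canonical isomorphisms $(\Ic^p\M)_s\simeq \Ic_{\Xcal_s}^p\M_s$ and consequently
\[
(\M^{(p)})_s\simeq (\M_s)^{(p)}
\]
as $\Z_2$-graded coherent sheaves on the bosonic fibre $X_s$.

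To conclude, Lemma \ref{lem:euler} gives, for $r\gg 0$,
\[
\bH(\M_s,r)=\sum_{0\leq p\leq n}\bigl(\chi(X_s,(\M^{(p)}_0)_s(r)),\,\chi(X_s,(\M^{(p)}_1)_s(r))\bigr).
\]
Each of $\M^{(p)}_0$ and $\M^{(p)}_1$ is a classical coherent sheaf on $X$, flat over the connected noetherian scheme $S$, and $f_{\bos}\colon X\to S$ is a projective morphism of ordinary schemes, so the classical theorem on constancy of Hilbert polynomials in flat families makes each term on the right a polynomial in $r$ independent of $s\in S$; summing over $p$ finishes the proof. I expect the main obstacle to be the identification $(\M^{(p)})_s\simeq (\M_s)^{(p)}$: without first reducing to an ordinary base this fails, because $\M^{(p)}$ is supported on $X$ and is typically not flat over the superring $\Oc_\Sc$ (so $\mathrm{Tor}_1^{\Oc_\Sc}(\M^{(p)},\kappa(s))$ need not vanish), whereas the reduction to an ordinary base together with the finiteness of the total filtration forces $\M/\Ic^p\M$ to be $\Oc_S$-flat and supplies the required Tor-vanishing.
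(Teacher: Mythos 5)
Your proof is correct, but it follows a genuinely different route from the paper's, which is much shorter. The paper's proof shares your first step: replace $\Sc$ by its bosonic reduction $S$, noting that $\rest{\M}{\Xcal_S}$ is flat over $S$ and has the same fibres $\M_s$. But instead of the total filtration it then uses the \emph{parity decomposition}: over the even base $S$ the splitting $\rest{\M}{\Xcal_S}=(\rest{\M}{\Xcal_S})_+\oplus(\rest{\M}{\Xcal_S})_-$ is $\Oc_S$-linear, so each summand is flat over $S$ and is a coherent sheaf on the bosonic quotient $\Xcal_S/\Gamma$, a classical projective $S$-scheme; since by Lemma \ref{lem:euler} one has $\bH(\M_s,r)=\chi(X_s,\M_s(r))=\bigl(\chi(X_s,(\M_s)_+(r)),\chi(X_s,(\M_s)_-(r))\bigr)$ for $r\gg 0$, the classical constancy theorem applied to these two sheaves finishes the proof. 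Your argument instead descends through the $(n+1)$-step total filtration \eqref{eq:totalfilt}, which obliges you to establish the compatibility $(\M^{(p)})_s\simeq(\M_s)^{(p)}$; you do this correctly, and note that the flatness of the graded pieces over $S$ is exactly Proposition \ref{prop:filt}(2), so you could cite it rather than redo that part of the induction (though your induction is still needed to get flatness of the subsheaves $\Ic^p\M$, which supplies the Tor-vanishing). What your route buys is that it tracks the definition of $\bH$ term by term and makes explicit that the total filtration commutes with passage to fibres over an even base; your closing remark also correctly isolates why the preliminary reduction to $S$ is indispensable in either approach, since over a super base $\M^{(p)}$ is generally not $\Oc_\Sc$-flat, and the parity decomposition is not even $\Oc_\Sc$-linear ($\Oc_{\Sc,-}$ exchanges the two parts). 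What the paper's route buys is economy: two flat classical sheaves instead of $2(n+1)$, and no filtration or base-change bookkeeping.
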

\begin{proof} {The question is local so that we can assume that $\Xcal$ is a relative projective superspace over $\Sc$. Then the successive quotients $\M^{(p)}$ of the total filtration are flat over $S$ by Proposition \ref{prop:filt}, and then their odd and even parts are flat over $S$ as well}. We finish by the corresponding statement for the classical case.
\end{proof}

We can then define:

\begin{defin}\label{def:Hilbpol} Let $f\colon \Xcal \to \Sc$ be a  {super}projective morphism   of locally noetherian superschemes, $\Oc_\Xcal(1)$ a relatively very ample line bundle (Remark \ref{rem:ample}), and $\bP=(P_+,P_-)$ a pair of polynomials with rational coefficients. We say that a coherent sheaf $\M$ on $\Xcal$ has super Hilbert polynomial $\bP$ on the ``fibre'' of a ``point'' $s\colon\Tc\to \Sc$ if
\begin{enumerate}
\item $\M_\Tc$ is flat over $\Tc$.
\item $\M_\Tc$ has super Hilbert polynomial $\bP$ on every fibre of $f_\Tc\colon \Xcal_\Tc\to\Tc$, that is, $\bH(\M_t,r) $ $=\bP(r)$ for every point $t\in T$.
\end{enumerate}
\end{defin}

\section{Cohomology of proper morphisms}\label{s:cohomprop}

\subsection{Finiteness theorems and Grothendieck-Mumford complex}

In the classical case the finiteness of the cohomology for proper morphisms of locally noetherian superschemes is proved from Serre's Theorem \ref{thm:serre}, using an argument based on ``d\'evissage'' and on   Chow lemma.  Although a super version of the Chow lemma is available \cite[7.1.3]{MoZh19}, we prefer a different approach, using the filtrations of a sheaf defined in Subsection \ref{subsec:filtrations} and  the classical finiteness result. 

\begin{prop}\label{prop:ftecoh}
Let $f\colon \Xcal \to \Sc$ be a proper morphism of locally noetherian superschemes. For every coherent sheaf $\M$ on $\Xcal$  and for every $i\geq 0$, the higher direct images $R^if_\ast\M$ are coherent sheaves on $\Sc$.
\end{prop}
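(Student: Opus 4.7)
The plan follows the hint that precedes the statement: avoid Chow's lemma and instead reduce directly to the classical finiteness theorem for proper morphisms of ordinary locally noetherian schemes (EGA III), using the nilpotence of the odd part of $\Oc_\Xcal$ to filter $\M$ by coherent $\Oc_X$-modules.

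Since the statement is local on $\Sc$, I may assume that $\Sc=\SSpec\As$ for a noetherian superring $\As$. Write $\iota\colon X\hookrightarrow\Xcal$ and $i\colon S\hookrightarrow\Sc$ for the closed immersions of the bosonic reductions, and $f_{\bos}\colon X\to S$ for the induced morphism. Since $f$ is proper (in particular of finite type), $f_{\bos}$ is a proper morphism of ordinary noetherian schemes. Let $\Ic\subset\Oc_\Xcal$ be the ideal sheaf of $X$; in the noetherian setting $\Ic$ is nilpotent, say $\Ic^{n+1}=0$, so the total filtration of Equation~\eqref{eq:totalfilt}
$$0=\Ic^{n+1}\M\subset\Ic^n\M\subset\cdots\subset\Ic\M\subset\M$$
is finite, and by Proposition~\ref{prop:filt} each quotient $\M^{(p)}=\Ic^p\M/\Ic^{p+1}\M$ is a coherent $\Oc_X$-module.

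I would then establish coherence of $R^if_\ast\M^{(p)}$ first. By the classical Grothendieck finiteness theorem applied to the proper morphism $f_{\bos}$, the sheaves $R^if_{\bos,\ast}\M^{(p)}$ are coherent on $S$. Now $\iota$ and $i$ are closed immersions, so their pushforwards are exact and preserve coherence; the Leray spectral sequence for the composition $f\circ\iota=i\circ f_{\bos}$ therefore collapses and yields a natural isomorphism
$$R^if_\ast\M^{(p)}\;\simeq\;i_\ast R^if_{\bos,\ast}\M^{(p)},$$
which is coherent on $\Sc$.

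Finally, descending induction on $p$ completes the proof. The base case $\Ic^{n+1}\M=0$ is trivial. For the inductive step, the short exact sequence
$$0\to\Ic^{p+1}\M\to\Ic^p\M\to\M^{(p)}\to 0$$
gives a long exact sequence of higher direct images; each $R^if_\ast\Ic^p\M$ sits in an extension whose subobject is a quotient of the coherent $R^if_\ast\Ic^{p+1}\M$ and whose quotient is a subobject of the coherent $R^if_\ast\M^{(p)}$, hence is itself coherent, as $\Sc$ is locally noetherian. Taking $p=0$ gives the statement. I do not foresee any single serious obstacle: the only point that requires a moment's thought is the identification of $R^if_\ast$ on sheaves supported on $X$ with the classical higher direct image along $f_{\bos}$, and this is a formal consequence of the exactness of pushforward along closed immersions.
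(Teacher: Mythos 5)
Your proof is correct and follows essentially the same route as the paper: the total filtration $\Ic^p\M$ of Subsection \ref{subsec:filtrations}, the identification $R^if_\ast\M^{(p)}\simeq i_\ast R^if_{bos\,\ast}(\iota^\ast\Ic^p\M)$ reducing to the classical finiteness theorem for the proper morphism $f_{bos}$, and descending induction on $p$ via the long exact sequence of the short exact sequences \eqref{eq:total}, using local noetherianity to conclude coherence of the extension terms. Your explicit remark that the Leray spectral sequence collapses because pushforward along the closed immersions $\iota$ and $i$ is exact is exactly the justification the paper leaves implicit.
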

\begin{proof} Consider the total filtration of $\M$.  The exact sequence of    higher direct images of Equation \eqref{eq:total} gives 
$$
\begin{aligned}
&0 \to f_\ast(\Ic^{p+1}\M)\to f_\ast(\Ic^p\M) \to f_\ast\M^{(p)}\to  R ^1f_\ast(\Ic^{p+1}\M)\to R^1 f_\ast(\Ic^p\M) \to 
\\
& \to R^1f_\ast\M^{(p)}\to
R^2 f_\ast (\Ic^{p+1}\M)\to R^2 f_\ast(\Ic^p\M) \to  R^2f_\ast\M^{(p)}\to\dots
\end{aligned}
$$ 
By Proposition \ref{prop:filt}  $\iota^\ast \Ic^p\M$ is coherent on $X$. Due to 
$$
R^i f_\ast \M^{(p)}\iso i_\ast R^i f_{bos\ast}(\iota^\ast \Ic^p\M) 
$$
these sheaves are coherent as a consequence of the corresponding property for classical schemes. By descending induction on $p$, we can assume that the sheaves  $R^if_\ast(\Ic^{p+1}\M)$ are coherent. By local noetherianity  the sheaves $R^if_\ast(\Ic^p\M)$ are coherent as well.
\end{proof}

The finiteness theorem allow us to generalize straightforwardly many classical results. We reproduce the relevant statements offering proofs only when they are different from the classical ones.  Among the many references for those classical results we mention Grothendieck's original  treatment \cite{EGAIII-II}, Hartshorne \cite{Hart77} and Nitsure \cite{Ni07}.

The first result we would like to report on is the existence of a complex of finitely generated modules that computes the cohomology of a coherent sheaf on a proper superscheme.

\begin{prop}[Grothendieck-Mumford complex]\label{prop:GMcomplex} Let $\Xcal$ be a proper superscheme over a noetherian superring $\As$ and $\M$ a coherent sheaf on $\Xcal$ {flat over $\As$}. There exists a finite complex
$$
K^\bullet := 0\to K_0\xrightarrow{\partial_0} K_1 \xrightarrow{\partial_1}\dots\xrightarrow{\partial_{s-1}} K_s\to 0\,,
$$
of finitely generated {and locally free} $\Z_2$-graded $\As$-modules, and a functorial $\As$-linear isomorphism
$$
H^i(X, \M\otimes_\As N) \iso H^i(K^\bullet\otimes_\As N)\,,
$$
in  the category of all ($\Z_2$-graded) $\As$-modules $N$.
\end{prop}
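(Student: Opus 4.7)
The strategy is to compute cohomology via a finite affine Čech cover of $\Xcal$, yielding a bounded complex $C^\bullet$ of flat $\Z_2$-graded $\As$-modules whose cohomology is finitely generated, and then to replace $C^\bullet$ by a quasi-isomorphic bounded complex $K^\bullet$ of finitely generated locally free $\Z_2$-graded $\As$-modules in such a way that the quasi-isomorphism persists after tensoring with any $\Z_2$-graded $\As$-module $N$.

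Since $\Xcal$ is noetherian and $f\colon\Xcal\to\SSpec\As$ is proper, hence separated, I would first choose a finite open cover $\Ucal=\{\Ucal_\alpha\}_{\alpha=0}^{r}$ of $\Xcal$ by affine sub-superschemes such that all finite intersections $\Ucal_{\alpha_0\cdots\alpha_p}$ are again affine (separatedness implies the intersection of two affine open sub-superschemes is affine, and this extends to multi-intersections by iteration). On an affine superscheme the higher cohomology of any quasi-coherent $\Z_2$-graded sheaf of $\Oc$-modules vanishes, so the alternating Čech complex $C^\bullet:=C^\bullet(\Ucal,\M)$ is a bounded complex of $\Z_2$-graded $\As$-modules computing $H^i(X,\M)$. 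Because $\M$ is $\As$-flat, each term $C^p=\prod_{\alpha_0<\cdots<\alpha_p}\Gamma(\Ucal_{\alpha_0\cdots\alpha_p},\M)$ is a flat $\As$-module; consequently, for every $\Z_2$-graded $\As$-module $N$, the natural map $C^\bullet\otimes_\As N\to C^\bullet(\Ucal,\M\otimes_\As N)$ is an isomorphism, which (since $\M\otimes_\As N$ is still quasi-coherent) gives
$$H^i(X,\M\otimes_\As N)\simeq H^i(C^\bullet\otimes_\As N).$$

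By Proposition \ref{prop:ftecoh} the cohomology modules $H^i(C^\bullet)=H^i(X,\M)$ are finitely generated over $\As$. The remaining task is purely homological: over a noetherian (graded) ring, a bounded complex of flat modules with finitely generated cohomology admits a quasi-isomorphism from a bounded complex $K^\bullet$ of finitely generated locally free modules, in a manner that remains a quasi-isomorphism after tensoring with any module. The construction is by descending induction on the cohomological degree; starting from the top nonzero cohomology, one chooses finitely many homogeneous generators, lifts them to cocycles of $C^\bullet$, takes the corresponding free module as $K^s$, and then descends, at each step enlarging the free modules so as to surject onto the relevant cycles and onto the kernels of the previously defined maps. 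Flatness of the terms of $C^\bullet$ guarantees that the resulting $K^\bullet\otimes_\As N\to C^\bullet\otimes_\As N$ is a quasi-isomorphism for every $N$.

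The main subtlety compared to the classical treatment (e.g.\ Mumford, Hartshorne III.12) is the compatibility of this descending induction with the $\Z_2$-grading: one must verify that all intermediate choices of generators and lifts can be made homogeneous. This is possible because $\As$ is a $\Z_2$-graded noetherian ring and every finitely generated $\Z_2$-graded $\As$-module admits a finite homogeneous generating set; hence the construction produces $K^\bullet$ whose terms are finite direct sums of copies of $\As$ and $\Pi\As$, i.e.\ finitely generated locally free $\Z_2$-graded $\As$-modules. Combining the two steps yields the desired functorial isomorphism $H^i(X,\M\otimes_\As N)\iso H^i(K^\bullet\otimes_\As N)$.
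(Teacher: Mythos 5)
Your overall route coincides with the one the paper intends: its proof simply says that ``the same proof as in the classical case'' produces the complex, and your reduction to a finite alternating \v{C}ech complex $C^\bullet$ with flat $\Z_2$-graded terms (finite generation of cohomology coming from Proposition \ref{prop:ftecoh}), followed by the descending induction producing finitely generated free graded terms in degrees $\geq 1$, is exactly that classical argument, with homogeneous choices of generators handling the grading. The genuine gap is at degree $0$. The induction as you describe it --- stop at degree $0$ with a free module chosen merely to surject onto the relevant cocycles and kernels --- does not yield the universal quasi-isomorphism: one has $H^0(K^\bullet\otimes_\As N)=\ker(K_0\otimes_\As N\to K_1\otimes_\As N)$, and nothing in your construction controls this kernel, so the map to $H^0(C^\bullet\otimes_\As N)$ fails to be injective in general (already for $N=\As$ one cannot arrange $\ker(K_0\to K_1)\iso H^0(C^\bullet)$ by only choosing generators). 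The classical fix (cf.\ \cite{Mum86}) is different: one takes for $K_0$ a suitable finitely generated correction of the fibre product $\ker\bigl(K_1\oplus C^0\to C^1\bigr)$, which is a priori only finitely generated, \emph{not} free, and one then proves that this $K_0$ is flat, using that the mapping cone of $K^\bullet\to C^\bullet$ is an exact bounded complex all of whose remaining terms are flat; it is the flatness of the terms of \emph{both} complexes, not of $C^\bullet$ alone, that makes the quasi-isomorphism persist under $\otimes_\As N$.

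Consequently your concluding claim --- that the terms of $K^\bullet$ come out as finite direct sums of copies of $\As$ and $\Pi\As$, i.e.\ free --- is both unsupported by the construction and stronger than the proposition asserts: over a noetherian superring a finitely generated flat module is projective but need not be free, and the proposition only claims local freeness. This is precisely the content of the paper's short proof, which your sketch skips: the classical argument yields finitely generated \emph{projective} graded terms, and the super Nakayama lemma is then invoked to identify finitely generated projective $\Z_2$-graded modules with locally free ones. Your argument becomes correct if you replace the degree-$0$ step by the fibre-product construction, prove flatness of $K_0$ as above, and conclude with the projective $\Rightarrow$ locally free step via super Nakayama; freeness of all terms could only be forced by a further padding argument (adding elementary acyclic summands $Q\xrightarrow{\operatorname{id}}Q$, at the cost of lengthening the complex), which is not what your induction does and is not needed for the statement.
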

\begin{proof} The same proof as in the classical case gives that the modules of the complex
are projective. By the  super Nakayama lemma  (\cite{BBH91,CarCaFi11} or \cite[6.4.5]{We09})\footnote{A ``super'' version of the Nakayama lemma first appeared in \cite{BBH91}.} this is equivalent to the local freeness.
\end{proof}

Let $\As$ be a noetherian superring and $f\colon \Xcal \to \Sc=\SSpec\As$ a proper morphism of superschemes.
For every coherent sheaf $\M$ of $\Oc_\Xcal$-modules, flat over $\Sc$, and every index $i\ge 0$ we consider the linear half  exact functor (Subsection \ref{ss:nakayama}) $T^i$ from the category of ($\Z_2$-graded) finitely generated $\As$-modules to itself given by
\begin{equation}\label{eq:functorT}
T^i(N):= \Gamma(S, R ^i f_\ast (\M\otimes_\As N))= H^i(X,\M\otimes_\As N)= H^i(K^\bullet\otimes_\As N)\,,
\end{equation}
where $K^\bullet$ is the Grothendieck-Mumford complex associated to $\M$ (Proposition \ref{prop:GMcomplex}).  The functors $T^i$ form a $\delta$-functor, that is, for every exact sequence $0\to N'\to N \to N''\to 0$ of finitely generated $\As$-modules, there is an exact sequence
\begin{equation}\label{eq:cohexactsq}
\dots T^{i-1}(N'')\xrightarrow{\delta} T^i(N') \to T^i(N) \to T^i(N'') \xrightarrow{\delta} T^{i+1}(N')\to\dots
\end{equation}

For every point $s\in \Sc$, denote by $T^i_{(s)}$ the functor $T^i$ for the morphism $\Xcal\times _\Sc \SSpec \As_s \to \SSpec \As_s$ and the sheaf $\M\otimes_\As\As_s$ obtained by the localization flat base change $\As \to \As_s$.

\begin{lemma} \label{lem:ti} {\ }
\begin{enumerate}
\item $T^i$ is left  exact if and only if $W_i:=\coker\partial_{i-1}$ is a locally free $\As$-module. Therefore:
\begin{enumerate}
\item $T^0$ is left  exact.
\item If $T^i_{(s_0)}$ is left  exact for a point $s_0$, $T^i_{(s)}$ is also left  exact for all the points $s$ in an open neighbourhood of $s_0$.
\end{enumerate}
\item $T^i$ is right  exact if and only if $T^{i+1}$ is left  exact.
\end{enumerate}
\end{lemma}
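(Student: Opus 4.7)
The plan is to combine the description of $T^i$ in terms of the Grothendieck--Mumford complex with elementary homological arguments. Set $W_i := \coker(\partial_{i-1}\colon K_{i-1} \to K_i)$; right-exactness of tensor product yields $W_i \otimes_{\As} N \simeq \coker(\partial_{i-1}\otimes 1_N)$, and the induced map $\bar\partial_i\colon W_i \to K_{i+1}$ produces a natural isomorphism
\[
T^i(N) \;\simeq\; \ker\bigl(\bar\partial_i\otimes 1_N\colon W_i\otimes_{\As} N \to K_{i+1}\otimes_{\As} N\bigr).
\]
This presentation is the only non-formal ingredient in the whole argument.

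For the main equivalence in (1), if $W_i$ is locally free then both $W_i$ and $K_{i+1}$ are flat, and a diagram chase on the commutative square obtained by applying $W_i\otimes -$ and $K_{i+1}\otimes -$ to an injection $N'\hookrightarrow N$ shows $T^i(N')\hookrightarrow T^i(N)$. Conversely, assuming $T^i$ left exact, I plan to show $W_i$ is flat by checking that $W_i\otimes_{\As} P' \to W_i\otimes_{\As} P$ is injective for every injection $P'\hookrightarrow P$ of finitely generated $\As$-modules: given $x$ in the kernel, the element $(\bar\partial_i\otimes 1_{P'})(x) \in K_{i+1}\otimes_{\As} P'$ maps to zero in $K_{i+1}\otimes_{\As} P$, and flatness of $K_{i+1}$ forces it to vanish, so $x \in T^i(P')$; since $x$ also maps to $0$ in $T^i(P) \subseteq W_i\otimes_{\As} P$, left exactness of $T^i$ forces $x=0$. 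Because $W_i$ is finitely generated over the noetherian superring $\As$, flatness then implies local freeness by the super Nakayama lemma already cited in Proposition \ref{prop:GMcomplex}.

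Items (a) and (b) follow immediately: in (a), $W_0 = K_0$ is locally free by construction of the Grothendieck--Mumford complex, so $T^0$ is automatically left exact; in (b), local freeness of a finitely generated module over a noetherian scheme is an open condition, so once $(W_i)_{s_0}$ is free over the local ring $\As_{s_0}$ the module $W_i$ is locally free on a Zariski neighborhood of $s_0$, and the main equivalence applied after localization base change yields the claim. Part (2) is purely formal: by the long exact sequence \eqref{eq:cohexactsq}, $T^{i+1}$ is left exact iff the connecting map $T^i(N'')\to T^{i+1}(N')$ vanishes for every short exact sequence $0 \to N' \to N \to N'' \to 0$, iff $T^i(N)\to T^i(N'')$ is surjective, iff $T^i$ is right exact. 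The only subtlety is that everything takes place in the $\Z_2$-graded category, but the super Nakayama lemma ensures that the classical facts on local freeness and its Zariski-openness carry over, so I anticipate no serious obstacle beyond careful bookkeeping.
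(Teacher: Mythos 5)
Your proof is correct and follows essentially the same route as the paper's: the identification $T^i(N)\simeq\ker\bigl(W_i\otimes_\As N\to K_{i+1}\otimes_\As N\bigr)$, the two-way diagram chase (using flatness of $K_{i+1}$) showing that left exactness of $T^i$ is equivalent to flatness, hence local freeness via super Nakayama, of $W_i$, the Zariski-openness of local freeness for (b), and the long exact sequence \eqref{eq:cohexactsq} for (2). No gaps.
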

\begin{proof}
(1) If $ N'\to N$ is an injective morphism, 
as $\coker (K_{i-1}\otimes_\As P \to K_i\otimes_\As P)=W_i\otimes_\As P$ for every $A$-module $P$,  
one has a commutative diagram of exact rows and columns:
$$
\xymatrix{
& & & 0\ar[d] \\
0 \ar[r] & T^i(N') \ar[r]\ar[d]^\alpha & W_i\otimes_\As N' \ar[r]\ar[d]^\beta & K_{i+1}
\otimes_\As N'\ar[d] \\
0 \ar[r] & T^i(N) \ar[r] & W_i\otimes_\As N \ar[r] & K_{i+1}
\otimes_\As N
}
$$
So $\alpha$ is injective if and only if $\beta$ is injective. Thus, $T^i$ is left  exact if and only of $W_i$ is flat and then locally free.

Now (a) is a consequence of Proposition \ref{prop:GMcomplex} as $W_0=K_0$, and (b) follows from the fact that if a sheaf of modules is free at a point, then it is free in an open neighbourhood.

(2) follows from Equation \eqref{eq:cohexactsq}.
\end{proof}

\subsection{Cohomology base change and semicontinuity}\label{ss:cohbasechange}
In this Section we prove the base change and semicontinuity theorems by applying  the Nakayama Lemma \ref{lem:nakayama}  for half  exact functors 
and Proposition \ref{prop:nakayama}
to the functors $T^i$.

\begin{thm}[Cohomology base change]\label{thm:cohombasechange} Let $f\colon \Xcal \to \Sc$ be a proper morphism of locally noetherian superschemes and $\M$ a coherent sheaf on $\Xcal$ {flat over $\Sc$}. If $s\in S$ is a point of $S$, one has:
\begin{enumerate}
\item If for some $i$ the base change map $\varphi^i_s\colon (R^if_\ast\M)_s \to H^i(X_s,\M_s)$ is surjective, then it is an isomorphism and the same happens for all points in an open neighbourhood of $s$.
\item If (1) is true, there exists an open sub-superscheme $\Ucal$ of $\Sc$ containing $s$, such that for any quasi-coherent $\Oc_\Ucal$-module $\Nc$ the natural morphism
$$
(R^i f_{\Ucal\ast}\M_{\vert \Xcal_\Ucal})\otimes_{\Oc_\Ucal}\Nc \to R^i f_{U\ast} (\M_{\vert \Xcal_\Ucal}\otimes f_\Ucal^\ast\Nc)\,,
$$
is an isomorphism.
\item
If (1) is true for $i>0$, then $\varphi^{i-1}_s\colon (R^{i-1}f_\ast\M)_s \to H^{i-1}(X_s,\M_s)$ is surjective if and only of $R^if_\ast\M$ is locally free on an open sub-superscheme $\Ucal$ fo $\Sc$ with $s\in U$.
\item If $H^i(X_s,\M_s)=0$ for some   $i>0$, the morphism $\varphi^{i-1}\colon (R^{i-1}f_\ast\M)_s \to H^{i-1}(X_s,\M_s)$ is an isomorphism.
\end{enumerate}

\end{thm}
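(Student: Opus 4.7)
The plan is to adapt Mumford's classical argument to the super setting, feeding the Grothendieck--Mumford complex of Proposition \ref{prop:GMcomplex} into the super Nakayama Lemma for half exact functors (Subsection \ref{ss:nakayama}). Since all four claims are local on $\Sc$, I would pass to an affine neighborhood $\SSpec\As$ of $s$ with $\As$ noetherian and fix a finite complex $K^\bullet$ of finitely generated locally free $\Z_2$-graded $\As$-modules whose associated half exact functors $T^i(N)=H^i(K^\bullet\otimes_\As N)$ compute $H^i(X,\M\otimes_\As N)$. The base change map $\varphi^i_s$ coincides with the value at $N=\kappa(s)$ of the natural transformation $\tau^i_N\colon T^i(\As)\otimes_\As N\to T^i(N)$, so (1)--(4) translate into statements about $\tau^i$, to be analyzed via the dictionary of Lemma \ref{lem:ti}: $T^i$ is left exact iff $W_i=\coker\partial_{i-1}$ is locally free, and right exact iff $T^{i+1}$ is left exact.

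For (1), surjectivity of $\tau^i_{\kappa(s)}$ at $s$ lifts, via the super Nakayama Lemma for half exact functors, to surjectivity of $\tau^i$ on all finitely generated modules in a Zariski neighborhood of $s$; equivalently, $T^i$ is right exact near $s$. Lemma \ref{lem:ti} then forces $T^{i+1}$ left exact and $W_{i+1}=\coker\partial_i$ locally free near $s$, so the image $B_{i+1}=\Ima\partial_i\subset K_{i+1}$ is a local direct summand of $K_{i+1}$ and hence locally free. From the exact sequence $0\to T^i(\As)\to W_i\to B_{i+1}\to 0$ and local freeness of $B_{i+1}$, tensoring with $N$ remains exact, while the local splitting of $B_{i+1}\hookrightarrow K_{i+1}$ preserves injectivity after $\otimes_\As N$. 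Combining, $T^i(N)=\ker(W_i\otimes N\to K_{i+1}\otimes N)=\ker(W_i\otimes N\to B_{i+1}\otimes N)=T^i(\As)\otimes_\As N$, so $\tau^i$ is an isomorphism of functors near $s$; specializing at $\kappa(s')$ for nearby $s'$ yields (1). For (2), on the neighborhood $\Ucal$ obtained in (1) any quasi-coherent $\Nc$ is a filtered colimit of its coherent submodules, and both tensor product and higher direct images (the latter by noetherianity of $X$, cf.~Proposition \ref{prop:cohombound}) commute with such colimits, so the isomorphism $\tau^i$ extends to all quasi-coherent sheaves.

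For (3), assume (1). The same Nakayama argument at index $i-1$ identifies surjectivity of $\varphi^{i-1}_s$ with right exactness of $T^{i-1}$ near $s$, hence with left exactness of $T^i$, hence with local freeness of $W_i$ near $s$. Since $W_{i+1}$ is already locally free by (1), the exact sequence $0\to T^i(\As)\to W_i\to B_{i+1}\to 0$ with $B_{i+1}$ locally free establishes the equivalence between local freeness of $W_i$ and of $T^i(\As)=R^if_\ast\M$, yielding (3). Finally for (4): vanishing of $H^i(X_s,\M_s)$ makes $\varphi^i_s$ trivially surjective, hence an isomorphism by (1), so $(R^if_\ast\M)_s\otimes_{\Oc_{\Sc,s}}\kappa(s)=0$ and ordinary Nakayama gives $R^if_\ast\M=0$ in a neighborhood of $s$; in particular $R^if_\ast\M$ is locally free there, so (3) delivers surjectivity of $\varphi^{i-1}_s$, which (1) upgrades to an isomorphism.

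The delicate step is the application of the super Nakayama Lemma for half exact functors in (1), extending pointwise surjectivity of $\tau^i$ at the single residue field $\kappa(s)$ to a genuine surjection of functors on all finitely generated modules in a Zariski neighborhood of $s$; this is precisely the content Subsection \ref{ss:nakayama} is designed to provide. After that the proof is essentially linear algebra with the Grothendieck--Mumford complex, together with the bookkeeping supplied by Lemma \ref{lem:ti}.
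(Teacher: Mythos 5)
Your proof is correct and takes essentially the same route as the paper: reduce to a local noetherian affine base, encode the cohomology in the Grothendieck--Mumford complex via the functors $T^i$, and combine the super Nakayama Lemma for half exact functors (Proposition \ref{prop:nakayama}) with the exactness dictionary of Lemma \ref{lem:ti}. Your explicit $W_i$/$B_{i+1}$ linear algebra, the filtered-colimit step for quasi-coherent $\Nc$ in (2), and the derivation of (4) from (1) and (3) merely spell out details the paper's compressed proof leaves implicit.
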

\begin{proof} The question is local, so that we can assume that $\Sc$ is affine, $\Sc=\SSpec \As$, and that the superring $\As$ is noetherian. After the flat base change $\As \to \As_s$ we can also assume that $\As$ is local. Applying Proposition \ref{prop:nakayama} (Nakayama's Lemma for half  exact functors) to the functors $T^i$ and Lemma \ref{lem:ti} we obtain the first three statements.

(4) $T^i=0$ by Lemma \ref{lem:nakayama}, and then $T^{i-1}$ is right  exact by Lemma \ref{lem:ti}. The statement now follows from Proposition \ref{prop:nakayama}.
\end{proof}
\begin{corol}\label{cor:cohombound1}
Let $f\colon \Xcal \to \Sc$ be a proper morphism of locally noetherian superschemes and $\M$ a coherent sheaf on $\Xcal$ {flat over $\Sc$}. If there is an index $j$ such that $R^i f_\ast\M=0$ for every $i\ge j$, then $H^i(X_s,\M_s)=0$ for every $i\ge j$ and every point $s\in S$.
\end{corol}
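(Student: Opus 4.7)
The plan is to proceed by descending induction on $i$, using Theorem \ref{thm:cohombasechange}(4) as the inductive engine and the cohomological dimension bound (Proposition \ref{prop:cohombound}) to seed the induction. Fix a point $s\in S$. Since $f$ is proper and locally of finite type, the fibre $\Xcal_s$ is a proper superscheme over the field $\kappa(s)$, hence the underlying scheme $X_s$ is noetherian of finite Krull dimension $d_s$.

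For the base of the induction, Proposition \ref{prop:cohombound} yields $H^i(X_s,\M_s)=0$ for every $i>d_s$, and in particular for all sufficiently large $i\geq j$. Now suppose, for some $i\geq j$, that we have already established $H^{i+1}(X_s,\M_s)=0$. Since $i+1>0$, Theorem \ref{thm:cohombasechange}(4) applies and tells us that the base change map
\[
\varphi^{i}_s\colon (R^{i}f_\ast \M)_s \longrightarrow H^{i}(X_s,\M_s)
\]
is an isomorphism. But by hypothesis $R^{i}f_\ast\M=0$, so its stalk at $s$ vanishes; therefore $H^{i}(X_s,\M_s)=0$. Descending induction from $i\gg 0$ down to $i=j$ then gives the vanishing for all $i\geq j$, completing the proof.

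No step here is really an obstacle: the only mild subtlety is verifying that the hypotheses of Theorem \ref{thm:cohombasechange}(4) are available (namely, that $\M$ is coherent and flat over $\Sc$, which is part of the corollary's assumption, and that the fibres have finite cohomological dimension, guaranteed by properness and the local noetherianity/finite type conventions of the paper).
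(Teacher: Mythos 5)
Your proof is correct and follows essentially the same route as the paper's: a descending induction seeded by the dimension bound of Proposition \ref{prop:cohombound} and driven by part (4) of the cohomology base change Theorem \ref{thm:cohombasechange}. You have merely made explicit the index bookkeeping (vanishing at degree $i+1$ yields the isomorphism $\varphi^i_s$, whose source vanishes for $i\ge j$ by hypothesis) that the paper's one-line proof leaves implicit.
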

\begin{proof} It follows from Theorem \ref{thm:cohombasechange} by descending induction on   $j$, taking into account that $H^i(X_s,\M_s)=0$ for every $i\ge \dim X_s$ and every point $s\in S$ by Proposition \ref{prop:cohombound}.
\end{proof}

\begin{corol}[Boundedness of higher direct images]\label{cor:cohombound2} Let $f\colon \Xcal \to \Sc$ be a proper morphism of noetherian superschemes, $\M$ a coherent sheaf on $\Xcal$ {flat over $\Sc$}. Let $r$ denote the maximum of the dimensions $\dim X_s$ of the bosonic fibres, which is well defined as $\Sc$ is noetherian.  Then $R^if_\ast\M=0$ for $i>r$.
\end{corol}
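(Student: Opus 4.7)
The plan is to derive the vanishing $R^i f_\ast \M = 0$ for $i > r$ stalk-by-stalk from the corresponding vanishing of the fibre cohomologies, which comes for free from the dimension bound on the underlying bosonic spaces. Both flatness of $\M$ over $\Sc$ and properness of $f$ are already in the hypotheses, so the cohomology base change apparatus of Theorem \ref{thm:cohombasechange} is directly applicable, and the statement should follow as a packaging of it with Proposition \ref{prop:cohombound}.

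More concretely, fix $s \in S$ and $i > r$. Since $i > r \geq \dim X_s$, applying the cohomological dimension bound of Proposition \ref{prop:cohombound} to the bosonic fibre $X_s$ gives $H^i(X_s, \M_s) = 0$. The base change morphism
$$\varphi^i_s \colon (R^i f_\ast \M)_s \longrightarrow H^i(X_s, \M_s) = 0$$
then lands in the zero module, hence is trivially surjective. By Theorem \ref{thm:cohombasechange}(1), $\varphi^i_s$ is in fact an isomorphism, and the same conclusion holds throughout an open neighbourhood of $s$; in particular $(R^i f_\ast \M)_s = 0$. Since $s \in S$ was arbitrary, this forces the coherent sheaf $R^i f_\ast \M$ to vanish on all of $\Sc$.

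I do not foresee any real obstacle, since the corollary is essentially a direct combination of Theorem \ref{thm:cohombasechange}(1) with the classical cohomological dimension bound. An equally good alternative route is to invoke Theorem \ref{thm:cohombasechange}(4) with $i+1$ in place of $i$: for $i > r$ one has $i+1 > r \geq \dim X_s$, so $H^{i+1}(X_s, \M_s) = 0$, and (4) yields that $\varphi^i_s$ is already an isomorphism; combining with $H^i(X_s, \M_s) = 0$ again gives $(R^i f_\ast \M)_s = 0$. The only small point to keep in mind is the interpretation of $(R^i f_\ast \M)_s$ in the theorem statement—whether stalk or residue-field base change—but in either reading, standard Nakayama-type arguments for coherent sheaves over noetherian superrings propagate the vanishing as required.
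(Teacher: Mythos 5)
Your proof is correct and follows essentially the same route as the paper: the paper combines Proposition \ref{prop:cohombound} with Theorem \ref{thm:cohombasechange}(4) to kill the fibres $(R^if_\ast\M)_s$ for $i>r$ and then concludes by super Nakayama, which is precisely your alternative route, while your main route via part (1) (trivial surjectivity onto the zero module) is an equivalent variant of the same mechanism. You also correctly flagged the one point of care — that $(R^if_\ast\M)_s$ is the residue-field fibre, so the final globalization needs the super Nakayama lemma, exactly as the paper invokes it.
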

\begin{proof} For every point $s$ one has $(R^i f_\ast\M)_s=0$ for $i>r$ by cohomology boundedness (Proposition \ref{prop:cohombound}) and by (4) of Theorem \ref{thm:cohombasechange}. Then $R^if_\ast\M=0$ for $i>r$ by super Nakayama (\cite{BBH91,CarCaFi11} or \cite[6.4.5]{We09}).
\end{proof}

One can also deduce the local freeness of the direct images of relatively acyclic coherent sheaves, assuming that the latter are \emph{flat over the base}:

\begin{prop}\label{prop:locfree}
Let $f\colon \Xcal \to \Sc$ be a proper morphism of locally noetherian superschemes and $\M$ a coherent sheaf on $\Xcal$ flat over $\Sc$.
\begin{enumerate}
\item 
 If either $R^if_\ast\M=0$ for every $i>0$, or $H^i(X_s,\M_s)=0$ for every $i>0$ and every point $s\in S$, then $f_\ast\M$ is locally free.  Moreover, for every morphism $\Tc\to \Sc$, the base change morphism $(f_\ast\M)_\Tc\to f_{\Tc\ast}(\M_\Tc)$ is an isomorphism.
 \item If $f$ is superprojective and $\Sc$ is noetherian then $f_\ast\M(r)$ is locally free for $r\gg0$.
\end{enumerate}
\end{prop}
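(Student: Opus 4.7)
The argument naturally splits along the two parts of the statement. For part (1), I would first reduce to the affine noetherian case $\Sc=\SSpec\As$. Both hypotheses then yield the same fibrewise vanishing $H^i(X_s,\M_s)=0$ for every $i>0$ and every $s\in S$: under the first, this is Corollary \ref{cor:cohombound1} applied with $j=1$, while under the second it is the assumption itself. A descending induction using Theorem \ref{thm:cohombasechange}(4) furthermore shows that every base change morphism $\varphi^i_s\colon(R^if_\ast\M)_s\to H^i(X_s,\M_s)$ is an isomorphism, so also the first condition holds locally by super Nakayama; but it is really the fibrewise vanishing that I shall exploit in what follows.

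The heart of the argument is showing local freeness of $f_\ast\M$ together with base change. I would proceed via the Grothendieck-Mumford complex of Proposition \ref{prop:GMcomplex}: localising $\As$ further at a point $s$, one obtains a finite complex $K^\bullet$ of free $\As$-modules with $H^i(K^\bullet\otimes_\As N)\simeq H^i(X,\M\otimes_\As N)$ for every $\As$-module $N$. The key technical step is the following trimming lemma: since $H^i(K^\bullet\otimes_\As\kappa(s))=0$ for $i>0$, the top differential $\partial_{s-1}$ is surjective modulo the maximal ideal, hence surjective by Nakayama; its kernel is therefore a direct summand of the free module $K_{s-1}$, so itself free, and replacing the tail $K_{s-1}\to K_s$ by $\ker\partial_{s-1}$ yields a strictly shorter complex with the same cohomology in every degree. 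Iterating reduces $K^\bullet$ to a single free module in degree $0$, which must be $f_\ast\M$, proving local freeness. Because each trimming step is realised by a splitting of free modules, it survives any tensor product, so that $H^0(K^\bullet\otimes_\As N)\simeq H^0(K^\bullet)\otimes_\As N$ for every $N$; applying this with $N$ the structure ring of an affine open piece of $\Tc$ and gluing gives the base change isomorphism $(f_\ast\M)_\Tc\simeq f_{\Tc\ast}(\M_\Tc)$ for any morphism $\Tc\to\Sc$.

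For part (2), since $f$ is superprojective and $\Sc$ is noetherian, Serre's Theorem \ref{thm:serre}(2) applied to $\M$ produces an integer $r_0$ such that $R^if_\ast\M(r)=0$ for all $i>0$ and $r\geq r_0$. Since $\Oc_\Xcal(r)$ is a line bundle, $\M(r)=\M\otimes_{\Oc_\Xcal}\Oc_\Xcal(r)$ inherits flatness over $\Sc$ from $\M$, so part (1) applies to $\M(r)$ and gives local freeness of $f_\ast\M(r)$ for $r\geq r_0$.

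The main obstacle is the local-freeness step in part (1). Theorem \ref{thm:cohombasechange}(3) together with super Nakayama only yields local freeness of the higher direct images $R^if_\ast\M$ for $i\geq 1$ (which in our setting are all zero), but does not directly give local freeness of $f_\ast\M$ itself; moreover, the base change statement must hold for arbitrary, possibly non-flat, morphisms $\Tc\to\Sc$, so a naive flat base change argument is insufficient. The explicit Grothendieck-Mumford trimming handles both difficulties uniformly, precisely because the required splittings survive any tensor operation.
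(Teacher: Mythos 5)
Your proof is correct, but it runs on different machinery than the paper's. Both arguments start from the Grothendieck--Mumford complex $K^\bullet$ of Proposition \ref{prop:GMcomplex}, but where you localize at a point and explicitly trim the complex from the top (Mumford's classical splitting trick: $H^i(K^\bullet\otimes_\As\kappa(s))=0$ for $i>0$ forces $\partial_{i-1}$ surjective by super Nakayama, the surjection onto the free module $K_i$ splits, and the split acyclic summand can be discarded without changing any $H^j(K^\bullet\otimes_\As N)$), the paper never touches the differentials directly: it invokes the abstract Nakayama Lemma for half exact functors (Proposition \ref{prop:nakayama}) together with Lemma \ref{lem:ti}, arguing that the vanishing hypotheses give $T^1=0$, hence $T^0$ is right exact, hence $T^0=f_\ast\M\otimes_\As-$, and then flatness of $\M$ makes $T^0$ also left exact, so $f_\ast\M$ is flat and locally free. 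The two proofs also diverge on base change: your trimmed complex gives the universal identity $T^0(N)\simeq f_\ast\M\otimes_\As N$ for \emph{all} $N$ at once, from which the isomorphism $(f_\ast\M)_\Tc\iso f_{\Tc\ast}(\M_\Tc)$ follows by taking $N$ to be the coordinate ring of an affine piece of $\Tc$ and gluing; the paper instead reruns its argument for the base-changed family $\Xcal_\Tc\to\Tc$ (using flatness of $\M_\Tc$), obtains that both sheaves are locally free with isomorphic fibres at every point, and concludes by Nakayama. Your route is more self-contained and makes the universality of the base change isomorphism transparent, at the cost of redoing concretely what Lemma \ref{lem:ti} and Proposition \ref{prop:nakayama} package abstractly (your trimming is essentially a proof of those two results in the case at hand); the paper's route is shorter given its prior investment in the half-exact-functor formalism, which it reuses throughout Section \ref{s:cohomprop}. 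Part (2) is handled identically in both: Serre's Theorem \ref{thm:serre} plus part (1) applied to $\M(r)$, which inherits flatness since $\Oc_\Xcal(r)$ is a line bundle.
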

\begin{proof} 
(1) We can now assume that $\Sc=\SSpec\As$ and $\As$ noetherian. Reasoning as in the proof of Corollaries \ref{cor:cohombound1} and \ref{cor:cohombound2}, one sees that the two conditions are equivalent and   imply $T^1=0$. Then $T^0$ is right  exact by Lemma \ref{lem:ti} and thus $T^0=f_\ast(\M)\otimes_\As$  by Proposition \ref{prop:nakayama}. Again by Lemma \ref{lem:ti}, $T^0$ is left  exact because $\M$ is flat over $\Sc$, so that $f_\ast(\M)$ is flat over $\As$ and $(f_\ast\M)_s\iso H^0(X_s,\M_s)$ for every point $s\in S$.  Since $\M_\Tc$ is flat over $\Tc$, we also have an isomorphism $(f_{\Tc\ast}\M)_s\iso H^0(X_s,\M_s)$ for every point $s\in T$, and   $f_{\Tc\ast}\M_\Tc$ is locally free. Now $(f_\ast\M)_\Tc\to f_{\Tc\ast}(\M_\Tc)$ is a morphism of locally free sheaves inducing isomorphisms $((f_\ast\M)_\Tc)_s\iso (f_{\Tc\ast}(\M_\Tc))_s$ for every point; then, it is an isomorphism by Nakayama's Lemma (Proposition \ref{prop:nakayama}).

(2) It follows from (1) and Serre's Theorem \ref{thm:serre}.
\end{proof}

\begin{corol}\label{cor:strongly-superproj}
Let $f\colon\Xcal \to \Sc$ be a flat superprojective morphism, where $\Sc$ is noetherian. Then $f$ is strongly superprojective (see {Definition} \ref{def:superprojective}).
\end{corol}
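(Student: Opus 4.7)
The plan is to replace the coherent sheaf in an arbitrary superprojective embedding by its direct image under $f$ of a sufficiently high twist, and then check that this direct image is locally free and that the induced morphism is a closed immersion.

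Since $f$ is superprojective, fix a closed immersion $j\colon \Xcal\hookrightarrow \Ps(\M)$ of $\Sc$-superschemes for some coherent sheaf $\M$ on $\Sc$, and set $\Oc_\Xcal(1):=j^\ast\Oc_{\Ps(\M)}(1)$, a relatively very ample line bundle (Remark \ref{rem:ample}). For every $r>0$ the line bundle $\Oc_\Xcal(r)$ is also relatively very ample, and the corresponding embedding $\Xcal\hookrightarrow\Ps(\SSym^r(\M))$ is obtained by composing $j$ with the closed immersion $\Ps(\M)\hookrightarrow \Ps(\SSym^r(\M))$ associated with the tautological surjection $\pi^\ast\SSym^r(\M)\twoheadrightarrow\Oc_{\Ps(\M)}(r)$.

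First I would apply Serre's Theorem \ref{thm:serre} and Proposition \ref{prop:locfree}(2) to the flat superprojective morphism $f$: for $r$ large enough the sheaf
$$
\Ec:= f_\ast\Oc_\Xcal(r)
$$
is coherent and locally free on $\Sc$, and the adjunction morphism
$
f^\ast\Ec\to \Oc_\Xcal(r)
$
is surjective. This gives a morphism of $\Sc$-superschemes
$$
\psi\colon \Xcal\to \Ps(\Ec)=\widetilde\Ps(\Ec^\ast)
$$
with $\psi^\ast\Oc_{\Ps(\Ec)}(1)\simeq \Oc_\Xcal(r)$ (Proposition \ref{prop:projmor}).

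Next I would compare $\psi$ with the $r$-th Veronese of $j$. The closed immersion $\Xcal\hookrightarrow\Ps(\M)$ corresponds to a surjection $f^\ast\M\twoheadrightarrow\Oc_\Xcal(1)$, which after applying $\SSym^r$ and passing to $f_\ast$ yields a morphism of $\Oc_\Sc$-modules
$$
\SSym^r(\M)=\pi_\ast\Oc_{\Ps(\M)}(r) \longrightarrow f_\ast\Oc_\Xcal(r)=\Ec.
$$
Using the exact sequence $0\to \Ic_{\Xcal/\Ps(\M)}\to \Oc_{\Ps(\M)}\to j_\ast\Oc_\Xcal\to 0$, tensoring with $\Oc_{\Ps(\M)}(r)$ and applying $\pi_\ast$, Serre vanishing (Theorem \ref{thm:serre}(2)) gives $R^1\pi_\ast(\Ic_{\Xcal/\Ps(\M)}(r))=0$ for $r\gg 0$, so this morphism is surjective. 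Hence it produces a closed immersion
$$
\iota\colon \Ps(\Ec)=\Ps(f_\ast\Oc_\Xcal(r))\hookrightarrow \Ps(\SSym^r(\M)),
$$
and by construction $\iota\circ\psi$ coincides with the Veronese composition $\Xcal\hookrightarrow\Ps(\M)\hookrightarrow\Ps(\SSym^r(\M))$, which is a closed immersion.

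Finally, since $\iota\circ\psi$ is a closed immersion and $\iota$ is separated (indeed itself a closed immersion), $\psi$ is a proper monomorphism, and so a closed immersion of $\Sc$-superschemes. Thus $f$ factors through a closed immersion into $\widetilde\Ps(\Ec^\ast)=\Ps(\Ec)$ with $\Ec$ locally free on $\Sc$, proving that $f$ is strongly superprojective. The main technical point is the surjectivity of $\SSym^r(\M)\to f_\ast\Oc_\Xcal(r)$, which relies on combining Serre vanishing on $\Ps(\M)$ with flatness of $\Xcal$ over $\Sc$; once this is in place, the remaining steps are formal.
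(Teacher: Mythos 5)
Your proposal is correct in substance but takes a genuinely different route from the paper's. The paper never returns to the ambient $\Ps(\M)$: after the same step 1 (Theorem \ref{thm:serre} plus Proposition \ref{prop:locfree}(2), which is the only place flatness enters in either argument), it proves that the multiplication maps $f_\ast\Oc_\Xcal(r)\otimes f_\ast\Oc_\Xcal(n)\to f_\ast(\Oc_\Xcal(r+n))$ are surjective for $n\ge n_0$, deduces that the relative section algebra $\bigoplus_{m\ge 0}f_\ast\Oc_\Xcal(md)$ with $d=n_0r$ is generated in degree one, and obtains the closed immersion $\Xcal\hookrightarrow\Ps(f_\ast\Oc_\Xcal(d))$ from the resulting surjection $\SSym(f_\ast\Oc_\Xcal(d))\to\bigoplus_m f_\ast\Oc_\Xcal(md)$ (implicitly identifying $\Xcal$ with the relative $\SProj$ of its section algebra). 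You instead keep the given embedding $j\colon\Xcal\hookrightarrow\Ps(\M)$, produce a surjection $\SSym^r(\M)\to\Ec$, and get closed-immersion-ness of $\psi$ by cancellation against the Veronese; this avoids the multiplication-map analysis and uses the twist $r$ itself rather than the possibly larger $d=n_0r$. One stylistic improvement: since $\iota$ is itself a closed immersion, you do not need the fact (nowhere established in the paper for superschemes) that a proper monomorphism is a closed immersion; elementary cancellation suffices, as $\iota\circ\psi$ being a closed immersion forces $\psi$ to be a homeomorphism onto a closed subset with $\Oc_{\Ps(\Ec)}\to\psi_\ast\Oc_\Xcal$ surjective locally.

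There is, however, one step that needs repair: you assert $\SSym^r(\M)=\pi_\ast\Oc_{\Ps(\M)}(r)$, but for $\M$ coherent and not locally free the canonical map $\SSym^r(\M)\to\pi_\ast\Oc_{\Ps(\M)}(r)$ need not be an isomorphism (nor even surjective) at a given $r$; it is so only for $r\gg0$, and this is not among the results you quote. Your argument only needs surjectivity of the composite $\SSym^r(\M)\to\pi_\ast\Oc_{\Ps(\M)}(r)\to f_\ast\Oc_\Xcal(r)$; the second arrow is handled correctly by $R^1\pi_\ast(\Ic_{\Xcal/\Ps(\M)}(r))=0$, and the first can be repaired locally on the noetherian base: present $\M_\Ucal$ as a quotient $\Oc_\Ucal^{p,q}\twoheadrightarrow\M_\Ucal$, so that $\Ps(\M_\Ucal)$ is a closed sub-superscheme of $\Ps^{p,q}_\Ucal$ compatibly with the $\Oc(1)$'s (Proposition \ref{prop:spbundle}); then $\SSym^r(\Oc_\Ucal^{p,q})\to\pi'_\ast\Oc_{\Ps^{p,q}_\Ucal}(r)$ is an isomorphism (Lemma \ref{lem:odensec}), and Serre vanishing applied to the ideal of $\Ps(\M_\Ucal)$ in $\Ps^{p,q}_\Ucal$ makes the composite onto $\pi_\ast\Oc_{\Ps(\M_\Ucal)}(r)$ surjective for $r\gg0$; since this composite factors through $\SSym^r(\M_\Ucal)$, the latter surjects as well, and noetherianity of $\Sc$ lets you choose one $r$ working on a finite affine cover. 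With this patch your proof is complete; it is fair to note that the paper's own proof suppresses a comparable amount of detail at the step identifying $\Xcal$ with $\SProj(\bigoplus_m f_\ast\Oc_\Xcal(md))$.
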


\begin{proof} Let {$\Oc_{\Xcal/\Sc}(1)$} be a relatively very ample line bundle for $\Xcal/\Sc$. By Proposition {\ref{prop:locfree}(2)} and Theorem \ref{thm:serre}, there exists $r>0$ such that
{$f_\ast\Oc_{\Xcal/\Sc}(r)$} is locally free and the natural map {$f^\ast(f_\ast\Oc_{\Xcal/\Sc}(r))\to \Oc_{\Xcal/\Sc}(r)$} is surjective. Hence, by Serre's Theorem \ref{thm:serre} applied to the kernel and the projection formula, there exists $n_0$ such that for $n\ge n_0$, the morphism
$$
f_\ast\Oc_{\Xcal/\Sc}(r)\otimes f_*\Oc_{\Xcal/\Sc}(n)\to f_*(\Oc_\Xcal/\Sc(r+n))$$
is surjective. This implies that
$$(f_*\Oc_{\Xcal/\Sc}(r))^{\otimes m}\otimes f_\ast\Oc_{\Xcal/\Sc}(n)\to f_\ast(\Oc_{\Xcal/\Sc}(rm+n))
$$
is surjective for $n\ge n_0$ and $m>0$. Hence,
$$(f_\ast\Oc_{\Xcal/\Sc}(n_0r))^{\otimes m}\to f_\ast\Oc_{\Xcal/\Sc}(mn_0r)
$$
is surjective for any $m>0$. Thus, we get a surjection of sheaves of graded algebras
$$\SSym(f_\ast\Oc_{\Xcal/\Sc}(d))\to \bigoplus_{m\ge 0}f_\ast\Oc_{\Xcal/\Sc}(md)\,,
$$
where $d=n_0r$. This induces a {closed immersion} of $\Xcal$ into the projectivization of the dual vector bundle to {$f_\ast\Oc_{\Xcal/\Sc}(d)$}.
\end{proof}

We can now state the super semicontinuity theorem, also given in \cite[Theorem 7.3]{MoZh19}, whose proof is the same as in the classical case (see, for instance, \cite[Thm.~III.12.8]{Hart77} or \cite[7.7.5 and 7.6.9]{EGAIII-II}).
Recall that we have equipped $\Z\times\Z$  with the natural partial order given by Equation \eqref{eq:order}.

\begin{thm}[Semicontinuity]\label{thm:semicontinuity}  Let $f\colon \Xcal \to \Sc$ be a proper morphism of locally noetherian superschemes, and $\M$ a coherent sheaf on $\Xcal$, flat over $\Sc$. One has:
\begin{enumerate}
\item
for every integer $i$ the function 
\begin{eqnarray} S &\to & \Z\times\Z \\
s&\mapsto &\dim_{\kappa(s)} H^i(X_s, \M_s)\,;
\end{eqnarray}
is upper semicontinuous on $S$, where $\Z\times\Z$ is equipped with the natural partial order.
\item the function $s\mapsto \sum_{i\ge 0} \dim_{\kappa(s)} H^i(X_s, \M_s)$ is  locally constant on $S$.
\end{enumerate}
\end{thm}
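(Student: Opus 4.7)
The plan is to reduce to an affine Noetherian base and then transfer both assertions to purely linear-algebraic facts about the Grothendieck-Mumford complex of Proposition \ref{prop:GMcomplex}; splitting that complex by parity reduces everything to two parallel copies of the classical argument.

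Both claims being local on $S$, I would first shrink $\Sc$ to $\SSpec\As$ with $\As$ Noetherian and apply Proposition \ref{prop:GMcomplex} to obtain a bounded complex $K^\bullet$ of finitely generated locally free $\Z_2$-graded $\As$-modules together with a functorial isomorphism $H^i(X,\M\otimes_\As N)\simeq H^i(K^\bullet\otimes_\As N)$ for every $\Z_2$-graded $\As$-module $N$. After further localization each $K_i$ is free. Specializing $N=\kappa(s)$ gives $H^i(X_s,\M_s)\simeq H^i(K^\bullet\otimes_\As\kappa(s))$ as $\Z_2$-graded $\kappa(s)$-vector spaces. Since the differentials preserve the $\Z_2$-grading, $K^\bullet$ decomposes as $K^\bullet_+\oplus K^\bullet_-$, each a complex of finitely generated free modules over the ordinary commutative Noetherian ring $\As_+$; and as $\As_+\to A$ has nilpotent kernel, $\Spec\As_+$ is canonically homeomorphic to $S$ with matching residue fields, so classical semicontinuity arguments on $\Spec\As_+$ transport directly to $S$.

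For part (1), I would apply to each ordinary complex $K^\bullet_\epsilon$ ($\epsilon\in\{+,-\}$) the standard identity
\[ \dim_{\kappa(s)}H^i(K^\bullet_\epsilon\otimes\kappa(s)) = \rk K_{i,\epsilon}-\rk(\partial_{i,\epsilon}\otimes\kappa(s))-\rk(\partial_{i-1,\epsilon}\otimes\kappa(s)) \]
together with the lower semicontinuity of matrix ranks, yielding upper semicontinuity of each of the two parity components of $s\mapsto\dim H^i(X_s,\M_s)$ in the classical sense. The locus in $S$ where $\dim_{\kappa(s)}H^i(X_s,\M_s)\geq(a,b)$ in the order of \eqref{eq:order} is then the intersection of the two corresponding classical closed subsets, and in particular closed. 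Part (2) is the Euler characteristic of the complex: the alternating sum
\[ \sum_{i\geq 0}(-1)^i\dim_{\kappa(s)}H^i(X_s,\M_s)=\sum_{i\geq 0}(-1)^i\rk K_i \]
is a pair of integers independent of $s$, hence locally constant on $S$.

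The only potential friction is verifying that the complex produced by Proposition \ref{prop:GMcomplex} is $\Z_2$-graded with parity-preserving differentials; granted that, no super-specific input is needed beyond the parity decomposition, and the remainder of the argument is an essentially verbatim transcription of the classical proof of \cite[Thm.~III.12.8]{Hart77} or \cite[7.7.5, 7.6.9]{EGAIII-II}.
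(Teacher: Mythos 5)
Your reduction to ``two parallel copies of the classical argument'' breaks at the parity-splitting step, and the failure is genuine, not cosmetic. First, the parity components of a free $\As$-module are in general neither free nor projective over $\As_+$: for $\As=k[\theta_1,\theta_2]$ one has $\As_+=k[\epsilon]/(\epsilon^2)$ with $\epsilon=\theta_1\theta_2$, while $\As_-\simeq(\As_+/(\epsilon))^{\oplus 2}$; hence for $K_i=\As^{p_i,q_i}$ the module $K_{i,+}=\As_+^{p_i}\oplus\As_-^{q_i}$ is not projective over $\As_+$ whenever $q_i>0$, so ``$\rk K_{i,\epsilon}$'' is not defined (the fibre dimension $\dim_{\kappa(s)}K_{i,\epsilon}\otimes_{\As_+}\kappa(s)$ can jump with $s$) and the matrix-rank semicontinuity has nothing to attach to. Second, and more fundamentally, tensoring the parity pieces over $\As_+$ computes the wrong fibres: since the odd scalars act \emph{across} parities and all lie in $\mf_s$, one has $(K_i\otimes_\As\kappa(s))_+=K_{i,+}/(\mf_{s,+}K_{i,+}+\As_-K_{i,-})$, a proper quotient of $K_{i,+}\otimes_{\As_+}\kappa(s)=K_{i,+}/\mf_{s,+}K_{i,+}$ in general. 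Concretely, for $K_i=\Pi\As$ with $\As=k[\theta_1,\theta_2]$ one gets $(K_i\otimes_\As\kappa)_+=0$ but $K_{i,+}\otimes_{\As_+}\kappa=k^2$. So the identification $H^i(X_s,\M_s)_{\pm}\simeq H^i(K^\bullet_{\pm}\otimes_{\As_+}\kappa(s))$ on which your proof of (1) rests is false. (Your part (2) survives if one bypasses the splitting entirely: $\dim_{\kappa(s)}K_i\otimes_\As\kappa(s)=(p_i,q_i)$ is constant because $K_i$ is free \emph{as a super-module over $\As$}, and the alternating sum of super-dimensions is then constant, which is exactly how the paper concludes (2).)

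The paper avoids the issue by never leaving $\As$: with $W_i=\coker\partial_{i-1}$ as in Lemma \ref{lem:ti}, it has for each $s$ the exact sequence $0\to T^i(\kappa(s))\to W_i\otimes_\As\kappa(s)\to K_{i+1}\otimes_\As\kappa(s)\to W_{i+1}\otimes_\As\kappa(s)\to 0$, whence $\dim T^i(\kappa(s))=\dim(W_i\oplus W_{i+1})\otimes_\As\kappa(s)-\dim K_{i+1}\otimes_\As\kappa(s)$; the subtracted term is constant since $K_{i+1}$ is free over $\As$, and the locus where $\dim(W_i\oplus W_{i+1})\otimes_\As\kappa(s)>(p,q)$ is closed by the super Nakayama lemma, giving (1) directly in the order \eqref{eq:order}. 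If you prefer to keep your Hartshorne-style rank identity, the correct repair is to apply it to the super complex $K^\bullet\otimes_\As\kappa(s)$ itself: the odd entries of the matrix of $\partial_i$ die modulo $\mf_s$, so $\partial_i\otimes_\As\kappa(s)$ is block-diagonal with blocks the reductions of the even-entry submatrices; the rank of each block is lower semicontinuous on $S$ (nonvanishing of a minor is detected after reduction modulo the nilpotent ideal $\Jc$, and $S$ carries the same topology), and then your formula with $\rk K_i=(p_i,q_i)$ yields (1) componentwise, hence for the order \eqref{eq:order}. As written, however, the pivotal step of your argument does not hold.
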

\begin{proof}
The question is local, so that we can assume $\Sc=\SSpec\As$ with $\As$ noetherian. 
{For every point $s\in S$ and every index $i$, we have an exact sequence
\begin{equation}\label{eq:eq}
 0 \to T^i(\kappa(s)) \to W_i\otimes_\As\kappa(s) \to  K_{i+1}\otimes_\As\kappa(s)\,,
 \end{equation}
 where $K_i$ are the terms of the Grothendieck-Mumford complex (Proposition \ref{prop:GMcomplex}) and $W_i=\coker\partial_{i-1}$ (as in Lemma \ref{lem:ti}). Moreover, we have and exact sequence 
$$
 W_i \to K_{i+1}\to  W_{i+1} \to 0\,,
$$
so that Equation \ref{eq:eq} can be completed to an exact sequence
\begin{equation}\label{eq:semicontinuity}
 0 \to T^i(\kappa(s)) \to W_i\otimes_\As\kappa(s) \to  K_{i+1}\otimes_\As\kappa(s)\to W_{i+1}\otimes_\As\kappa(s) \to 0\,.
\end{equation}
}

Since $\dim K_{i+1}\otimes_\As\kappa(s)$ is constant because $K_{i+1}$ is free, and, for every $(p,q)$, the set of points   $s$ such that  $\dim (W_i\oplus W_{i+1})\otimes_\As \kappa(s)> (p,q)$ is closed by super Nakayama (\cite{BBH91,CarCaFi11} or \cite[6.4.5]{We09}), one proves (1).
To prove (2) it is enough to take alternate sums in Equation \eqref{eq:semicontinuity}.
\end{proof}

\begin{lemma}\label{lem:red}
Let $\Sc$ be a noetherian superscheme with $S$ reduced, and $\Nc$ a coherent sheaf on $\Sc$ such that $\dim \Nc\otimes_{\Oc_\Sc}\kappa(s)=(p,q)$ for some integers $p$, $q$ and for every point $s\in S$. Then $\Nc$ is locally free of rank $(p,q)$. 
\end{lemma}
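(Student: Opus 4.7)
The plan is to follow the classical pattern: reduce to an affine local setup, use super Nakayama to produce a presentation $0\to\Kc\to\Oc_\Sc^{p,q}\to\Nc\to 0$ in a neighbourhood of a given point, and then prove the kernel $\Kc$ vanishes by localising at the minimal primes of $A$ and exploiting the reducedness hypothesis.

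First I would reduce to the case $\Sc=\SSpec\As$ with $\As$ noetherian, $A=\As/J$ reduced, and $\Nc=\widetilde{N}$ for a finitely generated $\Z_2$-graded $\As$-module $N$. Fix a point $s_0\in S$ with corresponding prime $\pf_0\subset A$. By the super Nakayama lemma (already invoked earlier in the paper, for instance in the proof of Theorem \ref{thm:cohombasechange}), I can lift a $\Z_2$-graded basis of the fibre $N\otimes_\As\kappa(\pf_0)$, which has dimension $(p,q)$, to elements $e_1,\dots,e_p\in N_+$ and $f_1,\dots,f_q\in N_-$ that generate $N$ in an open neighbourhood of $s_0$. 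After shrinking $\Sc$, this yields a short exact sequence
\begin{equation*}
0\to\Kc\to\Oc_\Sc^{p,q}\to\Nc\to 0
\end{equation*}
with $\Kc$ coherent, and the lemma is reduced to showing $\Kc=0$.

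For this, consider any minimal prime $\qf$ of $\As$ (equivalently of $A$, since $J$ is nilpotent). Reducedness of $A$ yields $A_\qf=\kappa(\qf)$. Localising the above sequence at $\qf$ and tensoring with $\kappa(\qf)=\As_\qf/J\As_\qf$, the hypothesis on fibre dimensions makes the induced surjection $\kappa(\qf)^{p,q}\to N\otimes_\As\kappa(\qf)$ an isomorphism, so the image of $\Kc\otimes\kappa(\qf)$ in $\kappa(\qf)^{p,q}$ vanishes; combined with super Nakayama applied over the local Artinian superring $\As_\qf$, this should yield $\Kc_\qf=0$. To globalise, I would use prime avoidance: for any local section $k$ of $\Kc$, the vanishing $\Kc_\qf=0$ at each of the finitely many minimal primes $\qf_i$ of $A$ implies that $\Ann_A(k)$ is not contained in any $\qf_i$, and hence not in their union either. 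Since $A$ is reduced, this union equals the set of zero-divisors of $A$, so $\Ann_A(k)$ contains a regular element and $k=0$, giving $\Kc=0$ as required.

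The main obstacle is the step $\Kc_\qf=0$ at a minimal prime. In the classical case $\As_\qf$ would itself be a field, and the conclusion is immediate from a dimension count; here $\As_\qf$ has a non-trivial nilpotent odd maximal ideal, and one must show that the $\Tor^1_{\As_\qf}(N_\qf,\kappa(\qf))$ term appearing in the long exact Tor sequence does not obstruct the vanishing of $\Kc\otimes\kappa(\qf)$. This is where the reducedness of the bosonic quotient $A$ must be carefully exploited in tandem with the nilpotent super-structure of $\As$, rather than replaced by a purely formal dimension count.
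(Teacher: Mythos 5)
Your first half coincides with the paper's: reduce to $\Sc=\SSpec\As$ affine, lift a homogeneous basis of the fibre at a given point by super Nakayama, and obtain after shrinking a presentation $0\to\Kc\to\Oc_\Sc^{p,q}\xrightarrow{\phi}\Nc\to 0$, so that everything rests on $\Kc=0$. From there the paper argues in one stroke: since $\kappa(s')^{p,q}\to\Nc\otimes\kappa(s')$ is a surjection between spaces of the same dimension $(p,q)$, hence an isomorphism, one gets $\Kc\subset\pf_{s'}\cdot\Oc_\Sc^{p,q}$ for every point $s'$, and it then concludes $\Kc=0$ from reducedness; you instead detour through the minimal primes of $A$ and a prime-avoidance globalization.

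The obstacle you flag at the minimal-prime step is a genuine gap, and it cannot be closed along the lines you sketch. At a minimal prime $\qf$, reducedness of $A$ gives $A_\qf=\kappa(\qf)$, but $\As_\qf$ is an Artinian local \emph{super}ring with maximal ideal $J_\qf\neq 0$, and the fibre hypothesis only says that $\Kc_\qf\otimes\kappa(\qf)\to\kappa(\qf)^{p,q}$ is the zero map, i.e.\ $\Kc_\qf\subseteq J_\qf\cdot\As_\qf^{p,q}$. The long exact Tor sequence then identifies $\Kc_\qf\otimes_{\As_\qf}\kappa(\qf)$ with the image of $\Tor_1^{\As_\qf}(\Nc_\qf,\kappa(\qf))$, whose vanishing is equivalent to the freeness of $\Nc_\qf$ that you are trying to prove, so super Nakayama enters only circularly. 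Worse, the intermediate claim is actually false under the stated hypotheses: for $\As=k[\theta]$ and $N=\As/(\theta)$ the fibre dimension is the constant $(1,0)$ at the unique point of the reduced $S=\Spec k$, yet $\Kc=(\theta)\neq 0$ and $N$ is not free; so no argument using only reducedness of $A$ and constancy of the fibre dimensions at ordinary points can yield $\Kc_\qf=0$ --- those dimensions simply do not see the odd directions. Your globalization step has a secondary flaw of the same nature: an element regular in $A$ need not act regularly on $\As$ (take $x$ in $\As=k[x,\theta]/(x\theta)$, whose bosonic reduction $k[x]$ is reduced), so $\Ann(k)$ meeting no minimal prime of $A$ does not kill $k$. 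Note, finally, that this example also puts pressure on the concluding line of the paper's own proof: since every graded prime contains the odd ideal, $\bigcap_{s'}\pf_{s'}=\Jc\neq 0$, and ``$\ker\phi\subset\pf_{s'}\cdot\Oc^{p,q}$ for all $s'$ plus $V$ reduced'' yields only $\ker\phi\subseteq\Jc\cdot\Oc^{p,q}$. Any complete argument must import information beyond fibre dimensions at ordinary points, e.g.\ their behaviour under base changes $\SSpec\Bs\to\Sc$ with $\Bs_-\neq 0$, or the exactness properties of the functors $T^i$ that are available in the intended application (cf.\ Lemma \ref{lem:ti}).
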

\begin{proof} If $(n_1,\dots,n_p,\eta_1,\dots, \eta_q)$ is a family of $p$ even and $q$ odd sections of $\Nc$ on an open neighbourhood $U$ of a point $s$ such that the classes $(\bar n_1,\dots,\bar n_p,\bar \eta_1,\dots, \bar \eta_q)$ in $\Nc\otimes_{\Oc_\Sc}\kappa(s)$ are a basis, then by super Nakayama (\cite{BBH91,CarCaFi11} or \cite[6.4.5]{We09}), there is an open sub-superscheme $\Vc$ with $s\in V\subseteq U$ such that $\Nc_\Vc$ is generated by $(n_1,\dots,n_p,\eta_1,\dots, \eta_q)$. Then, there is an exact sequence
$$
\Oc_\Vc^{p,q} \xrightarrow{\phi} \Nc_\Vc \to 0\,.
$$
Since  $\dim \Nc\otimes_{\Oc_\Sc}\kappa(s')=(p,q)$ for every point $s'$, by shrinking $\Vc$ if necessary, we have that $\ker \phi\subset \pf_{s'}\cdot \Oc_\Vc^{p,q}$ for every point point $s'\in V$. Then $\ker \phi=0$ because $V$ is reduced.
\end{proof}
\begin{thm}[Grauert]\label{thm:grauert} Under the same hypotheses of Theorem \ref{thm:semicontinuity}, and assuming that $S$ is reduced,  
the function $s\mapsto \dim_{\kappa(s)} H^i(X_s, \M_s)$ is locally constant for some $i\ge 0$  if and only if  $R^if_\ast\M$ is locally free  and $(R^if_\ast\M)_s \simeq H^i(X_s,\M_s)$ for every point $s$ in $S$. If these conditions are satisfied, the base change morphism
 $ (R^{i-1} f_\ast\M)_s \to H^{i-1}(X_s,\M_s)$ is an isomorphism as well.
\end{thm}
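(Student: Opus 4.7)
The plan is to work locally and to reuse the machinery of the Grothendieck--Mumford complex from Proposition \ref{prop:GMcomplex} together with the half-exact functors $T^i(N)=H^i(K^\bullet\otimes_\As N)$ already set up in Subsection \ref{ss:cohbasechange}. We may assume $\Sc=\SSpec\As$ with $\As$ noetherian and $S$ reduced, so that $T^i(\kappa(s))=H^i(X_s,\M_s)$, $T^i(\As)=R^if_\ast\M$, and the modules $W_j=\coker\partial_{j-1}$ are all at our disposal.

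One direction is immediate: if $R^if_\ast\M$ is locally free and the base-change map $\varphi^i_s$ is an isomorphism at every $s$, then $\dim_{\kappa(s)}H^i(X_s,\M_s)$ equals the local rank of $R^if_\ast\M$ at $s$, which is locally constant on $S$. For the converse, we invoke the exact sequence \eqref{eq:semicontinuity},
\[
0 \to T^i(\kappa(s)) \to W_i\otimes_\As\kappa(s) \to K_{i+1}\otimes_\As\kappa(s) \to W_{i+1}\otimes_\As\kappa(s) \to 0,
\]
already extracted in the proof of semicontinuity. Since $K_{i+1}$ is free the dimension of $K_{i+1}\otimes\kappa(s)$ is constant in $\Z\times\Z$, and $\dim T^i(\kappa(s))$ is locally constant by hypothesis, so the alternating-sum count shows that $\dim(W_i\otimes\kappa(s))+\dim(W_{i+1}\otimes\kappa(s))$ is locally constant. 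Each summand is integer-valued and upper semicontinuous by super Nakayama, so splitting into even and odd components reduces matters to the elementary observation that a sum of two integer-valued upper-semicontinuous functions which is locally constant must have each summand locally constant.

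With $s\mapsto\dim(W_i\otimes\kappa(s))$ and $s\mapsto\dim(W_{i+1}\otimes\kappa(s))$ both locally constant, the reducedness of $S$ combined with Lemma \ref{lem:red} forces $W_i$ and $W_{i+1}$ to be locally free. Lemma \ref{lem:ti} then makes $T^i$ both left and right exact, hence exact, and Proposition \ref{prop:nakayama} provides a functorial isomorphism $T^i(N)\simeq R^if_\ast\M\otimes_\As N$. Because $W_{i+1}$ is locally free, $\Ima(W_i\to K_{i+1})$ is a direct summand of $K_{i+1}$ and hence locally free, so $R^if_\ast\M=\ker(W_i\to K_{i+1})$ splits off $W_i$ and is locally free as well. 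Setting $N=\kappa(s)$ in the functorial identification shows that $\varphi^i_s$ is surjective after tensoring with $\kappa(s)$, whence Theorem \ref{thm:cohombasechange}(1) promotes it to an isomorphism. Finally, the local freeness of $R^if_\ast\M$ combined with Theorem \ref{thm:cohombasechange}(3) at index $i$ gives surjectivity of $\varphi^{i-1}_s$, and part (1) of the same theorem then upgrades it to an isomorphism. The main delicate point is the $\Z_2$-graded bookkeeping in passing from the local constancy of the combined dimension to the local constancy of each $\dim W_j\otimes\kappa(s)$ separately; once this is settled, everything else falls out from the lemmas already in hand.
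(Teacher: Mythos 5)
Your proof is correct and takes essentially the same route as the paper's: the reduction to $\Sc=\SSpec\As$ noetherian, the dimension count in the exact sequence \eqref{eq:semicontinuity}, Lemma \ref{lem:red} to get local freeness of the $W_j$, Lemma \ref{lem:ti} and Proposition \ref{prop:nakayama} to make $T^i$ exact and identify it with $T^i(\As)\otimes_\As-$, and Theorem \ref{thm:cohombasechange} for the statements about $\varphi^i_s$ and $\varphi^{i-1}_s$. The only cosmetic deviations are that the paper applies Lemma \ref{lem:red} once to the direct sum $W_i\oplus W_{i+1}$ (deducing flatness, hence local freeness, of each summand) instead of your componentwise semicontinuity splitting, and it gets local freeness of $R^if_\ast\M$ from flatness via exactness of $T^i$ rather than your explicit local splitting of $K_{i+1}$; both variants are valid.
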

\begin{proof} We can assume that $\Sc=\SSpec\As$ with $\As$ noetherian. Suppose that the function $s\mapsto \dim_{\kappa(s)} H^i(X_s, \M_s)$ is locally constant. Taking dimensions in Equation \eqref{eq:semicontinuity} for every point $s\in S$, and applying Lemma \ref{lem:red}, we have that $W_i\oplus W_{i+1}$ is locally free, and then all summands are flat. By Lemma \ref{lem:ti}, $T^i$ and $T^{i+1}$ are left  exact, so that $T^i$ is also right  exact again by   Lemma \ref{lem:ti}. By Proposition \ref{prop:nakayama}, $T^i(\As)\otimes \iso T^i$ and $T^i(A)=R^if_\ast\M$ is a flat, and then locally free,  $\As$-module. Moreover, for every point $s$ 
we have $$(R^if_\ast\M)_s=T^i(\As)\otimes_\As \kappa(s)\iso H^i(X_s,\M_s)=T^i(\kappa(s)).$$ The rest of the statement follows from cohomology base change, Theorem \ref{thm:cohombasechange}. 
\end{proof}

\subsection{Cohomological flatness in dimension $0$}

In this section we give the definition of cohomological flatness {in dimension} $0$ for morphisms of superschemes, which is necessary in the proof of  the existence of the Picard superscheme, as we shall see in Section \ref{s:picard}.

\begin{defin}
Let $\As$ be a superring. Recall that $J\subset\As$ denotes the ideal generated by odd elements. We say that 
$\As$ is \emph{integral} if $A=\As/J$ is an integral ring (i.e., a domain) and every element of $\As\setminus J$ is not a zero divisor in $\As$. 
In other words, the superring $\As$ is integral if $J$ is the set of zero divisors of $\As$.
\end{defin}

For example, the superring $k[x,\theta]/(x\theta)$ is not integral, while $k[x,\theta_1,\theta_2]/(\theta_1\theta_2)$ is (where $\theta$ and $\theta_i$ are odd variables).
Note that if $A=\As/J$ is integral and all the $A$-modules $J^i/J^{i+1}$ are torsion free, then $\As$ is integral.

\begin{defin}\label{def:geomint} 
A supescheme $\Xcal=(X,\Oc_\Xcal)$ is called integral if $\Oc_\Xcal(U)$ is integral for every open $U$.
A superscheme over a field $k$ is called geometrically integral if for every field extension $k\hookrightarrow \bar k$, where $\bar k$ is algebraically closed, the base change superscheme $\bar\Xcal=\Xcal\times_{\Spec k}\Spec\bar k$ is an integral superscheme.
\end{defin}

Clearly for a integral superscheme $\Xcal$ its bosonization $X$ is an integral scheme. Given a superscheme $\Xcal$ with integral bosonization $X$,
let $i_\eta\colon\eta\to X$ denote the embedding of the general point.
It is easy to see that $\Xcal$ is integral if and only if in addition the natural morphism of sheaves
$$\Oc_\Xcal\to i_{\eta,\ast}i_\eta^\ast\Oc_\Xcal$$
is injective.

The following easy fact, which follows immediately from the definition, will be needed later on.
\begin{prop}\label{prop:xxx} Every  morphism $\Lcl\to\Nc$ of line bundles on an integral superscheme $\Xcal=(X,\Oc_\Xcal)$
 whose bosonic restriction $\rest{\Lcl}X\to\rest{\Nc}X$ to $X$ is nonzero, is injective.
\qed\end{prop}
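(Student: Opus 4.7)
The plan is to reduce the question to a local computation and then invoke the definition of integrality for a superring. First I would pick an affine open cover of $X$ over which both $\Lcl$ and $\Nc$ admit even trivializations; on each such $U$ the morphism $\phi\colon\Lcl\to\Nc$ is given by multiplication by an even element $f\in\Oc_\Xcal(U)$, and the bosonic restriction $\bar\phi$ is correspondingly multiplication by the image $\bar f\in\Oc_X(U)$ of $f$ under the quotient $\Oc_\Xcal(U)\to\Oc_\Xcal(U)/J(U)=\Oc_X(U)$.

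The next step would be to exploit integrality of $X$ to show $\bar f\neq 0$ on every such $U$. Since $X$ is integral it is irreducible, so its generic point $\eta$ lies in $U$; the hypothesis that $\bar\phi$ is nonzero forces the stalk $\bar\phi_\eta$ to be nonzero, and being a morphism between one-dimensional $\kappa(\eta)$-vector spaces, $\bar\phi_\eta$ must then be injective. Hence the germ of $\bar f$ at $\eta$, and a fortiori $\bar f$ itself, is nonzero in the domain $\Oc_X(U)$.

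Finally, since $\bar f\neq 0$ in $\Oc_X(U)=\Oc_\Xcal(U)/J(U)$ we have $f\notin J(U)$, and by the very definition of integrality of the superring $\Oc_\Xcal(U)$ such an element is not a zero divisor. Multiplication by $f$ is therefore injective on $\Oc_\Xcal(U)$, so $\rest{\phi}{U}$ is injective, and since injectivity of a morphism of sheaves is a local property, $\phi$ is injective itself. I do not foresee any real obstacle here: the statement is essentially the content of the superring definition of integrality, which has been tailored precisely to make this kind of argument go through. The only small point to be careful about is to use this definition rather than weaker properties such as reducedness of $X$, which by itself would not rule out odd nilpotent zero divisors in $\Oc_\Xcal(U)$.
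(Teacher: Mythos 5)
Your proposal is correct and is essentially the paper's (unstated) argument: the paper dismisses the proof as immediate from Definition~5.11, and your local computation --- even trivializations, $\bar f\neq 0$ at the generic point since $X$ is integral, hence $f\notin J(U)$ is not a zero divisor --- is exactly that unpacking. One small point to make explicit: injectivity of $\rest{\phi}{U}$ as a \emph{sheaf} morphism requires multiplication by $f|_{\Vc}$ to be injective on $\Oc_\Xcal(V)$ for every nonempty open $V\subseteq U$, not just $V=U$; this follows by the same reasoning because the paper's definition demands integrality of $\Oc_\Xcal(V)$ for \emph{every} open $V$, and $\bar f|_V\neq 0$ since restriction maps on an integral scheme are injective.
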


\begin{defin}\label{def:cohomflat} A morphism $f\colon\Xcal\to\Sc$ of superschemes is cohomologically flat {in dimension} $0$ if the natural morphism
$$
\Oc_\Tc \to f_{\Tc\ast}\Oc_{\Xcal_\Tc}
$$
is an isomorphism for every base change $\Tc\to\Sc$. A superscheme $\Xcal$ over a superring $\As$ is  is cohomologically flat {in dimension} $0$ if so is the natural morphism $\Xcal\to\SSpec\As$.
\end{defin}

\begin{example}\label{ex:cohomflat} {\ }
\begin{enumerate}
\item Let $f\colon X\to S$ be a proper flat morphism of locally noetherian ordinary schemes. If the geometric fibres, that is, fibres over spectra of algebraically closed fields, are irreducible and reduced, then $f$ is cohomologically flat {in dimension} $0$ (see \cite[Proposition 7.8.6 and Corollary 7.8.8]{EGAIII-II}). 
\item If $f\colon \Xcal=\widetilde\Ps(\Ec) \to \Sc$ is the superprojective bundle associated to a locally free sheaf $\Ec$ of rank $(m,n)$ on $\Sc$ (Definition \ref{def:spbundle}) with $m\ge 1$, then $f$ is cohomologically flat {in dimension} $0$. Since $\Xcal_\Tc \simeq \widetilde\Ps(\Ec_\Tc)$, one has only to prove that $\Oc_\Sc \iso f_\ast\Oc_\Xcal$, and this follows from Lemma \ref{lem:odensec} because $\Sc$ can be covered by affine superschemes $\Vc=\SSpec\As$ such that $\Xcal_\Vc\simeq \Ps_\As^{m,n}$ (Proposition \ref{prop:spbundle}).
\item A split superscheme $\Xcal=(X, \bigwedge_{\Oc_X}(\Ec))$ over a field $k$  is cohomologically flat {in dimension} $0$, if and only if $\Gamma(X,\Oc_X)=k$ and  {$\Gamma(X,{\bigwedge}^p_{\Oc_X}(\Ec))=0$ for every $p>0$}.
\end{enumerate}
\end{example}

We are going to extend (1) of Example \ref{ex:cohomflat} of superschemes. The proof is similar to the one for ordinary schemes given in \cite{Kle05}.

Given a superscheme $\Xcal=(X,\Oc_\Xcal)$ over a field $k$ and a field extension $k\hookrightarrow \bar k$, we denote by $\bar\Xcal=(\bar X=X\times_{\Spec k}\Spec\bar k,\Oc_{\bar\Xcal}=\Oc_\Xcal\otimes_k\bar k)$ the base change of $\Xcal$ under $\Spec\bar k\to\Spec k$. 

\begin{prop}\label{prop:cohomflat} Let $f\colon\Xcal \to \Sc$  {be} a proper and flat morphism of locally noetherian superschemes whose fibres are geometrically integral.  Then 
$f$ is cohomologically flat in dimension $0$ if and only if the geometric fibres of $f$ satisfy 
\begin{equation}\label{eq:geom-fibers-H0}
H^0(\Xcal_{\bar s},\Oc_{\Xcal_{\bar s}})\simeq \kappa(\bar s).
\end{equation}
\end{prop}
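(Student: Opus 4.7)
The forward implication is immediate: taking $\Tc=\Spec \bar k$ mapping through the geometric point $\bar s$ in the definition of cohomological flatness gives $\kappa(\bar s)=f_{\bar s\ast}\Oc_{\Xcal_{\bar s}}=H^0(\Xcal_{\bar s},\Oc_{\Xcal_{\bar s}})$. For the converse, observe that the hypotheses (properness, flatness, geometric integrality of fibres, and $H^0(\Xcal_{\bar s},\Oc)=\kappa(\bar s)$) are all stable under arbitrary base change $\Tc\to \Sc$, because a geometric fibre of $f_\Tc$ coincides with a geometric fibre of $f$. It therefore suffices to treat the special case $\Tc=\Sc$, namely to prove that the natural map $\Oc_\Sc\to f_\ast\Oc_\Xcal$ is an isomorphism; the general case then follows by applying the same argument to $f_\Tc\colon \Xcal_\Tc\to \Tc$.

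First, by flat base change along $\kappa(s)\hookrightarrow\overline{\kappa(s)}$ the assumption descends to $H^0(\Xcal_s,\Oc_{\Xcal_s})=\kappa(s)$ for every $s\in S$. The cohomology base change map $\varphi^0_s\colon (f_\ast\Oc_\Xcal)_s\otimes\kappa(s)\to H^0(\Xcal_s,\Oc_{\Xcal_s})=\kappa(s)$ is a map of $\kappa(s)$-algebras sending $1$ to $1$, hence surjective, and therefore an isomorphism by Theorem \ref{thm:cohombasechange}(1). The composition $\Oc_\Sc\otimes\kappa(s)\to(f_\ast\Oc_\Xcal)\otimes\kappa(s)\to\kappa(s)$ equals the identity, so the cokernel of $\Oc_\Sc\to f_\ast\Oc_\Xcal$ has trivial fibres at every point. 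This cokernel is coherent (since $f_\ast\Oc_\Xcal$ is coherent by Proposition \ref{prop:ftecoh}), so super Nakayama forces it to vanish. Let $\Kc$ denote the kernel of the resulting surjection $\Oc_\Sc\twoheadrightarrow f_\ast\Oc_\Xcal$.

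The main obstacle is to show $\Kc=0$; the fibrewise analysis alone is insufficient, because without knowing $f_\ast\Oc_\Xcal$ to be $\Oc_\Sc$-flat one cannot conclude $\Kc\otimes\kappa(s)=0$ from $(f_\ast\Oc_\Xcal)\otimes\kappa(s)=\kappa(s)$. I bypass this via an adjunction trick. Applying the exact functor $f^\ast$ (exactness from flatness of $f$) to $0\to\Kc\to\Oc_\Sc\to f_\ast\Oc_\Xcal\to 0$ yields
\[
0\to f^\ast\Kc\to\Oc_\Xcal\xrightarrow{\eta}f^\ast f_\ast\Oc_\Xcal\to 0.
\]
The triangle identity for the adjunction $(f^\ast,f_\ast)$ shows the counit $f^\ast f_\ast\Oc_\Xcal\to\Oc_\Xcal$ is a left inverse to $\eta$; hence $\eta$ is split injective, and being surjective from the exact sequence, it is an isomorphism. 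Therefore $f^\ast\Kc=\Kc\cdot\Oc_\Xcal=0$.

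To extract $\Kc=0$ from this, I work locally at $s\in S$. The condition $H^0(\Xcal_{\bar s},\Oc)=\kappa(\bar s)\ne 0$ ensures $\Xcal_s$ is nonempty, so pick $x\in X$ with $f(x)=s$. The induced map $\Oc_{\Sc,s}\to\Oc_{\Xcal,x}$ is local (by the definition of a morphism of locally ringed superspaces) and flat (from flatness of $f$); a local flat map of local (super)rings is automatically faithfully flat, and in particular injective. Because $\Kc\cdot\Oc_\Xcal=0$ forces $\Kc_s$ to map to zero in $\Oc_{\Xcal,x}$, injectivity yields $\Kc_s=0$. Since $s$ was arbitrary, $\Kc=0$, completing the proof that $\Oc_\Sc\iso f_\ast\Oc_\Xcal$, and hence by the base change reduction of the first paragraph, cohomological flatness of $f$ in dimension $0$.
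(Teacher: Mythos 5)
Your proof is correct, and it takes a genuinely different route to the key step. Both arguments coincide at the start: reduce by base-change stability to showing $\Oc_\Sc\iso f_\ast\Oc_\Xcal$, descend the hypothesis along $\kappa(s)\hookrightarrow\overline{\kappa(s)}$ to get $H^0(\Xcal_s,\Oc_{\Xcal_s})=\kappa(s)$ at every point, and use the unit section to make the degree-zero base-change map surjective, hence bijective. They then diverge. The paper stays inside the half-exact-functor formalism: Proposition \ref{prop:nakayama} upgrades the pointwise surjectivity to $T^0\simeq T^0(\As)\otimes-$ and right exactness, which together with left exactness of $T^0$ (Lemma \ref{lem:ti}, flatness of $\Oc_\Xcal$) makes $f_\ast\Oc_\Xcal$ free of rank $(1,0)$; the fibrewise isomorphism $\Oc_\Sc\to f_\ast\Oc_\Xcal$ between locally free sheaves of equal rank is then an isomorphism. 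You instead obtain surjectivity of $\Oc_\Sc\to f_\ast\Oc_\Xcal$ from coherence (Proposition \ref{prop:ftecoh}) plus ordinary super Nakayama on the cokernel, and kill the kernel $\Kc$ by the adjunction trick: exactness of $f^\ast$ (flatness of $f$) plus the triangle identity split $f^\ast u$, so $f^\ast\Kc=0$, and nonemptiness of the fibres together with injectivity of the flat local homomorphisms $\Oc_{\Sc,s}\to\Oc_{\Xcal,x}$ forces $\Kc=0$. You correctly isolated the crux --- fibrewise data alone cannot exclude a kernel without knowing $f_\ast\Oc_\Xcal$ flat, which is exactly what the paper's functorial machinery supplies --- and resolved it by the geometry of $f$ instead. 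Two mild caveats, neither a gap: your appeal to Theorem \ref{thm:cohombasechange}(1) repackages rather than avoids the Nakayama-for-half-exact-functors machinery, since that theorem is itself proved via Proposition \ref{prop:nakayama}; and the fact that a flat local homomorphism of local superrings is faithfully flat, hence injective, is not stated in the paper, though the classical argument carries over verbatim to the $\Z_2$-graded setting via super Nakayama. What your route buys is an elementary, transparent final step that never needs $f_\ast\Oc_\Xcal$ to be locally free; what the paper's route buys is that flatness and local freeness of $f_\ast\Oc_\Xcal$ come out as an intermediate product of machinery already in place, with no recourse to the counit splitting or to surjectivity of $f$ on points.
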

  \begin{proof}  It is clear that if $f$ is cohomologically flat in dimension $0$ then its geometric fibres satisfy \eqref{eq:geom-fibers-H0}.
  For the converse, since our conditions are stable under base change, 
  it is enough to prove that the morphism $\Oc_\Sc \to f_\ast\Oc_\Xcal$ is an isomorphism. 

 Let us consider the left exact functor $T^0(N)=f_\ast(f^\ast N)$  defined on the category  of finitely-generated $\As$-modules and  associated to  $\M=\Oc_\Xcal$ as in Equation \eqref{eq:functorT}.

By the flat base change, the condition \eqref{eq:geom-fibers-H0} implies that for every point $s\in \Sc$ one has
$$T^0(\kappa(s))=H^0(\Xcal_s,\Oc_{\Xcal_s})\simeq \kappa(s).$$
Hence, the composition of $\kappa(s) \to f_\ast\Oc_\Xcal\otimes\kappa(s)=T^0(\As)\otimes \kappa(s)$ with  $T^0(\As)\otimes \kappa(s)\to T^0(\kappa(s))$ is an isomorphism, so that the second morphism is surjective.  Then by Proposition \ref{prop:nakayama} one has that
$T^0(\As)\otimes \kappa(s)\iso T^0(\kappa(s))$, $T^0=T^0(\As)\otimes - $ and that $T^0$ is right  exact. Since $T^0$ is also left  exact, we get that $T^0(\As)= f_\ast\Oc_\Xcal$ is free
of rank $(1,0)$. Since the morphism $\Oc_{\Sc}\to f_\ast\Oc_\Xcal$ induces an isomorphism of fibres at every point, it is an isomorphism. 
\end{proof}

\subsection{The superscheme of homomorphisms between coherent sheaves}

The following result on the representability of morphisms between coherent sheaves will be quite useful in the construction of the Hilbert superscheme. The proof is the same as in  the classical case (see, for instance, \cite[Thm. 3.5]{Ni07}).

Let $f\colon \Xcal \to \Sc$ be a  {superprojective} morphisms of superschemes, and let $\M$, $\Nc$ be coherent sheaves on $\Xcal$. 
Let us consider the  functor $\mathbf{Hom}_\Xcal(\M,\Nc)_+$ defined on the category on $\Sc$-superschemes $\Tc\to \Sc$  by letting
$$
\mathbf{Hom}_\Xcal(\M,\Nc)_+(\Tc)=\Hom_{\Xcal_\Tc}(\M_\Tc,\Nc_\Tc)_+\,.
$$

\begin{prop} {\rm \cite[7.7.8]{EGAIII-II}} \label{prop:homrep} If $\Nc$ is flat over $\Sc$, the above functor is representable by a linear superscheme $\Vs(\Qc)$ associated to a coherent sheaf $\Qc$ on $\Sc$, that is, there is a ``universal'' morphism of $\Oc_{\Xcal_{\Vs(\Qc)}}$-modules $\Phi\colon \M_{\Vs(\Qc)}\to \Nc_{\Vs(\Qc)}$ and a functorial isomorphism
\begin{align*}
\Hom_\Sc(\Tc, \Vs(\Qc)) &\iso \mathbf{Hom}_\Xcal(\M,\Nc)_+(\Tc)\\
\gamma& \mapsto  \gamma^\ast\Phi\,.
\end{align*}
\qed
\end{prop}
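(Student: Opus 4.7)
The strategy is the classical one used in EGA (Grothendieck's proof for ordinary schemes): construct a coherent sheaf $\Qc$ on $\Sc$ by dualizing a two-term resolution of $\M$ pushed through $f_\ast(\Nc\otimes -)$, and then identify the functor of points of $\Vs(\Qc)$ with $\mathbf{Hom}_\Sc(\M,\Nc)$. Since the construction of a linear superscheme is compatible with base change \eqref{eq:svbbasechang}, the question is local on $\Sc$ and one may assume $\Sc=\SSpec\As$ with $\As$ noetherian.

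First, Serre's Theorem \ref{thm:serre} provides, for sufficiently large twists, a finite presentation
$$
\Ec_1:=\bigoplus_j\Oc_\Xcal(-m_j)^{c_j,d_j}\xrightarrow{v}\Ec_0:=\bigoplus_i\Oc_\Xcal(-n_i)^{a_i,b_i}\xrightarrow{u}\M\to 0.
$$
Applying $\Homsh_{\Oc_\Xcal}(-,\Nc\otimes f^\ast\Kc)$ and then $f_\ast$, for an arbitrary quasi-coherent $\Oc_\Sc$-module $\Kc$, and choosing $n_i,m_j$ large enough so that $R^qf_\ast\Nc(r)=0$ for every $q>0$ and every relevant $r$, yields the exact sequence
$$
0\to f_\ast\Homsh_{\Oc_\Xcal}(\M,\Nc\otimes f^\ast\Kc)\to E\otimes_{\Oc_\Sc}\Kc\xrightarrow{v^\sharp} F\otimes_{\Oc_\Sc}\Kc,
$$
where $E=\bigoplus_i f_\ast\Nc(n_i)^{a_i,b_i}$ and $F=\bigoplus_j f_\ast\Nc(m_j)^{c_j,d_j}$. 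The local freeness of these sheaves comes from Proposition \ref{prop:locfree}(2) (using that $\Nc$ is flat over $\Sc$), and the commutation $f_\ast(\Nc(r)\otimes f^\ast\Kc)\simeq f_\ast\Nc(r)\otimes\Kc$ is Theorem \ref{thm:cohombasechange}(2). Define $\Qc:=\coker\bigl(v^{\sharp,\vee}\colon F^\vee\to E^\vee\bigr)$, a coherent $\Z_2$-graded $\Oc_\Sc$-module. Applying $\Hom_{\Oc_\Sc}(-,\Kc)$ to the presentation of $\Qc$ produces the functorial isomorphism
$$
\Hom_{\Oc_\Sc}(\Qc,\Kc)\iso\ker(E\otimes\Kc\to F\otimes\Kc)\iso\Hom_{\Oc_\Xcal}(\M,\Nc\otimes f^\ast\Kc).
$$

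To see that $\Vs(\Qc)$ represents $\mathbf{Hom}_\Sc(\M,\Nc)$, take any $\phi\colon\Tc\to\Sc$ and set $\Kc:=\phi_\ast\Oc_\Tc$. The universal property of the superspectrum of the symmetric superalgebra, combined with \eqref{eq:svbbasechang}, gives
$$
\Hom_\Sc(\Tc,\Vs(\Qc))\simeq\Hom_{\Oc_\Tc}(\phi^\ast\Qc,\Oc_\Tc)\simeq\Hom_{\Oc_\Sc}(\Qc,\phi_\ast\Oc_\Tc)\simeq\Hom_{\Oc_\Xcal}(\M,\Nc\otimes f^\ast\phi_\ast\Oc_\Tc),
$$
while the adjunction $(\pi_\Tc^\ast,\pi_{\Tc\ast})$ yields $\Hom_{\Xcal_\Tc}(\M_\Tc,\Nc_\Tc)\simeq\Hom_{\Oc_\Xcal}(\M,\pi_{\Tc\ast}\Nc_\Tc)$. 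The universal homomorphism $\Phi\colon\M_{\Vs(\Qc)}\to\Nc_{\Vs(\Qc)}$ is then the one corresponding to $\mathrm{id}_{\Vs(\Qc)}$ under the resulting bijection.

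The main obstacle is the identification
$$
\Nc\otimes_{\Oc_\Xcal}f^\ast\phi_\ast\Oc_\Tc\iso\pi_{\Tc\ast}\Nc_\Tc,
$$
which is a projection-formula-type statement that cannot be deduced from flat base change because $\phi$ is not assumed flat. The resolution is to substitute $\Nc\otimes f^\ast(-)$ by the Grothendieck--Mumford complex $K^\bullet\otimes_\As -$ of Proposition \ref{prop:GMcomplex} (applied, after twisting and shifting if needed, so that only $H^0$ of $K^\bullet\otimes_\As-$ survives on the relevant twists, thanks to the vanishing of the higher direct images provided by Proposition \ref{prop:locfree}(2) and Theorem \ref{thm:cohombasechange}). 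Since $K^\bullet$ is a finite complex of finitely generated $\Z_2$-graded locally free $\As$-modules, it commutes with arbitrary base change on the nose, reducing both sides of the desired isomorphism to the zeroth cohomology of one and the same complex. This closes the circle of identifications and establishes the representability together with the universal morphism $\Phi$.
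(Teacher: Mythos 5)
Your proof is correct and is essentially the classical EGA/Nitsure argument that the paper itself invokes (it gives no separate proof, saying it is ``the same as in the classical case''): resolve $\M$ by large twists, corepresent the functor $\Kc\mapsto f_\ast\Homsh_{\Oc_\Xcal}(\M,\Nc\otimes f^\ast\Kc)$ using acyclicity and local freeness of the $f_\ast\Nc(r)$, and take $\Qc$ as the cokernel of the dualized map. The ``main obstacle'' you patch with the Grothendieck--Mumford complex is in fact already available off the shelf: after reducing to affine $\Tc$ (both functors are Zariski sheaves) the identification $\pi_{\Tc\ast}\Nc_\Tc\simeq \Nc\otimes f^\ast\phi_\ast\Oc_\Tc$ is the projection formula for the \emph{affine} morphism $\pi_\Tc$, which needs no flatness of $\phi$, and the base-change isomorphism you want is precisely Proposition \ref{prop:locfree}(1), whose proof is exactly your $K^\bullet$ argument unwound.
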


\begin{corol}\label{cor:zerolocus} The zero section $\Vs_0(\Qc)$ of $\Vs(\Qc)$ (Definition \ref{def:zerosec}) is the locus where the universal morphism $\Phi$ vanishes, that is, for every $\Sc$-superscheme $\Tc$ and every morphism $\phi\colon \M_\Tc \to \Nc_\Tc$, corresponding to a morphism $\gamma\colon \Tc \to \Vs(\Qc)$ of $\Sc$-superschemes ($\phi=\gamma^\ast\Phi$), the closed sub-superscheme 
$$
\gamma^{-1}(\Vs_0(\Qc))\hookrightarrow \Tc\,,
$$
has the following universal property: a morphism $\psi\colon \Zc\to \Tc$ satisfies that $\psi^\ast\phi=0$ if and only if it factors through $\gamma^{-1}(\Vs_0(\Qc))$.
\qed
\end{corol}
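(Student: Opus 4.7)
The plan is to reduce the whole statement to the single identity $\sigma_0^\ast\Phi=0$, where $\sigma_0\colon \Sc\hookrightarrow \Vs(\Qc)$ is the zero section of Definition \ref{def:zerosec}. Once this identity is in hand, both directions of the claimed universal property follow formally from the representability provided by Proposition \ref{prop:homrep} together with the universal property of the fibre product $\gamma^{-1}(\Vs_0(\Qc))=\Tc\times_{\Vs(\Qc)}\Sc$.

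The first step is to verify that $\sigma_0^\ast\Phi=0$. Recall that $\Vs(\Qc)=\SSpec(\SSym_{\Oc_\Sc}\Qc)$, and that $\sigma_0$ corresponds to the surjection $\rho\colon \SSym_{\Oc_\Sc}\Qc\twoheadrightarrow\Oc_\Sc$ killing the ideal generated by $\Qc$. Applying the bijection of Proposition \ref{prop:homrep} at $\Tc=\Sc$ to $\sigma_0$, one obtains a morphism $\sigma_0^\ast\Phi\colon\M\to\Nc$. Unraveling the construction of $\Qc$ in \cite[7.7.8]{EGAIII-II}, in which $\SSym_{\Oc_\Sc}\Qc$ classifies $\Oc_\Sc$-algebra maps whose underlying $\Oc_\Sc$-linear data is encoded by the linear piece $\Qc\subset \SSym_{\Oc_\Sc}\Qc$, the universal morphism $\Phi$ is built precisely out of this linear piece; since $\rho$ kills it, $\sigma_0^\ast\Phi$ must be the zero morphism. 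This argument is the only point at which we use something beyond pure representability, and I expect it to be the main (though still routine) obstacle of the proof.

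Granted $\sigma_0^\ast\Phi=0$, the forward implication is immediate. If $\psi\colon\Zc\to\Tc$ factors as $\psi=j\circ\psi'$ with $j\colon\gamma^{-1}(\Vs_0(\Qc))\hookrightarrow\Tc$, then by the defining fibre-product square there is a second projection $p\colon\gamma^{-1}(\Vs_0(\Qc))\to\Sc$ with $\gamma\circ j=\sigma_0\circ p$, so
$$
\psi^\ast\phi=\psi^\ast\gamma^\ast\Phi=(\sigma_0\circ p\circ\psi')^\ast\Phi=(p\circ\psi')^\ast(\sigma_0^\ast\Phi)=0.
$$

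Conversely, suppose $\psi^\ast\phi=0$, i.e.\ $(\gamma\circ\psi)^\ast\Phi$ is the zero morphism $\M_\Zc\to\Nc_\Zc$. By the same identification as in the first step, the zero element of $\mathbf{Hom}_\Sc(\M,\Nc)(\Zc)$ corresponds under the representability of Proposition \ref{prop:homrep} to the unique morphism $\Zc\to\Vs(\Qc)$ of the form $\sigma_0\circ\alpha$, where $\alpha\colon\Zc\to\Sc$ is the structural morphism. Hence $\gamma\circ\psi$ factors through $\Vs_0(\Qc)=\sigma_0(\Sc)\subset\Vs(\Qc)$, and the universal property of the fibre product then yields the required factorization of $\psi$ through the closed sub-superscheme $\gamma^{-1}(\Vs_0(\Qc))\hookrightarrow\Tc$, completing the proof.
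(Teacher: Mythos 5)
Your proposal is correct and is precisely the routine verification the paper omits: Corollary \ref{cor:zerolocus} is stated with a \qed, being regarded as an immediate consequence of Proposition \ref{prop:homrep}, and your reduction to the identity $\sigma_0^\ast\Phi=0$ (via the fact that $\sigma_0$ corresponds to the zero element of $\Hom_{\Oc_\Sc}(\Qc,\Oc_\Sc)$, i.e.\ to the zero homomorphism $\M\to\Nc$) followed by the fibre-product formalism is exactly the intended argument. Both directions check out, including the slightly delicate point in the converse, where you correctly use the bijectivity and functoriality of $\gamma\mapsto\gamma^\ast\Phi$ to identify the zero morphism $\M_\Zc\to\Nc_\Zc$ with $\sigma_0\circ\alpha$.
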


\subsection{Relative Grothendieck duality}\label{subsec:duality}
Relative Grothendieck duality can be developed also for morphisms of superschemes. In the mid eighties  a simple version
of it, namely, Serre duality for projective smooth superschemes, 
was proved in \cite{OgPe84}
in its simplest version,  Penkov later extended duality to complete smooth complex superschemes \cite{Penk83}. A remarkable  result in these papers is the fact  that the  Berezinian  sheaf is the dualizing sheaf for smooth complex complete superschemes.
Relative Grothendieck duality for   proper morphisms   was stated in the super analytic case in \cite[1.5.1]{Va88}; however no proof was provided there in addition to   a reference  to the ordinary analytic case. 

So a description of relative duality for superschemes seems to be missing in the literature, and the aim of this Subsection is to fill that gap.
We state the main facts, stressing which proofs are straightforward translations of the corresponding ones for schemes, and  what is new in the super setting. Albeit most likely this is not the simplest approach,  we use Brown's representability theorem as in Neeman \cite{Nee96} (see also the references therein for a more complete perspective of the topic).

For every superscheme $\Xcal$ we denote  by $D(\Xcal)$ the unbounded derived category of the category of quasi-coherent $\Oc_\Xcal$-modules. 
If $\mathfrak{Mod}(\Xcal))$ is the category of all $\Oc_\Xcal$-modules, there is a natural functor $D(\Xcal)\to D(\mathfrak{Mod}(\Xcal))$. If $\Xcal$ is quasi-compact and separated, this induces an equivalence of categories
between $D(\Xcal)$ and the full subcategory $D_{qc}(\mathfrak{Mod}(\Xcal))$ of complexes with quasi-coherent cohomology. This is proved by translating the formal proof given in \cite[Cor.\;5.5]{BN93} to our setting. 

Again as in the ordinary case, one can prove, following Spaltenstein \cite{Spal88}, that for every quasi-compact morphism $f\colon\Xcal\to\Sc$ of superschemes   the direct and inverse images 
$$\bR f_\ast\colon D(\mathfrak{Mod}(\Xcal))\to D(\mathfrak{Mod}(\Sc)), \quad \bL f^\ast\colon D(\mathfrak{Mod}(\Sc)) \to D(\mathfrak{Mod}(\Xcal))$$
are defined for unbounded complexes of quasi-coherent sheaves,  and the latter functor is a right adjoint to the former.
Moreover, if $f$ is quasi-compact and quasi-separated, there are also derived functors
$$\bR f_\ast\colon D(\Xcal)\to D(\Sc), \quad \bL f^\ast\colon D(\Sc) \to D(\Xcal).$$ 
Following \cite[3.9.2]{Lip09}  one can prove that the two derived direct images agree, that is,   there is a commutative diagram
$$
\xymatrix{D(\Xcal)\ar[d]\ar[r]^{\bR f_\ast}& D(\Sc)\ar[d]\\
D(\mathfrak{Mod}(\Xcal))\ar[r]^{\bR f_\ast}&D(\mathfrak{Mod}(\Sc))\,.
}
$$
From this one sees that   also the functors  $\bR f_\ast\colon D(\Xcal)\to D(\Sc)$ and $\bL f^\ast\colon D(\Sc) \to D(\Xcal)$ are adjoint to each other.

To simplify the exposition, in this Subsection we assume that \emph{all   superschemes and all  morphisms between them are quasi-compact and separated}.
We note that there is  a projection formula
\begin{equation}\label{eq:projformula}
\bR f_\ast (\M^\bullet\otimes^{\bL} \bL f^\ast \Nc^\bullet) \iso \bR f_\ast \M^\bullet\otimes^{\bL} \Nc^\bullet\,,
\end{equation}
which is proved as in \cite[Prop.\; 5.3]{Nee96}, 
and also that  
\begin{equation}\label{eq:tensorhom}
\bR \Homsh_{\Oc_\Xcal}(\M^\bullet, \Kc^\bullet\otimes^\bL \Gc^\bullet)\simeq  \bR \Homsh_{\Oc_\Xcal}(\M^\bullet, \Kc^\bullet)\otimes^\bL \Gc^\bullet
\end{equation}
for $\M^\bullet$, $\Kc^\bullet$ and $\Gc^\bullet$ in $D(\Xcal)$, whenever either   $\M^\bullet$ or $\Gc^\bullet$ has finite homological dimension. A simple proof that extends directly to our setting can be found in \cite[1.2]{HRLMS09}.

Another interesting formula concerns cohomology flat base change in the derived category. It says that if 
$$
\xymatrix{ \Xcal_\Tc\ar[r]^{\phi_\Xcal}\ar[d]^{f_\Tc}& \Xcal \ar[d]^f
\\
\Tc\ar[r]^\phi& \Sc
}
$$
is a cartesian diagram of morphisms of superschemes and $\phi$ is flat,
there is an isomorphism
\begin{equation}\label{eq:derivedbasechange}
\phi^\ast \bR f_\ast \iso \bR f_{\Tc\ast}\phi_\Xcal^\ast
\end{equation}
of functors from $D(\Xcal)$ to $D(\Tc)$. This can be proved as in \cite[3.9.5]{Lip09}.

Let $f\colon \Xcal\to\Sc$ be  a morphism of superschemes. We want to apply Brown's representability theorem \cite[Thm.\;3.1]{Nee96} to $\bR f_\ast\colon D(\Xcal)\to D(\Sc)$. To this end, one needs two preliminary results.
The first   is the following, which is proved as \cite[Proposition 2.5]{Nee96}: 
\begin{lemma}\label{lem:compgen} The derived category $D(\Xcal)$ is a compactly generated triangulated category, that is:
\begin{enumerate}
\item it contains all small coproducts,\footnote{This is why we need to work with the unbounded derived category.} and
\item there exists a small set $P$ of \red{compact} objects in $D(\Xcal)$ such that if $\Hom(\Pc^\bullet, \M^\bullet)=0$ for every $\Pc^\bullet\in P$ then $\M^\bullet=0$ in $D(\Xcal)$.
\end{enumerate} 
\qed\end{lemma}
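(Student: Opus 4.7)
The plan is to follow the strategy of \cite[Proposition 2.5]{Nee96} essentially verbatim, since the underlying topological space of $\Xcal$ is that of $\Xcal_{\bos}=X$, which is quasi-compact and separated by our standing assumptions. The combinatorial and topological ingredients of Neeman's argument therefore carry over; the only adjustment is that one works throughout with $\Z_2$-graded quasi-coherent $\Oc_\Xcal$-modules instead of their purely even counterparts.

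For part (1), given a small family $\{\M_\alpha^\bullet\}_{\alpha\in A}$ of objects of $D(\Xcal)$, I would form the term-wise direct sum $\bigoplus_\alpha \M_\alpha^\bullet$. Small direct sums of quasi-coherent $\Z_2$-graded $\Oc_\Xcal$-modules are again quasi-coherent, and the cohomology of a direct sum of complexes is the direct sum of the cohomologies. A routine check shows that this object is the coproduct of the $\M_\alpha^\bullet$ in $D(\Xcal)$.

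For part (2), I would build the generating set from Koszul-type perfect complexes, exactly as in the classical case. Since $X$ is quasi-compact, choose a finite affine cover $X=\bigcup_{i=1}^N U_i$, which corresponds to a finite cover of $\Xcal$ by affine open sub-superschemes $\Ucal_i=\SSpec\As_i$. For each $i$, after possibly refining and using quasi-compactness of the complement $Z_i=X\setminus U_i$, choose finitely many \emph{even} global sections $f_{i,1},\dots,f_{i,n_i}$ (defined on a suitable affine neighbourhood) whose simultaneous vanishing cuts out $Z_i$. Form the Koszul complex
$$
\Kc_i^\bullet \;=\; \bigotimes_{j=1}^{n_i}\Bigl(\Oc_\Xcal \xrightarrow{\,\cdot f_{i,j}\,} \Oc_\Xcal\Bigr),
$$
which is a perfect complex of $\Oc_\Xcal$-modules, and hence a compact object of $D(\Xcal)$. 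Let $P$ be the set of all such $\Kc_i^\bullet$ together with their shifts $\Kc_i^\bullet[r]$ for $r\in\Z$.

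The main obstacle is verifying the separation property: if $\Hom_{D(\Xcal)}(\Kc_i^\bullet[r],\M^\bullet)=0$ for every $i$ and every $r$, then $\M^\bullet\simeq 0$. The standard computation shows that these Hom groups compute the local cohomology $\bR\Gamma_{Z_i}(\Xcal,\M^\bullet)$ with support on $Z_i$; their vanishing for all $r$ therefore implies that the restrictions $\rest{\M^\bullet}{\Ucal_i}$ are acyclic, and since $\{\Ucal_i\}$ is a finite open cover of $\Xcal$, we conclude $\M^\bullet\simeq 0$ in $D(\Xcal)$. The only super-specific point to check is that the Koszul complexes $\Kc_i^\bullet$ can be built using \emph{even} sections of $\Oc_\Xcal$, so that the sign conventions and tensor products agree with the classical ones and the odd part of $\Oc_\Xcal$ enters only as part of the coefficients; granted this, the local-cohomology identification is identical to Neeman's.
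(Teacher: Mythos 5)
Your part (1) is fine, and your overall plan is exactly the paper's: the paper proves this lemma by observing that the argument of \cite[Proposition 2.5]{Nee96} translates verbatim to $\Z_2$-graded quasi-coherent modules. The problem is that your execution of part (2) is \emph{not} Neeman's argument, and it has a genuine gap in two places. First, the closed sets $Z_i=X\setminus U_i$ are in general not the vanishing loci of global sections of $\Oc_\Xcal$, so your global Koszul complexes $\Kc_i^\bullet$ need not exist: already for $X=\Ps^1$ (hence for $\Ps^{1,n}$) the point at infinity is not cut out by global functions. The parenthetical ``defined on a suitable affine neighbourhood'' does not repair this, because a two-term complex defined only on an open subset $\Wc$ is not an object of $D(\Xcal)$, and extension by zero is not quasi-coherent. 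In Neeman's actual construction one takes an affine chart $\Ucal$ of the cover, sections $f_1,\dots,f_m\in\Gamma(\Ucal,\Oc)$ cutting out $Z=X\setminus V$ \emph{inside} $\Ucal$ (these exist because $\Ucal\cap\Vc$ is quasi-compact), and then pushes the Koszul complex forward along $j\colon\Ucal\hookrightarrow\Xcal$; since it is supported on $Z$, which is closed in $X$, the object $\bR j_\ast\Kc^\bullet$ is still compact in $D(\Xcal)$.

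Second, and more seriously, the separation step is logically wrong. Vanishing of $\Hom(\Kc_i^\bullet[r],\M^\bullet)$ for all $r$ says that the local cohomology $\bR\Gamma_{Z_i}(\M^\bullet)$ vanishes, equivalently that the unit $\M^\bullet\to \bR j_{i\ast}(\rest{\M^\bullet}{\Ucal_i})$ is an isomorphism; it does \emph{not} say that $\rest{\M^\bullet}{\Ucal_i}$ is acyclic. Indeed your proposed set $P$ fails to generate: a skyscraper sheaf $\kappa(x)$ at a point $x$ lying in $\bigcap_i U_i$ (e.g.\ a point of the intersection of the two standard charts of $\Ps^1$) satisfies $\Hom(\Kc_i^\bullet[r],\kappa(x))\simeq H^{-r}((\Kc_i^\bullet)^\vee\otimes\kappa(x))=0$ for all $i,r$, because the values $f_{i,j}(x)$ are units and a Koszul complex on units is acyclic — yet $\kappa(x)\neq 0$. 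The correct proof is Neeman's induction on the number of affines in the cover: the affine base case is generated by $\Oc$ itself (structure sheaves are conspicuously absent from your $P$); in the step $\Xcal=\Ucal\cup\Vc$ one uses the triangle $\bR\Gamma_Z\M^\bullet\to\M^\bullet\to\bR j_{\Vc\ast}(\rest{\M^\bullet}{\Vc})$, killing the first term with the pushed-forward Koszul complex and the third with the inductive compact generators of $D(\Vc)$, extended to $\Xcal$ by the Thomason--Trobaugh observation that $q\oplus q[1]$ always extends to a perfect complex on $\Xcal$. Your genuinely super-specific point is the one part that is fine: supports depend only on the bosonic reduction, odd sections are nilpotent, and the relevant ideals on an affine chart can always be generated by even elements — which is precisely why the classical skeleton, once restored as above, carries over to $D(\Xcal)$.
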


The second result, whose proof follows again \cite[Lemma 1.4]{Nee96},  is:
\begin{lemma}\label{lem:coprodfstar}  If $f\colon \Xcal\to\Sc$ is a morphism of superschemes, the functor $\bR f_\ast\colon D(\Xcal)\to D(\Sc)$ preserves (small) coproducts.
\qed
\end{lemma}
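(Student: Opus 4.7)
The claim is that for a family $\{\M_\alpha^\bullet\}_{\alpha\in A}$ of objects in $D(\Xcal)$, the canonical morphism
$$\eta\colon \bigoplus_{\alpha\in A}\bR f_\ast \M_\alpha^\bullet \longrightarrow \bR f_\ast\Bigl(\bigoplus_{\alpha\in A}\M_\alpha^\bullet\Bigr)$$
is an isomorphism in $D(\Sc)$. My plan is to follow Neeman's argument in \cite[Lemma 1.4]{Nee96}, translated to the super setting; no essentially new input is required, because the properties of quasi-coherent sheaves on superschemes used below are the same as in the ordinary case.

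\textbf{Step 1: Reduction to the affine base case.} Since being an isomorphism in $D(\Sc)$ can be tested on cohomology sheaves, and cohomology sheaves of $\bR f_\ast$ are local on $\Sc$, I may assume $\Sc=\SSpec\As$ is affine; then $\Xcal$ is quasi-compact and separated, because $f$ is assumed so. Under these hypotheses it suffices to prove that the natural map
$$\bigoplus_{\alpha}\bR\Gamma(\Xcal,\M_\alpha^\bullet)\longrightarrow \bR\Gamma\Bigl(\Xcal,\bigoplus_\alpha \M_\alpha^\bullet\Bigr)$$
is an isomorphism in $D(\Ab)$, since $\bR f_\ast$ is computed on affine opens by $\bR\Gamma$.

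\textbf{Step 2: The single-sheaf case, via a finite affine cover.} Fix a finite affine open cover $\{U_1,\dots,U_n\}$ of $\Xcal$. Because $\Xcal$ is separated, every intersection $U_{i_0}\cap\dots\cap U_{i_p}$ is again an affine sub-superscheme, hence acyclic for quasi-coherent sheaves (the super version of the vanishing of higher cohomology on an affine scheme is proved as in the classical case). Consequently, for a quasi-coherent sheaf $\Nc$ on $\Xcal$, $\bR\Gamma(\Xcal,\Nc)$ is computed by the finite Čech complex $\check C^\bullet(\{U_i\},\Nc)$. Since taking sections of a quasi-coherent sheaf over an affine open superscheme commutes with arbitrary coproducts, and finite Čech complexes commute with coproducts, we obtain
$$H^i\Bigl(\Xcal,\bigoplus_\alpha \Nc_\alpha\Bigr)\simeq \bigoplus_\alpha H^i(\Xcal,\Nc_\alpha)$$
for every family $\{\Nc_\alpha\}$ of quasi-coherent sheaves on $\Xcal$, and moreover $H^i(\Xcal,-)$ vanishes on quasi-coherent sheaves for $i\geq n$. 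Thus cohomology of a single quasi-coherent sheaf commutes with coproducts and has uniformly bounded cohomological dimension.

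\textbf{Step 3: Extension to unbounded complexes.} The uniform bound $n-1$ on the cohomological dimension for quasi-coherent sheaves is what makes the unbounded case follow formally from the bounded one. Concretely, for a fixed integer $i$, consider the truncation triangle $\tau^{\leq i+n}\M_\alpha^\bullet\to \M_\alpha^\bullet\to \tau^{>i+n}\M_\alpha^\bullet$; by the cohomological dimension bound, $H^i\bR\Gamma(\Xcal,\tau^{>i+n}\M_\alpha^\bullet)=0$, so $H^i\bR\Gamma(\Xcal,\M_\alpha^\bullet)\simeq H^i\bR\Gamma(\Xcal,\tau^{\leq i+n}\M_\alpha^\bullet)$. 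Since coproducts in $D(\Xcal)$ are exact and commute with truncation, the same identification holds after taking the coproduct over $\alpha$. This reduces the claim for $H^i$ of $\eta$ to the case of bounded-above complexes, which follows by induction on the amplitude from the quasi-coherent sheaf case of Step~2 using the hypercohomology spectral sequence (equivalently, the Čech double complex), which in each bidegree commutes with coproducts.

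\textbf{Main obstacle.} The only delicate point is passing from the bounded to the unbounded derived category, where one must justify that $\bR f_\ast$ of a coproduct is computed by a \emph{compatible} system of resolutions; this is resolved by the uniform cohomological dimension bound provided by the finite affine cover (quasi-compactness plus separatedness), which lets one truncate without changing $H^i\bR f_\ast$ in a prescribed degree. All other ingredients (acyclicity of quasi-coherent sheaves on affine superschemes, Čech computation, commutation of sections with coproducts on affine opens) are straightforward transcriptions to the super setting of the classical facts.
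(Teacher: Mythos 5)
Your proposal is correct and takes exactly the route the paper does: the paper's proof of this lemma is precisely a transcription of Neeman's \cite[Lemma 1.4]{Nee96} to the super setting, i.e., reduction to an affine base, a finite affine \v{C}ech cover on a quasi-compact separated superscheme whose finite \v{C}ech complex commutes with coproducts, and the resulting uniform bound on cohomological dimension to pass to unbounded complexes. One cosmetic slip worth fixing: the vanishing $H^i\bR\Gamma(\Xcal,\tau^{>i+n}\M_\alpha^\bullet)=0$ follows from left t-exactness of $\bR\Gamma$ alone, whereas the dimension bound is what is genuinely needed for the complementary truncation from below (equivalently, for the finite-width \v{C}ech double complex to compute $\bR\Gamma$ of bounded-above, hence unbounded-below, complexes) --- which is the crux your Step 3 and ``Main obstacle'' paragraph otherwise correctly identify.
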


One can then apply Brown's representability to obtain an extension to the super setting of  \cite[Example 4.2 and 6]{Nee96}:
\begin{prop}[Relative Grothendieck duality]\label{prop:duality} If $f\colon \Xcal\to\Sc$ is a  morphism of  superschemes, the functor $\bR f_\ast\colon D(\Xcal)\to D(\Sc)$ has a right adjoint $f^!\colon D(\Sc)\to D(\Xcal)$.   So there is a functorial isomorphism
$$
\Hom_{D(\Sc)}(\bR f_\ast \M^\bullet, \Nc^\bullet)\iso \Hom_{D(\Xcal)}(\M^\bullet, f^!\Nc^\bullet) 
$$
for $\M^\bullet$ in $D(\Xcal)$ and $\Nc^\bullet$ in $D(\Sc)$.
Moreover, if $f\colon \Xcal\to\Sc$ is  proper,  there is a sheaf version of the duality isomorphism, namely, there is a functorial isomorphism
$$
\bR \Homsh_{\Oc_\Sc}(\bR f_\ast \M^\bullet, \Nc^\bullet)\iso \bR f_\ast \bR\Homsh_{\Oc_\Xcal}(\M^\bullet, f^!\Nc^\bullet)\,.
$$
\qed
\end{prop}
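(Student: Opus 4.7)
The plan for the first part is to invoke Neeman's version of the Brown representability theorem \cite[Thm.~3.1]{Nee96}: any triangulated functor from a compactly generated triangulated category to another triangulated category that preserves small coproducts admits a right adjoint. The two hypotheses have already been supplied by the preceding lemmas: Lemma \ref{lem:compgen} shows that $D(\Xcal)$ is compactly generated, while Lemma \ref{lem:coprodfstar} shows that $\bR f_\ast\colon D(\Xcal)\to D(\Sc)$ preserves small coproducts, and triangulated-exactness is standard. The theorem therefore produces a right adjoint $f^!\colon D(\Sc)\to D(\Xcal)$, and the first displayed isomorphism of the proposition is exactly the adjunction counit/unit identity.

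For the sheaf version under the additional hypothesis that $f$ is proper, the plan is a Yoneda argument: test both sides against an arbitrary $\Gc^\bullet\in D(\Sc)$ via the functor $\bR\Hom_{\Oc_\Sc}(\Gc^\bullet,-)$ and reduce to a global $\bR\Hom$ identity. On the right-hand side, the standard tensor-hom adjunction (the defining universal property of $\bR\Homsh$) gives
\begin{equation*}
\bR\Hom_{\Oc_\Sc}\bigl(\Gc^\bullet,\bR\Homsh_{\Oc_\Sc}(\bR f_\ast\M^\bullet,\Nc^\bullet)\bigr)\iso \bR\Hom_{\Oc_\Sc}\bigl(\Gc^\bullet\otimes^{\bL}\bR f_\ast\M^\bullet,\Nc^\bullet\bigr).
\end{equation*}
The projection formula of Equation \eqref{eq:projformula} rewrites the interior as $\bR f_\ast(\bL f^\ast\Gc^\bullet\otimes^{\bL}\M^\bullet)$; applying the adjunction from the first part, then tensor-hom on $\Xcal$, and then the adjunction once more, I arrive at
\begin{equation*}
\bR\Hom_{\Oc_\Sc}\bigl(\Gc^\bullet,\bR f_\ast\bR\Homsh_{\Oc_\Xcal}(\M^\bullet,f^!\Nc^\bullet)\bigr).
\end{equation*}
Yoneda then forces the claimed functorial isomorphism of sheaves on $\Sc$.

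The principal technical obstacle has been absorbed into the two preparatory lemmas: compact generation of $D(\Xcal)$ in the super setting and coproduct preservation by $\bR f_\ast$. Beyond these, the manipulations in the Yoneda computation rely only on formal properties of adjoint functors, on the projection formula \eqref{eq:projformula}, and on the tensor-hom adjunction at the $\bR\Hom$ level (which, in contrast to the sheafified form of Equation \eqref{eq:tensorhom}, requires no finiteness hypothesis on the objects). Properness of $f$ enters in order to guarantee that both sides of the sheafy identity behave well under open restriction on the base, so that the Yoneda identification in global $\bR\Hom$ propagates to an identification of $\bR\Homsh$ sheaves on $\Sc$; in the proper case, this compatibility follows from the usual interaction of $\bR f_\ast$, $f^!$, and flat base change \eqref{eq:derivedbasechange} for open immersions.
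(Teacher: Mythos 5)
Your treatment of the first assertion is correct and is exactly the paper's proof: Brown representability \cite[Thm.\ 3.1]{Nee96} applied to $\bR f_\ast$, with the hypotheses supplied by Lemmas \ref{lem:compgen} and \ref{lem:coprodfstar}, as in \cite[Example 4.2]{Nee96}.

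The Yoneda argument for the sheaf version, however, has a genuine gap, and it sits precisely where your computation never uses properness. Every step in your chain
$$
\Hom_{D(\Sc)}(\Gc^\bullet,\bR\Homsh_{\Oc_\Sc}(\bR f_\ast\M^\bullet,\Nc^\bullet))\iso\cdots\iso
\Hom_{D(\Sc)}(\Gc^\bullet,\bR f_\ast\bR\Homsh_{\Oc_\Xcal}(\M^\bullet,f^!\Nc^\bullet))
$$
is valid only for test objects $\Gc^\bullet$ with quasi-coherent cohomology: the projection formula \eqref{eq:projformula} and the adjunction $(\bR f_\ast,f^!)$ live on the quasi-coherent derived categories, since $f^!$ is by construction a right adjoint of $\bR f_\ast\colon D(\Xcal)\to D(\Sc)$. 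But the two complexes you are comparing are objects of $D(\mathfrak{Mod}(\Sc))$ that in general do \emph{not} have quasi-coherent cohomology ($\Homsh$ out of a quasi-coherent sheaf which is not finitely presented need not be quasi-coherent), and Yoneda tested only against the subcategory $D_{qc}$ cannot distinguish such complexes, hence does not force an isomorphism in $D(\mathfrak{Mod}(\Sc))$. That this is a real failure and not a technicality is shown by the fact that your argument, as written, applies verbatim to any quasi-compact separated $f$, whereas the sheafified duality is false without properness. Already for ordinary schemes, take $f$ to be the open immersion of $\Spec A_t$ into $\Spec A$ with $A=k[t]$, $\M^\bullet=\Oc_{\Spec A_t}$, $\Nc^\bullet=\Oc_{\Spec A}$: restricting both sides to the open set, the left-hand side becomes $\Oc_{\Spec A_t}$ in degree $0$, while the right-hand side becomes $f^!\Oc$, which is represented by $\bR\Hom_A(A_t,A)$ and is concentrated in degree $1$ (one checks $\Hom_A(A_t,A)=0$ while $\operatorname{Ext}^1_A(A_t,A)\neq 0$). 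The two sides nevertheless agree against every quasi-coherent $\Gc^\bullet$ --- your chain of isomorphisms holds here --- and are separated only by non-quasi-coherent tests such as the extension by zero of $\Oc_{\Spec A_t}$.

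Your closing remark, that properness guarantees good behaviour under open restriction, is indeed the correct mechanism, but in a sound proof it must be the engine rather than an afterthought. The route taken by the paper (following \cite[\S 6]{Nee96}) is: construct the natural comparison morphism from the counit $\bR f_\ast f^!\to \Id$ together with the tensor-hom adjunction and \eqref{eq:projformula}, and then verify that it is an isomorphism locally on $\Sc$. The restriction of the left-hand side to an open $\U\subseteq\Sc$ is the analogous $\bR\Homsh$ for $f_\U$, and for the right-hand side one needs the compatibility $i^\ast f^!\iso f_\U^! j^\ast$ of Proposition \ref{prop:openduality} (Neeman's Lemma 6.1) --- and it is exactly in the proof of that lemma that properness is consumed. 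So to repair your argument you should establish the open base-change statement first and then run your global computation over the opens of a cover, rather than in a single global Yoneda step.
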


\begin{defin}\label{def:dualizingcomplex}
The object $\Dcal^\bullet_f:=f^!\Oc_\Sc$ in $D(\Xcal)$ is called the dualizing complex of $f$.
\end{defin}
The dualizing complex determines in many cases the functor $f^!$.  
The following  particular case of \cite[Thm.\;5.4]{Nee96} will  suffice to our purposes.

\begin{prop}\label{prop:dualizing} Let $f\colon \Xcal \to \Sc$ be a proper morphism of superschemes. If $\Nc^\bullet$ is of finite homological dimension in $D(\Sc)$  there is an isomorphism
$$
f^!\Nc^\bullet\simeq  \bL f^\ast \Nc^\bullet\otimes^{\bL} f^!\Oc_\Sc\,.
$$
\end{prop}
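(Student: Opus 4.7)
My plan is first to construct a canonical comparison morphism
$$
\varphi_{\Nc^\bullet}\colon \bL f^\ast \Nc^\bullet \otimes^{\bL} f^!\Oc_\Sc \longrightarrow f^!\Nc^\bullet,
$$
and then to verify that it is an isomorphism by a devissage on $\Nc^\bullet$. To construct $\varphi$, I would use the adjunction $\bR f_\ast \dashv f^!$ of Proposition \ref{prop:duality}: it suffices to give a morphism $\bR f_\ast\bigl(\bL f^\ast\Nc^\bullet \otimes^{\bL} f^!\Oc_\Sc\bigr)\to \Nc^\bullet$. By the projection formula \eqref{eq:projformula} the left-hand side is canonically isomorphic to $\Nc^\bullet\otimes^{\bL} \bR f_\ast f^!\Oc_\Sc$, and so the desired morphism is obtained by tensoring the identity of $\Nc^\bullet$ with the counit $\varepsilon\colon \bR f_\ast f^!\Oc_\Sc\to\Oc_\Sc$ of the adjunction.

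Next, I would observe that $\varphi_{\Oc_\Sc}$ is an isomorphism: both sides reduce, via the canonical isomorphisms $\bL f^\ast\Oc_\Sc\simeq\Oc_\Xcal$ and $\Oc_\Xcal\otimes^{\bL} f^!\Oc_\Sc\simeq f^!\Oc_\Sc$, to the identity of $f^!\Oc_\Sc$. Both the source and the target of $\varphi$ are triangulated functors in $\Nc^\bullet$. The source manifestly preserves small coproducts, because $\bL f^\ast$ and $\otimes^\bL$ do; the target preserves small coproducts as well, since the properness of $f$ implies, via Proposition \ref{prop:ftecoh} together with the super Grothendieck--Mumford complex of Proposition \ref{prop:GMcomplex}, that $\bR f_\ast$ carries compact (that is, perfect) objects to compact objects, so that its right adjoint $f^!$ commutes with small coproducts. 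Hence the full subcategory of $D(\Sc)$ on which $\varphi$ is an isomorphism is triangulated and stable under small coproducts.

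To conclude, I would argue that this subcategory contains every $\Nc^\bullet$ of finite homological dimension. Since $D(\Sc)$ is compactly generated (Lemma \ref{lem:compgen}), it is enough to verify $\varphi_{\Nc^\bullet}$ on compact generators: Zariski-locally on $\Sc$ such an object is quasi-isomorphic to a bounded complex of finitely generated free $\Oc_\Sc$-modules, and an induction on the length of the complex, using the five lemma applied to the canonical distinguished triangles
$$
\Oc_\Sc^{(p,q)}[n]\longrightarrow \tau_{\ge n}\Nc^\bullet\longrightarrow \tau_{\ge n+1}\Nc^\bullet\xrightarrow{+1},
$$
reduces the statement to the already checked case $\Nc^\bullet=\Oc_\Sc$; the finite homological dimension hypothesis on $\Nc^\bullet$ is exactly what keeps the induction and the pullback $\bL f^\ast$ well behaved, and it also allows the use of the tensor-hom formula \eqref{eq:tensorhom} when gluing local isomorphisms to a global one. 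The main obstacle is the compact-preservation of $\bR f_\ast$ in the super setting, which underlies both the coproduct property of $f^!$ and the local reduction to bounded complexes of free modules; in the ordinary case this is classical, while here it is precisely the content of Proposition \ref{prop:GMcomplex}, which realises $\bR f_\ast$ of a flat coherent sheaf as a bounded complex of finitely generated locally free modules and thus lifts it to the perfect world.
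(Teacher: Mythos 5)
Your construction of the comparison morphism $\varphi$ via the counit and the projection formula is fine, but the devissage collapses at the step where you claim that properness of $f$ makes $\bR f_\ast$ preserve compact objects, hence $f^!$ preserve coproducts. This is false for a general proper morphism of locally noetherian superschemes: compact objects of $D(\Sc)$ are the perfect complexes, and neither Proposition \ref{prop:ftecoh} (coherence of the $R^if_\ast$) nor Proposition \ref{prop:GMcomplex} (which applies only to coherent sheaves \emph{flat} over the base) yields perfection of $\bR f_\ast$ of a perfect complex when $\Sc$ is singular; coherent cohomology is strictly weaker than perfection. Concretely, take $f\colon \Xcal=\SSpec k\hookrightarrow \Sc=\SSpec k[\epsilon]$ with $\epsilon^2=0$, a proper (even) morphism covered by the proposition. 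Then $\Oc_\Xcal$ is compact in $D(\Xcal)$, but $\bR f_\ast\Oc_\Xcal=k$ is not perfect over $k[\epsilon]$, so by Neeman's criterion $f^!\simeq\bR\Homsh_{\Oc_\Sc}(\Oc_\Xcal,-)$ does not commute with coproducts. Worse, your argument, if it ran, would establish the formula on the whole localizing subcategory generated by the compacts, i.e., on all of $D(\Sc)$ --- and there the formula is genuinely false: for $\Nc=k$ over $k[\epsilon]$, the complex $f^!k=\bR\Homsh_{k[\epsilon]}(k,k)$ has cohomology in all degrees $\geq 0$, while $\bL f^\ast k\otimes^{\bL} f^!\Oc_\Sc$ computes $\Tor$ and is concentrated in degrees $\leq 0$. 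Relatedly, in your plan the finite homological dimension hypothesis never does any actual work (compact objects satisfy it automatically, and your induction on stupid truncations of a bounded complex of frees needs nothing beyond triangulated functoriality), which cannot be right for a statement that fails without that hypothesis.

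The paper's own proof avoids the comparison-map-plus-devissage route entirely: it shows directly, for every $\M^\bullet\in D(\Xcal)$, that $\bR f_\ast\bR\Homsh_{\Oc_\Xcal}(\M^\bullet, \bL f^\ast\Nc^\bullet\otimes^{\bL}f^!\Oc_\Sc)\simeq \bR f_\ast\bR\Homsh_{\Oc_\Xcal}(\M^\bullet,f^!\Nc^\bullet)$ by a chain of isomorphisms combining the tensor--hom formula \eqref{eq:tensorhom} --- this is precisely where the finite homological dimension of $\Nc^\bullet$ (and hence of $\bL f^\ast\Nc^\bullet$) enters, twice --- with the projection formula \eqref{eq:projformula} and the sheaf form of duality from Proposition \ref{prop:duality}, and then concludes by Yoneda in the compactly generated category $D(\Xcal)$. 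If you want to salvage your approach, you must either substitute this direct chain of identifications for the devissage, or add the hypothesis that $\bR f_\ast$ preserves perfect complexes (e.g.\ $f$ of finite Tor-dimension, or $\Sc$ regular), which the proposition does not assume.
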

\begin{proof} By Grothendieck duality and Equations \ref{eq:tensorhom} and \ref{eq:projformula} one has
\begin{align*}
 \bR f_\ast \bR & \Homsh_{\Oc_\Xcal}(\M^\bullet, \bL f^\ast\Nc^\bullet\otimes^{\bL} f^!\Oc_\Sc)
 \simeq  \bR f_\ast(\bR \Homsh_{\Oc_\Xcal}(\M^\bullet,  f^!\Oc_\Sc)\otimes^\bL \bL f^\ast \Nc^\bullet)\\[2pt]
&\simeq  \bR f_\ast\bR \Homsh_{\Oc_\Xcal}(\M^\bullet,  f^!\Oc_\Sc)\otimes^\bL \Nc^\bullet
 \simeq  \bR \Homsh_{\Oc_\Sc}(\bR f_\ast \M^\bullet, \Oc_\Sc)\otimes^\bL \Nc^\bullet 
\\[2pt]
&\simeq  \bR \Homsh_{\Oc_\Sc}(\bR f_\ast \M^\bullet, \Nc^\bullet)
\simeq  \bR f_\ast \bR\Homsh_{\Oc_\Xcal}(\M^\bullet, f^!\Nc^\bullet)
\end{align*}
for every $\M^\bullet$ in $D(\Xcal)$. Then one concludes.
\end{proof}

\subsubsection{Local properties}

Relative Grothendieck duality is   local   both on the base and on the source.
The locality on the base, that is to say, the base change property under open immersions, was proved for schemes by Neeman \cite[Lemma 6.1]{Nee96} as a preliminary lemma for the sheaf version of relative Grothendieck duality. His proof passes directly to the super setting, so that one has:

\begin{prop}\label{prop:openduality} Let $f\colon \Xcal\to\Sc$ be a  proper morphism of superschemes and let $j\colon \U\hookrightarrow \Sc$ be an open subsuperscheme. Consider the cartesian diagram
$$
\xymatrix{\Xcal_\U\ar@{^(->}[r]^{i}\ar[d]^{f_\U} & \Xcal\ar[d]^f
\\ 
\U\ar@{^(->}[r]^{j}& \Sc\,.
}
$$
There is an isomorphism 
$$
i^\ast f^!  \iso f_\U^! j^\ast 
$$
of functors  $D(\Sc)\to D(\Xcal_\U)$.
\qed
\end{prop}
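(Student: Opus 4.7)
The plan is to construct the natural morphism $\alpha\colon i^\ast f^!\to f_\U^! j^\ast$ out of the counit of the duality adjunction together with flat base change, and then verify it is an isomorphism using the compact generation of $D(\Xcal_\U)$.

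\textbf{Step 1 (Construction of $\alpha$).} Start from the counit $\epsilon\colon\bR f_\ast f^!\to\mathrm{id}_{D(\Sc)}$ of the adjunction given by Proposition \ref{prop:duality}. Applying $j^\ast$ yields $j^\ast\bR f_\ast f^!\to j^\ast$. By the derived flat base change of Equation \eqref{eq:derivedbasechange} applied to the cartesian square, $j^\ast\bR f_\ast\iso \bR f_{\U,\ast}i^\ast$, so we obtain a morphism $\bR f_{\U,\ast}\,i^\ast f^!\to j^\ast$. Using the adjunction $(\bR f_{\U,\ast},f_\U^!)$ of Proposition \ref{prop:duality} applied to $f_\U$, this produces the natural transformation
\[
\alpha\colon i^\ast f^!\longrightarrow f_\U^! j^\ast
\]
of functors $D(\Sc)\to D(\Xcal_\U)$.

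\textbf{Step 2 (Reduction to compact generators).} By Lemma \ref{lem:compgen}, $D(\Xcal_\U)$ is compactly generated. Hence $\alpha(\Nc^\bullet)$ is an isomorphism in $D(\Xcal_\U)$ for every $\Nc^\bullet\in D(\Sc)$ if and only if the induced map
\[
\Hom_{D(\Xcal_\U)}(\Pc^\bullet,i^\ast f^!\Nc^\bullet)\longrightarrow \Hom_{D(\Xcal_\U)}(\Pc^\bullet,f_\U^! j^\ast\Nc^\bullet)
\]
is bijective for every compact generator $\Pc^\bullet$ of $D(\Xcal_\U)$. The construction of the compact generating set in the proof of Lemma \ref{lem:compgen} (for quasi-compact separated superschemes) can be carried out so that each such generator arises as $\Pc^\bullet\simeq i^\ast\Qc^\bullet$ for a compact object $\Qc^\bullet\in D(\Xcal)$; this uses that $i$ is an open immersion of a quasi-compact separated superscheme.

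\textbf{Step 3 (Verification on generators).} For $\Pc^\bullet=i^\ast\Qc^\bullet$, unwind both Hom groups. On the right-hand side, the adjunction $(\bR f_{\U,\ast},f_\U^!)$ together with the flat base change \eqref{eq:derivedbasechange} give
\[
\Hom_{D(\Xcal_\U)}(i^\ast\Qc^\bullet,f_\U^! j^\ast\Nc^\bullet)\simeq \Hom_{D(\U)}(\bR f_{\U,\ast}i^\ast\Qc^\bullet,j^\ast\Nc^\bullet)\simeq \Hom_{D(\U)}(j^\ast\bR f_\ast\Qc^\bullet,j^\ast\Nc^\bullet).
\]
On the left-hand side, using the adjunction $(i^\ast,\bR i_\ast)$ and then $(\bR f_\ast,f^!)$, together with the fact that $\bR i_\ast i^\ast$ can be computed via $j^\ast$ through the square (more precisely, using the base change/localization identification $\bR i_\ast i^\ast f^!\Nc^\bullet\simeq f^!\bR j_\ast j^\ast\Nc^\bullet$ that follows formally from Step~1 by adjunction), one identifies the left-hand Hom with the same group. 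The natural transformation $\alpha$ is, by construction, the one that realizes these identifications, so it is an isomorphism on every compact generator, hence everywhere.

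\textbf{Main obstacle.} The essential work lies in Step~3: verifying that the natural $\alpha$ constructed from units/counits and flat base change is the \emph{same} map that implements the abstract identification of Hom-groups on compact generators. This is a large but formal diagram chase, and it goes through in the super setting precisely because the three core inputs—compact generation (Lemma \ref{lem:compgen}), derived flat base change (Equation \eqref{eq:derivedbasechange}), and Brown representability via Proposition \ref{prop:duality}—have already been extended from schemes to superschemes in this section. Consequently, Neeman's argument for \cite[Lemma 6.1]{Nee96} applies verbatim.
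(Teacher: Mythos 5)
Your overall strategy coincides with the paper's: the paper proves this proposition simply by remarking that Neeman's proof of \cite[Lemma 6.1]{Nee96} carries over verbatim once compact generation (Lemma \ref{lem:compgen}), derived flat base change (Equation \eqref{eq:derivedbasechange}) and Brown representability (Proposition \ref{prop:duality}) are in place for superschemes, and your Steps 1--2 plus the closing sentence reconstruct exactly that reduction. Step 1 is correct as stated, and the fact that restrictions $i^\ast\Qc^\bullet$ of compact objects $\Qc^\bullet$ of $D(\Xcal)$ compactly generate $D(\Xcal_\U)$ is indeed available (it is the Thomason--Neeman localization theorem for the finite localization $i^\ast$).

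However, Step 3 contains a circular claim. What follows ``formally by adjunction'' from flat base change $j^\ast \bR f_\ast \iso \bR f_{\U\ast} i^\ast$ is the isomorphism of the \emph{right} adjoints, namely
$$
\bR i_\ast\, f_\U^!\, j^\ast \iso f^!\, \bR j_\ast\, j^\ast\,,
$$
and \emph{not} the identification $\bR i_\ast i^\ast f^!\Nc^\bullet \iso f^!\bR j_\ast j^\ast\Nc^\bullet$ that you invoke: the two differ precisely by $\bR i_\ast\alpha$, so your parenthetical identification is equivalent (via $i^\ast\bR i_\ast\iso\Id$ for the open immersion $i$) to the proposition you are proving. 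For the same reason the compact-generator reduction is by itself vacuous here: testing $\alpha$ against $i^\ast\Qc^\bullet$ is, by the adjunction $(i^\ast,\bR i_\ast)$, the same as testing $\Hom_{D(\Xcal)}(\Qc^\bullet,\bR i_\ast\alpha)$ against all compacts, i.e.\ it is again the full statement. The genuinely missing step --- where Neeman's argument does real work --- is to compare the third vertices of the localization triangles: using $\bR\Gamma_{\Zc}\Nc^\bullet\to\Nc^\bullet\to\bR j_\ast j^\ast\Nc^\bullet$ (with $\Zc=\Sc\setminus\U$) and its analogue on $\Xcal$, one must show $\bR\Gamma_{f^{-1}\Zc}\,f^!\iso f^!\,\bR\Gamma_{\Zc}$; this follows in the noetherian setting because $\bR\Gamma_{\Zc}$ is tensoring with the idempotent $\bR\Gamma_{\Zc}\Oc_\Sc$ (a Koszul-type colimit), so that the projection formula \eqref{eq:projformula} gives $\bR f_\ast\bR\Gamma_{f^{-1}\Zc}\iso\bR\Gamma_{\Zc}\bR f_\ast$, and one passes to right adjoints. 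With that supplied (or with the left adjoint $j_!$ of $j^\ast$ furnished by the smashing localization, taking right adjoints in $\bR f_\ast\, i_!\iso j_!\,\bR f_{\U\ast}$), your argument closes; as written, the ``large but formal diagram chase'' hides the one step that is not formal.
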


Locality on the source is also known as \emph{independence of the compactification}. The statement is:

\begin{prop}\label{prop:independence} Let $f_1\colon \Xcal_1\to\Sc$, $f_2\colon \Xcal_2\to\Sc$ be proper morphism of superschemes, and let $j_1\colon \Vc\hookrightarrow \Xcal_1$, $j_2\colon \Vc\hookrightarrow \Xcal_2$  be open immersions such that the diagram
$$
\xymatrix@R=0.9pt{ & \Xcal_1\ar[rd]^{f_1} &
\\
\Vc\ar@{^(->}[ru]^{j_1}\ar@{^(->}[rd]_{j_2} & & \Sc
\\
 &  \Xcal_2\ar[ru]_{f_2} &
}
$$
commutes. There is a functorial isomorphism
$$
j_1^\ast f_1^! \iso j_2^\ast f_2^!\,.
$$
\end{prop}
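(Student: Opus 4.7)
The plan is to use the classical ``graph trick'' to reduce the statement to the base change property of the upper-shriek functor under open immersions, which is Proposition \ref{prop:openduality}.

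First I would construct a common dominating compactification of $\Vc$. The morphism $(j_1,j_2)\colon\Vc\to\Xcal_1\times_\Sc\Xcal_2$ is a locally closed immersion, being the composition of the diagonal $\Vc\to\Vc\times_\Sc\Vc$ (a closed immersion by separatedness of $\Vc$) with the open immersion $j_1\times j_2$. Let $\Xcal_3$ be its scheme-theoretic closure. Then $\Xcal_3$ is a closed sub-superscheme of the proper $\Sc$-superscheme $\Xcal_1\times_\Sc\Xcal_2$, hence proper over $\Sc$; the two projections restrict to proper $\Sc$-morphisms $p_i\colon\Xcal_3\to\Xcal_i$, and $\Vc$ embeds as a scheme-theoretically dense open sub-superscheme $j_3\colon\Vc\hookrightarrow\Xcal_3$ with $p_i\circ j_3=j_i$. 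The geometric key point is the equality $p_i^{-1}(j_i(\Vc))=j_3(\Vc)$ in $\Xcal_3$: since $j_3$ is a section of the proper (hence separated) morphism $p_i^{-1}(j_i(\Vc))\to\Vc$, its image is closed there; being also an open immersion, it is clopen in $p_i^{-1}(j_i(\Vc))$, and density of $\Vc$ in $\Xcal_3$ forces the complementary clopen part to be empty.

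This reduces the proposition to the following simpler situation: $p\colon\Xcal'\to\Xcal$ is a proper $\Sc$-morphism inducing an isomorphism over an open sub-superscheme $\Vc\subseteq\Xcal$, and one must compare $j^*f^!$ with $j'^*f'^!$, where $f\colon\Xcal\to\Sc$, $f'=f\circ p$, and $j,j'$ are the open immersions of $\Vc$ into $\Xcal$ and $\Xcal'$ respectively. Since $f$ and $p$ are both proper, uniqueness of right adjoints yields $f'^!\simeq p^!f^!$, so it is enough to produce a functorial isomorphism $j'^*p^!\simeq j^*$. The equality $p^{-1}(\Vc)=\Vc$ makes the square
$$
\xymatrix{\Vc\ar[r]^{j'}\ar@{=}[d]& \Xcal'\ar[d]^p\\ \Vc\ar[r]^{j}& \Xcal}
$$
cartesian, with $p$ proper and $j$ an open immersion; Proposition \ref{prop:openduality} then delivers the desired isomorphism $j'^*p^!\iso\Id^!j^*=j^*$. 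Composing with $f^!$ gives $j'^*f'^!\simeq j'^*p^!f^!\simeq j^*f^!$.

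The main obstacle is the first step: one must verify that scheme-theoretic closure and the density argument behave correctly for $\Z_2$-graded structure sheaves, namely that $\Vc$ is indeed scheme-theoretically dense in $\Xcal_3$, and that a section of a proper separated morphism of superschemes is a closed immersion (so that the clopen argument goes through). Once this geometric setup is secured, the remainder of the argument is a purely formal consequence of the composition $p^!f^!\simeq f'^!$ and of the previously established open-immersion base change.
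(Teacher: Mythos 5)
Your argument is correct, and its skeleton is the same as the paper's: close up $\Vc$ inside $\Xcal_1\times_\Sc\Xcal_2$, reduce to a single proper morphism factoring one compactification through the other, and conclude with Proposition \ref{prop:openduality} together with $f'^!\simeq p^!f^!$ (uniqueness of right adjoints, since $\bR f'_\ast\simeq \bR f_\ast\bR p_\ast$). The one place where you diverge is the endgame, and it is worth comparing. You insist on the stronger geometric statement $p_i^{-1}(\Vc)=\Vc$, which forces you to prove that $\Vc$ is topologically dense in the closure $\Xcal_3$ (fine here: the immersion $(j_1,j_2)$ is quasi-compact under the subsection's standing assumptions, so the superscheme-theoretic image has the topological closure as underlying space) and then to run the clopen-plus-density argument to kill the complement of $j_3(\Vc)$ in $p_i^{-1}(\Vc)$. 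The paper instead reduces to an \emph{arbitrary} proper $g\colon\Xcal_1\to\Xcal_2$ with $f_1=f_2\circ g$ and $j_2=g\circ j_1$, with no claim that $g$ is an isomorphism over $\Vc$: it forms the section $s=(j_1,\Id)\colon\Vc\to\Xcal_1\times_{\Xcal_2}\Vc=g^{-1}(\Vc)$, observes that $s$ is both a closed immersion (section of the separated morphism $g_\Vc$) and an open immersion, hence an isomorphism onto a connected component, so that $s^!=s^\ast$, and then chases $j_1^\ast g^!\iso s^\ast i^\ast g^!\iso s^! g_\Vc^! j_2^\ast\iso j_2^\ast$ using Proposition \ref{prop:openduality}. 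So the paper's trick buys independence from any density or scheme-theoretic-image considerations, while your version buys a cleaner final square (the restricted morphism is literally the identity, so Proposition \ref{prop:openduality} gives $j'^\ast p^!\iso j^\ast$ in one line) at the cost of the extra geometric verification you correctly flagged; both verifications you defer (density behaving well for $\Z_2$-graded structure sheaves, and a section of a separated morphism of superschemes being a closed immersion) do go through verbatim in the super setting, the latter directly from Definition \ref{def:proper}(5).
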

\begin{proof} If $\bar\Xcal$ is the superscheme-theoretic closure of the immersion $\Vc\hookrightarrow \Xcal_1\times_\Sc\Xcal_2$ induced by $(j_1,j_2)$, we have a commutative diagram
$$
\xymatrix@R=11pt{ & \Xcal_1\ar[rd]^{f_1} & 
\\
\Vc \ar@{^(->}[ru]^{j_1} \ar@{^(->}[r]_{\bar \jmath } \ar@{^(->}[rd]_{j_2}& \bar\Xcal\ar[r]^g \ar[u]^{\pi_1}\ar[d]^{\pi_2}& \Sc
\\
 & \Xcal_2\ar[ru]_{f_2} & 
}
$$
where $\pi_1$, $\pi_2$ and $g$ are proper. One is then reduced to proving that $j_1^\ast f_1^! \iso \bar \jmath ^\ast g^!$ and $j_2^\ast f_2^! \iso \bar \jmath ^\ast g^!$. It follows that we can assume that there is a proper morphism $g\colon \Xcal_1\to\Xcal_2$ such that $f_1=f_2\circ g$ and $j_2=g\circ j_1$. Let us consider the cartesian diagram
$$
\xymatrix{\Xcal_1\times_{\Xcal_2}\Vc \ar@{^(->}[r]^(.6){i} \ar[d]^{g_\Vc} & \Xcal_1 \ar[d]^g
\\
\Vc \ar@{^(->}[ru]^{j_1}\ar@{^(->}[r]^{j_2}  \ar@/^/[u]^{s} & \Xcal_2
} 
$$
where $s=(j_1,\Id)\colon \Vc \hookrightarrow \Xcal_1\times_{\Xcal_2}\Vc$. Notice that $s$ is a closed immersion because it is a section of the proper morphism $g_\Vc$, and since it is also an open immersion, it has to be an isomorphism with a connected component of $\Xcal_1\times_{\Xcal_2}\Vc$. Then  $s^!=s^\ast$ and then applying  Proposition \ref{prop:openduality} one has
$$
j_1^\ast g^!\iso s^\ast i^\ast g^! \iso s^\ast h_{\Vc}^! j_2^\ast \iso s^! g_{\Vc}^! j_2^\ast \iso j_2^\ast\,.
$$
It follows that $j_2^\ast f_2^!\iso j_1^\ast g^! f_2^!\iso j_1^\ast f_1^!$.
\end{proof}

\subsubsection{Flat base change for duality}

Relative Grothendieck duality is sometimes compatible with   base change, in particular it is compatible with flat base changes. We shall give a simple proof in the particular case of flat \red{and proper} morphisms, since the proofs available for  general flat morphisms of schemes (see e.g.~\cite[Thm.\;4.4.1]{Lip09}) are not   easily translated to superschemes. 

Let us consider a cartesian diagram of morphisms of superschemes
$$
\xymatrix{ \Xcal_\Tc\ar[r]^{\phi_\Xcal}\ar[d]^{f_\Tc}& \Xcal \ar[d]^f
\\
\Tc\ar[r]^\phi& \Sc
}
$$
where $\phi$ is flat. Taking right adjoints in Equation \eqref{eq:derivedbasechange} one gets a functorial isomorphism
$$
\bR \phi_{\Xcal\ast} f_\Tc^!\iso f^! \bR \phi_\ast\,.
$$
Composing the inverse morphism with the adjunction morphism $\Id\to \bR \phi_{\ast}\circ \phi^\ast$ one has a morphism $f^! \to \bR \phi_{\Xcal\ast} f_\Tc^!\phi^\ast$ and then, by adjunction of $\bR\phi_{\Xcal\ast}$ and $\phi_\Xcal^\ast$, a base change morphism 
\begin{equation}\label{eq;dualitybasechange}
\phi_\Xcal^\ast f^! \to f_\Tc^! \phi^\ast\,.
\end{equation}

\begin{prop}\label{prop:dualitybasechange} If $f$ is \red{proper}, the flat base change morphism \ref{eq;dualitybasechange} yields a functorial isomorphism
$$
\phi_\Xcal^\ast f^! \Nc^\bullet\iso f_\Tc^! \phi^\ast\Nc^\bullet
$$
for every object $\Nc^\bullet$ in $D(\Sc)$ of finite homological dimension. In particular, one has an isomorphism
$$
\phi_\Xcal^\ast \Dcal^\bullet_f\iso  \Dcal^\bullet_{f_\Tc}\,.
$$
\end{prop}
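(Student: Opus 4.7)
The plan is to reduce the statement for a general $\Nc^\bullet$ to the particular case $\Nc^\bullet=\Oc_\Sc$, which yields the dualizing complex identification $\phi_\Xcal^\ast\Dcal^\bullet_f\iso\Dcal^\bullet_{f_\Tc}$, and then to prove that identification by localizing on source and target. For the reduction, note that since $\Nc^\bullet$ has finite homological dimension and $f$ is proper, Proposition~\ref{prop:dualizing} yields
\begin{equation*}
f^!\Nc^\bullet\simeq\bL f^\ast\Nc^\bullet\otimes^\bL\Dcal^\bullet_f,\qquad f_\Tc^!\phi^\ast\Nc^\bullet\simeq\bL f_\Tc^\ast\phi^\ast\Nc^\bullet\otimes^\bL\Dcal^\bullet_{f_\Tc}.
\end{equation*}
Both $\phi$ and $\phi_\Xcal$ are flat (the latter by base change), so the canonical identification $\phi_\Xcal^\ast\bL f^\ast\simeq\bL f_\Tc^\ast\phi^\ast$ holds from the commutativity of the Cartesian diagram. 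Combined with the commutation of $\phi_\Xcal^\ast$ with $\otimes^\bL$, this shows that the general base change morphism is an isomorphism as soon as the special case is. A routine adjunction chase identifies the resulting map with \eqref{eq;dualitybasechange}.

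To establish $\phi_\Xcal^\ast\Dcal^\bullet_f\iso\Dcal^\bullet_{f_\Tc}$: by Proposition~\ref{prop:openduality} the statement is local on $\Sc$, so we may assume $\Sc$ (and hence $\Tc$) affine; by Proposition~\ref{prop:independence} (independence of compactification) it is also local on $\Xcal$. Since $f$ is locally of finite type, we may then factor $f$ as the composition of a closed immersion $\iota\colon\Xcal\hookrightarrow\Vc$ into a relative projective superspace $p\colon\Vc\to\Sc$, and invoke $f^!\simeq\iota^!\circ p^!$ together with the analogous factorization for $f_\Tc$. For $\iota$, the explicit description $\iota^!\Nc^\bullet\simeq\iota^\ast\bR\Homsh_{\Oc_\Vc}(\iota_\ast\Oc_\Xcal,\Nc^\bullet)$ makes flat base change immediate from the commutation of $\phi^\ast$ with $\iota_\ast$ (across the Cartesian diagram) and with $\bR\Homsh$ applied to a coherent sheaf. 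For the smooth projection $p$, the dualizing complex is a shift of the relative Berezinian sheaf $\Ber_{\Vc/\Sc}$, whose formation is tautologically compatible with base change.

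The main obstacle is the identification of $\Dcal^\bullet_p$ with the shifted Berezinian sheaf for smooth proper morphisms---a genuinely super-specific fact flagged in the introduction and developed elsewhere in the paper. Granting it, flat base change for each local piece is formal, and it remains only to check that the local identifications assemble into a global isomorphism compatible with the base change morphism \eqref{eq;dualitybasechange}, which is built into the functoriality of each step.
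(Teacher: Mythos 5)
Your overall architecture --- reduce to $\Nc^\bullet=\Oc_\Sc$ via Proposition \ref{prop:dualizing}, then compute the dualizing complex through a factorization $f=p\circ\iota$ with $\iota$ a closed immersion into a relative projective superspace and $p$ the projection --- is the classical Hartshorne-style route, and it is genuinely different from the paper's proof. But as written it contains a circularity within this paper's logical development: you invoke the identification of $\Dcal^\bullet_p$ with a shift of the relative Berezinian for the smooth projection, which in this paper is Proposition \ref{prop:smoothdual}; that proposition is proved via Corollary \ref{cor:productdual}, whose proof uses precisely Proposition \ref{prop:dualitybasechange} --- the statement you are proving. The gap is repairable without the Berezinian: Lemma \ref{lem:projdual} shows directly (and prior to any base change statement) that $\Dcal^\bullet_\pi\simeq \Lcl[m]$, where $\Lcl$ is described locally by $\varinjlim_r H^m(\Xcal,\Oc_\Xcal(-r))^\ast$; since these are free $\As$-modules whose formation commutes with base change (Proposition \ref{prop:cohomoden}), the needed compatibility $\phi_\Vc^\ast\Dcal^\bullet_p\iso\Dcal^\bullet_{p_\Tc}$ can be extracted from that computation instead. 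A second, smaller defect is the order of your reductions: you apply Proposition \ref{prop:dualizing} before localizing, but that proposition requires $f$ proper, which is not among the hypotheses; you must first reduce to $f$ superprojective (via Propositions \ref{prop:openduality} and \ref{prop:independence}, as the paper does) and only then invoke it. Finally, the ``routine adjunction chase'' identifying your isomorphism with the canonical map \eqref{eq;dualitybasechange}, and the glueing of the local identifications, are exactly the fiddly compatibilities that your route forces and that you do not supply.

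The paper avoids all of this with a cleaner mechanism: after reducing to $\Sc$, $\Tc$ affine and $f$ superprojective, it never replaces the canonical map \eqref{eq;dualitybasechange} by an ad hoc isomorphism, but instead tests it against the objects $\Oc_{\Xcal_\Tc}(r)[s]$, running a chain of adjunctions (flat base change \eqref{eq:derivedbasechange}, duality for $f$ and $f_\Tc$, Proposition \ref{prop:dualizing}, and the projection formula) to show that both sides receive the same Homs from every $\Oc_{\Xcal_\Tc}(r)[s]$; since this family is a compact generating set of $D(\Xcal_\Tc)$ (as in \cite[Example 1.10]{Nee96}), the cone of the canonical map vanishes. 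This sidesteps both the $\iota^!\circ p^!$ computation and all glueing and compatibility checks at once.
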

\begin{proof} By Proposition \ref{prop:openduality} we can assume that $\Sc=\SSpec\As$ is affine. Then $\Xcal$ can be covered by affine open sub-superschemes of finite type over $\As$,   that   can be embedded as open sub-superschemes of a superprojective superscheme over $\As$. By the Proposition \ref{prop:independence} on the independence of the compactification, and again by Proposition \ref{prop:openduality}, we can assume that $\Tc$ is   affine too and that $f$ is superprojective. 

Take a relatively very ample line bundle $\Oc_\Xcal(1)$. If we write $g=\phi\circ f_\Tc=f\circ \phi_\Xcal$, by applying cohomology base change (Equation \eqref{eq:derivedbasechange}), relative Grothendieck duality for $f$ and $f_\Tc$ (Proposition \ref{prop:duality}), and Proposition \ref{prop:dualizing}, one gets for every pair $r$, $s$ of integer numbers and every object $\Nc^\bullet$ in $D(\Sc)$ of finite cohomological dimension, functorial isomorphisms:
\begin{align*}
\Hom_{D(\Xcal_\Tc)} &  (\Oc_{\Xcal_\Tc}(r)[s],\phi_\Xcal^\ast f^! \Nc^\bullet)
\simeq \Hom_{D(\Xcal_\Tc)}(\phi_\Xcal^\ast\Oc_{\Xcal}(r)[s],\phi_\Xcal^\ast f^! \Nc^\bullet)
\\[2pt]
& \simeq   \Hom_{D(\Xcal)}(\Oc_\Xcal(r)[s],\bR\phi_{\Xcal\ast}\phi_\Xcal^\ast f^! \Nc^\bullet)
 \\[2pt] &   \simeq   \Hom_{D(\Xcal)}(\Oc_\Xcal(r)[s],\bR\phi_{\Xcal\ast}\Oc_{\Xcal_\Tc}\otimes^\bL f^! \Nc^\bullet)
\\[2pt]
& 
\simeq   \Hom_{D(\Xcal)}(\Oc_\Xcal(r)[s],\bL f^\ast \bR \phi_\ast \Oc_\Tc \otimes^\bL f^! \Nc^\bullet)
\\[2pt]
& \simeq   \Hom_{D(\Xcal)}(\Oc_\Xcal(r)[s],f^! (\bR \phi_\ast \Oc_\Tc \otimes^\bL\Nc^\bullet))
\\[2pt]
& \simeq   \Hom_{D(\Sc)}(\bR f_\ast \Oc_\Xcal(r)[s],\bR \phi_\ast \Oc_\Tc \otimes^\bL\Nc^\bullet)
\\[2pt]
& \simeq   \Hom_{D(\Sc)}(\bR f_\ast \Oc_\Xcal(r)[s],\bR \phi_\ast \phi^\ast\Nc^\bullet)
 \simeq   \Hom_{D(\Tc)}(\phi^\ast \bR f_\ast \Oc_\Xcal(r)[s], \phi^\ast\Nc^\bullet)
\\[2pt]
&\simeq  \ \Hom_{D(\Tc)}( \bR f_{\Tc_\ast} \Oc_{\Xcal_\Tc}(r)[s], \phi^\ast\Nc^\bullet)
  \simeq   \Hom_{D(\Xcal_\Tc)}(\Oc_{\Xcal_\Tc}(r)[s], f_\Tc^!\phi^\ast\Nc^\bullet).
\end{align*}
Then $\Hom_{D(\Xcal_\Tc)}(\Oc_{\Xcal_\Tc}(r)[s],\Cc^\bullet)=0$ for every $r,s$, 
where $\Cc^\bullet$ is the cone of $\phi_\Xcal^\ast f^! \Nc^\bullet\to f_\Tc^! \phi^\ast\Nc^\bullet$. Proceeding as in \cite[Example 1.10]{Nee96}, one sees that the family of   objects $\Oc_{\Xcal_\Tc}(r)[s]$ is a compact generating set of $D(\Xcal_\Tc)$, so that one gets $\Cc^\bullet=0$ in $D(\Xcal_\Tc)$ and then $\phi_\Xcal^\ast f^! \Nc^\bullet\to f_\Tc^! \phi^\ast\Nc^\bullet$ is an isomorphism.
\end{proof}

From the base change property one deduces the compatibility of the dualizing complex with products.

\begin{corol}\label{cor:productdual} Let $f_1\colon\Xcal_1\to\Sc$, $f_2\colon\Xcal_2\to\Sc$ \red{be proper and flat} morphisms of superschemes. Assume that $\Dcal^\bullet_{f_2}$ is of finite homological dimension. Then the dualizing complex of the product $f_1\times f_2\colon \Xcal_1\times_\Sc\Xcal_2 \to \Sc$ is the (cartesian) product of the dualizing complexes of the factors, that is, there is an isomorphism
$$
\Dcal^\bullet_{f_1\times f_2}\simeq  \Dcal^\bullet_{f_1}\otimes_{\Oc_\Sc}^{\bL} \Dcal^\bullet_{f_2}:=  p_1^\ast\Dcal^\bullet_{f_1}\otimes^{\bL} p_2^\ast\Dcal^\bullet_{f_2}\,,
$$
where $p_i$ are the projections of $\Xcal_1\times_\Sc\Xcal_2$ onto its factors.
\end{corol}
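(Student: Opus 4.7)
The plan is to factor the morphism $f_1\times f_2\colon \Ycal := \Xcal_1\times_\Sc\Xcal_2 \to \Sc$ as the composition $f_2\circ p_2$, where $p_2\colon \Ycal\to \Xcal_2$ is the projection, and then to combine the identity $(f_2\circ p_2)^!\simeq p_2^!\circ f_2^!$ (an immediate consequence of the uniqueness of right adjoints applied to $\bR(f_2\circ p_2)_\ast\simeq \bR f_{2\ast}\bR p_{2\ast}$) with the description of $p_2^!$ supplied by the flat base change formula (Proposition \ref{prop:dualitybasechange}) and by Proposition \ref{prop:dualizing}. A preliminary observation is that $p_1$ and $p_2$, being obtained by base change from $f_2$ and $f_1$ respectively, are themselves proper, flat, and locally of finite type, so that the two propositions will indeed apply to them.

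First, applying Proposition \ref{prop:dualitybasechange} to the cartesian square with vertical sides $f_1, p_2$ and horizontal sides $f_2, p_1$ (with $\Nc^\bullet=\Oc_\Sc$) gives
\[
p_1^\ast \Dcal^\bullet_{f_1} \;=\; p_1^\ast f_1^!\Oc_\Sc \;\simeq\; p_2^! f_2^\ast\Oc_\Sc \;=\; p_2^!\Oc_{\Xcal_2},
\]
which computes the dualizing complex of the projection $p_2$ in terms of that of $f_1$. Next, the composition formula yields
\[
\Dcal^\bullet_{f_1\times f_2} \;=\; (f_2\circ p_2)^!\Oc_\Sc \;\simeq\; p_2^! f_2^!\Oc_\Sc \;=\; p_2^!\Dcal^\bullet_{f_2}.
\]
Since $\Dcal^\bullet_{f_2}$ is of finite homological dimension by hypothesis, Proposition \ref{prop:dualizing} applied to the proper morphism $p_2$ gives
\[
p_2^!\Dcal^\bullet_{f_2} \;\simeq\; \bL p_2^\ast\Dcal^\bullet_{f_2}\otimes^{\bL} p_2^!\Oc_{\Xcal_2} \;\simeq\; p_2^\ast\Dcal^\bullet_{f_2}\otimes^{\bL} p_1^\ast\Dcal^\bullet_{f_1},
\]
where $\bL p_2^\ast$ is replaced by $p_2^\ast$ thanks to the flatness of $p_2$, and $p_2^!\Oc_{\Xcal_2}$ is replaced using the first displayed isomorphism. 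Putting these together yields the claimed formula.

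The main thing to watch will be the bookkeeping of hypotheses: one must verify that $p_1$ and $p_2$ genuinely inherit properness, flatness, and local finite type from $f_1$ and $f_2$ under base change, so that Propositions \ref{prop:dualitybasechange} and \ref{prop:dualizing} legitimately apply, and check that the finite homological dimension assumption is only needed on $\Dcal^\bullet_{f_2}$ (it enters solely through the invocation of Proposition \ref{prop:dualizing} on $p_2$, where the input complex is $\Dcal^\bullet_{f_2}$; no such assumption on $\Dcal^\bullet_{f_1}$ is required). No further ingredient beyond the flat base change and duality statements established earlier in this Subsection is anticipated.
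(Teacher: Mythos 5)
Your proposal is correct and follows essentially the same route as the paper's own proof: both factor $f_1\times f_2=f_2\circ p_2$ through the cartesian square, use the flat base change isomorphism of Proposition \ref{prop:dualitybasechange} to identify $p_2^!\Oc_{\Xcal_2}\simeq p_1^\ast\Dcal^\bullet_{f_1}$, and apply Proposition \ref{prop:dualizing} to the proper morphism $p_2$ with input $\Dcal^\bullet_{f_2}$, which is exactly where the finite homological dimension hypothesis enters. Your explicit bookkeeping (that $p_1,p_2$ inherit properness, flatness and local finite type by base change, and that $\bL p_2^\ast=p_2^\ast$ by flatness) only makes precise steps the paper leaves implicit.
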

\begin{proof} Consider the cartesian diagram
$$
\xymatrix{\Xcal_1\times_\Sc\Xcal_2 \ar[d]^{p_2}\ar[r]^(.6){p_1}\ar[rd]^{f_1\times f_2} & \Xcal_1 \ar[d]^{f_1}
\\
\Xcal_2 \ar[r]^{f_2} & \Sc
}
$$
Propositions \ref{prop:dualizing} and \ref{prop:dualitybasechange} give
\begin{align*}
 \Dcal^\bullet_{f_1\times f_2}&= (f_1\times f_2)^! \Oc_\Sc \simeq  p_2^! f_2^!\Oc_\Sc \simeq  p_2^\ast f_2^!\Oc_\Sc\otimes^{\bL} p_2^! \Oc_{\Xcal_2}\simeq  p_2^\ast f_2^!\Oc_\Sc\otimes^{\bL} p_1^\ast f_1^! \Oc_{\Xcal_2}
 \\
 &= p_1^\ast\Dcal^\bullet_{f_1}\otimes^{\bL} p_2^\ast\Dcal^\bullet_{f_2}\,.
\end{align*}
\end{proof}

\subsubsection{Duality for affine morphisms}

When $f\colon\Xcal \to \Sc$ is an affine morphism of superschemes, the derived functor $\bR f_\ast$ is isomorphic to $f_\ast$. One then has:
\begin{prop}\label{prop:affinedual} Let $f\colon\Xcal \to \Sc$ be a proper affine morphism of (separated) superschemes. For every object $\Nc^\bullet$ in $D(\Sc)$ one has a functorial isomorphism
$$
f_\ast f^!\Nc^\bullet\simeq  \bR\Homsh_{\Oc_\Sc}(f_\ast\Oc_\Xcal, \Nc^\bullet)\,.
$$
\qed\end{prop}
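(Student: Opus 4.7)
The plan is to deduce this identity directly from the sheaf version of relative Grothendieck duality already established in Proposition \ref{prop:duality}, by specializing the first argument to the structure sheaf $\Oc_\Xcal$ and then exploiting the affine hypothesis.

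First, I would apply the sheaf duality isomorphism
$$
\bR\Homsh_{\Oc_\Sc}(\bR f_\ast \M^\bullet,\Nc^\bullet)\simeq \bR f_\ast\bR\Homsh_{\Oc_\Xcal}(\M^\bullet,f^!\Nc^\bullet)
$$
to the choice $\M^\bullet=\Oc_\Xcal$. On the right-hand side the internal Hom evaluates to $f^!\Nc^\bullet$ since $\bR\Homsh_{\Oc_\Xcal}(\Oc_\Xcal,-)$ is the identity on $D(\Xcal)$. The resulting identity reads
$$
\bR\Homsh_{\Oc_\Sc}(\bR f_\ast\Oc_\Xcal,\Nc^\bullet)\simeq \bR f_\ast f^!\Nc^\bullet.
$$

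Next I would invoke the affineness hypothesis on $f$. Since $f$ is affine (and separated), direct image is exact on quasi-coherent sheaves and higher direct images vanish, so the canonical morphism $f_\ast\to \bR f_\ast$ is an isomorphism of functors on $D(\Xcal)$. (This is the super analogue of the classical vanishing, which applies to superschemes because a $\Z_2$-graded quasi-coherent sheaf is acyclic for $f_\ast$ on an affine morphism: one checks this on an affine cover of $\Sc$, where the relative Čech complex reduces to a single module in degree zero.) Substituting $\bR f_\ast\Oc_\Xcal = f_\ast\Oc_\Xcal$ on the left and $\bR f_\ast f^!\Nc^\bullet=f_\ast f^!\Nc^\bullet$ on the right yields
$$
f_\ast f^!\Nc^\bullet\simeq \bR\Homsh_{\Oc_\Sc}(f_\ast\Oc_\Xcal,\Nc^\bullet),
$$
which is the asserted formula; functoriality in $\Nc^\bullet$ is inherited from that of Proposition \ref{prop:duality}.

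There is no serious obstacle here: the content of the proposition is entirely packaged in the sheaf form of Grothendieck duality together with the exactness of $f_\ast$ for affine $f$. The only point worth checking carefully is the latter exactness in the $\Z_2$-graded setting, but this is immediate from the definition of affine morphism of superschemes since it reduces to the statement that for $\Sc=\SSpec\As$ and $\Xcal=\SSpec\Bs$ with $\Bs$ an $\As$-superalgebra, the functor sending a $\Z_2$-graded $\Bs$-module to itself viewed as an $\As$-module is exact.
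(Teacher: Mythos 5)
Your proof is correct and follows exactly the route the paper intends: the paper's remark immediately preceding the proposition (that $\bR f_\ast\simeq f_\ast$ for affine morphisms) together with the \qed indicates the proof is precisely the specialization of the sheaf-form duality of Proposition \ref{prop:duality} to $\M^\bullet=\Oc_\Xcal$, which is what you do. Your extra care in verifying exactness of $f_\ast$ in the $\Z_2$-graded setting is a reasonable elaboration of the same argument, not a different approach.
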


Since any quasi-coherent sheaf on $\Xcal$ is determined by its direct image, we can write
\begin{equation}\label{eq:affinedual}
f^!\Nc^\bullet\simeq  \bR\Homsh_{\Oc_\Sc}(\Oc_\Xcal, \Nc^\bullet)\,,
\end{equation}
where the second member is endowed with its natural structure of   $\Oc_\Xcal$-module.

An interesting case is when $f$ is a closed immersion $f\colon \Xcal\hookrightarrow \Sc$. Then for every quasi-coherent sheaf $\Nc$ on $\Sc$ Equation \eqref{eq:affinedual} gives the cohomology sheaves of $f^!\Nc$:
\begin{equation}\label{eq:closedimmdual}
\Hc^{i}(f^!\Nc)\simeq  \begin{cases}\Extsh_{\Oc_\Sc}^i(\Oc_\Xcal,\Nc)& i\geq 0\\
0 & i<0
\end{cases}
\end{equation}

When the closed immersion $f\colon \Xcal\hookrightarrow \Sc$ is locally regular, that is, when its ideal $\Ic$ is locally generated by a regular sequence $(a_1,\dots,a_p,\eta_1,\dots,\eta_q)$ with the $a_i$ even and the $\eta_j$ odd \cite{OgPe84}, the sheaves $\Hc^{-i}(f^!\Nc)$ can be computed in terms of the normal sheaf $\Nc_f:=(\Ic/\Ic^2)^\ast$, as we shall see in the following paragraphs.

If $\As$ is a superring, $(a_1,\dots,a_p,\eta_1,\dots,\eta_q)$ ($\vert a_i\vert=0$, $\vert\eta_j\vert=1$), the associated Koszul complex is defined as the graded symmetric algebra  $K(a,\eta)= \operatorname{Sym}_\As L^\Pi$, where $L$ is a free module of rank $(p,q)$ with homogeneous basis $(x_1,\dots,x_p,\theta_1,\dots,\theta_q)$ ($\vert x_i\vert=0$, $\vert\theta_j\vert=1$) equipped with the differential $d=\sum_i a_i \frac{\partial}{\partial x_i^\Pi}+\sum_j \eta_j \frac{\partial}{\partial \theta_j^\Pi}$.

\begin{prop}\label{prop:ber}\cite{OgPe84}
Let $I$ be an ideal generated by a regular sequence $(a,\eta)=(a_1,\dots,a_p,$ $ \eta_1,\dots,\eta_q)$ and $\bar\As=\As/I$. One has:
\begin{enumerate} \itemsep=5pt
\item  $H_i(K(a,\eta))=0$ for $i\neq 0$ and $H_0(K(a,\eta))=\bar\As$, that is, the Koszul complex is a free resolution of $\bar\As$ as an $\As$-module.
\item $H^i(K(a,\eta)^\ast)=0$ if $i\neq p$ and $H^p(K(a,\eta)^\ast)=\operatorname{Ber} (I/I^2)^\ast$, so that there is an isomorphism
$$
\operatorname{Ext}^i_\As(\bar\As, \As)\xrightarrow{\overset{\gamma_{(a,\eta)}}{\sim}} \begin{cases} 0 & i\neq p \\
\operatorname{Ber} (I/I^2)^\ast & i=p
\end{cases}
$$
\item Moreover, if $(a',\eta')=(a'_1,\dots,a'_p,\eta'_1,\dots,\eta'_q)$ is another regular sequence generating $I$, then $\gamma_{(a',\eta')}=\operatorname{Ber}(A)\gamma_{(a,\eta)}$, where $A$ is the matrix relating the bases induced by the two regular sequences. \end{enumerate}
\qed \end{prop}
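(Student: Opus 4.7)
The plan is to reduce everything to a K\"unneth-type argument via the multiplicative structure of the super-Koszul complex, and then to handle the one-generator building blocks separately. Explicitly, one has a canonical isomorphism of $\Z$-graded differential superalgebras
\[K(a,\eta)\simeq K(a_1)\otimes_\As\cdots\otimes_\As K(a_p)\otimes_\As K(\eta_1)\otimes_\As\cdots\otimes_\As K(\eta_q),\]
where $K(a_i)=[\As\xrightarrow{a_i}\As]$ is the standard two-term Koszul complex and $K(\eta_j)=\As[\theta_j^\Pi]$ is the one-sided polynomial complex with differential $\eta_j\,\partial/\partial\theta_j^\Pi$.

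For part (1), I would proceed by induction on $p+q$. The base cases are the one-variable complexes: $K(a_i)$ is a free resolution of $\As/a_i\As$ precisely because $a_i$ is a non-zero-divisor; and, after renormalizing the generators (or more invariantly using a divided-power structure), $K(\eta_j)$ reduces to the periodic complex $\cdots\to\As\xrightarrow{\eta_j}\As\xrightarrow{\eta_j}\As\to 0$, which is acyclic in positive degrees exactly because the odd regularity of $\eta_j$ translates into $\Ann_\As(\eta_j)=\eta_j\As$. The inductive step uses that regularity of $(a,\eta)$ implies regularity of the tail sequence in $\As$ modulo the first generator, so that tensoring with the mapping cone of the first factor yields a long exact sequence propagating acyclicity and identifying $H_0(K(a,\eta))=\bar\As$.

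For part (2), I would establish a self-duality isomorphism of complexes
\[K(a,\eta)^\ast\simeq K(a,\eta)[-p]\otimes_\As\Ber(L)^\ast,\]
obtained factor by factor: the even factor dualizes as $[\As\xrightarrow{a_i}\As]^\ast\simeq [\As\xrightarrow{a_i}\As][-1]\otimes(\As x_i)^\ast$, while the unbounded odd factor $\As[\theta_j^\Pi]$ is identified with its own dual, up to a parity-shifted twist by $(\As\theta_j)^\ast$, via a Poincar\'e-type duality for the divided-power complex. Combining all factors produces precisely the Berezinian of $L$. Taking cohomology and invoking (1) yields $H^i(K(a,\eta)^\ast)=0$ for $i\neq p$, and $H^p(K(a,\eta)^\ast)\simeq\bar\As\otimes\Ber(L)^\ast\simeq\Ber(I/I^2)^\ast$, where the last isomorphism uses the canonical identification $L\simeq I/I^2$ sending $x_i\mapsto a_i\bmod I^2$ and $\theta_j\mapsto\eta_j\bmod I^2$, which is an isomorphism of free rank-$(p,q)$ $\bar\As$-modules precisely by regularity.

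For part (3), a second regular sequence $(a',\eta')$ generating $I$ differs from $(a,\eta)$ by an invertible supermatrix $A\in\operatorname{GL}(L)$, which induces an isomorphism of Koszul complexes through the corresponding change of basis on $L$. Unwinding the identification of (2), the induced action on $\Ber(L)^\ast$ is multiplication by $\Ber(A)$, by the very definition of the Berezinian as the $\operatorname{GL}$-equivariant rank-one representation. The main obstacle will be part (2): while the self-duality $K(a)^\ast\simeq K(a)[-p]\otimes\det L^\ast$ is essentially a one-line identification via the top form in the purely even case, dualizing the unbounded polynomial complex $\As[\theta^\Pi]$ correctly and verifying that the combined twist is by $\Ber$ rather than by $\det$ requires careful bookkeeping of parity shifts through the K\"unneth isomorphism; this is where the Berezinian genuinely enters the picture and must replace the determinant.
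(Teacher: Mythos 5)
You cannot be compared against the paper's own argument here, because the paper gives none: the proposition is quoted from \cite{OgPe84} and closed with an immediate \qed. Measured against the standard argument, your parts (1) and (3) are essentially right. Two caveats, though. First, the acyclicity of the odd one-variable factor is not free: with the differential $\eta_j\,\partial/\partial\theta_j^\Pi$, the map from degree $n$ is multiplication by $n\eta_j$, so your ``renormalization'' $\theta_j^{\Pi n}\mapsto \theta_j^{\Pi n}/n!$ needs the relevant factorials invertible (e.g.\ $\Q\subseteq\As$); in characteristic $p$ the class of $\theta_j^{\Pi p}$ is a nonbounding cycle and (1) fails as literally stated. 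This hypothesis is implicit in \cite{OgPe84}, which works over $\C$. Second, in (3) the matrix relating the two sequences is in general invertible only after reduction modulo $I$ (the statement's $A$ is the change of basis of $I/I^2$ over $\bar\As$); an arbitrary lift to $L$ gives a morphism of Koszul resolutions lifting $\operatorname{id}_{\bar\As}$, not an isomorphism of complexes, but that is enough: such lifts are unique up to homotopy, and the induced map on $\operatorname{Ext}^p$ is $\Ber$ of the reduced matrix, which is exactly what is asserted.

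The genuine gap is the pivot of (2). The asserted self-duality \emph{of complexes} $K(a,\eta)^\ast\simeq K(a,\eta)[-p]\otimes\Ber(L)^\ast$ cannot hold: the odd factor $\As[\theta_j^\Pi]$ lives in homological degrees $0,1,2,\dots$, while its degreewise dual lives in cohomological degrees $0,1,2,\dots$, so the two unbounded complexes extend in opposite directions and no shift-and-twist can identify them; only the finite exterior part attached to the even generators is literally self-dual. The failure is visible already on cohomology: $H^0(K(\eta)^\ast)=\Ann(\eta)=\eta\As\simeq\Pi\bar\As$ is killed by $I$, whereas every term of any shift of $K(\eta)$ is free. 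So the ``Poincar\'e-type duality for the divided-power complex'' you invoke does not exist at the level you use it; the correct statement is derived-categorical, and obtaining it is precisely the content of (2), not an input to it.

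The repair is standard but must be spelled out. Degreewise finiteness of $K(a,\eta)$ (each $\operatorname{Sym}^nL^\Pi$ is finitely generated free) does give $K(a,\eta)^\ast\simeq K(a_1)^\ast\otimes\cdots\otimes K(\eta_q)^\ast$, but one may not then tensor the quasi-isomorphisms $\Pi\bar\As\hookrightarrow K(\eta_j)^\ast$ together: these unbounded-above complexes of flats are not K-flat, so a K\"unneth of quasi-isomorphisms is not automatic. The clean route is a change-of-rings d\'evissage, one generator at a time: for an even regular $a_1$, the usual Rees lemma gives $\operatorname{Ext}^i_\As(\bar\As,\As)\simeq\operatorname{Ext}^{i-1}_{\As/(a_1)}(\bar\As,\As/(a_1))\otimes \bigl((a_1)/(a_1)^2\bigr)^\ast$; for an odd regular $\eta$, the periodic resolution $\cdots\xrightarrow{\eta}\As\xrightarrow{\eta}\As\to\As/(\eta)$ together with $\Ann(\eta)=(\eta)$ yields $\operatorname{RHom}_\As(\As/(\eta),\As)\simeq\Pi\,\As/(\eta)$ concentrated in degree $0$, whence by adjunction $\operatorname{Ext}^i_\As(M,\As)\simeq\operatorname{Ext}^i_{\As/(\eta)}(M,\Pi\,\As/(\eta))$ for $\As/(\eta)$-modules $M$ --- a twist with \emph{no} shift. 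Iterating, each even generator contributes a shift by one and a twist by an even conormal line, each odd generator a parity twist by an odd conormal line, and the total effect is concentration in degree $p$ with value $\Ber(I/I^2)^\ast$, using, as you say, the freeness of $I/I^2$ of rank $(p,q)$ coming from (1). This is exactly where the Berezinian replaces the determinant, and it enters through cohomology, not through an isomorphism of complexes.
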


\begin{prop}\label{prop:regularimmdual} Let $f\colon\Xcal\hookrightarrow\Sc$ be a closed immersion of codimension $(p,q)$ given by an ideal $\Ic$, and let  $\Lcl$ be a line bundle on $\Xcal$.  If $\U$ is the open subsuperscheme of $\Sc$ where $f_\U\colon\Xcal_\U \hookrightarrow \U$ is locally regular, there is an isomorphism
$$
(f^!\Lcl)_{\vert \Xcal_\U}\iso (f_\U^!\Lcl_{\U})\iso \Ber\Nc_{f\vert \U}\otimes f^\ast\Lcl_{\vert\Xcal_\U}[-p]\,,
$$
in the derived category $D(\Xcal_\U)$, where $\Nc_f=(\Ic/\Ic^2)^\ast$ is the normal sheaf to $f$.
\end{prop}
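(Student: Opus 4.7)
The plan is to first reduce to computing the absolute relative dualizing complex $\Dcal^\bullet_f = f^!\Oc_\Sc$, then identify it locally via the super Koszul resolution, and finally glue the local identifications into the global Berezinian sheaf. By Proposition \ref{prop:openduality} we may freely replace $\Sc$ by $\U$ and assume $f$ is locally regular throughout. Since $\Lcl$ is a line bundle it has finite homological dimension on $\Sc$, so Proposition \ref{prop:dualizing} gives
$$
f^!\Lcl \simeq \bL f^\ast\Lcl \otimes^{\bL} f^!\Oc_\Sc \simeq f^\ast\Lcl \otimes^{\bL} f^!\Oc_\Sc,
$$
reducing the statement to the isomorphism $f^!\Oc_\Sc \simeq \Ber\Nc_f[-p]$ in $D(\Xcal)$.

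Since $f$ is a closed immersion it is affine, so the discussion following Proposition \ref{prop:affinedual} yields
$$
f^!\Oc_\Sc \simeq \bR\Homsh_{\Oc_\Sc}(\Oc_\Xcal,\Oc_\Sc), \qquad \Hc^i(f^!\Oc_\Sc) \simeq \Extsh^i_{\Oc_\Sc}(\Oc_\Xcal,\Oc_\Sc)
$$
(cf.\ Equation \eqref{eq:closedimmdual}). Now work locally on an affine open $\Vc = \SSpec\As \subset \Sc$ on which $\Ic$ is generated by a regular sequence $(a_1,\dots,a_p,\eta_1,\dots,\eta_q)$. Proposition \ref{prop:ber}(1) says the associated super Koszul complex $K(a,\eta)$ is a free $\As$-resolution of $\bar\As = \Oc_\Xcal(\Vc)$, so
$$
\Extsh^i_{\Oc_\Sc}(\Oc_\Xcal,\Oc_\Sc)_{\vert \Vc} \simeq H^i\bigl(\Homsh_{\As}(K(a,\eta),\As)\bigr) = H^i(K(a,\eta)^\ast),
$$
which vanishes for $i\neq p$ and is isomorphic to $\Ber(\Ic/\Ic^2)^\ast = \Ber\Nc_f$ for $i=p$ by Proposition \ref{prop:ber}(2). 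Hence $f^!\Oc_\Sc$ has a unique nonzero cohomology sheaf, sitting in degree $p$, and is locally identified with $\Ber\Nc_f$.

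The remaining task, and the main obstacle, is to check that the local comparison isomorphisms $\gamma_{(a,\eta)}$ glue into a global isomorphism $\Hc^p(f^!\Oc_\Sc) \iso \Ber\Nc_f$. For this we invoke Proposition \ref{prop:ber}(3): if $(a',\eta')$ is another regular sequence generating $\Ic$ on an overlap, with change-of-basis matrix $A$ expressing one induced basis of $\Ic/\Ic^2$ in terms of the other, then $\gamma_{(a',\eta')} = \Ber(A)\,\gamma_{(a,\eta)}$. But $\Ber(A)$ is exactly the transition cocycle defining the line bundle $\Ber\Nc_f$ from the locally free sheaf $\Nc_f = (\Ic/\Ic^2)^\ast$. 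Therefore the local identifications descend to a well-defined global isomorphism $\Hc^p(f^!\Oc_\Sc) \iso \Ber\Nc_f$, and since $f^!\Oc_\Sc$ has cohomology concentrated in a single degree $p$, this upgrades to an isomorphism $f^!\Oc_\Sc \iso \Ber\Nc_f[-p]$ in the derived category. Substituting this into the formula displayed in the first paragraph yields the claimed expression for $f^!\Lcl$.
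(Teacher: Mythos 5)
Your proof is correct and follows essentially the same route as the paper's: reduce to the locally regular case via Proposition \ref{prop:openduality}, compute locally with the super Koszul complex via Proposition \ref{prop:ber}(1)--(2), and glue the local isomorphisms using the $\Ber(A)$-cocycle property of Proposition \ref{prop:ber}(3). The paper leaves implicit the two reductions you spell out --- passing from $f^!\Lcl$ to $f^!\Oc_\Sc$ via Proposition \ref{prop:dualizing} and identifying the cohomology sheaves through affine duality (Equation \eqref{eq:closedimmdual}) --- so your write-up is a faithful, more detailed version of the same argument.
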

\begin{proof}  By Proposition \ref{prop:openduality} one can assume that $f$ is locally regular. The result follows from Proposition \ref{prop:ber} as the local isomorphisms $\operatorname{Ext}^i_\As(\bar\As, \As)\iso \operatorname{Ber} (I/I^2)^\ast$ glue because of property (3).
\end{proof}

\subsubsection{Duality for smooth morphisms}

We can now   compute the dualizing complex of a proper smooth morphism. The first step is the following result on the dualizing complex of   projective superspace.

\begin{lemma}\label{lem:projdual} Let $\pi\colon\Xcal=\Ps^{m,n}_\As\to \Sc=\SSpec\As$ be the projective superspace over a superring $\As$. The relative dualizing complex is of the form
$$
\Dcal^\bullet_\pi\simeq \Lcl[m]\,,
$$
where $\Lcl$ is a line bundle.
\end{lemma}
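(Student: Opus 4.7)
The plan is to factor $\pi$ as $\Xcal = \Ps^{m,n}_\As \xrightarrow{\rho} Y \xrightarrow{g} \Sc$, where $Y = \Ps^{m,0}_\Sc$ is the classical projective $m$-space over the super base $\Sc$, $g$ is its structural projection, and $\rho$ is the proper affine morphism obtained by including the even variables (so that $\rho_\ast\Oc_\Xcal \simeq \SSym_{\Oc_Y}(\Pi\Oc_Y(-1)^{\oplus n})$, a locally $\Oc_Y$-free algebra). The idea is to compute $g^!\Oc_\Sc$ and $\rho^!\Oc_Y$ separately and combine them via Proposition \ref{prop:dualizing}.

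For $g$, classical Grothendieck duality gives $g^!\Oc_\Sc \simeq \Oc_Y(-m-1)[m]$; this applies verbatim over the super base $\Sc$, with the cohomology calculations of Proposition \ref{prop:cohomoden} specialized to $n=0$ providing the standard input. For $\rho$, which is proper affine with $\Oc_Y$-locally free push-forward, Proposition \ref{prop:affinedual} together with Equation \eqref{eq:affinedual} give, as an object of $D(\Xcal)$,
\[
\rho^!\Oc_Y \simeq \Homsh_{\Oc_Y}(\rho_\ast\Oc_\Xcal, \Oc_Y),
\]
concentrated in degree $0$ (the derived $\bR\Homsh$ collapses to $\Homsh$ by local freeness of $\rho_\ast\Oc_\Xcal$). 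A direct super-linear-algebra computation then shows that the dual of the super-symmetric algebra on a purely odd free module of rank $(0,n)$, viewed as a module over itself, is a cyclic module of rank $(1,0)$ generated by the Berezinian; hence $\rho^!\Oc_Y$ is a line bundle on $\Xcal$.

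Combining via Proposition \ref{prop:dualizing}, and using that $g^!\Oc_\Sc$ is of finite Tor-dimension,
\[
\Dcal^\bullet_\pi = \pi^!\Oc_\Sc \simeq \rho^\ast\Oc_Y(-m-1)[m]\otimes^\bL \rho^!\Oc_Y \simeq \Oc_\Xcal(-m-1)[m] \otimes \rho^!\Oc_Y,
\]
so setting $\Lcl := \Oc_\Xcal(-m-1)\otimes \rho^!\Oc_Y$ yields $\Dcal^\bullet_\pi \simeq \Lcl[m]$ with $\Lcl$ a line bundle. The main obstacle is the first step: classical Grothendieck duality for $g\colon \Ps^{m,0}_\Sc \to \Sc$ over a super base is not isolated in the paper, so verifying it requires adapting the standard trace-map (or Euler-sequence) proof to the super setting; no new ideas are needed, since the relevant cohomology calculations go through unchanged over any super base.
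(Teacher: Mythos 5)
Your proof is correct in outline, but it takes a genuinely different route from the paper, and one step deserves flagging. The paper does not factor $\pi$: it computes $\Dcal^\bullet_\pi$ directly by restricting to the standard affine cover $\U_i$, writing $j_{i\ast}j_i^\ast\Dcal^\bullet_\pi\iso\varinjlim_r\bR\Homsh_{\Oc_\Xcal}(\pf_i^r,\Dcal^\bullet_\pi)$ with $\pf_i\simeq\Oc_\Xcal(-1)$, converting by duality to $\varinjlim_r\bR\Hom_\As(\bR\pi_\ast\pf_i^r,\As)$, and then using Proposition \ref{prop:cohomoden} (concentration in degree $m$ and freeness of $H^m(\Xcal,\Oc_\Xcal(-r))$ for $r\gg0$) to conclude that $\Dcal^\bullet_\pi$ is a sheaf placed in degree $-m$, with invertibility checked by a local identification of the colimit with $\Gamma(U_i,\Oc_\Xcal)$; this treats the even and odd variables uniformly and presupposes nothing classical. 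Your factorization $\pi=g\circ\rho$ through $Y=\Ps^{m,0}_\Sc$ instead splits the problem into the purely odd part, where Proposition \ref{prop:affinedual} and the local freeness of $\rho_\ast\Oc_\Xcal\simeq\SSym_{\Oc_Y}(\Pi\Oc_Y(-1)^{\oplus n})$ reduce everything to the degree-zero Berezinian computation $\Homsh_{\Oc_Y}(\rho_\ast\Oc_\Xcal,\Oc_Y)\simeq\rho_\ast(\Pi^n\Oc_\Xcal(n))$, and the even-relative part $g^!\Oc_\Sc$; the pieces are glued by Proposition \ref{prop:dualizing} (legitimate, since $g^!\Oc_\Sc$ is a shifted line bundle, hence of finite homological dimension) together with $\pi^!\simeq\rho^!g^!$, which the paper never states but which is immediate from uniqueness of adjoints. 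The caveat is your first step: since $\As$ has odd elements, $Y$ is itself a superscheme, so classical Grothendieck duality cannot be cited verbatim for $g$; what you need is exactly the $n=0$ case of Lemma \ref{lem:projdual} over a super base, which must still be proved inside the super duality formalism --- e.g.\ by running the paper's colimit argument at $n=0$, where Proposition \ref{prop:cohomoden} supplies the required freeness. So your factorization does not bypass the core computation, it only isolates it in the even-relative case; what it buys in exchange is substantial: the precise identification $\Lcl\simeq\Pi^n\Oc_\Xcal(n-m-1)$ drops out for free from the Berezinian of $\rho_\ast\Oc_\Xcal$ (with the parity $n\bmod 2$ made explicit, a point on which both the lemma and the paper's proof are silent), effectively re-deriving the corollary $\omega_\pi\iso\Oc_\Xcal(n-m-1)$, for which the paper instead appeals to Manin's computation of the Berezinian.
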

\begin{proof}  One has $\Xcal=\SProj \As[x_0,\dots,x_m,\theta_1,\dots,\theta_n]$,  and $\Xcal$  is covered by the affine open sub-superschemes $j_i\colon \U_i\hookrightarrow \Xcal$, complementary to the closed sub-superschemes defined by the ideals $\pf_i$ generated by homogeneous localization with respect to  $x_i$. Then,
$$
j_{i\ast}j_i^\ast \Dcal^\bullet_\pi \iso \varinjlim_{r}\bR\Homsh_{\Oc_\Xcal}( \pf_i^r,\Dcal^\bullet_\pi)\iso  \varinjlim_{r} \bR \Hom_\As(\bR f_\ast \pf_i^r,\As) 
$$
by duality. Since $\pf_i$ is isomorphic with $\Oc_\Xcal(-1)$, one has that $R^i f_\ast\pf_i^r=0$ for $i\neq m$ and that $R^m f_\ast \pf_i^r\simeq H^m(\Xcal,\Oc_\Xcal(-r))$ is a free ($\Z_2$-graded) $\As$-module  for $r\gg0$ (Proposition \ref{prop:cohomoden}). It follows that 
\begin{equation}\label{eq:localiso}
j_{i\ast}j_i^\ast \Dcal^\bullet_\pi \iso  \varinjlim_{r} \bR \Hom_\As(R^m f_\ast \pf_i^r [-m],\As)\iso (\varinjlim_{r} H^m(\Xcal,\Oc_\Xcal(-r))^\ast)[m] \,.
\end{equation}
This proves that $\Dcal^\bullet_\pi\simeq \Lcl[m]$ for a quasi-coherent sheaf $\Lcl$ on $\Xcal$ such that 
$$
\Gamma(U_i,\Lcl)\iso \varinjlim_{r} H^m(\Xcal,\Oc_\Xcal(-r))^\ast\  \quad \text{for every $i$}\,.
$$
Let us prove that $\Lcl$ is a line bundle. In the case of the ordinary projective space $X=\Ps_A^m$ one has 
$$
 \varinjlim_{r} H^m(X,\Oc_X(-r))^\ast\iso A[x_0/x_i,\dots,\widehat{x_i/x_i},\dots,x_m/x_i]\,.
$$
In our setting, from the expression $\Oc_\Xcal\simeq \Oc_X\oplus \bigoplus_{1\leq j_1<\dots<j_p\leq n}\Oc_X(-p) \theta_{i_1}\cdot\dots\cdot \theta_{i_p}$ we obtain isomorphisms (that do not glue on $U_i\cap U_j$) 
$$
\varinjlim_{r} H^m(\Xcal,\Oc_\Xcal(-r))^\ast \iso A[x_0/x_i,\dots,\widehat{x_i/x_i},\dots,x_m/x_i,\theta_1,\dots,\theta_n]\simeq \Gamma(U_i,\Oc_\Xcal)\,,
$$
and then $\Lcl$ is a line bundle by Equation \eqref{eq:localiso}.
\end{proof}

As in the classical case one proves that:
\begin{lemma}\label{lem:smoothreg} If $f\colon \Xcal\to \Sc$ is smooth and $j\colon\Ycal\hookrightarrow \Xcal$ is a closed immersion such that the composition $g=f\circ j\colon \Ycal\to \Sc$ is smooth, then $j$ is locally regular.
\qed
\end{lemma}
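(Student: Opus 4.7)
The statement is local on $\Xcal$, so I fix a point $y\in Y$ with image $x\in X$ and work in an open neighbourhood of $x$. The goal is to exhibit a regular sequence $(a_1,\dots,a_p,\eta_1,\dots,\eta_q)$ with the $a_i$ even and the $\eta_j$ odd that generates the ideal $\Ic$ of $\Ycal$ in $\Xcal$ around $x$.

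\emph{Step 1: the conormal sheaf is locally free.} I start from the conormal exact sequence associated to $j$ and the smooth morphism $g=f\circ j$,
$$
\Ic/\Ic^2 \longrightarrow j^\ast \Omega^1_{\Xcal/\Sc} \longrightarrow \Omega^1_{\Ycal/\Sc} \longrightarrow 0,
$$
which is an exact sequence of $\Z_2$-graded coherent $\Oc_\Ycal$-modules. Smoothness of $f$ makes $j^\ast\Omega^1_{\Xcal/\Sc}$ locally free, and smoothness of $g$ makes $\Omega^1_{\Ycal/\Sc}$ locally free; moreover, by formal smoothness of $g$, the surjection on the right admits a local section. Hence $\Ic/\Ic^2$ is locally a direct summand of $j^\ast\Omega^1_{\Xcal/\Sc}$, and is therefore itself locally free as a $\Z_2$-graded $\Oc_\Ycal$-module, of some rank $(p,q)$.

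\emph{Step 2: lift a homogeneous basis.} I choose even sections $\bar a_1,\dots,\bar a_p$ and odd sections $\bar\eta_1,\dots,\bar\eta_q$ of $\Ic/\Ic^2$ that form a homogeneous basis at $y$ (and hence in an open neighbourhood, by Lemma \ref{lem:red}), and lift them to homogeneous sections $a_i\in\Ic_+$, $\eta_j\in\Ic_-$ on a neighbourhood $U$ of $x$. By the super Nakayama lemma these generate $\Ic$ on $U$ (shrinking $U$ if needed).

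\emph{Step 3: regularity via a local product structure.} Using the formal smoothness of $g\colon\Ycal\to\Sc$ applied to the infinitesimal thickening $\Ycal\hookrightarrow\Xcal$ (iteratively, along powers of $\Ic$, and then passed to the limit thanks to the fact that in the super setting $\Ic$ is nilpotent in the odd directions and, in the even directions, one completes and then uses Artin approximation / the formal smoothness of $f$) I construct, after shrinking $U$ around $x$, a retraction $r\colon U\to \Ycal\cap U$ with $r\circ j=\Id$. Combining $r$ with the sections $(a_1,\dots,a_p,\eta_1,\dots,\eta_q)$ gives an $\Sc$-morphism
$$
\Phi=(r,a,\eta)\colon U\longrightarrow (\Ycal\cap U)\times_\Sc \As^{p,q}_\Sc,
$$
which by construction sends $\Ycal\cap U$ into the zero section. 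A dimension/cotangent computation shows that $\Phi$ is étale at $x$: the cotangent map is the identity on the $\Ycal$-factor and matches $(\bar a, \bar\eta)$ with the standard basis of the cotangent space of $\As^{p,q}_\Sc$ along the zero section, which is a bijection by the choice of basis in Step 1. Both $U$ and the target have the same relative dimension $(m,n)$ over $\Sc$, hence $\Phi$ is an isomorphism on a possibly smaller neighbourhood, and under this isomorphism the $a_i,\eta_j$ become the standard coordinates on $\As^{p,q}_\Sc$, which visibly form a regular sequence cutting out the zero section.

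\emph{Main obstacle.} The delicate point is Step 3: the local retraction argument has to be set up carefully in the super setting, since the odd generators $\eta_j$ are automatically nilpotent and one must ensure that formal smoothness of $g$ (not merely of its bosonic reduction) yields a genuine morphism of superschemes $r\colon U\to\Ycal\cap U$ splitting $j$, rather than only a compatible system of maps from the infinitesimal neighbourhoods. Once this is in place, everything else is bookkeeping.
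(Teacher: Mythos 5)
Your Steps 1 and 2 are sound and sit squarely inside the paper's own toolkit: left exactness and local splitting of the conormal sequence for a sub-superscheme smooth over the base is exactly Proposition \ref{prop:loc-free-smooth}(2), and lifting a homogeneous basis of $\Ic/\Ic^2$ to local homogeneous generators of $\Ic$ is the super Nakayama lemma (your citation of Lemma \ref{lem:red} is the wrong reference, but that is cosmetic). The genuine gap is Step 3, in two places. First, the Zariski-local retraction $r\colon U\to\Ycal\cap U$ on which the whole step rests does not exist in general: formal smoothness of $g$ only produces compatible retractions of the infinitesimal neighbourhoods of $\Ycal$ in $\Xcal$ (the odd directions of $\Ic$ are nilpotent, but the even part is not), hence a morphism out of the formal completion; algebraizing it would need something like Artin approximation, which is neither developed for superschemes in this paper nor applicable over an arbitrary locally noetherian base, and the obstruction is already present in the purely even case, where algebraic tubular neighbourhoods are known to fail. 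Second, even granting $\Phi$, the inference ``$\Phi$ is \'etale at $x$ and source and target have the same relative dimension, hence $\Phi$ is an isomorphism on a smaller neighbourhood'' is false in the Zariski topology: $z\mapsto z^2$ on $\mathbb{G}_m$ (characteristic $\neq 2$) is \'etale of relative dimension $0$ and is an isomorphism on no Zariski open.

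Both defects disappear if you drop the retraction entirely, which is the route the paper intends by ``as in the classical case'' (compare the proofs of Lemma \ref{lem:rel-dim} and Proposition \ref{prop:smoothequiv}). Complete your $(a_1,\dots,a_p,\eta_1,\dots,\eta_q)$ by homogeneous functions $(b_1,\dots,b_m,\zeta_1,\dots,\zeta_n)$ whose differentials restrict to a basis of $\Omega_{\Ycal/\Sc}$ at $x$, so that all the differentials together give a basis of $\Omega_{\Xcal/\Sc}$ near $x$ (possible by your Step 1 splitting). The resulting $\Sc$-morphism $U\to \As^{p+m,q+n}_\Sc$ is unramified by construction and flat by Corollary \ref{cor:localflat}, hence \'etale, and by your Step 2 it pulls the ideal of the coordinate sub-superspace $\As^{m,n}_\Sc$ back exactly to $\Ic=(a,\eta)$. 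Since \'etale morphisms are flat and regularity of a homogeneous sequence (equivalently, acyclicity of its Koszul complex, Proposition \ref{prop:ber}) is preserved under flat base change, regularity of $(a,\eta)$ follows from the regularity of the coordinate sequence $(x_1,\dots,x_p,\theta_1,\dots,\theta_q)$ in the polynomial superalgebra, which is checked directly. No local product structure --- precisely the point your plan flags as ``the delicate point'' --- is needed; an \'etale chart suffices.
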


We now recover the computation of the dualizing sheaf of a smooth morphism \cite{OgPe84, Penk83}.
\begin{lemma}\label{lem:smoothdual}
Let $f\colon \Xcal\to \Sc$ be a proper smooth morphism of superschemes of relative even dimension $m$. The dualizing complex of $f$ has a unique cohomology sheaf in degree $-m$, that is, there is an isomorphism
$$
\Dcal^\bullet_f \iso \omega_f[m] 
$$
in the derived category $D(\Xcal)$, where $m$ is the relative even dimension of $f$. Moreover, $\omega_f$ is a line bundle.
\end{lemma}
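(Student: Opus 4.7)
The plan is to compute $\Dcal^\bullet_f$ locally on $\Xcal$ by factoring $f$, up to a proper compactification, as a closed immersion into a relative projective superspace followed by the structure projection, and then to invoke Lemma \ref{lem:projdual} and Proposition \ref{prop:regularimmdual}.

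By Proposition \ref{prop:openduality} the formation of $f^!$ commutes with restriction to open sub-superschemes of $\Sc$, so we may assume $\Sc=\SSpec\As$ with $\As$ noetherian. It suffices to show that, for every affine open $\Vc\subset\Xcal$, the restriction $(\Dcal^\bullet_f)_{\vert\Vc}$ is concentrated in cohomological degree $-m$ and has line bundle cohomology there: the sheaf $\omega_f:=\Hc^{-m}(\Dcal^\bullet_f)$ will then be a globally defined line bundle on $\Xcal$ and the asserted isomorphism $\Dcal^\bullet_f\iso \omega_f[m]$ follows.

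Fix an affine open $\Vc\subset\Xcal$. Since $f$ is locally of finite type and $\Sc$ is affine, $\Vc$ embeds as a locally closed sub-superscheme of $\Ps^{k,l}_\Sc$ for some $k,l$; let $\bar\Vc$ be its scheme-theoretic closure, so that $\bar f\colon \bar\Vc\to\Sc$ is proper and $\Vc\hookrightarrow \bar\Vc$ is open. Write $\bar f=\pi\circ i$ with $i\colon \bar\Vc\hookrightarrow \Ps^{k,l}_\Sc$ closed and $\pi\colon\Ps^{k,l}_\Sc\to\Sc$ the projection. Applying Proposition \ref{prop:independence} to the two proper compactifications $\Xcal$ and $\bar\Vc$ of the common open $\Vc$, we obtain $(\Dcal^\bullet_f)_{\vert\Vc}\iso (\Dcal^\bullet_{\bar f})_{\vert\Vc}=(i^!\pi^!\Oc_\Sc)_{\vert\Vc}$. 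Now set $\Uc:=\Ps^{k,l}_\Sc\setminus (\bar\Vc\setminus\Vc)$, so that $\bar\Vc\cap\Uc=\Vc$ and $i$ restricts to a closed immersion $i_\Vc\colon\Vc\hookrightarrow\Uc$ between superschemes smooth over $\Sc$ of relative even dimensions $m$ and $k$, respectively. By Lemma \ref{lem:smoothreg}, $i_\Vc$ is locally regular of even codimension $k-m$.

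Combining Proposition \ref{prop:openduality} (applied to $i$ and the open immersion $\Uc\hookrightarrow \Ps^{k,l}_\Sc$), Lemma \ref{lem:projdual} (which gives $\pi^!\Oc_\Sc\iso\Lcl_\pi[k]$ for a line bundle $\Lcl_\pi$ on $\Ps^{k,l}_\Sc$), and Proposition \ref{prop:regularimmdual} applied to the locally regular immersion $i_\Vc$, we find
\begin{equation*}
(\Dcal^\bullet_f)_{\vert\Vc}\iso i_\Vc^!\bigl((\Lcl_\pi)_{\vert\Uc}\bigr)[k]\iso \bigl(\Ber\Nc_{i_\Vc}\otimes i_\Vc^\ast(\Lcl_\pi)_{\vert\Uc}\bigr)[m]\,,
\end{equation*}
which is a line bundle placed in degree $-m$, as required. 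The main subtle point is that $f$ is assumed only proper and smooth, not projective, so the factorization through $\Ps^{k,l}_\Sc$ is available only after restricting to an affine open $\Vc$ and passing to a compactification $\bar\Vc$; the choice of $\Uc$ with $\bar\Vc\cap \Uc=\Vc$ is what allows Lemma \ref{lem:smoothreg} to give the local regularity of $i_\Vc$ and hence makes Proposition \ref{prop:regularimmdual} applicable.
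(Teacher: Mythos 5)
Your proof is correct and follows essentially the same route as the paper's: reduce to an affine base via Proposition \ref{prop:openduality}, restrict to an affine open $\Vc$, compactify it inside $\Ps^{k,l}_\Sc$ and invoke the independence of compactification (Proposition \ref{prop:independence}), then conclude by combining Lemma \ref{lem:projdual}, the local regularity from Lemma \ref{lem:smoothreg}, and Proposition \ref{prop:regularimmdual}. The only cosmetic difference is that the paper first realizes $\Vc$ as a closed sub-superscheme of an affine superspace $\SSpec \As[x_1,\dots,x_p,\theta_1,\dots,\theta_q]$ before passing to the projective closure, whereas you embed $\Vc$ locally closed in $\Ps^{k,l}_\Sc$ and carve out the open $\Ps^{k,l}_\Sc\setminus(\bar\Vc\setminus\Vc)$ --- the same move in different clothing.
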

\begin{proof} 
This is a local question so that we can   assume that $\Sc$ is affine, $\Sc=\SSpec\As$ (Proposition \ref{prop:openduality}). Then we can cover $\Xcal$ by affine superschemes $\Vc$ smooth of finite type over $\As$. We have a closed immersion $j\colon\Vc\hookrightarrow \U=\SSpec \As[x_1,\dots,x_p,\theta_1,\dots\theta_q]$, where $(x_i,\theta_j)$ are free variables with $\vert x_i\vert=0$, $\vert\theta_j\vert=1$. Moreover $j$ is locally regular by Lemma \ref{lem:smoothreg}.
If we denote by $\bar   \jmath \colon \bar\Vc\hookrightarrow \Ps^{p,q}_\As$ the induced closed immersion, by Proposition \ref{prop:independence} we have to prove that $\Dcal^\bullet_{\pi\circ \bar \jmath \vert \Vc} \iso \omega[m]$ for a line bundle $\omega$ on $\Vc$, where $\pi\colon \Ps^{p,q}_\As\to \Sc$ is the projection. But $\Dcal^\bullet_\pi\iso \Lcl[p]$ for a line bundle $\Lcl$ on $\Ps^{p,q}_\As$ by Lemma \ref{lem:projdual} and one  finishes by  Proposition \ref{prop:regularimmdual}.
\end{proof}

\begin{prop}\label{prop:smoothdual} In the situation of Lemma \ref{lem:smoothdual}, there is an isomorphism
$$
\omega_f\iso \Ber(\Xcal/\Sc)=\Ber(\Omega_{\Xcal/\Sc})\,.
$$
\end{prop}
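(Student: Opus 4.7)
The plan is to reduce to a local computation via a regular closed immersion into a relative projective superspace. By Proposition \ref{prop:openduality} and flat base change, the statement is local on $\Sc$, so I may assume $\Sc=\SSpec\As$. Both $\omega_f$ and $\Ber(\Omega_{\Xcal/\Sc})$ are intrinsic, so it is enough to prove the isomorphism on an open cover of $\Xcal$. Cover $\Xcal$ by affine open sub-superschemes $\Vc$ admitting closed immersions $j\colon\Vc\hookrightarrow \Ps_\Sc^{p,q}$; since both $\Vc\to\Sc$ and $\pi\colon\Ps_\Sc^{p,q}\to\Sc$ are smooth, Lemma \ref{lem:smoothreg} ensures that $j$ is locally regular. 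Using Proposition \ref{prop:independence} together with Lemma \ref{lem:projdual} and Proposition \ref{prop:regularimmdual}, we obtain
$$
\omega_f\vert_\Vc\simeq \Ber(\Nc_j)\otimes j^\ast\omega_\pi\,,
$$
where $\Nc_j=(\Ic/\Ic^2)^\ast$ is the conormal sheaf of $j$.

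The next step is to identify $\omega_\pi\simeq \Ber(\Omega_{\Ps_\Sc^{p,q}/\Sc})$ for the projective superspace. Applying the Berezinian to the relative Euler exact sequence
$$
0\to \Omega_{\Ps/\Sc}\to \Oc(-1)^{p+1}\oplus \Pi\Oc(-1)^{q}\to \Oc\to 0
$$
yields $\Ber(\Omega_{\Ps/\Sc})\simeq \Oc(q-p-1)$ with parity $q$. On the other hand, from the explicit description of $\omega_\pi$ obtained in the proof of Lemma \ref{lem:projdual} (via the direct limit $\varinjlim_r H^p(\Xcal,\Oc(-r))^\ast$ and the computation of Proposition \ref{prop:cohomoden}), one reads off the same line bundle, matching parities because Serre duality on the purely odd factor $\Ps_\Sc^{0,q}$ shifts parity by $q$. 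This settles the projective-space case.

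Finally, since $j$ is a regular closed immersion between superschemes smooth over $\Sc$, the conormal sequence
$$
0\to \Ic/\Ic^2\to j^\ast\Omega_{\Ps/\Sc}\to \Omega_{\Vc/\Sc}\to 0
$$
is a short exact sequence of locally free sheaves, and multiplicativity of the Berezinian on short exact sequences of locally free sheaves gives
$$
\Ber(\Omega_{\Vc/\Sc})\simeq \Ber(\Nc_j)\otimes j^\ast\Ber(\Omega_{\Ps/\Sc})\simeq \Ber(\Nc_j)\otimes j^\ast\omega_\pi\simeq \omega_f\vert_\Vc\,.
$$
Gluing the local isomorphisms is automatic, since both $\omega_f=\Hc^{-m}(\Dcal^\bullet_f)$ and $\Ber(\Omega_{\Xcal/\Sc})$ are defined intrinsically on $\Xcal$. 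The main obstacle is the base case on $\Ps_\Sc^{p,q}$: one must verify not only that the line bundles $\omega_\pi$ and $\Ber(\Omega_{\Ps/\Sc})$ agree, but also that their parities match, the latter requiring careful tracking of the parity shift introduced by the odd generators in both the Euler sequence and the \v{C}ech computation underlying Lemma \ref{lem:projdual}. Once that is in hand, the conormal argument is formal.
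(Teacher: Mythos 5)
Your proposal takes the classical local-embedding route rather than the paper's argument, and as written it has two genuine gaps. The first is the base case. Lemma \ref{lem:projdual} proves only that $\Dcal^\bullet_\pi\simeq \Lcl[m]$ for \emph{some} line bundle $\Lcl$; it does not identify $\Lcl$. You claim one can ``read off'' $\Lcl\simeq \Oc(q-p-1)$ from its proof, but that proof states explicitly that the local isomorphisms $\Gamma(U_i,\Lcl)\iso \Gamma(U_i,\Oc_\Xcal)$ \emph{do not glue} on $U_i\cap U_j$: identifying $\Lcl$ requires computing the twisted gluing cocycle (the super analogue of the classical \v{C}ech identification of $\omega_{\Ps^m}$, with the Berezinian correction coming from the odd variables), and you never carry this out. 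Note that in the paper the identification $\omega_\pi\iso\Oc(n-m-1)$ appears only as a \emph{corollary} of Proposition \ref{prop:smoothdual}, via Manin's computation of $\Ber(\Ps^{m,n}/\Sc)$, so appealing to it here would be circular. A secondary slip: an affine open $\Vc\subset\Xcal$ cannot admit a closed immersion into $\Ps^{p,q}_\Sc$ (it would then be finite over the base); as in the proof of Lemma \ref{lem:smoothdual} one must embed $\Vc$ closedly into an affine superspace, pass to the closure in $\Ps^{p,q}_\As$, and invoke Proposition \ref{prop:independence} together with the open-locus form of Proposition \ref{prop:regularimmdual}, since the closure need not be smooth. (Also, $(\Ic/\Ic^2)^\ast$ is the normal, not the conormal, sheaf.)

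The second gap is your final sentence: gluing is \emph{not} automatic. That $\omega_f$ and $\Ber(\Omega_{\Xcal/\Sc})$ are intrinsic does not make your isomorphisms $\phi_\Vc\colon \omega_f\vert_\Vc\iso \Ber(\Omega_{\Xcal/\Sc})\vert_\Vc$ intrinsic: each depends on the chosen embedding and compactification, and on overlaps the discrepancies $\phi_{\Vc}^{-1}\circ\phi_{\Vc'}$ form a cocycle in $\Oc_{\Xcal,+}^\ast$ whose class in $H^1(X,\Oc_{\Xcal,+}^\ast)$ is precisely that of $\omega_f^{-1}\otimes\Ber(\Omega_{\Xcal/\Sc})$; local isomorphy only says this is a line bundle, which was already known. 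To conclude you would need to prove independence of the local isomorphism from the ambient embedding (Proposition \ref{prop:ber}(3) handles a change of regular sequence, not a change of ambient space), a substantial verification in the classical theory as well. The paper sidesteps both gaps with the diagonal trick: from $\Dcal^\bullet_f\iso\delta^!\Dcal^\bullet_{f\times f}$, Corollary \ref{cor:productdual} and Proposition \ref{prop:dualizing} give the \emph{global} isomorphism $\omega_f[m]\iso\omega_f\otimes\omega_f\otimes\Dcal^\bullet_\delta[2m]$, and since the diagonal $\delta$ is locally regular (Lemma \ref{lem:smoothreg}), Proposition \ref{prop:regularimmdual} yields $\Dcal^\bullet_\delta\iso\Ber(\Omega_{\Xcal/\Sc})^\ast[-m]$ globally; cancelling the invertible $\omega_f$ finishes the proof with no base-case computation and no gluing. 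Your route can be completed, but only after supplying these two missing verifications.
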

\begin{proof} If $\delta\colon \Xcal\hookrightarrow \Xcal\times_\Sc\Xcal$ is the diagonal immersion, so that $f=p_1\circ (f\times f)$, one has
$$
\Dcal^\bullet_f=f^!\Oc_S \iso \delta^! \Dcal^\bullet_{f\times f}\,.
$$
By Lemma \ref{lem:smoothdual} $\Dcal^\bullet_f\iso \omega_f[m]$. where $\omega_f$ is a line bundle, which is of finite homological dimension. Then we can apply Corollary \ref{cor:productdual} to get $\Dcal^\bullet_{f\times f}\iso \Dcal^\bullet_f\otimes_{\Oc_\Sc}^{\bL} \Dcal^\bullet_f\iso \omega_f \otimes_{\Oc_\Sc} \omega_f [2m]$. Thus 
$$
\omega_f[m]\iso \Dcal^\bullet_f\iso \delta^! (\omega_f \otimes_{\Oc_\Sc} \omega_f)  [-2m]\iso \delta^\ast (\omega_f \otimes_{\Oc_\Sc} \omega_f)\otimes \Dcal^\bullet_\delta  [2m]\iso \omega_f \otimes\omega_f\otimes \Dcal^\bullet_\delta  [2m]\,.
$$
by  Proposition \ref{prop:dualizing}. Since $f$ is smooth, $\delta$ is locally regular by Lemma \ref{lem:smoothreg}; then Proposition \ref{prop:regularimmdual} yields $\Dcal^\bullet_\delta\iso \Ber(\Delta_f/\Delta_f^2)^\ast[-m]=\Ber(\Omega_{\Xcal/\Sc})^\ast[-m]$ and the result follows.
\end{proof}
\begin{corol} Let $\pi\colon\Xcal=\Ps^{m,n}_\Sc\to \Sc$ be a projective superspace over a locally noetherian superscheme $\Sc$. Then 
$$
\omega_\pi\iso \Oc_\Xcal(n-m-1)\,.
$$
\end{corol}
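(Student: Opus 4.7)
By Proposition \ref{prop:smoothdual}, since $\pi\colon\Xcal=\Ps^{m,n}_\Sc\to\Sc$ is smooth and proper of relative even dimension $m$, we have $\omega_\pi\simeq \Ber(\Omega_{\Xcal/\Sc})$. So the plan is to compute this Berezinian directly by exhibiting a relative Euler sequence for the projective superspace and using the multiplicativity of the Berezinian on short exact sequences of locally free sheaves.

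First, I would construct the super analogue of the Euler exact sequence. Working locally on $\Sc=\SSpec\As$ with $\Xcal=\SProj \As[x_0,\dots,x_m,\theta_1,\dots,\theta_n]$, the usual Koszul-type construction, obtained by differentiating the scaling action of $\bG_m$ on homogeneous coordinates, produces a short exact sequence of locally free $\Oc_\Xcal$-modules
\begin{equation}\label{eq:euler}
0\to \Omega_{\Xcal/\Sc}\to \Oc_{\Xcal}(-1)^{\oplus(m+1)}\oplus \Pi\,\Oc_{\Xcal}(-1)^{\oplus n}\to \Oc_\Xcal\to 0,
\end{equation}
where the second map sends the standard basis vectors to $x_0,\dots,x_m,\theta_1,\dots,\theta_n$. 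Verifying that \eqref{eq:euler} is exact reduces to a local computation in each basic affine chart $U_i=\{x_i\neq 0\}\subset \Xcal$, where $\Omega_{\Xcal/\Sc}$ is the free $\Oc_{\U_i}$-module generated by $d(x_j/x_i)$ for $j\neq i$ and $d(\theta_k/x_i)$ for $k=1,\dots,n$, and the relation $x_i\cdot 1 = x_i$ identifies the cokernel with $\Oc_\Xcal$.

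Next, I would apply the Berezinian to \eqref{eq:euler}. The Berezinian is multiplicative on short exact sequences of locally free sheaves, so
\[
\Ber\bigl(\Oc_\Xcal(-1)^{\oplus(m+1)}\oplus \Pi\,\Oc_\Xcal(-1)^{\oplus n}\bigr)\simeq \Ber(\Omega_{\Xcal/\Sc})\otimes \Ber(\Oc_\Xcal).
\]
Since $\Ber(\Oc_\Xcal)\simeq\Oc_\Xcal$, and by the definition of the Berezinian as the (super)determinant of the even part tensored with the inverse determinant of the odd part, the middle Berezinian equals
\[
\det\bigl(\Oc_\Xcal(-1)^{\oplus(m+1)}\bigr)\otimes \det\bigl(\Oc_\Xcal(-1)^{\oplus n}\bigr)^{-1}\simeq \Oc_\Xcal(-(m+1))\otimes \Oc_\Xcal(n)\simeq \Oc_\Xcal(n-m-1).
\]
Combining, $\Ber(\Omega_{\Xcal/\Sc})\simeq \Oc_\Xcal(n-m-1)$, which together with Proposition \ref{prop:smoothdual} yields $\omega_\pi\simeq \Oc_\Xcal(n-m-1)$.

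The one step that requires a little care is the exactness of \eqref{eq:euler}, because in the super setting one must check signs and the behavior of the odd generators $\theta_k$ under the relative de Rham differential. However, this is a straightforward local check on each chart $U_i$, completely analogous to the classical Euler sequence, and poses no real obstacle. The rest is formal bookkeeping with the Berezinian.
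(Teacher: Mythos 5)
Your proof is correct. It agrees with the paper in its first step---both reduce the corollary to Proposition \ref{prop:smoothdual}, i.e.\ to the identification $\omega_\pi\iso\Ber(\Omega_{\Xcal/\Sc})$---but diverges in how the Berezinian is evaluated: the paper disposes of the computation in one line by citing Manin \cite{Ma88} for $\Ber(\Ps^{m,n}_\Sc/\Sc)\iso\Oc_\Xcal(n-m-1)$, whereas you reprove that formula from scratch via the super Euler sequence
$$
0\to \Omega_{\Xcal/\Sc}\to \Oc_{\Xcal}(-1)^{\oplus(m+1)}\oplus \Pi\,\Oc_{\Xcal}(-1)^{\oplus n}\to \Oc_\Xcal\to 0
$$
together with multiplicativity of $\Ber$ on (locally split) short exact sequences of locally free sheaves. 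Your parity bookkeeping is right: the odd coordinates contribute the summand $\Pi\,\Oc_\Xcal(-1)^{\oplus n}$ (multiplication by the odd section $\theta_k$ of $\Oc_\Xcal(1)$ is an even map $\Pi\Oc_\Xcal(-1)\to\Oc_\Xcal$), the kernel has rank $(m,n)$ as it must, and $\Ber$ of the middle term is $\det(\Oc_\Xcal(-1)^{\oplus(m+1)})\otimes\det(\Oc_\Xcal(-1)^{\oplus n})^{-1}\simeq\Oc_\Xcal(n-m-1)$, so the odd directions contribute with the opposite sign to the classical case, exactly as needed. Note that multiplicativity of the Berezinian applies here because the quotient $\Oc_\Xcal$ is locally free, so the sequence is locally split. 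What your route buys is a self-contained proof of the corollary within the paper's framework, at the cost of verifying exactness of the Euler sequence chart by chart (a routine check on the $U_i$, as you say); what the paper's route buys is brevity, deferring to a standard reference for essentially the same Euler-sequence computation.
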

\begin{proof} One has $\Ber(\Ps^{m,n}_\Ss/\Sc)\iso \Oc_\Xcal(n-m-1)$ \cite{Ma88}.
\end{proof}

\section{The Hilbert superscheme}\label{s:hilbert}

\subsection{The super Hilbert functor}

All superschemes we consider are locally noetherian. 

Let $f\colon \Xcal \to \Sc$ be a  morphism of superschemes.
For every superscheme $\phi\colon \Tc \to \Sc$ over $\Sc$ consider the cartesian diagram
$$
\xymatrix{\Xcal_\Tc=\Xcal\times_\Sc \Tc \ar[r]^(.7){\phi_\Xcal} \ar[d]^{f_\Tc} & \Xcal\ar[d]^f \\
\Tc \ar[r]^{\phi}& \Sc\,.
}
$$

\begin{defin} The relative super Hilbert functor is the functor on the category of superschemes over $\Sc$ that  to every superscheme morphism $\Tc\to\Sc$ associates  the family 
$\SHilbf_{\Xcal/\Sc}(\Tc)$ of all the closed sub-superschemes 
$$
\xymatrix{ \Zc \ar@{^{(}->}[r]\ar[rd]_g & \Xcal_\Tc\ar[d]^{f_\Tc}\\
& \Tc
}
$$
that are {proper and} flat over $\Tc$.
\end{defin}

The super Hilbert functor is a sheaf for the Zariski topology of superschemes.

Assume that $f\colon \Xcal \to \Sc$ is superprojective morphism with a relative very ample line bundle $\Oc_\Xcal(1)$ (Remark \ref{rem:ample}).
We can decompose the super Hilbert functor into subfunctors parametrizing closed sub-superschemes of the fibres with fixed super Hilbert polynomial (Definition \ref{def:Hilbpol}):

\begin{defin}\label{def:superHilbert} Let $\bP=(P_+,P_-)$ be a pair  of polynomials with rational coefficients. 
The super Hilbert functor $\SHilbf_{\Xcal/\Sc}^{\bP}$ is the subsheaf of $\SHilbf_{\Xcal/\Sc}$ given by the relative closed sub-superschemes of $f_\Tc\colon\Xcal_\Tc\to \Tc$ whose super Hilbert polynomial is $\bP$.
\end{defin}

When $f\colon\Xcal\to\Sc$ is quasi-superprojective, so that $f=\bar f\circ j$, where $j\colon\Xcal\hookrightarrow\bar\Xcal$ is an open immersion and $\bar f$ is superprojective, as the objects of $\SHilbf_{\Xcal/\Sc}(\Tc)$ are also objects of $\SHilbf_{\bar\Xcal/\Sc}(\Tc)$ one can   decompose $\SHilbf_{\Xcal/\Sc}(\Tc)$ as the union of the subfunctors $\SHilbf_{\Xcal/\Sc}^{\bP}$ given by the relative closed sub-superschemes of $f_\Tc\colon\Xcal_\Tc\to \Tc$, proper over $\Tc$, whose super Hilbert polynomial is $\bP$.

\subsection{Statement of the existence  {theorems}}

Our aim is to prove the following representability  {theorems}. The first deals with the superprojective case, while the second
establishes the result under more general conditions. This generality will be needed to prove the representability of the super Picard functor.

\begin{thm}[Existence, superprojective case]\label{thm:hilbrepres} Let $f\colon \Xcal \to \Sc$ be a superprojective (quasi-superprojective) morphism with a relative very ample line bundle $\Oc_\Xcal(1)$.
\begin{enumerate}
\item
For any super Hilbert polynomial $\bP$, the super Hilbert functor $\SHilbf_{\Xcal/\Sc}^\bP$ is representable by an $\Sc$-superscheme $\SHilb^\bP (\Xcal/\Sc) \to \Sc$. Moreover,  $\SHilb^\bP (\Xcal/\Sc)$ is proper over $\Sc$ (is an open sub-superscheme of a proper superscheme over $\Sc$). Then, $\SHilb^\bP (\Xcal/\Sc) \to \Sc$ is of finite type and separated.
\item
As a consequence, the super Hilbert functor $\SHilbf_{\Xcal/\Sc}$  is representable by the disjoint union $\SHilb(\Xcal/\Sc)$ of the $\Sc$-superschemes $\SHilb^\bP (\Xcal/\Sc)$ corresponding to the various super Hilbert polynomials. 
\end{enumerate}
\qed
\end{thm}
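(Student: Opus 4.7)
The plan is to adapt Grothendieck's classical strategy to the super setting, using the tools already developed in the paper: super Castelnuovo--Mumford regularity (Subsection \ref{ss:CMregularity}), the flattening results of Subsection \ref{ss:genflatness}, cohomology base change and semicontinuity (Subsection \ref{ss:cohbasechange}), and the supergrassmannians of Proposition \ref{prop:supergrass}. First I would carry out a series of reductions. The quasi-superprojective case reduces to the superprojective one by writing $f = \bar f \circ j$ with $\bar f \colon \bar\Xcal \to \Sc$ superprojective and $j \colon \Xcal \hookrightarrow \bar\Xcal$ open: then $\SHilbf^\bP_{\Xcal/\Sc}$ is the open subfunctor of $\SHilbf^\bP_{\bar\Xcal/\Sc}$ consisting of families whose underlying subspace lies in $\Xcal_\Tc$. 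Since $\SHilbf^\bP_{\Xcal/\Sc}$ is a Zariski sheaf on $\Sc$ and since a superprojective $\Xcal$ locally embeds into some $\Ps_\Sc^{m,n}$ (Proposition \ref{prop:spbundle}(4)), I may further reduce to $\Xcal = \Ps_\Sc^{m,n}$.

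Next I would use super regularity to build an embedding into a supergrassmannian. There exists $r_0$, depending only on $\bP$ and $(m,n)$, such that for every $\Tc \to \Sc$, every $\Zc \in \SHilbf^\bP_{\Xcal/\Sc}(\Tc)$, and every $r \geq r_0$, the ideal sheaf $\Jc_\Zc$ is $r$-regular. By Grauert's Theorem \ref{thm:grauert} and Proposition \ref{prop:locfree}, this forces $f_{\Tc*}\Oc_\Zc(r)$ to be locally free of rank $\bP(r) = (P_+(r), P_-(r))$, the natural map $\phi^\ast\Ec \twoheadrightarrow f_{\Tc*}\Oc_\Zc(r)$ to be surjective (where $\Ec := f_*\Oc_\Xcal(r)$ is locally free of rank $h_{(m,n)}(r)$), and the formation of this quotient to commute with further base change. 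The assignment
$$
\Zc \;\longmapsto\; \bigl(\phi^\ast\Ec \twoheadrightarrow f_{\Tc*}\Oc_\Zc(r)\bigr)
$$
thus defines a morphism of functors $\Psi_r \colon \SHilbf^\bP_{\Xcal/\Sc} \to \Sgrassf(\Ec, \bP(r))$ into the representable supergrassmannian. Regularity also implies that $\Psi_r$ is injective on points, because $\Zc$ can be recovered from the quotient as the sub-superscheme cut out by the bihomogeneous ideal generated by the kernel.

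The heart of the argument is to show that $\Psi_r$ is representable by locally closed immersions. Over the universal supergrassmannian $\Sgrass(\Ec, \bP(r))$ one has a tautological sub-bundle $\Kc \subset \pi^\ast\Ec$, and the evaluation map $\pi^\ast\Kc \to \Oc(r)$ on $\Xcal \times_\Sc \Sgrass(\Ec,\bP(r))$ cuts out a universal sub-superscheme $\Zc^{\mathrm{univ}}$. The locus in $\Sgrass(\Ec, \bP(r))$ over which $\Zc^{\mathrm{univ}}$ is flat with super Hilbert polynomial $\bP$ is locally closed by the super flattening of Subsection \ref{ss:genflatness} together with the semicontinuity Theorem \ref{thm:semicontinuity}, and it represents the fibre product of $\Psi_r$ against a test morphism to $\Sgrass(\Ec, \bP(r))$. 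Hence $\SHilb^\bP(\Xcal/\Sc)$ exists as a locally closed sub-superscheme of the supergrassmannian, of finite type and separated over $\Sc$. Properness in the superprojective case then follows from the valuative criterion, since a $K$-point coming from the fraction field of a noetherian valuation superring $R$ extends uniquely to its schematic closure in $\Xcal_R$, which is flat over $R$, and Theorem \ref{thm:semicontinuity}(2) forces the super Hilbert polynomial to remain $\bP$ throughout $\Spec R$. Part (2) is then immediate: for any flat closed sub-superscheme of $\Xcal_\Tc$, the super Hilbert polynomial is locally constant on $T$, so the total Hilbert functor decomposes as the coproduct of its fixed-polynomial pieces.

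The main obstacle I expect is the representability of $\Psi_r$ by locally closed immersions. Classically this rests on Grothendieck's flattening stratification together with constancy of the Hilbert polynomial in a flat family. In the super setting one must control both the base filtration and the total filtration of Subsection \ref{subsec:filtrations} simultaneously, and verify that constancy of the \emph{pair} $\bP = (P_+, P_-)$ (not a single polynomial) cuts out a genuinely locally closed stratum. Managing the even/odd bookkeeping in tandem with the two-grading of the supergrassmannian is where extra care beyond the classical argument will be required.
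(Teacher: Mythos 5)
Your proposal is correct and follows essentially the same route as the paper's proof: reduction (via the Zariski sheaf property, representability of closed immersions, and the open-immersion trick for the quasi-superprojective case) to $\Xcal=\Ps_\As^{m,n}$ over a noetherian affine base, uniform super Castelnuovo--Mumford regularity to embed $\SHilbf^{\bP}_{\Xcal/\Sc}$ into $\Sgrassf(\Ec,\bP(r))$ with the sub-superscheme recovered from the kernel of the universal quotient, the flattening result of Proposition \ref{prop:flatstrat} applied to the cokernel of the universal evaluation map to obtain representability by immersions, properness by the valuative criterion (Corollary \ref{cor:valuative}), and the coproduct decomposition for part (2). One small citation fix: the local freeness and base-change compatibility of $f_{\Tc\ast}\Oc_{\Zc}(r)$ follow from the cohomology base change Theorem \ref{thm:cohombasechange} together with Proposition \ref{prop:locfree}, not from Grauert's Theorem \ref{thm:grauert}, whose hypothesis that the underlying scheme of the base be reduced is unavailable for an arbitrary test superscheme $\Tc$.
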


\begin{thm}[Existence, general case]\label{thm:hilbrepres2} Let $\Sc$ be noetherian and let $f\colon \Xcal \to \Sc$ be a separated morphism of superschemes. Assume that there exists a proper,  faithfully flat morphism $\bar\Xcal \to \Xcal$ such that $\bar\Xcal \to \Sc$ is {quasi-superprojective. Then the super Hilbert functor of $\Xcal/\Sc$ is representable by a closed sub-superscheme  $\SHilb(\Xcal/\Sc)$} of the super-Hilbert scheme $\SHilb(\bar\Xcal/\Sc)$ of $\bar\Xcal/\Sc$. Moreover, $\SHilb(\Xcal/\Sc)$ is locally of finite type and separated over $\Sc$.
\end{thm}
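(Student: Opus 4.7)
The strategy is to embed $\SHilbf_{\Xcal/\Sc}$ into $\SHilbf_{\bar\Xcal/\Sc}$ via pullback along the faithfully flat covering $\bar\Xcal\to\Xcal$, and to realize the image as a descent locus representable by a (locally closed) sub-superscheme of $\SHilb(\bar\Xcal/\Sc)$, which exists by Theorem \ref{thm:hilbrepres}. First observe that in the superprojective case, since $\bar\Xcal\to\Sc$ is superprojective (hence proper) and $\Xcal\to\Sc$ is separated, the morphism $\bar\Xcal\to\Xcal$ is automatically proper in addition to being faithfully flat. Pullback then gives a well-defined natural transformation
$$
\alpha\colon\SHilbf_{\Xcal/\Sc}\to\SHilbf_{\bar\Xcal/\Sc},\qquad \Zc\mapsto\Zc\times_\Xcal\bar\Xcal,
$$
which is injective by fpqc descent for closed sub-superschemes of $\Xcal_\Tc$. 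Its image consists precisely of those $\bar\Zc\in\SHilbf_{\bar\Xcal/\Sc}(\Tc)$ satisfying the descent condition $p_1^\ast\bar\Zc=p_2^\ast\bar\Zc$ in $\bar\Xcal\times_\Xcal\bar\Xcal\times_\Sc\Tc$, where $p_1,p_2$ are the two projections.

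To realize this descent locus as a closed sub-superscheme of $\SHilb(\bar\Xcal/\Sc)$, let $\bar\Wc\hookrightarrow\bar\Xcal\times_\Sc\SHilb(\bar\Xcal/\Sc)$ be the universal family, set $Y:=\bar\Xcal\times_\Xcal\bar\Xcal\times_\Sc\SHilb(\bar\Xcal/\Sc)$, and consider the two closed sub-superschemes $V_i:=p_i^\ast\bar\Wc\subseteq Y$, which are flat over $\SHilb(\bar\Xcal/\Sc)$ by base change. Since $\Xcal\to\Sc$ is separated, $\bar\Xcal\times_\Xcal\bar\Xcal$ is a closed sub-superscheme of $\bar\Xcal\times_\Sc\bar\Xcal$, which is superprojective over $\Sc$ via a super-Segre embedding of the product of the ambient projective superbundles; consequently $Y\to\SHilb(\bar\Xcal/\Sc)$ is superprojective. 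By Proposition \ref{prop:homrep} the functor $\mathbf{Hom}_{\SHilb(\bar\Xcal/\Sc)}(\Ic_{V_2},\Oc_{V_1})$ is representable by a linear superscheme, and the canonical composition $\Ic_{V_2}\hookrightarrow\Oc_Y\twoheadrightarrow\Oc_{V_1}$ defines a section whose vanishing locus, by Corollary \ref{cor:zerolocus}, is a closed sub-superscheme of $\SHilb(\bar\Xcal/\Sc)$ encoding the condition $V_1\subseteq V_2$. Intersecting with the symmetric locus $V_2\subseteq V_1$ produces the descent locus, which then represents $\SHilbf_{\Xcal/\Sc}$; the properties of being locally of finite type and separated over $\Sc$ are inherited from $\SHilb(\bar\Xcal/\Sc)$.

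In the quasi-superprojective case, $\bar\Xcal$ sits as an open sub-superscheme of some superprojective $\bar{\bar\Xcal}$ over $\Sc$, so that $\SHilb(\bar\Xcal/\Sc)$ is an open sub-superscheme of $\SHilb(\bar{\bar\Xcal}/\Sc)$; but $\bar\Xcal\to\Xcal$ is now merely separated and $Y\to\SHilb(\bar\Xcal/\Sc)$ is only quasi-superprojective. This is the main technical obstacle, since Proposition \ref{prop:homrep} does not apply directly. It is handled by combining the closed-subscheme argument of the previous paragraph with a suitable local/compactification reduction: the representing section of $\mathbf{Hom}$ is constructed locally over $\SHilb(\bar\Xcal/\Sc)$ and glued, yielding the descent locus as a sub-superscheme that is locally closed rather than closed, still representing $\SHilbf_{\Xcal/\Sc}$ and inheriting the property of being separated and locally of finite type over $\Sc$.
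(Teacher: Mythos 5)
Your superprojective-case argument is correct and shares the paper's skeleton: pull back along $\bar\Xcal\to\Xcal$, identify the image of $\SHilbf_{\Xcal/\Sc}\hookrightarrow\SHilbf_{\bar\Xcal/\Sc}$ with the locus where the two pullbacks to $\Ycal=\bar\Xcal\times_\Xcal\bar\Xcal$ agree, use separatedness of $\Xcal/\Sc$ to make $\Ycal$ closed in $\bar\Xcal\times_\Sc\bar\Xcal$ and hence superprojective over $\Sc$, and invoke effective descent for sub-superschemes --- which you use but should cite (Proposition \ref{prop:subdescent}): it is exactly what makes ``image $=$ descent locus'' true, and you should also record that flatness and properness of the descended $\Zc$ over $\Tc$ follow because $\bar\Zc\to\Zc$ is faithfully flat and surjective and $\Zc\to\Tc$ is separated. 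Where you genuinely diverge is in how the coincidence locus is cut out of $\SHilb(\bar\Xcal/\Sc)$. The paper applies Theorem \ref{thm:hilbrepres} a \emph{second} time, to $\Ycal/\Sc$: the assignments $p_1^{-1}$, $p_2^{-1}$ give a morphism $\SHilb(\bar\Xcal/\Sc)\to\SHilb(\Ycal/\Sc)\times_\Sc\SHilb(\Ycal/\Sc)$, and the descent locus is the superschematic preimage of the relative diagonal, closed because $\SHilb(\Ycal/\Sc)$ is separated over $\Sc$. You instead cut out $V_1=V_2$ with the Hom-superscheme machinery (Proposition \ref{prop:homrep} plus Corollary \ref{cor:zerolocus}). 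Your mechanism does work in the superprojective case: $V_1$, $V_2$ are flat over $\SHilb(\bar\Xcal/\Sc)$ as you say, though note that flatness of $\Oc_{V_2}$ is also needed --- and you omit this --- so that the ideal sequence of $V_2$ pulls back exactly along any $\gamma\colon\Tc\to\SHilb(\bar\Xcal/\Sc)$, i.e.\ $\gamma^\ast\Ic_{V_2}\simeq\Ic_{V_{2,\Tc}}$, and the vanishing condition of Corollary \ref{cor:zerolocus} really becomes $V_{1,\Tc}\subseteq V_{2,\Tc}$. What the paper's route buys is economy and robustness: no Hom-schemes or ideal-sheaf bookkeeping, closedness for free from separatedness of a Hilbert superscheme, and, decisively, an argument that survives outside the superprojective setting.

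Your quasi-superprojective paragraph, by contrast, is not an argument, and this is a genuine gap. Proposition \ref{prop:homrep} is proved only for superprojective $f$, and what it produces is a representing superscheme $\Vs(\Qc)$, not a ``section'' that could be ``constructed locally over $\SHilb(\bar\Xcal/\Sc)$ and glued'': you specify neither the local construction (over which opens would $Y$ become superprojective? it need not) nor any glueing data, so the obstacle you correctly identify is left unresolved. This is precisely where the paper's mechanism pays off: $\Ycal$, being closed in the quasi-superprojective $\bar\Xcal\times_\Sc\bar\Xcal$, is itself quasi-superprojective over $\Sc$ (take closures inside a superprojective compactification), so Theorem \ref{thm:hilbrepres} still yields $\SHilb(\Ycal/\Sc)$ separated over $\Sc$ and the diagonal-preimage argument runs essentially unchanged, which is why the paper disposes of this case with ``the other case follows easily from it.'' The natural repair of your proof is therefore not to glue Hom-sections but to switch to the diagonal trick in the second case (or to run your construction inside a superprojective compactification and intersect with the relevant open locus), checking along the way that the pullbacks $p_i^{-1}(\bar\Zc)$ remain proper over $\Tc$ on the locus in question --- on the descent locus this holds because there $p_i^{-1}(\bar\Zc)\simeq\bar\Zc\times_{\Zc}\bar\Zc$ is closed in $\bar\Zc\times_\Tc\bar\Zc$.
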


\begin{rem} Notice that, due to Corollary \ref{cor:grassproj}, Theorem \ref{thm:hilbrepres2} can be applied taking for $\Xcal$  any sub-superscheme of a supergrassmannian over $\Sc$.
\end{rem}
\begin{proof} We prove Theorem \ref{thm:hilbrepres2} using Theorem \ref{thm:hilbrepres}, which will be in turn proved later on.
Set $\Ycal= \bar\Xcal\times_\Xcal \bar\Xcal$. Note that $\Ycal$ can be identified with the superschematic preimage of the relative diagonal under the morphism
$$
\bar\Xcal\times_\Sc \bar\Xcal \to \Xcal\times_\Sc \Xcal\,.
$$
Since $\Xcal$ is separated over $\Sc$, we deduce that $\Ycal$ is a closed sub-superscheme of $\bar\Xcal\times_\Sc \bar\Xcal$, and then it is {quasi-superprojective} over $\Sc$. 
Thus by Theorem \ref{thm:hilbrepres} we have a Hilbert superscheme  $\SHilb(\Ycal/\Sc)$ which is separated over $\Sc$.

Note that, as $\bar\Xcal$ is {quasi-superprojective} over $\Sc$, the morphism $\bar\Xcal \to \Xcal$ is quasi-compact. By the effective
descent for sub-superschemes (Proposition \ref{prop:subdescent}), to give a sub-superscheme of $\Xcal\times_\Sc \Tc$ (where $\Tc$ is a superscheme over $\Sc$) is
equivalent to giving a sub-superscheme $\Zc$ of $\bar\Xcal_\Tc$ such that $p_1^{-1}(\Zc) = p_2^{-1}(\Zc)$ in $\Ycal_\Tc$, where $p_1$, $p_2$ are the projections of $\Ycal_\Tc=\bar\Xcal_\Tc\times_{\Xcal_\Tc} \bar\Xcal_\Tc$ onto its factors. 

Thus, the superHilbert functor of $\Xcal/\Sc$ is represented by the closed sub-superscheme of $\SHilb(\bar\Xcal/\Sc)$ given as the superschematic preimage of the relative diagonal under the morphism
$$
\SHilb(\bar\Xcal/\Sc) \xrightarrow{(p_1^{-1},p_2^{-1})} \SHilb(\Ycal/\Sc)\times_\Sc \SHilb(\Ycal/\Sc)\,.
$$
\end{proof}

We now prove Theorem \ref{thm:hilbrepres}. This will be done in several steps and reducing to simpler situations. 

\begin{lemma}\label{lem:reductionsteps}
Let $f\colon\Xcal\to \Sc$ be a {quasi-superprojective} morphism.
\begin{enumerate}
\item If there is a covering of $\Sc$ by  open sub-superschemes $\Ucal$ such that the functors $\SHilbf_{\Xcal_\Ucal/\Ucal}^\bP$ are representable, then $\SHilbf_{\Xcal/\Sc}^\bP$ is representable as well. Moreover $\SHilb^\bP (\Xcal/\Sc) \to \Sc$ is proper if and only if all the local morphisms $$\SHilb^\bP (\Xcal_U/\Ucal) \to \Ucal$$ are proper.
\item If $j\colon \Ycal \hookrightarrow \Xcal$ is a closed (open) immersion of $\Sc$-schemes, then the functor morphism $j^\ast\colon \SHilbf_{\Ycal/\Sc}^\bP \to \SHilbf_{\Xcal/\Sc}^\bP$ is representable by closed (open)  immersions.
\end{enumerate}
\end{lemma}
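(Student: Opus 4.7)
For part (1), the plan is to exploit the fact that $\SHilbf^{\bP}_{\Xcal/\Sc}$ is a Zariski sheaf on the category of $\Sc$-superschemes, which holds because closed sub-superschemes, flatness, properness and the super Hilbert polynomial are all Zariski-local on the base. Given the cover $\{\Ucal\}$ and the representing $\Ucal$-superschemes $H_\Ucal = \SHilb^{\bP}(\Xcal_\Ucal/\Ucal)$, on any overlap $\Ucal \cap \Vc$ both $H_\Ucal \times_\Ucal (\Ucal \cap \Vc)$ and $H_\Vc \times_\Vc (\Ucal \cap \Vc)$ represent $\SHilbf^{\bP}_{\Xcal_{\Ucal \cap \Vc}/(\Ucal \cap \Vc)}$, so they are canonically isomorphic; the cocycle condition on triple overlaps comes from the uniqueness of representing objects. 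The $H_\Ucal$ therefore glue to a superscheme $\SHilb^{\bP}(\Xcal/\Sc)$ over $\Sc$ representing the global functor, and since properness is Zariski-local on the target, the final assertion of (1) is immediate.

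For part (2), I fix an $\Sc$-superscheme $\Tc$ together with an element $\Zc \hookrightarrow \Xcal_\Tc$ of $\SHilbf^{\bP}_{\Xcal/\Sc}(\Tc)$, and aim to exhibit an open (resp.\ closed) sub-superscheme $\Tc' \hookrightarrow \Tc$ with the universal property that a morphism $\psi\colon \Rc \to \Tc$ factors through $\Tc'$ if and only if $\psi_\Xcal^\ast \Zc \subset \Ycal_\Rc$ inside $\Xcal_\Rc$. Base change preserves flatness, properness and the Hilbert polynomial, so once such a containment holds, the pullback automatically lies in $\SHilbf^{\bP}_{\Ycal/\Sc}(\Rc)$. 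If $j$ is an open immersion, then $\Xcal \setminus \Ycal$ is closed in $\Xcal$, so $\Zc \cap (\Xcal_\Tc \setminus \Ycal_\Tc)$ is closed in $\Zc$; the properness of $g\colon \Zc \to \Tc$ lets me take its image $Z' \subset T$, which is closed, and set $\Tc' := \Tc \setminus Z'$. The universal property then follows from elementary topological considerations together with the fact that a morphism whose set-theoretic image misses $Z'$ factors uniquely through the open sub-superscheme $\Tc'$.

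If $j$ is a closed immersion with ideal sheaf $\Ic \subset \Oc_\Xcal$, the containment $\psi_\Xcal^\ast \Zc \subset \Ycal_\Rc$ is equivalent to the vanishing, after pullback to $\Rc$, of the natural morphism
\begin{equation*}
\phi\colon \rest{\Ic_\Tc}{\Zc} \longrightarrow \Oc_{\Zc}.
\end{equation*}
A preliminary point is that $g\colon \Zc \to \Tc$ is superprojective: in the superprojective case this is automatic; in the quasi-superprojective case one embeds $\Xcal \hookrightarrow \bar\Xcal$ with $\bar\Xcal/\Sc$ superprojective and notes that the locally closed immersion $\Zc \hookrightarrow \bar\Xcal_\Tc$ is actually closed, since $\Zc \to \Tc$ is proper while $\bar\Xcal_\Tc \to \Tc$ is separated. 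Because $\Oc_\Zc$ is flat over $\Tc$, Proposition \ref{prop:homrep} represents $\mathbf{Hom}_\Tc(\rest{\Ic_\Tc}{\Zc}, \Oc_\Zc)$ by a linear superscheme $\Vs(\Qc) \to \Tc$; the morphism $\phi$ corresponds to a section $\gamma\colon \Tc \to \Vs(\Qc)$, and Corollary \ref{cor:zerolocus} then supplies $\Tc' := \gamma^{-1}(\Vs_0(\Qc))$ as the desired closed sub-superscheme. The main delicate point I anticipate is cleanly identifying the functorial vanishing of the pullback of $\phi$ with the scheme-theoretic containment $\psi_\Xcal^\ast \Zc \subset \Ycal_\Rc$; this is bookkeeping with ideal sheaves leveraging flatness, but it is where the $\mathbf{Hom}$-superscheme machinery genuinely interacts with the Hilbert-functor setup.
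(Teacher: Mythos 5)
Your proposal is correct and follows essentially the same route as the paper: part (1) via the Zariski sheaf property and gluing, the open-immersion case of (2) via the image of $\Zc\cap(\Xcal_\Tc\setminus\Ycal_\Tc)$ under a proper map, and the closed-immersion case via the vanishing of the ideal sheaf restricted to $\Zc$, using Proposition \ref{prop:homrep} and Corollary \ref{cor:zerolocus}. If anything, you are slightly more careful than the paper's terse write-up, by invoking properness of $\Zc\to\Tc$ (rather than of $f$) in the open case and by verifying that $\Zc\to\Tc$ is superprojective in the quasi-superprojective case so that the $\mathbf{Hom}$-superscheme machinery legitimately applies.
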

\begin{proof}
(1) It follows from the sheaf condition for the relative super Hilbert functor.

(2) One has to show that given a superscheme morphism $\phi\colon \Tc \to \Sc$ and a closed (open) sub-superscheme $\delta\colon \Zc\hookrightarrow \Xcal_\Tc$ flat {and proper} over $\Tc$ with Hilbert polynomial $\bP$, there exists a closed (resp. open) sub-superscheme $\Tc'\hookrightarrow \Tc$ with the following universal property: for any $\Tc$-superscheme $\psi\colon \Ucal\to \Tc$, $\psi$ factors through $\Tc'\hookrightarrow \Tc$ if and only if $\Zc_\Ucal$ is the image of 
$j^\ast\colon \SHilbf_{\Ycal/\Sc}^\bP(\Ucal) \to \SHilbf_{\Xcal/\Sc}^\bP(\Ucal)$.

In the case of a closed immersion, this is equivalent to saying that the pull-back epimorphism  
$\Oc_{\Xcal_\Ucal}\xrightarrow{\psi^\ast(\delta^\ast)} \Oc_{\Zc_\Ucal}\to 0$
factors through $\psi^\ast(j^\ast)$:
$$
\xymatrix{
0\ar[r] & \ker \psi^\ast(j^\ast) \ar[r]& 
\Oc_{\Xcal_\Ucal} \ar[r]^{\psi^\ast(j^\ast)}  \ar@{->>}[d]_{\psi^\ast(\delta^\ast)} & \Oc_{\Ycal_\Ucal}\ar[r]  \ar@{.>>}[dl]& 0
\\
&&\Oc_{\Zc_\Ucal} &&\,.
}
$$
Then $\Tc'$ is  the zero locus of the composition morphism $\ker \psi^\ast(j^\ast) \to \Oc_{\Xcal_\Tc}\to \Oc_{\Zc_\Tc}$, which exists by Corollary \ref{cor:zerolocus} because $\Oc_{\Zc_\Tc}$ is flat over $\Tc$.

When $j\colon \Ycal \hookrightarrow \Xcal$ is an open immersion, if its enough to take $\Tc'$ as the complementary of sub-superscheme $f(\Xcal_\Tc-j(\Ycal_\Tc))$, which is closed because $f$ is proper.
\end{proof}

One then has:
\begin{prop}\label{prop:reduction1} It is enough to prove Theorem \ref{thm:hilbrepres} when $\Sc=\SSpec \As$ is affine and noetherian, $\Xcal=\Ps_\As^{n,m}$ is a projective superspace over $\As$, and $f$ is the natural projection $\Ps_\As^{n,m}\to\Sc$.
\qed
\end{prop}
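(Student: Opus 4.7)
The plan is to reduce the representability of $\SHilbf^{\bP}_{\Xcal/\Sc}$ to the case of a projective superspace over an affine noetherian base by successively applying the two parts of Lemma \ref{lem:reductionsteps}.

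First, since $\Sc$ is assumed to be locally noetherian, we may cover it by affine open sub-superschemes $\Ucal=\SSpec\As$ with $\As$ noetherian. The restriction $f_\Ucal\colon \Xcal_\Ucal\to \Ucal$ remains superprojective (resp.\ quasi-superprojective) with the pulled-back very ample line bundle. By Lemma \ref{lem:reductionsteps}(1), it suffices to prove the representability of each $\SHilbf^{\bP}_{\Xcal_\Ucal/\Ucal}$ and that each $\SHilb^{\bP}(\Xcal_\Ucal/\Ucal)\to \Ucal$ is proper (resp.\ open in a proper superscheme). So we may assume from now on that $\Sc=\SSpec\As$ with $\As$ noetherian.

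Second, assume $f\colon \Xcal\to\Sc$ is superprojective. By Definition \ref{def:superprojmor} we have a closed immersion $\Xcal\hookrightarrow \Ps(\M)$ for some coherent $\M$ on $\Sc$. Since $\Sc$ is affine and noetherian, $\M$ is a quotient of a free sheaf $\Oc_\Sc^{m+1,n}\twoheadrightarrow \M$ (Proposition \ref{prop:mder2}), and by Proposition \ref{prop:spbundle}(2) this induces a closed immersion $\Ps(\M)\hookrightarrow \Ps_\As^{m,n}$. Composing, we obtain a closed immersion $\Xcal\hookrightarrow \Ps_\As^{m,n}$ of $\Sc$-superschemes, compatible with the relatively very ample line bundles up to the standard identifications. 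By Lemma \ref{lem:reductionsteps}(2), the natural morphism
$$
\SHilbf^{\bP}_{\Xcal/\Sc}\to \SHilbf^{\bP}_{\Ps_\As^{m,n}/\Sc}
$$
is representable by closed immersions. Therefore, once $\SHilbf^{\bP}_{\Ps_\As^{m,n}/\Sc}$ is representable by a superscheme $\SHilb^{\bP}(\Ps_\As^{m,n}/\Sc)\to \Sc$, the functor $\SHilbf^{\bP}_{\Xcal/\Sc}$ is represented by a closed sub-superscheme of $\SHilb^{\bP}(\Ps_\As^{m,n}/\Sc)$, and properness over $\Sc$ is inherited from properness of $\SHilb^{\bP}(\Ps_\As^{m,n}/\Sc)\to \Sc$.

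Third, in the quasi-superprojective case we have $f=\bar f\circ j$ with $j\colon \Xcal\hookrightarrow \bar\Xcal$ an open immersion and $\bar f\colon \bar\Xcal\to\Sc$ superprojective. The second step applied to $\bar f$ embeds $\bar\Xcal$ as a closed sub-superscheme of $\Ps_\As^{m,n}$, and so $\Xcal$ embeds as an open sub-superscheme of this closed sub-superscheme. Applying Lemma \ref{lem:reductionsteps}(2) twice (first a closed immersion, then an open immersion) shows that $\SHilbf^{\bP}_{\Xcal/\Sc}\to \SHilbf^{\bP}_{\Ps_\As^{m,n}/\Sc}$ is representable as the composition of a closed immersion and an open immersion, hence representable by a sub-superscheme that is open in a proper superscheme over $\Sc$, once the result is known for $\Ps_\As^{m,n}/\Sc$. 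This reduces the statement of Theorem \ref{thm:hilbrepres} to the case claimed.
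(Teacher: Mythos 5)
Your proof is correct and follows exactly the route the paper intends: the paper states Proposition \ref{prop:reduction1} as an immediate consequence of Lemma \ref{lem:reductionsteps}, and you have simply made explicit the two applications of that lemma (localization via part (1), then the closed immersion $\Xcal\hookrightarrow\Ps(\M)\hookrightarrow\Ps_\As^{m,n}$ over an affine noetherian base via part (2), applied twice in the quasi-superprojective case). The details you supply — coherent sheaves over an affine noetherian base being quotients of free sheaves, compatibility of the very ample line bundles, and properness inherited through closed immersions — are precisely the ones the paper leaves implicit.
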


\begin{strategy} Once we are reduced to the case of the projective superspace  over a superring, the proof of the existence of the Hilbert superscheme is similar to the strategy for the classical case:
\begin{enumerate}
\item Using cohomology base change (Subsection \ref{ss:cohbasechange}) and Castelnuovo-Mumford regularity (Subsection \ref{ss:CMregularity}), one proves that, for any pair $\bP=\bP(r)=(P_+(r),P_-(r))$ of polynomials with rational coefficients, and every closed sub-superscheme $\Zc$ with $\Ps^{m,n}_\As$ of super Hilbert polynomial $\bP$, there exists an integer $q$, depending only on the coefficients of $\bP$, such that a $f_\ast\Oc_\Zc(q)$ is a locally free quotient of rank $\bP$ of the locally free sheaf $f_\ast\Oc(q)$ on $\SSpec\As$ (where $\Oc=\Oc_{\Ps^{m,n}_\As}$). This gives an immersion of the super Hilbert functor into a supergrassmannian.
\item Using generic flatness and the  flattening   (Subsection \ref{ss:genflatness}), one proves that this morphism of functors is representable by immersions. This proves that the Hilbert superscheme exists as a sub-superscheme of that supergrassmannian.
\end{enumerate}
The same results are also contained in \cite[7.1.8, 7.19]{MoZh19} in the more general situation of Quot superschemes.
\end{strategy}

\begin{remark}
The existence of the Hilbert superscheme implies the existence of the superscheme of morphisms  between two  {projective} superschemes, see Subsection
\ref{subsec:schemeofmorph}.
\end{remark}

In the following Subsections we complete the proof of Theorem \ref{thm:hilbrepres}.

\subsection{$m$-regularity for projective superschemes}\label{ss:CMregularity}
In the classical case, the Castelnuovo-Mumford $m$-regularity is used to embed the Hilbert functor into a grassmannian functor.
This procedure can be adapted to our situation  (see \cite[7.1.6]{MoZh19}). 
Let $k$ be a field and $\Xcal=\Ps_k^{m,n}$ the projective superspace over $k$. We write $\Oc(r)=\Oc_\Xcal(r)$ for every integer $r$.

\begin{defin}\label{def:mreg} A coherent sheaf $\M$ on $\Xcal$ is $r$-regular if one has
$$
H^i(X,\M(r-i))=0 \quad \text{for $i>0$.}
$$
\end{defin}

We now state a super Castelnuovo Theorem, whose proof in view of Remark \ref{rem:assoprim} 
is the same as in the  classical case \cite{Mum86,Ni07}.

\begin{prop}\label{prop:castelnuovo} If $\M$ is a coherent $r$-regular sheaf on $\Xcal$, then for every $r'\ge r$ one has:
\begin{enumerate}
\item $\M$ is $r'$-regular;
\item the natural morphism $H^0(X,\M(r'))\otimes_k H^0(X,\Oc(1)) \to H^0(X,\M(r'+1))$ is surjective;
\item $\M(r')$ is generated by its global sections.
\end{enumerate}
\qed
\end{prop}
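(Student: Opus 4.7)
The plan is to follow Mumford's classical double induction. I induct on the bosonic dimension $m$; for fixed $m$, part (1) is obtained by a further induction on $r'-r$. The base case $m=0$ is trivial since $\Ps_k^{0,n}$ is affine (Lemma \ref{lem:odensec}(2)), so all higher cohomology of coherent sheaves vanishes automatically and every coherent sheaf is globally generated; all three assertions hold tautologically.

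In the inductive step $m\ge 1$, the essential move is to select an \emph{even} linear form $\ell\in H^0(X,\Oc(1))_+$ such that $\cdot\ell\colon \M(-1)\to\M$ is injective. This is where Remark \ref{rem:assoprim} enters: one chooses $\ell$ avoiding the finitely many associated primes of $\M$ other than the irrelevant ideal, which is possible over an infinite field by prime avoidance (if $k$ is finite one replaces $\Oc(1)$ by a sufficiently high twist or extends scalars). The form must be even because the odd generators $\theta_j$ are nilpotent and hence never $\M$-regular. Setting $H$ to be the hyperplane cut out by $\ell$, so that $H\simeq\Ps_k^{m-1,n}$, we obtain for every $r'$ the fundamental exact sequence
\[
0\to \M(r'-1)\xrightarrow{\cdot\ell}\M(r')\to \M_H(r')\to 0.
\]
Taking cohomology and using the $r$-regularity of $\M$ at once shows that $\M_H$ is $r$-regular on $H$, so the inductive hypothesis on $m$ yields properties (1)--(3) for $\M_H$.

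Now (1) follows by induction on $r'\ge r$ from the long exact sequence above: $H^i(X,\M(r'-i))$ is squeezed between $H^i(X,\M((r'-1)-i))=0$ (by $(r'-1)$-regularity of $\M$) and $H^i(H,\M_H(r'-i))=0$ (by $r'$-regularity of $\M_H$, already established by induction on $m$). For (2) the standard diagram chase comparing the multiplication map on $\Xcal$ with the one on $H$ applies: the vertical restriction maps are surjective thanks to $H^1(X,\M(r'-1))=0$ (from (1)) and to the surjectivity $H^0(X,\Oc(1))\twoheadrightarrow H^0(H,\Oc_H(1))$ (Lemma \ref{lem:odensec}), while the bottom horizontal arrow is surjective by (2) for $\M_H$. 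For (3), I would iterate (2) to get the surjection $H^0(X,\M(r))\otimes H^0(X,\Oc(k))\twoheadrightarrow H^0(X,\M(r+k))$ for all $k\ge 0$; letting $\mathcal{F}\subseteq \M(r)$ be the image of $H^0(X,\M(r))\otimes\Oc\to \M(r)$ and $\Qc$ its cokernel, this forces $H^0(X,\mathcal{F}(k))=H^0(X,\M(r+k))$ for every $k\ge 0$. Serre's Theorem \ref{thm:serre} then gives $H^1(X,\mathcal{F}(k))=0$ for $k\gg 0$, so $H^0(X,\Qc(k))=0$ from the long exact sequence, and since $\Qc(k)$ is globally generated for $k\gg 0$ (Serre again) we conclude $\Qc=0$.

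The only genuinely super-geometric subtlety is producing the $\M$-regular \emph{even} linear form $\ell$; this is precisely the content of Remark \ref{rem:assoprim}. All cohomological inputs beyond this are supplied by Proposition \ref{prop:cohomoden} and Lemma \ref{lem:odensec}, so the rest of the classical argument transfers verbatim.
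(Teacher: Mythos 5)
Your route is precisely the one the paper intends: the paper gives no details, saying only that ``in view of Remark \ref{rem:assoprim} the proof is the same as in the classical case,'' and you have reconstructed that classical double induction, with the even $\M$-regular linear form supplied by Remark \ref{rem:assoprim} as the sole super-specific input. Parts (1) and (3) of your argument are correct as written (the saturation argument for (3) via Theorem \ref{thm:serre} is the standard one and transfers without change).

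There is, however, one concrete misstep in your proof of (2), and it shows that the classical argument does \emph{not} transfer entirely verbatim. You justify the surjectivity of the left vertical arrow of your diagram by claiming $H^0(X,\Oc(1))\twoheadrightarrow H^0(H,\Oc_H(1))$ ``by Lemma \ref{lem:odensec}''; but part (3) of that lemma asserts exactness of the global sections sequence at twist $1$ only for $m>1$, or for $m=1$ with $n\le 1$. For $m=1$ and $n\ge 2$ the restriction map is \emph{not} surjective: $H^1(\Ps^{1,n}_k,\Oc)\neq 0$ by Equation \eqref{eq:h1odenP1}, and the image of restriction is the proper subspace $\bigoplus_{p=0}^{1}\bigwedge^p E_\theta$ of $H^0(H,\Oc_H(1))\simeq x_0\cdot\bigwedge E_\theta$ --- a genuinely super phenomenon with no classical counterpart. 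So your diagram chase breaks exactly in this edge case. The repair is short: in the chase one never needs all of $H^0(H,\Oc_H(1))$, only that the bottom multiplication map remains surjective when restricted to $H^0(\M_H(r'))\otimes \operatorname{Im}\bigl(H^0(X,\Oc(1))\to H^0(H,\Oc_H(1))\bigr)$. Since that image contains the restriction of an even coordinate (the class of $x_0$), and on $H\simeq \Ps^{0,n}_k$ the section $x_0$ trivializes $\Oc_H(1)$ (the underlying space being a point), multiplication by $x_0$ alone already gives an isomorphism $H^0(\M_H(r'))\iso H^0(\M_H(r'+1))$, and the chase goes through. You should state the induction hypothesis (2) in this image-restricted form, or treat $m=1$ separately. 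A last small point: for finite $k$ your fallback ``replace $\Oc(1)$ by a high twist'' does not make sense, since $r$-regularity is defined with respect to $\Oc(1)$; extension of scalars (which you also mention, and which the paper uses in the proof of Theorem \ref{thm:mumford}) is the correct fix.
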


For the construction of the super  Hilbert scheme we shall need the following result 
on $r$-regularity,  which corresponds to Mumford's theorem \cite[Thm. 2.3]{Ni07}.

\begin{thm}\label{thm:mumford}
For every pair $(m,n)$ of non-negative integers, there exist universal polynomials $F^{m,n}_+$, $F^{m,n}_0$ with integer coefficients having the following property:

for any field $k$ and every coherent sheaf  $\pcal$ of ideals on $\Xcal=\Ps_k^{m,n}$ with super Hilbert polynomial
$$
\bH(X,\pcal)(r)=\left(\sum_{0\le p\le m} a_{p,0} \binom rp, \sum_{0\le p\le m} a_{p,1} \binom rp\right)\,,
$$
the sheaf $\pcal$ is $q$-regular, with $q=\max (F^{m,n}_+(a_{00},\dots,a_{m0}), F^{m,n}_-(a_{01},\dots,a_{m1}))$.
\end{thm}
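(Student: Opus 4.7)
The plan is to adapt the classical proof of Mumford's regularity theorem by induction on the even dimension $m$, with the odd dimension $n$ held fixed. The paper's hint (``in view of Remark~\ref{rem:assoprim}'') signals that the super version of associated-prime theory provides the single non-classical ingredient, namely the production of a generic non-zero-divisor linear form; once that is in hand, all cohomological bookkeeping proceeds formally in the $\Z_{2}$-graded setting, applied separately to the even and odd components of the cohomology.

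For the base case $m=0$, the underlying scheme is $\Spec k$, so $H^{i}(X,\pcal(r-i))=0$ for $i>0$ by Proposition~\ref{prop:cohombound}, and we may take $F^{0,n}_{\pm}\equiv 0$. For the inductive step, after a harmless base change to an infinite field (which preserves both cohomology dimensions and super Hilbert polynomials), Remark~\ref{rem:assoprim} furnishes a linear form $x_{m}$ avoiding the finitely many super-associated primes of both $\pcal$ and $\Oc_{\Xcal}/\pcal$. Multiplication by $x_{m}$ is then injective on both, so $\pcal':=\pcal/x_{m}\pcal$ is naturally an ideal sheaf on the hyperplane $\Xcal':=\Ps_{k}^{m-1,n}$, fitting in
\begin{equation*}
0\to\pcal(-1)\xrightarrow{\cdot x_{m}}\pcal\to\pcal'\to 0.
\end{equation*}
Taking super-Euler characteristics gives $\bH(\pcal',r)=\bH(\pcal,r)-\bH(\pcal,r-1)$; expanding in the basis $\bigl\{\binom{r}{p}\bigr\}$ and separating parity expresses the coefficients of $\bH(\pcal',r)$ as explicit integer-linear combinations of $a_{p,0}$ and $a_{p,1}$. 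The inductive hypothesis then supplies a $q_{0}$, polynomial in $(a_{p,\epsilon})$ with integer coefficients, such that $\pcal'$ is $q_{0}$-regular.

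To transfer regularity back to $\pcal$, the long exact sequences attached to the short exact sequence above twisted by $\Oc(s)$ yield surjections $H^{i}(X,\pcal(s-1))\twoheadrightarrow H^{i}(X,\pcal(s))$ for $i\ge 1$ once $s$ exceeds a threshold linear in $q_{0}$, so these dimensions are non-increasing; combined with Serre vanishing (Theorem~\ref{thm:serre}) and the super Castelnuovo statements of Proposition~\ref{prop:castelnuovo} applied to $\pcal'$, which control the degree past which $H^{0}(\pcal(s))\to H^{0}(\pcal'(s))$ becomes surjective in terms of the Hilbert-polynomial data, a dimension count forces $H^{i}(X,\pcal(s))=0$ once $s$ exceeds a bound that is a universal polynomial in $(a_{p,\epsilon})$ with integer coefficients. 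This produces $F^{m,n}_{\pm}$. The only genuinely non-classical obstacle throughout is the super associated-primes decomposition of Remark~\ref{rem:assoprim}; granting it, every remaining step is a direct bigraded transcription of Mumford's original argument.
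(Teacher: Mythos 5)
Your proposal is correct and takes essentially the same route as the paper: induction on $m$ after base change to an infinite field, a hyperplane furnished by Remark \ref{rem:assoprim} giving the exact sequence $0\to\pcal(-1)\to\pcal\to\pcal'\to 0$, the Euler-characteristic recursion expressing the coefficients of $\bH(\pcal',r)$ universally in the $a_{p,\epsilon}$, and the transfer of regularity from $\pcal'$ to $\pcal$ via the Castelnuovo multiplication property combined with the strictly decreasing $\dim H^1$ count, whose starting value is universally bounded by $\dim H^0(X,\Oc(r-1))-\bP(r-1)$ (this is exactly the paper's Lemma \ref{lem:mumford}). The only cosmetic difference is that you ask $x_m$ to avoid the associated primes of $\pcal$ as well as of $\Oc_\Xcal/\pcal$, which is harmless: avoiding those of $\Oc_\Xcal/\pcal$ already gives the Tor-vanishing (Equation \eqref{eq:torsh}) needed for exactness, while injectivity of $\cdot\,x_m$ on $\pcal\subset\Oc_\Xcal$ is automatic.
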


We   state a preliminary Lemma. 
Let $\Xcal'\simeq \Ps_k^{m-1,n}\hookrightarrow \Xcal$ be a super-hypersurface which does not meet any of the points corresponding to the associated primes of $\M$, as in Remark \ref{rem:assoprim}, so that there is an exact sequence
$$
0 \to \M(-1)\to \M \to \M':=\M_{\vert \Xcal'} \to 0\,.
$$

\begin{lemma}\label{lem:mumford}  If $\M'$ is $r$-regular, then:
\begin{enumerate}
\item if $i\ge 2$, then $H^i(X,\M(p))=0$ for $p\ge r-i$;
\item $H^i(X,\M(p))=0$ for $p\ge r-1+q$, with $q=\max(h_1(r-1)_0,h_1(r-1)_1)$, where $(h_1(r-1)_0,h_1(r-1)_1)=\dim_k H^1(X,\M(r-1))$.
\end{enumerate}
Thus, $\M$ is $(r+q)$-regular.
\end{lemma}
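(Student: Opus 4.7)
The argument is the super analogue of the classical Castelnuovo--Mumford proof, with careful bookkeeping of the $\Z_2$-grading. First I would twist the defining short exact sequence
$$
0\to \M(-1)\to \M\to \M'\to 0
$$
by $\Oc(p+1)$ to obtain, for every integer $p$, the exact sequence $0\to \M(p)\to \M(p+1)\to \M'(p+1)\to 0$. Since $\M'$ is the restriction of $\M$ along the super-hyperplane $x_m=0$, the connecting morphism is multiplication by the even section $x_m\in H^0(X,\Oc(1))_0$, so every arrow in the associated long exact cohomology sequence preserves parity, and the sequence decomposes into its even and odd components. The $r$-regularity of $\M'$, combined with Proposition \ref{prop:castelnuovo}(1) applied over $\Xcal'$, gives $H^j(X',\M'(q))=0$ for every $j>0$ and every $q\ge r-j$, which will be the key vanishing input throughout.

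For part (1), I would fix $i\ge 2$ and perform descending induction on $p\ge r-i$. For $p\gg 0$ Serre's Theorem \ref{thm:serre} yields $H^i(X,\M(p))=0$, which is the base case. In the inductive step the assumption $p\ge r-i$ forces both $p+1\ge r-(i-1)$ and $p+1\ge r-i$, so both $H^{i-1}(X',\M'(p+1))$ and $H^i(X',\M'(p+1))$ vanish by the above. The long exact sequence then produces the isomorphism $H^i(X,\M(p))\simeq H^i(X,\M(p+1))$, which is zero by the inductive hypothesis.

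For part (2), denote $h^1_{\pm}(p):=\dim_k H^1(X,\M(p))_{\pm}$. Since $H^1(X',\M'(p))=0$ for $p\ge r-1$, multiplication by $x_m$ gives surjections $H^1(X,\M(p-1))_{\pm}\twoheadrightarrow H^1(X,\M(p))_{\pm}$, so both $h^1_+$ and $h^1_-$ are non-increasing for $p\ge r-1$; the kernel of each equals the cokernel of the restriction $H^0(X,\M(p))_{\pm}\to H^0(X',\M'(p))_{\pm}$. The decisive propagation step will be: if, for some $p\ge r-1$, both restrictions $H^0(X,\M(p+1))_{\pm}\to H^0(X',\M'(p+1))_{\pm}$ are surjective (equivalently $h^1_{\pm}(p)=h^1_{\pm}(p+1)$ in both parities), then the analogous surjectivity holds in degree $p+2$, and hence in all larger degrees by iteration. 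This I would deduce from Proposition \ref{prop:castelnuovo}(2) applied to $\M'$ (valid since $p+1\ge r$) combined with the surjectivity of the restriction $H^0(X,\Oc(1))\twoheadrightarrow H^0(X',\Oc_{\Xcal'}(1))$ in each parity: any section of $\M'(p+2)$ is a sum of products $s\cdot t$ with $s\in H^0(\M'(p+1))$ and $t\in H^0(\Oc_{\Xcal'}(1))$, and both factors admit parity-preserving lifts to $\Xcal$. Such persistent equality would contradict Serre vanishing unless both $h^1_{\pm}$ are already zero. Consequently, whenever $(h^1_+(p),h^1_-(p))\ne(0,0)$ with $p\ge r-1$, at least one parity dimension drops strictly at the next step, so both vanish by $p=r-1+q$; combined with part (1) this proves that $\M$ is $(r+q)$-regular.

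The main obstacle will be making the propagation argument of part (2) genuinely rigorous in the graded setting: one must verify that the surjectivity of the restriction in each parity at level $p+1$, together with the Castelnuovo surjection on $\M'$, actually suffices to lift every section of $\M'(p+2)$ in each parity, keeping in mind that odd sections of $\M'(p+1)$ multiplied by odd coordinates contribute to even sections of $\M'(p+2)$, and conversely. Part (1), by contrast, is a routine descending induction with no additional super-theoretic complication.
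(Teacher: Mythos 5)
Your overall route is the paper's: the same twisted sequence $0\to\M(p)\to\M(p+1)\to\M'(p+1)\to 0$, the same chain of isomorphisms terminating in Serre vanishing for part (1) (your index bookkeeping $p\ge r-i\Rightarrow p+1\ge r-(i-1)$ is correct), and for part (2) the same dichotomy: if the restriction $\alpha_p\colon H^0(X,\M(p))\to H^0(X,\M'(p))$ is surjective then surjectivity propagates via the Castelnuovo multiplication statement and $H^1$ stabilizes at $0$, while otherwise $\dim_k H^1(X,\M(p))$ drops strictly in the order \eqref{eq:order}, giving vanishing after at most $q$ steps. The counting to $(r+q)$-regularity also matches.

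However, your propagation step contains a step that fails. You factor the multiplication through $H^0(X',\Oc_{\Xcal'}(1))$ and assert that sections of $\Oc_{\Xcal'}(1)$ admit parity-preserving lifts to $\Oc_\Xcal(1)$, i.e.\ that $H^0(X,\Oc(1))\to H^0(X',\Oc_{\Xcal'}(1))$ is surjective. By the paper's own Lemma \ref{lem:odensec}(3) this holds for $m\ge 2$, but fails for $m=1$ when $n\ge 2$: there $\Xcal'\simeq\Ps^{0,n}_k$, $H^0(X',\Oc_{\Xcal'}(1))=x_0\cdot\bigwedge_k E_\theta$, and the image of restriction is only $\bigoplus_{p\le 1}\bigwedge^p E_\theta$, so elements such as $\theta_1\theta_2/x_0$ have no lift. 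This is not a vacuous corner case: the induction on $m$ in the proof of Theorem \ref{thm:mumford} runs the Lemma down to $m=1$ with hyperplane $\Ps^{0,n}_k$, exactly where your lifting breaks. (Your own closing caveat about odd-times-odd contributions is precisely this phenomenon, but the proposal does not resolve it.) The paper avoids the issue by never lifting degree-one sections from $\Xcal'$ at all: since the closed immersion $\delta\colon\Xcal'\hookrightarrow\Xcal$ preserves cohomology, $\delta_\ast\M'$ is $r$-regular as a sheaf on the ambient $\Ps^{m,n}_k$, and Proposition \ref{prop:castelnuovo}(2) applied to it gives surjectivity of $H^0(X,\M'(p))\otimes_k H^0(X,\Oc(1))\to H^0(X,\M'(p+1))$ with the \emph{ambient} $H^0(X,\Oc(1))$ for $p\ge r$. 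With that version of the multiplication map, the commutative square needs only a lift of the $\M'$-factor, which is supplied by the inductively surjective $\alpha_p$, and your argument then goes through verbatim in all parities and for all $m\ge 1$.
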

\begin{proof}
(1) For $i\ge 2$, $p\ge r-(i-1)$, the exact sequence of cohomology together with (1) of Proposition \ref{prop:castelnuovo}, give isomorphisms
$$
H^i(X,\M(p-1))\iso H^i(X,\M(p)) \iso \dots\iso 0\,.
$$
(2) If $s\ge r-1$, one has an exact sequence
\begin{multline*}
0 \to H^0(X,\M(p-1)) \to H^0(X,\M(p)) \xrightarrow{\alpha_p} H^0(X,\M'(p))\to \\
\to H^1(X,\M(p-1))\to H^1(X,\M(p))\to 0\,.
\end{multline*}

If $\alpha_p$ is surjective, from
$$
\xymatrix{
H^0(X,\M(p))\otimes_k H^0(X,\Oc(1)) \ar[d]\ar[r]^{\alpha_p\otimes 1} & H^0(X,\M'(p))\otimes_k H^0(X,\Oc(1)) \ar[d]\ar[r] & 0
\\
H^0(X,\M(p+1))\ar[r]^{\alpha_{p+1}} & H^0(X,\M'(p+1))\ar[d] &
\\
& 0 &
}
$$
whose second column is exact by Proposition \ref{prop:castelnuovo}, it follows that $\alpha_{p+1}$ is surjective as well. Thus, $H^1(X,\M(p-1))\iso H^1(X,\M(p))\iso\dots\iso 0$.

If $\alpha_p$ is not surjective, then $\dim_k H^1(X,\M(p)) < \dim_k H^1(X,\M(p-1))$, (with respect to the order given by Equation \eqref{eq:order}. 
Then, $\dim H^1(X,\M(p))$ decreases until it is zero.
\end{proof}

\begin{proof}[Proof of Theorem \ref{thm:mumford}] Since   cohomology is compatible with the flat base change $\Spec K \to \Spec k$ induced by a field extension, we can assume that $k$ is infinite.
We   proceed by induction on $m$, the case $m=0$ being trivial. Since $k$ is infinite, there exists a hypersurface $\Ps_k^{m-1,n}=\Xcal'\hookrightarrow \Xcal$ which does not meet the points corresponding to the associated primes of $\Oc/\pcal$ (Remark \ref{rem:assoprim}). Then $\Torsh_1^{\Oc}(\Oc',\Oc/\pcal)=0$
(Equation \eqref{eq:torsh}) so that $\pcal':= \pcal\otimes_\Oc \Oc'=\rest{\pcal}{\Xcal'}$ is a coherent ideal on $\Xcal'$, and one has
$$
0 \to \pcal(r-1)\to \pcal(r) \to \pcal'(r)\to 0\,,
$$
for every integer $r$.

Since
\begin{align*}
\chi(X,\pcal'(r))&=\chi(X,\pcal(r))-\chi(X,\pcal(r-1))
\\
& = (\sum_{0\le p\le m} a_{p,0} \left[\binom rp-\binom{r-1}p\right], \sum_{0\le p\le m} a_{p,1} \left[\binom rp - \binom{r-1}p\right])\\
& = (\sum_{0\le p\le m} a_{p+1,0} \binom{r-1}p, \sum_{0\le p\le m} a_{p,1} \binom{r-1}p)\,,
\end{align*}
by induction on $m$, $\pcal'$ is $q'$-regular with $q'=\max(Q^{m,n}_0(a_{1,0},\dots,a_{m,0}), Q^{m,n}_1(a_{1,1},\dots,a_{m,1}))$ for certain universal polynomials $Q^{m,n}_0$, $Q^{m,n}_1$.
By Lemma \ref{lem:mumford}, $\pcal$ is $(q'+q)$-regular, where 
$$
q=\max(\dim_k H^1(X,\pcal(r-1))_0, \\ \dim_k H^1(X,\pcal(r-1))_1)\,,
$$
and $H^i(X,\pcal(r-1))=0$ for $i\ge 2$. Moreover,  by Proposition \ref{prop:cohomoden}
\begin{align*}
\dim_k H^1(X,\pcal(r-1))&= \dim_k H^0(X,\pcal(r-1)) - \chi (X,\pcal(r-1)) \\
& \leq \dim_k H^0(X,\Oc(r-1))-\bP(r-1)\\
&= h_{(m,n)}(r) -\bP(r-1) \\
&= (H_0^{m,n}(r, a_{1,0},\dots,a_{m,0}), H_1^{m,n}(r, a_{1,1},\dots,a_{m,1}))
\end{align*}
for certain universal polynomials $H^{m,n}_0$, $H^{m,n}_1$.   Then $\pcal$ is $(q+\bar q)$-regular, where
$$
\bar q= \max (q+H_0^{m,n}(r, a_{1,0},\dots,a_{m,0}), q+H_1^{m,n}(r, a_{1,1},\dots,a_{m,1}))\,.
$$
\end{proof}

\subsection{Generic flatness and flattening}\label{ss:genflatness}
We start this Subsection with a result about base change without any flatness conditions.

Let $\Sc$ be a superscheme and $\M$ a coherent sheaf on $\Ps_\Sc^{m,n}$. For every $\Sc$-superscheme $\phi\colon \Tc \to \Sc$ we have the base change cartesian diagram
$$
\xymatrix{
\Ps_\Tc^{m,n}\ar[r]^\phi\ar[d]^{f_\Tc} & \Ps_\Sc^{m,n}\ar[d]^f \\
\Tc \ar[r]^\phi& \Sc
}
$$
\begin{prop} \label{prop:basechangenoflat} If $\Sc$ and $\Tc$ are noetherian, there exists an integer $r_0$ (which may depend on $\phi$ and on $\M$)  such that the base change morphism
$$
\phi^\ast f_\ast \M(r) \to f_{\Tc\ast} \M_\Tc(r)
$$
is an isomorphism for every $r\ge r_0$.
\end{prop}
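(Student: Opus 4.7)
The plan is to reduce to a situation where the base change can be read off from a presentation of $\M$ by twisted standard sheaves.

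First, since the statement is local on both $\Sc$ and $\Tc$, I would reduce to the case where $\Sc=\SSpec\As$ and $\Tc=\SSpec\Cs$ are affine noetherian. By Proposition \ref{prop:mder2}, there is a presentation on $\Ps_\Sc^{m,n}$
\[
\mathcal{L}_1\xrightarrow{\ u\ }\mathcal{L}_0\to \M\to 0,
\]
where each $\mathcal{L}_j$ is a finite direct sum of line bundles of the form $\Oc(-a)$ and $\Pi\Oc(-b)$. Applying the right exact functor $\phi^\ast$ yields the analogous presentation
\[
\mathcal{L}_{1,\Tc}\xrightarrow{\ u_\Tc\ }\mathcal{L}_{0,\Tc}\to \M_\Tc\to 0
\]
on $\Ps_\Tc^{m,n}$.

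Next, let $\Kc=\ker(\mathcal{L}_0\to\M)$ and $\Kc'=\ker(\mathcal{L}_{0,\Tc}\to\M_\Tc)$; both are coherent on their respective projective superspaces, although $\Kc'$ need not coincide with $\phi^\ast\Kc$ in the absence of flatness of $\phi$. Applying Serre's Theorem \ref{thm:serre} \emph{separately} on $\Sc$ (which is noetherian) and on $\Tc$ (which is noetherian too, by hypothesis), I would pick $r_0$ large enough that, for $r\ge r_0$:
\begin{enumerate}
\item $R^1f_\ast\Kc(r)=0$ and $R^1 f_{\Tc\ast}\Kc'(r)=0$;
\item all the twists $\Oc(c)$, $\Pi\Oc(c)$ that occur in $\mathcal{L}_j(r)$ satisfy $c\ge n-1$, so that by Proposition \ref{prop:cohomoden}(1) their direct images on $\Sc$ are free $\Oc_\Sc$-modules whose rank and basis (by $\Z$-homogeneous monomials in $\Bs(m,n)$) are preserved under pullback to $\Tc$.
\end{enumerate}
Condition (1) yields right exact sequences
\[
f_\ast\mathcal{L}_1(r)\to f_\ast\mathcal{L}_0(r)\to f_\ast\M(r)\to 0,\qquad f_{\Tc\ast}\mathcal{L}_{1,\Tc}(r)\to f_{\Tc\ast}\mathcal{L}_{0,\Tc}(r)\to f_{\Tc\ast}\M_\Tc(r)\to 0,
\]
and condition (2) gives canonical isomorphisms $\phi^\ast f_\ast\mathcal{L}_j(r)\iso f_{\Tc\ast}\mathcal{L}_{j,\Tc}(r)$ for $j=0,1$.

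Finally, applying $\phi^\ast$ to the first sequence and comparing with the second via the natural base change transformation gives a commutative diagram with right exact rows whose first two vertical arrows are isomorphisms; the five lemma (or a direct diagram chase) then forces $\phi^\ast f_\ast\M(r)\iso f_{\Tc\ast}\M_\Tc(r)$ for every $r\ge r_0$. The main subtlety, rather than a serious obstacle, is simply the necessity of applying Serre's theorem to the $\Tc$-kernel $\Kc'$ rather than to $\phi^\ast\Kc$; this is exactly why the resulting $r_0$ is allowed to depend on $\phi$, as announced in the statement.
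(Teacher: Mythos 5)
Your proposal is correct in substance, but it takes a genuinely different route from the paper's proof. The paper argues entirely at the level of bigraded modules: it observes that both $\Gamma_\ast(\M)\otimes_\As \As'$ and $\Gamma_\ast(\M_\Tc)$ yield $\M_\Tc$ under $\Z$-homogeneous localization, so the natural comparison morphism between these two bigraded $\As'$-modules induces an isomorphism of associated sheaves, and then the last item of Proposition \ref{prop:mder} converts this into an isomorphism of the graded pieces in all sufficiently large degrees --- which is precisely the base change map $\phi^\ast f_\ast\M(r)\to f_{\Tc\ast}\M_\Tc(r)$. Your argument is instead the EGA-style one: present $\M$ by sums of twisted line bundles via Proposition \ref{prop:mder2}, apply Serre's Theorem \ref{thm:serre} separately on $\Sc$ and on $\Tc$ (which is exactly where the dependence of $r_0$ on $\phi$ enters, as you correctly point out), identify the direct images of the twisted free sheaves compatibly with base change (here you could also just invoke Proposition \ref{prop:locfree}(1), since $\Oc(c)$ is flat and acyclic for $c\ge n-1$, instead of the explicit monomial bases), and conclude by comparing cokernels. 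Both approaches work; the paper's is shorter because Proposition \ref{prop:mder} packages the ``agreement in large degrees'' once and for all, while yours makes the mechanism and the role of $\phi$ more transparent. One wrinkle you should patch: your condition (1) only gives surjectivity of $f_\ast\mathcal{L}_0(r)\to f_\ast\M(r)$ (and its analogue over $\Tc$), i.e.\ exactness of the row at the right-hand term; exactness at the \emph{middle} term additionally requires $f_\ast\mathcal{L}_1(r)\to f_\ast\Kc(r)$ to be surjective, hence the vanishing of $R^1 f_\ast$ of the kernel of $\mathcal{L}_1\to\Kc$ (and of its counterpart on $\Tc$), which Serre's Theorem \ref{thm:serre} again supplies after enlarging $r_0$. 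With that addition, and using that $\phi^\ast$, being right exact, preserves exactness of sequences of the form $A\to B\to C\to 0$, your cokernel-comparison step goes through.
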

\begin{proof} Since $\Sc$ and $\Tc$ are noetherian, we can assume that they are affine, $\Sc=\SSpec\As$, $\Tc=\SSpec \As'$. The sheaf $\M$ is the sheaf associated by $\Z$-homogeneous localization to the bigraded $\As$-module $\Gamma_\ast(\M)=\bigoplus_{r\ge 0} f_\ast\M(r)$ (Proposition \ref{prop:mder2}). Then $\M_\Tc$ is the sheaf associated by $\Z$-homogeneous localization to the bigraded $\As'$-module $\Gamma_\ast(\M)_\Tc\simeq \Gamma_\ast(\M)\otimes_\As \As'$. On the other hand, $\M_\Tc$ is also the sheaf associated by $\Z$-homogeneous localization to $\Gamma_\ast(\M_\Tc)= \bigoplus_{r\ge 0} f_{\Tc\ast} \M_\Tc(r)$. Thus the morphism of bigraded $\As'$-modules $\Gamma_\ast(\M)_\Tc \to \Gamma_\ast(\M_\Tc)$, given by the base change morphisms, induces an isomorphism between the sheaves associated by $\Z$-homogeneous localization.  One concludes by Proposition  \ref{prop:mder}.
\end{proof}

We now prove a generic flatness result for the simple case   we need. The proof adapts to our setting that  for the classical case given in \cite[Thm.~ 3.7.3]{Bir11}, and   is similar to the one in \cite[7.1.7]{MoZh19}.

\begin{prop}[Generic flatness]\label{prop:genericflatness}  Let $S$ be a noetherian integral scheme   and $f\colon \Xcal \to S$ a superprojective morphism. For every coherent sheaf $\M$ on $\Xcal$ there is a nonempty affine open subscheme $U$ of $S$ such that $\M_U$ is flat over $U$.
\end{prop}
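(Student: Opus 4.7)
The plan is to reduce the super case to the classical generic flatness theorem by decomposing $\M$ via the total filtration from Subsection \ref{subsec:filtrations}. Since the statement is Zariski local on $S$, I would first reduce to the case $S=\Spec A$ with $A$ a noetherian integral domain. Shrinking $S$ further if necessary and using that on an affine base every coherent sheaf is a quotient of a free one (Proposition \ref{prop:mder2}), I can realize $\Xcal$ as a closed sub-superscheme of some $\Ps^{m,n}_A$, and then (by pushing $\M$ forward along the closed immersion) even assume $\Xcal=\Ps^{m,n}_A$. In particular the ideal $\Ic$ of the bosonic reduction $X\hookrightarrow\Xcal$ satisfies $\Ic^{n+1}=0$, so the total filtration
$$
0\subset \Ic^n\M\subset\Ic^{n-1}\M\subset\cdots\subset\Ic\M\subset\M
$$
has only finitely many nontrivial steps.

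Next I would pass to the successive quotients $\M^{(p)}=\Ic^p\M/\Ic^{p+1}\M$. By Proposition \ref{prop:filt}(1) each $\M^{(p)}$ is a coherent sheaf supported on the ordinary projective scheme $X=\Ps^m_A$. Applying the classical generic flatness theorem (for coherent sheaves on morphisms of finite type over a noetherian integral base) to the ordinary projective morphism $X\to S$ and each $\M^{(p)}$ produces, for every $p$, a nonempty open subscheme $U_p\subset S$ over which $\M^{(p)}|_{U_p}$ is flat over $U_p$. Because the filtration has only finitely many nontrivial steps and $S$ is integral, every finite family of nonempty opens in $S$ contains the generic point, so the intersection $U_0=\bigcap_p U_p$ is a nonempty open subscheme of $S$; shrinking to a basic affine open inside $U_0$, we obtain a nonempty affine open $U\subset S$ over which every $\M^{(p)}|_U$ is flat.

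To conclude, since the base $U$ is an ordinary scheme, the converse part of Proposition \ref{prop:filt}(2) applies: flatness over $U$ of all the successive quotients of the total filtration of $\M|_U$ forces $\M|_U$ itself to be flat over $U$.

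The key conceptual point, and the only ``super'' ingredient, is the decoupling step: although generic flatness in the ordinary setting is classical, one must first peel off the super structure of $\M$ into purely bosonic pieces. The total filtration does exactly this, and it is essential that (i) its successive quotients live on the ordinary scheme $X$, so that the classical theorem applies, and (ii) flatness of the pieces over an \emph{ordinary} base $U$ can be recombined into flatness of $\M|_U$ via Proposition \ref{prop:filt}(2); the latter converse is false over a general superscheme base, and it is precisely the hypothesis that $S$ is an ordinary scheme that makes the argument go through directly.
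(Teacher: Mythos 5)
Your proof is correct, but it takes a genuinely different route from the paper's. The paper proves generic flatness from scratch inside the super framework: after the same reduction to $\Xcal=\Ps^{m,n}_A$, it inducts on the even dimension $m$, choosing a hyperplane $\Xcal'\simeq\Ps^{m-1,n}_A$ avoiding the associated primes of $\M$, twisting the resulting short exact sequence, and using the flatness criterion of Serre's Theorem \ref{thm:serre} ($\M$ is flat over the base iff $f_\ast\M(r)$ is locally free for $r\gg0$), with Westra's super generic-freeness theorem handling the finite case $m=0$ and the direct images. You instead peel $\M$ apart along the total filtration of Subsection \ref{subsec:filtrations}: the graded pieces $\M^{(p)}=\Ic^p\M/\Ic^{p+1}\M$ are coherent on the ordinary scheme $X=\Ps^m_A$ by Proposition \ref{prop:filt}(1), classical generic flatness applies to each, the finitely many nonempty opens intersect nontrivially since $S$ is irreducible, and the converse in Proposition \ref{prop:filt}(2) --- valid precisely because the base $U$ is an ordinary scheme, as you correctly emphasize --- recombines flatness of the pieces into flatness of $\M|_U$. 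Your devissage is shorter and uses the classical theorem as a black box, whereas the paper's argument is self-contained (it in effect reproves the classical statement in the super setting, in the same spirit as the rest of Section \ref{s:hilbert}). Two cosmetic remarks: the fact that a coherent sheaf on an affine noetherian base is a quotient of a free one is elementary there and is not really what Proposition \ref{prop:mder2} addresses (that proposition concerns sheaves on projective superspectra); and the reduction to $\Xcal=\Ps^{m,n}_A$ is not strictly needed for your argument --- it suffices that $X\to S$ is of finite type and that $\Ic$ is nilpotent, which holds since $\Xcal$ is noetherian --- though it does give you the clean global bound $\Ic^{n+1}=0$. Neither point affects the validity of the proof.
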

\begin{proof}
 
One can assume that $S=\Spec A$ is affine 
and that $\Xcal=\Ps_A^{m,n}$ is a projective superspace over $A$. We now proceed by induction on $m$. If $m=0$, then $\Xcal=\SSpec A[\theta_1,\dots,\theta_n]$ and $f\colon \Xcal \to S$ is finite. It follows that $\M$ is finitely generated over $A$ and we conclude by \cite[Thm.~ 6.4.4]{We09}.
 
Take now $m>0$. If $S$ has only a finite number of points, we can assume   it reduces to its generic point. In this case $A$ is a field and flatness is automatic.
If $S$ has an infinite number of points, there is a  hyperplane $\Xcal'\simeq \Ps_A^{m-1,n}$ which does not meet any     point of $\Xcal$ corresponding to the (finitely many) associated primes of $\M$, Then, for every integer $r$,  one has an exact sequence
$$
0 \to \M(r) \to \M(r+1) \to \M'(r+1)\to 0\,,
$$
where $\M'=\M_{\vert\Xcal'}$, as in Remark \ref{rem:assoprim}.
By induction on $m$, $\M'_V$ is flat over $V$ for an affine open subscheme $V$ of $S$. After restriction to that open, we can assume that $\M'$ is flat over $S$. 
Then, by Theorem \ref{thm:serre} and Proposition \ref{prop:locfree}, there exists $r_0$ such that for every $r\ge r_0$ one has $R^i f_\ast\M(r)=0$, $R^i f_\ast \M'(r+1)=0$ for $i>0$, and   $f_\ast \M'(r+1)$ is locally free. Moreover, by the case $m=0$, $n=0$,  there is a nonempty affine open subscheme $U$ of $S$ such that $R^i f_\ast\M(r_0)_\Ucal$ is flat, and then locally free. 
From the exact sequence
$$
0 \to f_\ast\M(r_0) \to f_\ast\M(r_0+1)\to f_\ast\M'(r_0+1)\to  0\,, 
$$
one obtains that $f_\ast\M(r_0+1)$ is locally free on $U$. By ascending induction one sees that $f_\ast\M(r)$ is locally free on $U$ for every $r\ge r_0$. By Theorem \ref{thm:serre} $\M_U$ is flat over $U$.
\end{proof}

The result on flattening  will need   a preliminary result. In its  proof we shall use the following notation: for every affine superscheme $\Sc=\SSpec\As$, $\Sc_{rd}$ will stand for the affine superscheme $\SSpec(\As\otimes_A A_{red})$. It is a closed sub-superscheme of $\Sc$ whose ordinary underlying scheme is the reduced scheme $S_{red}=\Spec A_{red}$.

\begin{lemma}\label{lem:flatstrat1}
Let $\Sc$ be a noetherian superscheme, $f\colon \Xcal \to \Sc$ a superprojective morphism of superschemes, $\Oc_\Xcal(1)$ a relatively very ample line bundle,  and $\M$ a coherent sheaf on $\Xcal$. 
\begin{enumerate}
\item Only finitely many pairs of polynomials occur as super Hilbert polynomials ${\bH}(X_s,\M_s)$ of the fibres of $\M$.
\item There exists an integer $r_0$ such that for every $\Sc$-superscheme $\Tc\to\Sc$ and every $r\geq r_0$ one has
\begin{enumerate}
\item $R^i f_{\Tc\ast} \M_\Tc(r)=0$ for $i>0$;
\item the base change morphism 
$$
((f_\ast\M(r))_{\Tc})_s \to H^0(X_s,\M_s(r)) 
$$
is an isomorphism for every point $s\in T$.
\end{enumerate}
\end{enumerate}
\end{lemma}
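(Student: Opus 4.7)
Both (1) and (2) are local on $\Sc$, so I would immediately reduce to $\Sc=\SSpec\As$ affine and noetherian. The plan is to first establish the key fibre-cohomology vanishing
\begin{equation*}
(\ast)\qquad H^i(X_s,\M_s(r))=0\quad\text{for all }s\in S,\ i>0,\ r\ge r_0,
\end{equation*}
from which (1) will follow at once and (2) will follow through cohomology base change.

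To prove $(\ast)$, I would argue by noetherian induction on closed subsets of $|S|$. Since the fibres $\M_s$ depend only on $\M\otimes_{\Oc_\Sc}\Oc_{\Sc_{rd}}$ (because $\kappa(s)$ kills the nilpotent ideal), we may work with $\Sc_{rd}$ and decompose the underlying scheme $S_{rd}$ into its finitely many integral irreducible components $S_{rd,i}$. Generic flatness (Proposition \ref{prop:genericflatness}) applied to $\Xcal\times_\Sc S_{rd,i}\to S_{rd,i}$ with the pulled-back sheaf produces a dense affine open $V_i\subseteq S_{rd,i}$ over which the restricted sheaf is flat; shrinking so that each $V_i$ avoids the intersections with other components, each $V_i$ is open in $|S|$. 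On this flat locus, Serre's Theorem \ref{thm:serre} together with Proposition \ref{prop:locfree}(1) yields $r_i$ such that $R^jf_{V_i*}\M|_{\Xcal_{V_i}}(r)=0$ for $j>0$, $r\ge r_i$, $f_{V_i*}\M|_{\Xcal_{V_i}}(r)$ is locally free, and fibre vanishing $H^j(X_s,\M_s(r))=0$ holds for $s\in V_i$; since $V_i$ is connected, the super Hilbert polynomial is constant there. Setting $U=\bigcup V_i$ and $Z=|S|\setminus U$ (a proper closed subset), the inductive hypothesis applied to the reduced-induced closed sub-superscheme $\Zc\hookrightarrow\Sc$---noting that $(\M|_{\Xcal_\Zc})_z=\M_z$ for $z\in Z$---provides finiteness of Hilbert polynomials on $Z$ together with a bound $r_0^Z$. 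Taking $r_0=\max(r_1,\ldots,r_k,r_0^Z)$ proves $(\ast)$ and assertion (1).

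Now (2) follows from $(\ast)$. For any $\phi\colon\Tc\to\Sc$ and any $t\in T$ with image $s=\phi(t)$, the fibre $(\M_\Tc)_t=\M_s\otimes_{\kappa(s)}\kappa(t)$ involves no derived tensor (since $\kappa(t)$ is naturally a $\kappa(s)$-algebra), so flat base change along the field extension gives $H^i(X_t,(\M_\Tc)_t(r))=0$ for $i>0$, $r\ge r_0$. Assertion (2)(b) then follows from Theorem \ref{thm:cohombasechange}(4) (applied with $i=1$, using $(\ast)$) at every point $s\in S$, combined with the observation that the base change map at $t\in T$ factors through the one at $\phi(t)\in S$ by tensoring with $\kappa(t)$ over $\kappa(\phi(t))$. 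For (2)(a), I would rerun the same noetherian-induction/generic-flatness strategy on the pair $(\Tc,\M_\Tc)$: since the fibre vanishing is inherited at the same level $r_0$, generic flatness on $\Tc_{rd}$ yields a dense open $W\subseteq\Tc$ on which Serre's theorem combined with Proposition \ref{prop:locfree}(1) gives $R^if_{\Tc*}\M_\Tc(r)|_W=0$ for $i>0$, $r\ge r_0$, and noetherian induction on $|T|$ disposes of the closed complement. Zariski locality of higher direct images glues these pieces into the required global vanishing.

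The \textbf{main obstacle} is exactly (2)(a): cohomology base change with non-flat coefficients. Theorem \ref{thm:cohombasechange} requires flatness over the base, so it cannot be invoked directly for $\M_\Tc$. The resolution is the nested noetherian-induction argument above, which effectively plays the role of a weak flattening stratification internal to $\Tc$---on each stratum the sheaf is flat, so Serre's theorem and Proposition \ref{prop:locfree} apply, and the same bound $r_0$ works throughout because it depends only on the fibre cohomology, which is preserved under base change.
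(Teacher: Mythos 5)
Your part (1) and the uniform fibre vanishing $(\ast)$ are fine, and indeed close to the paper: the paper builds the same strata $V_\alpha$ by generic flatness (Proposition \ref{prop:genericflatness}) plus noetherian induction, and fibrewise assertions only depend on the reduced structure, so stratifying $S_{rd}$ is legitimate there. The gaps are in (2a) and (2b). For (2b) you invoke Theorem \ref{thm:cohombasechange}(4) for $\M$ over $\Sc$, but every part of that theorem presupposes $\M$ \emph{flat} over the base: its proof runs through the Grothendieck--Mumford complex (Proposition \ref{prop:GMcomplex}) and the half exact functors $T^i$, which do not exist for non-flat $\M$ --- and non-flatness is the whole point of this lemma (if $\M$ were flat, the flattening stratification it feeds into would be vacuous). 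Your reduction of (2b) to points of $S$ by tensoring with $\kappa(t)$ over $\kappa(\phi(t))$ is correct and is a nice simplification, but the statement at points of $S$ still needs an argument. The ingredient you are missing is the comparison between $f_\ast\M(r)$, computed on $\Sc$, and the pushforwards computed on the strata: the paper applies Proposition \ref{prop:basechangenoflat} (base change \emph{without} flatness, valid for $r\ge r_0(\phi)$ depending on the morphism) to the \emph{finitely many fixed} immersions $V_\alpha\hookrightarrow S$, obtaining $r_\alpha$ with $(f_\ast\M(r))_{V_\alpha}\iso f_{V_\alpha\ast}\M_{V_\alpha}(r)$; only then do flatness on $V_\alpha$ and Proposition \ref{prop:locfree} yield the fibre isomorphism, uniformly in $\Tc$ precisely because the $r_\alpha$ depend only on $S$ and $\M$. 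Knowing things on each stratum about the restricted sheaf, as you do, says nothing about the fibres of the globally computed $f_\ast\M(r)$ without this comparison.

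Your replacement argument for (2a) fails for a different reason. Generic flatness on $\Tc_{rd}$ gives flatness of $\M_\Tc$ over opens of the \emph{reduced} structure, which is not flatness over the corresponding open sub-superscheme of $\Tc$ (nilpotents of $\Oc_\Tc$ are invisible to $\Tc_{rd}$), so Proposition \ref{prop:locfree}(1) cannot be applied to conclude $R^if_{\Tc\ast}\M_\Tc(r)|_W=0$. Worse, $R^if_{\Tc\ast}\M_\Tc(r)$ is a coherent sheaf on $\Tc$, and its vanishing cannot be checked along a locally closed stratification: higher direct images localize only over \emph{open} covers, restricting $\M_\Tc$ to a closed stratum changes the sheaf, and a direct image of the form $\epsilon\cdot\Gc$ with $\epsilon$ a nilpotent even function dies on every reduced stratum while being nonzero. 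So your ``nested noetherian induction'' cannot be glued into the required global vanishing. The paper's route for (2a) is a one-step sheaf-level argument on $\Tc$ itself: from the fibre vanishing at every point of $T$ (your $(\ast)$ transported along field extensions) it concludes $R^if_{\Tc\ast}\M_\Tc(r)=0$ via Theorem \ref{thm:cohombasechange}(1) combined with super Nakayama, with no stratification of $\Tc$ at all (granted, the paper is itself terse about the flatness hypothesis at this step, but that is the mechanism it relies on). In short: your stratification technique is the right tool exactly where the paper uses it --- for the fibrewise statements (1), $(\ast)$ and, via Proposition \ref{prop:basechangenoflat}, for (2b) --- but it is the wrong tool for the base-level vanishing (2a).
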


\begin{proof} 
We can assume that $\Sc$   is affine, $\Sc=\SSpec\As$, and that $\Xcal$ is the superprojective space $\Xcal=\Ps_\As^{m,n}$ over $\As$.
By Serre's Theorem \ref{thm:serre}  there exists $r_0$ such that $R^il_\ast(\M(r))=0$ for $i>0$ and $r\ge r_0$. Then $H^i(X_s,\M_s(r))=0$ for  $i>0$ and $r\ge r_0$ by Corollary \ref{cor:cohombound1}. If $\Tc\to \Sc$ is a superscheme morphism, the same formula holds for every point $s\in T$, and then $R^i f_{\Tc\ast} \M_\Tc(r)=0$ for $i>0$ and $r\ge r_0$ by 
(1) of the cohomology base change Theorem \ref{thm:cohombasechange}. This proves (2a). 

Notice now that   to prove (1) we can assume that $\Sc=S$ is an ordinary scheme.
Moreover, as $((f_\ast\M(r))_{\Tc})_s\simeq ((f_\ast\M(r))_{T})_s$, to prove (2b) we can also assume that  $\Tc=T$ is an ordinary scheme.
Let $S'$ be one of the finitely many irreducible components of $S$.
By Proposition \ref{prop:genericflatness} on generic flatness there is a nonempty open sub-superscheme $U$ of $S'$  such that $\M_U$ is flat over $U$. We can assume that $U$ is affine of the form $U = \Spec A'_f$ where $A'$ is the ring of $S'$ and $f\in A'$. Then, $Z=\Spec A'/(f)\hookrightarrow A'$ is the complementary closed sub-superscheme of $U$. We  now take $Z$ as a new base superscheme and repeat the procedure. By noetherian induction this recursive process ends up with a finite number of subschemes $V_\alpha\hookrightarrow S$ such that $\M_{V_\alpha}$ is flat over $V_\alpha$ for every $\alpha$ and the induced morphism $\coprod_\alpha V_\alpha\to S$ is topologically surjective.
Since $\M_{V_\alpha}$ is flat over $V_\alpha$, the super Hilbert polynomials are constant on the fibres, so that there are only a finite number of super Hilbert polynomials ${\bH}(X_s,\M_s)$ of the fibres of $\M$ over $\Sc$. This proves (1).

By Proposition \ref{prop:basechangenoflat} for every $\alpha$ there exists an integer $r_\alpha$ such that 
\begin{equation}\label{eq:bc1}
(f_\ast\M(r))_{V_\alpha} \iso f_{V_\alpha\ast}\M_{V_\alpha}(r)
\end{equation}
for every $r\ge r_\alpha$. Write $T_\alpha=V_\alpha	\times_S T\hookrightarrow T$. Since all   higher direct images $R^i f_{V_\alpha}\M_{V_\alpha}(r)$ vanish by (2a) and $\M_{V_\alpha}$ is flat over $V_\alpha$, Proposition \ref{prop:locfree} implies that 
\begin{equation}\label{eq:bc2}
\begin{aligned}
(f_{V_\alpha\ast}\M_{V_\alpha}(r))_{T_\alpha}& \iso f_{T_\alpha \ast}\M_{T_\alpha}(r)\,,
\\
(f_{T_\alpha \ast}\M_{T_\alpha}(r))_s& \iso H^0(X_s\M_s(r))
\end{aligned}
\end{equation}
for every  $s\in T_\alpha$ and 
 $r\ge r_\alpha$.
Combining    Equations \eqref{eq:bc1} and \eqref{eq:bc2} we obtain an isomorphism
$$
((f_\ast\M(r))_{T_\alpha})_s \iso H^0(X_s\M_s(r))
$$
for every $r\ge r_\alpha$ and every point $s\in T_\alpha$.
Corollary \ref{cor:cohombound1} and the cohomology base change Theorem \ref{thm:cohombasechange} imply that for every point $s\in T_\alpha$ and every $r\ge r_\alpha$  the base change morphism is an isomorphism
$$
(f_{T_\alpha\ast}\M_{T_\alpha}(r))_s \iso H^0(X_s, \M_{s}(r))\,,.
$$
 If $r_0$ is an integer greater than all the $r_\alpha$'s the composition of the two above isomorphisms gives
$$
((f_\ast\M(r))_{T_\alpha})_s \iso H^0(X_s, \M_{s}(r))
$$
for $r\ge r_0$ and   every $s\in T_\alpha$. It follows that 
$$
((f_\ast\M(r))_T)_s \iso H^0(X_s, \M_{s}(r))\,,
$$
for every $r\ge r_0$ and   $s\in T$.

\end{proof}

\begin{lemma}\label{lem:flatstrat2} Let $\Sc$ be a noetherian superscheme and $\M$ a coherent sheaf on $\Sc$. For any pair of integer numbers $(p,q)$ the geometric locus of the points $\Tc\to \Sc$ where $\M$ is locally free of rank $(p,q)$ is a sub-superscheme of $\Sc$. In other words, there exists a sub-superscheme $\Vc_{p,q}(\M)\hookrightarrow \Sc$ such that a morphism $\Tc\to \Sc$ of superschemes factors through $\Vc_{p,q}(\M)\hookrightarrow \Sc$ if and only if $\M_\Tc$ is locally free of rank $(p,q)$.
\end{lemma}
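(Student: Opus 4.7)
The plan is as follows. Since the universal sub-superscheme $\Vc_{p,q}(\M)$ is uniquely determined by its functorial characterization and hence compatible with localization on $\Sc$, I may assume $\Sc=\SSpec\As$ is affine noetherian. The super Nakayama lemma implies that each of the component functions $s\mapsto\dim_{\kappa(s),i}\M\otimes\kappa(s)$ ($i=0,1$) is upper semicontinuous on $S$, so the locus
\[
U=\bigl\{s\in S\mid \dim_{\kappa(s)}\M\otimes\kappa(s)\leq(p,q)\bigr\}
\]
(order as in Equation \eqref{eq:order}) is an open sub-superscheme of $\Sc$. Any $\Tc\to\Sc$ along which $\M$ pulls back to a locally free sheaf of rank $(p,q)$ automatically factors through $U$, so I will construct $\Vc_{p,q}(\M)$ as a sub-superscheme of $U$.

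Working locally on $U$, around each point $u$ I choose an affine open $V\subseteq U$ over which $\M$ fits in a presentation
\[
0\longrightarrow\Kc\stackrel{\iota}{\longrightarrow}\Oc_V^{p,q}\stackrel{\alpha}{\longrightarrow}\M_V\longrightarrow 0,
\]
obtained by lifting a minimal generating family of $\M\otimes\kappa(u)$ and padding with zeros when the super dimension at $u$ is strictly smaller than $(p,q)$; the kernel $\Kc$ is coherent. The pivotal claim is: for every $\psi\colon\Tc\to V$ the sheaf $\M_\Tc$ is locally free of rank $(p,q)$ if and only if $\iota_\Tc=\psi^\ast\iota$ is the zero morphism. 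The ``if'' direction is immediate from the right-exactness of the pulled-back sequence, which gives $\M_\Tc\simeq\Oc_\Tc^{p,q}$. The ``only if'' direction is the main technical step: the image $\Ima(\iota_\Tc)\subset\Oc_\Tc^{p,q}$ is coherent and $\Oc_\Tc^{p,q}/\Ima(\iota_\Tc)\simeq\M_\Tc$, so flatness of $\M_\Tc$ lets me tensor the short exact sequence with $\kappa(t)$ to obtain a surjection of super vector spaces $\kappa(t)^{p,q}\twoheadrightarrow\kappa(t)^{p,q}$, which is necessarily an isomorphism; hence $\Ima(\iota_\Tc)\otimes\kappa(t)=0$ at every $t\in T$, and super Nakayama applied stalkwise to the coherent sheaf $\Ima(\iota_\Tc)$ forces $\Ima(\iota_\Tc)=0$, i.e.\ $\iota_\Tc=0$.

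With the claim in hand, I invoke Proposition \ref{prop:homrep} applied to $\mathbf{Hom}_V(\Kc,\Oc_V^{p,q})$---valid because the target $\Oc_V^{p,q}$ is flat over $V$---representing this functor by a linear superscheme $\Vs(\Qc)\to V$ with universal morphism $\Phi$. The inclusion $\iota$ corresponds to a section $\gamma\colon V\to\Vs(\Qc)$ with $\gamma^\ast\Phi=\iota$, and Corollary \ref{cor:zerolocus} identifies $\Vc_V:=\gamma^{-1}(\Vs_0(\Qc))$ as the sub-superscheme of $V$ characterized by the universal property that $\psi\colon\Tc\to V$ factors through it precisely when $\iota_\Tc=0$. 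Combined with the pivotal claim, $\Vc_V$ represents our functor on $V$. The universal characterization forces independence of the auxiliary choices of generators and presentation, so the various $\Vc_V$'s glue across intersections to produce the desired sub-superscheme $\Vc_{p,q}(\M)\hookrightarrow U\hookrightarrow \Sc$. The principal obstacle is the forward direction of the central claim, in which fibrewise vanishing of $\Ima(\iota_\Tc)$ has to be promoted to a global vanishing while simultaneously tracking the $\Z_2$-grading; this is handled by the careful application of super Nakayama to the coherent image sheaf sketched above.
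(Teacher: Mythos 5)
Your proposal is correct and takes essentially the same route as the paper: after localizing, both proofs present $\M$ near the relevant points as a quotient of $\Oc^{p,q}$ and define $\Vc_{p,q}(\M)$ as the scheme-theoretic vanishing locus of the relations, the paper directly via the ideal generated by the entries of the graded matrix $\Psi$ in a free presentation $\Oc^{r,s}\xrightarrow{\Psi}\Oc^{p,q}\to\M\to 0$, you via Proposition \ref{prop:homrep} and Corollary \ref{cor:zerolocus} applied to $\iota\colon\Kc\to\Oc^{p,q}$ --- two packagings of the same construction. Your fibrewise dimension count combined with the super Nakayama lemma correctly supplies the key verification (that local freeness of $\M_\Tc$ of rank $(p,q)$ forces the pulled-back relations to vanish) which the paper's terse ``one finishes'' leaves implicit.
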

\begin{proof} The question is local, so that it is enough to prove that for every point $s\in S$
there is an open sub-superscheme $\Ucal$ with $s\in U$ where the Lemma is true.
If $\M$ has rank $(p,q)$ at $s\in S$  by the super Nakayama Lemma (\cite{BBH91,CarCaFi11},\cite[6.4.5]{We09})  there is an open sub-superscheme $\Ucal$ with $s\in U$ and an exact sequence
$$
 \Oc_\Ucal^{r,s}\xrightarrow{\Psi} \Oc_\Ucal^{p,q} \to \M_\Ucal \to 0\,.
$$
Then $\M$ has rank smaller or equal to $(p,q)$, with respect to the order given by Equation \eqref{eq:order}, at every point of $U$. Taking $\Vc_{p,q}(\M_\Ucal)$ as the closed sub-superscheme of $\Ucal$ defined by the ideal generated by the entries of the graded matrix of $\Psi$, one finishes.
\end{proof}

The following result is part of what in the classical case is the existence of a ``flattening stratification''  (see e.g.~\cite{Ni07}).

\begin{prop} \label{prop:flatstrat}
Let $\Sc$ be a noetherian superscheme, $f\colon \Xcal \to \Sc$ a superprojective morphism of superschemes, $\Oc_\Xcal(1)$ a relatively very ample line bundle, and $\M$ a coherent sheaf on $\Xcal$. For each pair of polynomials with rational coefficients $\bP(r)=(P_+(r),P_-(r))$, the geometric locus of the points $\Tc\to \Sc$ where $\M$ has super Hilbert polynomial $\bP$ (Definition \ref{def:superHilbert}) is a sub-superscheme $\Sc_\bP$ of $\Sc$. 
More precisely, there is a sub-superscheme $\Sc_\bP\hookrightarrow\Sc$ with the following universal property: given a morphism $\phi\colon \Tc\to \Sc$ of superschemes,   $\M_\Tc$ is flat over $\Tc$ with super Hilbert polynomial $\bP(r)$ if and only if $\phi$ factors through $\Sc_\bP\hookrightarrow \Sc$.
\end{prop}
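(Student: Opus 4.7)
My plan is to realize $\Sc_\bP$ as the locus in $\Sc$ where the direct image sheaves $\F_r := f_\ast\M(r)$ become simultaneously locally free of the expected rank $\bP(r)$, for all $r$ large enough, and then to translate this condition into the required flatness condition on $\M_\Tc$ via base change.

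First, I invoke Lemma \ref{lem:flatstrat1} to pick an integer $r_0$, depending only on $\M$ and $\Sc$, such that for every $\Sc$-superscheme $\phi\colon \Tc \to \Sc$ and every $r \geq r_0$ one has $R^i f_{\Tc\ast}\M_\Tc(r) = 0$ for $i > 0$, the base change map $(\F_r)_\Tc \to f_{\Tc\ast}\M_\Tc(r)$ is an isomorphism, and fibrewise $((\F_r)_\Tc)_t \iso H^0(X_t,\M_t(r))$ for every $t \in T$. Applying Lemma \ref{lem:flatstrat2} to each coherent sheaf $\F_r$ on $\Sc$ then provides a sub-superscheme $\Sc_{\bP,r} := \Vc_{\bP(r)}(\F_r) \hookrightarrow \Sc$ characterized by the property that $\phi\colon \Tc \to \Sc$ factors through $\Sc_{\bP,r}$ if and only if $(\F_r)_\Tc$ is locally free of rank $\bP(r)$. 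I would set $\Sc_\bP := \bigcap_{r \geq r_0} \Sc_{\bP,r}$ and argue that this a priori infinite intersection stabilizes at a finite step thanks to the noetherianity of $\Sc$: first the ascending chain of complements of the open loci (where the rank of $\F_r$ is $\leq \bP(r)$) stabilizes, and then on the resulting common open the descending chain of the remaining closed defining conditions also stabilizes.

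To verify the universal property of $\Sc_\bP$, I would check both implications. If $\M_\Tc$ is flat over $\Tc$ with super Hilbert polynomial $\bP$, then Proposition \ref{prop:locfree}(1), combined with the vanishing of $R^i f_{\Tc\ast}\M_\Tc(r)$ and the base change isomorphism from the previous step, shows that $(\F_r)_\Tc$ is locally free for every $r \geq r_0$; its fibrewise rank equals $\dim H^0(X_t,\M_t(r)) = \chi(X_t,\M_t(r)) = \bP(r)$, so $\phi$ factors through every $\Sc_{\bP,r}$ and hence through $\Sc_\bP$. Conversely, if $\phi$ factors through $\Sc_\bP$, then $(\F_r)_\Tc \simeq f_{\Tc\ast}\M_\Tc(r)$ is locally free of rank $\bP(r)$ for all $r \geq r_0$, so Theorem \ref{thm:serre}(4) forces $\M_\Tc$ to be flat over $\Tc$; the fibrewise rank identity then pins down the super Hilbert polynomial of $\M_\Tc$ as $\bP$.

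The step I expect to be the main obstacle is the stabilization of the intersection $\bigcap_{r \geq r_0} \Sc_{\bP,r}$: because the $\Sc_{\bP,r}$ are only locally closed sub-superschemes of $\Sc$, one must argue separately that both the ambient open parts and the closed defining ideals stabilize under the noetherian hypothesis. Once this technical point is handled, the remainder of the proof is a fairly direct assembly of the results from Subsection \ref{ss:cohbasechange}, Proposition \ref{prop:locfree}, and the key Lemmas \ref{lem:flatstrat1} and \ref{lem:flatstrat2}.
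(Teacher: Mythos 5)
Your construction of $\Sc_\bP$ and your verification of the two implications of the universal property follow the paper's argument exactly (Lemma \ref{lem:flatstrat1} to choose $r_0$, Lemma \ref{lem:flatstrat2} to get the loci $\Vc_{\bP(r)}(f_\ast\M(r))$, Proposition \ref{prop:locfree} and Theorem \ref{thm:serre} for the two directions). The genuine gap is in the step you yourself flag as the main obstacle: the stabilization of $\bigcap_{r\ge r_0}\Vc_{\bP(r)}(f_\ast\M(r))$. You claim that ``the ascending chain of complements of the open loci \ldots stabilizes'' by noetherianity, but this has the chain condition backwards. A noetherian topological space satisfies the \emph{descending} chain condition on closed subsets, equivalently the ascending chain condition on opens; here each $\Vc_{\bP(r)}$ is closed inside an open $\Ucal_r$ (the locus where the rank of $f_\ast\M(r)$ is at most $\bP(r)$), and as $N$ grows the ambient opens $\bigcap_{r_0\le r\le N}\Ucal_r$ form a \emph{descending} chain of opens, whose complements form an \emph{ascending} chain of closed sets. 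Such chains need not stabilize in a noetherian space (e.g., strictly growing finite sets of closed points in $\As^1_k$ over an infinite field), so noetherianity alone cannot finish the argument. Your second step (stabilization of the closed defining ideals on a fixed common open, via the ascending chain condition on coherent ideal sheaves) is fine, but it is conditional on the first, false, step.

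The paper closes this gap with a geometric argument, not an abstract chain argument: this is its statement $(\bullet)$. After enlarging $r_0$ so that Proposition \ref{prop:basechangenoflat} applies to $S\hookrightarrow \Sc$, one reduces to the bosonic base $S$; then one base-changes along the surjective morphism $\coprod_\alpha V_\alpha\to S$ built from generic flatness and noetherian induction in the proof of Lemma \ref{lem:flatstrat1}, reducing to the case where $\M$ is flat over $S$. In that case the fibrewise super Hilbert polynomial is a constant $\bH$, and the dichotomy is immediate: if $\bP\neq\bH$ then $\bP(r)\neq\bH(r)$ for $r\gg r_0$, so $\Vc_N=\emptyset$ for $N$ large, while if $\bP=\bH$ then $\Vc_N=\Vc_{r_0}$ for all $N\ge r_0$. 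This use of the generic-flatness stratification is the essential missing idea in your proposal; without it (or some substitute of comparable strength) the infinite intersection defining $\Sc_\bP$ is not known to exist as a sub-superscheme.
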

\begin{proof} The question is local on the base, so  we can assume that $\Sc=\SSpec\As$ is affine. Moreover, we can assume that $\Xcal=\Ps_\As^{m.n}$ is a projective superspace over $\As$.

By Lemma \ref{lem:flatstrat1} there exists $r_0$ such that for every $\Sc$-superscheme $\Tc\to\Sc$ one has   $R^i f_{\Tc\ast} \M_\Tc(r)=0$ for $i>0$ and  $((f_\ast \M(r))_\Tc)_s \iso H^0(X_s,\M_s(r))$
for every $r\ge r_0$. If $\M_\Tc$ is flat over $\Tc$ with super Hilbert polynomial $\bP$, then  $(f_\ast \M(r))_\Tc\simeq f_{\Tc\ast}\M_\Tc(r)$ is locally free of rank $\bP(r)$ (Proposition \ref {prop:locfree}), and then, by Lemma \ref{lem:flatstrat2}, $\Tc \to \Sc$ factors through the sub-superscheme $\Vc_{\bP(r)}(f_\ast \M(r))$ associated to the sheaf $f_\ast \M(r)$, where  $\bP(r)=(P_+(r),P_-(r))$ for every $r\ge r_0$. 
Thus $\Tc \to \Sc$ factors through the intersection sub-superscheme $\bigcap_{r\ge r_0}\Vc_{\bP(r)}(f_\ast \M(r))$ of $\Sc$ if one proves that the intersection makes sense as a superscheme.

Conversely, if $\Tc$ factors through the sub-superscheme $\Vc_{\bP(r)}(f_\ast \M(r))$ for every $r\ge r_1$ for some value $r_1\ge r_0$, then $f_{\Tc\ast} \M_\Tc(r)$ is locally free of rank $\bP(r)$ for $r\ge r_1$ and then $\M_\Tc$ is flat over $\Tc$ by Theorem \ref{thm:serre}.

To complete the proof one has to show that for some value $r_1\ge r_0$,   the sub-superscheme $\bigcap_{r\ge r_1}\Vc_{\bP(r)}(f_\ast \M(r))$ of $\Sc$ exists. This follows from the following statement:
\begin{itemize}\renewcommand{\labelitemi}{$(\bullet)$}
\item
For every integer $N\ge r_0$  the  sub-superschemes $\Vc_N=\bigcap_{r_0\le r\le N}\Vc_{\bP(r)}(f_\ast \M(r))$ of $\Sc$ form a stationary chain as $N$ grows.
\end{itemize}
To prove $(\bullet)$,  increase first $r_0$ so that Proposition \ref{prop:basechangenoflat} is true for the immersion $S\hookrightarrow \Sc$. Then, if $f_\ast \M(r)$ is locally free of rank $\bP(r)$, the same happens after the base change $S\hookrightarrow \Sc$, so that  we can assume that the base is the ordinary scheme $S$. Consider then the sub-superschemes $V_\alpha\hookrightarrow  S$ such that $\M_{V_\alpha}$ is flat over $V_\alpha$ for every $\alpha$ constructed in the proof of Lemma \ref{lem:flatstrat1}.  Since the induced morphism $\coprod_\alpha V_\alpha\to S$ is surjective it is enough to prove $(\bullet)$ after the base change $\coprod_\alpha V_\alpha\to S$, that is, we can assume that $\M$ is flat over $S$. Now, the super Hilbert polynomials of $\M$ on the fibres of $\Xcal \to S$ are all equal to a pair $\bH=(H_+,H_-)$ of polynomials.

If $\bP\neq \bH$, one has $\bP(r)\neq \bH(r)$ for $r\gg r_0$ and then $\Vc_N=\emptyset$ for $N$ big enough.

If $\bP=\bH$, then $\Vc_N=\Vc_{r_0}$ for every $N\ge r_0$ and one finishes.
\end{proof}

\subsection{Embedding into supergrassmannians}

Let $f\colon \Xcal=\Ps_\Sc^{m,n} \to \Sc$ be the projective superspace over $\Sc$, with $\Sc$ noetherian, {$\Oc_\Xcal(1)$ a relatively very ample line bundle}, and let $\bP=\bP(r)=(P_0(r),P_1(r))$ be a pair of polynomials with rational coefficients. 
From Mumford's Theorem \ref{thm:mumford} on regularity and by Proposition \ref{prop:locfree}  one obtains:

\begin{prop}\label{prop:mreg} There exists an integer $r_0$,   depending only on the coefficients of the polynomials of $\bP$, such that for every closed sub-superscheme $\Zc\hookrightarrow \Xcal_\Tc$ in $\SHilbf_{\Xcal/\Sc}^{\bP}(\Tc)$, and every integer $r\ge r_0$, the following conditions hold:
\begin{enumerate}
\item $R ^if_{\Tc\ast}(\Oc_{\Zc}(r))=0$ for $i>0$ and  $f_{\Tc\ast}(\Oc_{\Zc}(r))$ is a locally free sheaf on $\Sc$ of rank $\bP(r)$.
\item If $\pcal_\Zc$ is the ideal of $\Zc$ in $\Xcal_\Tc$, then $R ^i f_{\Tc\ast}(\pcal_\Zc(r))=0$ for $i>0$, so that 
\begin{enumerate}
\item there is an exact sequence
$$
0 \to f_{\Tc\ast}(\pcal_\Zc(r))\to f_{\Tc\ast}(\Oc_{\Xcal_\Tc}(r))\to f_{\Tc\ast}(\Oc_\Zc(r))\to 0\,,
$$
of locally free sheaves of ranks $h_{(m,n)}(r)-\bP(r)$, $h_{(m,n)}(r)$ and  $\bP(r)$, respectively (Proposition \ref{prop:cohomoden});
\item the sheaves $\pcal_\Zc(r)$, $\Oc(r)$ and $\Oc_\Zc(r)$ are generated by their global sections, that is, there is a commutative diagram
$$
\xymatrix@R=12pt@C=12pt{
0 \ar[r] & f_\Tc^\ast f_{\Tc\ast} (\pcal_\Zc(r))\ar[r] \ar[d] & f_\Tc^\ast f_{\Tc\ast} (\Oc_{\Xcal_\Tc}(r)) \ar[r]\ar[d] & f_\Tc^\ast f_{\Tc\ast} (\Oc_{\Zc}(r)) \ar[r] \ar[d] & 0\\
0\ar[r]& \pcal_\Zc(r) \ar[r]\ar[d] & \Oc_{\Xcal_\Tc}(r)\ar[r] \ar[d]& \Oc_{\Zc}(r)\ar[r]\ar[d]& 0
\\
& 0& 0& 0&
}
$$
with exact rows and columns. 
\end{enumerate}
\end{enumerate}
\qed\end{prop}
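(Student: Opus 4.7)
The plan is to reduce everything to a statement about the fibres $\Zc_t\hookrightarrow\Xcal_{\kappa(t)}=\Ps_{\kappa(t)}^{m,n}$ and then to invoke Mumford's regularity theorem in a uniform way, using cohomology base change and Proposition \ref{prop:locfree} to transfer fibrewise information to a statement about $\bR f_{\Tc\ast}$. Let me lay out the steps.

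First I would observe that since $\Zc$ is flat and proper over $\Tc$, for each geometric point $t\in T$ the fibre $\Zc_t$ is a closed sub-superscheme of $\Ps_{\kappa(t)}^{m,n}$ with super Hilbert polynomial $\bP$. Consequently, the super Hilbert polynomial of the ideal sheaf $\pcal_{\Zc_t}$ equals $\bH(\Oc_{\Xcal_t})-\bP$, whose coefficients (by Proposition \ref{prop:cohomoden} and the formulas for $h_{(m,n)}(r)$ in \eqref{eq:dimsecoden}) depend only on $(m,n)$ and on the coefficients of $\bP$. Applying Mumford's Theorem \ref{thm:mumford}, there is a single integer $r_0$, depending only on the coefficients of $\bP$ (and on $(m,n)$, which is fixed), such that $\pcal_{\Zc_t}$ is $r_0$-regular for every $t\in T$.

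Next I would use Castelnuovo (Proposition \ref{prop:castelnuovo}) together with Proposition \ref{prop:cohomoden} to deduce, for all $r\ge r_0$ and all $i>0$, that
\[
H^i(X_t,\pcal_{\Zc_t}(r))=0,\qquad H^i(X_t,\Oc_{\Xcal_t}(r))=0,
\]
and then, from the exact sequence $0\to\pcal_{\Zc_t}(r)\to\Oc_{\Xcal_t}(r)\to\Oc_{\Zc_t}(r)\to 0$, also $H^i(X_t,\Oc_{\Zc_t}(r))=0$. Since $\Oc_\Zc$ is flat over $\Tc$ (being the structure sheaf of a relatively flat closed sub-superscheme), Proposition \ref{prop:locfree} gives $R^if_{\Tc\ast}(\Oc_\Zc(r))=0$ for $i>0$ and $f_{\Tc\ast}(\Oc_\Zc(r))$ locally free with fibre $H^0(X_t,\Oc_{\Zc_t}(r))$ at $t$; flatness of $\Oc_\Zc$ pins the rank to $\bP(r)$, proving (1). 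The sheaf $\pcal_\Zc$ is also flat over $\Tc$ (since both $\Oc_{\Xcal_\Tc}$ and $\Oc_\Zc$ are), so the same argument applied to $\pcal_\Zc(r)$ yields $R^if_{\Tc\ast}(\pcal_\Zc(r))=0$ for $i>0$; pushing forward the ideal sequence then gives the exact sequence of (2a), and the ranks follow from Proposition \ref{prop:cohomoden}.

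Finally, for the global generation statement in (2b): by part (3) of Proposition \ref{prop:castelnuovo}, each $\pcal_{\Zc_t}(r)$, $\Oc_{\Xcal_t}(r)$ and $\Oc_{\Zc_t}(r)$ is generated by its global sections for $r\ge r_0$. Combined with the base change isomorphism $(f_{\Tc\ast}\Fsc(r))_t\iso H^0(X_t,\Fsc_t(r))$, which holds by Theorem \ref{thm:cohombasechange} (or directly by Proposition \ref{prop:locfree}) for $\Fsc\in\{\pcal_\Zc,\Oc_{\Xcal_\Tc},\Oc_\Zc\}$, the natural map $f_\Tc^\ast f_{\Tc\ast}\Fsc(r)\to\Fsc(r)$ is surjective on each fibre and hence surjective by the super Nakayama lemma. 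This yields the commutative diagram with exact rows and columns. The main (essentially only) conceptual point is the uniformity of $r_0$, which is exactly what Mumford's theorem provides; everything else is a straightforward combination of base change, flatness, and Serre vanishing on projective superspace.
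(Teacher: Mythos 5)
Your proposal is correct and takes precisely the route the paper intends: Proposition \ref{prop:mreg} is stated there as an immediate consequence of Mumford's Theorem \ref{thm:mumford} (giving a uniform $r_0$ from fibrewise regularity of the ideal sheaves, whose super Hilbert polynomial $\bH(\Oc_{\Xcal_t})-\bP$ is determined by the coefficients of $\bP$) combined with Castelnuovo's Proposition \ref{prop:castelnuovo}, cohomology base change, and Proposition \ref{prop:locfree}, with no further argument supplied. The details you fill in --- flatness of $\pcal_\Zc$ and the identification $(\pcal_\Zc)_t\simeq \pcal_{\Zc_t}$, pullback-exactness of the locally free sequence in (2a), and the super Nakayama argument for surjectivity of $f_\Tc^\ast f_{\Tc\ast}\Fsc(r)\to\Fsc(r)$ --- are exactly the steps the paper leaves implicit.
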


Let us fix an integer $p\ge r_0$, and write $\Ec=f_{\ast}(\Oc_\Xcal(p))$.
Since $\Ec_\Tc\simeq f_{\Tc\ast}(\Oc_{\Xcal_\Tc}(p))$ for every $\Tc \to \Sc$, by (2) of the cohomology base change Theorem \ref{thm:cohombasechange}  we have  a functor morphism
$$
\SHilbf_{\Xcal/\Sc}^{\bP} \to \Sgrassf(\Ec,\bP(p))
$$
from the super Hilbert functor to the supergrassmannian functor of the locally free quotients of  rank $\bP(p)$ of  $\Ec$  (see Definition \ref{def:supergrass}) given by
\begin{align}
\SHilbf_{\Xcal/\Sc}^{\bP}(\Tc) & \to \Sgrassf(\Ec,\bP(p))(\Tc) \\
\Zc & \mapsto f_{\Tc\ast}(\Oc_{\Zc}(p))
\end{align}
for every $\Sc$-superscheme $\Tc\to\Sc$. 

\begin{corol}\label{cor:injectve}
The above functor morphism is injective, that is, one has a functorial immersion
 $$
\SHilbf_{\Xcal/\Sc}^{\bP} \hookrightarrow \SG^\bullet\,,
$$
into the (functor of the points of the) supergrassmannian $\pi\colon\SG:=\Sgrass(\Ec,\bP(p))\to \Sc$.
\end{corol}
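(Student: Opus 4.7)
The plan is to show that the assignment $\Zc\mapsto f_{\Tc\ast}\Oc_\Zc(p)$, viewed as a quotient of $\Ec_\Tc\simeq f_{\Tc\ast}\Oc_{\Xcal_\Tc}(p)$, is \emph{injective} on $\Tc$-points, i.e.\ that $\Zc$ can be canonically recovered from the associated subsheaf $\Kc:=\ker(\Ec_\Tc\twoheadrightarrow f_{\Tc\ast}\Oc_\Zc(p))\subset\Ec_\Tc$. Equivalently, two closed sub-superschemes of $\Xcal_\Tc$ in $\SHilbf_{\Xcal/\Sc}^{\bP}(\Tc)$ giving rise to the same rank-$\bP(p)$ quotient of $\Ec_\Tc$ must coincide.

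The key input is Proposition \ref{prop:mreg}(2b): because $p\ge r_0$, the ideal sheaf $\pcal_\Zc$ of $\Zc\hookrightarrow\Xcal_\Tc$ satisfies that $\pcal_\Zc(p)$ is generated by its global sections relatively over $\Tc$, and by the cohomology base change Theorem \ref{thm:cohombasechange} the formation of $f_{\Tc\ast}\pcal_\Zc(p)$ commutes with arbitrary further base change and identifies canonically with $\Kc\subset\Ec_\Tc$. Therefore the composition
\[
f_\Tc^\ast\Kc\otimes_{\Oc_{\Xcal_\Tc}}\Oc_{\Xcal_\Tc}(-p)\;\longrightarrow\; f_\Tc^\ast\Ec_\Tc\otimes_{\Oc_{\Xcal_\Tc}}\Oc_{\Xcal_\Tc}(-p)\;\longrightarrow\;\Oc_{\Xcal_\Tc}
\]
is surjective onto $\pcal_\Zc$. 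Thus $\pcal_\Zc$ — and hence $\Zc$ — is recovered functorially from the subsheaf $\Kc\subset\Ec_\Tc$. This immediately gives injectivity of
\[
\SHilbf_{\Xcal/\Sc}^{\bP}(\Tc)\;\longrightarrow\;\Sgrassf(\Ec,\bP(p))(\Tc)
\]
for every $\Sc$-superscheme $\Tc$, so we obtain a functorial monomorphism
\[
\SHilbf_{\Xcal/\Sc}^{\bP}\;\hookrightarrow\;\SG^\bullet.
\]

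The remaining point is naturality: a morphism $\Tc'\to\Tc$ of $\Sc$-superschemes pulls back $\Zc$ to $\Zc_{\Tc'}$, and by the base-change part of Proposition \ref{prop:mreg}(1) combined with Theorem \ref{thm:cohombasechange} the quotient $(\Ec_\Tc\to f_{\Tc\ast}\Oc_\Zc(p))_{\Tc'}$ coincides with $\Ec_{\Tc'}\to f_{\Tc'\ast}\Oc_{\Zc_{\Tc'}}(p)$. This guarantees that the map of functors is well defined and commutes with restriction.

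I expect no serious obstacle here: the statement is an immediate consequence of the regularity and base-change results already established. The only thing to be careful about is the $\Z_2$-grading — one must check that the global-generation and base-change isomorphisms provided by Proposition \ref{prop:mreg} and Theorem \ref{thm:cohombasechange} are homogeneous, so that the reconstruction $\Kc\mapsto\pcal_\Zc$ lies in the correct category of $\Z_2$-graded ideal sheaves. Once this is observed the argument is formal, and the corollary follows; the real work will come in the next step, where one must prove that this functorial monomorphism is representable by immersions.
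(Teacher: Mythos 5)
Your proposal is correct and follows essentially the same route as the paper: both identify the kernel of $\Ec_\Tc\twoheadrightarrow f_{\Tc\ast}\Oc_\Zc(p)$ with $f_{\Tc\ast}\pcal_\Zc(p)$ via Proposition \ref{prop:mreg}(2a) and then use the relative global generation of $\pcal_\Zc(p)$ from (2b) to reconstruct the ideal sheaf (the paper recovers $q(p)$ as the cokernel of the induced map $g\colon f_\Tc^\ast f_{\Tc\ast}(\pcal_\Zc(p))\to\Oc_{\Xcal_\Tc}(p)$, which is the same thing as your recovery of $\pcal_\Zc$ as its image). Your added remarks on base-change naturality and $\Z_2$-homogeneity are consistent with how the paper sets up the functor morphism before the corollary.
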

\begin{proof}
Let us see that the quotient $q\colon \Oc_{\Xcal_\Tc}\to \Oc_\Zc\to 0$ can be recovered from $$f_\Tc^\ast(q(p))\colon f_\Tc^\ast\Ec \to f_\Tc^\ast f_{\Tc\ast}(\Oc_\Zc(p))\to 0\,.$$
On $\SG$  the sheaf $\Ec_{\SG}\simeq \pi^\ast\Ec\simeq f_{\SG,\ast}(\Oc_{\Xcal_{\SG}}(p))$  has  a universal locally free quotient 
\begin{equation}\label{eq:univgrass}
0 \to \Kc \xrightarrow{\mu} \Ec_{\SG} \xrightarrow{\rho} \Qc \to 0
\end{equation}
so that, if $v\colon \Tc\to \SG$ is the morphism   corresponding to the surjection $f_\Tc^\ast(q(p))$,   one has $f_\Tc^\ast(q(p))=v^\ast(\rho)$. This gives an exact sequence
$$
0 \to v^\ast\Kc\simeq f_{\Tc\ast} (\pcal_\Zc(p))\xrightarrow{v^\ast(\mu)}  f_{\Tc\ast} (\Oc_{\Xcal_\Tc}(p)) \xrightarrow{f_\Tc^\ast(q(p))=v^\ast(\rho)} v^\ast \Qc\simeq f_{\Tc\ast} (\Oc_{\Zc}(p)) \to 0\,.
$$
If $g\colon f_\Tc^\ast f_{\Tc\ast} (\pcal_\Zc(p)) \to \Oc_{\Xcal_\Tc}(p)$ is the composition of $$f_\Tc^\ast(v^\ast(\mu))\colon f_\Tc^\ast f_{\Tc\ast} (\pcal_\Zc(p)) \to f_\Tc^\ast f_{\Tc\ast} (\Oc_{\Xcal_\Tc}(p))$$ and $f_\Tc^\ast f_{\Tc\ast} (\Oc_{\Xcal_\Tc}(p)) \to \Oc_{\Xcal_\Tc}(p) $, the diagram of (2)-(b) in Proposition \ref{prop:mreg} gives an exact sequence
$$
f_\Tc^\ast f_{\Tc\ast} (\pcal_\Zc(p)) \xrightarrow{g} \Oc_{\Xcal_\Tc}(p) \xrightarrow{q(p)} \Oc_\Zc(p)\to 0\,.
$$
Thus, $q(p)$ can be recovered from $v^\ast(\mu)$, and so from $f_\Tc^\ast(q(p))$, as the cokernel of $g$. Twisting by $-p$ we recover $q\colon  \Oc_{\Xcal_\Tc} \to \Oc_\Zc$ as well.
\end{proof}

\subsection{Existence of the Hilbert superscheme}

Now to prove that $\SHilbf_{\Xcal/\Sc}^{\bP} $ is representable it is enough to show  that $\SHilbf_{\Xcal/\Sc}^{\bP} \hookrightarrow \SG$ is representable by immersions, that is,   there exists a sub-superscheme $\Wc$ of $\SG$ whose points $g\colon \Tc \to \Wc$ are precisely the points $g\colon \Tc \to \SG$   belonging to the image of $\SHilbf_{\Xcal/\Sc}^{\bP}(\Tc)$.

We use the notation of the universal Equation \eqref{eq:univgrass} of the supergrassmannian $\SG$.
Let $g\colon f_{\SG}^\ast \Kc \to \Oc_{\SG}(s)$ be the composition
 $$
 \xymatrix{
  f_{\SG}^\ast \Kc \ar[r]^(.25){f_{\SG}^\ast(\mu)} \ar@/_2pc/[rr]^g&   f_{\SG}^\ast\Ec\simeq f_{\SG}^\ast f_{\SG\ast}(\Oc_{\Xcal_\SG}(s)) \ar[r] & \Oc_{\Xcal_\SG}(s)
  }\,.
 $$
 and  $\psi\colon \Oc_{\Xcal_\SG}(s) \to \Nc$ its cokernel. 
By generic flatness and the  flattening  stratification (Proposition \ref{prop:flatstrat}) for the sheaf $\Nc$ and the super Hilbert polynomial $\bar\bP(r)=\bP(r+s)$, there exists a sub-superscheme $\Ss\Hs\hookrightarrow \SG$ such that a morphism $\Tc\to\SG$ of $\Sc$-superschemes factors through $\Ss\Hs$ if and only if $\Nc$ is flat over $\Tc$ with super Hilbert polynomial $\bar\bP(r)$ (so that $\Nc(-s)$ defines a $\Tc$-valued point of $\SHilbf_{\Xcal/\Sc}^{\bP}$).

The superscheme $\Ss\Hs$ clearly represents the super Hilbert functor. One then has the following result, which completes the proof of the existence Theorem \ref{thm:hilbrepres} {due to Proposition \ref{prop:reduction1}}.

 \begin{prop} Let $f\colon \Xcal=\Ps_\Sc^{m,n} \to \Sc$ be the projective superspace over $\Sc$, with $\Sc$ noetherian and let $\bP=\bP(r)=(P_0(r),P_1(r))$ be a pair of polynomials with rational coefficients. The super Hilbert functor $\SHilbf_{\Xcal/\Sc}^\bP$ is representable by a sub-superscheme 
 $$
 \SHilb^\bP (\Xcal/\Sc) \hookrightarrow \SG
 $$ 
 of a supergrassmannian $\SG=\Sgrass(\Ec,\bP(s))$ associated to a locally free sheaf $\Ec$ over $\Sc$. Moreover, $\SHilb^\bP (\Xcal/\Sc)\to \Sc$ is a proper morphism of superschemes. 
 \end{prop}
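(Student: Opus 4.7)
The plan is to observe that the construction immediately preceding the proposition already produces the representing object. Namely, the sub-superscheme $\Ss\Hs\hookrightarrow\SG$ produced by applying the flattening result (Proposition \ref{prop:flatstrat}) to the cokernel $\Nc$ with polynomial $\bar\bP(r)=\bP(r+s)$ satisfies, tautologically from Corollary \ref{cor:injectve} and the proof of Proposition \ref{prop:mreg}, the universal property of $\SHilbf_{\Xcal/\Sc}^{\bP}$: a morphism $\Tc\to\SG$ factors through $\Ss\Hs$ exactly when the pulled-back quotient of $\Ec_\Tc$ arises from a closed sub-superscheme $\Zc\hookrightarrow\Xcal_\Tc$, flat over $\Tc$ with super Hilbert polynomial $\bP$. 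Setting $\SHilb^\bP(\Xcal/\Sc):=\Ss\Hs$ gives representability; since $\SG\to\Sc$ is proper by Proposition \ref{prop:supergrass}, hence of finite type and separated, these properties are inherited by the sub-superscheme $\SHilb^\bP(\Xcal/\Sc)$.

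It remains to establish properness. My plan is to verify the valuative criterion by showing that the immersion $\SHilb^\bP(\Xcal/\Sc)\hookrightarrow\SG$ is in fact closed; since $\SG\to\Sc$ is proper, closedness of this immersion will imply properness of $\SHilb^\bP(\Xcal/\Sc)\to\Sc$. Given a discrete valuation ring $R$ with fraction field $K$ and an $R$-valued point $\SSpec R\to\SG$ whose generic point factors through $\SHilb^\bP$, one has to show that the full $R$-point factors through $\SHilb^\bP$. The generic data is a closed sub-superscheme $\Zc_K\hookrightarrow\Xcal_K$, flat over $K$ with super Hilbert polynomial $\bP$, and the natural candidate for the extension is its schematic closure $\overline{\Zc_K}\hookrightarrow\Xcal_R$.

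The main obstacle is twofold: verifying that $\overline{\Zc_K}$ is flat over $R$ with super Hilbert polynomial $\bP$, and matching it with the specialization prescribed by the $R$-point of $\SG$. For flatness, I would invoke the total filtration of Subsection \ref{subsec:filtrations}: by Proposition \ref{prop:filt}(2), flatness of $\Oc_{\overline{\Zc_K}}$ over $R$ reduces to flatness over $R$ of the graded pieces, which are coherent sheaves on the bosonic reduction $X_R$; for these, flatness over a DVR is equivalent to being torsion-free, which holds by construction of the schematic closure. Constancy of the super Hilbert polynomial on fibres then follows from flatness over $R$. To identify $\overline{\Zc_K}$ with the sub-superscheme coming from the $R$-point of $\SG$, I would twist by $\Oc(s)$ for $s$ large enough and push forward by $f_{R\ast}$: using Proposition \ref{prop:mreg}, Proposition \ref{prop:locfree} and cohomology base change (Theorem \ref{thm:cohombasechange}), both sub-superschemes yield the same locally free rank-$\bP(s)$ quotient of $\Ec_R$, and the injectivity statement of Corollary \ref{cor:injectve} (applied over $R$) forces them to coincide. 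This completes the valuative criterion and hence properness.
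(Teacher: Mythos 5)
Your architecture is the same as the paper's: representability is read off from the flattening construction immediately preceding the statement, and properness is established through the valuative criterion of Corollary \ref{cor:valuative}, which the paper invokes in a single line without spelling out the details. Your expanded valuative check --- taking the schematic closure of $\Zc_K$ in $\Xcal_R$, proving flatness over the DVR, and identifying the resulting $R$-point of $\SG$ with the prescribed one via $m$-regularity (Proposition \ref{prop:mreg}), Proposition \ref{prop:locfree}, cohomology base change (Theorem \ref{thm:cohombasechange}) and the injectivity of Corollary \ref{cor:injectve} --- is exactly the standard argument the paper has in mind, and the identification step is sound: two locally free quotients of $\Ec_R$ of the same rank agreeing over $K$ have kernels each contained in the other's saturation, hence coincide.

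The one genuine flaw is in your flatness step. You reduce flatness of $\Oc_{\overline{\Zc_K}}$ over $R$ to flatness of the graded pieces $\Ic^p\Oc_{\overline{\Zc_K}}/\Ic^{p+1}\Oc_{\overline{\Zc_K}}$ of the total filtration and assert these are torsion-free ``by construction of the schematic closure''. That inference fails: the graded pieces are quotients, and quotients of $R$-torsion-free modules can acquire torsion, even for actual schematic closures. For instance, in $\Xcal_R=\SSpec R[x,\theta_1,\theta_2]$ let $\Zc_K$ be cut out by $x-t^{-1}\theta_1\theta_2$, with $t$ a uniformizer; the closure has coordinate ring $B=R[x,\theta_1,\theta_2]/(tx-\theta_1\theta_2,\;x\theta_1,\;x\theta_2,\;x^2)$, which is $R$-free on $1,x,\theta_1,\theta_2$, yet $B/(\theta_1,\theta_2)\simeq R\oplus (R/tR)\,x$ has $t$-torsion, so the zeroth graded piece (the bosonic reduction) is not even flat over $R$. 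Fortunately the detour through the filtration is unnecessary: since the ideal of $\overline{\Zc_K}$ is by definition the kernel of $\Oc_{\Xcal_R}\to j_\ast\Oc_{\Zc_K}$, the sheaf $\Oc_{\overline{\Zc_K}}$ embeds in a sheaf of $K$-modules and is therefore $R$-torsion-free; flatness over the even DVR $R$ is a property of the underlying $R$-module, where it is equivalent to torsion-freeness, the $\Z_2$-grading notwithstanding. With this replacement your proof is complete and matches the paper's intended argument.
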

 \begin{proof}
By the above discussion  we have only to prove that $\SHilb^\bP (\Xcal/\Sc)\to \Sc$ is proper. This follows straigthforwardly from the valuative criterion for properness for a morphism of superschemes (see Corollary \ref{cor:valuative}).
 \end{proof}
 
  \begin{remark} (Relation with the Hilbert scheme $\operatorname{Hilb}(X/S)$.) If $f\colon \Xcal \to \Sc$ is a superprojective superscheme morphism, and
 $f_{bos} \colon X \to S$ is the underlying projective morphism of ordinary schemes, one can   ask   about the relation between the underlying ordinary scheme $\Ss H:= \SHilb(\Xcal/\Sc)_{bos}$ of the Hilbert superscheme and the ordinary Hilbert scheme $H:=\operatorname{Hilb}(X/S)$, and the  relation  between the super Hilbert polynomial in the first case and the usual  Hilbert polynomial in the second.
The scheme $\Ss H$ represents the restriction of the super Hilbert functor to the category of schemes. Since every morphism $T\to \Sc$ from an ordinary scheme $T$ factors through $S$, we see that $\Ss H$ is as well the underlying scheme to the Hilbert superscheme $\SHilb(\Xcal_S/S)$. 
 
 Now, every closed subscheme of $X\to S$ flat over $S$ is also
 a closed sub-superscheme of $\Xcal_S\to S$, so that  there is an immersion of schemes
$
\varpi\colon  H  \to  \Ss H
$
which maps $H^P$ to $\Ss H^{(P,0)}$. Moreover, $\varpi$ is an open imersion, as for every  scheme morphism $T\to S$ and every closed sub-superscheme $\Zc\hookrightarrow\Xcal_T$ flat over $T$,  the geometric locus $U\subseteq T$ where $\Zc_U$ is an ordinary closed subscheme of $X_U$ is the   open subset complementary to the image by the proper morphism $f$ of the support of the ideal $\Jc_\Zc$ defining $Z$ in $\Zc$.

 However, $\Ss H$ is
 usually bigger than $H$ as there are non-purely bosonic families of closed sub-superschemes of  $\Xcal_S\to S$ that only have even parameters.  This is entirely analogous to
 what happens with the supergrassmannian of a supervector bundle $\mathcal E = \mathcal E_0\oplus \mathcal E_1$, whose
 underlying ordinary scheme is strictly bigger than the grassmannian of $ \mathcal E_0$ (cf.~Proposition \ref{prop:supergrass}).
 
 Actually one can say something more: if $\Zc\hookrightarrow \Xcal_S$ is a closed superscheme flat over $S$, the bosonic reduction $Z$ is a closed subscheme of $X$ and it is flat over $S$  by Proposition \ref{prop:filt}, so that we have a scheme morphism
 $
 \rho\colon \Ss H \to H
 $
 such that $\rho\circ\varpi=\Id$.
  \end{remark}

\subsection{Hilbert superschemes of 0-cycles} Let $f\colon \Xcal \to \Sc$ be a superprojective morphism of superschemes. 
We call \emph{Hilbert superscheme of 0-cycles} of $\Xcal $
 a superscheme which represents the super Hilbert functor associated with a constant super Hilbert polynomial
 $\mathbf P = (p,q)$, where $p$, $q$ are two nonnegative integers. It turns out that the ordinary scheme underlying this
 Hilbert superscheme has a richer structure than the ordinary Hilbert scheme of 0-cycles. 
 
 Consider for instance a smooth superprojective superscheme $\Xcal \to S$ over an ordinary scheme $S$, of relative dimension $(m,1)$.
 Then $\Xcal$ is split, and moreover its structure sheaf $\Oc_\Xcal $ has the form $\Oc_\Xcal = \Oc_X \oplus\mathcal L$,
 where $X\to S$ is the underlying ordinary scheme, and $\mathcal L$ is a line bundle on $X$.  Now let $\Zc\subset\Xcal$ 
 be a closed sub-superscheme of $\Xcal$ with super Hilbert polynomial $(p,q)$, flat over $S$. Let $\Ic_\Zc$ be the ideal sheaf of $\Zc$; it has the structure
 $$ \Ic_\Zc = \Ic_0 \oplus \Ic_1 \cdot \mathcal L,$$
 where $\Ic_0$, $\Ic_1$ are ideals of $\Oc_X$, which correspond to 0-cycles $Z_0$ and $Z_1$ of $X$ of length $p$ and $q$, respectively. Actually for $\Ic_\Zc$ to be an ideal
 of $\Oc_\Xcal$ one also needs that $\Ic_0\subseteq \Ic_1$, i.e., $Z_1\subseteq Z_0$. 
 Therefore, when $ p \ge q$ the ordinary scheme underlying the Hilbert superscheme 
$ \SHilb^{\!(p,q)} (\Xcal/S)$ is a {\em(2-step) nested Hilbert scheme of 0-cycles of $X$}, parameterizing pairs of 0-cycles $(Z_0,Z_1)$ of $X$,
with $Z_1\subseteq Z_0$. When $p<q$ the  Hilbert superscheme 
$ \SHilb^{\!(p,q)} (\Xcal/S)$ is empty. For a formal definition of the (2-step) nested Hilbert scheme of 0-cycles see e.g.~\cite{GhoSheYau}.

Nested Hilbert schemes have been recently studied intensively, for instance in connection with enumerative invariants and moduli spaces of quiver representations. One may envision that   Hilbert superschemes of 0-cycles can find applications to study these problems.

\subsection{Superschemes of morphisms of superschemes} 
\label{subsec:schemeofmorph}
Our next aim is to prove that, under  quite reasonable hypotheses,  the space of morphisms between two superschemes can be given the structure of a superscheme.
We start with a preliminary result.

\begin{lemma}\label{lem:fibreflat}
Let $\Sc$ be a noetherian superscheme and let  $f\colon \Xcal \to \Sc$, $g\colon\Zc\to\Sc$ be  flat morphisms of superschemes, locally of finite type,  and let $\phi\colon\Zc\to\Xcal$ be  a morphism of $\Sc$-superschemes. Let $z\in Z$ be a point of $Z$ and write $x=\phi(z)$, $s=g(z)=f(x)$.  If the restriction $\phi_s\colon \Zc_s\to\Xcal_s$  to the fibre is flat at $z$, then $\phi$ is flat at $z$ as well.
\end{lemma}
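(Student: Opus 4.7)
The plan is to work stalk-locally and reduce to an algebraic statement on local superrings that mirrors the classical fibre-by-fibre flatness criterion (EGA IV, 11.3.10). Set $A=\Oc_{\Sc,s}$, $B=\Oc_{\Xcal,x}$, $C=\Oc_{\Zc,z}$, with local superring maps $A\to B\to C$. The hypotheses translate to: $B$ and $C$ are flat over $A$, and the induced map $\bar B\to\bar C$ (where $\bar B=B\otimes_A k(s)$, $\bar C=C\otimes_A k(s)$) is flat. The goal becomes: show $C$ is flat over $B$.

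The first step is to compute $\operatorname{Tor}_i^B(\bar B,C)$. Take a free resolution $F_\bullet\to k(s)$ over $A$. By flatness of $B$ over $A$, $F_\bullet\otimes_A B$ is a $B$-free resolution of $\bar B$. Tensoring over $B$ with $C$ gives back $F_\bullet\otimes_A C$, which by flatness of $C$ over $A$ is a resolution of $\bar C$. Hence $\operatorname{Tor}_i^B(\bar B,C)=0$ for $i\ge 1$ and $\bar B\otimes_B C\simeq \bar C$. The change-of-rings spectral sequence
$$
\operatorname{Tor}_p^{\bar B}\bigl(N,\operatorname{Tor}_q^B(\bar B,C)\bigr)\Longrightarrow \operatorname{Tor}_{p+q}^B(N,C)
$$
then degenerates for any $\bar B$-module $N$, yielding $\operatorname{Tor}_p^B(N,C)\simeq \operatorname{Tor}_p^{\bar B}(N,\bar C)$. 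Since $\bar C$ is flat over $\bar B$, these Tors vanish for $p\ge 1$.

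A standard d\'evissage along the $\mathfrak{m}_A$-adic filtration (using the short exact sequences $0\to\mathfrak{m}_A^{n+1}N\to\mathfrak{m}_A^n N\to \mathfrak{m}_A^n N/\mathfrak{m}_A^{n+1}N\to 0$ and that each successive quotient is a $\bar B$-module) then gives $\operatorname{Tor}_1^B(N,C)=0$ for every finitely generated $B$-module $N$ annihilated by some power of $\mathfrak{m}_A$. In particular the residue field $k(x)=B/\mathfrak{n}_B$ is a $\bar B$-module (since $\mathfrak{m}_A B\subset\mathfrak{n}_B$), so $\operatorname{Tor}_1^B(k(x),C)=0$.

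It remains to invoke the super local criterion of flatness: because $\phi$ is locally of finite type, $C$ is essentially of finite type over $B$ and in particular $\mathfrak{n}_B$-adically ideal-separated, so the vanishing $\operatorname{Tor}_1^B(k(x),C)=0$ forces $C$ to be $B$-flat. This is the step I expect to be the main obstacle: one must check that the classical local criterion (with its Artin--Rees/Mittag--Leffler underpinnings) transfers to the $\Z_2$-graded setting. The verification is not substantial — it follows by applying the super Nakayama lemma of Subsection~\ref{ss:nakayama} (for half-exact functors) to the relevant Tor functor, and by noting that Artin--Rees for noetherian superrings reduces to the classical statement on the even part $B_+$ augmented by the finitely many odd generators of $\mathfrak{n}_B$ modulo $\mathfrak{n}_B^2$ — but it is where the argument must be spelled out carefully to be rigorous in the super context.
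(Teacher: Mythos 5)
Your proof is correct, but it reaches the conclusion by a more homological route than the paper, whose entire proof is the single line ``Directly from Corollary \ref{cor:localflat}.'' That corollary is precisely the fibrewise criterion over local superrings ($\psi\colon\As\to\Bs$, $\phi\colon\Bs\to\Bs'$ with $\psi$ and $\psi'=\phi\circ\psi$ flat; then $\phi$ is flat if and only if $\bar\phi\colon\Bs/\mf_\As\Bs\to\Bs'/\mf_\As\Bs'$ is flat), and the paper obtains it from the \emph{ideal} form of the super local criterion (Lemma \ref{lem:localflat}, the super analogue of Matsumura, Thm.\ 22.3) applied to $\phi$ with $I=\mf_\As\Bs$: the hypothesis $I\otimes_\Bs\Bs'\iso I\Bs'$ is checked by the one-line identity $(\mf_\As\Bs)\otimes_\Bs\Bs'\simeq\mf_\As\otimes_\As\Bs'\simeq\mf_\As\Bs'$, both isomorphisms coming from flatness over $\As$. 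You instead derive the needed $\Tor$-vanishing directly: base-changing a free resolution of $\kappa(s)$ gives $\Tor_q^B(\bar B,C)=0$ for $q\ge1$ and $\bar B\otimes_B C\simeq\bar C$, the change-of-rings spectral sequence then identifies $\Tor_p^B(N,C)\simeq\Tor_p^{\bar B}(N,\bar C)$ for $\bar B$-modules $N$, and you invoke only the residue-field form of the local criterion. Both arguments ultimately rest on the same super-specific input, namely the local flatness criterion underpinned by super Artin--Rees (Corollary \ref{cor:AR}); your version buys a self-contained homological reduction needing only the maximal-ideal case of the criterion, while the paper's buys brevity by quoting it for a general ideal. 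Two minor points: your d\'evissage along the $\mf_\As$-adic filtration is redundant, since the spectral sequence already gives $\Tor_1^B(N,C)=0$ for \emph{every} $\bar B$-module $N$, and $\kappa(x)$ is itself a $\bar B$-module; and the paper justifies its super local criterion via super Artin--Rees (Proposition \ref{prop:AR}, Corollary \ref{cor:AR}) rather than via the Nakayama lemma for half-exact functors of Subsection \ref{ss:nakayama}, which belongs to the cohomology-base-change machinery, so your closing remark cites the wrong tool even though the statement you need is indeed available in the paper as Lemma \ref{lem:localflat}.
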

\begin{proof}
Directly from Corollary \ref{cor:localflat}.
\end{proof}

\begin{lemma}\label{lem:openiso} Let $\Sc$ be a noetherian superscheme, let $f\colon \Xcal \to \Sc$, $g\colon\Zc\to\Sc$ be proper and flat morphisms   of $\Sc$-superschemes, and let  $\phi\colon\Zc\to\Xcal$ be a {superprojective} morphism.   Then, the loci of the points where $\phi$ is flat or an isomorphism are open. In other words, there exists open subsuperschemes $\Ucal\hookrightarrow \Vc$ of $\Sc$ with the following universal property: For every $\Sc$-superscheme $h\colon\Tc \to \Sc$, the base change morphism $\phi_\Tc\colon \Zc_\Tc \to \Xcal_\Tc$ is flat (respect. an isomorphism) if and only if $\phi$ factors through $\Vc\hookrightarrow \Sc$ (respect. $\Ucal\hookrightarrow \Sc$).
\end{lemma}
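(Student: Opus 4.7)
The plan is to realise each of $\Vc$ and $\Ucal$ as an open subsuperscheme of $\Sc$, obtained as the complement in $\Sc$ of the image, under the proper morphism $g$ (resp.\ $f$), of a suitable closed subset of $\Zc$ (resp.\ of $\Xcal$). In both cases the universal property will be verified by fibrewise reduction, using Lemma \ref{lem:fibreflat}.

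\textbf{Flatness locus.} Let $F:=\{z\in Z:\phi \text{ is not flat at }z\}$. This is closed in $\Zc$: it is the super analogue of the classical openness of the flat locus for a morphism locally of finite type between noetherian (super)schemes, and can be established by combining the generic flatness of Proposition \ref{prop:genericflatness} with noetherian induction on $\Xcal$, in the spirit of the proof of Lemma \ref{lem:flatstrat1}. By properness of $g$, the set $g(F)$ is closed in $S$; let $\Vc$ be the open subsuperscheme of $\Sc$ with underlying open $V:=S\setminus g(F)$. For $h\colon\Tc\to\Sc$, if $h$ factors through $\Vc$ then $\phi|_{\Zc_V}$ is everywhere flat and so is its base change $\phi_\Tc$; conversely, if $\phi_\Tc$ is flat, then for every $t\in T$ the fibre $(\phi_\Tc)_t=\phi_{h(t)}$ is flat at every point of $\Zc_{h(t)}$, so Lemma \ref{lem:fibreflat} forces $\Zc_{h(t)}\cap F=\emptyset$ and hence $h(t)\in V$.

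\textbf{Isomorphism locus.} Restricting to $\Vc$, we may assume $\phi$ is flat. Apply the flattening stratification (Proposition \ref{prop:flatstrat}) to the superprojective morphism $\phi\colon\Zc\to\Xcal$, to the coherent sheaf $\Oc_\Zc$, and to the polynomial $\bP=(1,0)$; this produces a sub-superscheme $\Xcal_{(1,0)}\hookrightarrow\Xcal$. Because $\phi$ is flat the super Hilbert polynomial of its fibres is locally constant on $\Xcal$, so $\Xcal_{(1,0)}$ is in fact the open (clopen) subsuperscheme carried by the locus where this polynomial equals $(1,0)$. At such a point $x$, the scheme-theoretic fibre $\phi^{-1}(x)$ is $\Spec\kappa(x)$; hence $\phi$ is quasi-finite at the unique preimage of $x$ and, by properness, finite in a neighbourhood of $x$. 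Over that neighbourhood $\phi_\ast\Oc_\Zc$ is locally free of rank $(1,0)$ (the super-dimension of the fibre), and the unit map $\Oc_\Xcal\to\phi_\ast\Oc_\Zc$ between rank-$(1,0)$ locally free sheaves is an isomorphism by super Nakayama, so $\phi$ itself is an isomorphism near $x$. Thus $\Xcal_{(1,0)}$ is precisely the locus on which $\phi$ is locally an isomorphism. By properness of $f$, $f(\Xcal\setminus\Xcal_{(1,0)})$ is closed in $\Sc$; set $\Ucal$ to be the open subsuperscheme of $\Vc$ with underlying set $V\setminus f(\Xcal\setminus\Xcal_{(1,0)})$. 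For $h\colon\Tc\to\Vc$, one has $h(T)\subseteq U$ iff $\Xcal_{h(t)}\subseteq\Xcal_{(1,0)}$ for every $t\in T$, iff each fibre $(\phi_\Tc)_t=\phi_{h(t)}$ is an isomorphism; combined with the flatness of $\phi_\Tc$ guaranteed on $\Vc$, the same finite-plus-rank-$(1,0)$ argument applied to $\phi_\Tc$ itself shows this is equivalent to $\phi_\Tc$ being an isomorphism.

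The principal technical obstacle is the openness of the flat locus of $\phi$ in $\Zc$, which is not proved explicitly earlier in the paper and requires a careful super-adaptation of the classical generic-flatness / noetherian-induction argument. Once that is in hand, the rest of the argument is a direct combination of Lemma \ref{lem:fibreflat}, Proposition \ref{prop:flatstrat}, and super Nakayama.
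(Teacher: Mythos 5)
Your proof is correct, and for the flatness locus it travels the same road as the paper: both arguments take the openness of the flat locus of $\phi$ as the starting point (the paper, too, asserts this with a bare appeal to generic flatness, Proposition \ref{prop:genericflatness}, so the gap you flag is present in the paper's own proof and your treatment is at worst at parity with it), and both then produce the desired open of $\Sc$ by removing the image of the complementary closed set under a proper morphism, with the universal property checked fibrewise --- which is exactly the role of Lemma \ref{lem:fibreflat}, placed immediately before this statement. Where you genuinely diverge is the isomorphism locus. The paper does not use the flattening stratification here: it chooses a single twist $r$ (via Theorem \ref{thm:serre} and Proposition \ref{prop:locfree}) for which $\Lcl=\phi_\ast\Oc_\Zc(r)$ is locally free, $R^i\phi_\ast\Oc_\Zc(r)=0$ for $i>0$, and $\Oc_\Zc(r)$ is relatively globally generated; on the open of $\Xcal$ where $\Lcl$ has rank $(1,0)$, the tautological morphism $\Zc\to\Ps(\Lcl)$ over that open (Proposition \ref{prop:projmor}) coincides with $\phi$ and is an isomorphism, since $\Ps(\Lcl)$ is the base itself; the complement is then pushed down by the proper $f$. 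You instead invoke Proposition \ref{prop:flatstrat} with the constant polynomial $\bP=(1,0)$, observe that this stratum is clopen by local constancy of super Hilbert polynomials in flat families, and conclude via quasi-finite plus proper implies finite, pushforward locally free of rank $(1,0)$, and super Nakayama applied to the unit map $\Oc_\Xcal\to\phi_\ast\Oc_\Zc$. Both routes reduce to the same fibrewise fact (the fibre is $\Spec\kappa(x)$ exactly where the pushforward has rank $(1,0)$); the paper's version is more economical, needing only cohomology and base change rather than the stratification machinery and the finiteness digression, while yours is more transparent about why rank $(1,0)$ forces an isomorphism and, unlike the paper (which ends with ``one sees that $\Ucal$ satisfies the required universal property''), actually carries out the universal-property verification for $\Ucal$. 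Two small points to tidy in your write-up: $(\phi_\Tc)_t$ equals $\phi_{h(t)}$ only after the faithfully flat field extension $\kappa(h(t))\hookrightarrow\kappa(t)$, along which flatness and being an isomorphism descend, so the fibrewise reductions are fine but should say so; and your step ``quasi-finite plus proper implies finite'' should be reduced explicitly to the bosonic statement together with the observation that a locally noetherian superscheme whose bosonic reduction is affine is itself affine.
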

 
\begin{proof} 
 By Proposition \ref{prop:genericflatness} on generic flatness  there is an open sub-superscheme $\Xcal'$ of $\Xcal$ which is the locus of the points where $\phi$ is flat.
$\Xcal'$ can be covered by open sub-superschemes $\Xcal_i$ such that $\phi\colon \Zc_i=\phi^{-1}(\Xcal_i)\to \Xcal_i$ is flat and superprojective. Then there exist integers $r_i$ such that $R^i \phi_\ast(\Oc_{\Zc_i}(r))=0$, $\Oc_{\Zc_i}(r)$ is relatively generated by its global sections, and $\phi_\ast \Oc_{\Zc_i}(r)$ is locally free for $r\ge r_i$ (Theorem \ref{thm:serre} and Proposition \ref{prop:locfree}). Since $\Xcal$ is noetherian, we can choose an $r$ that satisfies   $R^i \phi_\ast(\Oc_\Zc(r))=0$, $\phi^\ast\phi_\ast\Oc_\Zc(r)\to \Oc_\Zc(r)$, and $\phi_\ast \Oc_\Zc(r)$ is locally free. Let $\Vc$ be the open sub-superscheme of $\Xcal$ where $\Lcl=\phi_\ast \Oc_\Zc(r)$ has rank $(1,0)$ and $\Zc_\Vc=\phi^{-1}(\Vc)$. By Proposition \ref{prop:projmor} $\Oc_{\Zc_\Vc}(r)$ induces a morphism 
$$
\Zc_\Vc\to \Ps(\Lcl_\Vc)\simeq \Vc
$$
of superschemes over $\Vc$ which is  an isomorphism. Now, $U=S-f(X-V)$ is an open subset of $S$ because $f$ is proper. If we give $U$ the induced structure $\Ucal$ of sub-superscheme of $\Sc$, one sees that $\Ucal$ satisfies the required universal property.
\end{proof}

Let $\Xcal \to \Sc$, $\Ycal\to\Sc$ be superschemes over $\Sc$. 
\begin{defin}\label{def:morfunct}
The functor of morphisms of $\Sc$-superschemes from $\Xcal $ to $\Ycal$ is the functor on the category of $\Sc$-superschemes that associates to every $\Sc$-superscheme $\Tc\to \Sc$ the family
$$
\Homsh_\Sc(\Xcal,\Ycal)(\Tc)=\Hom_{\Tc}(\Xcal_\Tc, \Ycal_\Tc)\,,
$$
of all the morphisms $\Xcal_\Tc \to \Ycal_\Tc$ of $\Tc$-superschemes.
\end{defin}

\begin{prop}\label{prop:morpsuper} Let $\Sc$ be a noetherian superscheme and let $\Xcal \to \Sc$, $\Ycal\to\Sc$ be  {superprojective} morphisms. If $\Xcal\to\Sc$ is flat,   the  functor of morphisms $\Homsh_\Sc(\Xcal,\Ycal)$ is representable by an open sub-superscheme $\SHom_\Sc(\Xcal,\Ycal)$ of the Hilbert superscheme $\SHilb(\Xcal\times_\Sc\Ycal/\Sc)$.
\end{prop}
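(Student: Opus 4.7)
The plan is to realize every $\Tc$-morphism $f\colon\Xcal_\Tc\to\Ycal_\Tc$ by its graph $\Gamma_f\subset(\Xcal\times_\Sc\Ycal)_\Tc$, which is a closed sub-superscheme flat and proper over $\Tc$ and which projects isomorphically onto $\Xcal_\Tc$ via the first factor. This should identify the morphism functor $\Homsh_\Sc(\Xcal,\Ycal)$ with the subfunctor of $\SHilb(\Xcal\times_\Sc\Ycal/\Sc)^\bullet$ singled out by the condition ``first projection onto $\Xcal$ is an isomorphism''; Lemma \ref{lem:openiso} will then show that this subfunctor is representable by an open sub-superscheme.

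First I would check that the fibre product $\Xcal\times_\Sc\Ycal\to\Sc$ is itself superprojective. Given closed immersions $\Xcal\hookrightarrow\Ps(\M_1)$ and $\Ycal\hookrightarrow\Ps(\M_2)$ over $\Sc$, an argument analogous to the classical Segre embedding produces a closed immersion $\Ps(\M_1)\times_\Sc\Ps(\M_2)\hookrightarrow\Ps(\M_1\otimes_{\Oc_\Sc}\M_2)$ (built via the tensor product of the two tautological even quotient line bundles), giving the required closed immersion of the fibre product into a projective superbundle. Theorem \ref{thm:hilbrepres} then yields a Hilbert superscheme $\Hcal:=\SHilb(\Xcal\times_\Sc\Ycal/\Sc)$ together with a universal family $\Zc^{\mathrm{univ}}\hookrightarrow(\Xcal\times_\Sc\Ycal)_\Hcal$, proper and flat over $\Hcal$. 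I would then apply Lemma \ref{lem:openiso} to the first projection $p_1\colon\Zc^{\mathrm{univ}}\to\Xcal_\Hcal$: its source is proper and flat over $\Hcal$ by construction, its target is proper and flat over $\Hcal$ since $\Xcal\to\Sc$ is proper (as superprojective) and flat by hypothesis, and $p_1$ is superprojective because it factors as a closed immersion into $\Xcal_\Hcal\times_\Hcal\Ycal_\Hcal$ followed by the superprojective first projection of this product (a base change of $\Ycal\to\Sc$). The lemma provides an open sub-superscheme $\SHom_\Sc(\Xcal,\Ycal)\hookrightarrow\Hcal$ characterised by the universal property that a morphism $\Tc\to\Hcal$ factors through it exactly when the corresponding closed sub-superscheme $\Zc\hookrightarrow(\Xcal\times_\Sc\Ycal)_\Tc$ is carried isomorphically onto $\Xcal_\Tc$ by $p_1$.

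Finally, I would verify the universal property by exhibiting mutually inverse assignments. From a $\Tc$-point of $\SHom_\Sc(\Xcal,\Ycal)$, i.e.\ a closed sub-superscheme $\Zc\hookrightarrow(\Xcal\times_\Sc\Ycal)_\Tc$ as above, one recovers the morphism $f_\Zc:=p_2\circ(p_1|_\Zc)^{-1}\colon\Xcal_\Tc\to\Ycal_\Tc$; conversely, from $f\colon\Xcal_\Tc\to\Ycal_\Tc$ one forms the graph $\Gamma_f=(\Id_{\Xcal_\Tc},f)\colon\Xcal_\Tc\to(\Xcal\times_\Sc\Ycal)_\Tc$. This is a closed immersion because $\Ycal_\Tc\to\Tc$ is separated (being proper, as a base change of the superprojective $\Ycal\to\Sc$); its image is flat and proper over $\Tc$ (being isomorphic to $\Xcal_\Tc$), and $p_1$ restricts to an isomorphism on it, so it defines a $\Tc$-point of $\SHom_\Sc(\Xcal,\Ycal)$. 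These two constructions are tautologically inverse and functorial in $\Tc$. I expect no genuine obstacle: Lemma \ref{lem:openiso} packages the only nontrivial openness statement, and the remaining steps are routine unwinding of definitions; the mild point to check carefully is the superprojectivity of $\Xcal\times_\Sc\Ycal\to\Sc$ via the super Segre-type construction, so that the Hilbert superscheme of the product is actually at our disposal.
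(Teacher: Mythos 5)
Your proof is correct and follows essentially the same route as the paper's: represent morphisms by their graphs, so that $\Homsh_\Sc(\Xcal,\Ycal)$ becomes the subfunctor of the Hilbert functor of $\Xcal\times_\Sc\Ycal/\Sc$ on which the first projection is an isomorphism, and invoke Lemma~\ref{lem:openiso} for the openness; applying that lemma once to the universal family over $\SHilb(\Xcal\times_\Sc\Ycal/\Sc)$, rather than to each $\Tc$-valued point as the paper does, is an equivalent packaging. Your explicit super Segre-type closed immersion $\Ps(\M_1)\times_\Sc\Ps(\M_2)\hookrightarrow\Ps(\M_1\otimes_{\Oc_\Sc}\M_2)$ merely fills in a step the paper leaves implicit (the superprojectivity of $\Xcal\times_\Sc\Ycal\to\Sc$, needed for Theorem~\ref{thm:hilbrepres} to apply), so it is a welcome addition rather than a departure.
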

\begin{proof} If $f\colon \Xcal_\Tc \to \Ycal_\Tc$ is a morphism of $\Tc$-superschemes, the graph morphism $$(1,f)\colon \Xcal_\Tc \to \Xcal_\Tc\times_\Tc \Ycal_\Tc\simeq (\Xcal\times_\Sc\Ycal)_\Tc$$ is a closed immersion as $f$ is separated. Then, its graph  $\Gamma(f)=\Ima(1\times f)$ is a closed sub-superscheme of $(\Xcal\times_\Sc\Ycal)_\Tc$. Moreover, the first projection $\Gamma(f)\to \Xcal_\Tc$ is an isomorphism as it is the inverse of $(1,f)\colon \Xcal_\Tc \iso \Gamma(f)$. Then $\Gamma(f)$ is flat over $\Tc$, that is, it defines a $\Tc$-valued point of the super Hilbert functor $\SHilbf(\Xcal\times_\Sc\Ycal/\Sc)$. 
This defines a morphism of functors
$$
\gamma\colon \Homsh_\Sc(\Xcal,\Ycal)\to \SHilbf(\Xcal\times_\Sc\Ycal/\Sc)\,,
$$
given, for every $\Sc$-superscheme $\Tc$, by 
\begin{align*}
\gamma(\Tc)\colon \Homsh_\Sc(\Xcal,\Ycal)(\Tc)&\to \SHilbf(\Xcal\times_\Sc\Ycal/\Sc)(\Tc)
\\
f & \mapsto \Gamma(f) \,.
\end{align*}
One needs only to prove that $\gamma$ is representable by open immersions. Take an $\Sc$-superscheme $\Tc$ and an element $\Zc \in \SHilbf(\Xcal\times_\Sc\Ycal/\Sc)(\Tc)$, that is, a closed sub-superscheme $\Zc\hookrightarrow (\Xcal\times_\Sc\Ycal)_\Tc$ flat over $\Sc$. Since $\Ycal_\Tc\to\Tc$ is locally superprojective, the same is true for the projection $(\Xcal\times_\Sc\Ycal)_\Tc\to\Xcal_\Tc$ and then the restriction of that projection to $\Zc$ is a locally superprojective morphism $\Zc\to \Xcal_\Tc$ as well. By Lemma \ref{lem:openiso} there is an open sub-superscheme $\Ucal$ of $\Tc$ with the following universal property: a morphism of superschemes $\Rcal\to \Tc$ factors through $\Ucal\hookrightarrow\Tc$ if and only if the base change map $\Zc_\Rcal\to \Xcal_\Rcal$ is an isomorphism, that is, if $\Zc$ is the graph of a well-determined morphism $\Xcal_\Rcal\to\Ycal_\Rcal$ of $\Rcal$-superschemes. This finishes the proof.
\end{proof}

\begin{corol}\label{cor:isosuper} Let $\Sc$ be a noetherian superscheme and let  $\Xcal \to \Sc$, $\Ycal\to\Sc$  be {superprojective} and flat morphisms. Then, the functor of isomorphisms of $\Sc$-superschemes from $\Xcal$ to $\Ycal$ is representable. That is, there exists an $\Sc$-superscheme
$\Sisom_\Sc(\Xcal,\Ycal)$ such that for every $\Sc$-superscheme $\Tc$  there is  a functorial isomorphism
$$
\Hom_\Sc(\Tc, \Sisom_\Sc(\Xcal,\Ycal))\simeq \Isom_\Tc(\Xcal_\Tc,\Ycal_\Tc)\,.
$$
\qed
\end{corol}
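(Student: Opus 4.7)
By Proposition \ref{prop:morpsuper}, the hypotheses of the corollary guarantee that $\Homsh_\Sc(\Xcal,\Ycal)$ is representable by an $\Sc$-superscheme $\Hc:=\SHom_\Sc(\Xcal,\Ycal)$, which carries a universal morphism $u\colon \Xcal_\Hc\to\Ycal_\Hc$ of $\Hc$-superschemes corresponding to the identity $\Hc\to\Hc$. My plan is to construct $\Sisom_\Sc(\Xcal,\Ycal)$ as the open sub-superscheme of $\Hc$ over which $u$ becomes an isomorphism after base change. The key tool is Lemma \ref{lem:openiso}, which shows that for proper and flat morphisms $\Zc\to\Sc$ and $\Xcal\to\Sc$ of superschemes and a superprojective morphism $\Zc\to\Xcal$ over a noetherian base, the locus over which the morphism is an isomorphism is open.

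In order to apply Lemma \ref{lem:openiso} with base $\Hc$ to the universal morphism $u\colon\Xcal_\Hc\to\Ycal_\Hc$, I need to check three things. First, $\Xcal_\Hc\to\Hc$ and $\Ycal_\Hc\to\Hc$ are proper and flat; this is automatic since properness and flatness are stable under base change, and $\Xcal\to\Sc$, $\Ycal\to\Sc$ are proper (being superprojective) and flat by hypothesis. Second, $u$ itself must be superprojective: this follows from the standard observation that since $\Ycal_\Hc\to\Hc$ is separated, the graph morphism $\Gamma_u\colon\Xcal_\Hc\hookrightarrow\Xcal_\Hc\times_\Hc\Ycal_\Hc$ is a closed immersion, while the second projection $\Xcal_\Hc\times_\Hc\Ycal_\Hc\to\Ycal_\Hc$ is superprojective as a base change of $\Xcal\to\Sc$; hence $u$, being the composition of a closed immersion with a superprojective morphism, is superprojective. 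Third, although $\Hc$ is in general only locally noetherian, Lemma \ref{lem:openiso} and the desired universal property are local on $\Hc$, so we may reduce to the noetherian statement by working on open affine charts of $\Sc$ (which are noetherian, so the corresponding charts of $\Hc$ are locally of finite type over a noetherian base, hence locally noetherian).

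With these verifications in place, Lemma \ref{lem:openiso} produces an open sub-superscheme $\Ucal\hookrightarrow\Hc$ with the universal property that a morphism $\phi\colon\Tc\to\Hc$ factors through $\Ucal$ if and only if the base-changed morphism $u_\Tc=\phi^\ast u\colon\Xcal_\Tc\to\Ycal_\Tc$ is an isomorphism of $\Tc$-superschemes. Setting $\Sisom_\Sc(\Xcal,\Ycal):=\Ucal$ and combining this with the functorial bijection $\Hom_\Sc(\Tc,\Hc)\iso \Hom_\Tc(\Xcal_\Tc,\Ycal_\Tc)$ coming from Proposition \ref{prop:morpsuper}, we obtain the required functorial identification
$$
\Hom_\Sc(\Tc,\Sisom_\Sc(\Xcal,\Ycal))\simeq \Isom_\Tc(\Xcal_\Tc,\Ycal_\Tc).
$$

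The main (mild) obstacle is simply the verification that the universal morphism $u$ is superprojective, so that Lemma \ref{lem:openiso} can be invoked; everything else reduces to bookkeeping about base change and the local nature of openness.
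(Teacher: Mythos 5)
Your proof is correct and takes essentially the route the paper intends for this corollary (which it states as immediate, with no written proof): you combine Proposition \ref{prop:morpsuper} with the openness of the isomorphism locus from Lemma \ref{lem:openiso}, applied to the universal morphism $u$ over $\Hc=\SHom_\Sc(\Xcal,\Ycal)$, checking the superprojectivity of $u$ via its graph exactly as is done inside the proof of Proposition \ref{prop:morpsuper} itself. Your bookkeeping is also right on the two delicate points: the extra flatness hypothesis on $\Ycal\to\Sc$ (absent from Proposition \ref{prop:morpsuper}) is used precisely to make $\Ycal_\Hc\to\Hc$ proper and flat so that Lemma \ref{lem:openiso} applies, and the reduction to noetherian charts of $\Hc$ is legitimate since openness and the universal property are local on the base.
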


\subsection{Superschemes embeddable into supergrassmannians}\label{sec:emb-supergrass}

Using super Hilbert superschemes we get the following characterizations of superschemes embeddable into supergrassmannians.

\begin{prop}\label{prop:supergrassm-emb}
Let $\Xcal\to \Sc$ be a proper morphism of Noetherian superschemes. Then the following conditions are equivalent:
\begin{enumerate}
\item 
there exists a closed immersion of $\Xcal$ into the relative supergrassmannian of some supervector bundle over $\Sc$;
\item
there exists a smooth surjective morphism $\bar\Xcal\to \Xcal$ of relative dimension $(0,n)$ such that $\bar\Xcal$ is strongly superprojective over $\Sc$;
\item
there exists a proper, faithfully flat morphism $g:\bar\Xcal\to \Xcal$ such that $\bar\Xcal$ is strongly superprojective over $\Sc$.
\end{enumerate}
\end{prop}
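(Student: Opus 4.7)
The plan is to establish the cycle $(1) \Rightarrow (2) \Rightarrow (3) \Rightarrow (1)$. The implication $(1) \Rightarrow (2)$ is essentially Corollary \ref{cor:grassproj}: given a closed immersion $\Xcal \hookrightarrow \Sgrass((a,b);\Ec)$ for a locally free sheaf $\Ec$ on $\Sc$, pulling back the partial flag variety $\mathbb{F} = \SFlag((a,0),(a,b);\Ec)$ along this immersion yields $\widetilde{\Xcal} := \mathbb{F} \times_{\Sgrass((a,b);\Ec)} \Xcal$, which is smooth of relative dimension $(0,ab)$ over $\Xcal$ (by base change) and sits as a closed sub-superscheme of $\mathbb{F}$. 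Since $\mathbb{F}$ is strongly superprojective over $\Sc$ by Proposition \ref{prop:superflag}, so is $\widetilde{\Xcal}$. The implication $(2) \Rightarrow (3)$ is immediate, since a smooth surjective morphism is faithfully flat.

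For the substantive implication $(3) \Rightarrow (1)$: let $g\colon \bar{\Xcal} \to \Xcal$ be the given faithfully flat morphism, with $\bar{\Xcal}$ strongly superprojective over $\Sc$ via a closed immersion $i\colon \bar{\Xcal} \hookrightarrow \widetilde{\Ps}(\Ec)$ for some locally free sheaf $\Ec$ on $\Sc$. Since $\bar{\Xcal}$ and $\Xcal$ are both proper over $\Sc$, the morphism $g$ is proper. The morphism $(g,i)\colon \bar{\Xcal} \to \Xcal \times_\Sc \widetilde{\Ps}(\Ec)$ factors as the graph $\bar{\Xcal} \to \Xcal \times_\Sc \bar{\Xcal}$ (a closed immersion because $\Xcal \to \Sc$ is separated) followed by the base change $\mathrm{id}\times i \colon \Xcal \times_\Sc \bar{\Xcal} \hookrightarrow \Xcal \times_\Sc \widetilde{\Ps}(\Ec)$ of the closed immersion $i$, hence is a closed immersion. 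Viewing $\bar{\Xcal}$ as a closed sub-superscheme of $\widetilde{\Ps}(\Ec) \times_\Sc \Xcal$ that is flat and proper over $\Xcal$, we obtain via the universal property of the Hilbert superscheme (Theorem \ref{thm:hilbrepres}) a classifying morphism
\[
\varphi\colon \Xcal \to \SHilb(\widetilde{\Ps}(\Ec)/\Sc).
\]

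The key step is to prove that $\varphi$ is a closed immersion. I would first verify it is a monomorphism of functors: if $h_1,h_2\colon \Tc \to \Xcal$ satisfy $\varphi\circ h_1 = \varphi\circ h_2$, then the pullbacks $Y_k := \bar{\Xcal} \times_{\Xcal,h_k} \Tc$ coincide as closed sub-superschemes of $\widetilde{\Ps}(\Ec) \times_\Sc \Tc$. The projection of this common sub-superscheme to $\widetilde{\Ps}(\Ec)$ factors through $i$, producing a single morphism $Y_1 = Y_2 \to \bar{\Xcal}$, and likewise a single projection to $\Tc$; combined with the universal property of the fiber products ($h_k \circ \mathrm{pr}_\Tc = g \circ \mathrm{pr}_{\bar{\Xcal}}$) and the fact that $\mathrm{pr}_\Tc$, being the base change of $g$, is faithfully flat and therefore an epimorphism, this forces $h_1 = h_2$. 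Since $\varphi$ is a morphism between superschemes that are both proper over $\Sc$, it is itself proper; and a proper monomorphism of noetherian superschemes is a closed immersion (the super analogue of a standard scheme-theoretic fact, provable by reducing to the bosonic case via the ideal $\Jc$ of odd elements and then lifting through the successive quotients $\Jc^p/\Jc^{p+1}$ using the super Nakayama lemma). Once $\varphi$ is known to be a closed immersion, composing with the embedding of $\SHilb(\widetilde{\Ps}(\Ec)/\Sc)$ into a relative supergrassmannian over $\Sc$ from the proof of Theorem \ref{thm:hilbrepres} (applied component by component, since the super Hilbert polynomial of the family $\bar{\Xcal} \to \Xcal$ is locally constant on $\Xcal$) yields the desired embedding. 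The main obstacle is the passage from monomorphism to closed immersion in the super setting; the monomorphism property itself is a clean consequence of faithfully flat descent.
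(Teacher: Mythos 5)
Your cycle $(1)\Rightarrow(2)\Rightarrow(3)\Rightarrow(1)$ matches the paper's, and your $(1)\Rightarrow(2)$ (pulling back the flag bundle of Corollary \ref{cor:grassproj}) and $(2)\Rightarrow(3)$ are the same as the paper's; but your $(3)\Rightarrow(1)$ is a genuinely different argument. Both you and the paper classify the same family --- $\bar\Xcal$, embedded via the graph of $g$ as a closed sub-superscheme flat and proper over $\Xcal$ --- and obtain a classifying morphism $\varphi$ from $\Xcal$ to a Hilbert superscheme; the divergence is in proving $\varphi$ is a closed immersion. You prove $\varphi$ is a monomorphism by descent (the argument is sound: the two lifts $Y\to\bar\Xcal$ agree because $i$ is a monomorphism, and the base change of $g$ is a faithfully flat \emph{quasi-compact} cover, hence an epimorphism), deduce properness of $\varphi$ from Proposition \ref{prop:sorite}(5), and then invoke ``proper monomorphism of superschemes $\Rightarrow$ closed immersion.'' That auxiliary lemma is true, but it is the one substantive debt in your write-up and your sketch compresses real work: one needs that bosonic reduction commutes with fibre products (so $\varphi_{\mathrm{bos}}$ is again a proper monomorphism, hence a closed immersion by the classical EGA fact), which makes $\varphi$ finite, after which surjectivity of $\Oc\to\varphi_\ast\Oc_\Xcal$ follows from the fibrewise dimension count $\dim_\kappa(R\otimes_\kappa R)=\dim_\kappa R$ (forcing each fibre to be empty or a reduced point) together with super Nakayama. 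The paper avoids this lemma altogether: via Theorem \ref{thm:hilbrepres2} it has the closed immersion $\SHilb(\Xcal/\Sc)\xrightarrow{g^{-1}}\SHilb(\bar\Xcal/\Sc)$, and it shows directly that $\Xcal\to \SHilb(\Xcal/\Sc)\times_{\SHilb(\bar\Xcal/\Sc)}\SHilb^\bP(\bar\Xcal/\Sc)$ is an isomorphism of functors: for a $\Tc$-family $\Zc\subset\Xcal_\Tc$ with $g^{-1}(\Zc)$ of polynomial $\bP$, every geometric fibre $\Zc_t$ contains a point $z$ with $g^{-1}(z)\subseteq g^{-1}(\Zc_t)$ of \emph{equal} super Hilbert polynomial, forcing $\Zc_t$ to be the reduced point, so $\Zc$ is a graph. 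This containment-of-equal-polynomials trick delivers the closed immersion in one stroke, with no monomorphism theory; your route buys independence from Theorem \ref{thm:hilbrepres2} at the cost of the extra lemma. Finally, your parenthetical ``component by component'' leaves unfinished a step the paper carries out explicitly: condition (1) asks for a single supergrassmannian, so for disconnected $\Xcal$ (finitely many components, by noetherianity) you still need the paper's construction embedding a disjoint union of supergrassmannians into one via the auxiliary free summands $\bigoplus_i\Oc e_i\oplus\bigoplus_j\Pi\Oc f_j$; and the immersions $\SHilb^\bP\hookrightarrow\SG$ from Theorem \ref{thm:hilbrepres} are closed only after noting that $\SHilb^\bP$ is proper over $\Sc$ while $\SG$ is separated.
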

\begin{proof}
(1)$\implies$(2). This follows immediately from Corollary \ref{cor:grassproj}.

(2)$\implies$(3). We just have to observe that a smooth surjective morphism is faithfully flat. 

(3)$\implies$(1). 
The idea is to show that $\Xcal$ embeds into the Hilbert superscheme component $\SHilb^\bP(\bar\Xcal/\Sc)$ for some $\bP$. 
Recall that by Theorem \ref{thm:hilbrepres2}, 
the Hilbert superscheme $\SHilb(\Xcal/\Sc)$  exists and we have a closed immersion
$$\SHilb(\Xcal/\Sc)\xrightarrow{g^{-1}}\SHilb(\bar\Xcal/\Sc).$$
Since $\Xcal$ is separated over $\Sc$, 
the diagonal $\delta:\Xcal\hookrightarrow \Xcal\times_{\Sc}\Xcal$ is a closed immersion, so viewing it as a flat family of subschemes in $\Xcal$ parametrized by $\Xcal$,
we get a morphism 
$$\Xcal\to\SHilb(\Xcal/\Sc)\to \SHilb(\bar\Xcal/\Sc).$$

Assume that $\Xcal$ is connected. Then the above morphism factors through  $\SHilb^\bP (\bar\Xcal/\Sc)$ for some super Hilbert polynomial $\bP$.
We claim that the obtained morphism
$$\Xcal\to \SHilb(\Xcal/\Sc)\times_{\SHilb(\bar\Xcal/\Sc)} \SHilb^\bP (\bar\Xcal/\Sc)$$ 
is an isomorphism. Indeed, let $\Tc$ be an $\Sc$-superscheme and suppose we have a flat family of sub-superschemes $\Zc\sub \Xcal_\Tc$ such that
$g^{-1}(\Zc)\sub \bar\Xcal_\Tc$ has super Hilbert polynomial $\bP$. It is enough to show that $\Zc$ is a graph of a morphism $\Tc\to \Xcal$ of $\Sc$-superschemes.
In other words, we need to show that the projection $\Zc\to \Tc$ is an isomorphism.
Since $\Zc$ is flat over $\Tc$, it suffices to show that for every geometric point $t:\Spec(K)\to\Tc$, the fibre $\Zc_t$ is isomorphic to $\Spec(K)$. We know that
$g^{-1}(\Zc_t)$ is a closed sub-superscheme in $\bar\Xcal_s$, where $s:\Spec(K)\to\Sc$ is the corresponding $K$-point of $\Sc$, with the same super Hilbert polynomial as
$g^{-1}(x)$, where $x$ is a $K$-point of $\Xcal$. This implies that $\Zc_t$ is nonempty, so picking a $K$-point $z$ of $\Zc_t$, we obtain that
the sub-superschemes $g^{-1}(\Zc_t)$ and $g^{-1}(z)$ of $\bar\Xcal_s$ have the same super Hilbert polynomial. Since one sub-superscheme contains the other, this is possible only if
$\Zc_t$ is the reduced point, as claimed.

Thus, we obtained a closed immersion of $\Xcal$ into $\SHilb^\bP (\bar\Xcal/\Sc)$, and hence, into a relative supergrassmannian.

In the case when $\Xcal$ is not connected, the above argument gives an immersion of each connected component of $\Xcal$ into a relative supergrassmannian.
It remains to observe that the disjoint union of two supergrassmannians $\Sgrass((c_i,d_i),\Ec_i)$, $i=1,2$, can be embedded into the supergrassmannian
$$\Sgrass((c_1+c_2,d_1+d_2),\Ec_1\oplus \Ec_2\oplus \bigoplus_{i=1}^{c_1+c_2}\Oc e_i\oplus \bigoplus_{j=1}^{d_1+d_2}\Pi \Oc f_j)\,.$$
Namely, a subbundle $\mathcal F_1\sub \Ec_1$ of rank $(c_1,d_1)$ ($\mathcal F_2\sub \Ec_2$ of rank $(c_2,d_2)$, resp.) 
is sent to 
$$\mathcal  F_1\oplus 0\oplus \bigoplus_{i=c_1+1}^{c_1+c_2}\Oc e_i\oplus \bigoplus_{j=d_1+1}^{d_1+d_2}\Pi \Oc f_j \ \  \ 
(0\oplus \mathcal  F_2\oplus \bigoplus_{i=1}^{c_1}\Oc e_i\oplus \bigoplus_{j=1}^{d_1}\Pi \Oc f_j \text{, resp.})\,.$$
\end{proof}

\section{The Picard superscheme}\label{s:picard}

All the superschemes considered in this Section are locally noetherian.

\subsection{The super Picard functors}

If $\Xcal$ is a superscheme, we can associate to it two Picard groups:  the {Picard} group ${\Pic}(\Xcal)$ of isomorphism classes of all even line bundles on $\Xcal$, and the {total Picard} group $\Picb(\Xcal):=\Pic(\Xcal)\coprod \Pic_-(\Xcal)$ of {all  line bundles, even and odd,} on $\Xcal$, that is, all  locally free sheaves of $\Oc_\Xcal$-modules of rank either $(1,0)$ or $(0,1)$. Moreover, ${\Pic}(\Xcal)$ is a subgroup of ${\Picb(\Xcal)}$ 
{and for an ordinary scheme the group $\Pic(X)$ so defined is the usual Picard group of $X$.}
If $\Oc_\Xcal^{\Pi}=\Oc_{\Xcal,1}\oplus\Oc_{\Xcal,0}$ is the {trivial odd line bundle}, the multiplication by $\Oc_\Xcal^{\Pi}$ gives a one-to-one correspondence between ${\Pic}(\Xcal)$ and $\Pic_-(\Xcal)$.

\begin{lemma}\label{lem:Pic-bos-quot}
One has {natural isomorphisms} 
$${\Pic}(\Xcal)\simeq H^1(\Xcal,\Oc_{\Xcal,+}^*)\simeq \Pic(\Xcal/\Gamma),$$
where $\Xcal/\Gamma$ is the bosonic quotient of $\Xcal$.
\end{lemma}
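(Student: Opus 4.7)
The plan is to prove the two isomorphisms separately, both by essentially classical arguments.

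For the first isomorphism, I would argue by \v{C}ech cohomology. An even line bundle $\Lcl$ on $\Xcal$ is, by definition, locally free of rank $(1,0)$, hence admits an open cover $\{U_i\}$ with trivializations $\Lcl|_{U_i}\simeq \Oc_{U_i}$ as $\Z_2$-graded $\Oc_{U_i}$-modules. Since each trivialization sends an even generator to an even generator, the transition functions $g_{ij}\in\Oc_{\Xcal}^*(U_i\cap U_j)$ must be homogeneous of degree $0$, and form a 1-cocycle with values in the subsheaf of even units. A small observation identifies this subsheaf with $\Oc_{\Xcal,+}^*$: if an even element $u\in\Oc_\Xcal$ admits an inverse $v=v_++v_-\in\Oc_\Xcal$, then $uv_+=1$ and $uv_-=0$, so $u$ is already a unit inside the ordinary ring sheaf $\Oc_{\Xcal,+}$. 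The usual bijection between isomorphism classes of such locally trivial sheaves and $H^1(\Xcal,\Oc_{\Xcal,+}^*)$ then yields the isomorphism $\Pic_+(\Xcal)\simeq H^1(\Xcal,\Oc_{\Xcal,+}^*)$.

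For the second isomorphism, I would invoke the definition of the bosonic quotient $\Xcal/\Gamma=(X,\Oc_{\Xcal,+})$ given in Section~\ref{s:supergeom}. The bosonic quotient is a purely even locally ringed space whose underlying topological space coincides with that of $\Xcal$ and whose structure sheaf is the ordinary commutative sheaf of rings $\Oc_{\Xcal,+}$. By the classical identification of the Picard group of a locally ringed space with $H^1$ of its unit sheaf, we have
\[
\Pic(\Xcal/\Gamma)\simeq H^1(X,\Oc_{\Xcal,+}^*),
\]
and this is the same cohomology group as $H^1(\Xcal,\Oc_{\Xcal,+}^*)$ since the topological space is the same.

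There is no hard step here: the only subtlety worth spelling out is the compatibility noted above between the sheaf of even units of $\Oc_\Xcal$ and the unit sheaf $\Oc_{\Xcal,+}^*$ of the even part. Concretely, the composed bijection can be described on objects as $\Lcl\mapsto \Lcl_+$, whose inverse is $\Mcal\mapsto \Mcal\otimes_{\Oc_{\Xcal,+}}\Oc_\Xcal$; functoriality and multiplicativity under tensor products are immediate, so this gives an isomorphism of groups.
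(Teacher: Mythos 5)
Your proof is correct and takes essentially the same approach as the paper, whose own (one-sentence) argument is exactly the observation that transition functions of an even line bundle are even invertible functions, from which both identifications follow. The details you add — that an even unit of $\Oc_\Xcal$ is already a unit of the ordinary ring sheaf $\Oc_{\Xcal,+}$, and the explicit mutually inverse maps $\Lcl\mapsto\Lcl_+$ and $\M\mapsto\M\otimes_{\Oc_{\Xcal,+}}\Oc_\Xcal$ — simply spell out what the paper leaves implicit.
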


\begin{proof} The proof of the first isomorphism is the same as in the classical case: we just observe that transition functions of a line bundle are
even invertible functions. The isomorphism with the usual Picard group $\Pic(\Xcal/\Gamma)$ follows from this.
\end{proof}

Let $f\colon \Xcal\to\Sc$ be a morphism of superschemes. Then the inverse image of line bundles maps {$\Picb(\Sc)$ to $\Picb(\Xcal)$ preserving the parity}.

\begin{defin} {The relative Picard group of $f$ is the quotient group}
$$
{\Pic}(\Xcal/\Sc):={\Pic}(\Xcal)/f^\ast {\Pic}({\Sc})\,.
$$
\end{defin}

\begin{defin}\label{def:Picsh} The super Picard presheaf of $f$ is the group functor on the category of  (locally noetherian) superschemes over $\Sc$ that associates to every superscheme morphism $\Tc\to\Sc$ the relative Picard group
$$
\SPicf_{\Xcal/\Sc}(\Tc)=\Pic(\Xcal\times_\Sc\Tc/\Tc)\,.
$$
The super Picard functor, or super Picard sheaf of $f$,  is the sheaf 
$$
\SPicf_{\Xcal/\Sc\et}
$$
associated with $\SPicf_{\Xcal/\Sc}$ for the \'etale topology of superschemes. 

\end{defin}

The remainder of this Section is devoted to proving a  representability theorem for the super Picard functor, that is,  the existence,  under suitable hypotheses, of the Picard superscheme associated to $f$.

\begin{thm}\label{thm:picrepres} Let $\Sc$ be a  {locally} noetherian superscheme and $f\colon\Xcal\to \Sc$ a  {locally} superprojective flat morphism which has   geometrically integral fibres (Definition \ref{def:geomint}) and is cohomologically flat in dimension $0$ (Definition \ref{def:cohomflat}). 
The super Picard functor is representable by an $\Sc$-supergroup 
$$
\SPic(\Xcal/\Sc)\to \Sc
$$
which is locally of finite type over $\Sc$.
\end{thm}
In proving this Theorem we can assume that $\Sc$ is \emph{noetherian} 
as  the representability of the even super Picard sheaf is local on $\Sc$ in  the Zariski topology of superschemes.
The strategy for a  proof consists of the study of the super Abel morphism (or Abel contraction). This is the map from positive superdivisors to even line bundles that associates to a positive superdivisor the dual of the corresponding ideal sheaf. We shall see that   this map is a projective superbundle on an open part of the 
Picard sheaf, so that  that open part is actually representable. Since the super Picard functor is a group functor which can be covered by translates of that open subfunctor, this will imply that it is representable as well. 

The proof will be  actually given in the next Subsections, and will be completed by the end of Subsection \ref{ss:construction}.

\begin{remark} We might also define a total relative Picard group of $f\colon\Xcal\to\Sc$ by the formula 
$$
\Picb(\Xcal)/f^\ast\Picb(\Sc)\,.
$$
However, this group fails to have a natural $\Z_2$-grading as for any line bundle $\Lcl$ one has $\Lcl^\Pi=\Lcl\otimes\Oc_\Xcal^\Pi\simeq \Lcl\otimes f^\ast\Oc_\Sc^\Pi$ and then the classes of $\Lcl$ and $\Lcl^\Pi$ in $\Picb(\Xcal)/f^\ast\Picb(\Sc)$ coincide. A suitable definition of the total relative Picard group is rather
$$
\Picb(\Xcal/\Sc):=\Picb(\Xcal)/f^\ast\Pic(\Sc)\,,
$$
so that two line bundles $\Lcl$ and $\Lcl'$ on $\Xcal$ are identified in $\Picb(\Xcal/\Sc)$ when $\Lcl'\simeq\Lcl\otimes f^\ast\Nc$ for an even line bundle $\Nc$ on $\Sc$. There is a natural grading
$$
\Picb(\Xcal/\Sc)=\Pic(\Xcal/\Sc)\coprod \Pic_-(\Xcal/\Sc)
$$
where $\Pic_-(\Xcal/\Sc)$ denotes the set of   classes of   odd line bundles.  One defines the total Picard sheaf as in Definition \ref{def:Picsh}, and  Theorem \ref{thm:picrepres} now  implies that the total Picard sheaf is representable by 
the $\Sc$-supergroup 
$$
\SPic(\Xcal/\Sc)\times \Z_2
$$
where
the subgroup $\Z_2$ is generated by the class of $\Oc_\Xcal^\Pi$.
\end{remark}

We start the proof of Theorem \ref{thm:picrepres} by describing the
relation between the super Picard presheaf and its associated  \'etale sheaf.

\begin{lemma}\label{lem:injpic} Let $f\colon\Xcal\to\Sc$ be a morphism of superschemes which is 
   cohomologically flat in dimension $0$ (Definition \ref{def:cohomflat}).
For every base change $\Tc\to\Sc$ the morphism
$$
f_\Tc^\ast\colon\Pic(\Tc)\to \Pic(\Xcal_\Tc)
$$
is injective.
\end{lemma}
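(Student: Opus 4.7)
The plan is to reduce both injectivity statements to the single assertion that any line bundle $\Lcl$ on $\Tc$ with $f_\Tc^\ast\Lcl\simeq\Oc_{\Xcal_\Tc}$ must already be trivial on $\Tc$. Since parity is preserved by isomorphisms of $\Z_2$-graded line bundles, this simultaneously covers both $\Pic_+(\Tc)\hookrightarrow\Pic_+(\Xcal_\Tc)$ and $\Pic(\Tc)\hookrightarrow\Pic(\Xcal_\Tc)$; in the latter case, triviality of $f_\Tc^\ast\Lcl$ already forces $\Lcl$ to be even (an odd line bundle would pull back to an odd one, which cannot be $\Z_2$-graded-isomorphic to $\Oc_{\Xcal_\Tc}$).

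The key input is that cohomological flatness in dimension $0$ is, by Definition \ref{def:cohomflat}, stable under arbitrary base change, so the base-changed morphism $f_\Tc\colon \Xcal_\Tc\to\Tc$ satisfies $\Oc_\Tc\iso f_{\Tc\ast}\Oc_{\Xcal_\Tc}$. Applying the projection formula to the locally free sheaf $\Lcl$ (which passes to the super setting by the usual local-trivialization argument), one computes
$$
f_{\Tc\ast}(f_\Tc^\ast\Lcl)\simeq \Lcl\otimes_{\Oc_\Tc}f_{\Tc\ast}\Oc_{\Xcal_\Tc}\simeq \Lcl.
$$
On the other hand, the assumption $f_\Tc^\ast\Lcl\simeq\Oc_{\Xcal_\Tc}$ together with cohomological flatness gives $f_{\Tc\ast}(f_\Tc^\ast\Lcl)\simeq f_{\Tc\ast}\Oc_{\Xcal_\Tc}\simeq \Oc_\Tc$, and comparing the two identifications yields $\Lcl\simeq \Oc_\Tc$.

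There is essentially no real obstacle here: the result is a one-line computation once the projection formula is in place, and both the super projection formula for a line bundle and the definition of cohomological flatness have been set up precisely to make this argument go through verbatim from the classical case.
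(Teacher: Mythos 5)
Your proof is correct and is essentially the paper's own argument: both apply the projection formula to a line bundle with trivial pullback and use $\Oc_\Tc\iso f_{\Tc\ast}\Oc_{\Xcal_\Tc}$ (which holds after any base change by Definition \ref{def:cohomflat}) to conclude $\Lcl\simeq\Oc_\Tc$. Your parity observation merely makes explicit the reduction the paper dismisses with ``the odd case is similar,'' and it is valid since pullback preserves parity and $\Pic(\Xcal_\Tc)$ is the disjoint union $\Pic_+\coprod\Pic_-$.
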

 
\begin{proof}  
 Let $\Nc$ be an even line bundle on $\Tc$ such that $f_\Tc^\ast\Nc\iso \Oc_{\Xcal_\Tc}$. Then $f_{\Tc\ast}\Oc_{\Xcal_\Tc}
 \iso \Nc\otimes f_{\Tc\ast}\Oc_{\Xcal_\Tc}$. {As} $f$ is cohomologically flat in dimension $0$, we have $\Nc\iso \Oc_\Sc$, which finishes the proof.\end{proof}

\begin{prop}\label{prop:asssheaf}  Let $f\colon \Xcal\to \Sc$ be  {proper and flat}.
\begin{enumerate}
\item  If 
 $f$ is cohomologically flat {in dimension} $0$ 
  the natural {morphism} of group functors
{
$
\SPicf_{\Xcal/\Sc} \to \SPicf_{\Xcal/\Sc\et}
$
is injective;}
\item if in addition
  $f\colon \Xcal\to\Sc$ has a section, then 
{$\SPicf_{\Xcal/\Sc} \iso \SPicf_{\Xcal/\Sc\et}$.}
\end{enumerate}

\end{prop}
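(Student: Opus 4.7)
The plan is to prove both (1) and (2) for the even Picard variant; by the parity decomposition \eqref{eq:relpic} the total case then follows, since tensoring with $\Oc_\Xcal^\Pi$ identifies $\Pic_-(\Xcal_\Tc/\Tc)$ with $\Pic_+(\Xcal_\Tc/\Tc)$ compatibly before and after étale sheafification. The strategy is the classical one: (1) reflects that, under cohomological flatness, a line bundle on $\Xcal_\Tc$ which becomes a pull-back after an étale cover of $\Tc$ is itself a pull-back; (2) uses rigidification along the section to kill all automorphisms, making étale descent of line bundles effective.

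For (1), let $\Lcl$ be an even line bundle on $\Xcal_\Tc$ whose class dies in $\SPicf_{+\Xcal/\Sc\et}(\Tc)$, so that there exist an étale cover $\phi\colon\Tc'\to\Tc$ and an even line bundle $\M'$ on $\Tc'$ with $\Lcl_{\Tc'}\iso f_{\Tc'}^\ast\M'$. Set $\Nc:=f_{\Tc\ast}\Lcl$. Since $\phi$ is flat, flat base change combined with the projection formula yields
$$\phi^\ast\Nc\iso f_{\Tc'\ast}\Lcl_{\Tc'}\iso f_{\Tc'\ast}f_{\Tc'}^\ast\M'\iso \M'\otimes f_{\Tc'\ast}\Oc_{\Xcal_{\Tc'}}\iso\M',$$
the last step using that cohomological flatness in dimension $0$ is stable under base change. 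Hence $\Nc$ is étale-locally an even line bundle, and since local freeness is an étale-local property, $\Nc\in\Pic_+(\Tc)$. The adjunction morphism $f_\Tc^\ast\Nc\to\Lcl$ is an isomorphism étale-locally on $\Tc'$ (it identifies with the identity under the isomorphisms just displayed), hence is itself an isomorphism. Thus $[\Lcl]=0$ already in $\SPicf_{+\Xcal/\Sc}(\Tc)$.

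For (2), fix a section $\sigma\colon\Sc\to\Xcal$ of $f$ and call a pair $(\Lcl,\varphi)$ with $\varphi\colon\sigma_\Tc^\ast\Lcl\iso\Oc_\Tc$ a rigidified even line bundle on $\Xcal_\Tc$. Every line bundle can be rigidified after twisting by $f_\Tc^\ast(\sigma_\Tc^\ast\Lcl)^{-1}$, which leaves the class in $\SPicf_{+\Xcal/\Sc}(\Tc)$ unchanged. The decisive input is the vanishing of automorphisms of rigidified line bundles: cohomological flatness identifies $H^0(\Xcal_\Tc,\Oc_{\Xcal_\Tc,+}^\ast)$ with $H^0(\Tc,\Oc_{\Tc,+}^\ast)$ via $f_\Tc^\ast$, and $\sigma_\Tc^\ast$ inverts this identification, so any unit preserving the rigidification equals $1$. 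Given $\xi\in\SPicf_{+\Xcal/\Sc\et}(\Tc)$, lift it to a rigidified line bundle $\Lcl'$ on $\Xcal_{\Tc'}$ for some étale cover $\Tc'\to\Tc$; writing $\Tc''=\Tc'\times_\Tc\Tc'$ with projections $p_1,p_2$, the hypothesis $[p_1^\ast\Lcl']=[p_2^\ast\Lcl']$ in $\SPicf_{+\Xcal/\Sc}(\Tc'')$ produces an isomorphism $p_1^\ast\Lcl'\iso p_2^\ast\Lcl'\otimes f_{\Tc''}^\ast\Nc''$ for some $\Nc''\in\Pic_+(\Tc'')$. Pulling back by $\sigma_{\Tc''}$ and comparing rigidifications forces $\Nc''\iso\Oc_{\Tc''}$, and a further modification by a unit on $\Tc''$ arranges $p_1^\ast\Lcl'\iso p_2^\ast\Lcl'$ as rigidified line bundles; such an isomorphism is unique by the automorphism vanishing, which implies the cocycle identity on $\Tc'\times_\Tc\Tc'\times_\Tc\Tc'$. Étale descent for quasi-coherent sheaves then yields a line bundle $\Lcl$ on $\Xcal_\Tc$ with $\Lcl_{\Tc'}\iso\Lcl'$, whose class realizes $\xi$.

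The delicate point proper to the super setting is the multiplicative identity $H^0(\Xcal_\Tc,\Oc_{\Xcal_\Tc,+}^\ast)=H^0(\Tc,\Oc_{\Tc,+}^\ast)$: cohomological flatness supplies only the additive version $f_{\Tc\ast}\Oc_{\Xcal_\Tc}\iso\Oc_\Tc$, and one passes to units via the observation that an even section of $\Oc_{\Xcal_\Tc}$ is invertible if and only if its bosonic reduction in $\Oc_{X_\Tc}$ is invertible, which reduces the multiplicative statement to the corresponding classical fact on the bosonization. Once this identity is in place, the remainder of the argument follows the classical template verbatim.
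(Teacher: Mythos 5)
Your proof is correct, but it diverges from the paper's in a way worth recording. For part (1) the paper produces the descended bundle by descent theory: from $\Lcl_\Vc\iso f_\Vc^\ast\Nc$ it derives $f_\Rcal^\ast p_1^\ast\Nc\iso f_\Rcal^\ast p_2^\ast\Nc$ on $\Rcal=\Vc\times_\Tc\Vc$, cancels $f_\Rcal^\ast$ using the injectivity of pull-back (Lemma \ref{lem:injpic}), and then descends $\Nc$ and the isomorphism $\alpha$ via Propositions \ref{prop:moddescent} and \ref{prop:morphdescent}. You instead build the candidate directly on the base as $\Nc=f_{\Tc\ast}\Lcl$ and check, by flat base change and the projection formula together with the base-change stability built into Definition \ref{def:cohomflat}, that $\Nc$ is an even line bundle and the counit $f_\Tc^\ast\Nc\to\Lcl$ is an isomorphism; this bypasses descent machinery entirely (essentially globalizing the pushforward trick the paper uses only inside Lemma \ref{lem:injpic}) and is arguably cleaner. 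For part (2) your skeleton matches the paper's (rigidify along $\sigma$ by twisting with $f^\ast\sigma^\ast\Lcl^{-1}$, use part (1) on the double product to compare the two pull-backs, then descend), but you add a step the paper omits: you normalize the comparison isomorphism to respect the rigidifications and use the vanishing of automorphisms of rigidified bundles --- via the multiplicative identity $H^0(\Xcal_\Tc,\Oc_{\Xcal_\Tc,+}^\ast)\simeq H^0(\Tc,\Oc_{\Tc,+}^\ast)$ --- to get uniqueness and hence the cocycle identity on the triple product. The paper's proof, like its statement of Proposition \ref{prop:moddescent}, never mentions the cocycle condition, so your version is more rigorous at precisely the point where the paper is terse; this is a genuine improvement rather than pedantry. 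One simplification you could make: your detour through the bosonization to pass from the additive to the multiplicative statement is sound (the kernel of $\Oc_{\Xcal,+}\to\Oc_X$ is locally nilpotent and $f$ is surjective), but unnecessary --- writing an invertible even global section as $v=f_\Tc^\ast w$ and its inverse as $v^{-1}=f_\Tc^\ast w'$, the injectivity of $\Oc_\Tc\to f_{\Tc\ast}\Oc_{\Xcal_\Tc}$ gives $ww'=1$ directly.
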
 
\begin{proof} 
(1) 
Let $\Tc\to\Sc$ be an $\Sc$-superscheme and $[\Lcl]$ be a class in $\SPicf_{\Xcal/\Sc}(\Tc)$ whose image in $\SPicf_{\Xcal/\Sc\et}(\Tc)$ is trivial. This means that there exist an \'etale covering $\phi\colon\Vc\to \Tc$ and an isomorphism $\alpha\colon\Lcl_{\Vc}\iso f_\Vc^\ast \Nc$ for some even line bundle $\Nc$ on $\Vc$. Let us write $\Rcal=\Vc\times_\Tc\Vc$ and $(p_1,p_2)\colon \Rcal \rra\Vc$ for the projections. Consider the cartesian diagram
$$
\xymatrix{\Xcal_\Rcal\iso\Xcal_\Vc\times_{\Xcal_\Tc}\Xcal_\Vc\ar@<2pt>[r]^(.75){(q_1,q_2)}\ar@<-3pt>[r] \ar[d]^{f_\Rcal} & \Xcal_\Vc \ar[r]^{\phi_\Xcal} \ar[d]^{f_\Vc}& \Xcal \ar[d]^{f_\Tc} \\
\Rcal \ar@<3pt>[r]^{(p_1,p_2)}\ar@<-3pt>[r] & \Vc \ar[r]^\phi & \Tc
}
$$
Since $\Lcl_\Vc=\phi_\Xcal^\ast\Lcl_\Tc$, there is an isomorphism $q_1^\ast f_\Vc^\ast \Nc \iso q_1^\ast\Lcl_\Vc\iso q_2^\ast\Lcl_\Vc = q_2^\ast f_\Vc^\ast \Nc$, that is, an isomorphism $f_\Rcal^\ast p_1^\ast\Nc \iso f_\Rcal^\ast p_2^\ast\Nc$. 

 {By Lemma \ref{lem:injpic}} there is an isomorphism $p_1^\ast\Nc \iso p_2^\ast\Nc$, that is, a descent data for $\Nc$. By faithfully flat descent (Proposition \ref{prop:moddescent}), there is an even line bundle $\Nc'$ on $\Tc$ such that $\Nc\iso\phi^\ast \Nc'$. Moreover, the isomorphism $\alpha\colon \Lcl_\Vc\iso f_\Vc^\ast\Nc$ satisfies $q_1^\ast\alpha=q_2^\ast\alpha$, so that, by faithfully flat descent for morphisms (Proposition \ref{prop:morphdescent}), $\alpha$ descends to an isomorphism $ \Lcl\iso f_\Tc^\ast\Nc'$. Then $[\Lcl]$ is trivial in $\SPicf_{\Xcal/\Sc}(\Tc)$.

(2) Let $\sigma\colon \Sc\hookrightarrow \Xcal$ be a section of $f\colon \Xcal\to\Sc$. If $\Tc\to\Sc$ is a superscheme over $\Sc$, a section $\xi$ of the super Picard  sheaf over $\Tc$ is given by an \'etale covering  $\phi\colon\Vc\to \Tc$ and a class $\xi=[\Lcl]$  of an even line bundle $\Lcl$ on $\Xcal_\Vc$ in the relative super Picard group. 
Let us consider the diagram
$$
\xymatrix{\Xcal_\Rcal\iso\Xcal_\Vc\times_{\Xcal_\Tc}\Xcal_\Vc\ar@<3pt>[r]^(.75){(q_1,q_2)}\ar@<-3pt>[r] \ar[d]^{f_\Rcal} & \Xcal_\Vc \ar[r]^{\phi_\Xcal} \ar[d]^{f_\Vc}& \Xcal \ar[d]^{f_\Tc} \\
\Rcal \ar@<3pt>[r]^{(p_1,p_2)}\ar@<-3pt>[r] \ar@/^1pc/[u]^{\sigma_\Rcal}  & \Vc \ar[r]^\phi \ar@/^1pc/[u]^{\sigma_\Vc} & \Tc \ar@/^1pc/[u]^{\sigma_\Tc}
}
$$
where $\sigma_\Tc$, $\sigma_\Vc$ and $\sigma_\Rcal$ are the sections of $f_\Tc$, $f_\Vc$ and $f_\Rcal$ induced by $\sigma$.
Since   $q_1^\ast\phi_{\Xcal}^\ast (\xi)= q_2^\ast\phi_{\Xcal}^\ast (\xi)$ in $\SPicf_{\Xcal/\Sc\et}(\Rcal)$, and the map $$\SPicf_{\Xcal/\Sc}(\Rcal)\to\SPicf_{\Xcal/\Sc\et}(\Rcal)$$ is injective 
by (1), one has    $q_1^\ast\Lcl\iso q_1^\ast\Lcl\otimes f_\Rcal^\ast\Nc$  for an even line bundle $\Nc$ on $\Rcal$. Take  $\tilde\Lcl=\Lcl\otimes f_\Vc^\ast\sigma_\Vc^\ast\Lcl^{-1}$. Then $[\tilde\Lcl]=[\Lcl]$ in the relative super Picard group,  and $q_1^\ast\tilde\Lcl\iso q_2^\ast \tilde\Lcl$. By faithfully flat descent (Proposition \ref{prop:moddescent}), there is an even line bundle $\Lcl'$ on $\Xcal_\Tc$ such that $\tilde\Lcl\iso \phi_\Xcal^\ast \Lcl'$, so that $\xi=[\Lcl']$. Then the immersion
$\SPicf_{\Xcal/\Sc}(\Rcal)\hookrightarrow \SPicf_{\Xcal/\Sc\et}(\Rcal)$ is also surjective.
\end{proof}

Let us also make some simple general observations about the Picard functor.

\begin{lemma}\label{lem:base-change-Pic}
{\ }
\begin{enumerate}
\item For a base change $\Sc'\to \Sc$, the restriction of the Picard functor $\SPicf_{\Xcal/\Sc\et}$ to the category of $\Sc'$-superschemes is naturally
isomorphic to $\SPicf_{\Xcal_{\Sc'}/{\Sc'}\et}$, where $\Xcal_{\Sc'}=\Xcal\times_{\Sc} \Sc'$. In particular, if $\SPicf_{\Xcal/\Sc\et}$
is representable by $\SPic(\Xcal/\Sc)$ then $\SPicf_{\Xcal_{\Sc'}/{\Sc'}\et}$ is representable by
$$\SPic(\Xcal_{\Sc'}/\Sc')\simeq \SPic(\Xcal/\Sc)\times_{\Sc} \Sc'\,.$$
\item
 Assume that $\Sc=S$ is a usual scheme. Then the restriction of {$\SPicf_{\Xcal/S\et}$ to the category of usual schemes over $S$ is naturally
isomorphic to the} Picard functor of the bosonic quotient $\Xcal/\Gamma$ over $S$. If the Picard functor of $\Xcal/S$ is representable by a superscheme
then the Picard functor of $(\Xcal/\Gamma)/S$ is also representable and we have
$$\SPicc((\Xcal/\Gamma)/S)\simeq {\SPic(\Xcal/S)_{\bos}}\,.$$
\end{enumerate}
\end{lemma}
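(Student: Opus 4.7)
The plan is to treat (1) as a purely formal base-change identity at the level of presheaves followed by a routine sheafification argument, and to derive (2) by combining Lemma~\ref{lem:Pic-bos-quot} with the bosonization adjunction for morphisms from ordinary schemes to superschemes recalled in Section~\ref{s:supergeom}.

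For (1), I first observe that for any $\Sc'$-superscheme $\phi\colon\Tc\to\Sc'$, viewed as an $\Sc$-superscheme via $\Sc'\to\Sc$, the canonical isomorphism $\Xcal\times_\Sc\Tc\simeq \Xcal_{\Sc'}\times_{\Sc'}\Tc$ shows that the presheaves $\SPicf_{\Xcal/\Sc}$ and $\SPicf_{\Xcal_{\Sc'}/\Sc'}$ evaluate to the same group on $\Tc$. Since an \'etale covering of $\Tc$ as an $\Sc'$-superscheme is the same datum as an \'etale covering of $\Tc$ as an $\Sc$-superscheme, the associated \'etale sheaves agree on the subcategory of $\Sc'$-superschemes. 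The representability statement is then the Yoneda identification
$$
\Hom_{\Sc'}(\Tc,\SPic(\Xcal/\Sc)\fp\Sc')\simeq \Hom_\Sc(\Tc,\SPic(\Xcal/\Sc))\simeq \SPicf_{\Xcal/\Sc\et}(\Tc)\simeq \SPicf_{\Xcal_{\Sc'}/\Sc'\et}(\Tc)
$$
for every $\Sc'$-superscheme $\Tc$.

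For (2), let $T\to S$ be an ordinary $S$-scheme. Since $\Oc_T$ is purely even, the even part of $\Oc_\Xcal\otimes_{\Oc_S}\Oc_T$ equals $\Oc_{\Xcal,+}\otimes_{\Oc_S}\Oc_T$, so there is a natural isomorphism $(\Xcal\times_S T)/\Gamma\simeq (\Xcal/\Gamma)\times_S T$. Combined with Lemma~\ref{lem:Pic-bos-quot}, this yields $\Pic_+(\Xcal\times_S T)\simeq\Pic((\Xcal/\Gamma)\times_S T)$, while $\Pic_+(T)=\Pic(T)$ trivially. Hence, restricted to ordinary $S$-schemes, the presheaf $\SPicf_{+\Xcal/S}$ coincides with the ordinary relative Picard presheaf of $\Xcal/\Gamma$ over $S$, and passing to the associated \'etale sheaves delivers the first assertion of (2).

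For the representability claim, suppose $\SPicf_{+\Xcal/S\et}$ is represented by a superscheme $\SPic_+(\Xcal/S)$ over $S$. Then for any ordinary $S$-scheme $T$ the bosonization adjunction gives $\Hom_S(T,\SPic_+(\Xcal/S))\simeq \Hom_S(T,\SPic_+(\Xcal/S)_{\bos})$, so
$$
\Hom_S(T,\SPic_+(\Xcal/S)_{\bos})\simeq \SPicf_{+\Xcal/S\et}(T)\simeq \SPicc((\Xcal/\Gamma)/S)(T),
$$
and Yoneda identifies $\SPic_+(\Xcal/S)_{\bos}$ with the ordinary Picard scheme of $(\Xcal/\Gamma)/S$. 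The only substantive point to verify is that \'etale sheafification is compatible with the restriction of categories used in both parts; for (2) this reduces to the fact that an \'etale cover of an ordinary scheme is automatically an ordinary scheme (a smooth morphism of relative dimension $(0,0)$ introduces no odd coordinates), so no obstacle is expected beyond this routine check.
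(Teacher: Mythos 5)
Your proof is correct and follows essentially the same route as the paper: part (1) by unwinding the definitions, and part (2) by combining Lemma~\ref{lem:Pic-bos-quot} with the bosonization adjunction and Yoneda. You even supply two details the paper leaves implicit — the identification $(\Xcal\times_S T)/\Gamma\simeq(\Xcal/\Gamma)\times_S T$ and the compatibility of \'etale sheafification with restriction to ordinary schemes (which indeed rests on Corollary~\ref{cor:even}, since an \'etale cover of an even base is even) — so no changes are needed.
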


\begin{proof}
Part (1) follows directly from the definitions. To prove (2) we use Lemma \ref{lem:Pic-bos-quot}. Namely, for any $S$-scheme $T$ we have
$${\Pic}(\Xcal_T/T)\simeq \Pic((\Xcal_T/\Gamma)/T)\simeq \Pic((\Xcal/\Gamma)_T/T).$$
It follows the \'etale sheafifications of these functors are also isomorphism.
Assuming that the Picard functor of $\Xcal/S$ is representable, we obtain for any $S$-scheme $T$,
$$\Morf(T, \SPic(\Xcal/S)_{\bos})\simeq \Morf(T,\SPic(\Xcal/S))\simeq \Picf_{(\Xcal/\Gamma)/S\et},$$
which shows that the Picard functor of $(\Xcal/\Gamma)/S$ is represented by {$\SPic(\Xcal/S)_{\bos}$}.
\end{proof}
 
\begin{remark} Lemma \ref{lem:base-change-Pic}(2) explains why we impose the superprojectivity assumption in order to prove the representability of the
super Picard functor. Even in the case when $S$ is even, the representability of super Picard functor of $\Xcal$ over $S$
implies the representability (by a scheme) of the usual Picard functor of the bosonic
quotient $\Xcal/\Gamma$ over $S$. But even for superschemes embeddable into a relative supergrassmannian, $\Xcal/\Gamma$ is not necessarily projective over
$S$, so its Picard functor may only be representable by an algebraic space.
\end{remark}
 
\subsection{Quotients of flat equivalence relations of superschemes}\label{ss:quots}

\begin{defin}\label{def:equivrel} 
Let $f\colon \Xcal \to \Sc$ be a morphism of superschemes. An equivalence relation of superschemes on $\Xcal\to \Sc$ is a closed sub-superscheme $\Rcal\hookrightarrow \Xcal\times_\Sc\Xcal$ that induces an equivalence relation for the functor of   points, that is,   for every $\Sc$-superscheme $\Tc$, the inclusion  $\Rcal^\bullet(\Tc)\hookrightarrow \Xcal^\bullet(\Tc)\times_{\Sc^\bullet(\Tc)}\Xcal^\bullet(\Tc)$ is an equivalence relation of sets.
\end{defin}

Here, as  customary, we write $\Ycal^\bullet(\Tc)=\Hom_\Sc(\Tc,\Ycal)$ for every $\Sc$-superscheme $\Ycal$.

Following \cite[I.5.1]{Knut71} we consider the notion of effective relation of superschemes, and the associated quotient superscheme.

\begin{defin}\label{def:effective} An equivalence relation of superschemes is effective if
\begin{enumerate}
\item there exists the cokernel $q\colon\Xcal\to\Zc$ of $(p_1,p_2)\colon \Rcal \rra \Xcal$, that is, $q\colon\Xcal\to\Zc$  is a categorical quotient;
\item   $\Rcal\simeq \Xcal\times_\Zc\Xcal$. 
\end{enumerate}
We then say that $\Zc$ is the \emph{quotient superscheme} of $\Rcal$.
\end{defin}

\begin{example}\label{ex:effective descent} Let $p\colon \Wc \to\Hcal$ be a morphism of  $\Sc$-superschemes, $\Xcal\hookrightarrow \Wc$ a sub-superscheme and $\Rcal=\Xcal\times_\Hcal\Xcal\hookrightarrow \Xcal\times_\Sc\Xcal$. Then $\Rcal$ is an equivalence relation of superschemes, and it is effective if and only if $\Xcal$ descends to a sub-superscheme $\Zc\hookrightarrow \Hcal$, that is, if and only if there exists a sub-superscheme $\Zc\hookrightarrow \Hcal$ such that $\Xcal=p^{-1}(\Zc)$.

\end{example}

Proposition \ref{prop:subdescent} can be stated as follows:

\begin{prop}[Grothendieck's effective descent for sub-superschemes]\label{prop:effective descent} With the notation of Example \ref{ex:effective descent}, assume  that $p\colon \Wc \to\Hcal$ is quasi-compact and faithfully flat. Then $\Rcal$ is effective if and only if $\Wc\times_\Hcal\Xcal\simeq\Xcal\times_\Hcal\Wc$ as sub-superschemes of $\Wc\times_\Hcal\Wc$.
\qed \end{prop}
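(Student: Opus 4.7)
The plan is to recognize this proposition as a direct restatement of the effective faithfully flat descent for sub-superschemes (Proposition \ref{prop:subdescent}) in the language of equivalence relations; the main work is thus the dictionary between the two formulations, after which the theorem cited earlier does the heavy lifting. Concretely, I would first unwind both conditions. By Example \ref{ex:effective descent}, the effectiveness of $\Rcal$ is by definition equivalent to the existence of a sub-superscheme $\Zc\hookrightarrow\Hcal$ with $\Xcal=p^{-1}(\Zc)=\Wc\times_\Hcal\Zc$. On the other hand, writing $p_1,p_2\colon \Wc\times_\Hcal\Wc\rra\Wc$ for the two projections, one has canonical identifications $p_1^{-1}(\Xcal)=\Xcal\times_\Hcal\Wc$ and $p_2^{-1}(\Xcal)=\Wc\times_\Hcal\Xcal$ as sub-superschemes of $\Wc\times_\Hcal\Wc$. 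Thus the hypothesis $\Wc\times_\Hcal\Xcal\simeq \Xcal\times_\Hcal\Wc$ is precisely the statement that the two pullbacks of $\Xcal$ under $p_1$ and $p_2$ agree, which is exactly a descent datum for the sub-superscheme $\Xcal$ relative to $p$ (the cocycle condition being automatic, since a sub-superscheme is determined by its ideal sheaf, so the gluing isomorphism is forced to be the identity).

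For the direct implication, assume $\Xcal = \Wc\times_\Hcal \Zc$ for some $\Zc\hookrightarrow\Hcal$. Then a routine manipulation of fibre products gives
\begin{equation*}
\Wc\times_\Hcal\Xcal \;\simeq\; \Wc\times_\Hcal(\Wc\times_\Hcal\Zc) \;\simeq\; (\Wc\times_\Hcal\Wc)\times_\Hcal\Zc \;\simeq\; (\Wc\times_\Hcal\Zc)\times_\Hcal\Wc \;\simeq\; \Xcal\times_\Hcal\Wc,
\end{equation*}
all identifications being the canonical ones inside $\Wc\times_\Hcal\Wc$. This step is essentially bookkeeping and presents no difficulty.

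For the converse, the hypothesis provides exactly a descent datum for the sub-superscheme $\Xcal\hookrightarrow\Wc$ along the quasi-compact, faithfully flat morphism $p\colon\Wc\to\Hcal$. Applying Proposition \ref{prop:subdescent} produces a sub-superscheme $\Zc\hookrightarrow\Hcal$ such that $\Xcal=p^{-1}(\Zc)$, which by Example \ref{ex:effective descent} means exactly that $\Rcal$ is effective with quotient $\Zc$. The only non-trivial input in the entire argument is Proposition \ref{prop:subdescent} itself, i.e.\ the super version of Grothendieck's effective flat descent for sub-superschemes, which has already been established; the present proposition is then a purely formal reformulation, and I do not anticipate any real obstacle beyond ensuring that the identifications $p_i^{-1}(\Xcal) = \Xcal\times_\Hcal\Wc$ and $\Wc\times_\Hcal\Xcal$ are spelled out correctly so that the descent hypothesis matches the statement.
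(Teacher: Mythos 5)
Your proof is correct and takes the same route as the paper: the paper offers no separate argument for this proposition, presenting it (note the immediate \qed) as exactly the reformulation of Proposition \ref{prop:subdescent} that you spell out, namely identifying $p_1^{-1}(\Xcal)=\Xcal\times_\Hcal\Wc$ and $p_2^{-1}(\Xcal)=\Wc\times_\Hcal\Xcal$ so that the hypothesis is a descent datum, with the forward implication being formal fibre-product bookkeeping and Example \ref{ex:effective descent} supplying the translation between effectiveness and $\Xcal=p^{-1}(\Zc)$. The one small point to be aware of (which the paper glosses over just as you do) is that Proposition \ref{prop:subdescent} is stated for \emph{closed} sub-superschemes, whereas here $\Xcal\hookrightarrow\Wc$ is an arbitrary sub-superscheme, so strictly speaking one descends the schematic closure and its open complement separately, using that a quasi-compact faithfully flat morphism locally of finite type is open.
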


\begin{prop}\label{prop:flatproperquot} Let $f\colon\Xcal \to \Sc$ be 
a morphism of superschemes such that  the  Hilbert superscheme  {$\Hcal=\SHilb(\Xcal/\Sc)$} exists.
An equivalence relation $\Rcal\hookrightarrow \Xcal\times_\Sc\Xcal$ such that the second projection $p_2\colon \Rcal\to\Xcal$ is flat and proper  is effective. The quotient  morphism $q\colon \Xcal\to\Zc$ is flat and proper,  and  {$\Zc$ is a closed sub-superscheme of the Hilbert superscheme $\Hcal$, so that $\Zc$ is of finite type and separated over $\Sc$ when $\Hcal$ is so.}
\end{prop}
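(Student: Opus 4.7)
The plan is to use the universal property of $\Hcal$ to build a morphism $\alpha\colon\Xcal\to\Hcal$ classifying the equivalence classes, and to take $\Zc$ to be its scheme-theoretic image. Viewed through the second projection $p_2$, the closed sub-superscheme $\Rcal\hookrightarrow\Xcal\times_\Sc\Xcal$ is a flat and proper family of closed sub-superschemes of $\Xcal$ parametrized by $\Xcal$, so Theorem~\ref{thm:hilbrepres} gives a unique $\alpha\colon\Xcal\to\Hcal$ with $\Rcal=(\alpha\times\Id_\Xcal)^{-1}(\Ucal)$, where $\Ucal\hookrightarrow\Hcal\times_\Sc\Xcal$ is the universal family. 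The crucial identification is then $\Rcal=\Xcal\times_\Hcal\Xcal$ as closed sub-superschemes of $\Xcal\times_\Sc\Xcal$. The inclusion $\Rcal\subseteq\Xcal\times_\Hcal\Xcal$ (equivalent to $\alpha\circ p_1=\alpha\circ p_2$) is proved by showing, via transitivity (in the form $\Rcal_{12}\cap\Rcal_{13}=\Rcal_{12}\cap\Rcal_{23}$ inside $\Xcal\times_\Sc\Xcal\times_\Sc\Xcal$), that $(p_1\times\Id_\Xcal)^{-1}\Rcal$ and $(p_2\times\Id_\Xcal)^{-1}\Rcal$ coincide in $\Rcal\times_\Sc\Xcal$. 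The reverse inclusion uses reflexivity: for a $\Tc$-point $(\xi_1,\xi_2)$ of $\Xcal\times_\Hcal\Xcal$, the diagonal section $\delta\colon\Xcal\hookrightarrow\Rcal$ pulled back along $\xi_2$ puts the graph of $\xi_2$ inside $(\xi_2\times\Id_\Xcal)^{-1}\Rcal=(\xi_1\times\Id_\Xcal)^{-1}\Rcal$, which translates to $(\xi_1,\xi_2)$ factoring through $\Rcal$.

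Next I would let $\Zc\hookrightarrow\Hcal$ be the scheme-theoretic image of $\alpha$ (a closed sub-superscheme since $\alpha$ is quasi-compact), and verify that the corestriction $q\colon\Xcal\to\Zc$ is faithfully flat and proper. Properness of $q$ is verified by the valuative criterion: given a DVR $R$ and a compatible $K$-point of $\Xcal$ together with an $R$-point of $\Zc$, the corresponding $R$-family of closed sub-superschemes of $\Xcal$ (pulled back from the universal family) is flat and proper over $\Spec R$, the given $K$-point lies in its generic fibre, and properness extends it to an $R$-point of $\Xcal$. Flatness of $q$ follows from the cartesian diagram
\[
\xymatrix{
\Rcal\ar[r]^{p_1}\ar[d]_{p_2}&\Xcal\ar[d]^{q}\\
\Xcal\ar[r]^{q}&\Zc
}
\]
by a faithfully-flat-descent argument combining the flatness of $p_2$ with the diagonal section of $p_2$, which supplies the local section needed to work around the apparent circularity.

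Finally, since $\Zc\hookrightarrow\Hcal$ is a closed sub-superscheme we have $\Xcal\times_\Zc\Xcal=\Xcal\times_\Hcal\Xcal=\Rcal$, and combining this with the fact that $q$ is fpqc-surjective and proper, Proposition~\ref{prop:morphdescent} on faithfully flat descent for morphisms shows that $\Zc$ is the categorical quotient of $\Rcal\rra\Xcal$, proving effectiveness; the finite type and separatedness assertions for $\Zc\to\Sc$ are inherited from $\Hcal\to\Sc$ through the closed immersion. The technically delicate step is establishing the flatness and properness of $q$ in the previous paragraph, where the equivalence-relation structure---in particular the existence of the diagonal section of $p_2$---must be used carefully to push the descent argument through.
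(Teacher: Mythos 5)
Your opening moves coincide with the paper's: you classify $\Rcal$ via $p_2$ by a morphism $\alpha\colon\Xcal\to\Hcal$ and identify $\Rcal=\Xcal\times_\Hcal\Xcal$ (the paper asserts this identification with a one-line argument; your reflexivity/transitivity sketch is a fine way to verify it). The genuine gap is exactly the step you flag as delicate: flatness of $q$. Descent of flatness along the cartesian square $\Rcal=\Xcal\times_\Zc\Xcal$ requires the morphism you descend along to be \emph{already known} faithfully flat, and that morphism is $q$ itself, so the argument is circular as you note. The diagonal section $\delta$ of $p_2$ does not repair this: it only yields surjectivity of $p_2$ (and $p_1$). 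The one standard way a flat surjection on the source could help is that flatness is fppf-local on the source, i.e.\ if $q\circ p_2$ were flat then $q$ would be flat; but $q\circ p_2=q\circ p_1$, and neither composite can be shown flat without already knowing $q$ is, since $p_1,p_2$ are flat over $\Xcal$, not over $\Zc$. A cheap counterexample to the bare pattern (drop the scheme-theoretic-image hypothesis): $\Xcal=\Spec k\hookrightarrow\Zc=\Spec k[\epsilon]$ has $\Xcal\times_\Zc\Xcal=\Spec k$ with $p_2=\Id$ flat and proper, yet $q$ is not flat; so any correct argument must use the image condition in an essential way, and yours does not. Secondary gaps: your valuative-criterion argument for properness presupposes that $q$ is separated and of finite type (Corollary \ref{cor:valuative} requires this), which you never establish, and the surjectivity of $\Xcal$ onto the scheme-theoretic image is used silently.

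The missing idea — and the paper's actual route — is to embed $\Xcal$ into the universal family rather than work downstairs. Since $x_1\sim x_2$ iff $(x_1,g(x_2))$ lies in the universal sub-superscheme $\Wc\hookrightarrow\Xcal\times_\Sc\Hcal$, reflexivity makes the graph $\Gamma_g\colon\Xcal\hookrightarrow\Wc$ a closed immersion; one then applies effective descent for closed sub-superschemes (Proposition \ref{prop:effective descent}) along the faithfully flat, proper projection $\Wc\to\Hcal$, the descent condition $\Wc\times_\Hcal\Gamma_g(\Xcal)=\Gamma_g(\Xcal)\times_\Hcal\Wc$ inside $\Wc\times_\Hcal\Wc$ being precisely symmetry and transitivity of $\Rcal$. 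This produces $\Zc\hookrightarrow\Hcal$ with $\Gamma_g(\Xcal)=\Wc\times_\Hcal\Zc$, so that $q\colon\Xcal\to\Zc$ is a base change of $\Wc\to\Hcal$ and is therefore flat and proper \emph{for free} — no valuative criterion and no descent-of-flatness needed. Your $\Zc$ is in fact the same superscheme: since scheme-theoretic images commute with the flat base change $\Wc\to\Hcal$, and the induced map $\Rcal\simeq\Wc\times_\Hcal\Xcal\to\Wc$ factors as $\Gamma_g\circ p_1$ with $p_1$ faithfully flat, the preimage in $\Wc$ of the scheme-theoretic image of $\alpha$ is $\Gamma_g(\Xcal)$; but reaching that identification is exactly the descent step you omitted. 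Once it is in place, your final paragraph (effectiveness via $\Xcal\times_\Zc\Xcal=\Rcal$ and Proposition \ref{prop:morphdescent}, plus inheritance of finite type and separatedness from $\Hcal$) goes through.
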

\begin{proof} By   hypothesis $\Rcal$ is an $\Xcal$-valued point of the Hilbert superscheme  {$\Hcal$}, that is, it defines a morphism of superschemes $g\colon \Xcal \to \Hcal$ such that  $\Rcal\hookrightarrow \Xcal\times_\Sc\Xcal$ is the pull-back by $1\times g\colon \Xcal\times_\Sc\Xcal \to \Xcal \times_\Sc \Hcal$ of the  universal  closed sub-superscheme $\Wc\hookrightarrow \Xcal\times_\Sc\Hcal$. 

Notice that for every $\Sc$-superscheme $\Tc$  two $\Tc$-valued points $x_1,x_2\colon \Tc\to \Xcal$ are equivalent (i.e. $(x_1,x_2)\in \Rcal^\bullet(\Tc)$) if and only if $g(x_1)=g(x_2)$. Then $\Rcal\iso\Xcal\times_\Hcal \Xcal$. Moreover, as $x_1$, $x_2$ are equivalent if and only if $(x_1,g(x_2))\in \Wc^\bullet(\Tc)$, the graph $\Gamma_g\colon \Xcal \hookrightarrow  \Xcal \to \Hcal$ of $g$ gives a closed immersion $\Gamma_g\colon \Xcal \hookrightarrow  \Wc$. We conclude by applying Proposition \ref{prop:effective descent} to the natural morphism $\Wc\to \Hcal$, which is faithfully flat, and to the closed sub-superscheme $\Xcal\iso \Gamma_g(\Xcal)\hookrightarrow \Wc$. The required condition that $\Wc\times_\Hcal \Gamma_g(\Xcal)$ and $ \Gamma_g(\Xcal)\times_\Hcal\Xcal$ coincide in  $\Wc\times_\Hcal\Wc$ is readily verified.

 The quotient morphism $q\colon\Xcal\to\Zc$ is flat and proper as it is obtained from  $\Wc\to \Hcal$ by base change. Moreover,  $\Zc$ is a closed sub-superscheme of the Hilbert superscheme $\Hcal$  {and we conclude}.
\end{proof}

 The following lemma   corresponds to \cite[Exercise 9.4.9]{Kle05} and has an analogous proof.
\begin{lemma}\label{lem:quotrepres} Let $q\colon\Ss\F\to\Sc\Gc$ be a morphism of \'etale sheaves on $\Sc$-superschemes. Then $q$ is an epimorphism of sheaves if and only if it is the cokernel (coequalizer) of the two morphisms $(p_1,p_2)\colon \Ss\F\times_{\Ss\Gc}\Ss\F \rra \Ss\F$.\qed
\end{lemma}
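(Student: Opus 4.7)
The plan is to prove the equivalence by invoking the fact that the category of \'etale sheaves of sets on superschemes is a topos (or at least a regular category), in which every epimorphism is effective, i.e., is the coequalizer of its own kernel pair. Both directions then reduce to standard universal-property arguments.

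First I would handle the easy direction $(\Leftarrow)$: if $q$ is the coequalizer of $(p_1,p_2)$, then by the universal property any two morphisms $g,h\colon \Sc\Gc\to\Sc\Hc$ satisfying $g\circ q=h\circ q$ must agree, because $g\circ q$ automatically coequalizes $(p_1,p_2)$ (recall $q\circ p_1=q\circ p_2$ by construction of the fibre product) and so factors uniquely; hence $q$ is an epimorphism.

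For the converse $(\Rightarrow)$, suppose $q$ is an epimorphism of sheaves. Let $\Sc\Hc$ denote the coequalizer of $(p_1,p_2)\colon\Ss\F\times_{\Sc\Gc}\Ss\F\rra\Ss\F$ in the category of \'etale sheaves, with canonical morphism $\pi\colon\Ss\F\to\Sc\Hc$. Since $q\circ p_1=q\circ p_2$, the universal property of $\Sc\Hc$ yields a unique morphism $\bar q\colon\Sc\Hc\to\Sc\Gc$ with $\bar q\circ\pi=q$. The assumption that $q$ is epi implies at once that $\bar q$ is epi as well, so the remaining task is to show that $\bar q$ is a monomorphism; this is the main (but entirely standard) obstacle.

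To prove $\bar q$ is a monomorphism one works sectionwise on an $\Sc$-superscheme $\Tc$. The coequalizer $\Sc\Hc$ is the \'etale sheafification of the presheaf $\Tc\mapsto \Ss\F(\Tc)/\sim$, where $x\sim y$ iff $(x,y)$ lifts to a section of $\Ss\F\times_{\Sc\Gc}\Ss\F$ over $\Tc$, i.e.\ iff $q(x)=q(y)$ in $\Sc\Gc(\Tc)$. Thus two local sections of $\Sc\Hc$ have the same image under $\bar q$ precisely when they are identified in $\Sc\Hc$, which is exactly injectivity of $\bar q$ on sections; passing to \'etale sheafification preserves this since sheafification is exact. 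Hence $\bar q$ is both mono and epi, hence an isomorphism (in the topos of sheaves of sets), and $q\cong\bar q\circ\pi$ is the coequalizer of $(p_1,p_2)$, as required.
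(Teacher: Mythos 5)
Your proof is correct and is essentially the paper's: the lemma is stated there without argument, with a pointer to Kleiman's Exercise 9.4.9 in \cite{Kle05}, whose standard solution is exactly what you wrote, namely that in the category of \'etale sheaves every epimorphism is effective (the coequalizer of its kernel pair). Your key steps check out: fibre products of sheaves are computed sectionwise, so the presheaf coequalizer of $(p_1,p_2)$ is the quotient by the (already transitive) relation $q(x)=q(y)$, and the induced map to the target is a presheaf monomorphism preserved by sheafification (only left exactness is needed here); the final appeal to balancedness can, if you prefer, be replaced by the direct observation that a sectionwise-injective epimorphism of sheaves is bijective on sections, since local lifts of a given section agree on overlaps by injectivity and hence glue.
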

 
 The following result will be necessary to get a  proof of the representability of the super Picard sheaf, and is a  {strengthened}  version for superschemes of \cite[Lemma 9.9]{Kle05}.
 
\begin{prop}\label{prop:quotsmooth}
Let $f\colon\Xcal \to \Sc$ be a 
morphism of superschemes  {such that the Hilbert superscheme $\SHilb(\Xcal/\Sc)$  exists}.
\begin{enumerate}
\item Let $\Rcal\hookrightarrow \Xcal\times_\Sc\Xcal$ be an equivalence relation on $f\colon\Xcal \to \Sc$ such that the second projection $p_2\colon \Rcal\to\Xcal$ is smooth and proper, so that it is effective by Proposition \ref{prop:flatproperquot}. Then, the quotient morphism $q\colon \Xcal\to\Zc$ is smooth and proper as well  and $\Zc$ is a closed sub-superscheme of $\SHilb(\Xcal/\Sc)$. Moreover $\Rcal^\bullet\hookrightarrow \Xcal^\bullet\times\Xcal^\bullet$ is an effective equivalence relation of \'etale sheaves on $\Sc$-superschemes and $q\colon \Xcal^\bullet\to\Zc^\bullet$ is the corresponding quotient morphism.
\item Let $\Ss\Gc$ be an \'etale sheaf on $\Sc$-superschemes and $\psi\colon\Xcal^\bullet\to \Ss\Gc$ an epimorphism of \'etale sheaves such that 
\begin{enumerate}
\item
$\Xcal^\bullet\times_{\Ss\Gc}\Xcal^\bullet$ is representable by a sub-superscheme $\Rcal\hookrightarrow \Xcal\times_\Sc\Xcal$.
\item $\Rcal$ is an equivalence relation of superschemes that satisfies the conditions of (1);
\end{enumerate}
Then,  $\Ss\Gc$ is representable by $\Zc$ and $\psi=q$.
 \end{enumerate}
\end{prop}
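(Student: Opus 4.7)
The plan is to reduce everything to three facts: (i) smoothness of morphisms of superschemes descends under faithfully flat maps, (ii) smooth surjections admit étale-local sections and are therefore epimorphisms of étale sheaves, and (iii) Lemma \ref{lem:quotrepres}, which characterizes coequalizers of étale sheaves as epimorphisms that coincide with the coequalizer of their kernel pair.

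For part (1), I would start from the conclusion of Proposition \ref{prop:flatproperquot}, which already gives effectivity of $\Rcal$ as a relation of superschemes, the flatness and properness of $q\colon\Xcal\to\Zc$, and the closed immersion $\Zc\hookrightarrow\SHilb(\Xcal/\Sc)$. The only additional geometric fact needed is smoothness of $q$. The idea is to base change $q$ along itself: by effectivity, $\Xcal\times_{\Zc}\Xcal\simeq\Rcal$, and under this isomorphism the base change of $q$ becomes the second projection $p_2\colon\Rcal\to\Xcal$, which is smooth by hypothesis. Since $q$ is flat and surjective, hence faithfully flat (and $p_2$ is faithfully flat as well, being the base change of a faithfully flat morphism), smoothness descends, so $q$ is smooth. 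Next, for the sheaf-theoretic assertion, apply Yoneda to the isomorphism $\Rcal\simeq\Xcal\times_{\Zc}\Xcal$ to obtain $\Rcal^\bullet\simeq\Xcal^\bullet\times_{\Zc^\bullet}\Xcal^\bullet$. Since $q$ is smooth surjective, it has sections over an étale cover of $\Zc$, so $q\colon\Xcal^\bullet\to\Zc^\bullet$ is an epimorphism of étale sheaves on $\Sc$-superschemes. Lemma \ref{lem:quotrepres} then identifies $\Zc^\bullet$ with the coequalizer of $(p_1,p_2)\colon\Rcal^\bullet\rightrightarrows\Xcal^\bullet$, which is precisely the claim that $\Rcal^\bullet\hookrightarrow\Xcal^\bullet\times\Xcal^\bullet$ is an effective equivalence relation of sheaves with quotient $\Zc^\bullet$.

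For part (2), the strategy is pure sheaf formalism once (1) is established. By hypothesis, $\psi\colon\Xcal^\bullet\to\Ss\Gc$ is an epimorphism of étale sheaves with kernel pair $\Rcal^\bullet\simeq\Xcal^\bullet\times_{\Ss\Gc}\Xcal^\bullet$. By Lemma \ref{lem:quotrepres}, $\Ss\Gc$ is the coequalizer of $(p_1,p_2)\colon\Rcal^\bullet\rightrightarrows\Xcal^\bullet$ in the category of étale sheaves. But by part (1) applied to $\Rcal$, the superscheme quotient $\Zc$ also represents this coequalizer. By uniqueness of coequalizers, the map $\psi$ factors through a canonical isomorphism $\Zc^\bullet\iso\Ss\Gc$, proving representability and identifying $\psi$ with $q^\bullet$.

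The main obstacle I anticipate is justifying rigorously that smoothness descends under faithfully flat morphisms in the super setting, and that smooth surjections of superschemes admit étale-local sections. The first amounts to checking that the usual flat descent for formally smooth morphisms, combined with finite presentation (which is built into our locally-of-finite-type conventions), goes through for superschemes; this is implicit in the smoothness theory underlying Corollary \ref{cor:grassproj} and Section \ref{s:smoothmorphisms}, but needs to be invoked carefully. The second — existence of étale-local sections for smooth surjections — follows from the super version of the structure theorem for smooth morphisms, which locally expresses them as étale maps to an affine superspace over the base; this gives the required sections after étale localization. Once these two inputs are granted, the rest of the argument is a formal application of Lemma \ref{lem:quotrepres} together with the effectivity conclusion already supplied by Proposition \ref{prop:flatproperquot}.
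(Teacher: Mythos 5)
Your proposal is correct and takes essentially the same route as the paper: the paper likewise deduces smoothness of $q$ from the cartesian square in which the base change of $q$ along the faithfully flat $q$ itself is the smooth $p_2$ (using $\Rcal\simeq\Xcal\times_{\Zc}\Xcal$ from effectivity), proves that $q\colon\Xcal^\bullet\to\Zc^\bullet$ is an epimorphism of \'etale sheaves by producing \'etale-local sections of the smooth surjection $q_\Tc$, and settles (2) by uniqueness of the coequalizer via Lemma \ref{lem:quotrepres}. The two inputs you flag as needing care --- faithfully flat descent of smoothness and \'etale-local sections of smooth surjections in the super setting --- are invoked in the paper's proof with the same degree of implicitness, so they are not gaps relative to the published argument.
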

 
\begin{proof}
 (1). Since the horizontal $q$ in the cartesian diagram
$$
\xymatrix{ \Rcal\ar[r]^{p_1}\ar[d]^{p_a} & \Xcal\ar[d]^q \\
\Xcal\ar[r]^q & \Zc }
$$
is flat and surjective, and so faithfully flat, and $p_2$ is smooth, the   vertical $q$ is smooth. Since $\Rcal^\bullet\simeq \Xcal^\bullet\times{\Zc^\bullet}\Xcal^\bullet$ by Lemma \ref{lem:quotrepres} we have only to see that $q\colon \Xcal^\bullet\to\Zc^\bullet$ is a surjective morphisms of \'etale sheaves.

Let $\Tc\to\Sc$ be a $\Sc$-superscheme and $\xi\colon \Tc \to \Zc$ an element of $\Zc^\bullet(\Tc)$. Then, $q_\Tc\colon \Xcal_\Tc\to \Zc_\Tc$ is smooth, and thus  there exists an \'etale covering $\Vc \to \Tc$ such that $\Xcal_\Vc \to \Vc$ has a section $\sigma\colon \Vc\hookrightarrow \Xcal_\Vc$. The image of $\sigma$ maps to the element of $\Zc^\bullet(\Vc)$ induced by $\xi$, and we finish.

 (2). By (1) $q\colon\Xcal^\bullet\to \Zc^\bullet$ is the cokernel of the two morphisms $\Rcal^\bullet\rra\Xcal^\bullet$, and by Lemma \ref{lem:quotrepres} $\psi\colon \Xcal^\bullet\to\Ss\Gc$ is also the cokernel of the same arrows. Since the cokernel is unique, one has $\Ss\Gc\simeq \Zc$ and $\psi\simeq q$.
\end{proof}

\begin{remark}
If $\Rcal\hookrightarrow \Xcal\times_\Sc\Xcal$ is an effective \'etale equivalence relation of superschemes on $\Xcal\to \Sc$ (by this we mean that $p_1$ and $p_2$ are \'etale and proper), the quotient morphism $q\colon\Xcal\to\Zc$ is an \'etale morphism. This can be seen as in Proposition \ref{prop:quotsmooth} (see also \cite[I.5.6]{Knut71}). Again by Proposition \ref{prop:quotsmooth}, the induced equivalence categorical relation  $\Rcal^\bullet\hookrightarrow \Xcal^\bullet\times\Xcal^\bullet$ on the category of  \'etale sheaves on  $\Sc$-superschemes is \emph{effective} and   $q\colon\Xcal^\bullet\to\Zc^\bullet$ is a categorical quotient. The converse is not true, and this is what motivates the definition of Artin algebraic superspace \cite[Def. 2.19]{BrHR19}.
\end{remark}

\subsection{Relative positive divisors. The Abel morphism}\label{ss:abel}

Let $f\colon \Xcal\to\Sc$ be a morphism of superschemes.

\begin{defin}\label{def:uperdiv} A relative positive superdivisor in $\Xcal/\Sc$ (or in $f$) is a closed sub-superscheme $j\colon \Zc\hookrightarrow \Xcal$ flat over $\Sc$ (i.e. such that $f\circ i\colon\Zc\to\Sc$ is flat), whose ideal sheaf $\Ic_\Zc$ in $\Xcal$ is an even line bundle.
\end{defin}

In this situation, we write
$$
\Oc_\Xcal(-\Zc):=\Ic_\Zc\,,\quad \Oc_\Xcal(\Zc):=\Oc_\Xcal(-\Zc)^{-1}\,.
$$

\begin{defin}\label{def:uperdivf} The functor of relative positive superdivisors is the functor $\Div_{\Xcal/\Sc}$ on $\Sc$-superschemes that associates to an $\Sc$-superscheme $\Tc\to\Sc$ the family $\Div_{\Xcal/\Sc}(\Tc)$ of the relative positive superdivisors in $f_\Tc\colon\Xcal_\Tc\to\Tc$.
\end{defin}

Since every relative positive divisor is a closed sub-superscheme which is flat over the base, there is a natural functor immersion into the super Hilbert functor:
$$
\Div_{\Xcal/\Sc}\hookrightarrow \SHilbf_{\Xcal/\Sc}\,.
$$

\begin{prop}\label{prop:divisors} If $f$ is proper, the above immersion is representable by open immersions. Then, if  {the Hilbert superscheme $\SHilb(\Xcal/\Sc)$ exists (see Theorem \ref{thm:hilbrepres2})},  $\Div_{\Xcal/\Sc}$ is representable by a  superscheme $\SDiv(\Xcal/\Sc)\to \Sc$ which is an open sub-superscheme of $\SHilb(\Xcal/\Sc)$, and then it is  {locally} of finite type and separated over $\Sc$.
\end{prop}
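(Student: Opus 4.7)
The plan is to show that for every $\Tc$-point $\Zc\hookrightarrow \Xcal_\Tc$ of $\SHilbf_{\Xcal/\Sc}$ there exists a unique open sub-superscheme $\Ucal\hookrightarrow \Tc$ with the following universal property: a morphism $\phi\colon\Tc'\to \Tc$ factors through $\Ucal$ if and only if $\Zc_{\Tc'}\subset \Xcal_{\Tc'}$ is a relative positive superdivisor of $\Xcal_{\Tc'}/\Tc'$. Applied to the universal closed sub-superscheme over $\SHilb(\Xcal/\Sc)$, this identifies $\SDiv(\Xcal/\Sc)$ with an open sub-superscheme of $\SHilb(\Xcal/\Sc)$, which then automatically inherits local finiteness of type and separatedness.

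First I would construct $\Ucal$. Consider the coherent ideal sheaf $\Ic_\Zc\subset \Oc_{\Xcal_\Tc}$. The locus $W\subset \Xcal_\Tc$ of points where $\Ic_\Zc$ is locally free of rank $(1,0)$ is open: if $\Ic_\Zc$ is freely generated by one even element near $x\in W$, any such generator extends to a section on a neighbourhood, and the induced morphism $\Oc\to \Ic_\Zc$ is an isomorphism on a possibly smaller open set by coherence of the kernel and cokernel. Since $f_\Tc\colon \Xcal_\Tc\to \Tc$ is proper, the image $K=f_\Tc(\Xcal_\Tc\setminus W)$ is closed in $T$, and I would let $\Ucal$ be the open sub-superscheme of $\Tc$ supported on $U=T\setminus K$.

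The ``if'' direction of the universal property is straightforward: when $\Tc'\to \Tc$ factors through $\Ucal$, $\Xcal_{\Tc'}$ lies over $\Xcal_\Ucal\subset W$, and flatness of $\Oc_\Zc$ over $\Tc$ ensures that $\Ic_{\Zc_{\Tc'}}$ is the pullback of the even line bundle $\Ic_\Zc|_{\Xcal_\Ucal}$. The heart of the proof is the converse. Assuming $\Ic_{\Zc_{\Tc'}}$ is an even line bundle on $\Xcal_{\Tc'}$, I would pick $t'\in T'$ with image $t\in T$ and show $\Xcal_t\subset W$. Applying the long Tor sequence to $0\to \Ic_\Zc\to \Oc_{\Xcal_\Tc}\to \Oc_\Zc\to 0$ and using flatness of $\Oc_\Zc$ over $\Tc$, one obtains both $\Ic_\Zc|_{\Xcal_t}\simeq \Ic_{\Zc_t}$ and the vanishing of $\Tor_1^{\Oc_\Tc}(\Ic_\Zc,\kappa(t))$. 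Since $\Xcal_{t'}\to \Xcal_t$ is faithfully flat and $\Ic_{\Zc_{t'}}\simeq \Ic_{\Zc_t}\otimes_{\kappa(t)}\kappa(t')$ is locally free of rank $(1,0)$, faithfully flat descent gives that $\Ic_{\Zc_t}$ itself is locally free of rank $(1,0)$. Then for each $x\in \Xcal_t$, super Nakayama produces a surjection $\Oc_{\Xcal_\Tc}\twoheadrightarrow \Ic_\Zc$ in a neighbourhood of $x$; the Tor vanishing shows that the restriction of its kernel $\Kc$ to $\Xcal_t$ coincides with the kernel of $\Oc_{\Xcal_t}\to \Ic_{\Zc_t}$, which is zero because this latter map is an isomorphism of rank $(1,0)$ locally free sheaves. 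A second application of super Nakayama forces $\Kc=0$ near $x$, so $\Xcal_t\subset W$ and hence $t\in U$, as required.

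The main obstacle is this final propagation: local freeness of $\Ic_{\Zc_{t'}}$ at geometric points over $t'$ must be translated into local freeness of $\Ic_\Zc$ along the entire fibre $\Xcal_t$ in the total space $\Xcal_\Tc$. The simultaneous use of faithfully flat descent along $\Xcal_{t'}\to \Xcal_t$, the Tor vanishing inherited from flatness of $\Oc_\Zc$ over $\Tc$, and two successive applications of the super Nakayama lemma is essential. By contrast, openness of $W$, properness of $f_\Tc$, and the compatibility of ideals with the base change $\Xcal_t\hookrightarrow \Xcal_\Tc$ are routine and already available from the preceding sections.
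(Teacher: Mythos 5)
Your construction is exactly the paper's: the same open locus $W\subset \Xcal_\Tc$ where $\Ic_\Zc$ is locally free of rank $(1,0)$, the same $\Ucal$ supported on $T\setminus f_\Tc(X_T\setminus W)$ via properness, and the same reduction of representability to this universal open. You go further than the paper, whose proof merely asserts the universal property of $\Ucal$, by actually attempting its hard direction — and precisely there your justification contains a genuine error. The vanishing of $\Tor_1^{\Oc_\Tc}(\Ic_\Zc,\kappa(t))$ does \emph{not} follow from flatness of $\Oc_\Zc$ over $\Tc$: the long exact Tor sequence of $0\to \Ic_\Zc\to\Oc_{\Xcal_\Tc}\to\Oc_\Zc\to 0$, together with $\Tor_i^{\Oc_\Tc}(\Oc_\Zc,\kappa(t))=0$ for $i\ge 1$, yields an isomorphism $\Tor_1^{\Oc_\Tc}(\Ic_\Zc,\kappa(t))\simeq \Tor_1^{\Oc_\Tc}(\Oc_{\Xcal_\Tc},\kappa(t))$, and the latter vanishes only when $f_\Tc$ is flat along $\Xcal_t$ — a hypothesis absent from the proposition, which assumes only that $f$ is proper. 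Without it, your second application of super Nakayama collapses: the sequence only shows that $\Kc\otimes\kappa(t)$ is a quotient of $\Tor_1^{\Oc_\Tc}(\Ic_\Zc,\kappa(t))$, which need not vanish.

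This is not a bookkeeping issue: the propagation step, and in fact the open-immersion statement under bare properness, is false. Take $\Sc=\Tc=\Spec k[\epsilon]$, $\Xcal_\Tc=\Proj k[\epsilon][x,y]/(\epsilon x)$ (proper but not flat over $\Tc$), and $\Zc=V_+(x)\simeq \Tc$, which is flat and proper over $\Tc$. On the fibre $\Xcal_t=\Ps^1_k$ over the unique point $t$, the subscheme $\Zc_t=\{[0:1]\}$ is a divisor; yet in the chart $y\neq 0$, with $u=x/y$ and $B=k[\epsilon][u]/(\epsilon u)$, one computes $\Ic_\Zc=(u)\simeq B/(\epsilon)$, which is not invertible at the origin of the fibre. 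Since $T$ consists of a single point, no open sub-superscheme of $\Tc$ can have the universal property, so $\Div_{\Xcal/\Sc}\hookrightarrow\SHilbf_{\Xcal/\Sc}$ is not representable by open immersions here. The remedy is to add flatness of $f$ (which holds in every application the paper makes of this proposition, e.g.\ throughout the Picard section, where $f$ is flat and superprojective): then $\Tor_1^{\Oc_\Tc}(\Oc_{\Xcal_\Tc},\kappa(t))=0$ is restored, and the rest of your argument — base change of the ideal via flatness of $\Oc_\Zc$, faithfully flat descent along $\Xcal_{t'}\to\Xcal_t$, and the two super Nakayama steps — is correct, matching the classical argument of Kleiman. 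Note that the paper's own one-line proof passes silently over exactly the point at which your argument breaks.
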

\begin{proof} One has to prove that, for every $\Sc$-superscheme $\Tc\to\Sc$ and every closed sub-superscheme $\Zc\hookrightarrow \Xcal_\Tc$ flat over $\Tc$, there is an open sub-superscheme $\Ucal\hookrightarrow \Tc$ with the following universal property. A morphism of superschemes $\Vc\to \Tc$ factors through $\Ucal\hookrightarrow \Tc$ if and only if $\Zc_\Vc\hookrightarrow \Xcal_\Vc$ is a positive relative superdivisor. This is equivalent to impose that the ideal sheaf  $\Ic_{\Zc_\Vc}$ is a locally free $\Oc_{\Xcal_\Vc}$-module. 
Since $\Ic_\Zc$ is coherent, the locus of the points where it is free is an open subset $W$ of $X_T$. Then $f _T(X_T-W)$ is closed in $T$ as $f$ is proper. One has then to take $U=T-f_T(X_T-W)$ and endow it with the natural structure $\Ucal$ of an open sub-superscheme of $\Tc$. The second part follows from the existence Theorem \ref{thm:hilbrepres}.
\end{proof}

\begin{defin}\label{def:Abelmor} The Abel morphism is the morphism of functors
$$
\Ab\colon \Div_{\Xcal/\Sc} \to {\SPicf_{\Xcal/\Sc}}\,,
$$
where, for an $\Sc$-superscheme $\Tc$, the morphism $\Div_{\Xcal/\Sc}(\Tc) \to {\SPicf_{\Xcal/\Sc}(\Tc)}$ is defined by
$$
\Zc \mapsto [\Oc_{\Xcal_\Tc}(\Zc)]
$$
for every relative positive superdivisor $\Zc\hookrightarrow \Xcal_\Tc$ in  $\Xcal_\Tc/\Tc$.
\end{defin}

 It defines an equivalence relation in the functor $\Div_{\Xcal/\Sc}$, the \emph{relative linear equivalence},  by saying that two relative positive divisors are relatively linearly equivalent when they have the same image by the Abel morphism, that is, when the duals of the corresponding ideal sheaves are isomorphic up to the pull-back of a line bundle on the base. The relative linear equivalence is then given by the fibre product functor
 $$
 \Div_{\Xcal/\Sc} \times_{\SPicf_{\Xcal/\Sc}} \Div_{\Xcal/\Sc} \hookrightarrow \Div_{\Xcal/\Sc}\times_\Sc  \Div_{\Xcal/\Sc}\,.
 $$

 On our way to prove Theorem \ref{thm:picrepres}, we take a flat superprojective morphism $f\colon\Xcal\to\Sc$ where $\Sc$ is assumed to be noetherian and connected. 
Let us denote by $\Oc_\Xcal(1)$ the  fixed ample line bundle  for $f\colon \Xcal \to \Sc$.
Since $f$ is flat, every even  line bundle  on $\Xcal$ has constant super Hilbert polynomial  on the various fibres. This means that the 
super Picard presheaf decomposes as the disjoint union of the various 
super Picard presheaves ${\SPicf^{\bP}_{\Xcal/\Sc}}$ of the classes of even line bundles $\Lcl$ \emph{such that the dual $\Lcl^{-1}$ has  super Hilbert polynomial $\bP$}, that is,
$$
\bP(n)=\chi(X_s, \Lcl^{-1}_s(n))
$$
for every $s\in S$. The same happens for the coresponding \'etale sheaves  and for the superschemes   they represent.
Let $\bH$ be the super Hilbert polynomial of $\Oc_\Xcal$, and $\bQ(n)=\bH(n)-\bP(n)$. Then the pre-image of {$\SPicf^{\bP}_{\Xcal/\Sc}$} by the Abel morphism is the functor $\Div_{\Xcal/\Sc}^\bQ$ of relative superdivisors of super Hilbert polynomial $\bQ$ and  $\Div_{\Xcal/\Sc}$ is the disjoint union of the various $\Div_{\Xcal/\Sc}^\bQ$.

The Abel morphism decomposes as the union of the various
$$
\Ab\colon \Div_{\Xcal/\Sc}^\bQ \to {\SPicf^{\bP}_{\Xcal/\Sc}}\,,
$$
and there are relative linear equivalences
 $$
 \Div_{\Xcal/\Sc}^\bQ \times_{\SPicf_{\Xcal/\Sc}} \Div_{\Xcal/\Sc}^\bQ \hookrightarrow \Div_{\Xcal/\Sc}^\bQ\times_\Sc  \Div_{\Xcal/\Sc}^\bQ\,.
 $$
 Proceeding as in Proposition \ref{prop:divisors}, we have
 \begin{prop}\label{prop:divisorsQ} $\Div_{\Xcal/\Sc}^\bQ$ is representable by a  superscheme $\SDiv(\Xcal/\Sc)^\bQ\to \Sc$ which is an open sub-superscheme of the super  Hilbert scheme $\SHilb(\Xcal/\Sc)^\bQ$, and then it is a sub-superscheme of a supergrassmannian over $\Sc$. In particular, it is of finite type and separated over $\Sc$.
\end{prop}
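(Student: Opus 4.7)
The plan is to deduce this statement directly from the results already proved, essentially by intersecting the two decompositions. First I would observe that by Theorem \ref{thm:hilbrepres}, the Hilbert superscheme $\SHilb(\Xcal/\Sc)$ decomposes as the disjoint union $\coprod_{\bQ} \SHilb^{\bQ}(\Xcal/\Sc)$ over all super Hilbert polynomials, and each component $\SHilb^{\bQ}(\Xcal/\Sc)$ is a closed sub-superscheme of a relative supergrassmannian over $\Sc$, in particular of finite type and separated over $\Sc$.

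Next, I would use Proposition \ref{prop:divisors}, which gives an open immersion $\SDiv(\Xcal/\Sc) \hookrightarrow \SHilb(\Xcal/\Sc)$ representing the sub-functor of relative positive superdivisors. Since the functor $\Div_{\Xcal/\Sc}^{\bQ}$ is, by its very definition, the intersection (i.e.\ the fibre product of functors) of $\Div_{\Xcal/\Sc}$ with $\SHilbf_{\Xcal/\Sc}^{\bQ}$, I would set
\[
\SDiv(\Xcal/\Sc)^{\bQ} := \SDiv(\Xcal/\Sc) \cap \SHilb^{\bQ}(\Xcal/\Sc),
\]
viewed as an open sub-superscheme of $\SHilb^{\bQ}(\Xcal/\Sc)$ (equivalently, as the preimage of $\SHilb^{\bQ}(\Xcal/\Sc)$ under the open immersion of Proposition \ref{prop:divisors}). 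A routine check on the functor of points shows that this superscheme represents $\Div_{\Xcal/\Sc}^{\bQ}$: a $\Tc$-valued point of $\SHilb^{\bQ}(\Xcal/\Sc)$ lies in $\SDiv(\Xcal/\Sc)^{\bQ}$ exactly when the corresponding closed sub-superscheme $\Zc \hookrightarrow \Xcal_{\Tc}$ has locally free ideal sheaf of rank $(1,0)$, which is the condition defining a relative positive superdivisor.

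Finally, the stated properties are inherited: $\SDiv(\Xcal/\Sc)^{\bQ}$ is an open sub-superscheme of $\SHilb^{\bQ}(\Xcal/\Sc)$, which in turn is a closed sub-superscheme of a supergrassmannian over $\Sc$; hence $\SDiv(\Xcal/\Sc)^{\bQ}$ is itself a sub-superscheme of that supergrassmannian, of finite type and separated over $\Sc$. There is no real obstacle here, as all the non-trivial content (openness of the positive-superdivisor condition inside the Hilbert functor, and the embedding of each $\SHilb^{\bQ}$ into a supergrassmannian) has already been established; the present statement is just the combination of Proposition \ref{prop:divisors} with the decomposition of $\SHilb(\Xcal/\Sc)$ into components indexed by the super Hilbert polynomial.
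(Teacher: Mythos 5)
Your proposal is correct and takes essentially the same approach as the paper, whose entire proof is the remark ``Proceeding as in Proposition \ref{prop:divisors}'': one fixes the super Hilbert polynomial and reuses the openness of the positive-superdivisor condition inside the Hilbert functor, together with the construction of $\SHilb^{\bQ}(\Xcal/\Sc)$ as a sub-superscheme of a supergrassmannian. Your packaging of this as the intersection of the open sub-superscheme $\SDiv(\Xcal/\Sc)$ with the open-and-closed component $\SHilb^{\bQ}(\Xcal/\Sc)$ is just a formal restatement of that same argument, with no gap.
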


\begin{remark} The existence of the superscheme of positive divisors $\SDiv(\Xcal/\Sc)^\bQ$, in the particular case of a smooth supercurve $f\colon\Xcal \to S$  (i.e., the relative dimension is $(1,1)$) over an ordinary scheme $S$, is known from the 90s \cite{DoHeSa93}.
Since $\Oc_\Xcal\simeq \Oc_X\oplus \Lcl
$
for a line bundle $\Lcl$ on $X$, one can define the \emph{dual supercurve} as the superscheme with the same bosonic reduction $X$ and with structure sheaf  $\Oc_{\Xcal^\vee}=\Oc_X\oplus (\kappa_{X/S}\otimes_{\Oc_X}\Lcl^{-1})$.
In \cite{DoHeSa93} it was proved that the functor of relative positive divisors of degree 1 is representable by $\Xcal^\vee$ and that the universal relative positive divisor in $\Xcal^\vee\times_S \Xcal \to \Xcal^\vee$
is the Manin superdiagonal. For higher degrees, the functor of relative positive divisors of degree $q$ is representable by the symmetric power $\mathrm{Sym}^q_S(\Xcal^\vee)$ of the dual supercurve, that is, $\SDiv(\Xcal/S)^q\simeq \mathrm{Sym}^q_S(\Xcal^\vee)$. This is a smooth superscheme over $S$ of relative dimension $(q,q)$.
\end{remark}

The  strategy for the proof of Theorem \ref{thm:picrepres} is the following.
\begin{strategy}\label{st:pic}
\ 
\begin{enumerate}
\item There is an open subfunctor of $ \Div_{\Xcal/\Sc}$, given by the divisors  such that the dual of their ideal sheaf is acyclic and generated by   global sections, which is representable by a superscheme $\Dsc_a$.
\item The relative linear equivalence for $\Dsc_a$ is given by a sub-superscheme   $\Rcal$ of $\Dsc_a\times_\Sc \Dsc_a$, that is, $\Rcal$ defines an equivalence relation of superschemes.  {Analogously, the relative linear equivalence for $\Dsc_a^\bQ$ is given by a sub-superscheme   $\Rcal^\bQ$ of $\Dsc_a^\bQ\times_\Sc \Dsc_a^\bQ$.}
\item Moreover, when  $f$ is flat and superprojective with geometrically integral fibres, and is cohomologically flat in dimension $0$,
then  for every super polynomial $\bQ$, $\Rcal^\bQ$ is proper and flat over  $\Dsc_a^\bQ$ w.r.t.~the second projection, so that  the quotient superscheme exists by Proposition \ref{prop:flatproperquot}.
Notice that Proposition \ref{prop:flatproperquot} can be applied here because by Proposition \ref{prop:divisorsQ} $\Dsc_a^\bQ$   is a sub-superscheme of a supergrassmannian, so that Corollary \ref{cor:grassproj} and Theorem \ref{thm:hilbrepres2} imply that the Hilbert superscheme of $\Dsc_a^\bQ/\Sc$ exists.
\item  This quotient superscheme represents the open subfuntor {$\SPicf^{\bP}_{a,\Xcal/\Sc\et}$} of the even super Picard sheaf associated to the classes of even line bundles that satisfy the following properties: they
 are relatively acyclic, are generated by their global sections, and their duals have super Hilbert polynomial $\bP$. The disjoint union of the various quotient superschemes for all the super Hilbert polynomial $\bP$ represents the open subfuntor {$\SPicf_{a,\Xcal/\Sc\et}$} of the 
 super Picard sheaf associated to the classes of even line bundles that relatively acyclic and  are generated by their global sections.
\item Finally, one proves that the 
super Picard sheaf and its open subsheaf {$\SPicf_{a,\Xcal/\Sc\et}$} satisfy the conditions of Proposition \ref{prop:opengoup}. This implies that 
 the even super Picard functor is representable, thus finishing the proof of Theorem \ref{thm:picrepres}.
\end{enumerate}
\end{strategy}

\subsection{Representability of acyclic divisors}\label{ss:acyclicdiv}

In this subsection we prove (1) of Strategy \ref{st:pic}, assuming that $f\colon\Xcal\to\Sc$ is proper  and flat.

Let {$\SPicf_{a,\Xcal/\Sc}$} be the subfunctor of {$\SPicf_{\Xcal/\Sc}$} given by the even line bundles that are relatively acyclic and are generated by their global sections, and let
$$
\Div_{a,\Xcal/\Sc}=\Ab^{-1}({\SPicf_{a,\Xcal/\Sc}})
$$
be the functor of the relative positive supervisors such that the duals of their ideal sheaf are relatively acyclic and generated by their global sections.

\begin{prop}\label{prop:acyclic} The functor morphisms
$$
{\SPicf_{a,\Xcal/\Sc}\hookrightarrow\SPicf_{\Xcal/\Sc}}\,,\quad \Div_{a,\Xcal/\Sc}\hookrightarrow \Div_{\Xcal/\Sc}
$$
are representable by open immersions. So, if $f\colon\Xcal\to\Sc$ is superprojective, $\Div_{a,\Xcal/\Sc}$ is representable by an open sub-superscheme $\SDiv(a,\Xcal/\Sc)\to \Sc$ of the superscheme $\SDiv(\Xcal/\Sc)$ $\to \Sc$ of relative positive superdivisors of $\Xcal/\Sc$.
 {Analogously, $\Div_{a,\Xcal/\Sc}^\bQ$ is representable by an open sub-superscheme $\SDiv(a,\Xcal/\Sc)^\bQ\to \Sc$ of $\SDiv(\Xcal/\Sc)^\bQ$.}
\end{prop}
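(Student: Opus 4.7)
The plan is to show, for both functors simultaneously, that the conditions ``relatively acyclic'' and ``generated by global sections on the fibres'' cut out open loci in the base. The key input is the semicontinuity Theorem \ref{thm:semicontinuity}, the cohomology base change machinery of Proposition \ref{prop:locfree}, and properness of $f$ combined with super Nakayama.

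Fix a morphism $\phi\colon \Tc\to\Sc$ and consider a $\Tc$-valued point of either $\SPicf_{+\Xcal/\Sc}$ or $\Div_{\Xcal/\Sc}$; in both cases we obtain a coherent even line bundle $\Lcl$ on $\Xcal_\Tc$, flat over $\Tc$ (in the Picard case $\Lcl$ is determined only up to twisting by $f_\Tc^\ast\Nc$ for some even line bundle $\Nc$ on $\Tc$, but the properties of being relatively acyclic and of being fibrewise generated by global sections are insensitive to such twists, since $\Nc_t$ is a one-dimensional $\kappa(t)$-space). First I would produce the open ``acyclicity'' locus $\Tc_{ac}\hookrightarrow\Tc$: by Corollary \ref{cor:cohombound2} only finitely many indices $i$ can give nonzero $H^i(X_t,\Lcl_t)$, and Theorem \ref{thm:semicontinuity} asserts that for each such $i>0$ the set $\{t\in T\mid \dim_{\kappa(t)}H^i(X_t,\Lcl_t)>(0,0)\}$ is closed; intersecting the complements gives an open set, which we endow with the induced open sub-superscheme structure $\Tc_{ac}\hookrightarrow\Tc$.

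Next, on $\Tc_{ac}$ the hypothesis $H^i(X_t,\Lcl_t)=0$ for $i>0$ together with flatness of $\Lcl$ over $\Tc$ allows one to invoke Proposition \ref{prop:locfree}: the direct image $f_{\Tc_{ac}\ast}\Lcl$ is locally free and, moreover, its formation commutes with any further base change $\Vc\to\Tc_{ac}$. Consequently the evaluation map
\[
\varepsilon\colon f_{\Tc_{ac}}^\ast f_{\Tc_{ac}\ast}\Lcl\longrightarrow \Lcl
\]
restricts on each fibre $X_t$ to the usual evaluation $H^0(X_t,\Lcl_t)\otimes_{\kappa(t)}\Oc_{X_t}\to \Lcl_t$. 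Hence $\Lcl_t$ is generated by its global sections if and only if $\varepsilon$ is surjective at every point of $X_t$, i.e.\ if and only if the fibre $\mathcal{C}_t$ of $\mathcal{C}:=\coker(\varepsilon)$ vanishes. By super Nakayama (\cite{BBH91,CarCaFi11}, \cite[6.4.5]{We09}) the support of the coherent sheaf $\mathcal{C}$ is closed in $\Xcal_{\Tc_{ac}}$; since $f$ (hence $f_{\Tc_{ac}}$) is proper, its image $Z:=f_{\Tc_{ac}}(\mathrm{Supp}\,\mathcal{C})$ is closed in $T_{ac}$, and the open complement, given the induced superscheme structure $\Tc_a\hookrightarrow\Tc_{ac}\hookrightarrow\Tc$, is by construction the locus we want.

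It remains to check the universal property: a morphism $\psi\colon\Vc\to\Tc$ factors through $\Tc_a\hookrightarrow\Tc$ if and only if $\Lcl_\Vc$ is relatively acyclic and fibrewise globally generated on $\Xcal_\Vc/\Vc$. The ``if'' direction is immediate from the pointwise characterisations above. For ``only if'', note that Theorem \ref{thm:cohombasechange} guarantees that the vanishing $H^i(X_{\psi(v)},\Lcl_{\psi(v)})=0$ for every $v\in V$ is equivalent to the image of $V$ lying in $T_{ac}$, while flat base change for $\varepsilon$ along $\psi$ shows that surjectivity on fibres of $\Vc$ is equivalent to the image avoiding $Z$; both together force $\psi$ to factor scheme-theoretically through the open sub-superscheme $\Tc_a$. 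This proves that both $\Div_{a,\Xcal/\Sc}\hookrightarrow\Div_{\Xcal/\Sc}$ and $\SPicf_{+a,\Xcal/\Sc}\hookrightarrow\SPicf_{+\Xcal/\Sc}$ are representable by open immersions, and the final representability statement for $\SDiv(a,\Xcal/\Sc)$ and $\SDiv(a,\Xcal/\Sc)^\bQ$ follows by combining with Propositions \ref{prop:divisors} and \ref{prop:divisorsQ}. The only slightly delicate point is the well-definedness of the conditions on Picard classes rather than on line bundles, which as noted above is automatic.
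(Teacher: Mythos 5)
Your proof is correct and follows essentially the same route as the paper's: the acyclicity locus is cut out as an open sub-superscheme via the cohomology base change/semicontinuity machinery, and the global-generation locus is obtained by removing the image (closed, by properness of $f$) of the support of the cokernel of the evaluation map $f_{\Tc}^\ast f_{\Tc\ast}\Lcl\to\Lcl$, with Propositions \ref{prop:divisors} and \ref{prop:divisorsQ} supplying the final representability statements, exactly as in the paper. The only cosmetic slip is the phrase ``flat base change for $\varepsilon$'' in the universal-property check --- $\psi$ need not be flat --- but this is harmless, since the compatibility of $f_\ast\Lcl$ with \emph{arbitrary} base change over the acyclicity locus, which you had already invoked from Proposition \ref{prop:locfree}, is exactly what that step requires.
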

\begin{proof} 
We prove the statement for the Picard functors, since the other is similar. Let $\Tc$ be a superscheme over $\Sc$ and $\Phi\colon\Tc \to {\SPicf_{\Xcal/\Sc}}$ a morphism of $\Sc$-superschemes. Then $\Phi$ is given by a class $[\Lcl]$ in the relative Picard group of $\Xcal_\Tc/\Tc$ of even line bundles on $\Xcal_\Tc$. By the cohomology base change Theorem \ref{thm:cohombasechange}, there is an open sub-superscheme $\Ucal$ of $\Tc$ such that a morphism $\Tc'\to \Tc$ of $\Sc$-superschemes factors through $\Ucal$ if and ony if $\Lcl_{\Tc'}$ is relatively acyclic  and generated by global sections with respect to $f_{\Tc'}\colon \Xcal_{\Tc'}\to \Tc'$. Let $\Ycal\hookrightarrow \Xcal_{\Tc'}$ be the support of the cokernel of $f_{\Tc}^\ast f_{\Tc\ast} \Lcl_{\Tc} \to \Lcl_{\Tc}$. Since $f_T$ is proper, $f_T(Y)$ is closed in $T$. If we endow $V=U-f_T(Y)$ with the structure $\Vc$ of an open sub-superscheme of $\Ucal$, we obtain an open sub-superscheme $\Vc$ of $\Tc$ with the following universal property: a morphism $\Tc'\to \Tc$ of $\Sc$-superschemes factors through $\Vc$ if and ony if $\Lcl_{\Tc'}$ is relatively acyclic with respect to $f_{\Tc'}\colon \Xcal_{\Tc'}\to \Tc'$ and it is relatively generated by its global sections.

The final part now follows from Proposition \ref{prop:divisors}. The statement about $\Div_{a,\Xcal/\Sc}^\bQ$ is proved in the same way using Proposition \ref{prop:divisorsQ}.
\end{proof}

To prove a similar statement for the Picard sheaves  we need a preliminary lemma.

\begin{lemma}\label{lem:acyclic} Let $\Ss\Gc\to\Ss\F$ be a morphism of presheaves on superschemes, and  let $\Ss\Gc_{\et}\to\Ss\F_{\et}$ be the induced morphisms between the associated \'etale sheaves. If  \  $\Ss\F\to\Ss\F_{\et}$ is injective and $\Ss\Gc\to\Ss\F$ is representable by open immersions, then $\Ss\Gc_{\et}\to\Ss\F_{\et}$ is representable by open immersions as well.
\end{lemma}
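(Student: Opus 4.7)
The plan is to use the universal property of $\Ss\F_{\et}$ as the \'etale sheafification of $\Ss\F$, coupled with descent for open sub-superschemes along \'etale covers. Given a superscheme $\Tc$ and an element $\xi\in \Ss\F_{\et}(\Tc)$, classifying a morphism $\Tc\to \Ss\F_{\et}$, I must produce an open sub-superscheme $\Ucal\hookrightarrow \Tc$ with the property that for any morphism $\Tc'\to \Tc$, the induced morphism $\Tc'\to \Ss\F_{\et}$ factors through $\Ss\Gc_{\et}$ if and only if $\Tc'\to \Tc$ factors through $\Ucal$.

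First, I would choose an \'etale covering $\{\phi_i\colon \Vc_i\to\Tc\}$ such that each pullback $\phi_i^\ast\xi$ lifts to an element $\eta_i\in\Ss\F(\Vc_i)$; this is possible by the very definition of the \'etale sheafification. The hypothesis that $\Ss\Gc\to\Ss\F$ is representable by open immersions then provides, for each index $i$, an open sub-superscheme $\Ucal_i\hookrightarrow \Vc_i$ representing the fibre product $\Vc_i\times_{\Ss\F}\Ss\Gc$ taken with respect to the map classified by $\eta_i$.

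The next step is to glue the family $\{\Ucal_i\}$ into an open sub-superscheme of $\Tc$. On the \'etale overlap $\Vc_{ij}=\Vc_i\times_\Tc\Vc_j$, the two pullbacks of $\eta_i$ and $\eta_j$ via the projections are both lifts of $\xi|_{\Vc_{ij}}$ to $\Ss\F(\Vc_{ij})$, and the injectivity of $\Ss\F(\Vc_{ij})\to \Ss\F_{\et}(\Vc_{ij})$ forces them to coincide. Consequently, the pullbacks of $\Ucal_i$ and $\Ucal_j$ to $\Vc_{ij}$ both represent the same fibre product and agree as open sub-superschemes. By effective descent of open sub-superschemes along the \'etale cover $\coprod_i\Vc_i\to\Tc$ (a special case of Proposition \ref{prop:effective descent}), the family $\{\Ucal_i\}$ descends to a well-defined open sub-superscheme $\Ucal\hookrightarrow\Tc$.

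Finally, I would verify the universal property. One direction is essentially tautological: if $\Tc'\to\Tc$ factors through $\Ucal$, then pulling back to the \'etale cover $\{\Vc_i\times_\Tc\Tc'\}$ of $\Tc'$, the morphism to $\Ss\F$ given by $\eta_i$ factors through $\Ss\Gc$ over each $\Ucal_i\times_{\Vc_i}(\Vc_i\times_\Tc\Tc')$, hence, after sheafification, $\Tc'\to \Ss\F_{\et}$ factors through $\Ss\Gc_{\et}$. For the converse, given such a factorization, I refine the \'etale covers so that the factorization through $\Ss\Gc$ and the local lifts $\eta_i$ are defined on a common cover, and apply the injectivity of $\Ss\F\to \Ss\F_{\et}$ once more to identify these two lifts in $\Ss\F$; this places the test morphism into each $\Ucal_i$ locally, hence into $\Ucal$ by the uniqueness of descent. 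The main technical point—though essentially bookkeeping—is this last compatibility check involving two different \'etale refinements; the hypothesis that $\Ss\F\to\Ss\F_{\et}$ is injective is what allows the local lifts to be compared unambiguously on each overlap, and it is used exactly twice: once to glue the $\Ucal_i$, and once more to verify the converse direction of the universal property.
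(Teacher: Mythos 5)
Your proof is correct and takes essentially the same approach as the paper's: lift the $\Ss\F_{\et}$-point to $\Ss\F$ on an \'etale cover, represent the fibre product upstairs by an open sub-superscheme, use the injectivity of $\Ss\F\to\Ss\F_{\et}$ to show the two pullbacks agree on the overlap, and conclude by effective descent for open sub-superschemes (Proposition \ref{prop:effective descent}). The only difference is presentational: you work with an indexed cover $\{\Vc_i\}$ instead of a single covering $\pi\colon\Tc'\to\Tc$ and you spell out the final universal-property verification (where injectivity is used a second time), a step the paper compresses into ``one now sees that $\U\simeq \Ss\Gc_{\et}\times_{\Ss\F_{\et}}\Tc$.''
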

\begin{proof}
Let $\Tc\to\Sc$ be an $\Sc$-superscheme and $\lambda\colon \Tc\to \Ss\F_{\et}$ a morphism of functors on the category  of  $\Sc$-superschemes. There exists an open covering $\pi\colon \Tc'\to \Tc$ such that $\lambda\circ\pi\colon \Tc'\to \Ss\F_{\et}$ factors through a morphism $\lambda'\colon \Tc\to\Ss\F$ and the immersion $\iota\colon\Ss\F\hookrightarrow\Ss\F_{\et}$. Then  $\Ss\Gc_{\et}\times_{\Ss\F_{\et},\lambda\circ\pi}\Tc'$ is representable by an open sub-superscheme $\Vc\hookrightarrow \Tc'$. 
(in the notation of the fibre product we stress the second projection to avoid confusion.) In particular, $\Vc\simeq\Ss\Gc\times_{\Ss\F_{\et},\lambda\circ\pi}\Tc'$ is a sheaf, so that it coincides with its associated sheaf $\Ss\Gc_{\et}\times_{\Ss\F_{\et},\lambda\circ\pi}\Tc'$. Let us consider the projections $p_1,p_2$ of $\Tc'\times_\Tc\Tc'\rra\Tc'$. Then
 \begin{align*}
p_i^{-1}(\Vc)&=\Vc\times_\Tc\Tc'\simeq (\Ss\Gc_{\et}\times_{\Ss\F_{\et},\lambda\circ\pi}\Tc')\times_\Tc\Tc' \\
&\simeq \Ss\Gc_{\et}\times_{\Ss\F_{\et},\lambda'\circ p_1}(\Tc'\times_\Tc\Tc')\\
&\simeq \Ss\Gc_{\et}\times_{\Ss\F_{\et},\lambda'\circ p_2}(\Tc'\times_\Tc\Tc') \simeq \Tc'\times_\Tc \Vc\simeq p_2^{-1}(\Vc)\,,
\end{align*}
where we have used that $\lambda'\circ p_1=\lambda'\circ p_2$ as $\lambda\circ\pi\circ p_1=\lambda\circ\pi\circ p_2$ and $\iota\colon\Ss\F\hookrightarrow\Ss\F_{\et}$ is injective.
By Grothendieck effective descent for superschemes (Proposition \ref{prop:effective descent}) there exists an open sub-superscheme $\U\hookrightarrow \Tc$ such that $\Vc=\pi^{-1}(\U)$. One now sees that $\U\simeq \Ss\Gc_{\et}\times_{\Ss\F_{\et}}\Tc$.
\end{proof}

\begin{prop}\label{prop:acyclicsh}  If   $f$ is cohomologically flat {in dimension} $0$ 
the sheaf  {morphisms}
{
$$
\SPicf_{a,\Xcal/\Sc \et}\hookrightarrow\SPicf_{\Xcal/\Sc\et} {\,,\quad \SPicf^{\bP}_{a,\Xcal/\Sc \et}\hookrightarrow\SPicf^{\bP}_{\Xcal/\Sc\et}}
$$
}
 {are} representable by open immersions.
\end{prop}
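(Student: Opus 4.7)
The plan is to deduce this directly by combining Proposition \ref{prop:acyclic} with Lemma \ref{lem:acyclic}, using the cohomological flatness hypothesis to verify the injectivity assumption of the latter. More precisely, I would take $\Ss\Gc=\SPicf_{+a,\Xcal/\Sc}$ and $\Ss\F=\SPicf_{+\Xcal/\Sc}$ in Lemma \ref{lem:acyclic}. The morphism $\Ss\Gc\to\Ss\F$ is representable by open immersions by Proposition \ref{prop:acyclic}, so the only thing left to check is that the canonical morphism $\Ss\F\to\Ss\F_{\et}$ from the even super Picard presheaf to its associated \'etale sheaf is injective. This is exactly what Proposition \ref{prop:asssheaf}(1) asserts under the assumption that $f$ is proper, flat and cohomologically flat in dimension $0$, which are precisely the standing hypotheses here (note that $f$ being locally superprojective is in particular proper, and flatness is assumed).

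Once these two hypotheses are satisfied, Lemma \ref{lem:acyclic} yields that the induced morphism of \'etale sheaves
\[
\SPicf_{+a,\Xcal/\Sc\et}\hookrightarrow\SPicf_{+\Xcal/\Sc\et}
\]
is representable by open immersions. For the version with fixed super Hilbert polynomial $\bP$, the same argument works verbatim: the subfunctor $\SPicf^{\bP}_{+a,\Xcal/\Sc}\hookrightarrow \SPicf^{\bP}_{+\Xcal/\Sc}$ of the decomposition described just before Proposition \ref{prop:divisorsQ} is representable by open immersions by the same proof as in Proposition \ref{prop:acyclic}, and the injectivity of $\SPicf^{\bP}_{+\Xcal/\Sc}\to\SPicf^{\bP}_{+\Xcal/\Sc\et}$ follows from the injectivity of $\SPicf_{+\Xcal/\Sc}\to\SPicf_{+\Xcal/\Sc\et}$ since the decomposition by super Hilbert polynomials is compatible with \'etale sheafification (the decomposition is local in the Zariski topology of the base). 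Applying Lemma \ref{lem:acyclic} again gives the second claim.

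I do not anticipate any serious obstacle: the argument is a formal consequence of already-established results. The only mildly delicate point is checking that the decomposition $\SPicf_{+\Xcal/\Sc\et}=\coprod_{\bP}\SPicf^{\bP}_{+\Xcal/\Sc\et}$ passes through sheafification, but this is immediate because on any connected $\Sc$-superscheme $\Tc$ an even line bundle on $\Xcal_\Tc$ has a well-defined super Hilbert polynomial (by flatness of $f$), and this invariant is compatible with \'etale pullback and with tensoring by pullbacks from the base.
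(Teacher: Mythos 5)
Your proposal is correct and follows essentially the same route as the paper: the paper's proof is precisely the combination of Proposition \ref{prop:acyclic} (representability by open immersions at the presheaf level) and Lemma \ref{lem:acyclic}, with Proposition \ref{prop:asssheaf}(1) supplying the required injectivity of $\SPicf_{+\Xcal/\Sc}\to\SPicf_{+\Xcal/\Sc\et}$ under cohomological flatness in dimension $0$. Your extra remark verifying that the decomposition by super Hilbert polynomials is compatible with \'etale sheafification is a sound detail that the paper leaves implicit, but it does not change the argument.
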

\begin{proof}  This follows from Lemma \ref{lem:acyclic} and  Proposition \ref{prop:acyclic} taking also Proposition \ref{prop:asssheaf} into account.
\end{proof}

\subsection{Structure of the Abel morphism}\label{ss:abelstr}

We start by defining the complete linear series $\vert \Lcl\vert$ associated to an even line bundle $\mathcal L$ as the fibre of the Abel morphism (Definition \ref{def:Abelmor}) over 
the point of the Picard superscheme corresponding to $\mathcal L$. More precisely:
\begin{defin}\label{def:lineraseries} Let $\Lcl$ be an even line bundle on $\Xcal$. The complete linear series of $\Lcl$ is the subfunctor
$$
\vert\Lcl\vert=\Ab^{-1}(\Lcl)\hookrightarrow \Div_{\Xcal/\Sc}
$$
which associates to every superscheme $\Tc\to\Sc$ the set $\vert\Lcl\vert(\Tc):=\Ab^{-1}(\Lcl)(\Tc)$ of the relative positive superdivisors $\Zc\hookrightarrow\Xcal_\Tc$ such that
$$
[\Lcl_\Tc]=[\Oc_{\Xcal_\Tc}(\Zc)]
$$
in ${\SPicf_{\Xcal/\Sc}}(\Tc)$. This is equivalent to claiming that 
$$
\Oc_{\Xcal_\Tc}(\Zc)\iso \Lcl\otimes f_\Tc^\ast\Nc
$$
for an even line bundle $\Nc$ on $\Tc$.
\end{defin}

In general, the structure of the complete linear series of $\Lcl$ can be determined as in the classical case \cite{AlKl80}. However, we only need to consider the simple case of acyclic line bundles generated by their global sections.

Let $f\colon\Xcal\to \Sc$ a flat proper morphism of superschemes with $\Sc$ noetherian
and  let $\Lcl$ be an even line bundle on $\Xcal$ which is $f$-acyclic, that is, one has $R^if_\ast\Lcl=0$ for every $i>0$. Then $f_\ast\Lcl$ is locally free by Proposition \ref{prop:locfree}, and the formation of $f_\ast\Lcl$ commutes with arbitrary base change, that is, for every morphism $\Tc\to\Sc$ of superschemes one has
\begin{equation}\label{eq:basechange}
(f_\ast\Lcl)_\Tc \iso f_{\Tc\ast}\Lcl_\Tc\,.
\end{equation}

Assume moreover that $\Lcl$ is relatively generated by its global sections, i.e.,  the morphism $f^\ast f_\ast \Lcl\to\Lcl$ is surjective. Then $f_\ast \Lcl$ is nonzero.

If we consider the locally free sheaf $\Qc=(f_\ast\Lcl)^{-1}$, Equation \eqref{eq:basechange} implies that the formation of $\Qc$ is compatible with base change, that is, one has
\begin{equation}\label{eq:basechange2}
\Qc_\Tc \iso (f_{\Tc\ast}\Lcl_\Tc)^{-1}
\end{equation}
for every morphism $\Tc\to\Sc$ of superschemes, and there is a functorial isomorphism
\begin{equation}\label{eq:Qsheaf}
\Homsh_{\Oc_\Tc}(\Qc_\Tc, \Nc)\iso f_{\Tc\ast}(\Lcl_\Tc)\otimes \Nc\iso
 f_{\Tc\ast}(\Lcl_\Tc\otimes f_\Tc^\ast\Nc)
\end{equation}
for every quasi-coherent sheaf $\Nc$ on $\Sc$. In particular, for every base change $\Tc\to\Sc$ there is an isomorphism
\begin{equation}\label{eq:Qsections}
\gamma\colon\Hom_{\Tc}(\Qc_\Tc, \Nc)\iso H^0(\Tc, \Lcl_\Tc\otimes f_\Tc^\ast\Nc)\,.
\end{equation}

\begin{prop}\label{prop:Abel} {Let $\Sc$ be a noetherian superscheme and let }$f\colon\Xcal\to \Sc$ be  a flat proper morphism of superschemes  {which is cohomologically flat {in dimension} $0$ and} has  geometrically integral fibres. Let $\Lcl$ be an even line bundle on $\Xcal$ which is $f$-acyclic and relatively generated by its global sections. Then the complete linear series of $\Lcl$ is represented by the  projective superbundle
$\widetilde\Ps(f_\ast\Lcl){=\Ps(\Qc)}$.
\end{prop}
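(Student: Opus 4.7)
By Proposition \ref{prop:projmor}, an $\Sc$-morphism $\phi\colon\Tc\to\Ps(\Qc)$ is equivalent to the datum of a quotient $\rho\colon\phi^\ast\Qc=\Qc_\Tc\twoheadrightarrow\Nc'$ with $\Nc'$ an even line bundle on $\Tc$. The plan is to construct mutually inverse natural transformations between $\Tc\mapsto\Hom_\Sc(\Tc,\Ps(\Qc))$ and $\vert\Lcl\vert$ by translating such surjections into even global sections of $\Lcl_\Tc\otimes f_\Tc^\ast\Nc'$ via the base-change isomorphism \eqref{eq:Qsections}, and then into relative positive superdivisors.

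In the forward direction, $\rho$ corresponds under \eqref{eq:Qsections} to an even global section $s\in H^0(\Xcal_\Tc,\Lcl_\Tc\otimes f_\Tc^\ast\Nc')$, equivalently a morphism of even line bundles $s\colon f_\Tc^\ast(\Nc')^{-1}\to\Lcl_\Tc$ on $\Xcal_\Tc$. For every geometric point $\bar t\to\Tc$, surjectivity of the fibre $\rho_{\bar t}$ translates---via \eqref{eq:Qsections} applied at $\bar t$ together with compatibility with bosonization, guaranteed by the cohomological flatness of $f$ in dimension $0$---into the non-vanishing of the bosonic restriction of $s_{\bar t}$. Combined with geometric integrality of $\Xcal_{\bar t}$, Proposition \ref{prop:xxx} then forces $s_{\bar t}$ to be injective as a morphism of line bundles on $\Xcal_{\bar t}$. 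Since both $f_\Tc^\ast(\Nc')^{-1}$ and $\Lcl_\Tc$ are $\Tc$-flat (as $\Xcal$ is $\Sc$-flat), the local criterion of flatness (Corollary \ref{cor:localflat}) upgrades fibrewise injectivity to global injectivity of $s$ with $\Tc$-flat cokernel $\Oc_\Zc$. Thus $\Zc\hookrightarrow\Xcal_\Tc$ is a relative positive superdivisor satisfying $\Oc_{\Xcal_\Tc}(\Zc)\simeq\Lcl_\Tc\otimes f_\Tc^\ast\Nc'$, giving a well-defined element of $\vert\Lcl\vert(\Tc)$ independent of the representative of $\rho$.

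In the reverse direction, given $\Zc\in\vert\Lcl\vert(\Tc)$, the definition of the relative super Picard presheaf yields an even line bundle $\Nc$ on $\Tc$ and an isomorphism $\alpha\colon\Oc_{\Xcal_\Tc}(\Zc)\iso\Lcl_\Tc\otimes f_\Tc^\ast\Nc$. The composition $\Oc_{\Xcal_\Tc}\hookrightarrow\Oc_{\Xcal_\Tc}(\Zc)\xrightarrow{\alpha}\Lcl_\Tc\otimes f_\Tc^\ast\Nc$ is an even global section $s$, which via \eqref{eq:Qsections} corresponds to a morphism $\rho\colon\Qc_\Tc\to\Nc$. Surjectivity of $\rho$ follows fibrewise: $s_{\bar t}\neq 0$ since $\Zc_{\bar t}$ is a proper sub-superscheme of the integral $\Xcal_{\bar t}$, so $\rho\otimes\kappa(\bar t)$ is a non-zero map into a rank-$(1,0)$ target and hence surjective, whence super Nakayama gives surjectivity of $\rho$. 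For two choices $(\Nc_i,\alpha_i)$, the composition $\alpha_2\alpha_1^{-1}$ yields $f_\Tc^\ast(\Nc_1^{-1}\otimes\Nc_2)\iso\Oc_{\Xcal_\Tc}$; applying $f_{\Tc\ast}$, using cohomological flatness of $f$ in dimension $0$, and invoking Lemma \ref{lem:injpic} produces a canonical isomorphism $\beta\colon\Nc_1\iso\Nc_2$ intertwining $\rho_1$ with $\rho_2$, so the isomorphism class of $\rho$, and hence the $\Sc$-morphism $\Tc\to\Ps(\Qc)$, is well-defined.

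Verifying that the two constructions are mutually inverse is then a routine unwinding of definitions. The main obstacle is the fibrewise-injectivity step, which crucially combines geometric integrality of the fibres (Proposition \ref{prop:xxx}, with particular attention to the non-vanishing of bosonic restrictions) with the local criterion of flatness; cohomological flatness in dimension $0$ is used both to control the base-change behaviour of $\Qc$ and to resolve the residual ambiguity in the pair $(\Nc,\alpha)$.
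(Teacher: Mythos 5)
Your proposal is correct and follows essentially the same route as the paper's proof: identify $\Sc$-points of $\Ps(\Qc)$ with even line-bundle quotients of $\Qc_\Tc$ (Proposition \ref{prop:projmor}), translate these into even sections via the isomorphism \eqref{eq:Qsections}, use geometric integrality of the fibres together with Proposition \ref{prop:xxx} to get fibrewise injectivity and hence a relative positive superdivisor, apply super Nakayama for surjectivity in the reverse direction, and use cohomological flatness in dimension $0$ together with Lemma \ref{lem:injpic} to resolve the ambiguity in the pair $(\Nc,\alpha)$, exactly as the paper does. The only quibble is your citation of Corollary \ref{cor:localflat} (which concerns flatness of morphisms of local superalgebras) for the step upgrading fibrewise injectivity to injectivity with $\Tc$-flat cokernel; the relevant tool is the local flatness criterion of Lemma \ref{lem:localflat}, an equivalence the paper itself simply asserts without proof.
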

\begin{proof} {As we have seen,} $f_\ast\Lcl${, and then $\Qc$, are} locally free and nonzero. Given a superscheme $\Tc\to \Sc$ over $\Sc$, the relative positive divisors $\Zc$ in the linear series $\vert\Lcl\vert(\Tc)$ are identified with the exact sequences
$$
0\to \Lcl_\Tc^{-1}\otimes f_\Tc^\ast\Nc^{-1}{\xrightarrow{\sigma}} \Oc_{\Xcal_\Tc}{\xrightarrow{\rho}} \Oc_\Zc\to 0\,,
$$
where $\rho$ is the natural projection {and $\Nc$ is an even line bundle on $\Tc$}.  The sheaf $\Nc$ is uniquely determined by $\rho$ because if $ \Lcl_\Tc^{-1}\otimes f_\Tc^\ast\Nc^{-1}\simeq  \Lcl_\Tc^{-1}\otimes f_\Tc^\ast\Nc'^{-1}$, then $f_\Tc^\ast\Nc\simeq f_\Tc^\ast\Nc'$ so that $\Nc\simeq \Nc'$ by Lemma \ref{lem:injpic}.
The morphism $\sigma$ can be seen as a section of $\Lcl_\Tc\otimes f_\Tc^\ast\Nc$, and by Equation \eqref{eq:Qsections} it defines a morphism
$\gamma(\sigma)\colon \Qc_\Tc \to \Nc$ such that
\begin{equation}\label{eq:gc}
\gamma(\sigma)_{\Tc'}= \gamma_{\Tc'}(\sigma_{\Tc'})
\end{equation}
for every morphism $\Tc'\to \Tc$.
The condition that $\Oc_\Zc$ is flat over $\Tc$ is tantamount to the fact that for every $t\in T$ the restriction $\sigma_t$ of $\sigma$ to the fibre of  $f_\Tc\colon \Xcal_\Tc \to \Tc$ over $t$ is still injective. Since the fibres of $f$ are geometrically integral, Proposition \ref{prop:xxx} implies that the injectivity of $\sigma_t$ for every $t\in T$ is equivalent to its nonvanishing. Thus by Equation \eqref{eq:gc}
$$
\gamma(\sigma)_t\colon \Qc\otimes\kappa(t) \to \Nc\otimes \kappa(t)
$$
is nonzero for every $t\in T$, and then it is surjective as $\Nc$ is an even line bundle. By the super Nakayama Lemma  (\cite{BBH91,CarCaFi11} or \cite[6.4.5]{We09}), $\gamma(\sigma)\colon \Qc_\Tc \to \Nc$ is surjective, that is, it yields a $\Tc$-valued point of the supergrassmanian $\Sgrass(\Qc,(1,0))$.

Moreover, if $\sigma'$ is another section of $\Lcl_\Tc\otimes f_\Tc^\ast\Nc$ inducing the same positive relative divisor, then $\sigma'=\phi\circ \sigma$ where $\phi$ is an invertible section of $\Oc_{\Xcal_\Tc}$. Since $f$ is cohomologically flat in dimension $0$, so that $\Oc_\Tc\simeq f_{\Tc\ast}\Oc_{\Xcal_\Tc}$, $\phi$ is   an invertible section of $\Oc_\Tc$ and   $\gamma(\sigma')=\phi\circ \gamma(\sigma)$. Then, the two surjections $\gamma(\sigma)\colon \Qc_\Tc \to \Nc$ and $\gamma(\sigma')\colon \Qc_\Tc \to \Nc$ define the same $\Tc$-valued point of 
$\Sgrass(\Qc,(1,0))$.
It follows that there is a functor morphism $\vert\Lcl\vert \to \Sgrassf(\Qc,(1,0))$, which by the above discussion  is an isomorphism. This concludes the proof as $\Sgrass(\Qc,(1,0))\simeq \Ps(\Qc)$ by  Proposition \ref{prop:projmor}.
\end{proof}
\begin{remark} The requirements on $\Lcl$ in Proposition \ref{prop:Abel} are actually superfluous. There   always exists  a sheaf $\Qc$ fulfilling Equations \ref{eq:basechange2} and \ref{eq:Qsheaf} (see \cite[7.7.3]{EGAIII-II} for the corresponding classical statement), and one can follow the same proof of Proposition \ref{prop:Abel}  {to prove that $\vert\Lcl\vert\simeq \Ps(\Qc)$ (see \cite[Theorem 9.3.13]{Kle05}). However, the latter superscheme is not a superprojective bundle when $\Qc$ is not locally free.}
\end{remark}

\begin{remark}\label{rem:A01}

The hypothesis of cohomological flatness in dimension $0$ cannot be removed from Proposition \ref{prop:Abel}. Indeed, we can produces   examples of a morphism and a line bundle $\Lcl$ satisfying all the hypotheses of Proposition \ref{prop:Abel} except for the cohomological flatness and such that $\vert\Lcl\vert$ is not representable by $\widetilde\Ps(f_\ast\Lcl){=\Ps(\Qc)}$.

If $f\colon\Ps_\Sc^{0,1}= \As_\Sc^{0,1}\to\Sc$ is the affine superline over a superscheme $\Sc$, for every $\Sc$-superscheme $\Tc$  there are no nonzero relative effective divisors in $f_\Tc\colon \As_\Tc^{0,1}\to\Tc$ (as there are no codimension $(1,0)$ subschemes).
This means that for even line bundle $\Lcl$ on $\As_\Sc^{0,1}$ the complete linear system $\vert\Lcl\vert$ is 
$$
\vert\Lcl\vert(\Tc)=\begin{cases} \emptyset & \text{if $\Lcl_\Tc$ is nontrivial} \\
\{0\} \text{(a set with one element)} &  \text{if $\Lcl_\Tc$ is trivial}
\end{cases}
$$
Now take $\Lcl$ to be the trivial line bundle on $\As_k^{0,1}$, where $k$ is a field. Then the corresponding linear system is represented by the point,
but the corresponding projective superspace $\Ps((f_\ast\Lcl)^{-1})$ is $\Ps_k^{1,1}$, which has more than one $\Tc$-point whenever the $k$-superscheme $\Tc$ has nonzero odd functions.
\end{remark}

\subsection{Construction of the Picard superscheme}\label{ss:construction}
In this Subsection we prove (4) and (5) of Strategy \ref{st:pic}, thus finishing the proof of the existence Theorem \ref{thm:picrepres}.
Remember that we are assuming   that  $\Sc$ is noetherian  {and connected} and that $f\colon\Xcal\to\Sc$ is   \emph{flat and superprojective with geometrically integral fibres {and  is cohomologically flat in dimension $0$}}.

 For simplicity let us write $\Dsc=\SDiv(\Xcal/\Sc)\to \Sc$ and $\Oc=\Oc_{\Xcal\times_\Sc \Dsc}$. If $\Zc\hookrightarrow \Xcal\times_\Sc \Dsc$ is the relative universal divisor over $\Dsc$,   the open sub-superscheme $\SDiv(a,\Xcal/\Sc)\to \Sc$ (Proposition \ref{prop:acyclicsh}) is the locus $\Dsc_a$ of the points of $\Dsc$ where $\Oc(\Zc)$ is relatively acyclic and generated by its global sections over $\Dsc$.

In the rest of this paragraph we will confuse  superschemes with their functors of   points. Now, the Abel morphism
$$
\Ab\colon \Dsc_a \to {\SPicf_{a,\Xcal/\Sc}}
$$
is defined by the class of $[\Oc(\Zc)]$  
in the relative Picard group of $\Xcal\times_\Sc\Dsc_a/\Dsc_a$. Then in the cartesian diagram of morphisms of functors
$$
\xymatrix{
\Dsc_a\times_{{\SPicf_{a,\Xcal/\Sc}}}\Dsc_a \ar[d]^{p_2}\ar[r]^(.6){p_1} & \Dsc_a \ar[d]^{\Ab} \\
\Dsc_a \ar[r]^{\Ab} & {\SPicf_{a,\Xcal/\Sc}}
}
$$ the projection $p_2$ identifies $\Dsc_a\times_{{\SPicf_{a,\Xcal/\Sc}}}\Dsc_a$  with the fibre functor  of the (vertical) Abel morphism over the universal line bundle $\Oc(\Zc)$:
$$
 \Rcal_a:=\Dsc_a\times_{{\SPicf_{a,\Xcal/\Sc}}}\Dsc_a\iso \vert \Oc(\Zc)\vert =\Ab^{-1}([\Oc(\Zc)])\hookrightarrow \Dsc_a\times_\Sc\Dsc_a\,,
$$
so that the relative linear equivalence on $\Dsc_a$ is an \emph{equivalence relation of superschemes}, thus proving (2) of Strategy \ref{st:pic}.

If we fix the super Hilbert polynomials as in Subsection \ref{ss:abel} we have   similar formulas:
$$
\xymatrix{
\Dsc_a^\bQ\times_{{\SPicf_{a,\Xcal/\Sc}}}\Dsc_a^\bQ \ar[d]^{p_2^\bQ}\ar[r]^(.6){p_1^\bQ} & \Dsc_a^\bQ \ar[d]^{\Ab} \\
\Dsc_a^\bQ \ar[r]^{\Ab} & ~{\SPicf^{\bP}_{a,\Xcal/\Sc}}\,,
}
$$
and
$$
 \Rcal_a^\bQ:=\Dsc_a^\bQ\times_{{\SPicf_{a,\Xcal/\Sc}}}\Dsc_a^\bQ\iso \vert \Oc(\Zc)\vert =\Ab^{-1}([\Oc(\Zc^\bP)])\hookrightarrow \Dsc_a^\bP\times_\Sc\Dsc_a^\bQ\,,
$$
where  $\Oc(\Zc^\bP)$ is over the universal line bundle whose dual has super Hilbert polynomial $\bP$.
Since $\Sc$ is noetherian,  $\Dsc_a^\bQ$ is noetherian as well by Proposition \ref{prop:divisorsQ}. By Proposition \ref{prop:Abel} $p_2$ is a projective superbundle $p_2\colon \vert \Oc(\Zc)\vert\simeq \widetilde\Ps(f_{\Dsc_a,\ast}\Oc(\Zc))\to \Dsc_a$, so that it is proper and flat.  {Analogously, $p_2^\bQ$ is also a projective superbundle $p_2^\bQ\colon \vert \Oc(\Zc^\bP)\vert\simeq \widetilde\Ps(f_{\Dsc_a^\bQ,\ast}\Oc(\Zc^\bP))\to \Dsc_a^\bQ$.}

One has following result, which proves (3) of Strategy \ref{st:pic}.
\begin{prop} 
 The relative linear equivalence  $\Rcal_a^\bQ:=\Dsc_a^\bQ\times_{{\SPicf_{a,\Xcal/\Sc}}}\Dsc_a^\bQ$  on $\Dsc_a^\bQ\to\Sc$ is effective, so that the quotient $\Sc$-superscheme exists.  The relative linear equivalence  $\Rcal_a:=\Dsc_a\times_{{\SPicf_{a,\Xcal/\Sc}}}\Dsc_a$  on $\Dsc_a\to\Sc$ is also effective, so that the quotient $\Sc$-superscheme exists;  we denote those quotients by $q\colon \Dsc_a^\bQ\to \Dsc_a^\bQ/\sim$ and $q\colon \Dsc_a\to \Dsc_a/\sim$. Moreover, $\Dsc_a^\bQ/\sim$ is of finite type and separated over $\Sc$  for every $\bQ$, and then $\Dsc_a/\sim$ is of finite type and separated over $\Sc$ as well.
\end{prop}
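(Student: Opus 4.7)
The plan is to apply Proposition \ref{prop:flatproperquot} to the sub-superscheme $\Rcal_a^\bQ\hookrightarrow \Dsc_a^\bQ\times_\Sc\Dsc_a^\bQ$. It has already been observed that $\Rcal_a^\bQ$ is an equivalence relation of superschemes (this is just the fact that linear equivalence of divisors with fixed Hilbert polynomial is an equivalence relation), so two things remain to check: that the second projection $p_2^\bQ\colon\Rcal_a^\bQ\to\Dsc_a^\bQ$ is proper and flat, and that the Hilbert superscheme $\SHilb(\Dsc_a^\bQ/\Sc)$ exists.

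For the first, I would invoke the identification of $\Rcal_a^\bQ$ as a fibre of the Abel morphism already recorded above the statement, namely $\Rcal_a^\bQ\simeq \vert\Oc(\Zc^\bP)\vert$, where $\Oc(\Zc^\bP)$ is the universal line bundle on $\Xcal\times_\Sc\Dsc_a^\bQ$. Since the dual of $\Oc(\Zc^\bP)$ has fixed super Hilbert polynomial on the fibres and the line bundle $\Oc(\Zc^\bP)$ is by construction relatively acyclic and generated by its global sections, Proposition \ref{prop:Abel} applies (the hypotheses on $f$ in that proposition, i.e.\ cohomological flatness in dimension $0$ and geometrically integral fibres, are exactly those imposed in the proof of Theorem \ref{thm:picrepres}) and identifies $p_2^\bQ$ with the structural morphism of the projective superbundle $\widetilde\Ps(f_{\Dsc_a^\bQ,\ast}\Oc(\Zc^\bP))\to\Dsc_a^\bQ$. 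In particular $p_2^\bQ$ is proper and flat, as required.

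For the second point, I would argue as follows. By Proposition \ref{prop:divisorsQ} the superscheme $\Dsc_a^\bQ$ is an open sub-superscheme of $\SHilb(\Xcal/\Sc)^\bQ$, which in turn sits inside a relative supergrassmannian over $\Sc$. Corollary \ref{cor:grassproj} then supplies a smooth surjective morphism of relative dimension $(0,n)$ from a strongly superprojective superscheme to $\Dsc_a^\bQ$ (hence faithfully flat), so the hypotheses of Theorem \ref{thm:hilbrepres2} are met and $\SHilb(\Dsc_a^\bQ/\Sc)$ exists, is locally of finite type, and is separated over $\Sc$. Applying Proposition \ref{prop:flatproperquot} now gives effectivity of $\Rcal_a^\bQ$, a quotient morphism $q\colon\Dsc_a^\bQ\to\Dsc_a^\bQ/\!\!\sim$ which is flat and proper, and realizes $\Dsc_a^\bQ/\!\!\sim$ as a closed sub-superscheme of $\SHilb(\Dsc_a^\bQ/\Sc)$, hence of finite type and separated over $\Sc$ (since $\Dsc_a^\bQ$, being of finite type over $\Sc$, produces a quasi-compact locus inside the locally-of-finite-type Hilbert superscheme).

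Finally, to pass from $\Dsc_a^\bQ$ to $\Dsc_a$ I would use that the relative linear equivalence preserves the super Hilbert polynomial of the dual of the ideal sheaf (the two classes $[\Oc(\Zc_1)]$ and $[\Oc(\Zc_2)]$ coincide up to the pull-back of an even line bundle on the base, and such a pull-back does not affect the relative Hilbert polynomial on the fibres). Consequently the decomposition $\Dsc_a=\coprod_\bQ \Dsc_a^\bQ$ is compatible with $\Rcal_a$, so $\Rcal_a=\coprod_\bQ \Rcal_a^\bQ$, the equivalence relation $\Rcal_a$ is effective, and the quotient is simply the disjoint union $\Dsc_a/\!\!\sim\,=\coprod_\bQ \Dsc_a^\bQ/\!\!\sim$, which is locally of finite type and separated over $\Sc$ as each piece is. The only point requiring any care is the verification that the hypotheses of Proposition \ref{prop:Abel} indeed hold for the universal line bundle on each $\Dsc_a^\bQ$; this is essentially built into the definition of $\Dsc_a^\bQ$ via Proposition \ref{prop:acyclic}, so no serious obstacle is expected.
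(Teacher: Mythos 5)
Your proof is correct and follows essentially the same route as the paper's: you reduce to fixed $\bQ$ via the disjoint-union decomposition, use Proposition \ref{prop:Abel} to identify $p_2^\bQ$ with a projective superbundle (hence proper and flat), obtain the existence of $\SHilb(\Dsc_a^\bQ/\Sc)$ from Proposition \ref{prop:divisorsQ}, Corollary \ref{cor:grassproj} and Theorem \ref{thm:hilbrepres2}, and conclude with Proposition \ref{prop:flatproperquot}. Your extra remark that linear equivalence preserves the super Hilbert polynomial (so that $\Rcal_a=\coprod_\bQ\Rcal_a^\bQ$) is a sound elaboration of a step the paper leaves implicit.
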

\begin{proof} 
Since $\Dsc_a$ is the disjoint union of the various $\Dsc_a^\bQ$, it is enough to prove the first statement. By Proposition \ref{prop:divisorsQ}  $\Dsc_a^\bQ$ is a sub-superscheme of a supergrassmannian, so that Corollary \ref{cor:grassproj} and Theorem \ref{thm:hilbrepres2} imply that the Hilbert superscheme of $\Dsc_a^\bQ/\Sc$ exists. Then, Proposition \ref{prop:flatproperquot} implies that the relative linear equivalence  $\Rcal_a^\bQ$ is effective and that the quotient $\Dsc_a^\bQ/\sim$ is of finite type and separated over $\Sc$.
\end{proof}

 We can  now  prove that the quotient superscheme $\Dsc_a/\sim$ represents the Picard sheaf ${\SPicf_{a,\Xcal/\Sc(et)}}$  and that $\Dsc_a^\bQ/\sim$ represents the Picard sheaf ${\SPicf^{\bP}_{a,\Xcal/\Sc(et)}}$.
For simplicity we introduce the notation
$$
\Ss\Pc_a:={\SPicf_{a,\Xcal/\Ss(et)}}\,.
$$
Note  that if we compose the Abel morphism with the immersion ${\SPicf_{a,\Xcal/\Ss}}\hookrightarrow \Ss\Pc_a$ (Proposition \ref{prop:asssheaf}) we have another Abel morphism
$$
\Ab_{et}\colon \Dsc_a\to \Ss\Pc_a\,.
$$
Moreover one has an isomorphism
$$
\Rcal_a=\Dsc_a\times_{{\SPicf_{a,\Xcal/\Ss}}}\Dsc_a\simeq \Dsc_a\times_{\Ss\Pc_a}\Dsc_a\,.
$$
 
\begin{prop}\label{prop:divpic} 
Under the hypotheses of Theorem \ref{thm:picrepres}
there  {are isomorphisms} of \'etale sheaves 
$$
 {\Dsc_a^\bQ/\sim \iso {\SPicf^{\bP}_{a,\Xcal/\Sc\et}}\,,\quad}\Dsc_a/\sim \iso {\SPicf_{a,\Xcal/\Sc\et}}
$$
for the \'etale topology on $\Sc$-superschemes.  Thus, the Picard sheaf 
${\SPicf^{\bP}_{a,\Xcal/\Sc\et}}$ is representable by the superscheme ${\SPic^{\bP}_{a}(\Xcal/\Sc)}:=\Dsc_a^\bQ/\sim$, and the Picard sheaf 
${\SPicf_{a,\Xcal/\Sc\et}}$ is representable by the superscheme ${\SPic_{a}(\Xcal/\Sc)}:=\Dsc_a/\sim$. Both Picard superschemes are of finite type and separated over $\Sc$.
\end{prop}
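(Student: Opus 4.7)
The plan is to apply Proposition~\ref{prop:quotsmooth}(2) to the \'etale Abel morphism $\Ab_{et}\colon \Dsc_a^\bQ\to\SPicf^{\bP}_{+a,\Xcal/\Sc\et}$, and then to deduce the statement for the total sheaf $\SPicf_{+a,\Xcal/\Sc\et}$ by taking a disjoint union over super Hilbert polynomials. Three hypotheses must be verified: the fibre product $\Dsc_a^\bQ\times_{\SPicf^{\bP}_{+a,\Xcal/\Sc\et}}\Dsc_a^\bQ$ is representable by a sub-superscheme of $\Dsc_a^\bQ\times_\Sc\Dsc_a^\bQ$; this sub-superscheme is an equivalence relation whose second projection is smooth and proper; and $\Ab_{et}$ is an epimorphism of \'etale sheaves.

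The first two conditions have essentially been established in the discussion preceding the statement. The identification $\Rcal_a^\bQ\simeq \vert\Oc(\Zc^\bP)\vert$ exhibits $\Rcal_a^\bQ$ as a sub-superscheme of $\Dsc_a^\bQ\times_\Sc\Dsc_a^\bQ$, and it is an equivalence relation because it is the pullback of the diagonal of the Picard sheaf under $\Ab_{et}\times\Ab_{et}$. To see that the second projection $p_2^\bQ\colon \Rcal_a^\bQ\to\Dsc_a^\bQ$ is smooth and proper, I would apply Proposition~\ref{prop:Abel} in the relative setting over $\Dsc_a^\bQ$: since by the very definition of $\Dsc_a^\bQ$ the universal line bundle $\Oc(\Zc^\bP)$ is relatively acyclic and generated by its global sections, $f_{\Dsc_a^\bQ,\ast}\Oc(\Zc^\bP)$ is locally free, and $p_2^\bQ$ is identified with the projective superbundle $\widetilde\Ps(f_{\Dsc_a^\bQ,\ast}\Oc(\Zc^\bP))\to\Dsc_a^\bQ$. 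Since projective superbundles are locally superprojective spaces over the base (Proposition~\ref{prop:spbundle}), they are smooth and proper.

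The main obstacle is showing that $\Ab_{et}$ is an epimorphism of \'etale sheaves. Let $\Tc\to\Sc$ be an $\Sc$-superscheme and $\xi\in\SPicf^{\bP}_{+a,\Xcal/\Sc\et}(\Tc)$. By the construction of the associated \'etale sheaf together with Proposition~\ref{prop:asssheaf}, there exists an \'etale cover $\Tc'\to\Tc$ such that $\xi\vert_{\Tc'}=[\Lcl]$ for an even line bundle $\Lcl$ on $\Xcal_{\Tc'}$ which is relatively acyclic, relatively generated by its global sections, and whose dual has super Hilbert polynomial $\bP$ on the fibres. A preimage of $\xi\vert_{\Tc''}$ under $\Ab$, over a further \'etale cover $\Tc''\to\Tc'$, corresponds by Proposition~\ref{prop:Abel} to a $\Tc''$-point of the projective superbundle $\widetilde\Ps(f_{\Tc',\ast}\Lcl)\to\Tc'$. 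The hypothesis that $\Lcl$ is even and relatively generated by its global sections forces the surjection $f^\ast(f_\ast\Lcl)_+\twoheadrightarrow\Lcl$, whence $(f_{\Tc',\ast}\Lcl)_+$ has positive rank; thus an even line bundle quotient of $f_{\Tc',\ast}\Lcl$ exists Zariski-locally, giving (after a suitable $\Tc''$) an even line bundle $\Nc$ on $\Tc''$ together with a section $\sigma$ of $\Lcl_{\Tc''}\otimes f_{\Tc''}^\ast\Nc$. By the geometric integrality of the fibres and Proposition~\ref{prop:xxx}, $\sigma$ is fibrewise injective, hence cuts out a relative positive divisor $\Zc\in\Dsc_a^\bQ(\Tc'')$ with $\Ab(\Zc)=\xi\vert_{\Tc''}$, proving the required \'etale local surjectivity.

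With the three hypotheses in place, Proposition~\ref{prop:quotsmooth}(2) yields an isomorphism $\Dsc_a^\bQ/\sim\iso\SPicf^{\bP}_{+a,\Xcal/\Sc\et}$. Finiteness of type and separatedness over $\Sc$ are inherited from the quotient $\Dsc_a^\bQ/\sim$ established in the paragraph preceding the statement. Taking a disjoint union over all super Hilbert polynomials $\bP$ then produces the corresponding isomorphism $\Dsc_a/\sim\iso\SPicf_{+a,\Xcal/\Sc\et}$ for the full even acyclic Picard sheaf, representable by a superscheme locally of finite type and separated over $\Sc$.
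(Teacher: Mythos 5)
Your proof is correct and follows essentially the same route as the paper: reduce to a fixed super Hilbert polynomial, verify the hypotheses of Proposition \ref{prop:quotsmooth}(2) using the identification of $\Rcal_a^\bQ$ with the complete linear series and the projective-superbundle structure of the Abel fibres from Proposition \ref{prop:Abel}, then show $\Ab$ is an \'etale epimorphism and take disjoint unions over $\bP$. The only (harmless) deviation is at the epimorphism step, where you produce a point of $\widetilde\Ps(f_{\Tc'\ast}\Lcl)\to\Tc'$ by Zariski-locally trivializing $f_{\Tc'\ast}\Lcl$ (whose even rank is positive) and projecting onto an even coordinate, whereas the paper simply invokes smoothness of this projective superbundle to obtain a section after a further \'etale cover.
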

\begin{proof} 
 It is enough to prove the first case. Since $\Rcal_a^\bQ=\Dsc_a^\bQ\times_{\Ss\Pc_a}\Dsc_a^\bQ$ and the projection $p_2^\bQ\colon\Rcal_a^\bQ\to \Dsc_a^\bQ$ is smooth and proper, if we prove that  $\Ab\colon\Dsc_a^\bQ \to {\SPicf^{\bP}_{a,\Xcal/\Sc(et)}}$ is an epimorphism of \'etale sheaves, the result will follow  from Proposition \ref{prop:quotsmooth}. Let $\Tc\to\Sc$ be a superscheme over $\Sc$. A $\Tc$-valued point $\xi\colon \Tc \to {\SPicf^{\bP}_{a,\Xcal/\Sc\et}}$ of the functor ${\SPicf^{\bP}_{a,\Xcal/\Sc\et}}$ is given by an \'etale covering $\phi\colon\Tc'\to \Tc$ together with the class $\xi'=[\Lcl]\in {\SPicf^{\bP}_{a,\Xcal/\Sc}}(\Tc')$ of an even line bundle $\Lcl$ on $\Xcal_{\Tc'}\to \Tc'$ that relatively acyclic and is  generated by its global sections. One has to prove that there exist an \'etale covering $\psi\colon\Vc\to\Tc'$ and a morphism $\sigma\colon \Vc \to \Dsc_a^\bQ$ of $\Sc$-superschemes such that $\Ab\circ \sigma=\xi'\circ\psi$.

By Proposition \ref{prop:Abel} and Proposition \ref{prop:asssheaf}  the fibre of the Abel morphism over $\xi'$ is a projective bundle $p\colon\widetilde\Ps(f_{\Tc'\ast}\Lcl)\to \Tc'$:
$$
\xymatrix{
\widetilde\Ps(f_{\Tc'\ast}\Lcl) \ar[d]^p\ar[r]^{\xi'_{\Dsc_a^\bQ}} & \Dsc_a^\bQ\ar[d]^{\Ab}\\
\Tc'\ar[r]^{\xi'}& {{\SPicf^{\bP}_{a,\Xcal/\Sc}}\,.}
}
$$
 Since $p$ is smooth, there exist an \'etale covering $\psi\colon\Vc\to\Tc'$  and  a morphism $\varpi\colon\Vc\to \widetilde\Ps(f_{\Tc'\ast}\Lcl)$ satisfying $\psi=p\circ\varpi$. Now one can take for  $\sigma$   the composition $\xi'_{\Dsc_a^\bQ}\circ \varpi\colon \Vc\to \Dsc_a^\bQ$.
\end{proof}

To finish the proof of Theorem \ref{thm:picrepres} we   check that the even Picard sheaf ${\SPicf_{\Xcal/\Sc\et}}$ and the open subsheaf ${\SPicf_{a,\Xcal/\Sc\et}}$ satisfy condition (1) of Proposition \ref{prop:opengoup}. 
This is the content of the following statements. In the proof we use the following notation: we write $\Oc=\Oc_\Xcal$ and $\Oc(n)=\Oc(1)^{\otimes n}$, where $\Oc(1)$ is the even  ample line bundle which makes   $f$ a superprojective morphism. 

\begin{lemma}\label{lem:localrep} Let us consider the set of the $\Sc$-valued points of the Picard sheaf defined by the classes $[\Oc(-n)]$ ($n>0$). One has:
$$
\bigcup_{n\in \N}{\SPicf_{a,\Xcal/\Sc\et}}\cdot [\Oc(-n)]={\SPicf_{\Xcal/\Sc\et}}\,,
$$
that is, the even super Picard sheaf is the union of the translated of the even Picard sheaf ${\SPicf_{a,\Xcal/\Sc\et}}$ by the classes $[\Oc(-n)]$.
\end{lemma}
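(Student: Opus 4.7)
The plan is to show that any local section of $\SPicf_{+\Xcal/\Sc\et}$ becomes, after twisting by some $\Oc(n)$, a local section of $\SPicf_{+a,\Xcal/\Sc\et}$. Since both sides are \'etale sheaves and the statement is one of containment of sheaves, it suffices to work locally on any given $\Sc$-superscheme $\Tc$, and the argument will in fact be Zariski-local on $\Tc$ after an initial \'etale refinement.

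First, I would fix $\phi\colon\Tc\to\Sc$ and a section $\xi\in\SPicf_{+\Xcal/\Sc\et}(\Tc)$. By the very definition of the \'etale sheafification, there exists an \'etale cover $\Vc\to\Tc$ such that $\xi|_\Vc$ lies in the image of the natural map $\SPicf_{+\Xcal/\Sc}(\Vc)\to\SPicf_{+\Xcal/\Sc\et}(\Vc)$; in particular, it is represented by the class $[\Lcl]$ of a genuine even line bundle $\Lcl$ on $\Xcal_\Vc$. Since $\Sc$ is noetherian and $\Vc$ is locally noetherian, I may further cover $\Vc$ by affine noetherian open sub-superschemes $\Wc_\alpha$ and work with each restriction $\Lcl|_{\Xcal_{\Wc_\alpha}}$ separately; the family $\{\Wc_\alpha\to\Tc\}_\alpha$ obtained by composing with $\Vc\to\Tc$ is again an \'etale cover.

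The key step is then to invoke Serre's Theorem~\ref{thm:serre}: the base change $f_{\Wc_\alpha}\colon\Xcal_{\Wc_\alpha}\to\Wc_\alpha$ is superprojective over the noetherian base $\Wc_\alpha$, carrying the relatively very ample line bundle $\Oc_\Xcal(1)|_{\Xcal_{\Wc_\alpha}}$. Applied to the coherent sheaf $\Lcl|_{\Xcal_{\Wc_\alpha}}$, parts~(2) and~(3) of that theorem provide an integer $n_\alpha\geq 0$ with $R^if_{\Wc_\alpha,*}\bigl(\Lcl(n_\alpha)|_{\Xcal_{\Wc_\alpha}}\bigr)=0$ for all $i>0$ and with $\Lcl(n_\alpha)|_{\Xcal_{\Wc_\alpha}}$ relatively generated by its global sections. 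Hence $[\Lcl(n_\alpha)|_{\Xcal_{\Wc_\alpha}}]$ defines a section of $\SPicf_{+a,\Xcal/\Sc}(\Wc_\alpha)$, and therefore also of $\SPicf_{+a,\Xcal/\Sc\et}(\Wc_\alpha)$.

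To conclude, I would write the obvious identity
\[
\xi|_{\Wc_\alpha}\;=\;[\Lcl]|_{\Wc_\alpha}\;=\;[\Lcl(n_\alpha)|_{\Xcal_{\Wc_\alpha}}]\cdot[\Oc(-n_\alpha)|_{\Xcal_{\Wc_\alpha}}]
\]
in $\SPicf_{+\Xcal/\Sc\et}(\Wc_\alpha)$, which exhibits $\xi|_{\Wc_\alpha}$ as an element of $\SPicf_{+a,\Xcal/\Sc\et}(\Wc_\alpha)\cdot[\Oc(-n_\alpha)]$. Because $\{\Wc_\alpha\to\Tc\}$ is an \'etale cover of $\Tc$, $\xi$ itself belongs, as a section of the \'etale sheaf $\SPicf_{+\Xcal/\Sc\et}$, to the union $\bigcup_{n\in\N}\SPicf_{+a,\Xcal/\Sc\et}\cdot[\Oc(-n)]$, giving one inclusion; the reverse inclusion is immediate since both $\SPicf_{+a,\Xcal/\Sc\et}$ and $[\Oc(-n)]$ are sections of $\SPicf_{+\Xcal/\Sc\et}$. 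The only (very mild) obstacle is the standard bookkeeping of passing from an \'etale-locally defined Picard class to an honest line bundle and then localizing further on the base so that Serre's theorem is applicable; once this reduction is made, the statement is essentially a direct consequence of Serre vanishing.
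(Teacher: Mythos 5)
Your proof is correct and follows essentially the same route as the paper's: pass to an \'etale cover where the class is represented by an honest even line bundle $\Lcl$, then use Serre's Theorem~\ref{thm:serre} to twist $\Lcl$ into the acyclic, globally generated subfunctor $\SPicf_{+a,\Xcal/\Sc\et}$. The only (harmless) variation is that you apply Serre uniformly over affine noetherian pieces of the cover to get a single twist $n_\alpha$ on each piece, whereas the paper applies Serre fibrewise at each point $t$ and invokes the openness of the acyclic locus (Proposition~\ref{prop:acyclic}) to conclude that the opens $\Tc'_n(\lambda)$ cover the base; both versions yield the covering condition needed for Proposition~\ref{prop:opengoup}.
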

\begin{proof} 
 Let us write $\Ss\Pc={\SPicf_{\Xcal/\Sc\et}}$ and $\Ss\Pc_a={\SPicf_{a,\Xcal/\Sc\et}}$ for simplicity.
One has to prove that for every $\Sc$-superscheme $\Tc\to\Sc$ and every section $\lambda\in \Ss\Pc(\Tc)$, that is, for every functor morphism $\lambda\colon \Tc \to \Ss\Pc$, the open sub-superschemes $\Tc_n(\lambda) = \Ss\Pc_a\cdot [\Oc(-n)]\times_{\Ss\Pc,\lambda}\Tc\hookrightarrow \Tc$ for the various $n\in\Z$ yield a covering of $\Tc$.
 Let $\pi\colon\Tc'\to\Tc$ be an \'etale covering such that $\pi^\ast\lambda=\lambda\circ\pi\colon \Tc'\to \Ss\Pc$ is defined by the class $[\Lcl]$ of an even line bundle $\Lcl$ on $\Xcal_{\Tc'}$. Then, it is enough to prove that the open sub-superschemes $\Tc'_n(\lambda) = \Ss\Pc_a\cdot [\Oc(-n)]\times_{\Ss\Pc,\pi^\ast\lambda}\Tc'\hookrightarrow \Tc'$ for the various $n\in\Z$ yield a covering of $\Tc'$. By Proposition \ref{prop:acyclic} $\Tc'_n(\lambda)$ is the open sub-superscheme of the points $t\in T'$ such that the restriction of the sheaf $\Lcl_t(n)$ to the fibre $\Xcal_t$ of $\Xcal_{\Tc'}\to\Tc'$ over $t$ is acyclic and generated by its global sections. 
 It follows then from Serre's Theorem \ref{thm:serre} that for every point $t\in\Tc'$ there is $n$ such that $t\in \Tc'_n(\lambda)$.
\end{proof}
Then one has:
\begin{prop}[End of the proof of Theorem \ref{thm:picrepres}]\label{prop:proof} {Under the hypotheses of Theorem \ref{thm:picrepres}, the} even super Picard sheaf ${\SPicf_{\Xcal/\Sc\et}}$ is representable by a superscheme ${\SPic(\Xcal/\Sc)}$ which can be covered by open sub-superschemes of the form ${\SPic_{a}}(\Xcal/\Sc)\cdot [\Oc(-n)]$.  {In particular,  the Picard   superscheme ${\SPic}(\Xcal/\Sc)$ is locally of finite type over $\Sc$.}
\end{prop}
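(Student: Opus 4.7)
The plan is to combine the representability of the open subfunctor $\SPicf_{+a,\Xcal/\Sc\et}$ established in Proposition \ref{prop:divpic} with the covering result of Lemma \ref{lem:localrep}, invoking the gluing criterion for group sheaves (Proposition \ref{prop:opengoup}) referenced in item (5) of Strategy \ref{st:pic}.

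First I would observe that since $\SPicf_{+\Xcal/\Sc\et}$ is a group sheaf under tensor product of line bundles, and the class $[\Oc(-n)]$ is a global section in $\SPicf_{+\Xcal/\Sc\et}(\Sc)$, translation by $[\Oc(-n)]$ is a sheaf automorphism
$$\tau_n \colon \SPicf_{+\Xcal/\Sc\et} \iso \SPicf_{+\Xcal/\Sc\et}.$$
Under $\tau_n$ the open subsheaf $\SPicf_{+a,\Xcal/\Sc\et}$ is carried isomorphically onto the translate $\SPicf_{+a,\Xcal/\Sc\et} \cdot [\Oc(-n)]$, which is therefore itself an open subsheaf (in the sense of Proposition \ref{prop:acyclicsh}) representable by (a copy of) $\SPic_{+a}(\Xcal/\Sc) \cdot [\Oc(-n)]$. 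By Proposition \ref{prop:divpic} each such translate is of finite type and separated over $\Sc$.

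By Lemma \ref{lem:localrep}, the family $\{\SPicf_{+a,\Xcal/\Sc\et} \cdot [\Oc(-n)]\}_{n \in \N}$ is an open covering of $\SPicf_{+\Xcal/\Sc\et}$ by representable open subfunctors. Applying the gluing criterion (Proposition \ref{prop:opengoup}) then produces a superscheme $\SPic_+(\Xcal/\Sc)$ representing $\SPicf_{+\Xcal/\Sc\et}$, covered by the open sub-superschemes $\SPic_{+a}(\Xcal/\Sc) \cdot [\Oc(-n)]$. The gluing data are automatic: the pairwise fibered products on overlaps are representable by open sub-superschemes of each translate, and the cocycle condition follows from associativity of the tensor product. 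Local finite-type-ness of $\SPic_+(\Xcal/\Sc)$ over $\Sc$ then follows at once from that of each chart.

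The main obstacle I expect to encounter is the verification of the hypotheses of Proposition \ref{prop:opengoup}, specifically that the covering by translates is compatible with the group structure and that the overlaps are open in each of the two charts being glued. This reduces to checking that the translation automorphisms $\tau_n \circ \tau_m^{-1}$ of the total sheaf restrict to open immersions on the relevant intersections; this is a formal consequence of the fact that all translates arise from the same representable open subfunctor $\SPicf_{+a,\Xcal/\Sc\et}$ under sheaf automorphisms of $\SPicf_{+\Xcal/\Sc\et}$, so no separate argument for the cocycle is needed beyond the representability of binary intersections.
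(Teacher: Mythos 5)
Your proposal is correct and takes essentially the same route as the paper, which likewise deduces representability by combining Proposition \ref{prop:divpic}, Lemma \ref{lem:localrep} and the gluing criterion of Proposition \ref{prop:opengoup}, and obtains local finite type from the finite type of the charts $\SPic_{+a}(\Xcal/\Sc)\cdot[\Oc(-n)]$. The details you spell out about translation automorphisms, openness of the translates (via Proposition \ref{prop:acyclicsh}) and the cocycle condition are precisely what the paper absorbs into the proof of Proposition \ref{prop:opengoup}.
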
 
\begin{proof}
The representability follows from Propositions \ref{prop:opengoup} and \ref{prop:divpic} together with Lemma \ref{lem:localrep}.  Moreover, the superschemes ${\SPic_{a}}(\Xcal/\Sc)\cdot [\Oc(-n)]$ are of finite type over $\Sc$ by Proposition \ref{prop:divpic} and then  ${\SPic}(\Xcal/\Sc)$ is locally of finite type over $\Sc$.
\end{proof}

\subsection{Representability of the Picard functor over an even affine base}

\subsubsection{General criterion}

The conditions that according to Theorem \ref{thm:picrepres} ensure the existence of the Picard superscheme of a morphism $f\colon\Xcal \to \Sc$ and, in particular the requirement of projectivity and of cohomological flatness in dimension $0$ may look quite restrictive. 
Using the following technical criterion for the  representability of the Picard functor for proper superschemes over an even affine base, we will show that in some cases these conditions can
be relaxed.

Let us fix a Noetherian even ring $R$.

\begin{prop}\label{prop:even-ring-Pic-repr}
Let $\Xcal$ be a proper superscheme over $S=\Spec(R)$.
Assume that $(\Oc_{\Xcal,-})^{N+1}=0$ and $N!$ is invertible in $R$ for some $N$.
Then ${\Picf_{\Xcal/S\et}}$ is representable if and only if
\begin{itemize}
\item the usual Picard functor of the bosonic quotient $\Xcal/\Gamma$ over $S$ is representable by an $R$-scheme;
\item the functor $M\mapsto H^1(\Xcal/\Gamma,\Oc_{\Xcal,-}\otimes_R M)$ on the category of $R$-modules is left exact.
\end{itemize}
Furthermore, if this is the case then ${\SPic}(\Xcal/S)$ splits as follows:
$${\SPic}(\Xcal/R)\simeq \SPicc((\Xcal/\Gamma)/S)\times_S \Spec({\bigwedge}_R F),$$
for some finitely generated $R$-module $F$ (where we think of elements of $F$ as odd).
\end{prop}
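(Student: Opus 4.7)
My overall approach filters the even structure sheaf $\Oc_{\Xcal,+}$ by powers of the nilpotent ideal $\Ic:=(\Oc_{\Xcal,-})^2$ and climbs the resulting finite tower of square-zero thickenings
$$X=(X,\Oc_{\Xcal,+}/\Ic)\hookrightarrow(X,\Oc_{\Xcal,+}/\Ic^2)\hookrightarrow\cdots\hookrightarrow\Xcal/\Gamma.$$
Because $\Ic^k\subset(\Oc_{\Xcal,-})^{2k}$ this filtration terminates, and because $N!$ is invertible in $R$ the formal exponential produces, after any superschematic base change $T\to S$, short exact sequences of abelian sheaves
\begin{equation*}
0\to (\Ic^k/\Ic^{k+1})_T\xrightarrow{\exp} (\Oc_{\Xcal,+}/\Ic^{k+1})_T^\ast\to (\Oc_{\Xcal,+}/\Ic^k)_T^\ast\to 1.
\end{equation*}
Combined with Lemma \ref{lem:Pic-bos-quot}, these interpolate a finite tower of Picard sheaves between $\Picf_{(\Xcal/\Gamma)/S\et}$ at the bottom and $\SPicf_{+\Xcal/S\et}$ at the top, and everything reduces to controlling the extensions at each step.

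For the \emph{only-if} direction, Lemma \ref{lem:base-change-Pic}(2) immediately identifies the ordinary Picard functor of $\Xcal/\Gamma$ with the restriction of $\SPicf_{+\Xcal/S\et}$ to schemes, so it is representable by the bosonization of $\SPic_+(\Xcal/S)$. For the cohomological hypothesis, apply the exponential sequence to the square-zero super-thickening $T=\SSpec(R\oplus\Pi M)$: one obtains, up to an $H^0$-contribution irrelevant to representability, an identification of the kernel of $\SPic_+(\Xcal/S)(T)\to \SPic_+(\Xcal/S)(\Spec R)$ with $M\mapsto H^1(\Xcal/\Gamma,\Oc_{\Xcal,-}\otimes_R M)$. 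This kernel is the odd tangent functor of $\SPic_+(\Xcal/S)$ along its zero section, which for any representable abelian supergroup is representable by an additive super-vector group $\Vs(\Pi F)$ for some finitely generated $R$-module $F$, hence is left-exact in $M$.

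For the \emph{if} direction I will construct $\SPicf_{+\Xcal/S\et}$ by induction up the tower. Given representability at step $k$, the exponential sequence exhibits the Picard sheaf at step $k+1$ as an extension by the additive functor $M\mapsto H^1(X,\Ic^k/\Ic^{k+1}\otimes_R M)$; granting its left-exactness, it is represented by a coherent sheaf and hence by a vector super-group, and extensions of representable abelian sheaves by affine vector super-groups are representable. Iterating finitely many times gives representability of $\SPicf_{+\Xcal/S\et}$. For the final splitting statement, use that $\SPic_+(\Xcal/S)$ is a commutative supergroup with odd nilpotency bounded by $N$ and $N!$ invertible: the exponential on its odd part identifies a maximal odd sub-supergroup with $\SSpec(\bigwedge_R F)$ where $F$ is the $R$-module of the odd tangent, and this splits off as a direct factor via the zero section of the projection to $\SPicc((\Xcal/\Gamma)/S)$, producing the claimed product decomposition.

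\emph{Main obstacle.} The delicate technical step is the inductive claim: left-exactness of $M\mapsto H^1(X,\Oc_{\Xcal,-}\otimes_R M)$ must imply the same for $M\mapsto H^1(X,\Ic^k/\Ic^{k+1}\otimes_R M)$ for all $k\geq 1$. I plan to realize $\Ic^k/\Ic^{k+1}$ as a super-symmetric component of $\Oc_{\Xcal,-}^{\otimes 2k}$—with invertibility of $N!$ providing the symmetrization projector—and to deduce the required cohomological flatness of the subquotients from the hypothesis by a K\"unneth-style argument applied to the Grothendieck--Mumford complex of Proposition \ref{prop:GMcomplex}; handling the failure of flatness of $\Oc_{\Xcal,-}$ over $\Oc_X$ in the absence of any local splitness assumption is where most of the technical work will lie.
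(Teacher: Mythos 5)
Your inductive tower is where the argument breaks, in two places. First, the exponential sequence at each step does \emph{not} exhibit the level-$(k+1)$ Picard functor as an extension of the level-$k$ one by $M\mapsto H^1(X,\Ic^k/\Ic^{k+1}\otimes_R M)$: the long exact cohomology sequence continues with a coboundary into $H^2(X,\Ic^k/\Ic^{k+1}\otimes_R M)$, so there is an $H^2$-valued obstruction to surjectivity, and on the other side an $H^0$-coboundary from units which makes the kernel only a \emph{quotient} of the $H^1$-functor. Nothing in the hypotheses (no $H^2$-vanishing, no cohomological flatness in dimension $0$, not even flatness of $\Xcal$ over $R$) controls these, and representability of the middle term of such a non-split sequence does not follow from representability of the outer ones. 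Second, the ``main obstacle'' you flag cannot be resolved by the route you sketch: left exactness of $M\mapsto H^1(X_0,\Oc_{\Xcal,-}\otimes_R M)$ says nothing about the graded pieces, and the planned derivation fails on three counts — there is no K\"unneth formula computing $H^1(X,\F\otimes_{\Oc_X}\Gc)$ from $H^\bullet(X,\F)$ and $H^\bullet(X,\Gc)$ for two sheaves on the \emph{same} space; $\Ic^k/\Ic^{k+1}$ is in general only a quotient (over $\Oc_X$, not over $R$) of an exterior-power object, a direct summand of $\Oc_{\Xcal,-}^{\otimes 2k}$ only under splitness assumptions you do not have; and the Grothendieck--Mumford complex of Proposition \ref{prop:GMcomplex} requires flatness over the base, which the proposition does not assume.

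The tower is also unnecessary, which is the point you are missing: the hypothesis already grants representability of the Picard functor of $\Xcal/\Gamma$, so there is nothing to climb from $X$ to $\Xcal/\Gamma$, and the odd directions never produce graded pieces at all. The paper's proof rests on a one-step multiplicative splitting valid for \emph{every} $R$-superalgebra $\As$ simultaneously, not just square-zero thickenings: since $\Oc_{\Xcal_\As,+}=\Oc_{\Xcal,+}\otimes_R\As_+\oplus\Oc_{\Xcal,-}\otimes_R\As_-$, the map $(x_0,x_1)\mapsto x_0\exp(x_1)$ is an isomorphism of sheaves of groups $(\Oc_{\Xcal,+}\otimes_R\As_+)^\ast\times\bigl(\Oc_{\Xcal,-}\otimes_R\As_-\bigr)\iso \Oc_{\Xcal_\As,+}^\ast$ — well defined because any product of $N+1$ sections of $\Oc_{\Xcal,-}$ vanishes and $N!$ is invertible, and inverted explicitly via $f_0^{-1}f_1=\tanh(x_1)$. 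Consequently $\SPicf_{+\Xcal/S}(\As)\simeq \Picf_{X_0/S\phantom{\et}}(\As_+)\times H^1\bigl(X_0,\Oc_{\Xcal,-}\otimes_R\As_-\bigr)$ with $X_0=\Xcal/\Gamma$, so only the single functor $T^1(M)=H^1(X_0,\Oc_{\Xcal,-}\otimes_R M)$ of the statement ever appears: no extensions, no obstructions, no induction. From this decomposition everything follows: the only-if direction uses that $\SSpec(R\oplus(M_-\times_{P_-}N_-))$ is the fibred coproduct of the corresponding superschemes when $M_-\to P_-$ is surjective — which is also the actual proof of your asserted-but-unproved claim that the odd tangent functor of a representable supergroup is left exact — and the if direction uses left exactness to write $T^1(M)\simeq\Hom_R(F,M)$ for a finitely generated $F$, so the odd factor is corepresented by ${\bigwedge}_R F$ and $\SPicf_{+\Xcal/S\et}$ is represented by $\SPicc((\Xcal/\Gamma)/S)\times_S\Spec({\bigwedge}_R F)$, giving the splitting at once rather than via your vague ``maximal odd sub-supergroup'' argument.
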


\begin{proof} 
Let us set for brevity $X_0=\Xcal/\Gamma$.
For an $R$-superalgebra $\As$ the set of $\As$-points ${\SPicf_{\Xcal/S}}(\As)$ can be identified with
$$H^1(\Xcal, \Oc_{\Xcal_\As,+}^*)/\Pic(\Spec(\As_+)).$$
Furthermore, we have
\begin{equation}\label{even-tensor-product-component-decomposition}
\Oc_{\Xcal_\As,+}=\Oc_{\Xcal,+}\otimes_R \As_+\oplus \Oc_{\Xcal,-}\otimes_R \As_-.
\end{equation}
Hence, $(\Oc_{\Xcal,+}\otimes_R \As_+)^*$ is naturally a subgroup in $\Oc_{\Xcal_\As,+}^*$.
On the other hand, we claim that there is a well defined homomorphism of sheaves of groups,
$$\exp: \Oc_{\Xcal,-}\otimes_R \As_-\to \Oc_{\Xcal_\As,+}^*.$$
Indeed, this follows from the assumption that $(\Oc_{\Xcal,-})^{N+1}=0$ and $N!$ is invertible in $R$.

Next, we claim that the homomorphism
$$(\Oc_{\Xcal,+}\otimes_R \As_+)^*\times \Oc_{\Xcal,-}\otimes_R \As_-\to \Oc_{\Xcal_\As,+}^*:(x_0,x_1)\mapsto x_0\exp(x_1)$$ 
is an isomorphism. Indeed, the components of $x_0\exp(x_1)$ with respect to the decomposition 
\eqref{even-tensor-product-component-decomposition} are given by
$$f_0=x_0\cosh(x_1)=x_0\big(1+\frac{x_1^2}{2!}+\ldots\big), \ \ f_1=x_0\sinh(x_1)=x_0\big(x_1+\frac{x_1^3}{3!}+\ldots\big)\,.$$
Hence, $f_0^{-1}f_1=\tanh(x_1)$, and we can recover $(x_0,x_1)$ from $(f_0,f_1)$ by 
$$(f_0,f_1)\mapsto \big(\frac{f_0}{\cosh(\tanh^{-1}(f_0^{-1}f_1))},\tanh^{-1}(f_0^{-1}f_1)\big)\,,$$
where $\tanh^{-1}(t)=t+\frac{t^3}{3}+\frac{t^5}{5}+\ldots$ (note that the compatibility with the group law corresponds to the addition theorem for $\tanh$).

The above isomorphism of sheaves of groups induces an isomorphism of functors
$${\SPicf_{\Xcal/S}}(\As)\simeq \Picf_{X_0/S}(\As_+)\times H^1(X_0,\Oc_{\Xcal,-}\otimes_R \As_-).$$
We claim that this implies the isomorphism for the sheafified functors
$${\SPicf_{\Xcal/S\et}}(\As)\simeq \Picf_{X_0/S\et}(\As_+)\times H^1(X_0,\Oc_{\Xcal,-}\otimes_R \As_-).$$
Namely, the fact that the second factor does not change follows by taking odd components in the identification
$$H^1(X_0,\Oc_{\Xcal,-}\otimes_R \As)=\ker(H^1(X_0,\Oc_{\Xcal,-}\otimes_R \Bs)\to H^1(X_0,\Oc_{\Xcal,-}\otimes_R (\Bs\otimes_{\As}\Bs))),$$
for any faithfully flat extension $\As\to \Bs$. Indeed, this follows from the faithfully flat descent for modules using the isomorphism
$$H^1(X_0,\Oc_{\Xcal,-}\otimes_R \Bs)\simeq H^1(X_0,\Oc_{\Xcal,-}\otimes_R \As)\otimes_{\As} \Bs$$
(and a similar isomorphism for $\Bs\otimes_{\As} \Bs$) which follows from the flat base change.

Assume first that ${\SPicf_{\Xcal/S\et}}$ is representable. Then by Lemma \ref{lem:base-change-Pic}(ii), $\Picf_{X_0/S}$ is also representable. 
Now let us consider $R$-superalgebras of the form
$$\As=R\oplus M_-,$$
where $M_-$ is any $R$-module (the multiplication on $\As$ is such that $(M_-)^2=0$). Note that 
for such superalgebras we have
$$
{\SPicf_{\Xcal/S\et}}(R\oplus M_-)\simeq \Picf_{X_0/S\et}(R)\times T^1(M_-)\,,
$$
where
$$
T^1(M_-):=H^1(X_0,\Oc_{\Xcal,-}\otimes_R M_-)\,.
$$
Now let us consider any fibred product diagram of $R$-modules
$$
\xymatrix{
M_-\times_{P_-} N_- \ar[r] \ar[d]& N_-\ar[d]\\
M_-\ar[r]& P_-
}
$$
with $M_-\to P_-$ surjective.
We can form the corresponding diagram of $R$-superalgebras (by adding $R$ as the even part). Then 
the corresponding spectra form a fibred coproduct diagram in the category of $R$-superschemes (this is checked similarly to
the even case considered in \cite{Schwede}). Hence,
taking $N_-=0$ and applying the isomorphisms
$$\Morf(\Spec(R\oplus M_-),\SPic(\Xcal/S))\simeq \Picf_{X_0/S\et}(R)\times T^1(M_-)$$
we get the fibred product diagram of sets
$$
\xymatrix{
\Picf_{X_0/S\et}(R)\times T^1(\ker(M_-\to P_-)) \ar[r] \ar[d]& \Picf_{X_0/S\et}(R)\times\{0\}\ar[d]\\
\Picf_{X_0/S\et}(R)\times T^1(M_-)\ar[r]& \Picf_{X_0/S\et}(R)\times T^1(P_-)
}
$$
which implies that $T^1$ is left exact.

Conversely, assume that $\Picf_{X_0/S}$ is representable and $T^1$ is left exact. 
Then there exists a finitely generated $R$-module $F$ such that there is a functorial isomorphism
$$T^1(M_-)\simeq \Hom_R(F,M_-).$$
This means that an element of ${\SPicf_{\Xcal/S\et}}(\As)$ is given by a pair: a morphism $\Spec(\As)\to \SPicc(X_0/S)$ (which automatically factors
through $\Spec(\As_+)$) and a homomorphism of $R$-modules $F\to \As_-$. Giving such a homomorphism is equivalent to giving a homomorphism of $R$-superalgebras
${\bigwedge}_R F\to \As$, so we deduce that ${\SPicf_{\Xcal/S\et}}$ is represented by
$$\SPicc(X_0/S)\times_S \Spec({\bigwedge}_R F).$$
\end{proof}

\begin{remark}
We claim that if 
the morphism $\Xcal\to \Spec(R)$ is flat and cohomologically flat in dimension $0$ then the functor $M\mapsto H^1(\Xcal/\Gamma,\Oc_{\Xcal,-}\otimes_R M)$ is left exact.
Indeed, by Proposition \ref{prop:GMcomplex}, the functor $M\mapsto T^i(M)=H^i(\Xcal/\Gamma,\Oc_{\Xcal,-}\otimes_R M)$ can be calculated as 
$$T^i(M)=H^i(K^\bullet\otimes_R M),$$
where $K^\bullet$ is a complex of finitely generated projective $R$-modules, concentrated in degrees $[0,n]$. The condition of cohomological flatness in dimension $0$
applied to $R$-superalgebras of the form $R\oplus M$ implies that $T^0(M)=0$ for all $R$-modules $M$. This implies that the differential $\partial_0:K^0\to K^1$
is injective with projective quotient $C^1=\coker(\partial_0)$. It follows that 
$$T^1(M)=\ker(C^1\otimes_R M\to K^2\otimes_R M),$$
so this functor is left exact.
\end{remark}

\begin{corol}\label{cor:even-ring-Pic-repr-dim1} 
Under the assumptions of Proposition \ref{prop:even-ring-Pic-repr},
assume in addition that $\Xcal$ is a flat over $R$ and the relative dimension of $\Xcal/\Gamma$ over $R$ is $\le 1$. Then
${\Picf_{\Xcal/S\et}}$ is representable if and only if
\begin{itemize}
\item the usual Picard functor of the bosonic quotient $\Xcal/\Gamma$ over $S$ is representable by an $S$-scheme;
\item the $R$-module $H^1(\Xcal/\Gamma,\Oc_{\Xcal,-})$ is projective.
\end{itemize}
\end{corol}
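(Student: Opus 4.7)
The plan is to combine Proposition \ref{prop:even-ring-Pic-repr} with a direct analysis of the functor $T^1(M) := H^1(X_0, \Oc_{\Xcal,-} \otimes_R M)$ under the additional hypotheses, where $X_0 := \Xcal/\Gamma$. By that proposition, $\Picf_{+\Xcal/S\et}$ is representable if and only if the bosonic Picard functor $\Picf_{X_0/S}$ is representable and $T^1$ is left exact on the category of $R$-modules; so the task reduces to showing that, under flatness of $\Xcal/R$ and the dimension bound on $X_0/R$, left exactness of $T^1$ is equivalent to projectivity of $H^1(X_0, \Oc_{\Xcal,-})$.

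First I would observe that, since $R$ is purely even and $\Xcal$ is flat over $R$, the $R$-module direct sum decomposition $\Oc_\Xcal = \Oc_{\Xcal,+} \oplus \Oc_{\Xcal,-}$ exhibits $\Oc_{\Xcal,-}$ as an $R$-flat coherent sheaf on $X_0$. Proposition \ref{prop:GMcomplex} then supplies a bounded complex $K^\bullet$ of finitely generated projective $R$-modules together with functorial isomorphisms $T^i(M) \simeq H^i(K^\bullet \otimes_R M)$ for every $R$-module $M$. The hypothesis that $X_0/R$ has relative dimension $\le 1$, together with Corollary \ref{cor:cohombound2} and the affineness of the base (which makes $H^i(X_0,-)$ agree with $\Gamma\circ R^if_*$ on quasi-coherent sheaves), forces $R^if_*(\Oc_{\Xcal,-}\otimes_R M) = 0$ for $i \ge 2$ and every flat $M$; by a standard truncation one may therefore arrange $K^\bullet$ to be concentrated in degrees $[0,1]$, say $K^\bullet = (K^0 \xrightarrow{\partial} K^1)$, without altering $T^0$ or $T^1$.

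From this two-term presentation, right-exactness of $-\otimes_R M$ gives the functorial identification
\[
T^1(M) \;=\; \coker(\partial \otimes_R \id_M) \;\simeq\; \coker(\partial)\otimes_R M \;=\; T^1(R)\otimes_R M \;=\; H^1(X_0,\Oc_{\Xcal,-})\otimes_R M,
\]
with no hypothesis on the image $\Ima(\partial)$ needed. The functor $M\mapsto T^1(R)\otimes_R M$ is left exact if and only if the finitely generated $R$-module $T^1(R) = H^1(X_0,\Oc_{\Xcal,-})$ is flat; as $R$ is Noetherian, flatness and projectivity coincide for finitely generated modules. Combining this equivalence with Proposition \ref{prop:even-ring-Pic-repr} yields both directions of the corollary, and (as a bonus) identifies the odd part $F$ of the representing superalgebra appearing in the conclusion of that proposition with the $R$-dual of $H^1(X_0,\Oc_{\Xcal,-})$.

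The only mildly delicate step is justifying that $K^\bullet$ can be chosen of length $\le 1$; once this is in place, the rest is purely algebraic. I would not anticipate any genuine obstacle here, since it is a cohomological-dimension argument over the affine Noetherian base combined with the Grothendieck--Mumford construction.
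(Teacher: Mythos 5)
Your proof is correct and takes essentially the same route as the paper: both reduce, via Proposition \ref{prop:even-ring-Pic-repr}, to showing that the relative-dimension bound forces the functor $T^1(M)=H^1(\Xcal/\Gamma,\Oc_{\Xcal,-}\otimes_R M)$ to be right exact, so that left exactness is equivalent to exactness, i.e.\ to flatness of $H^1(\Xcal/\Gamma,\Oc_{\Xcal,-})$, which equals projectivity since this module is finitely generated over the Noetherian ring $R$. The only difference is presentational: where the paper invokes \cite[Cor.\ 12.6]{Hart77}, you re-prove that result by splitting off contractible summands to truncate the Grothendieck--Mumford complex to degrees $[0,1]$ --- a step that is sound and in fact needs only $T^i(R)=H^i(\Xcal/\Gamma,\Oc_{\Xcal,-})=0$ for $i\ge 2$ (your vanishing ``for every flat $M$'' is more than is required, since $H^{\mathrm{top}}(K^\bullet\otimes_R M)=\coker(\partial)\otimes_R M$ vanishes for all $M$ once it vanishes for $M=R$).
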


\begin{proof}
We just observe that using the notation above, the functor $T^1$ associated with $\Oc_{\Xcal,-}$ (which is flat over $R$) is right exact due to the assumption on the relative
dimension. Hence, it is left exact if and only if it is exact if and only if the $R$-module
$H^1(\Xcal\Gamma,\Oc_{\Xcal,-})$ is projective (see \cite[Cor.\ 12.6]{Hart77}).
\end{proof}

\begin{corol}
Let $f\colon\Xcal\to S$ be a flat family of $(1,1)$-curves over $S=\Spec(R)$, given by $\Oc_{\Xcal}=\Oc_X\oplus \Lcl$, where $f_{\bos}\colon X\to S$ is a family of smooth projective
curves, and $\Lcl$ is a line bundle over $X$. Then ${\Picf_{\Xcal/S\et}}$ is representable if and only if $R^1f_{\bos,\ast}(\Lcl)$ is locally free.
\qed\end{corol}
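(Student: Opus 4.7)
The plan is to deduce this directly from Corollary \ref{cor:even-ring-Pic-repr-dim1} after verifying all of its hypotheses for the given $(1,1)$-curve $f\colon\Xcal\to S$. First I would check the assumptions of the ambient Proposition \ref{prop:even-ring-Pic-repr}. Since $\Oc_\Xcal=\Oc_X\oplus\Lcl$ with $\Lcl$ odd and of rank $1$, super-commutativity forces any two odd sections $s,t$ of $\Lcl$ to satisfy $st=-ts$ and $s^2=0$; locally, $\Lcl$ being rank $1$ then forces the multiplication $\Lcl\otimes_{\Oc_X}\Lcl\to\Oc_X$ to vanish identically, so $(\Oc_{\Xcal,-})^2=0$. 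Thus we may take $N=1$, and the hypothesis that $N!$ is invertible in $R$ is automatic. Flatness of $\Xcal$ over $R=\Oc_S$ is part of the assumption, and properness follows from properness of $X/S$. The additional hypothesis of Corollary \ref{cor:even-ring-Pic-repr-dim1}, namely that the relative dimension of $\Xcal/\Gamma=X$ over $S$ is at most $1$, holds since $X/S$ is a family of curves.

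Next I would dispose of the first bullet point in Corollary \ref{cor:even-ring-Pic-repr-dim1}: the classical Picard functor of $\Xcal/\Gamma=X$ over $S$ must be representable by an $S$-scheme. This is one of the foundational results of Grothendieck (see \cite{FGA,Kle05}): for a smooth projective family of curves over a Noetherian base (in particular with geometrically integral fibres and cohomologically flat in dimension $0$), $\Picf_{X/S\et}$ is representable by a smooth separated $S$-scheme. So no further work is required here.

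It remains to match the second bullet point to the stated condition. Corollary \ref{cor:even-ring-Pic-repr-dim1} requires that the $R$-module $H^1(X,\Oc_{\Xcal,-})=H^1(X,\Lcl)$ be projective. Because $X$ is proper over the Noetherian affine base $S=\Spec(R)$ and $\Lcl$ is coherent, $H^1(X,\Lcl)$ is a finitely generated $R$-module, and over a Noetherian ring a finitely generated module is projective if and only if it is locally free. Under the identification $H^1(X,\Lcl)=\Gamma(S,R^1f_{\bos,\ast}\Lcl)$, this is exactly the condition that the coherent sheaf $R^1f_{\bos,\ast}\Lcl$ on $S$ is locally free. Combining these observations with Corollary \ref{cor:even-ring-Pic-repr-dim1} yields the desired equivalence. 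There is no substantial obstacle in this argument: it is essentially a direct application of the previous corollary together with the standard representability of Picard schemes of smooth curve families.
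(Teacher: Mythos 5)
Your proof is correct and follows exactly the route the paper intends: the corollary is stated in the paper as an immediate consequence of Corollary \ref{cor:even-ring-Pic-repr-dim1}, and your verifications (that $(\Oc_{\Xcal,-})^2=0$ so $N=1$ works and $\Xcal/\Gamma=X$, that representability of $\Picf_{X/S\et}$ for a smooth projective family of curves is classical, and that projectivity of the finitely generated $R$-module $H^1(X,\Lcl)$ over the Noetherian ring $R$ is equivalent to local freeness of $R^1f_{\bos,\ast}\Lcl$) are precisely the steps the paper leaves implicit. Nothing is missing.
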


For example, if $\Lcl=\omega_{X/S}$ in the last corollary then ${\Picf_{\Xcal/S\et}}$ is representable since $R^1 f_{\bos,\ast}(\omega_{X/S})$ is locally free but $f\colon\Xcal\to S$ is
not cohomologically flat in dimension $0$ if genus is $\ge 1$.

\subsubsection{Generic representability and representability over a field}

\begin{thm}\label{thm:generic-Pic-repr}
Let $S=\Spec(R)$ where $R$ is an (even) integral domain. Assume that $\Xcal$ is a proper superscheme over $S$ and that
$(\Oc_{\Xcal,-})^{N+1}=0$ and $N!$ is invertible in $R$, for some $N$.
Then there exists
a nonempty open subset $V\subset S$ such that $\SPicf_{\Xcal_V/V\et}$ is representable by 
$$\SPicc((\Xcal/\Gamma)_V/V)\times \As^{0,n}$$
for some $n$, where $\SPicc((\Xcal/\Gamma)_V/V)$ is a disjoint union of  quasiprojective schemes over $V$.
\end{thm}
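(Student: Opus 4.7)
The plan is to apply the criterion of Proposition \ref{prop:even-ring-Pic-repr} after restricting to a sufficiently small nonempty affine open $V=\Spec R_V\subset S$. That criterion requires two conditions: the representability of the ordinary Picard functor $\Picf_{(\Xcal/\Gamma)_V/V\et}$ by an $R_V$-scheme, and the left exactness of the functor $T^1(M):=H^1((\Xcal/\Gamma)_V,\Oc_{\Xcal,-}\otimes_{R_V}M)$ on the category of $R_V$-modules. Once both are established, Proposition \ref{prop:even-ring-Pic-repr} itself produces an isomorphism
$$\SPic_+(\Xcal_V/V)\simeq \SPicc((\Xcal/\Gamma)_V/V)\times_V \Spec({\bigwedge}_{R_V} F)$$
for some finitely generated $R_V$-module $F$. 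A further shrinking of $V$ makes $F$ free of some rank $n$ by generic freeness over the noetherian integral domain $R$, which turns $\Spec({\bigwedge}_{R_V} F)$ into the desired affine superspace $\As^{0,n}_V$.

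For the left exactness of $T^1$, I would first apply classical generic flatness to the coherent sheaf $\Oc_{\Xcal,-}$ on $\Xcal/\Gamma\to S$, shrinking $V$ so that $\Oc_{\Xcal,-}|_V$ becomes flat over $V$. Since $\Xcal/\Gamma\to V$ is proper and $\Oc_{\Xcal,-}|_V$ is flat over $V$, the Grothendieck--Mumford complex (Proposition \ref{prop:GMcomplex}) represents the $\delta$-functor $T^\bullet$ as the cohomology of a bounded complex $K^\bullet$ of finitely generated projective $R_V$-modules, functorially in the module argument. By Lemma \ref{lem:ti}, the left exactness of $T^1$ is equivalent to the flatness of $W_1=\coker(\partial_0\colon K^0\to K^1)$, and this finitely generated module over the noetherian integral domain $R_V$ becomes free after one more shrinking of $V$.

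For the representability of $\Picf_{(\Xcal/\Gamma)_V/V\et}$, I would apply generic flatness again (this time to $\Oc_{\Xcal/\Gamma}$ itself), enforce cohomological flatness in dimension $0$ of $\Xcal/\Gamma\to V$ via the same Grothendieck--Mumford argument together with Proposition \ref{prop:cohomflat}, and use that the geometric integrality of the generic fibre of $\Xcal/\Gamma\to S$ spreads over an open neighbourhood to shrink $V$ further so that every fibre of $(\Xcal/\Gamma)_V\to V$ is geometrically integral. The classical Grothendieck representability theorem for the Picard functor of a flat proper morphism with geometrically integral fibres then yields $\SPicc((\Xcal/\Gamma)_V/V)$ as a scheme locally of finite type over $V$; quasiprojectivity of its connected components follows from the classical structure theory of the Picard scheme of a proper variety over a field, spread out over a small enough $V$.

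The main obstacle will be the verification of the various hypotheses needed to invoke the classical Grothendieck representability theorem for the bosonic Picard scheme after shrinking $V$---flatness, cohomological flatness in dimension $0$, geometric integrality of fibres, and quasiprojectivity of components---each of which needs a separate generic argument and some care in the successive shrinkings. The odd part of the statement, by contrast, reduces cleanly to generic freeness of the cokernel $W_1$ inside the Grothendieck--Mumford complex and produces the affine superspace factor $\As^{0,n}_V$ almost automatically.
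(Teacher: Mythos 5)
Your treatment of the odd part is essentially the paper's own argument: restrict to an affine open, use the Grothendieck--Mumford complex (Proposition \ref{prop:GMcomplex}) together with Lemma \ref{lem:ti} to reduce left exactness of $T^1(M)=H^1(\Xcal/\Gamma,\Oc_{\Xcal,-}\otimes_R M)$ to projectivity of $\coker(\partial_0\colon K^0\to K^1)$, achieve that by localizing over the integral domain $R$, then invoke Proposition \ref{prop:even-ring-Pic-repr} and localize once more to make the module $F$ free, producing the factor $\As^{0,n}$. (Your explicit preliminary appeal to generic flatness of $\Oc_{\Xcal,-}$, needed for the hypotheses of Proposition \ref{prop:GMcomplex}, is if anything more careful than the paper, which passes over this point quickly.)

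The genuine gap is in your treatment of the bosonic factor. You propose to obtain representability of $\Picf_{(\Xcal/\Gamma)_V/V\et}$ by shrinking $V$ until $\Xcal/\Gamma\to V$ has geometrically integral fibres and is cohomologically flat in dimension $0$, and then applying the classical Grothendieck representability theorem. But the theorem makes \emph{no} integrality assumption on $\Xcal$: the generic fibre of $\Xcal/\Gamma\to S$ can perfectly well be geometrically reducible, non-reduced, or disconnected, and then no shrinking of $V$ will make the fibres geometrically integral, nor will $\Oc_\Tc\to f_{\Tc\ast}\Oc$ become an isomorphism for all base changes (so Proposition \ref{prop:cohomflat}, which itself presupposes geometrically integral fibres, is unavailable). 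There is a second obstruction of the same kind: Grothendieck's theorem requires (local) projectivity, and $\Xcal/\Gamma\to S$ is only proper --- as the paper explicitly remarks after Lemma \ref{lem:base-change-Pic}, even for $\Xcal$ embeddable in a relative supergrassmannian the bosonic quotient need not be projective over $S$, and properness does not improve under shrinking the base. The paper avoids both problems by citing the classical \emph{generic} representability theorem \cite[Thm.\ 9.4.18.2]{FGA05}, valid for an arbitrary proper morphism over an integral noetherian base with no hypotheses on the fibres, which directly yields, after shrinking $V$, representability of $\Picf_{(\Xcal/\Gamma)_V/V\et}$ by a disjoint union of quasiprojective $V$-schemes. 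Replacing your integral-fibre reduction by an appeal to that theorem repairs the proof; as written, the reduction step would fail.
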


\begin{proof}
We would like to apply Proposition \ref{prop:even-ring-Pic-repr} to $\Xcal_V$ over $V$ for some open affine subset $V\subset S$. 
First, we apply the classical result \cite[Thm.\ 9.4.18.2]{FGA05} that implies that there exists $V$ such that the Picard functor of $(\Xcal/\Gamma)_V$ over $V$ is representable 
by a disjoint union of  quasiprojective schemes.

Next we claim that after replacing $R$ with its nonzero localization, we can achieve that 
the functor $M\mapsto H^1(\Xcal/\Gamma,\Oc_{\Xcal,-}\otimes_R M)$ on the category of $R$-modules is left exact.
Indeed, let $K^0\to\ldots\to K^n$ be the complex of finitely generated projective $R$-modules such that
$$T^i(M)=H^i(\Xcal/\Gamma,\Oc_{\Xcal,-}\otimes_R M)\simeq H^i(K^\bullet\otimes_R M)$$
(it exists by Proposition \ref{prop:GMcomplex}, since $\Oc_{\Xcal}$ is flat). Furthermore, $T^1$ is left exact if and only if
$\coker(\partial_0:K^0\to K^1)$ is projective (Lemma \ref{lem:ti}). But the latter condition is satisfied after replacing $R$ with its localization.

Applying Proposition \ref{prop:even-ring-Pic-repr}, we get the representability of the Picard functor and an isomorphism
$${\SPic}(\Xcal/R)\simeq \SPicc((\Xcal/\Gamma)/R)\times_{\Spec R}\Spec({\bigwedge}_R F),$$
for some finitely generated $R$-module $F$. Localizing $R$ further we can achieve that $F$ is a free $R$-module.
\end{proof}

\begin{corol}\label{cor:field-Pic-repr} 
Let $\Xcal$ be a proper superscheme over a field $k$, such that for some $N$, one has $(\Oc_{\Xcal,-})^{N+1}=0$ and $N!$ does not divide the characteristic of $k$. Then $\SPicf_{\Xcal/k\et}$ is representable by 
$$\SPicc((\Xcal/\Gamma)/k)\times_{\Spec(k)} \As^{0,n},$$
where $n=\dim_k H^1(\Xcal/\Gamma, \Oc_{\Xcal,-})$.
\qed\end{corol}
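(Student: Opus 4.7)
The plan is to obtain this as a direct application of Proposition \ref{prop:even-ring-Pic-repr} with $R=k$. The first task is verifying the hypotheses. The nilpotency condition $(\Oc_{\Xcal,-})^{N+1}=0$ is given, and the assumption that $\operatorname{char}(k)$ does not divide $N!$ ensures that $N!$ is a nonzero element of the field $k$, hence invertible. The classical representability of $\Picf_{X_0/k}$ for the proper scheme $X_0:=\Xcal/\Gamma$ over the field $k$ is standard (Murre-Oort; see \cite[Thm.\;9.4.18.1]{FGA05}), and yields a group scheme locally of finite type.

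The second hypothesis of Proposition \ref{prop:even-ring-Pic-repr} requires the left exactness of the functor $T^1\colon M\mapsto H^1(X_0,\Oc_{\Xcal,-}\otimes_k M)$ on $k$-modules. Since we are working over a field, every $k$-module is flat and flat base change in cohomology gives a functorial isomorphism
\[
H^1(X_0,\Oc_{\Xcal,-}\otimes_k M)\simeq H^1(X_0,\Oc_{\Xcal,-})\otimes_k M,
\]
which is not merely left exact but exact in $M$. Thus the hypotheses of Proposition \ref{prop:even-ring-Pic-repr} hold, and we obtain a representation
\[
\SPic_+(\Xcal/k)\simeq \SPicc(X_0/k)\times_{\Spec(k)}\Spec({\bigwedge}_k F)
\]
for a finite-dimensional $k$-vector space $F$ determined by the functorial identification $\Hom_k(F,M)\simeq T^1(M)$.

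Taking $M=k$ in this identification yields $F^{\ast}\simeq H^1(X_0,\Oc_{\Xcal,-})$, so $\dim_k F=n$. Choosing a basis of $F$ then gives an isomorphism ${\bigwedge}_k F\simeq k[\theta_1,\dots,\theta_n]$ of $k$-superalgebras with the $\theta_i$ odd of degree $1$, whence $\Spec({\bigwedge}_k F)\simeq \As^{0,n}$. Plugging this into the displayed isomorphism produces precisely the asserted representing superscheme $\SPicc(X_0/k)\times_{\Spec(k)}\As^{0,n}$.

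There is no real obstacle: the corollary is a specialization of Proposition \ref{prop:even-ring-Pic-repr} to the case where the base is a field, where automatic flatness of all modules collapses the left exactness condition to a triviality and finite generation of $F$ automatically yields a free module. The only point requiring minor care is the interpretation of the hypothesis ``$N!$ does not divide the characteristic,'' which must be read as $\operatorname{char}(k)\nmid N!$ so that $N!$ is invertible in $k$, as needed to define the exponential map in the proof of Proposition \ref{prop:even-ring-Pic-repr}.
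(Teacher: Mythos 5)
Your proof is correct and is essentially the paper's own argument: the corollary is the specialization of Theorem \ref{thm:generic-Pic-repr} (equivalently, a direct application of Proposition \ref{prop:even-ring-Pic-repr}) to $R=k$, where the localization steps become vacuous, $T^1(M)\simeq H^1(X_0,\Oc_{\Xcal,-})\otimes_k M$ is exact since every $k$-module is flat, and the finitely generated module $F$ with $T^1(M)\simeq\Hom_k(F,M)$ is automatically free of rank $n=\dim_k H^1(\Xcal/\Gamma,\Oc_{\Xcal,-})$, giving $\Spec({\bigwedge}_k F)\simeq\As^{0,n}$. Your identification $F^\ast\simeq H^1(X_0,\Oc_{\Xcal,-})$ and your reading of the hypothesis as $N!$ being invertible in $k$ both match the intended argument.
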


\begin{example}
Using Corollary \ref{cor:field-Pic-repr} one can deduce that for the $\Pi$-projective space $\Ps^n_\Pi$ over a field $k$ of characteristic zero (see e.g., \cite{Noja18}) 
one has $\SPic(\Ps^n_{\Pi}/k)\simeq \As^{0,1}$. Indeed, the corresponding reduced space is the usual projective space $\Ps^n$ and it is well known that only the trivial
line bundle on $\Ps^n$ extends to a line bundle on $\Ps^n_\Pi$. Our assertion follows from the fact that this remains true over any base (even) ring.
\end{example}

\subsubsection{Example of non-representability of the super Picard functor}

Extending Corollary \ref{cor:even-ring-Pic-repr-dim1},
we are going to give an example of a projective morphism of superschemes of relative dimension $(1,1)$ for which not only super Picard functor is not representable, but even the corrresponding deformation functor is not pro-representable.

Let $X$ be a family of smooth projective curves over $\Spec(R)$, where $R=k[\![t]\!]$, and let $\Lcl$ be a line bundle over $X$.
Let us consider the split $(1,1)$-dimensional smooth superscheme $\Xcal$ over $\Spec(R)$, with the usual underlying scheme $X$,
given by $\Oc_\Xcal=\Oc_X\oplus \Lcl$.

Let $K=R[t^{-1}]=k(\!(t)\!)$. Let $\Lcl_0=\Lcl_{\vert X_0}$, where $X_0$ is the fibre of $X$ over $\Spec k\subset \Spec R $
and let $\Lcl_K=\Lcl_{\vert X_K}$, where $X_K=X\times_{\Spec(R)} \Spec(K)$.

\begin{prop} Assume that $H^1(X_0,\Lcl_0)\neq 0$ while $H^1(X_K,\Lcl_K)=0$. Then the relative even Picard functor  of $\Xcal/\Spec(R)$ is not
representable. Moreover, the corresponding functor on Artin super $R$-algebras (deforming the trivial line bundle over $\Xcal_0$) is not pro-representable. 
\end{prop}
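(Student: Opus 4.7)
The plan is to reduce, via Proposition \ref{prop:even-ring-Pic-repr}, both the representability of the super Picard sheaf and the pro-representability of the associated deformation functor to the question of left exactness of the half-exact functor $T^1(M):=H^1(X,\Lcl\otimes_R M)$, and then to exhibit the failure of left exactness directly.

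First I would analyze $N:=T^1(R)=H^1(X,\Lcl)$ as an $R$-module. By Proposition \ref{prop:ftecoh} it is finitely generated over $R$. Flat base change along $\Spec K\to\Spec R$ gives $N\otimes_R K\simeq H^1(X_K,\Lcl_K)=0$, so $N$ is $R$-torsion. Since the relative dimension of $X/\Spec R$ is one, $T^1$ vanishes in degree $\geq 2$, hence is right exact and commutes with finite direct sums; presenting any finitely generated $R$-module $M$ as the cokernel of a map of free $R$-modules and applying $T^1$ then gives $T^1(M)\simeq N\otimes_R M$. In particular $N/tN\simeq T^1(k)=H^1(X_0,\Lcl_0)\neq 0$, so $N$ is nonzero; being a nonzero torsion module over the discrete valuation ring $R$, it is not flat, and therefore $T^1\simeq N\otimes_R(-)$ is not left exact on $R$-modules.

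For non-representability of $\SPicf_{+\Xcal/\Spec R\et}$, the exponential identification in the proof of Proposition \ref{prop:even-ring-Pic-repr}, applied to super $R$-algebras of the form $R\oplus M\epsilon$ with $\epsilon$ odd and $(M\epsilon)^2=0$, identifies the fibre of $\SPicf_{+\Xcal/\Spec R\et}(R\oplus M\epsilon)$ over the trivial line bundle on $\Xcal$ with the group $T^1(M)$. Representability of the super Picard sheaf would then force, by the same argument as in the cited proof, the existence of a finitely generated $R$-module $F$ with $T^1(M)\simeq\Hom_R(F,M)$ functorially in $M$, and hence left exactness of $T^1$; this contradicts the previous paragraph.

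For non-pro-representability of the deformation functor $D$ of the trivial line bundle on Artin super $R$-algebras, I would apply the same identification to Artin super $R$-algebras of the form $A=R/t^n\oplus M\epsilon$ with $M$ a finite length $R$-module annihilated by $t^n$, where the sub-functor of $D$ consisting of odd deformations with trivial bosonic part evaluates to $T^1(M)\simeq N\otimes_R M$. Pro-representability of $D$ by a complete local super $R$-algebra would imply that $D$ commutes with all finite fibre products of Artin super $R$-algebras, hence so would this sub-functor on finite length $R$-modules, forcing left exactness of $T^1$ on finite length $R$-modules; for $n$ larger than the exponent of $N$, the short exact sequence $0\to R/t^n\xrightarrow{t} R/t^{n+1}\to R/t\to 0$ induces a map $N/t^n N\to N/t^{n+1}N$ whose kernel equals the nonzero $t$-torsion $N[t]$, yielding the contradiction. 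The main conceptual obstacle throughout is the passage from (pro-)representability of the Picard or deformation functor to representability of the restricted odd sub-functor $T^1$, but this is exactly provided by the exponential identification of $\Oc_{\Xcal_A,+}^*$ in the proof of Proposition \ref{prop:even-ring-Pic-repr}, which isolates the odd direction from the bosonic one and makes the reduction essentially formal.
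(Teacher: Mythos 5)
Your proposal is correct and follows essentially the same route as the paper: both reduce, via the exponential splitting from Proposition \ref{prop:even-ring-Pic-repr}, to the left exactness of $T^1(M)=H^1(X,\Lcl\otimes_R M)$, identify $T^1(M)\simeq H^1(X,\Lcl)\otimes_R M$ by right exactness/base change, deduce from the two cohomological hypotheses that $H^1(X,\Lcl)$ is a nonzero torsion $R$-module, and then exhibit an explicit short exact sequence of finite-length torsion modules on which tensoring fails to be left exact. The only differences are cosmetic: you treat the representability claim explicitly rather than folding it into the deformation-functor argument, and you use the sequence $0\to R/t^n\xrightarrow{t}R/t^{n+1}\to R/t\to 0$ with kernel $N[t]$ where the paper invokes the structure theorem and tensors $0\to R/(t)\xrightarrow{t^n}R/(t^{n+1})\to R/(t^n)\to 0$ with $R/(t^n)$.
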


Here is an example when the above situation can realize. Consider a fixed smooth projective curve $X_0$ of genus $g\ge 1$ over $k$,
and let $X=X_0\times \Spec(R)$. Let $\Lcl_0$ be a line bundle of degree $g-1$ with $H^1(X_0,\Lcl_0)\neq 0$, i.e., the corresponding point sits
on the theta divisor in $\Pic^{g-1}(X_0)$. Now consider any curve in $\Pic^{g-1}(X_0)$ passing through this point and not contained in the theta-divisor.
Then take the induced family of line bundles over $k[[t]]$.

Furthermore, by allowing to vary the curve as well, we can make sure that $\Lcl$ is a theta-characteristic on $X$, so $\Xcal$ would be a family of SUSY curves.

\begin{proof}[Proof of Proposition]
Let us consider the functor $\As\mapsto {\SPic}(\Xcal_\As/\As)$ on local Artin $R$-{super\-algebras} with the residue field $k$. 
Restricting this functor to local Artin $R$-algebras with $\As_+=k$ and arguing as in the proof of Proposition \ref{prop:even-ring-Pic-repr},
we see that if the deformation functor were pro-representable then the functor
$$M_-\mapsto H^1(X,\Lcl\otimes_R M_-)$$
on the category of finite-dimensional torsion $R$-modules
would be left exact.
Note that by the base change, 
$$
H^1(X,\Lcl\otimes_R M_-)\simeq H^1(X,\Lcl)\otimes_R M_-.
$$ 
But $H^1(X,\Lcl)$ is a finitely generated $R$-module with $H^1(X,\Lcl)\otimes_R k=H^1(X,\Lcl_0)$ and $H^1(X,\Lcl)\otimes_R K=0$ (still by the base change).
Hence, $H^1(X,\Lcl)$ is a nonzero torsion $R$-module, so it is a direct sum of $R$-modules of the form $R/(t^n)$. Thus, it is enough to observe that the functor
$$
M_-\mapsto R/(t^n)\otimes_R M_-
$$
is not left exact on finite-dimensional torsion $R$-modules.
Indeed, for the exact sequence 
$$
0\to R/(t)\xrightarrow{t^n} R/(t^{n+1})\to R/(t^n)\to 0
$$
its tensor product with $R/(t^n)$ fails to be left exact.
\end{proof}

\section{The super period map}\label{s:superperiod}

Our goal in this section is to construct a morphism from an open substack of the moduli of proper and smooth supercurves to the moduli stack of
principally polarized abelian {schemes}.

\subsection{Smoothness of the Picard scheme}
To begin with, we need a standard criterion of smoothness of the Picard superscheme.
Let $f\colon\Xcal\to \Sc$ be a flat proper morphism of superschemes, with $\Sc$ Noetherian, 
for which the even Picard superscheme ${\SPic}(\Xcal/\Sc)$ exists
and represents the super Picard sheaf in the \'etale topology. 

\begin{prop}\label{prop:smooth-Pic}
Assume that for some point $s\in S$ one has $H^2(X_s,\Oc_{\Xcal_s})=0$. Then ${\SPic}(\Xcal/\Sc)$ is smooth over an open neighbourhood of $s$.
\end{prop}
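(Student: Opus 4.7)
The plan is to verify formal smoothness of $\SPic_+(\Xcal/\Sc)\to\Sc$ at points over $s$ by computing the deformation-theoretic obstruction to extending line bundles. First I would use the upper semicontinuity in Theorem \ref{thm:semicontinuity} applied to $\M=\Oc_{\Xcal}$ to find an open neighbourhood $\Ucal$ of $s$ such that $H^2(X_t,\Oc_{\Xcal_t})=0$ for every $t\in U$. After replacing $\Sc$ by $\Ucal$ we may assume the vanishing holds throughout. Since $\SPic_+(\Xcal/\Sc)$ is locally of finite type over $\Sc$ by Theorem \ref{thm:picrepres}, it is enough to verify formal smoothness at each point above $U$; equivalently, given a surjection $\rho\colon R'\to R$ of local Artinian $\Oc_{\Sc,t}$-superalgebras with square-zero kernel $I$, every $R$-point $\xi\in \SPic_+(\Xcal/\Sc)(R)$ compatible with a chosen lift $\Spec R'\to \Sc$ must extend to an $R'$-point.

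By Lemma \ref{lem:base-change-Pic} we may replace $\Sc$ by $\Spec R'$, so $\xi\in \SPicf_{+\Xcal_R/R\et}(R)$. Since $R$ is Henselian, after an \'etale cover of $R$ (which is harmless for smoothness testing, as all the cohomological inputs behave well under finite separable extensions of the residue field) the class $\xi$ is represented by an honest even line bundle $\Lcl_R$ on $\Xcal_R$. The ideal $I\cdot\Oc_{\Xcal_{R'}}\subset\Oc_{\Xcal_{R'}}$ is square-zero and canonically isomorphic to $\Oc_{\Xcal_R}\otimes_R I$, so the map $x\mapsto 1+x$ is a well-defined homomorphism and gives an exact sequence of abelian sheaves on the common topological space of $\Xcal_R$ and $\Xcal_{R'}$:
\begin{equation*}
0\longrightarrow \bigl(\Oc_{\Xcal_R}\otimes_R I\bigr)_+ \xrightarrow{\ 1+\cdot\ } \Oc_{\Xcal_{R'},+}^{\ast}\longrightarrow \Oc_{\Xcal_R,+}^{\ast}\longrightarrow 1.
\end{equation*}
The associated long exact cohomology sequence, combined with Lemma \ref{lem:Pic-bos-quot}, produces an obstruction class
\begin{equation*}
\mathrm{ob}(\Lcl_R)\in H^2\bigl(\Xcal_R,(\Oc_{\Xcal_R}\otimes_R I)_+\bigr)
\end{equation*}
which vanishes if and only if $\Lcl_R$ extends to $\Xcal_{R'}$.

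It remains to show $\mathrm{ob}(\Lcl_R)=0$. Since $R$ is Artinian local, $I$ carries a finite filtration by sub-$R$-super\-modules whose graded pieces are either the residue field $\kappa$ (in even parity) or $\Pi\kappa$ (in odd parity); the long exact cohomology sequences of these short exact sequences reduce the vanishing to the two statements $H^2(\Xcal_R,(\Oc_{\Xcal_R}\otimes_R\kappa)_+)=0$ and $H^2(\Xcal_R,(\Oc_{\Xcal_R}\otimes_R\Pi\kappa)_+)=0$. Because $\Spec R$ is a single point, the underlying topological space of $\Xcal_R$ equals that of the fibre $\Xcal_\kappa=\Xcal_R\times_R\Spec\kappa$, and the sheaves in question are nothing but $\Oc_{\Xcal_\kappa,+}$ and $\Oc_{\Xcal_\kappa,-}$ respectively. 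Flat base change along the field extension $k(t)\hookrightarrow\kappa$ gives $H^2(\Xcal_\kappa,\Oc_{\Xcal_\kappa})\simeq H^2(X_t,\Oc_{\Xcal_t})\otimes_{k(t)}\kappa$, which vanishes by our reduction; since both the sheaf and the cohomology split into even and odd parts, both required summands vanish. Hence $\mathrm{ob}(\Lcl_R)=0$, $\xi$ lifts to $\Spec R'$, and $\SPic_+(\Xcal/\Sc)$ is smooth over $U$.

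The main conceptual obstacle is the need to reconcile the \'etale-sheaf nature of the Picard functor with the requirement that obstructions be computed using genuine line bundles; I resolve this by invoking the Henselian property of Artinian local rings together with flat base change, which ensures that passing to an \'etale cover of $R$ neither changes nor obstructs the vanishing of $H^2(\Xcal_\kappa,\Oc_{\Xcal_\kappa})$.
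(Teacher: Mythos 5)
Your overall strategy coincides with the paper's: spread the vanishing of $H^2$ over a neighbourhood by semicontinuity, realize the obstruction to extending a line bundle across a square-zero thickening via the units (exponential) sequence, locate it in the even part of an $H^2$ with coefficients in the pulled-back ideal, kill it, and conclude by formal smoothness plus locally of finite type. The paper executes this more directly: it tests Definition \ref{def:formallysmooth} on an \emph{arbitrary} affine square-zero extension $\SSpec\Bs=\SSpec(\As/\Nc)\hookrightarrow \SSpec\As$ over $\Sc$, and computes the obstruction space once and for all as $H^2(X_B,f_\Bs^\ast\Nc)_+\simeq H^0(T,\Nc\otimes R^2f_{\Bs\ast}\Oc_{\Xcal_\Bs})_+=0$, the vanishing of $R^2f_{\Bs\ast}\Oc_{\Xcal_\Bs}$ coming from semicontinuity and base change. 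It therefore never needs your reduction to Artinian local test rings, your filtration of $I$, or any discussion of \'etale covers of the test ring.

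Your detour contains one genuine gap and one unproven reduction. The unproven reduction: restricting the lifting criterion to local Artinian extensions is a classical fact in the even case, but the paper's notion of formal smoothness (Definition \ref{def:formallysmooth}, Proposition \ref{prop:smoothequiv}) quantifies over all affine nilpotent thickenings, and the Artinian reduction is nowhere established in the super setting, so you are leaning on a statement outside the paper's toolkit where the direct test costs nothing. The genuine gap is the sentence ``after an \'etale cover of $R$ (which is harmless for smoothness testing\dots)'': if $\xi_{\tilde R}$ is represented by a line bundle over an \'etale extension $\tilde R$ of $R$ and you lift it to the induced \'etale extension $\tilde R'$ of $R'$, you have produced a point of $\SPic_+(\Xcal/\Sc)$ over $\tilde R'$, not over $R'$; lifting $\xi$ itself requires descending that lift, and since the set of lifts is only a torsor under an $H^1(\Xcal_{\tilde\kappa},\cdot)$-type group, descent needs a further argument (for instance additive Hilbert 90 for the semilinear Galois action on that torsor). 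Your parenthetical justification — that the cohomological inputs behave well under separable residue extensions — addresses base change of the obstruction group, which is not the issue. The standard repair is to test only on Artin local rings with separably closed residue field (legitimate, since smoothness amounts to flatness plus geometric regularity of fibres): there every \'etale cover of $\SSpec R$ splits, so by the injectivity of Proposition \ref{prop:asssheaf}(1) the sections of the \'etale sheaf coincide with classes of honest line bundles and no cover is needed at all. With that substitution (or by reverting to the paper's arbitrary-affine test), your obstruction computation — the filtration of $I$ by copies of $\kappa$ and $\Pi\kappa$ and flat base change to the fibre over $t$ — is correct and completes the proof.
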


\begin{proof}
This is a standard argument similar to the classical one. By semicontinuity and base change replacing $\Sc$ by an affine neighbourhood of $s$
we get that $H^2(X_s,\Oc_{\Xcal_s})=0$ for every $s\in S$, and for every affine $\Sc$-superscheme $\Tc=\SSpec(\Bs)$ one has $H^2(X_T,\Oc_{\Xcal_\Bs})=0$.

Now let $\SSpec(\As)$ be an affine superscheme over $\Sc$ with a zero-square ideal $\Nc\subset \As$, and let $\Bs=\As/\Nc$.
We can view $\Xcal_\As=\Xcal\times_\Ss \Spec(\As)$ as an infinitesimal
thickening of $\Xcal_\Tc=\Xcal_\Bs$. The exact sequence
$$
0\to (f_\Bs^\ast \Nc)_+\to (\Oc_{\Xcal_\As +})^\ast\to (\Oc_{\Xcal_\Bs+})^\ast\to 0
$$
leads to a long exact sequence
$$
\ldots\to \Pic(\Xcal_\As)\to \Pic(\Xcal_\Bs)\to H^2(X_B,f_\Bs^\ast\Nc)_+\to\ldots$$
Thus, the obstruction to lifting a line bundle on $\Xcal_\Bs$ to a line bundle on $\Xcal_\As$ lies
in the even part of $H^2(X_T,f_\Bs^\ast\Nc)$. On the other hand,
$$
H^2(X_T,f_\Bs^\ast\Nc)\simeq H^0(T,R^2f_{\Bs\ast}(f_\Bs^\ast\Nc))\simeq
H^0(T,\Nc\otimes R^2f_{\Bs\ast}(\Oc_{\Xcal_\Bs}))=0
$$
as $R^2f_{\Bs\ast}(\Oc_{\Xcal_\Bs})=0$. It follows that ${\SPic}(\Xcal_\Bs/\Bs)$ is formally smooth 
(see Definition \ref{def:formallysmooth}). Since it is locally of finite type by Theorem \ref{thm:picrepres}, the conclusion follows (cf.~Proposition \ref{prop:loc-free-smooth}).
\end{proof}

\subsection{Abelian {schemes} associated with Picard superschemes}

In this section we construct abelian {schemes} which are naturally associated with Picard superschemes of relative supercurves.

\subsubsection{From the Picard superscheme to an abelian {scheme}}

Let $f\colon\Xcal\to\Sc$ be a proper and smooth morphism of relative dimension $(1,1)$. Proceeding as in \cite[Lemma 7.24]{MoZh19}, one can see that $f$ is locally superprojective (see also \cite{FKP20} or \cite{Cod14}).
Assume that $H^\bullet(X_s,\Oc_{\Xcal_s})_-=0$ for each $s$ in $\Sc$. Then $f$ is cohomologically flat in dimension $0$ by Proposition \ref{prop:cohomflat}, and Theorem \ref{thm:picrepres} ensures the existence of the 
Picard superscheme ${\SPic}(\Xcal/\Sc)$ of $f$.

\begin{thm}\label{thm:SUSY-Picard} In this situation there exists an open subgroup scheme ${\SPic}^0(\Xcal/\Sc)\hookrightarrow {\SPic}(\Xcal/\Sc)$ such that for every point $s\in \Sc$, one has
$$
{\SPic}^0(\Xcal/\Sc)\times_\Sc \{s\}=\SPicc^0(X_s)\,,
$$
where $\SPicc^0(X_s)$ is the connected component of zero of the Picard scheme $\SPicc(X_s)$. Furthermore, ${\SPic}^0(\Xcal/\Sc)$ is smooth and proper of even dimension over $\Sc$. Hence, there exists an abelian scheme $A$ over the bosonic quotient $\Sc/\Gamma$ 
such that ${\SPic}^0(\Xcal/\Sc)\simeq \Sc\times_{\Sc/\Gamma} A$.
\end{thm}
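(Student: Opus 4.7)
The plan has three steps tracking the three assertions of the theorem.

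\emph{Step 1: Smoothness and relative dimension.} First I would verify that $\SPic_+(\Xcal/\Sc)$ exists by Theorem \ref{thm:picrepres}: $f$ is locally superprojective, flat (being smooth), has geometrically integral fibres, and is cohomologically flat in dimension $0$ by Proposition \ref{prop:cohomflat} combined with the vanishing of $H^0(X_s,\Oc_{\Xcal_s,-})$. Since $X_s$ is a smooth projective curve, $H^2(X_s,\Oc_{X_s})=0$ on dimensional grounds while $H^2(X_s,\Oc_{\Xcal_s,-})=0$ by hypothesis, so $H^2(X_s,\Oc_{\Xcal_s})=0$. Proposition \ref{prop:smooth-Pic} then yields smoothness of $\SPic_+(\Xcal/\Sc)\to\Sc$. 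A tangent space computation at the identity of the fibre gives $H^1(X_s,\Oc_{\Xcal_s})$, whose odd part vanishes by hypothesis and whose even part has dimension $g_s=\dim_{\kappa(s)}H^1(X_s,\Oc_{X_s})$, a locally constant function on $S$; hence the relative dimension of $\SPic_+(\Xcal/\Sc)\to\Sc$ is $(g,0)$, purely even.

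\emph{Step 2: Carving out $\SPic_+^0$.} Setting $S=\Sc_{\bos}$ and $X=\Xcal_{\bos}$, a local computation in coordinates $\Oc_\Sc[x,\theta]$ (with $\theta^2=0$) shows that for $(1,1)$-smooth morphisms one has $\Xcal_S/\Gamma=X$. Applying Lemma \ref{lem:base-change-Pic}(1) to the closed immersion $S\hookrightarrow\Sc$ and then Lemma \ref{lem:base-change-Pic}(2) to $\Xcal_S/S$ identifies
$$
\SPic_+(\Xcal/\Sc)_{\bos}\simeq \SPicc(X/S),
$$
whose identity component, by the classical Grothendieck construction, is the relative Jacobian $J=\mathrm{Jac}(X/S)$, an abelian scheme of dimension $g$ over $S$. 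I would then define $\SPic_+^0(\Xcal/\Sc)$ as the open sub-superscheme of $\SPic_+(\Xcal/\Sc)$ whose underlying topological space coincides with that of $J$. The required fibre statement $\SPic_+^0(\Xcal/\Sc)\times_\Sc\{s\}=\SPicc^0(X_s)$ follows from Corollary \ref{cor:field-Pic-repr} applied to $\Xcal_s$: since $H^1(X_s,\Oc_{\Xcal_s,-})=0$, one has $\SPic_+(\Xcal_s/\kappa(s))\simeq\SPicc(X_s)$, whose identity component is $\SPicc^0(X_s)$.

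\emph{Step 3: Descent to $\Sc/\Gamma$.} The key structural claim, which I expect to be the main obstacle of the proof, is that any smooth morphism $\pi\colon Y\to\Sc$ of relative dimension $(m,0)$ satisfies
$$
Y\;\simeq\;\Sc\times_{\Sc/\Gamma}(Y/\Gamma),
$$
with $Y/\Gamma\to\Sc/\Gamma$ smooth of dimension $m$. \'Etale-locally one reduces this to the model $\As^{m,0}_\Sc=\Sc\times_{\Sc/\Gamma}\As^{m,0}_{\Sc/\Gamma}$, and the general case follows by \'etale descent using that \'etale morphisms are themselves of purely even relative dimension. Applying this structural fact to $Y=\SPic_+^0(\Xcal/\Sc)$ produces $A:=\SPic_+^0(\Xcal/\Sc)/\Gamma$ as a smooth scheme over $\Sc/\Gamma$ with $\SPic_+^0(\Xcal/\Sc)\simeq \Sc\times_{\Sc/\Gamma}A$. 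The group law on $\SPic_+^0(\Xcal/\Sc)$ descends to $A$; its pullback along the nilpotent closed immersion $S\hookrightarrow\Sc/\Gamma$ is the abelian scheme $J$, and properness of $A\to\Sc/\Gamma$ follows from properness of $J\to S$ because nilpotent thickenings preserve topological properness. Since the geometric fibres of $A$ agree with those of $J$, they are geometrically connected abelian varieties, and $A$ is an abelian scheme over $\Sc/\Gamma$. Properness of $\SPic_+^0(\Xcal/\Sc)\to\Sc$ follows by base change, completing the proof.
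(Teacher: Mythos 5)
Your proposal follows essentially the same route as the paper: smoothness from Proposition \ref{prop:smooth-Pic} together with the vanishing of $H^1(X_s,\Oc_{\Xcal_s})_-$ to get purely even relative dimension, definition of $\SPic_+^0(\Xcal/\Sc)$ as the open sub-superscheme lying over the classical identity component after base change to the bosonic reduction (via Lemma \ref{lem:base-change-Pic}), properness transferred from the classical Jacobian because properness is a topological condition, and descent to $\Sc/\Gamma$ via the structural fact about smooth morphisms of relative dimension $(m,0)$. One caution on Step 3: your ``key structural claim'' is exactly Proposition \ref{prop:even-smooth}, which the paper proves by a direct local-algebra argument (showing $\Bs_+\otimes_{\As_+}\As\to\Bs$ is an isomorphism via the sequence of relative differentials, then formal smoothness of $\Bs_+$ over $\As_+$); your proposed \'etale-local reduction to the model $\As^{m,0}_\Sc$ is circular as sketched, since descending the \'etale-local charts from $\Sc$ to $\Sc/\Gamma$ is precisely the $m=0$ case of the same claim, so you should either cite the proposition or prove topological invariance of the \'etale site for the morphism $\Sc\to\Sc/\Gamma$ independently. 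A second, smaller omission: you assert the group law restricts to $\SPic_+^0(\Xcal/\Sc)$ without checking it; the paper verifies this by noting that the base change to $S=\Sc_{\bos}$ of the map $(a,b)\mapsto a\cdot b^{-1}$ lands in $\SPic_+^0(\Xcal_S/S)$, whence the image is contained in the open sub-superscheme $\SPic_+^0(\Xcal/\Sc)$ — an easy step, but it should be said, since your definition of $\SPic_+^0$ is purely topological.
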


\begin{proof}
First, we observe that by Proposition \ref{prop:smooth-Pic} the superscheme ${\SPic}(\Xcal/\Sc)$ is smooth over $\Sc$.
Furthermore, for every $s\in \Sc$ the tangent space to ${\SPic}(\Xcal_s)$ at any point is isomorphic to $H^1(X_s,\Oc_{\Xcal_s})$.
By assumption, 
$H^1(X_s,\Oc_{\Xcal_s})_-=0$. Hence, we derive that  ${\SPic}(\Xcal/\Sc)$ is a disjoint union of schemes which are smooth of even relative dimension over $\Sc$.

Next, let us consider the case when $\Sc=S$ is purely even.
By Corollary \ref{cor:even}, in this case
${\SPic}(\Xcal/S)$ is also a purely even superscheme (since it is a disjoint union of smooth superschemes of even dimension over $S$).

Let $C=\Xcal_{\bos}=\Xcal/\Gamma$ be the underlying family of usual curves over $S$. From Lemma \ref{lem:base-change-Pic}(ii) we get
an isomorphism
\begin{equation}\label{eq:Pic-even-S-curve-isom}
\SPicc(C/S)\simeq {\SPic}(\Xcal/S)_{\bos}\simeq {\SPic}(\Xcal/S).
\end{equation}
Thus, in the case of purely even $\Sc=S$, using the isomorphism \eqref{eq:Pic-even-S-curve-isom} we can define 
${\SPic}^0(\Xcal/S)$ as the subgroup scheme of ${\SPic}(\Xcal/S)$ corresponding to $\SPicc^0(C/S)$, which is an open subgroup scheme of $\SPicc(C/S)$.
Hence, by the classical result on the Picard scheme of a family of curves (see \cite[n$^{\mbox{o}}$ 236, Thm.~2.1]{FGA}), the scheme
${\SPic}^0(\Xcal/S)$ is proper over $S$.

Now let us return to the case of a general base $\Sc$. Let us set $S=\Sc_{\bos}$ and let $\Xcal_S\to S$ be the base change of our family. 
By Lemma \ref{lem:base-change-Pic}(i) one has
$$
{\SPic}(\Xcal/\Sc)\times_\Sc S \simeq {\SPic}(\Xcal_S/S)\,.
$$
In particular, ${\SPic}(\Xcal/\Sc)$ has the same   underlying topological space of ${\SPic}(\Xcal_S/S)$, so we define
${\SPic}^0(\Xcal/\Sc)$ as the open sub-superscheme of ${\SPic}(\Xcal/\Sc)$ corresponding to the open subset ${\SPic^0}(\Xcal_S/S)$.
Note that the base change of the morphism
\begin{align*}
\alpha\colon{\SPic}^0(\Xcal/\Sc)\times_\Sc {\SPic}^0(\Xcal/\Sc)&\to {\SPic}(\Xcal/\Sc)\\(a,b)&\mapsto a\cdot b^{-1}
\end{align*}
from $\Sc$ to $S$ is a similar morphism for ${\SPic}^0(\Xcal_S/S)$. It follows that the image of $\alpha$ is contained in 
${\SPic}^0(\Xcal/\Sc)$, so ${\SPic}^0(\Xcal/\Sc)$ is a subgroup.

It follows from Theorem \ref{thm:picrepres} that ${\SPic}^0(\Xcal/\Sc)$ is locally of finite type. Since we also know that
${\SPic}^0(\Xcal_S/S)$ is proper over $S$, we deduce that ${\SPic}^0(\Xcal/\Sc)$ is proper over $\Sc$.

Finally, applying Proposition \ref{prop:even-smooth} we see that ${\SPic}^0(\Xcal/\Sc)$ is obtained by the base change from a smooth and proper scheme $A$ over $\Sc/\Gamma$. Furthermore,
by the same theorem, the group structure on ${\SPic}^0(\Xcal/\Sc)$ also comes from a group structure on $A$. Thus, $A$ is an abelian {scheme}.
\end{proof}

\begin{remark}
As we have seen in the proof, in the case when the base $\Sc$ is even, $\Xcal_{\bos}=\Xcal/\Gamma$ is a smooth proper curve over $\Sc$, and the abelian scheme $A$ in the above theorem is its relative Jacobian.
In general, there seems to be no family of usual curves over $\Sc/\Gamma$ whose relative Jacobian would give $A$. For example, 
the bosonic quotient $\Xcal/\Gamma$ is not smooth over $\Sc/\Gamma$ in general. 
\end{remark}

\subsubsection{From a family of SUSY-curves to a polarized abelian {scheme}}

We can apply Theorem \ref{thm:SUSY-Picard} to a family of SUSY curves $f\colon \Xcal\to\Sc$. Recall that this is a proper smooth morphism of relative dimension $(1,1)$ equipped
with a certain distribution of rank $(0,1)$ (see e.g., \cite{LeRoth88} for details). Usually, SUSY curves are considered in characteristic $0$, however, all features of the
theory of SUSY curves are valid away from characteristic $2$.
We claim that in this case the abelian {scheme} over $\Sc/\Gamma$ constructed in Theorem \ref{thm:SUSY-Picard}
carries a natural polarization.

\begin{thm}\label{superperiod-thm}
Let $f\colon \Xcal\to\Sc$ be a relative SUSY curve such that $H^\bullet(X_s,\Oc_{\Xcal_s})_-=0$ for each $s$ in $\Sc$.
In addition we assume that $2$ is invertible on $\Sc$.
Then there exists a natural principally polarized abelian {scheme} $A$ over $\Sc/\Gamma$ that extends the relative Jacobian of the
corresponding family of usual curves $f_{bos}\colon X\to S$. Furthermore, $A$ is equipped with a natural relative symmetric divisor inducing
the polarization.
\end{thm}

\begin{proof} Let us set for brevity  $\Psc={ \SPic}^0(\Xcal/\Sc)$. As we have seen in Theorem \ref{thm:SUSY-Picard}, $\Psc=\Sc\times_{\Sc/\Gamma} A$ for an abelian scheme $A$ over $\Sc/\Gamma$.

First, assume that $f$ admits a section $\sigma\colon\Sc\to \Xcal$. Recall (see \cite[Lemma 2.9]{FKP20} and also \cite{DoHeSa93}) that to every Neveu-Schwarz (NS) puncture on
a relative SUSY curve one can associate canonically a relative effective Cartier divisor.
Applying this construction to the section $\sigma$ we can construct a relative effective divisor $\Zc_{\sigma}\hookrightarrow \Xcal$
supported on $\sigma(\Sc)$. On the other hand, considering the diagonal $\delta$ as an NS puncture on the family $\Xcal\times_\Sc \Xcal\xrightarrow{p_2} \Xcal$, 
using the same construction, we get a relative effective divisor
$\Zc_{\delta}\hookrightarrow\Xcal\times_\Sc \Xcal$, so that we have a line bundle $\Oc_{\Xcal\times_\Sc\Xcal}(\Zc_{\delta}-p_1^\ast\Zc_{\sigma})$ on $\Xcal\times_\Sc\Xcal$ which restricts to the trivial line bundle over $\Xcal\times_\Sc\sigma(\Sc)$.
Hence, it defines a morphism of superschemes over $\Sc$,
$$
\alpha_{\sigma}\colon\Xcal\to \Psc
$$
depending on $\sigma$, which restricts to the standard Abel morphism on the underlying curve $X$ over $S$.

Set $\Psc^\vee:=\Sc\times_{\Sc/\Gamma} A^\vee$, where $A^{\vee}$ is the dual abelian scheme to $A$ over $\Sc/\Gamma$.
Note that the dual abelian scheme $A^{\vee}$ exists by a theorem of Raynaud (see \cite[Ch.\! I, Sec.\! 1]{CF90}).
Let $\Pc$ be the pull-back of the Poincar\'e line bundle from $A\times A^{\vee}$ to $\Psc\times \Psc^\vee$ (normalized along the zero sections).
Consider the line bundle 
$$
\Lcl_\sigma:=(\alpha_\sigma\times \Id)^\ast\Pc
$$
on $\Xcal\times \Psc^\vee$. Note that it is trivalized along $\sigma(\Sc)\times \Psc^\vee$ and along $\Xcal\times 0$. Hence, it defines a morphism
$$
\Psc^\vee\to \Psc\,,
$$
sending the zero section to the zero section.
By Proposition \ref{prop:even-smooth}, this morphism comes from a homomorphism of abelian schemes over $\Sc/\Gamma$
$$
\lambda_\sigma\colon A^\vee\to A
$$
(recall that any morphism between abelian schemes preserving the zero section is a homomorphism, see \cite[Cor.\! 6.4]{MF-GIT}).
Furthermore,
 the induced morphism of abelian schemes over $S$ corresponds to the standard principal polarization of $J(X)^\vee\simeq J(X)$ (where $X/S$
 is the corresponding usual family of curves).
Hence, $(A^\vee,\lambda)$ is a polarized abelian scheme over $\Sc/\Gamma$ extending the relative Jacobian of $X$ over $S$.

We claim that if $\sigma'\colon\Sc\to \Xcal$ is another section, then $\lambda_{\sigma'}=\lambda_\sigma$. Indeed,
first we observe that $\alpha_{\sigma'}$ differs from $\alpha_{\sigma}$ by the translation map $t_{\xi}\colon \Psc\to \Psc$,
corresponding to the $\Sc$-point $\xi\in \Psc(\Sc)$ coming from the relative divisor $\Zc_{\sigma'}-\Zc_{\sigma}$.
Then, using the standard isomorphism
$$
(x+y,z)^\ast \Pc\simeq (x,z)^\ast\Pc\otimes (y,z)^\ast\Pc\,,
$$ 
we get an isomorphism
$$
\Lcl_{\sigma'}\simeq \Lcl_{\sigma}\otimes p_2^\ast(\Pc_{\vert \xi\times \Psc^\vee})\,.
$$
Hence $\Lcl_{\sigma'}$ defines the same element in the relative Picard functor, so the corresponding homomorphisms are the same.

In general, $f\colon\Xcal\to\Sc$ has a section locally in the \'etale topology, so we define the homomorphism from $A^{\vee}$ to $A$ locally and then use descent (Proposition \ref{prop:morphdescent})
to show that it exists globally.

To prove the last statement we observe that so far we constructed a morphism
$$S\to \Ac_g,$$
where $\Ac_g$ is the stack of principally polarized abelian varieties. Let
$$\wt{\Ac}_g\to \Ac_g$$
be the natural covering corresponding to a choice of a symmetric divisor inducing the polarization.
Since we work over $\Z[1/2]$, this is an \'etale covering.
Our family of supercurves gives rise to a family of spin-structures over the corresponding usual family of curves $X\to S$. Hence,
the morphism $S\to \Ac_g$ has a natural lift to a morphism
$$S\to \wt{\Ac}_g$$
(here we use the classical construction of the symmetric divisor $\Theta_L$ in the Jacobian of a curve associated with a spin-structure $L$).
Since $\Sc/\Gamma$ is a nilpotent extension of $S$, there is a unique compatible morphism
$$\Sc/\Gamma\to \wt{\Ac}_g,$$
as claimed.
\end{proof}
 
\subsection{Relation with the superperiod matrix} {Here we establish the relation of our construction of a super period map with
the D'Hoker-Phong superperiod matrix \cite{D'Hoker-Phong}.}
We   switch to working in the complex analytic category.
Let $\pi:\Xcal\to \Sc$ be a family of SUSY curves equipped with a symplectic trivialization of the local system $R^1\pi_*\Z_X$.
Let us pick a decomposition $R^1\pi_*\Z_X=\La_\Z\oplus \La'_\Z$ into (trivial) Lagrangian local systems, and let
$$\Vc:=R^1\pi_*\pi^{-1}\Oc_{\Sc}=\La\oplus \La'$$
be the corresponding decomposition into (trivial) Lagrangian subbundles of rank $g$.
Assume that $H^*(X_s,\Oc_{\Xcal_s})_-=0$ for every $s$.
Then the exact sequence of sheaves
$$0\to \pi^{-1}\Oc_{\Sc}\to \Oc_{\Xcal}\to \om_{\Xcal/\Sc}\to 0$$
gives rise to an exact sequence
\begin{equation}
0\to \pi_*\om_{X/S}\to \Vc \to R^1\pi_*\Oc_{\Xcal}\to 0
\end{equation}
(see \cite[Sec.\ 2]{FKP19})
Furthermore, $\pi_*\om_{X/S}$ is a Lagrangian subbundle in the trivial symplectic bundle $\Vc$,
which is transversal to $\La$ and $\La'$.
Hence, $\pi_*\om_{X/S}$ is the graph of a symmetric morphism $\tau:\La\to \La'$.
Since the bundles $\La$ and $\La'\simeq \La^\vee$ are trivialized, $\tau$ is given by a symmetric matrix of even functions on $\Sc$,
which is called the {\it superperiod matrix}.

In this way we get a morphism of schemes $\tau:\Sc/\Gamma\to {\frak H}_g$, where ${\frak H}_g$ is the Siegel upper half-space.
The restriction of this morphism to $S\sub \Sc/\Gamma$ is the classical period matrix associated with the family of usual curves over
$S$ (and a trivialization of $R^1\pi_*\Z_X$).
As is well known, there is a natural morphism to the moduli stack of principally polarized abelian varieties,
$${\frak H}_g\to \Ac_g.$$ 

\begin{prop}\label{superperiod-matrix-prop} In the above situation
the composed map $\Sc/\Gamma\to {\frak H}_g\to \Ac_g$ coincides with the superperiod map constructed in
Theorem \ref{superperiod-thm}.
\end{prop}

We will need the following standard fact.

\begin{lemma}\label{analytic-spaces-lem} {Let $S$ be an analytic space and 
$f:X\to X'$ a smooth morphism of $S$-analytic spaces.} Let $S_{\re}$ be the reduced subspace, and assume that the induced morphism
$$f_{{\re}}:X_{S_{\re}}\to X'_{S_{\re}}$$ is an isomorphism. Then $f$ is an isomorphism.
\end{lemma}

\begin{proof}
It is enough to check this locally near every point $x\in X$. By a standard argument (see \cite[V.17]{Dem}), it is enough to check that the morphism
$f_*:T_x X\to T_{f(x)} X'$
of Zariski tangent spaces is an isomorphism. Let $\pi:X\to S$ (resp., $\pi':X'\to S$) denote the projection. The space $T_x X$ fits into an exact sequence
$$0\to T_s S\to T_x X\to T_x\pi^{-1}(s)\to 0$$
where $s=\pi(x)$. We have a similar sequence for $X'$. The fact that $f_{{\re}}$ is an isomorphism implies that
the map $T_x\pi^{-1}(s)\to T_{f(x)}(\pi')^{-1}(s)$ is an isomorphism. Hence, $f_*$ is an isomorphism.
\end{proof}

\begin{proof}[Proof of Proposition \ref{superperiod-matrix-prop}]
Let us set $X'=\Xcal/\Gamma$, $S'=\Sc/\Gamma$, and let $\pi':X'\to S'$ be the morphism induced by $\pi$.

Note that the category of vector bundles of even rank on a superscheme $\Ycal$ is equivalent to the category of vector bundles on the bosonic quotient $\Ycal/\Gamma$.
Namely, this equivalence sends a vector bundle $V$ on $\Ycal/\Gamma$ to its pull-back under the natural morphism $\Ycal\to \Ycal/\Gamma$. The inverse equivalence
sends a vector bundle of even rank $\Vc$ on $\Ycal$ to $\Vc^+$.
Thus, the embedding of vector bundles 
$\Wc:=\pi_*\om_{\Xcal/\Sc}\to \Vc$ corresponds to an embedding of vector bundles $\Wc'\to \Vc'$ on $S'$.
Also, the quotient $\Vc/\Wc\simeq R^1\pi_*\Oc_{\Xcal}$ corresponds to the vector bundle
$$H:=\Vc'/\Wc'\simeq (\Vc/\Wc)^+\simeq R^1\pi'_*\Oc_{X'}.$$
Furthermore, $\Vc'$ still comes from the trivial symplectic local system on $S'$ and is equipped with a $\Z$-system $\Vc'_\Z\sub \Vc'$ which projects
to a $\Z$-lattice $H_\Z$ in $H$.

The morphism $S'\to \Ac_g$ given by the superperiod matrix corresponds to the analytic abelian scheme over $S'$
$$A_{\tau}:=\Vc'/(\Wc'+\Vc'_\Z)=H/H_{\Z}.$$
Furthermore, $A_{\tau}$ is equipped with a natural principal polarization.

On the other hand, we have an abelian scheme $A$ over $S'$ such that $A_{\Sc}=\SPic^0(\Xcal/\Sc)$.
We want to construct an isomorphism of polarized abelian schemes over $S'$,
$$A_{\tau} \simeq A.$$

Assume first that the family $\Xcal\to\Sc$ has a section $\si:\Sc\to \Xcal$. We denote still by $\si$ the induced
section $S'\to X'$.
We have $H\simeq R^1\pi'_*\Oc_{X'}$.
Hence, there is a tautological global section $s$ of the pull-back of $R^1\pi'_*\Oc_{X'}$ to $H$.
This means that there exists a Stein open covering $(U_i)$ of $S'$, such that $s$ defines an element $s_i\in H^1(H_{U_i}\times_{U_i} X'_{U_i},\Oc)$.
Furthermore, we have $H^1(H_{U_i},\Oc)=0$, so the induced elements $\exp(2\pi s_i)\in H^1(H_{U_i}\times_{U_i} X'_{U_i},\Oc^*)$ have trivial restrictions
under the section $\si:H_{U_i}\to H_{U_i}\times_{U_i} X'_{U_i}$. Thus, they define line bundles $L_i$ over $H_{U_i}\times_{U_i} X'_{U_i}$ with trivial
pull-backs under $\si$. If we fix a trivialization of $L_i$ over the image of $\si$ then $L_i$ is defined uniquely up to a unique isomorphism.
Hence, these line bundles glue into a line bundle $L$ over $H\times_{S'} X'$. Note that the tautological section $s$ has trivial restriction to the zero section $0_{S'}$ in $H$.
Hence, $L$ has trivial restriction to $0_{S'}\times_{S'} X'$.

Let $\Lcl$ denote the pull-back of $L$ to $H\times_{S'} \Xcal\simeq (H\times_{S'}\Sc)\times_{\Sc} \Xcal$. Then $\Lcl$
gives a morphism $H\times_{S'}\Sc\to \SPic^0(\Xcal/\Sc)=A_{\Sc}$. Passing to bosonic truncations we get a morphism
$$H=(H\times_{S'}\Sc)/\Ga\to A_{\Sc}/\Ga=A.$$
Furthermore, locally the translation by a section of $H_{\Z}$ does not change the elements $\exp(2\pi s_i)$, hence, 
our morphism factors through a morphism $H/H_{\Z}\to A$.
As we have seen above, this morphism sends zero to zero, hence it is a homomorphism of abelian schemes over $S'$.
We know that it becomes an isomorphism if we replace $S'$ be the reduced subscheme. 
By Lemma \ref{analytic-spaces-lem}, we conclude that it is an isomorphism.

We claim that the morphism $A_{\tau}\to A$ constructed above does not depend on a section $\si$.
Indeed, suppose $\si':\Sc\to \Xcal$ is another section. The corresponding line bundles $L_i$ and $L'_i$ are isomorphism,
hence, the induced morphisms $H_{U_i}\to A$ are the same, which implies our claim.

Since the family $\Xcal\to \Sc$ admits a section \'etale locally, we can glue the morphism $H/H_{\Z}\to A$ out of local morphisms 
constructed as  above.

The fact that our isomorphism $A_{\tau}\simeq A$ agrees with polarizations follows from the fact
that the polarizations agree over every $s\in \Sc$, using the rigidity lemma (see \cite[Prop.\ 6.1]{MF-GIT}).
\end{proof} 
 
\appendix
\section{Miscellaneous results}

\subsection{Associated primes of a module}

The definition of associated prime ideal of a module over an arbitrary ring as well as the primary decomposition of an ideal are defined and studied in many places; see for instance \cite{Lam99}. 
For the case of a noetherian superring $\As$, if we restrict ourselves to $\Z_2$-graded ideals and modules, the theory is very much like the usual one in commutative algebra.

If $M$ is a ($\Z_2$-graded) $\As$-module, the associated ($\Z_2$-graded) primes of $\M$ are the prime ideals that occur among the annihilators of the elements of $M$ (see \cite[Coro.~4.3.6]{We09}). Since $\As$ is noetherian, each finitely generated module has associated primes and there are only finitely many of them. By the very definition one has  {\cite[Thm.~6.4.3]{We09}}:
\begin{prop} \label{prop:assprimes}
\begin{enumerate}
\item
A prime ideal $\pf$ of $\As$ is an associated prime to $M$ if and only if there is  {either an injective morphism of $\As$-modules $\As/\pf \hookrightarrow M$ or an injective morphism of $\As$-modules $(\As/\pf)^\Pi \hookrightarrow M$}.
\item If $M$ is finitely generated, there is a filtration
$$ 0=M_0\subset M_1\subset\dots\subset M_s=M
$$
such that  {either $M_j/M_{j-1}\simeq \As/\pf_j$ or $M_j/M_{j-1}\simeq (\As/\pf_j)^\Pi$,} where $\pf_j$ is a prime ideal for every index $j=1,\dots,s$.
\item If $M$ is finitely generated and $f\in\As$ is a homogeneous element that does not belong to any of the associated primes of $M$, then
$$
\Tor^\As_1(A/(f),M)=0\,,
$$
so that there is an exact sequence 
$$
0 \to f\cdot\As \otimes_\As M \to M \to M/f\cdot M \to 0\,.
$$
\end{enumerate}
\qed
\end{prop}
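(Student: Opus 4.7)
The plan is to adapt the three standard arguments from commutative algebra, keeping track of the $\Z_2$-grading throughout. The whole discussion takes place in the category of $\Z_2$-graded $\As$-modules, so all elements, submodules, annihilators and primes considered are homogeneous; the only genuinely super feature is that each associated prime $\pf$ may be realized as the annihilator of an even or of an odd element of $M$, and this is what produces the two possibilities $\As/\pf$ or $(\As/\pf)^\Pi$.

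For part (1), I would start from the definition: an associated prime is $\pf=\Ann(m)$ for some homogeneous $m\in M$. The map $\As\to M$, $a\mapsto am$, factors through $\As/\pf$ and is injective by construction; it is an even morphism if $m$ is even and an odd morphism if $m$ is odd, giving an embedding $\As/\pf\hookrightarrow M$ or $(\As/\pf)^\Pi\hookrightarrow M$ respectively. Conversely, an injection $\As/\pf\hookrightarrow M$ (or its parity shift) sends the class of $1$ to a homogeneous element whose annihilator is exactly $\pf$.

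For part (2) I would use noetherian induction on $\Z_2$-graded submodules. Let $\Sigma$ be the set of submodules $N\subseteq M$ admitting a filtration of the requested form; it contains $0$, and since $\As$ is noetherian and $M$ is finitely generated the ascending chain condition holds, so $\Sigma$ has a maximal element $N^\ast$. If $N^\ast\neq M$, the quotient $M/N^\ast$ is a nonzero finitely generated $\Z_2$-graded $\As$-module, hence has at least one associated prime; applying part (1) to $M/N^\ast$ yields a submodule isomorphic to $\As/\pf$ or $(\As/\pf)^\Pi$, whose preimage in $M$ strictly enlarges $N^\ast$ while staying in $\Sigma$, contradicting maximality. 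Hence $N^\ast=M$.

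For part (3), the key reduction is that the hypothesis means $f$ is not a zero-divisor on $M$. Tensoring the short exact sequence $0\to\As\xrightarrow{\cdot f}\As\to \As/(f)\to 0$ with $M$ gives $\Tor_1^\As(\As/(f),M)\simeq\ker(M\xrightarrow{\cdot f}M)$, so I must show this kernel $K$ is zero. If $K\neq 0$ it is a nonzero $\Z_2$-graded submodule of $M$ and hence has an associated prime $\pf$, which is automatically an associated prime of $M$; but every element of $K$ is annihilated by $f$, so $f\in \pf$, contradicting the assumption on $f$. Thus $K=0$, which gives both the vanishing of $\Tor_1$ and the exact sequence $0\to f\As\otimes_\As M\to M\to M/fM\to 0$ obtained from the long exact $\Tor$-sequence. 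I expect the only place requiring real care is checking that the elementary lemmas one invokes (existence of associated primes for nonzero finitely generated modules, stability of $\operatorname{Ass}$ under passage to submodules, annihilators being $\Z_2$-graded) have been set up in the graded setting, which is exactly the content of \cite[\S 4.3, \S 6.4]{We09}.
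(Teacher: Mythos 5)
Your parts (1) and (2) are correct, and they are the standard graded arguments; note that the paper itself gives no proof of this proposition (it is stated with a reference to Westra's dissertation \cite{We09}), so a self-contained treatment along your lines is exactly what is called for, and your closing caveat correctly identifies the only delicate inputs (that associated primes can be taken to be annihilators of \emph{homogeneous} elements, that nonzero finitely generated graded modules over a noetherian superring have associated primes, and that graded primality can be tested on homogeneous elements), all of which are in \cite{We09}.

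Part (3), however, contains a step that fails as written. You tensor $0\to\As\xrightarrow{\cdot f}\As\to\As/(f)\to 0$ with $M$ and conclude $\Tor^\As_1(\As/(f),M)\simeq\ker(M\xrightarrow{\cdot f}M)$, but that sequence is exact on the left only when $f$ is a non-zero-divisor in $\As$ itself, which is not part of the hypothesis: $f$ is only assumed to avoid the associated primes of $M$, not those of $\As$. For instance, with $\As=k[x,y]/(xy)$, $M=\As/(y)$ and $f=x$, the hypothesis holds while $x\cdot y=0$ in $\As$, so your auxiliary sequence is not exact and the claimed identification of $\Tor_1$ has no justification. (In the super setting one should also observe that, when $M\neq 0$, the hypothesis forces $f$ to be \emph{even}: the ideal $J$ of odd elements is nilpotent by the definition of superring, so every odd element lies in every graded prime, in particular in every associated prime; an odd $f$ would make your sequence fail outright since $f^2=0$.) The repair is short and matches the shape of the conclusion in the statement: use the always-exact sequence $0\to f\As\to\As\to\As/(f)\to 0$, giving $\Tor^\As_1(\As/(f),M)\simeq\ker(f\As\otimes_\As M\to M)$; since $f\As\simeq\As/\Ann_\As(f)$ via $a\mapsto af$ (no signs, $f$ being even), one gets $f\As\otimes_\As M\simeq M/\Ann_\As(f)M$ with the map to $M$ induced by multiplication by $f$, whose kernel is the image of $K=\ker(M\xrightarrow{\cdot f}M)$ in $M/\Ann_\As(f)M$. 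Your associated-primes argument showing $K=0$ is correct (and is the right mechanism: $\Ass(K)\subseteq\Ass(M)$ and any associated prime of $K$ would contain $f$), and it now finishes the proof. So the gap is local and repairable, but as written the derivation of the $\Tor_1$ vanishing is invalid.
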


\begin{remark}\label{rem:assoprim} If  $\Xcal$ is a noetherian superscheme, the above notions can be globalized for any coherent sheaf $\M$ on $\Xcal$. There is a particular situation that is used in the extension to the ``super'' setting of Castelnuovo-Mumford properties of $m$-regularity (Subsection \ref{ss:CMregularity}).
Let $$\Bs=\Bs_k(m,n)=k[x_0,\dots,x_m,\theta_1,\dots,\theta_n]$$ be a  free polynomial $k$-algebra and $\Xcal=\Ps_k^{m,n}=\SProj \Bs$ the projective superspace over $k$.
Given a $\Z$-homogeneous polynomial $f\in k[x_0,\dots,x_m]$ of $\Z$-degree 1, we consider $f$ as an even element of $\Bs$.  If $\Bs'=\Bs/(f)$ and $\Xcal'=\SProj \Bs'\simeq \Ps_k^{m-1,m}$, there is a closed immersion
$$
\Ps_k^{m-1,m}\simeq\Xcal' \hookrightarrow \Xcal=\Ps_k^{m,n}\,,
$$
which identifies $\Xcal'$ with the closed super hyperplane defined by the ideal $\Jc\simeq \widetilde{f\cdot\Bs}^h\simeq \Oc_\Xcal(-1)$ generated by $f$.

Let $\M$ be a coherent sheaf on $\Xcal$. If $\Xcal'$ does not contain any of the points of $\Xcal$ corresponding to the associated primes of $\M$, then the 1-Tor sheaf vanishes:
\begin{equation}\label{eq:torsh}
\Torsh^{\Oc_\Xcal}_1(\Oc_{\Xcal'},\M)=0\,,
\end{equation}
and one has an exact sequence
\begin{equation}
0 \to \M(-1)\simeq \Jc \otimes_{\Oc_\Xcal} \M \to \M \to \M_{\vert\Xcal'}\to 0\,.
\end{equation}
\end{remark}

\subsection{Filtrations of a graded module and super Artin-Rees theorem}

Let $\As$ be a superring, $I$ a $\Z_2$-graded ideal and $M$ a $\Z_2$-graded $\As$-module.
\begin{defin}
A filtration
$$
M=M_0\supseteq M_1\supseteq \dots\supseteq M_n\supseteq \dots
$$
is an \emph{$I$-filtration} if $I\cdot M_n\subseteq M_{n+1}$ for every $n$, and   it is a \emph{stable filtration} if there exists an integer $s\ge 0$ such that 
$I\cdot M_n=M_{n+1}$ for every $n\ge s$.
\end{defin}
Let us consider the bigraded $\As$-algebra
$$
S_I(\As)=\oplus_{n\ge0} \lambda^n I^n\,,
$$
where $\lambda$ is an even indeterminate of $\Z$-degree 1 which we use to   keep track of the $\Z$-degree.  For any $I$-filtration $\{M_n\}$ of an $\As$-module $M$ we have a bigraded $S_I(\As)$-module
$$
S(M)=\oplus\lambda^n M_n\,.
$$
\begin{lemma}\label{lem:AR}  Assume that $\As$ is noetherian.
\begin{enumerate}
\item $S_I(\As)$ is an $\As$-algebra of finite type, so that it is noetherian.
\item If $M$ is a finitely generated $\As$-module, then a $I$-filtration $\{M_n\}$ of $M$ is stable if and only if the associated $S_I(\As)$-module $S(M)$ is finitely  generated.
\end{enumerate}
\end{lemma}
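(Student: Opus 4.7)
The plan is to follow the classical Artin--Rees proof, with only bookkeeping changes required to handle the $\Z_{2}$-grading.

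For part (1), I would first invoke the noetherianity of $\As$ to produce finitely many $\Z_{2}$-homogeneous generators $a_{1},\dots,a_{r}$ of the $\Z_{2}$-graded ideal $I$. Each $\lambda a_{i}$ is a homogeneous element of $S_{I}(\As)$ of $\Z$-degree one, carrying the parity of $a_{i}$, and every element of $I^{n}$ is an $\As$-linear combination of products $a_{i_{1}}\cdots a_{i_{n}}$. Hence $S_{I}(\As)=\As[\lambda a_{1},\dots,\lambda a_{r}]$, so it is an $\As$-algebra of finite type. Since $\As$ is a noetherian superring, the free polynomial superalgebra $\As[z_{1},\dots,z_{r}]$ (with $|z_{i}|=|a_{i}|$) is noetherian by the super Hilbert basis theorem (see e.g.~\cite[Thm.~6.4.4]{We09}), and $S_{I}(\As)$ is a quotient of it; this yields noetherianity.

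For part (2), I would introduce for each $n\geq 0$ the truncation
\[
\sigma_{n}(M):=M_{0}\oplus \lambda M_{1}\oplus\cdots\oplus \lambda^{n}M_{n}\oplus \bigoplus_{k\geq 1}\lambda^{n+k}I^{k}M_{n},
\]
which is readily checked to be a $\Z$-graded $S_{I}(\As)$-submodule of $S(M)$. Since $M$ is finitely generated over the noetherian superring $\As$, each $M_{i}$ is finitely generated as an $\As$-module, so $\sigma_{n}(M)$ is generated over $S_{I}(\As)$ by a finite set of homogeneous elements living in $\Z$-degrees $\leq n$. Obviously $\sigma_{n}(M)\subseteq \sigma_{n+1}(M)$ and $\bigcup_{n\geq 0}\sigma_{n}(M)=S(M)$.

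The equivalence then falls out. If $\{M_{n}\}$ is stable with $I\cdot M_{n}=M_{n+1}$ for $n\geq s$, then $\sigma_{s}(M)=S(M)$, so $S(M)$ is finitely generated over $S_{I}(\As)$. Conversely, if $S(M)$ is finitely generated, then by part (1) the ascending chain $\sigma_{0}(M)\subseteq\sigma_{1}(M)\subseteq\cdots$ of $S_{I}(\As)$-submodules stabilizes, say at $n=s$; equivalently one can pick finitely many homogeneous generators of $S(M)$ and let $s$ be the largest $\Z$-degree appearing among them. Then $S(M)=\sigma_{s}(M)$, which by inspection of $\Z$-degree $s+k$ forces $M_{s+k}=I^{k}M_{s}$ for every $k\geq 0$, i.e.~the filtration is stable.

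I do not foresee a genuine obstacle: the only delicate point is ensuring that the super Hilbert basis theorem applies to a polynomial superalgebra over $\As$ with generators of prescribed parity, but this is exactly the content of \cite[Thm.~6.4.4]{We09}. Everything else is purely formal, since the $\Z_{2}$-grading is compatible with the $\Z$-grading used throughout the argument.
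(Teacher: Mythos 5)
Your proof is correct and follows essentially the same route as the paper: part (1) is the same observation that $S_I(\As)=\As[\lambda a_1,\dots,\lambda a_r]$ for homogeneous generators $a_i$ of $I$ (with noetherianity from the super Hilbert basis theorem), and your truncations $\sigma_n(M)$ are exactly the paper's submodules $P_n$, with the same ascending-chain/bounded-degree-of-generators argument establishing the equivalence in part (2). The only cosmetic difference is that you make explicit the parity bookkeeping and the appeal to the super Hilbert basis theorem, which the paper leaves implicit.
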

 
\begin{proof}
 {(1). If $I=(a_1,\dots,a_s)$, then $S_I(\As)=\As[a_1\lambda,\dots,a_s
\lambda]$.}

 (2). Every $M_s$ is finitely generated over $\As$, and  the same happens for $N_n=\oplus_{s=0}^n M_s$, so that it generates a submodule of 
$S(M)$ which is finitely generated over  $S_I(\As)$. This submodule is given by
$$
P_n=M_0\oplus\dots\oplus \lambda^n M_n\oplus\lambda^{n+1}I
M_n\oplus\dots\oplus\lambda^{n+s}I^s M_n\oplus\dots\,.
$$
One then has an ascendent chain
$$
P_0\subseteq\dots\subseteq P_n\subseteq\dots
$$
whose union is $S(M)$. Since $S_I(\As)$ is noetherian, 
$S(M)$ is finitely generated if and only there is $n_0$ such that $P_{n_0}=S(M)$. This is equivalent to 
$I^n M_{n_o}=M_{n+n_0}$ for every $n\in\mathbb N$, that is, to the stability of the filtration.
\end{proof}
 We can now prove the super version of the Artin-Rees Lemma (see \cite[Lemma 7.8]{MoZh19}. Our proof is based on \cite[Prop.~10.9]{AtMcD69}.

\begin{prop}[Artin-Rees]\label{prop:AR} Let $\As$ be a noetherian superring, $I$ an ideal in it,
$M$ a finitely generated $\As$-module, and $\{M_n\}$ a stable $I$-filtration of $M$. If $M'\hookrightarrow M$ is a submodule, the induced filtration $\{M'\cap M_n\}$ is stable.
\end{prop}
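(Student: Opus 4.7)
The plan is to mimic the classical Atiyah--Macdonald argument, leveraging Lemma \ref{lem:AR} as a black box; the $\Z_2$-grading poses no obstacle since all relevant submodules are automatically $\Z_2$-graded, and $\Z_2$-graded ideals/modules over $\As$ satisfy the ascending chain condition.

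First I would verify that $\{M'\cap M_n\}$ is indeed an $I$-filtration of $M'$. This follows from
$$
I\cdot(M'\cap M_n)\subseteq IM'\cap IM_n\subseteq M'\cap M_{n+1},
$$
where the last inclusion uses $I\cdot M_n\subseteq M_{n+1}$ and $IM'\subseteq M'$. Each $M'\cap M_n$ is a $\Z_2$-graded submodule of $M$, so this is a filtration in the graded sense.

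Next I would form the associated bigraded $S_I(\As)$-module
$$
S(M')=\bigoplus_{n\ge 0}\lambda^n\,(M'\cap M_n),
$$
and observe that it sits naturally as a (bihomogeneous) submodule of $S(M)=\bigoplus_{n\ge 0}\lambda^n M_n$.

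Now the argument is immediate from Lemma \ref{lem:AR}. By part (1), $S_I(\As)$ is a noetherian superring. By part (2), the stability of $\{M_n\}$ implies that $S(M)$ is a finitely generated $S_I(\As)$-module. Hence its submodule $S(M')$ is also finitely generated over $S_I(\As)$. Applying part (2) in the reverse direction to the $I$-filtration $\{M'\cap M_n\}$ of the finitely generated $\As$-module $M'$, we conclude that $\{M'\cap M_n\}$ is stable, as required.

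The main point of the argument is really encapsulated in Lemma \ref{lem:AR}, whose proof hinges on the noetherianity of the super Rees algebra $S_I(\As)=\As[a_1\lambda,\dots,a_s\lambda]$ when $I=(a_1,\dots,a_s)$; once this is in place (as it is in the excerpt), there is no genuine obstacle to extending the classical proof verbatim to the super setting.
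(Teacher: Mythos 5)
Your proof is correct and follows essentially the same route as the paper's own argument: verify the $I$-filtration property, embed $S(M')$ as a sub-$S_I(\As)$-module of $S(M)$, and apply both parts of Lemma \ref{lem:AR} (noetherianity of the super Rees algebra plus the characterization of stability via finite generation) in both directions. There is nothing to add.
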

\begin{proof} Since $$I(M'\cap M_n)\subseteq It M'\cap I  M_n\subseteq
M'\cap M_{n+1},$$  $\{M'\cap M_n\}$ is an  
$I$-filtration. The associated bigraded $S_I(\As)$-module is a submodule of $S(M)$. We now apply  Lemma \ref{lem:AR} repeatedly:  $S(M)$ is finitely generated, and then$S(M')$ is finitely generated as well, since $S_I(\As)$ is noetherian, so that the $I$-filtration
$\{M'\cap M_n\}$ is stable.
\end{proof}
\begin{corol}[Artin-Rees Lemma]\label{cor:AR} There exists an integer  $s\ge 0$ such that
$$
(I^n M)\cap M'=I^{n-s}((I^s M)\cap M')
$$
for every $n\ge s$.
\qed\end{corol}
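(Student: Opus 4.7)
The plan is to apply Proposition \ref{prop:AR} to the $I$-adic filtration $\{I^n M\}_{n\ge 0}$ of $M$. First, I would observe that this filtration is a stable $I$-filtration in the sense of the definition preceding Lemma \ref{lem:AR}: indeed, $I\cdot I^n M = I^{n+1} M$ for every $n\ge 0$, so the stability constant can be taken to be $0$. Hence the hypotheses of Proposition \ref{prop:AR} are satisfied with $M_n = I^n M$.

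Applying Proposition \ref{prop:AR} to the submodule $M' \hookrightarrow M$, we conclude that the induced filtration $\{M' \cap I^n M\}_{n\ge 0}$ is a stable $I$-filtration of $M'$. By the definition of stability, this means there exists an integer $s\ge 0$ such that
$$
I \cdot (M' \cap I^n M) = M' \cap I^{n+1} M \qquad \text{for every } n\ge s.
$$

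To conclude, I would iterate this identity. By an easy induction on $k\ge 0$, we obtain
$$
I^k \cdot (M' \cap I^s M) = M' \cap I^{s+k} M \qquad \text{for every } k\ge 0.
$$
Setting $n = s+k$, this is precisely the desired equality $(I^n M)\cap M' = I^{n-s}((I^s M) \cap M')$ for every $n\ge s$. The only substantive ingredient is Proposition \ref{prop:AR}, which has already been proved; no further obstacle arises, as the passage from stability of the induced filtration to the stated formula is a purely formal induction.
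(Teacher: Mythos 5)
Your proof is correct and is precisely the argument the paper intends: the corollary carries a \qed because it follows immediately from Proposition \ref{prop:AR} applied to the stable $I$-adic filtration $\{I^n M\}$, with the stated formula obtained by iterating the stability identity $I\cdot(M'\cap I^n M)=M'\cap I^{n+1}M$ for $n\ge s$, exactly as you do. No gaps.
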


\subsection{Local criterion for flatness}

We now review the extension to superrings of the local criterion for flatness, which for the ordinary case
can be found for instance  in \cite{AlKl70,Mat89}.
We only consider a simpler version which   suffices for  these notes (see \cite[Lemma 7.7]{MoZh19}).
\begin{lemma}[Local criterion for flatness]\label{lem:localflat}
Let $\phi\colon\As\to\Bs$ be a morphism of finite type of local noetherian superrings and $I$ an ideal of $\As$. Then $\phi$ is flat if and only if the induced morphism $\bar\phi\colon \As/I\to\Bs/I\Bs$ is flat and $I\otimes_\As\Bs\iso I \Bs$.
\end{lemma}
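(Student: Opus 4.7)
The forward direction is immediate: if $\phi$ is flat, applying $-\otimes_\As\Bs$ to the exact sequence $0\to I\to\As\to\As/I\to 0$ yields a short exact sequence, which simultaneously shows that $I\otimes_\As\Bs\to\Bs$ is injective (hence $I\otimes_\As\Bs\iso I\Bs$) and that $\Bs/I\Bs$ is flat over $\As/I$ by base change.

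For the converse, my plan is to establish the stronger statement that $\Tor_1^\As(N,\Bs)=0$ for every finitely generated $\As$-module $N$, which characterizes flatness. The first step is an intermediate claim: $\Tor_1^\As(N,\Bs)=0$ for every $\As/I^n$-module $N$ and every $n\ge 1$, proved by induction on $n$. For $n=1$, given an $\As/I$-module $N$, I would choose a presentation $0\to K\to F\to N\to 0$ with $F$ a free $\As/I$-module. Using the identification $P\otimes_\As\Bs\simeq P\otimes_{\As/I}(\Bs/I\Bs)$ valid for any $\As/I$-module $P$, together with the vanishing $\Tor_1^\As(F,\Bs)=0$ (a consequence of the hypothesis $I\otimes_\As\Bs\iso I\Bs$, equivalent to $\Tor_1^\As(\As/I,\Bs)=0$) and the flatness of $\Bs/I\Bs$ over $\As/I$, the long exact Tor sequence gives the desired vanishing. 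For $n\ge 2$, the exact sequence $0\to I^{n-1}N\to N\to N/I^{n-1}N\to 0$ reduces to previously handled cases, since $I^{n-1}N$ is annihilated by $I$ and $N/I^{n-1}N$ is an $\As/I^{n-1}$-module.

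Now for a general finitely generated $\As$-module $M$, I would write $M=F/K$ with $F$ finitely generated free over $\As$ and argue that $\psi\colon K\otimes_\As\Bs\to F\otimes_\As\Bs$ is injective. For each $n$, the module $F/I^nF$ is an $\As/I^n$-module, so by the previous step the inclusion $K/(K\cap I^nF)\hookrightarrow F/I^nF$ remains injective after $\otimes_\As\Bs$. A diagram chase then shows that any $u\in\ker\psi$ lies in the image of $(K\cap I^nF)\otimes_\As\Bs\to K\otimes_\As\Bs$. By the super Artin-Rees lemma (Proposition~\ref{prop:AR} and Corollary~\ref{cor:AR}), there exists $s$ with $K\cap I^nF\subseteq I^{n-s}K$ for all $n\ge s$, so this image is contained in $I^{n-s}\Bs\cdot(K\otimes_\As\Bs)\subseteq \mf_\Bs^{n-s}(K\otimes_\As\Bs)$ (using that $\phi$ is local, so $I\Bs\subseteq\mf_\Bs$). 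Since $K\otimes_\As\Bs$ is finitely generated over $\Bs$, the super Nakayama lemma applied via Krull intersection forces $u=0$.

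The main obstacle, apart from bookkeeping, is confirming that every classical ingredient has a $\Z_2$-graded analogue fitting together in the argument: the super Artin-Rees lemma is furnished by Proposition~\ref{prop:AR}, Krull intersection follows as usual from super Nakayama (cited in \cite{BBH91,CarCaFi11,We09}), and the Tor manipulations carry over verbatim because $\Z_2$-graded modules over a supercommutative ring form an abelian category with enough projectives. With these ingredients in place, the argument is a faithful translation of the classical local criterion for flatness.
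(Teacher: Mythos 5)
Your proof is correct and follows essentially the same route as the paper: the paper's proof simply cites the classical argument for Matsumura's Theorem 22.3, observing that it carries over because $I\Bs$ lies in the Jacobson radical of the local superring $\Bs$ and because the super Artin--Rees lemma (Corollary \ref{cor:AR}) is available — precisely the two super ingredients you invoke, together with super Nakayama. Your write-up is a faithful expansion of that cited argument (Tor-vanishing for $\As/I^n$-modules by induction on $n$, then Artin--Rees plus Krull intersection to handle arbitrary finitely generated modules), so there is nothing to correct.
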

\begin{proof} The proof of \cite[Thm. 22.3]{Mat89} applies straitghforwardly to our situation as $I\Bs$ is contained in the Jacobson radical of $\Bs$, since $\Bs$ is local, and we can use the super version of the Artin-Rees lemma (Corollary \ref{cor:AR}).
\end{proof}
\begin{corol}\label{cor:localflat} Let $\psi\colon \As\to\Bs$, $\phi\colon\Bs\to\Bs'$ be morphisms of finite type of local noetherian superrings and write $\psi'=\phi\circ\psi\colon \As\to\Bs'$ for the composition. Assume that $\psi$ and $\psi'$ are flat. Then $\phi$ is flat if and only if the induced morphism $\bar\phi\colon \Bs/\mf_\As\Bs\to\Bs'/\mf_\As\Bs'$ is flat, where $\mf_\As$ is the maximal ideal of $\As$.
\end{corol}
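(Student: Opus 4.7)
The ``only if'' direction is immediate: $\bar\phi$ is obtained from $\phi$ by the base change $\Bs\to\Bs/\mf_\As\Bs$, so flatness is preserved.

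For the converse, the natural thing to do is to invoke Lemma \ref{lem:localflat} directly, applied to the morphism $\phi\colon \Bs\to \Bs'$ and the ideal $I=\mf_\As\Bs$ of $\Bs$. Note that $I$ is contained in $\mf_\Bs$ (since $\psi$, being a morphism of local superrings, sends $\mf_\As$ into $\mf_\Bs$), so the ``Jacobson radical'' hypothesis implicit in Lemma \ref{lem:localflat} is automatic. The quotient $\Bs/I\to \Bs'/I\Bs'$ coincides with $\bar\phi\colon \Bs/\mf_\As\Bs\to \Bs'/\mf_\As\Bs'$, which is flat by assumption. Thus the only condition left to verify is that the natural map
$$
I\otimes_\Bs\Bs'\longrightarrow I\cdot\Bs'
$$
is an isomorphism.

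The key computation is a chain of isomorphisms combining the flatness of $\psi$ and of $\psi'=\phi\circ\psi$. Flatness of $\psi$ gives $\mf_\As\otimes_\As\Bs\iso \mf_\As\Bs=I$, so
$$
I\otimes_\Bs\Bs'\iso (\mf_\As\otimes_\As\Bs)\otimes_\Bs\Bs'\iso \mf_\As\otimes_\As\Bs'.
$$
On the other hand, flatness of $\psi'$ gives $\mf_\As\otimes_\As\Bs'\iso \mf_\As\Bs'=\mf_\As\Bs\cdot\Bs'=I\cdot\Bs'$. Composing these identifications yields the desired isomorphism, and Lemma \ref{lem:localflat} then produces the flatness of $\phi$.

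There is no real obstacle here beyond the standard bookkeeping of tensor products and base change of ideals; the whole argument is a short algebraic manipulation once Lemma \ref{lem:localflat} is available. The only point worth emphasizing (which would take at most a line in the written proof) is the use of the hypothesis that $\psi$ is local to ensure $I\subseteq\mf_\Bs$, and the verification that the map $I\otimes_\Bs\Bs'\to I\cdot\Bs'$ factors through the canonical isomorphisms described above in the expected way.
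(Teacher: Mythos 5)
Your proof is correct and follows essentially the same route as the paper: both apply Lemma \ref{lem:localflat} to $\phi$ with $I=\mf_\As\Bs$, using flatness of $\psi$ and $\psi'$ to obtain $\mf_\As\otimes_\As\Bs\simeq\mf_\As\Bs$ and $\mf_\As\otimes_\As\Bs'\simeq\mf_\As\Bs'$, whence $(\mf_\As\Bs)\otimes_\Bs\Bs'\simeq(\mf_\As\Bs)\Bs'$ and the flatness of $\bar\phi$ closes the argument. Your write-up is in fact slightly more careful than the paper's, spelling out the associativity step in the tensor-product chain and the trivial base-change direction, but the mathematical content is identical.
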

\begin{proof} One has $\mf_\As\otimes_\As\Bs\simeq \mf_\As \Bs$ and $\mf_\As\otimes_\As\Bs'\simeq \mf_\As \Bs'$ since $\psi$ and $\psi'$ are flat. Then, $(\mf_\As \Bs)\otimes_\Bs\Bs'\simeq  (\mf_\As\Bs) \Bs'$ and one concludes by Lemma \ref{lem:localflat}.
\end{proof}
 
\subsection{Nakayama's lemma for half  exact functors}\label{ss:nakayama}
We prove here a super extension of   Nakayama's lemma for half  exact functors given in \cite{OgBerg72}.
Let $\As$ be a local noetherian superring, $\mf$ its maximal ideal and $\kappa=\As/\mf$. 
Let $\Mf(\As)$ be the category of $\Z_2$-graded finitely generated $\As$-modules and $T$ a \emph{half  exact} functor from $\Mf(\As)$ to itself which is \emph{linear}, that is, such that for every pair of $\Z_2$-graded finitely generated $\As$-modules $M$, $N$ the natural map
$$
\Hom_\As(M,N)\to \Hom_\As(T(M),T(N))
$$
is a morphism of $\Z_2$-graded modules.

\begin{lemma}\label{lem:nakayama}
 If $T(\kappa)=0$, then $T=0$.
\end{lemma}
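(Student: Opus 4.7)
The plan is to prove the lemma in three steps that successively widen the class of modules on which $T$ vanishes, finally concluding via Krull's intersection theorem applied to the finitely-generated $\As$-module $T(M)$.

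Step 1 (vanishing on $\kappa$-modules). Because $T$ is $\Z_2$-linear, it commutes with parity shift, so $T(\kappa^\Pi)\simeq T(\kappa)^\Pi=0$. For any finite-dimensional $\Z_2$-graded $\kappa$-vector space $V$, I would induct on $\dim V$: choose a $1$-dimensional (even or odd) submodule $W\subset V$, apply half-exactness to
\[
0\to W\to V\to V/W\to 0,
\]
and conclude $T(V)=0$ from the inductive hypothesis applied to $W$ and $V/W$.

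Step 2 (vanishing on finite-length modules). For a $\Z_2$-graded $\As$-module $N$ of finite length, every associated prime of $N$ equals $\mf$. Proposition \ref{prop:assprimes}(2) then produces a filtration whose successive quotients are isomorphic to $\kappa$ or $\kappa^\Pi$. A second induction on length, using the half-exactness of $T$ together with Step 1, gives $T(N)=0$.

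Step 3 (reduction to a Krull-intersection argument). For an arbitrary finitely-generated $\As$-module $M$, the quotient $M/\mf^n M$ is finitely generated over the artinian local superring $\As/\mf^n$, hence of finite length; by Step 2, $T(M/\mf^n M)=0$. Applying half-exactness of $T$ to $0\to\mf^n M\to M\to M/\mf^n M\to 0$ shows that the map $\iota_n\colon T(\mf^n M)\to T(M)$ induced by the inclusion is surjective for every $n$. The goal of this step is to strengthen this surjectivity to the inclusion $T(M)\subseteq \mf^n T(M)$ for every $n$; once this is established, Krull's intersection theorem---a consequence of the super Artin--Rees lemma (Corollary \ref{cor:AR}) applied to the f.g. $\As$-module $T(M)$---gives $T(M)\subseteq\bigcap_n\mf^n T(M)=0$.

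The main obstacle will be proving the inclusion $T(M)\subseteq\mf^n T(M)$. The natural approach is to choose homogeneous generators $b_1,\ldots,b_r$ of $\mf^n$ and to consider the surjection $\varphi\colon M^{(r_0,r_1)}\twoheadrightarrow\mf^n M$, $(m_i)\mapsto\sum_i b_i m_i$; its composition with the inclusion $\mf^n M\hookrightarrow M$ is the evaluation map, whose image under $T$ is, by $\Z_2$-linearity, precisely $\mf^n T(M)$. The subtle point is that half-exactness of $T$ does not automatically imply surjectivity of $T(\varphi)\colon T(M)^{(r_0,r_1)}\to T(\mf^n M)$, so this computation alone does not yield the desired containment. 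To get around this, one combines the above calculation with the tower of surjections $\cdots\twoheadrightarrow T(\mf^{k+1}M)\twoheadrightarrow T(\mf^k M)$ obtained by applying half-exactness to $0\to\mf^{k+1}M\to\mf^k M\to\mf^k M/\mf^{k+1}M\to 0$ (whose last term has finite length and so is annihilated by $T$), and invokes the super Artin--Rees lemma (Proposition \ref{prop:AR}) to absorb the kernel of $\varphi$ into arbitrarily high powers of $\mf$. Iterating this absorption and passing to the intersection then closes the argument.
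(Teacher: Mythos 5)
Steps 1 and 2 of your plan are correct, but they only handle modules supported at $\mf$; the whole difficulty of the lemma lives in Step 3, and there your argument has a genuine gap. Everything hinges on the containment $T(M)\subseteq\mf^n T(M)$ (in fact $T(M)=\mf T(M)$ for a single $n=1$ would already finish, by the classical graded Nakayama lemma applied to the finitely generated module $T(M)$ --- the appeal to Krull intersection is redundant). You correctly observe that half-exactness does not make $T(\varphi)\colon T(M)^{(r_0,r_1)}\to T(\mf^n M)$ surjective, but the proposed repair does not work: the Artin--Rees lemma (Proposition \ref{prop:AR}, Corollary \ref{cor:AR}) controls intersections $\mf^nM\cap M'$ of submodules with $\mf$-adic filtrations \emph{inside the source modules}; it gives no comparison whatsoever between the image of $T(\mf^nM)\to T(M)$ and the submodule $\mf\,T(M)$, because a half-exact functor carries no right-exactness and hence no control of images of surjections. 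The tower of surjections $T(\mf^{k+1}M)\twoheadrightarrow T(\mf^kM)$ that you build only restates that $T(M)$ is hit from arbitrarily deep in the filtration; it never places any element of $T(M)$ inside $\mf\,T(M)$. ``Iterating the absorption and passing to the intersection'' is not an argument here, and I do not see how to make it one: you are in effect trying to prove a weak right-exactness statement for $T$, which is precisely what cannot be assumed (indeed, Proposition \ref{prop:nakayama} shows right-exactness of $T$ is an extra condition equivalent to $T\simeq T(\As)\otimes_\As-$).

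The paper's proof (following Ogus--Bergman) sidesteps this obstruction by a d\'evissage that your plan omits: by Proposition \ref{prop:assprimes}(2) one reduces to $M=\As/\pf$ for $\pf$ a graded prime, and argues by induction on $\dim \As/\pf$. For $\pf\neq\mf$ one picks an even $a\in\mf\setminus\pf$ and uses the exact sequence $0\to\As/\pf\xrightarrow{a}\As/\pf\to\As/(\pf+(a))\to0$. The crucial point is that multiplication by $a$ is an endomorphism of the module itself, so by linearity $T(a\cdot)=a\cdot$ on $T(\As/\pf)$; half-exactness in the middle, together with $T(\As/(\pf+(a)))=0$ (from the inductive hypothesis applied to a prime filtration of $\As/(\pf+(a))$, all of whose primes have strictly smaller dimension), then yields $a\,T(\As/\pf)=T(\As/\pf)$, and the classical super Nakayama lemma kills $T(\As/\pf)$. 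In other words, where your approach needs $T$ of a \emph{surjection} to be surjective, the paper only needs $T$ of an \emph{injective endomorphism} to act as the same scalar --- which linearity gives for free. To fix your proof you would have to replace Step 3 by this induction on $\dim\As/\pf$ (your Steps 1--2 then become the base case $\pf=\mf$).
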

\begin{proof} We have to prove that $T(M)=0$ for every $\Z_2$-graded finitely generated $\As$-module $M$. By Proposition \ref{prop:assprimes} $M$ has a filtration whose successive quotients are of the form $\As/\pf$  for certain prime ideals $\pf$. Then, it is enough to prove that $T(\As/\pf)=0$ for every ($\Z_2$-graded) prime ideal $\pf$. We proceed by induction on the dimension of the ordinary ring $\As/\pf$. If $\dim \As/\pf=0$, then $\pf=\mf$ and this is our assumption. If $\pf\neq \mf$, take an even element $a\in \mf-\pf$. For every prime ideal $\qf$ containing $\pf+(a)$ one has $\dim \As/\qf < \dim \As/\pf$, and then $T(\As/\qf)=0$ by induction. Applying this to the successive quotients of  the  above mentioned filtration in the case $M=\As/\pf+(a)$ we obtain   $T(\As/\pf+(a))=0$.

Now, from the exact sequence
$$
0\to \As/\pf \xrightarrow{a} \As/\pf \to \As/\pf+(a) \to 0\,,
$$
we get $a\cdot T(\As/\pf)=0$, so that $\As/\pf=0$ by the super Nakayama Lemma  (\cite{BBH91,CarCaFi11} or \cite[6.4.5]{We09}).
\end{proof}

The natural morphism $M=\Hom_\As(\As,M) \to \Hom_\As(T(\As),T(M))$,
defined for every $\Z_2$-graded finitely generated $\As$-module $M$, induces an morphism of functors
$T(\As)\otimes_\As\to T$.

\begin{prop}\label{prop:nakayama} Under the above hypothesis, the following conditions are equivalent:
\begin{enumerate}
\item the above morphism of functors is an isomorphism,
$$
T(\As)\otimes_\As\iso T\,;
$$
\item the morphism $T(\As)\to T(\As/\mf)$ is surjective;
\item $T$ is right  exact.
\end{enumerate}
\end{prop}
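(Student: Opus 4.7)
The plan is to prove the cyclic chain of implications $(1)\Rightarrow(3)\Rightarrow(2)\Rightarrow(1)$. The first two steps are formal: $T(\As)\otimes_\As -$ is always right exact, which gives $(1)\Rightarrow(3)$, and applying a right exact $T$ to the surjection $\As\twoheadrightarrow\kappa$ yields the surjection $T(\As)\twoheadrightarrow T(\kappa)$ needed for $(3)\Rightarrow(2)$.

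The substantive step is $(2)\Rightarrow(1)$, which I handle by introducing the cokernel functor
$$
C(M):=\coker\bigl(\phi_M\colon T(\As)\otimes_\As M\to T(M)\bigr),
$$
viewed as a linear functor from $\Mf(\As)$ to itself. Condition $(2)$ immediately gives $C(\kappa)=0$, since the surjection $T(\As)\twoheadrightarrow T(\kappa)$ kills $\mf\cdot T(\As)$ and hence factors through $T(\As)\otimes_\As\kappa=T(\As)/\mf T(\As)$. The key assertion is that $C$ is half-exact. Given a short exact sequence $0\to A\to B\to D\to 0$ in $\Mf(\As)$ we obtain a commutative diagram whose top row
$$T(\As)\otimes A\to T(\As)\otimes B\to T(\As)\otimes D\to 0$$
is right exact and whose bottom row $T(A)\to T(B)\to T(D)$ is exact in the middle; a routine diagram chase combining these two exactness properties shows that $C(A)\to C(B)\to C(D)$ is exact at $C(B)$. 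Invoking Lemma \ref{lem:nakayama} applied to the linear half-exact functor $C$ then forces $C=0$, so $\phi_M$ is surjective for every $M\in\Mf(\As)$.

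Surjectivity of every $\phi_M$ now yields right-exactness of $T$: for any surjection $M\twoheadrightarrow N$, the right-exactness of $\otimes$ and commutativity of the square involving $\phi_M,\phi_N$ make $T(M)\twoheadrightarrow T(N)$, and combining this with middle-exactness of $T$ applied to $0\to\ker(M\to N)\to M\to N\to 0$ gives right-exactness in the three-term sense. Finally, once $T$ is right exact one obtains that $\phi_M$ is actually an isomorphism by applying both right exact functors $T$ and $T(\As)\otimes_\As -$ to a presentation $\As^{p,q}\to\As^{r,s}\to M\to 0$: the two resulting right exact sequences coincide on their first two terms through the identity $T(\As)^{r,s}\to T(\As)^{r,s}$, forcing $\phi_M$ to be injective as well, which closes the cycle. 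The main obstacle is the verification of half-exactness of $C$; the diagram chase is elementary but must delicately mix right-exactness of the top row with middle-exactness of the bottom row, and one must also confirm that $C$ is genuinely linear in the sense needed to apply Lemma \ref{lem:nakayama}.
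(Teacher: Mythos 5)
Your proof is correct and follows essentially the same route as the paper: the heart of both arguments is applying the Nakayama Lemma \ref{lem:nakayama} to the cokernel functor $C(M)=\coker\bigl(T(\As)\otimes_\As M\to T(M)\bigr)$ (called $Q$ in the paper) to get surjectivity of every $\phi_M$, followed by the finite-presentation argument $\As^{r,s}\to\As^{p,q}\to M\to 0$ to upgrade surjectivity to an isomorphism once $T$ is right exact. The only differences are cosmetic --- you organize the implications as a cycle $(1)\Rightarrow(3)\Rightarrow(2)\Rightarrow(1)$ rather than the paper's $(1)\Rightarrow(2)$, $(2)\Rightarrow(3)$, $(3)\Rightarrow(1)$, and you spell out the half-exactness of $C$ (a correct diagram chase), which the paper merely asserts.
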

\begin{proof} It is clear that (1) implies (2). 

(2)$\implies$(3).  Let $Q$ be the half  exact functor given by $Q(M)=T(M)/\Ima((T(\As)\otimes_\As M))$. Since $Q(\As/\mf)=0$, one has $Q=0$ by Lemma \ref{lem:nakayama} and then, $T(\As)\otimes_\As M \to T(M)$ is a surjection for every $M$. It follows that $T$ is right  exact.

(3)$\implies$(1). Since $\M$ is finitely generated and $\As$ is noetherian, there is an exact sequence
$$
\As^{r,s}\to\As^{p,q} \to M \to 0\,.
$$
Moreover, $T(\As^{r,s})\iso T(\As)\otimes_\As \As^{r,s}$ and $T(\As^{p,q})\iso T(\As)\otimes_\As \As^{p,q}$ as $T$ is half  exact, and   there is an exact sequence
$$
T(\As)\otimes_\As \As^{r,s} \to T(\As)\otimes_\As \As^{p,q} \to T(M) \to 0
$$
as $T$ is right  exact. This proves that $T(M)\iso T(\As)\otimes_\As M$.
\end{proof}

\subsection{Separated and proper  morphisms of superschemes}\label{ss:propermor}

For convenience we   mention a few of types of morphisms of superschemes.
\begin{defin}\label{def:proper}
A morphism $f\colon \Xcal\to\Sc$ of superschemes is:
\begin{enumerate}
\item affine, if for every affine open sub-superscheme $\Ucal\subset \Sc$ the inverse image $f^{-1}(\Ucal)$ is affine;
\item finite, if it is affine, and for $\Ucal = \SSpec \As\subset \Sc$, then $f^{-1}(\Ucal)=\SSpec \Bs$, where $\Bs$ is a finitely generated graded $\As$-module;
\item locally of finite type, if $\Sc$ has an affine open cover $\{\Ucal_i=\SSpec \As_i\}$ such that every inverse image
 $f^{-1}(\Ucal_i)$ has an open affine cover $\mathfrak V_i =\{\mathcal V_{ij} = \SSpec \Bs_{ij}\}$, where
 each $\Bs_{ij}$ is a finitely generated graded $\As_i$-algebra.
 \item $f$ is of finite type if in addition each $\mathfrak V_i$ can be taken to be finite. In other words, if it is locally of finite type and quasi-compact.
\item separated, if the diagonal morphism $\delta_f\colon \Xcal\to \Xcal\times_\Sc \Xcal$ is a closed immersion (actually it is enough to ask that it is a closed morphism);
\item proper, if it is separated, of finite type and universally closed;
\item flat, if for every point $x\in X$, $\Oc_{\Xcal,x}$ is flat over $\Oc_{\Sc,f(x)}$;
\item faithfully flat, if it is flat and surjective.
\end{enumerate}
\end{defin}

The following proposition is standard.

\begin{prop} The properties of being flat and faithfully flat are stable under  base change and composition.
\qed
\end{prop}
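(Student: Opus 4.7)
The statement reduces to two purely module-theoretic facts about $\Z_2$-graded modules over superrings, plus a set-theoretic argument for surjectivity. The plan is to check flatness at the level of stalks (which is how it is defined in Definition \ref{def:proper}), and then use that (i) the tensor product of two flat $\Z_2$-graded modules is flat, and (ii) a localization of a flat $\Z_2$-graded module is flat. Both facts are immediate from the universal characterization of flatness via the exactness of $-\otimes M$ on the abelian category of $\Z_2$-graded modules (see e.g.\ \cite{We09}).

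For base change, let $f\colon \Xcal\to\Sc$ be flat and let $\phi\colon\Sc'\to\Sc$ be an arbitrary morphism of superschemes; write $f'\colon \Xcal'=\Xcal\fp\Sc'\to\Sc'$ and $\phi_\Xcal\colon\Xcal'\to\Xcal$. Given $x'\in X'$ with $f'(x')=s'$, $\phi_\Xcal(x')=x$, $f(x)=\phi(s')=s$, the stalk $\Oc_{\Xcal',x'}$ is a localization of $\Oc_{\Xcal,x}\otimes_{\Oc_{\Sc,s}}\Oc_{\Sc',s'}$. Since $\Oc_{\Xcal,x}$ is flat over $\Oc_{\Sc,s}$ by hypothesis, the tensor product is flat over $\Oc_{\Sc',s'}$, and a localization of a flat module is again flat.

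For composition, let $f\colon\Xcal\to\Ycal$ and $g\colon\Ycal\to\Zc$ be flat. For $x\in X$ with $y=f(x)$, $z=g(y)$, the ring $\Oc_{\Xcal,x}$ is flat over $\Oc_{\Ycal,y}$ and $\Oc_{\Ycal,y}$ is flat over $\Oc_{\Zc,z}$, so by the transitivity of flatness (which, for $\Z_2$-graded modules, is the obvious consequence of the associativity $N\otimes_{\Oc_{\Ycal,y}}(-\otimes_{\Oc_{\Zc,z}}-)\simeq (N\otimes_{\Oc_{\Ycal,y}}-)\otimes_{\Oc_{\Zc,z}}-$ applied to $N=\Oc_{\Xcal,x}$) the composition $\Oc_{\Xcal,x}$ is flat over $\Oc_{\Zc,z}$.

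It remains to verify that the additional surjectivity condition defining faithful flatness is stable under both operations. Composition of surjective continuous maps is surjective set-theoretically, so the composition of two faithfully flat morphisms is again faithfully flat. For base change, given $s'\in S'$ with image $s=\phi(s')\in S$, pick any $x\in X$ with $f(x)=s$ (possible by surjectivity of $f$); the fibre $(f')^{-1}(s')$ contains the nonempty superscheme $\SSpec(\kappa(x)\otimes_{\kappa(s)}\kappa(s'))$, where $\kappa(x)$ and $\kappa(s')$ are the residue superfields at $x$ and $s'$ (note that the tensor product of two nonzero superfield extensions of a superfield $\kappa(s)$ is nonzero, being a nonzero $\Z_2$-graded module over $\kappa(s)$). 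Hence $f'$ is surjective, completing the proof. No step of this argument is a real obstacle; the only point meriting care is the explicit verification that the classical facts about flat modules transfer verbatim to the $\Z_2$-graded setting, which is however standard.
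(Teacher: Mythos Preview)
Your proof is correct; the paper itself gives no argument (the statement is followed immediately by \qed), treating this as a standard fact whose proof carries over verbatim from the ungraded case. One minor terminological point: the residue ``superfields'' $\kappa(x)$, $\kappa(s)$, $\kappa(s')$ are in fact ordinary fields (odd elements of a local superring are nilpotent, hence lie in the maximal ideal), so the nonvanishing of $\kappa(x)\otimes_{\kappa(s)}\kappa(s')$ is literally the classical statement.
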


The separateness and properness of a morphism depend only on the associated morphism between the underlying ordinary schemes.

\begin{prop}\label{prop:proper}
A morphism $f\colon \Xcal\to\Sc$ of superschemes which is  {locally of finite type}  is proper (resp.~separated) if and only if the induced scheme morphism $f_{bos}\colon X \to S$ is proper (resp.~separated).
\end{prop}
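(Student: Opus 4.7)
The plan is to reduce every condition to the corresponding statement about the bosonic reduction by exploiting the fact that the underlying topological spaces of fibre products of superschemes coincide with those of the fibre products of the bosonic reductions.

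The first step is to establish the following foundational fact. For any pair of superschemes $\Ycal,\Zc$ over $\Sc$, the underlying topological space of $\Ycal\times_\Sc\Zc$ is canonically homeomorphic to $Y\times_S Z$, and the canonical projections to $\Ycal$ and $\Zc$ reduce to the canonical projections from $Y\times_S Z$. Locally, this reduces to the ring-theoretic identity $(\As\otimes_\Bs\As')/J = A\otimes_B A'$, where $\As,\Bs,\As'$ are superrings, $J$ is the ideal generated by odd elements, and $A,B,A'$ are the bosonic reductions. This follows because odd elements of a tensor product of superrings are nilpotent and so already lie in the nilradical; squaring out the odd-odd even products (which are themselves nilpotent in a supercommutative ring) and passing to the reduction recovers $A\otimes_B A'$. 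As a corollary, for every morphism of superschemes $g\colon\Ycal\to\Zc$ the underlying continuous map agrees with the underlying continuous map of $g_{\bos}\colon Y\to Z$.

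For separatedness: the diagonal $\delta_f\colon\Xcal\to\Xcal\times_\Sc\Xcal$ is always an immersion, because locally it is induced by the surjective multiplication $\As\otimes_\Bs\As\to\As$, giving a closed immersion into the corresponding affine open. Hence $f$ is separated if and only if the image of $\delta_f$ is closed in $\Xcal\times_\Sc\Xcal$. By the first step, this topological question coincides with asking whether the image of $\delta_{f_{\bos}}$ is closed in $X\times_S X$, which is exactly separatedness of $f_{\bos}$.

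For properness, combine the separatedness equivalence with the following two observations. First, finite type is equivalent to locally of finite type plus quasi-compactness, and quasi-compactness depends only on the underlying topological space; moreover, local finiteness of type for $f_{\bos}$ is immediate from the local finiteness of type for $f$ (if $\As$ is a finitely generated $\Bs$-algebra then so is $A$ over $B$). Second, universal closedness is preserved in both directions: given any base change $\Tc\to\Sc$, the induced $\Xcal_\Tc\to\Tc$ has underlying continuous map (by the first step) equal to $X\times_S T\to T$, where $T=\Tc_{\bos}$; conversely, every base change $T\to S$ of the underlying morphism lifts to a base change $\Tc=T\to\Sc$ in the category of superschemes. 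Thus the topological maps arising as base changes of $f$ and as base changes of $f_{\bos}$ form the same family, and universal closedness transfers both ways. Combining all three equivalences yields the properness case.

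The main obstacle is the foundational first step, namely the verification that the underlying topological space of a superscheme fibre product equals that of the ordinary fibre product. Once this is cleanly established, the equivalences for separatedness, quasi-compactness, and universal closedness follow by formal manipulation of the classical arguments, and properness is immediate.
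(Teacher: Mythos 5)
Your proof is correct and follows essentially the same route as the paper: both reduce separatedness to the closedness of the diagonal and properness to finite type plus universal closedness, observing that these are purely topological conditions once one knows that bosonic reduction commutes with fibre products (so that $\delta_{f_{\bos}}=(\delta_f)_{\bos}$ and base changes of $f$ and $f_{\bos}$ give the same family of continuous maps). The only difference is one of detail: you prove explicitly the foundational identity $(\As\otimes_\Bs\As')_{\bos}\simeq A\otimes_B A'$ and the lifting of scheme-theoretic base changes $T\to S$ to superscheme base changes $\Tc\to\Sc$, both of which the paper's two-line proof uses implicitly.
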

\begin{proof} Since the bosonic reduction of the diagonal morphism of $f$ is the diagonal morphism of $f_{bos}$, $\delta_f$ is closed if and only if $\delta_{f_{bos}}$ is closed. This proves the separatedness. For the properness we have to see first that $f$ is of finite type if and only if $f_{bos}$ is of finite type, which is true because we are assuming that $f$ is locally of finite type, and secondly that $f$ is universally closed if and only $f_{bos}$ is universally closed, which is true as this is a topological question.
\end{proof}

Thus, one has:
\begin{corol}\label{cor:valuative} The valuative criteria for separatedness  and properness \cite[Thm.~II.4.3 and II.4.7]{Hart77} are still valid in the graded setting.
\qed
\end{corol}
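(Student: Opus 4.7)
The plan is to reduce the super valuative criteria to the classical ones via Proposition \ref{prop:proper}. The crucial observation I would exploit is that every valuation ring $R$ and its fraction field $K$ are purely even, so the test schemes $\SSpec R$ and $\SSpec K$ appearing in the valuative criterion are purely even superschemes, and hence cannot distinguish $\Xcal$ from its bosonic reduction $X$.

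More precisely, I would first recall the natural bijection
$$
\Morf(T,\Ycal)\simeq \Morf(T,Y)
$$
valid for any locally ringed superspace $\Ycal$ (with bosonic reduction $Y$) and any purely even locally ringed space $T$; this is the adjunction noted in Subsection 2.2, and its content is simply that a homogeneous morphism of graded sheaves $\Oc_\Ycal\to g_\ast \Oc_T$ must kill all odd elements, since the target is concentrated in even degree. Applying this with $T=\SSpec K$ and $T=\SSpec R$ yields a functorial identification of the commutative diagrams of superschemes
$$
\xymatrix@R=10pt{\SSpec K \ar[r] \ar[d] & \Xcal \ar[d]^f \\ \SSpec R \ar[r] & \Sc}
$$
with the corresponding commutative diagrams of ordinary schemes obtained by bosonic reduction; moreover, lifts $\SSpec R\to\Xcal$ in the first are in natural bijection with lifts $\Spec R\to X$ in the second.

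With this identification, the corollary is immediate: by Proposition \ref{prop:proper}, $f$ is separated (respectively, proper) if and only if $f_{bos}$ is, and by the classical valuative criteria \cite[Thm.~II.4.3 and II.4.7]{Hart77} applied to $f_{bos}$ the latter properties are equivalent to the ``at most one'' (respectively, ``exactly one'') lifting condition on the ordinary valuative diagrams. Transporting this condition back through the bijection yields the valuative criteria in the super setting. I do not expect any serious obstacle: since the only test objects involved are spectra of ordinary valuation rings, the super structure of $\Xcal$ and $\Sc$ plays no role in the criterion, and the statement reduces entirely to the classical result combined with Proposition \ref{prop:proper}.
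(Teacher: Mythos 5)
Your proof is correct and follows the same route the paper intends: the corollary is stated there with no written argument precisely because, as you observe, the test objects $\SSpec R$ and $\SSpec K$ are purely even, so by the adjunction of Subsection 2.2 the valuative diagrams and their lifts for $f$ are in natural bijection with those for $f_{bos}$, and Proposition \ref{prop:proper} together with the classical criteria of \cite[Thm.~II.4.3 and II.4.7]{Hart77} finishes the proof. You merely make explicit the adjunction step that the paper leaves implicit, which is a welcome clarification rather than a deviation.
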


We can then extend to superschemes many of the properties of proper and separated morphisms of schemes.

\begin{prop}\label{prop:sorite}
{\ }
\begin{enumerate}
\item The properties of being separated and proper are stable under  base change.
\item Every morphism of affine superschemes is separated.
\item Open immersions are separated and closed immersions are proper.
\item The composition of two separated (resp.~ proper) morphisms of superschemes is separated (resp.~ proper).
\item If $g\colon \Ycal \to \Xcal$ and $f\colon \Xcal \to \Sc$ are morphisms of superschemes and $f\circ g$ is proper and {$f$ is separated, then $g$ is proper. If  $f\circ g$ is separated, then $g$ is separated.} 
\end{enumerate}
\qed
\end{prop}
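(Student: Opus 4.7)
The plan is to reduce each of the five statements to its counterpart for ordinary schemes, using Proposition \ref{prop:proper}, which identifies separatedness and properness of a morphism locally of finite type with the corresponding property of its bosonization $f_{\bos}\colon X\to S$. Throughout we exploit the standing convention of these notes that all superschemes are locally noetherian and all morphisms are locally of finite type, so the hypothesis of Proposition \ref{prop:proper} is always in force and can be combined with the classical ``sorite'' for ordinary schemes (e.g.\ \cite[II.4.6--II.4.8]{Hart77}).

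Parts (1)--(4) are then essentially formal. For (1), if $f\colon\Xcal\to\Sc$ is separated (resp.\ proper) and $\phi\colon\Sc'\to\Sc$ is any morphism of superschemes, the base change $f'\colon\Xcal\times_\Sc\Sc'\to\Sc'$ bosonizes to the base change $f_{\bos}\times_S S'\to S'$ of $f_{\bos}$; the classical sorite plus Proposition \ref{prop:proper} yields the conclusion. For (2), a morphism $\Xcal=\SSpec\Bs\to\Sc=\SSpec\As$ of affine superschemes has diagonal induced by the surjective multiplication $\Bs\otimes_\As\Bs\to\Bs$, which is therefore a closed immersion; equivalently, $f_{\bos}$ is a morphism of affine schemes, hence separated, and Proposition \ref{prop:proper} applies. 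For (3), open and closed immersions are locally of finite type and bosonize to open (resp.\ closed) immersions of schemes, which are classically separated (resp.\ proper). For (4), $(f\circ g)_{\bos}=f_{\bos}\circ g_{\bos}$, and composition of morphisms locally of finite type is locally of finite type, so closure under composition of separated and proper morphisms again reduces to the classical case via Proposition \ref{prop:proper}.

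For (5) we use the standard graph factorization. Decompose $g\colon\Ycal\to\Xcal$ as $g=p_2\circ\Gamma_g$, where $\Gamma_g=(1_\Ycal,g)\colon\Ycal\to\Ycal\times_\Sc\Xcal$ is the graph morphism and $p_2$ is the second projection. The square
\[
\xymatrix{
\Ycal\ar[r]^{\Gamma_g}\ar[d]_{g} & \Ycal\times_\Sc\Xcal\ar[d]^{g\times 1_\Xcal}\\
\Xcal\ar[r]^(.4){\delta_f} & \Xcal\times_\Sc\Xcal
}
\]
is cartesian, exhibiting $\Gamma_g$ as the pullback of the diagonal $\delta_f$. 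Since $f$ is separated, $\delta_f$ is a closed immersion, and, closed immersions being stable under base change, so is $\Gamma_g$; in particular $\Gamma_g$ is proper by (3). On the other hand, $p_2\colon\Ycal\times_\Sc\Xcal\to\Xcal$ is the base change of $f\circ g\colon\Ycal\to\Sc$ along $f$, so by (1) it is proper (resp.\ separated) whenever $f\circ g$ is. Applying (4) to $g=p_2\circ\Gamma_g$ gives the desired conclusion in both cases.

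The only real obstacle is bookkeeping in (1): one must verify that the bosonization functor $\Xcal\mapsto \Xcal_{\bos}$ commutes with fibre products of superschemes, which is a purely local computation (the underlying topological space of a fibre product of superschemes agrees with the fibre product of the bosonic spaces, and the formation of the quotient of the structure sheaf by the ideal generated by odd sections commutes with tensor product). Granted this, Proposition \ref{prop:proper} turns the whole sorite into a formal consequence of the analogous scheme-theoretic statements.
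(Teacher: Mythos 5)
Your proof is correct and follows essentially the route the paper intends: the result is stated without proof precisely because, under the standing hypotheses (locally noetherian, locally of finite type), Proposition \ref{prop:proper} reduces each statement to the classical sorite for ordinary schemes, which is exactly your argument for (1)--(4), including the check that bosonization commutes with fibre products. Your graph-factorization proof of (5) is also fine, though note it could be shortened in the same spirit as the rest: since $(f\circ g)_{\bos}=f_{\bos}\circ g_{\bos}$, the cancellation property for ordinary schemes together with Proposition \ref{prop:proper} gives (5) directly.
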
 

\subsection{Smooth morphisms}\label{s:smoothmorphisms}

A definition of smooth morphism of superschemes was given in \cite[2.16]{BrHR19} for superschemes that are locally of finite type over an algebraically closed field.
 We would like to remove the requirement that the base field is algebraically closed. Let us review the standard definitions in the case of ordinary schemes.
 A scheme $X$ over  $k$ is smooth if it is locally of finite type and geometrically regular, that is, if for every extension $k\hookrightarrow K$ where $K$ is an algebraically closed field, the local rings of the scheme $X\times_{\Spec k}\Spec K$ are regular.
A scheme morphism $f\colon X\to S$ is smooth when it is locally of finite presentation (or simply locally of finite type if the schemes are locally noetherian), flat, and its fibres $X_s$ are smooth over the residue field $\kappa(s)$ for every point $s\in S$. By \cite[17.15.5]{EGAIV-IV}, a flat morphism  $f\colon X\to S$ locally of finite presentation is smooth if and only if the sheaf of relative differentials $\Omega_f$ is a locally free $\Oc_X$-module of rank equal to the relative dimension $\dim f$.
This suggests the following definition.

\begin{defin}\label{def:smooth}  A morphism $f\colon \Xcal\to\Sc$ of superschemes of relative dimension $(m,n)$ is smooth if:
\begin{enumerate}
\item $f$ is locally of finite presentation (if the superschemes are locally noetherian, it is enough to ask that $f$ is locally of finite type);
\item $f$ is flat;
\item the sheaf of relative differentials $\Omega_{\Xcal/\Sc}$ is locally free of rank $(m,n)$.
\end{enumerate}
A morphism $f\colon \Xcal\to\Sc$ of superschemes is \'etale if it is smooth of relative dimension $(0,0)$.
\end{defin}

When $\Sc=\Spec k$ is a single point one obtains the definitions of smooth or \'etale superscheme over a field.

One has the following criterion for smoothness over a field, which extends Proposition 2.14 in \cite{BrHR19}. 
\begin{prop}\label{prop:smoothsplit}
 A superscheme $\Xcal$  of dimension $(m,n)$,
locally of finite type over a field $k$,   is smooth if and only if
\begin{enumerate}
\item $X$ is a smooth scheme over $k$ of dimension $m$;
\item  the $\Oc_X$-module $\Ec=\Jc/\Jc^2$ is locally free of rank $n$ and the natural map ${\bigwedge}_{\Oc_X} \Ec\to Gr_J(\Oc_\Xcal)$ is an isomorphism.
\end{enumerate}
If $\Xcal$ is smooth of dimension $(m,n)$ then it is locally split,
and for every closed point $x\in X$ there exist \emph{graded local coordinates}, that is, $m$ even functions $(z_1,\dots,z_m)$ which generate the maximal ideal $\mathfrak m_x$ of $\Oc_{X,x}$ and $n$  odd functions $(\theta_1,\dots,\theta_n)$ generating $\Ec_x$, such that $(dz_1,\dots,dz_m,d\theta_1,\dots,d\theta_n)$ is a   basis for $\Omega_{\Xcal,x}$.
\end{prop}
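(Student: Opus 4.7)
Our plan handles the two directions separately and uses the same local model in both.

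\textbf{Forward direction.} Assume $\Xcal$ is smooth of relative dimension $(m,n)$. The key observation is that the conormal sequence for $\iota\colon X\hookrightarrow \Xcal$,
$$
\Jc/\Jc^2 \xrightarrow{d} \iota^\ast\Omega_{\Xcal/k} \to \Omega_{X/k} \to 0,
$$
splits according to parity, because $\Ec=\Jc/\Jc^2$ is purely odd while $\Omega_{X/k}$ is purely even. This yields $(\iota^\ast\Omega_{\Xcal/k})_+ \simeq \Omega_{X/k}$ and a surjection $\Ec\twoheadrightarrow (\iota^\ast\Omega_{\Xcal/k})_-$. Freeness of $\Omega_{\Xcal/k}$ of rank $(m,n)$ then forces $\Omega_{X/k}$ to be locally free of rank $m$ (so $X$ is smooth of dimension $m$, giving part of (1)); a small rank count, lifting a local basis of the free rank-$n$ quotient back to $\Ec$ via super-Nakayama, upgrades the surjection to an isomorphism and shows $\Ec$ is locally free of rank $n$.

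The heart of the argument is local splitness. Fix a closed point $x$ (passing to $\bar k$ so that $\kappa(x)=k$) and set $A=\Oc_{\Xcal,x}$. Formal smoothness of $\Oc_{X,x}$ over $k$ (which follows from smoothness of $X$) allows us to lift $\Id\colon\Oc_{X,x}\to A/\Jc$ inductively along the nilpotent surjections $A/\Jc^{p+1}\to A/\Jc^p$, yielding a $k$-algebra section $s\colon\Oc_{X,x}\to A$. Choose odd $\theta_1,\dots,\theta_n\in A$ lifting an $\Oc_{X,x}$-basis of $\Ec_x$; by supercommutativity these assemble into an $\Oc_{X,x}$-superalgebra map
$$
\varphi\colon B:={\bigwedge}_{\Oc_{X,x}}\Oc_{X,x}^{\,n}\to A.
$$
The relative cotangent sequence together with freeness of $\Omega_{A/k}$ of rank $(m,n)$ and of $s^\ast\Omega_{\Oc_{X,x}/k}$ of rank $m$ forces $\Omega_{A/\Oc_{X,x}}$ to be free of rank $(0,n)$, so $A$ is smooth of relative dimension $(0,n)$ over $\Oc_{X,x}$, in particular flat and, since $\Jc$ is nilpotent and finitely generated, free of finite rank as an $\Oc_{X,x}$-module.

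To pin down that rank, analyze the closed fibre $A\otimes_{\Oc_{X,x}}k$, which is a local Artinian smooth $k$-superalgebra of dimension $(0,n)$. Its maximal ideal is generated by $n$ odd elements $\bar\theta_j$, so supercommutativity gives a surjection ${\bigwedge}k^n\twoheadrightarrow A\otimes k$, a priori with some superideal kernel $I$. Comparing free modules of rank $(0,n)$ in the conormal sequence shows $I/I^2\to \Omega_{\bigwedge k^n/k}\otimes_k (A\otimes k)$ is zero; a direct argument with the odd partial derivatives $\partial_j=\partial/\partial\theta_j$, applied to a nonzero $f\in I$ of minimal $\bar\mathfrak{m}$-adic order, then forces $I=0$, so $A\otimes k\simeq{\bigwedge}k^n$ has dimension $2^n$. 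Hence $A$ has rank $2^n$ over $\Oc_{X,x}$, the same as $B$. Since $\varphi\otimes k$ is an isomorphism, Nakayama makes $\varphi$ surjective, and a surjection between finitely generated free modules of the same rank over a commutative ring is an isomorphism. Thus $A\simeq B$, local splitness follows, ${\bigwedge}_{\Oc_X}\Ec\simeq Gr_J\Oc_\Xcal$ as in (2), and $(z_1,\dots,z_m,\theta_1,\dots,\theta_n)$, with $z_i$ a regular system of parameters of $\Oc_{X,x}$, are graded local coordinates whose differentials form a basis of $\Omega_{\Xcal,x}$ by direct computation in the split model.

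\textbf{Converse direction.} Assume (1) and (2). Formal smoothness of $\Oc_{X,x}$ again produces a section $s$, and lifts of a basis of $\Ec_x$ give an $\Oc_{X,x}$-superalgebra map $\varphi\colon B\to A$ preserving the $\Jc$-adic filtrations; by construction the induced map on associated graded is the isomorphism ${\bigwedge}\Ec\simeq Gr_J\Oc_\Xcal$ of (2). Since both filtrations are nilpotent, a standard filtered-isomorphism lemma shows $\varphi$ itself is an isomorphism, and in this split model $\Omega_{A/k}$ is manifestly free of rank $(m,n)$ on $dz_i,d\theta_j$, so $\Xcal$ is smooth. The main technical obstacle throughout is the rank computation for $A$ over $\Oc_{X,x}$ in the forward direction: combining formal lifting with the structural analysis of the closed fibre as a smooth $k$-superalgebra of dimension $(0,n)$ is what identifies $A$ with the exterior algebra $B$ and thereby yields local splitness.
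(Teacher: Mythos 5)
Your converse direction and your treatment of the conormal sequence (parts (1) and the local freeness of $\Ec$) match the paper's argument, but in the forward direction there is a genuine gap at the crucial step. You conclude that $A=\Oc_{\Xcal,x}$ is ``smooth of relative dimension $(0,n)$ over $\Oc_{X,x}$, in particular flat'' from the freeness of $\Omega_{A/\Oc_{X,x}}$. Under the paper's Definition \ref{def:smooth}, flatness is \emph{part} of the definition of smoothness, so this inference assumes exactly what has to be proved; and you cannot invoke the equivalence of smoothness with formal smoothness (Proposition \ref{prop:smoothequiv}), because in the paper that equivalence is established later via Lemma \ref{lem:rel-dim}, whose proof cites Proposition \ref{prop:smoothsplit} itself --- the appeal would be circular (nor do you verify formal smoothness of $A$ over $\Oc_{X,x}$ independently). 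Flatness is where the whole content sits: without it, your fibre computation $A\otimes_{\Oc_{X,x}}\kappa(x)\simeq \bigwedge \kappa(x)^n$ does not force $I=\ker\varphi$ to vanish. Indeed, any ideal $I\subseteq \mathfrak m_x B$ (for instance $I=\mathfrak m_x\theta_1 B$) dies in the closed fibre, so $B/I$ has fibre of dimension $2^n$ while $\varphi$ fails to be injective; only with flatness does $I\otimes\kappa(x)=0$ (via $\operatorname{Tor}_1$-vanishing) yield $I=0$ by Nakayama. So as written, the rank count ``$A$ free of rank $2^n$, same as $B$'' has no support. A smaller wrinkle: your parenthetical passage to $\bar k$ is not innocuous, since the statement is over $k$ and you never descend the splitting; the argument should instead be run with the residue field $\kappa(x)$, which works since formal smoothness of $\Oc_{X,x}$ over $k$ (not over $\bar k$) suffices to produce the section $s$.

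For comparison, the paper avoids flatness altogether: it proves directly that the surjection $\rho\colon \Oc_X[t_1,\dots,t_n]\to\Oc_\Xcal$ is injective by induction on $n$, using that $(dz_1,\dots,dz_m,d\theta_1,\dots,d\theta_n)$ is a basis of $\Omega_{\Xcal}$ and reading off the coefficient of $d\theta_n$ after differentiating the relation $\rho(f)=0$. Your fibre analysis via odd partial derivatives is correct in itself (it essentially reproves what the paper records in Example \ref{ex:smooth-even}), and your architecture can be repaired in the same spirit without ever discussing flatness: since $\Omega_{B/k}\otimes_B A\to\Omega_{A/k}$ is a surjection of free modules of rank $(m,n)$ over $A$, it is an isomorphism, so the conormal map $I/I^2\to\Omega_{B/k}\otimes_B A$ vanishes, i.e.\ $I$ is stable under all partials $\partial/\partial z_i$ and $\partial/\partial\theta_j$. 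Then, given $0\neq f\in I$, differentiating away a monomial $\theta_{S_0}$ of minimal length with nonzero coefficient $a_{S_0}\in\Oc_{X,x}$, multiplying by $\theta_1\cdots\theta_n$, and differentiating once more produces $\pm a_{S_0}\in I\cap \Oc_{X,x}$, contradicting the injectivity of the section $s=\varphi|_{\Oc_{X,x}}$. With that replacement (run upstairs in $B$ rather than in the closed fibre), your proof closes the gap and becomes a valid alternative to the paper's induction.
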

 
\begin{proof} 
Assume that the two conditions hold. Since the question is local, we can assume that $\Xcal$ is affine and $\Ec$ is a free $\Oc_X$-module with
 generators $\bar\theta_1,\ldots,\bar\theta_n$. Let us choose some lifts $\theta_i\in \Jc$ of $\bar\theta_i$ for $i=1,\ldots,n$. 
 Since $X$ is smooth, there exists a splitting $\Oc_X\to \Oc_{\Xcal,+}$ of the nilpotent extension $\Oc_{\Xcal,+}\to \Oc_X$. Thus, we get a homomorphism
$$\Oc_X[\theta_1,\dots,\theta_n]\to \Oc_{\Xcal}.$$ 
Since it induces an isomorphism of the associative graded algebras, it is an isomorphism.
Hence, $\Xcal$ is locally split and we have $\Omega_{\Xcal/X}\simeq \Oc_X d\theta_1\oplus\ldots\oplus\Oc_X d\theta_n$, and the exact sequence of $\Oc_{\Xcal}$-modules
$$
\Omega_X\otimes_{\Oc_X}\Oc_{\Xcal}\to \Omega_{\Xcal}\to \Omega_{\Xcal/X}\to 0
$$
splits:
$$
\Omega_{\Xcal}\simeq (\Omega_X\otimes_{\Oc_X}\Oc_{\Xcal}) \oplus \Oc_X d\theta_1\oplus\ldots\oplus\Oc_X d\theta_n\,.
$$ 
Since $X$ is smooth, $\Omega_{X,x}$ is free of rank $m$ for every point $x$, and then $\Omega_{\Xcal}$ is locally free, so that $\Xcal$ is smooth.

For the converse, the question is again local, so we can also assume that $X$ is the spectrum of a local ring, which we still denote by $\Oc_X$. The exact sequence induced by $i\colon X\hookrightarrow \Xcal$ gives an exact sequence
$$
\Ec=\Jc/\Jc^2\xrightarrow{\delta} \Omega_{\Xcal}\otimes_{\Oc_{\Xcal}}\Oc_X \to \Omega_X \to 0
$$
where $\delta(\theta)$ is the class of $d\theta$ modulo $\Jc$. Taking  even and odd parts we get an isomorphism
$$\Omega_X  \simeq (\Omega_{\Xcal}\otimes_{\Oc_{\Xcal}}\Oc_X)_+ \,$$
and a surjection
\begin{equation}\label{eq:Ec-odd-Omega-surj}
\Ec\twoheadrightarrow (\Omega_{\Xcal}\otimes_{\Oc_{\Xcal}}\Oc_X)_- .
\end{equation}
Since $(\Omega_{\Xcal}\otimes_{\Oc_{\Xcal}}\Oc_X)_+$ is free of rank $m$ and $X$ has dimension $m$, we get that $X$ is smooth by \cite[17.15.5]{EGAIV-IV}.
On the other hand, by definition of odd dimension, there exists a surjection $\Oc_X^n\to \Ec$. Since $(\Omega_{\Xcal}\otimes_{\Oc_{\Xcal}}\Oc_X)_-$ is free of rank $n$,
this implies that \eqref{eq:Ec-odd-Omega-surj} is an isomorphism, so $\Ec$ is also free $\Oc_X$-modules of rank $n$. 

Since $X$ is smooth, as before, we can choose a splitting $\Oc_X\to \Oc_{\Xcal,+}$. Let also $(\theta_1,\dots,\theta_n)$ be sections of $\Jc$ projecting to a basis of $\Ec$,
and let $(z_1,\dots,z_m)$ be a set of minimal generators of the maximal ideal $\mathfrak m$ of $\Oc_X$. Then 
$(dz_1,\dots,dz_m,d\theta_1,\dots,d\theta_n)$ is a basis of $\Omega_{\Xcal}$.
It remains to prove that the natural epimorphism  
$$\rho:\Oc_X[t_1,\ldots,t_n]\to \Oc_{\Xcal}: t_i\mapsto\theta_i,$$ 
where $(t_i)$ are formal odd variables, is an isomorphism. We proceed by induction on $n$, the case $n=0$ being trivial. Let $\Sc$ be the closed sub-superscheme of $\Xcal$
defined by the ideal $(\theta_n)$. Then $\Omega_{\Sc}$ is a free $\Oc_{\Sc}$-module with the basis $(dz_1,\dots,dz_m,d\theta_1,\dots,d\theta_{n-1})$,
and by induction assumption we have an isomorphism
\begin{equation}\label{eq:Oc-X-t-Oc-mod-th-n}
\Oc_X[t_1,\ldots,t_{n-1}]\to \Oc_{\Sc}=\Oc_\Xcal/(\theta_n).
\end{equation}
Now, if an element $f\in \Oc_X[t_1,\ldots,t_n]$ is in the kernel of $\rho$, reducing modulo $\theta_n$ and applying the above isomorphism, we see that 
$$f=\sum a_{j_1\dots j_{i-1}}t_{j_1}\ldots t_{j_{i-1}} \cdot t_n,$$ 
where the sum runs over  $1\leq j_1<\dots<j_{i-1}\leq n-1$ and the coefficients are in $\Oc_X$. Let us take differentials in the identity
$$0=\rho(f)=\sum a_{j_1\dots,j_{i-1}}\theta_{j_1}\cdot\dots\cdot\theta_{j_{i-1}} \cdot\theta_n.$$ 
Since $(dz_1,\dots,dz_m,d\theta_1,\dots,d\theta_n)$ is a  basis of $\Omega_{\Xcal}$, looking at the coefficient of $d\theta_n$, we get 
$$0=\sum a_{j_1\dots,j_{i-1}}\theta_{j_1}\cdot\dots\cdot\theta_{j_{i-1}}.$$ 
Hence, using isomorphism \eqref{eq:Oc-X-t-Oc-mod-th-n} we get that all the coefficients should be zero, so $f=0$.
\end{proof}

\subsubsection{Formally smooth morphisms}
As in the commutative case, there are other possible approaches to defining smoothness of morphisms. We will prove the equivalence between smoothness and formal smoothness plus local finite type in the locally noetherian case.

\begin{defin}\label{def:formallysmooth}
A morphism  of superschemes $f\colon \Xcal\to \Sc$
is formally smooth if for every affine $\Sc$-superscheme $\SSpec(\Bs)$ and every nilpotent ideal $\Nc\subset \Bs$,
any $\Sc$-morphism $\SSpec(\Bs/\Nc)\to \Xcal$ extends to an $\Sc$-morphism $\SSpec(\Bs)\to \Xcal$. 
\end{defin}

\begin{example}\label{ex:smoothpoly} For any superring $\As$ the polynomial superalgebra $\Bs(m,n)$ defined in Equation \eqref{eq:polyalgebra} is formally smooth over $\As$, that is, $\SSpec\Bs\to\SSpec\As$ is formally smooth.
\end{example}

\begin{prop}\label{prop:loc-free-smooth} Let $f\colon \Xcal\to \Sc$ be a formally smooth morphism locally of finite type of relative dimension $(m,n)$ of locally noetherian superschemes.
\begin{enumerate}
\item $\Omega_{\Xcal/\Sc}$ is locally free of rank $(m,n)$.
\item Let $\Xcal\hookrightarrow \Ycal$ be a closed immersion of $\Sc$-superschemes locally of finite type, there is a natural exact sequence
of sheaves of $\Oc_\Xcal$-modules
$$
0\to \Ic/\Ic^2\to (\Omega_{\Ycal/\Sc})_{\vert \Xcal} \to \Omega_{\Xcal/\Sc}\to 0\,,
$$
where $\Ic$ is the ideal sheaf of $\Xcal$ in $\Ycal$.
\item
Assume that $\Xcal$ and $\Sc$ are affine, $\Xcal=\SSpec \Bs$, $\Sc=\SSpec\As$. Then there exists an affine open covering $\SSpec \Bs=\cup_i \,\SSpec \Bs_i$, where
each $\SSpec \Bs_i\to \SSpec \As$ is a \emph{standard smooth morphism}, i.e.,
$$
\Bs_i=\As[x_1,\dots,x_p,\theta_1,\dots,\theta_q]/(f_1,\dots,f_c,\phi_1,\dots,\phi_d)\,,
$$
where $x_i$ and $f_j$ are even, while $\theta_r$ and $\phi_s$ are odd,
such that the matrices $(\partial f_i/\partial x_j)_{i,j\le c}$ and $(\partial \phi_i/\partial \theta_j)_{i,j\le d}$
are invertible in $\As_i$.
\end{enumerate}
\end{prop}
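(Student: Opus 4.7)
The plan is to prove (2) first, bootstrap to (1) by a local embedding into affine superspace, and finally derive (3) by a careful selection of generators using super Nakayama.

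For (2), the natural map $(\Omega_{\Ycal/\Sc})|_\Xcal\to \Omega_{\Xcal/\Sc}\to 0$ together with the map $\Ic/\Ic^2\to (\Omega_{\Ycal/\Sc})|_\Xcal$ sending the class of $a\in\Ic$ to $da|_\Xcal$ are the standard constructions from the universal property of K\"ahler differentials, which transfer verbatim to the super setting, and the right-hand portion is automatically exact. The piece that uses formal smoothness is injectivity of the left map. For this, consider the first-order thickening $\Ycal_1:=(Y,\Oc_\Ycal/\Ic^2)$ of $\Xcal$ inside $\Ycal$; the ideal of $\Xcal\hookrightarrow\Ycal_1$ is the square-zero ideal $\Ic/\Ic^2$, so formal smoothness of $\Xcal\to\Sc$ (applied affine-locally) produces a retraction $\Ycal_1\to\Xcal$ splitting the inclusion. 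This retraction splits the truncated conormal sequence attached to $\Xcal\hookrightarrow\Ycal_1$, and since $(\Omega_{\Ycal/\Sc})|_\Xcal$ depends only on $\Oc_\Ycal/\Ic^2$ after restriction to $\Xcal$, the splitting transfers to the sequence in (2) and yields the desired injectivity.

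For (1), the question is local, so assume $\Xcal=\SSpec\Bs$ over $\SSpec\As$ with $\Bs$ a finitely generated super $\As$-algebra. Choose any presentation $\Bs=\As[x_1,\dots,x_p,\theta_1,\dots,\theta_q]/I$, giving a closed $\Sc$-immersion $\Xcal\hookrightarrow \Ycal:=\SSpec\As[x,\theta]$ with $(\Omega_{\Ycal/\Sc})|_\Xcal$ free of rank $(p,q)$. By (2) together with the splitting obtained there, $\Omega_{\Xcal/\Sc}$ is a direct summand of this free module, hence locally free of finite rank, and simultaneously $\Ic/\Ic^2$ is locally free of complementary rank. To identify the rank of $\Omega_{\Xcal/\Sc}$ with $(m,n)$: formal smoothness is stable under base change, so each fibre $\Xcal_s\to\Spec\kappa(s)$ is formally smooth and locally of finite type, of dimension $(m,n)$; the standard Grothendieck lifting argument over a field realizes $\Xcal_s$ locally in the split form covered by Proposition \ref{prop:smoothsplit}, so $\Omega_{\Xcal_s/\kappa(s)}$ has rank $(m,n)$. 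The base-change isomorphism $\Omega_{\Xcal/\Sc}\otimes_{\Oc_\Sc}\kappa(s)\simeq \Omega_{\Xcal_s/\kappa(s)}$ then pins the rank of $\Omega_{\Xcal/\Sc}$ at $s$.

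For (3), retain the presentation $\Bs=\As[x,\theta]/I$; by (1) the $\Bs$-module $\Ic/\Ic^2$ is locally free of rank $(c,d):=(p-m,q-n)$. Fix $x\in \Xcal$ and choose even elements $f_1,\dots,f_c\in I$ and odd elements $\phi_1,\dots,\phi_d\in I$ whose classes form a $\Bs$-basis of $\Ic/\Ic^2$ near $x$. The splitting of the conormal sequence at $x$ forces $(df_1,\dots,df_c,d\phi_1,\dots,d\phi_d)$ to be linearly independent in $(\Omega_{\Ycal/\Sc})|_\Xcal$ at $x$, so after relabelling the $(x_j)$ and $(\theta_j)$ the submatrices $(\partial f_i/\partial x_j)_{i,j\le c}$ and $(\partial \phi_i/\partial \theta_j)_{i,j\le d}$ are invertible at $x$ and hence on an open neighbourhood. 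Setting $I':=(f_1,\dots,f_c,\phi_1,\dots,\phi_d)\subset I$, we have $I=I'+I^2$ nearby; localizing at $\mf_x$ gives $I^2\subset \mf_x I$, so super Nakayama (\cite{BBH91,CarCaFi11,We09}) forces $I=I'$ in the localization. Since $I/I'$ is finitely generated, its support is closed and avoids $x$, so $I=I'$ on an affine open neighbourhood, producing the standard smooth presentation. The main obstacle is exactly this last bootstrap, from $I=I'+I^2$ to $I=I'$ on an open set: it is the step that most essentially uses the locally noetherian hypothesis together with the super refinement of Nakayama's Lemma, and constitutes the one place where the super and the classical proofs genuinely require separate justification.
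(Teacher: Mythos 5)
Your proposal is correct and takes essentially the same route as the paper: the paper also proves (1) and (2) simultaneously by applying formal smoothness to the square-zero extension $\Bs'/I^2\to \Bs'/I=\Bs$ to obtain a lifting $\Bs\to\Bs'/I^2$, hence a splitting $\Bs'/I^2=\Bs\oplus I/I^2$ whose projection $\Bs'\to I/I^2$ is a derivation splitting the conormal sequence --- your retraction $\Ycal_1\to\Xcal$ is exactly this mechanism phrased geometrically, and specializing $\Bs'$ to a free polynomial superalgebra gives local freeness just as in your step (1). For part (3) the paper merely cites \cite[Lemma 10.136.10]{Stacks}, and your argument (lifting a local basis of $\Ic/\Ic^2$, extracting invertible even/odd Jacobian blocks, and upgrading $I=I'+I^2$ to $I=I'$ on a neighbourhood via super Nakayama) is precisely that proof transported to the super setting; your fibrewise identification of the rank as $(m,n)$ is a sound addition filling a point the paper leaves implicit, the only detail you gloss over being the routine re-presentation of $\Bs_g$ as a standard smooth algebra after restricting to a basic open.
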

\begin{proof} 
We prove (1) and (2) together. The statements are local, so we can assume $\Xcal=\Spec(\Bs)$, $\Ycal=\Spec(\As)$, where $\Bs$ is a finitely generated $\As$-algebra.
Let $\Bs'$ be another superalgebra over $\As$ such that $\Bs\simeq \Bs'/I$.
By smoothness of $\SSpec \Bs$ there exists a homomorphism $\Bs\to \Bs'/I^2$ lifting the isomorphism $\Bs\simeq \Bs'/I$.
Hence, we have a splitting $\Bs'/I^2=\Bs\oplus I/I^2$. Let $x\mapsto x_1$ denote the corresponding projection
$$
\Bs'\to \Bs/I^2\to I/I^2\,.
$$ 
It is easy to check that it is derivation. Hence it induces a well defined splitting 
$$
\Omega_{\Bs/\As}\otimes_{\Bs'} \Bs\to I/I^2\,: \quad dx\mapsto x_1
$$
of the standard exact sequence 
$$
I/I^2\to \Omega_{\Bs'/\As}\otimes_{\Bs'} \Bs\to \Omega_{\Bs/\As}\to 0\,.
$$
Applying this in the case when $\Bs'$ is a free polynomial superalgebra $\Bs(m',n')$ over $\As$ (Equation \eqref{eq:polyalgebra}), we deduce
that $\Omega_{\Bs/\As}$ is a direct summand of the free $\Bs$-module $\Omega_{\Bs'/\As}\otimes_{\Bs'} \Bs$. Therefore, $\Omega_{\Bs/\As}$ is locally free.

(3) is proved similarly to \cite[Lemma 10.136.10]{Stacks}.
\end{proof}

\begin{remark}
One can check as in the even case
that formal smoothness for finitely generated superalgebras $\Bs/\As$ is equivalent to smoothness understood in
terms of the naive cotangent complex $NL_{\Bs/\As}:=\tau_{\leq 1} L_{\Bs/\As}$, i.e., $H_1(L_{\Bs/\As})=0$ and $H_0(L_{\Bs/\As})$ is
a finitely generated projective module (see \cite[Prop.\ 10.137.8]{Stacks}). 
\end{remark}

\begin{example}\label{ex:smooth-even}
If $k$ is a field, and $(\Bs,\mf)$ is a local Noetherian superalgebra over $k$ with $\Bs/\mf=k$, obtained as a localization of a finitely generated
$k$-superalgebra,
the formal smoothness of $\Bs$ over $k$ implies that the completion $\hat{\Bs}$ is isomorphic to the formal power ring
over $k$ in even variables $x_1,\dots,x_m$ and odd variables $\theta_1,\dots,\theta_n$ (see also \cite{Fi08}).
In particular, if $\Omega_{\Bs/k}$ has rank $(m,0)$ then $\hat{\Bs}$ is purely even, hence, so is $\Bs$.
\end{example}

We are now going to prove the equivalence between smoothness and formal smoothness for  morphisms locally of finite type. Recall that all the superschemes are supposed to be locally noetherian.

\begin{lemma}\label{lem:rel-dim} 
Let $f\colon\Xcal\to \Sc$ be a smooth morphism of relative dimension $(m,n)$, and let $f$ be a function on $\Xcal$ such that $df$ generates a subbundle
(of rank $(1,0)$ or $(0,1)$) in $\Omega_{\Xcal/\Sc\vert \Ycal}$,\footnote{By a subbundle we mean a locally free subsheaf such that the corresponding quotient is locally free}  where $\Ycal$ is the closed sub-superscheme corresponding to the ideal sheaf $(f)$. Then $\Ycal$ is smooth of dimension $(m-1,n)$ or $(m,n-1)$ for
$f$ even or odd, respectively.
\end{lemma}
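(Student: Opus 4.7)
The strategy is to verify the three requirements of Definition \ref{def:smooth} for $\Ycal\to\Sc$: local finiteness of type, flatness, and local freeness of $\Omega_{\Ycal/\Sc}$ of the predicted rank. The first is immediate, since $\Ycal\hookrightarrow \Xcal$ is a closed immersion and $\Xcal\to\Sc$ is locally of finite type. The remaining two properties both follow once one establishes that $f$ is, locally near $\Ycal$, a non-zero-divisor on $\Oc_\Xcal$.

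To check this non-zero-divisor property, I would fix a point $y\in\Ycal$ lying over $s\in\Sc$ and argue fibrewise, showing $f_s\in\Oc_{\Xcal_s,y}$ is a non-zero-divisor. By hypothesis, $df|_\Ycal$ generates a subbundle of $\Omega_{\Xcal/\Sc}|_\Ycal$ of rank $(1,0)$ or $(0,1)$; restricting further to the fibre, $df_s$ is nonzero in the cotangent space of $\Xcal_s$ at $y$ and has the same parity as $f$. Since $\Xcal_s$ is smooth over $\kappa(s)$, Proposition \ref{prop:smoothsplit} supplies graded local coordinates at $y$, and after a linear change one may take $f_s$ to equal one of the even coordinates $z_1$ (if $f$ is even) or one of the odd coordinates $\theta_1$ (if $f$ is odd). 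In either case $f_s$ is a non-zero-divisor on the completion $\hat\Oc_{\Xcal_s,y}$, and hence on $\Oc_{\Xcal_s,y}$ by faithful flatness of the completion.

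Knowing that $f_s$ is a non-zero-divisor on each fibre and that $\Oc_\Xcal$ is flat over $\Oc_\Sc$, the Tor long exact sequence applied to $0\to(f)\to\Oc_\Xcal\to\Oc_\Ycal\to 0$ yields $\Tor_1^{\Oc_{\Sc,s}}(\Oc_{\Ycal,y},\kappa(s))=0$, and the local criterion of Lemma \ref{lem:localflat} then gives flatness of $\Oc_\Ycal$ over $\Oc_\Sc$. The same flatness argument also shows that multiplication by $f$ is injective on $\Oc_\Xcal$ near $\Ycal$, so that $\Ic/\Ic^2\simeq\Oc_\Ycal$ as a rank-$(1,0)$ or rank-$(0,1)$ $\Oc_\Ycal$-module of the parity of $f$. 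Substituting this into the conormal sequence of Proposition \ref{prop:loc-free-smooth}(2)
$$
0\to \Ic/\Ic^2\to (\Omega_{\Xcal/\Sc})_{\vert\Ycal} \to \Omega_{\Ycal/\Sc}\to 0,
$$
and using that the left map is the inclusion of a subbundle by the hypothesis on $df$, we conclude that $\Omega_{\Ycal/\Sc}$ is locally free of rank $(m-1,n)$ or $(m,n-1)$, as required.

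The main obstacle is the fibrewise non-zero-divisor property of $f$; the rest is routine homological bookkeeping with the conormal sequence and the local criterion of flatness. That obstacle is overcome by combining the subbundle hypothesis on $df$ with the structure theorem for smooth superschemes over a field (Proposition \ref{prop:smoothsplit}), which reduces it to the trivial fact that a coordinate in a formal power series superring is a non-zero-divisor.
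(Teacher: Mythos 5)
Your even case is sound: it is the classical slicing argument, and it essentially coincides with what the paper does there. But your odd case contains a genuine error that invalidates the second half of the proposal: \emph{an odd function is never a non-zero-divisor}. If $f$ is odd then $f^2=0$ by supercommutativity, so $f$ annihilates itself; concretely, in your local model the odd coordinate $\theta_1$ of $k[\![z_1,\dots,z_m]\!][\theta_1,\dots,\theta_n]$ satisfies $\theta_1\cdot\theta_1=0$, and the annihilator of $\theta_1$ is the ideal $(\theta_1)$ itself, not zero. Hence your claims that ``in either case $f_s$ is a non-zero-divisor on the completion,'' that ``multiplication by $f$ is injective on $\Oc_\Xcal$ near $\Ycal$,'' and the resulting identification of $\Ic/\Ic^2$ with a free rank-one module \emph{via} that injectivity, all fail for odd $f$ --- and with them the Tor/slicing proof of flatness, since the exact sequence you feed into the Tor argument is no longer $0\to\Oc_\Xcal\xrightarrow{f}\Oc_\Xcal\to\Oc_\Ycal\to 0$. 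This is precisely the point of the Remark that the paper places immediately after Lemma \ref{lem:rel-dim}: the standard even-case argument rests on $f$ not being a zero divisor, and ``this is not true for odd functions, so the argument had to be modified.''

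The modification the paper uses, and which you need for the odd half, avoids regularity of $f$ altogether. Working locally with $\Sc=\SSpec\As$ and $\Xcal=\SSpec\Bs$ local, one uses Proposition \ref{prop:smoothsplit} to choose graded coordinates on the closed fibre in which the reduction $\bar f$ is \emph{one of the coordinates}, lifts them to $\Bs$, and checks (unramifiedness plus Corollary \ref{cor:localflat}) that the induced map $\As[t_1,\dots,t_m,\psi_1,\dots,\psi_n]\to\Bs$ is \'etale with $f$ the image of a coordinate $\widetilde f$. Then $\As[t,\psi]/(\widetilde f)$ is again a polynomial superring, hence flat over $\As$, and $\Bs/(f)$ is flat over it by \'etale base change; this yields flatness of $\Oc_\Ycal$ for both parities. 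For the conormal sequence, note that for odd $f$ one has $\Ic^2=0$ and $\Ic/\Ic^2=\Ic\simeq\Pi\Oc_\Ycal$; more efficiently, in either parity $\Ic/\Ic^2$ is cyclic with generator $[f]$, and since its image $df|_\Ycal$ generates a rank-one subbundle by hypothesis, the map $\Ic/\Ic^2\to(\Omega_{\Xcal/\Sc})|_\Ycal$ is an isomorphism onto that subbundle --- no injectivity of multiplication by $f$ is needed. Finally, your implicit picture of the fibres must also be corrected in the odd case: there $\Ycal$ has the same bosonization as $\Xcal$ (so $Y=X$, rather than a divisor), and only the odd dimension drops by one.
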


\begin{proof} First, let us check that $\Ycal\to \Sc$ is flat. This is a local question, so we can assume $\Ss=\SSpec(\As)$ and $\Xcal=\SSpec(\Bs)$ for some local noetherian
superrings, and that $f$ is in the maximal ideal  $\mf\subset \As$.  We know that $\Bs/\mf\Bs$ admits generators in the maximal ideal $\bar{z}_1,\dots,\bar{z}_m,\bar{\theta}_1,\dots,\bar{\theta}_n$ 
(with $\bar{z}_i$ even and $\bar{\theta}_j$ odd) such that 
 the image $\bar{f}$ of $f$ in $\Bs/ \mf\cdot \Bs$ is one of them (see Proposition \ref{prop:smoothsplit}), and the map 
$$
\As/\mf[t_1,\dots,t_m,\psi_1,\dots,\psi_n]\to \Bs/\mf \Bs\,, \quad t_i\mapsto \bar{z}_i\,, \quad \psi_j\mapsto \bar{\theta}_j
$$
from the free polynomial superring is \'etale.

Let us lift $(\bar{z}_i)$ and $(\bar{\theta}_j)$ to some elements $(z_i)$ and $(\theta_j)$ in the maximal ideal of $\Bs$. We claim that the morphism
$$
\As[t_1,\dots,t_m,\psi_1,\dots,\psi_n]\to \Bs\,:\quad  t_i\mapsto z_i\,, \quad \psi_j\mapsto \theta_j
$$
is \'etale. Indeed, it is clear that it is unramified (i.e., relative differentials vanish). Also, it is flat, as follows immediately from Corollary \ref{cor:localflat}.
Now let us consider $\Bs/(f)$. Note that $f$  is one of the coordinates $(t_1,\dots,t_m,\psi_1,\ldots,\psi_n)$.
Hence, the induced map
$$
\As[t_1,\dots,t_m,\psi_1,\ldots,\psi_n]/(\widetilde{f})\to \Bs/(f)
$$
is again flat. But $\As[t_1,\dots,t_m,\psi_1,\ldots,\psi_n]/(\widetilde{f})$ is still a polynomial superring, so it is flat over $\As$. Thus, $\Bs/(f)$ is flat over $\As$.

The exact sequence
$$
0\to \Ic_\Ycal/\Ic_\Ycal^2\to (\Omega_{\Xcal/\Sc})_{\vert \Ycal}\to \Omega_{\Ycal/\Sc}\to 0
$$
shows that $\Omega_{\Ycal/\Sc}$ is locally free of rank $(m-1,n)$ or $(m,n-1)$ for $f$ even or odd, respectively.

Thus, it remains to compute the dimension of the fibres of $\Ycal\to \Sc$. To this end we can work over a field $k$
and use the characterization of smooth superschemes $\Xcal$ of dimension $(m,n)$ as locally split superschemes with smooth bosonization of dimension $m$ and the
conormal bundle of $X$ in $\Xcal$ of rank $(0,n)$ (see Proposition \ref{prop:smoothsplit}). Note that $df$ generates a subbundle of $\Omega_{\Xcal/k}$ in an open neighbourhood of $\Ycal$.
If $f$ is even then $Y$ is a smooth divisor given by the function $\rest{f}{X}$ on $X$, while the odd dimension does not change.
If $f$ is odd then $Y=X$ and $f$ can be locally taken as one of the odd coordinates on $\Xcal$, so the odd dimension of $\Ycal$ is $n-1$.
\end{proof}

\begin{remark}
The standard argument for an analogous statement in the even case is based on the fact that an even function $f$ as in Lemma \ref{lem:rel-dim}
would not  be a  zero divisor. This is not true for odd functions, so the argument had to be modified.
\end{remark}

We now obtain the equivalence we were seeking for.
\begin{prop}\label{prop:smoothequiv} Let $f\colon\Xcal\to \Sc$ be a morphism of superschemes, locally of finite type. Then $f$ is formally smooth if and only if it is smooth.
\end{prop}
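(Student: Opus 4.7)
The proposition is a biconditional, so I would treat the two directions separately, leveraging the structural results in Proposition \ref{prop:loc-free-smooth}.

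For formally smooth $\Rightarrow$ smooth, all that is missing is flatness, since Proposition \ref{prop:loc-free-smooth}(1) and (3) already supply local freeness of $\Omega_{\Xcal/\Sc}$ of the correct rank and a local standard-smooth presentation
$$\Bs \simeq \As[x_1,\dots,x_p,\theta_1,\dots,\theta_q]/(f_1,\dots,f_c,\phi_1,\dots,\phi_d)$$
with both Jacobian submatrices invertible in $\Bs$. I would prove flatness of this standard smooth morphism by induction on $c+d$, killing one relation at a time. The base case $c=d=0$ is the polynomial superring, which is manifestly flat over $\As$. For the induction step, the invertibility of the relevant Jacobian entry allows me to invoke Corollary \ref{cor:localflat} (the super local criterion for flatness) to propagate flatness from $\As[x,\theta]/(f_2,\dots,\phi_d)$ down to $\As[x,\theta]/(f_1,\dots,\phi_d)$, and similarly for the odd relations. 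Equivalently, after passing to completions the Jacobian conditions let me complete $(f_k,\phi_l)$ to part of a system of local parameters, whence $\hat\Bs$ becomes a polynomial superring in the remaining variables over $\hat\As$, and flatness descends from faithfully flat completions.

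For smooth $\Rightarrow$ formally smooth, I would proceed in three stages. First, by induction on the order of nilpotence of $\Nc$, factoring $\Bs\to\Bs/\Nc$ through $\Bs/\Nc^k$, I reduce to the case $\Nc^2=0$. Second, I would establish the analogue of Proposition \ref{prop:loc-free-smooth}(3) starting from \emph{smoothness} rather than formal smoothness: near a point, I pick generators $(b_i,\beta_j)$ of $\Bs$ over $\As$ such that $(db_i,d\beta_j)$ restricts to a basis of $\Omega_{\Bs/\As}$, write $\Bs = P/I$ with $P=\As[x_i,\theta_j]$, and use the conormal sequence together with flatness of $\Bs/\As$ and the super Nakayama lemma to produce relations $(f_k,\phi_l)\in I$ whose Jacobians are invertible and which generate $I$ locally. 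Third, with this standard presentation in hand, I construct the lift explicitly on a local chart: choose arbitrary lifts $\tilde a_i,\tilde\alpha_j\in\Bs$ of the images of $x_i,\theta_j$ under the given map $\Bs\to \Bs/\Nc$, so that $f_k(\tilde a,\tilde\alpha),\phi_l(\tilde a,\tilde\alpha)\in\Nc$; since $\Nc^2=0$, a single Newton-type correction $\tilde a_i\mapsto \tilde a_i+u_i$, $\tilde\alpha_j\mapsto \tilde\alpha_j+\vartheta_j$, with $(u_i,\vartheta_j)\in\Nc$ determined by inverting the Jacobians, simultaneously kills all the $f_k$ and $\phi_l$ and yields the required local lift. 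To globalize, I use that the set of lifts over an affine $T$ is (if nonempty) a torsor under $\Hom_\Bs(g_0^\ast\Omega_{\Xcal/\Sc},\Nc)$, which is quasi-coherent on the affine $T$, so the torsor is trivial and the local lifts glue to the required global morphism $T\to\Xcal$.

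The principal obstacle is the standard-smooth local presentation in stage two of the second direction: Proposition \ref{prop:loc-free-smooth}(3) provides it only under formal smoothness, but here I must extract it from smoothness alone, which forces a delicate argument combining the conormal exact sequence, flatness of $\Bs/\As$, and the super Nakayama lemma to verify that the chosen relations $(f_k,\phi_l)$ really generate the full ideal $I$. The super setting adds the complication that even and odd Jacobian blocks must be handled separately and every appeal to Nakayama must respect the $\Z_2$-grading. In the first direction the mirror difficulty is that Lemma \ref{lem:rel-dim} itself presupposes smoothness, so rather than citing it verbatim I have to reuse its Jacobian-level content via Corollary \ref{cor:localflat} applied directly to quotients of the polynomial superring.
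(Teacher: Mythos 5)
Your outline gets the easy parts right, but stage two of the smooth $\Rightarrow$ formally smooth direction contains the genuine gap, and your sketch of how to fill it would not work as stated. From smoothness alone you know only that the chosen $(f_k,\phi_l)\in\Ic_\Xcal$ have differentials generating the kernel of $\Omega_{P/\Sc}\otimes\Oc_\Xcal\to\Omega_{\Xcal/\Sc}$; to conclude by super Nakayama that $J:=(f_1,\dots,f_c,\phi_1,\dots,\phi_d)$ equals $\Ic_\Xcal$ at a point $x$ you would need $\Ic_\Xcal=J+\mf_x\Ic_\Xcal$, i.e.\ injectivity of $\Ic_\Xcal/\mf_x\Ic_\Xcal\to\Omega_{P/\Sc}\otimes\kappa(x)$ --- equivalently, left exactness of the conormal sequence with $\Ic_\Xcal/\Ic_\Xcal^2$ locally free. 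But that left exactness is exactly what Proposition \ref{prop:loc-free-smooth}(2) derives \emph{from formal smoothness}, so assuming it here is circular; an element of $\Ic_\Xcal$ (in particular an odd one) with differential vanishing along $\Xcal$ cannot be excluded by a Jacobian count. The paper's proof closes this circle by never trying to prove $J=\Ic_\Xcal$ directly: it takes $\Xcal'=V(J)\supseteq\Xcal$, which is standard and hence formally smooth, hence smooth by the first direction, and of the same relative dimension $(m,n)$; then, since both $\Xcal$ and $\Xcal'$ are flat, the equality $\Xcal=\Xcal'$ is checked fibrewise over a field, where $X_s=X'_s$ (closed immersion of smooth schemes of the same dimension $m$), the conormal sheaves of $X_s$ in $\Xcal_s$ and $\Xcal'_s$ agree (a surjection of locally free sheaves of equal rank), and local splitness (Proposition \ref{prop:smoothsplit}) forces equality of the structure sheaves. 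Any correct completion of your ``delicate argument'' essentially has to pass through this comparison with the auxiliary smooth $\Xcal'$ plus the flatness/Nakayama step; the structure theorem over a field is genuinely needed and is the specifically super input.

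Two further remarks. In the first direction, your induction ``one relation at a time via Corollary \ref{cor:localflat}'' is circular as phrased --- that corollary deduces flatness of $\Bs\to\Bs'$ from flatness of \emph{both} over $\As$, which is what you are proving --- and the classical d\'evissage also breaks for odd relations, since multiplication by an odd $\phi_l$ is a zero divisor (this is precisely the Remark following Lemma \ref{lem:rel-dim}). Your fallback via completions (use the invertible Jacobian blocks to make $(f_k,\phi_l)$ part of a coordinate system, so $\hat{\Bs}$ is a power series superring over $\hat{\As}$, then descend flatness along the faithfully flat completion) is sound, and is in substance what the proof of Lemma \ref{lem:rel-dim} does; note also that your worry about citing that lemma is unfounded, since the ambient $\As^{p,q}_\Sc$ is smooth and the lemma applies verbatim and iteratively, each successive vanishing locus remaining smooth. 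Finally, your stage one (reduction to square-zero ideals), the Newton-type correction (the relevant square supermatrix is invertible because its diagonal blocks are invertible and the off-diagonal blocks odd, and the parity-graded linear system is solvable in $\Nc$), and the gluing of local lifts as a torsor under the quasi-coherent sheaf $\Homsh_{\Oc}(g_0^\ast\Omega_{\Xcal/\Sc},\Nc)$ on an affine base are all correct; the paper leaves this last gluing point implicit.
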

\begin{proof}
If $f$ is formally smooth then it is locally standard (Proposition \ref{prop:loc-free-smooth} (3)). Hence, we can assume that $\Xcal$ is a sub-superscheme in a relative affine space $P=\As^{p,q}_\Sc\to \Sc$
given by $(f_1,\dots,f_c)$, where $df_1,\dots,df_c$ generate a subbundle in $\Omega_{P/\Sc\vert\Xcal}$. Replacing $P$ by an open neighbourhood of $\Xcal$
we can assume that $df_1,\dots,df_c$ generate a subbundle of rank $(a,b)=(p-m,q-n)$ in $\Omega_{P/\Sc}$, where $a$ (resp., $b$) is the number of even (resp., odd)
functions among $(f_i)$. Now applying Lemma \ref{lem:rel-dim} we derive that $f$ is smooth.

Conversely, starting with a smooth morphism $f\colon\Xcal\to \Sc$, let us embed locally $\Xcal$ into a relative affine space $P$.
Locally we can choose functions $(f_1,\dots,f_c)\in \Ic_\Xcal$ such that the kernel
of $\Omega_{P/\Sc\vert\Xcal}\to \Omega_{\Xcal/\Sc}$ (which is locally free) is generated by $df_1,\dots,df_c$. The ideal $(f_1,\dots,f_c)$ defines a standard formally smooth closed sub-superscheme 
$\Xcal'\hookrightarrow \Ucal\subset P$ such that $\Xcal\hookrightarrow \Xcal'$ (here $\Ucal$ is some open neighbourhood of $\Xcal$ in $P$). 
By the first part of the proof, $\Xcal'$ is smooth over $\Sc$ of the same relative dimension as $\Xcal$.

Now by flatness, it is enough to check that $\Xcal_s=(\Ucal\cap \Xcal')_s$ for every $s\in S$, so we can work over a field.
Since $X$ and $X'$ are smooth of the same dimension, we have $X=X'$.
Furthermore, the conormal sheaves of $X$ in $\Xcal$ and $\Xcal'$ are locally free of the same rank, and one surjects 
onto the other, hence they are equal. Since both $\Xcal$ and $\Xcal'$ are smooth, they are locally split (see Prop.\ \ref{prop:smoothsplit}), 
and it follows that $\Xcal=\Xcal'$, so $\Xcal$ is formally smooth over $\Sc$.
\end{proof}

\subsubsection{Structure of the smooth morphisms of even relative dimension} All the superschemes are again  locally noetherian. We  shall freely use the equivalence between smoothness and formal smoothness for  morphisms locally of finite type (Proposition \ref{prop:smoothequiv}).

\begin{prop}\label{prop:even-smooth} 
Let $f\colon\Xcal\to \Sc$ be a 
smooth morphism of relative dimension $(m,0)$. Then the morphism $f/\Gamma\colon \Xcal/\Gamma\to \Sc/\Gamma$ induced between the bosonic quotients is a 
smooth morphism of relative dimension $(m,0)$, and
$f$ is obtained from $f/\Gamma$ by the base change with respect to $\Sc\to \Sc/\Gamma$. Furthermore,
the base change with respect to $\Sc\to \Sc/\Gamma$ yields an equivalence between the categories of smooth $\Sc/\Gamma$-schemes of relative dimension $m$ and smooth $\Sc$-superschemes of relative dimension $(m,0)$.
\end{prop}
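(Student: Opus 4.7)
The claim is local on both $\Sc$ and $\Xcal$, so I would reduce to the affine case $\Sc=\SSpec(\As)$, $\Xcal=\SSpec(\Bs)$, where $\Bs$ is a smooth $\As$-superalgebra of relative dimension $(m,0)$. The three assertions to establish are then: (a) $\Bs_+$ is a smooth $\As_+$-algebra of relative dimension $m$, i.e.\ $f/\Gamma$ is smooth; (b) the natural multiplication morphism $\mu\colon \Bs_+\otimes_{\As_+}\As\to \Bs$ is an isomorphism; (c) base change along $\Sc\to\Sc/\Gamma$ and formation of the bosonic quotient are mutually inverse functors between smooth $\Sc/\Gamma$-schemes of dimension $m$ and smooth $\Sc$-superschemes of relative dimension $(m,0)$.

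For (a) I would exploit the adjunction recorded in Section~\ref{s:supergeom}: for any purely even ring $B$, a morphism $\SSpec(B)\to\Xcal$ of superschemes is the same datum as a morphism $\SSpec(B)\to \Xcal/\Gamma$, because the structure morphism $\Oc_\Xcal\to B$ must annihilate the odd part of $\Oc_\Xcal$ (and likewise for $\Sc$). Using this identification, any lifting problem
$$\SSpec(B/\Nc)\to\Xcal/\Gamma,\qquad \SSpec(B)\to\Sc/\Gamma$$
for the bosonic quotient along a nilpotent thickening $B\twoheadrightarrow B/\Nc$ of purely even rings pulls back to a lifting problem for $\Xcal\to\Sc$, which is solvable since $f$ is formally smooth (Proposition~\ref{prop:smoothequiv}), and the solution then descends by the same adjunction. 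Together with local finite type (since $\Bs$ is a finitely generated $\As$-superalgebra, $\Bs_+$ is finitely generated over $\As_+$), Proposition~\ref{prop:smoothequiv} yields that $f/\Gamma$ is smooth. To pin down the rank, I would argue that each geometric fibre of $f/\Gamma$ agrees with the corresponding geometric fibre of $f$: indeed each geometric fibre of $f$ is smooth of dimension $(m,0)$ over a field, hence purely even by Example~\ref{ex:smooth-even}. Hence $\Omega_{(\Xcal/\Gamma)/(\Sc/\Gamma)}$ is locally free of rank $m$.

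For (b) I would invoke the local structure theorem (Proposition~\ref{prop:loc-free-smooth}(3)): after further Zariski localization I can present
$$\Bs=\As[x_1,\dots,x_p,\theta_1,\dots,\theta_q]/(f_1,\dots,f_c,\phi_1,\dots,\phi_q),$$
with the even Jacobian $(\partial f_i/\partial x_j)_{i,j\le c}$ and the odd Jacobian $(\partial \phi_i/\partial \theta_j)$ invertible; the equality between the number of odd variables and of odd relations reflects the hypothesis that the odd relative dimension is $0$. The invertibility of the odd Jacobian, coupled with the nilpotence of the odd ideal of the superring $\Bs$, permits a super implicit function/Hensel-type iteration: I would solve $\theta_j=\sigma_j$ for unique odd $\sigma_j\in\Bs$ built inductively from even generators and elements of $\As_-$. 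This shows that $\Bs$ is generated over $\Bs_+$ by $\As_-$, so $\mu$ is surjective, and then injectivity of $\mu$ follows from matching ranks on fibres: both source and target of $\mu$ are $\As$-flat, and on each geometric fibre $\mu$ reduces (via Example~\ref{ex:smooth-even}) to an isomorphism between two smooth $\kappa(s)$-algebras of the same dimension. An alternative path to (b) is to build an inverse of $\mu$ directly from the presentation, using $\sigma_j$ to eliminate the $\theta_j$.

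Finally, (c) is essentially formal once (a) and (b) are in hand. Stability of smoothness and of locally free sheaves of differentials under base change shows that $Y\mapsto Y\times_{\Sc/\Gamma}\Sc$ lands in smooth $\Sc$-superschemes of relative dimension $(m,0)$. The identity $\Oc_Y\otimes_{\Oc_{\Sc/\Gamma}}\Oc_{\Sc,+}=\Oc_Y$ (since $\Oc_Y$ is purely even and $\Oc_{\Sc/\Gamma}=\Oc_{\Sc,+}$) shows that the bosonic quotient of $Y\times_{\Sc/\Gamma}\Sc$ is $Y$, giving one direction of the equivalence; the other direction is exactly (b). The main obstacle I foresee is (b), specifically verifying that the super implicit function iteration terminates globally and yields a well-defined inverse to $\mu$; this is where careful use of the nilpotence of the odd ideal, together with the local-structure presentation of smooth morphisms, will be crucial.
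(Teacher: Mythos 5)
Your step (a) contains a genuine gap, and it is the adjunction that breaks. You assert that for a purely even ring $B$ a morphism $\SSpec(B)\to\Xcal$ is the same datum as a morphism $\SSpec(B)\to\Xcal/\Gamma$. This conflates the two adjoints recalled in Section \ref{s:supergeom}: a homogeneous map $\Oc_\Xcal\to B$ with $B$ purely even kills the odd part and hence the whole ideal $\Jc=(\Oc_{\Xcal,-})^2\oplus\Oc_{\Xcal,-}$, so it factors through the bosonic \emph{reduction} $X$ (the correct identity is $\Morf(T,\Xcal)\simeq\Morf(T,X)$ for even $T$); the bosonic \emph{quotient} $\Xcal/\Gamma$, with structure sheaf $\Oc_{\Xcal,+}$, is adjoint on the other side, $\Morf(\Xcal,Y)\simeq\Morf(\Xcal/\Gamma,Y)$. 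Since $X\hookrightarrow\Xcal/\Gamma$ is a closed immersion with nilpotent ideal $(\Oc_{\Xcal,-})^2$, generally not an isomorphism, a morphism $\SSpec(B/\Nc)\to\Xcal/\Gamma$ need not come from any morphism into $\Xcal$, so your infinitesimal lifting problem for $f/\Gamma$ cannot be ``pulled back'' to one for $f$, and your proof that $f/\Gamma$ is smooth collapses. The gap then propagates into (b): your injectivity argument for $\mu\colon\Bs_+\otimes_{\As_+}\As\to\Bs$ requires the source to be $\As$-flat, i.e.\ $\Bs_+$ flat over $\As_+$, which you only have from (a).

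The paper avoids this by inverting the order of (a) and (b). It first proves that $\alpha\colon\Bs'=\Bs_+\otimes_{\As_+}\As\to\Bs$ is an isomorphism using only smoothness of $\Bs$ over $\As$: surjectivity because the closed fibre $\Bs/\mf_\As\Bs$ is smooth of dimension $(m,0)$ over a field, hence purely even by Example \ref{ex:smooth-even}, which gives $\Bs_-=\As_-\cdot\Bs_+$ (your Hensel-type iteration on the standard presentation of Proposition \ref{prop:loc-free-smooth}(3) establishes the same fact, more laboriously but essentially correctly); and injectivity not via flatness of $\Bs'$ but via the conormal sequence $0\to I/I^2\to\Bs\otimes_{\Bs'}\Omega_{\Bs'/\As}\to\Omega_{\Bs/\As}\to 0$, left exact by Proposition \ref{prop:loc-free-smooth}(2), a rank comparison after tensoring with the residue field (using local freeness of $\Omega_{\Bs/\As}$), and Nakayama to conclude $I=0$. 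Formal smoothness of $\Bs_+$ over $\As_+$ is then obtained by the manoeuvre opposite to yours: an even square-zero lifting problem $S_+\to R_+$ is \emph{extended} to a super one by setting $S=\As\otimes_{\As_+}S_+$ and $R=\Bs\otimes_{\Bs_+}R_+$; formal smoothness of $\Bs/\As$ yields a lift $\Bs\to S$, and its even component solves the original problem. To salvage your outline you must replace the false adjunction step by this tensoring trick, or prove (b) first without appealing to flatness of $\Bs_+$. Your step (c) is sound and coincides with the paper's, which reduces it to the observation that an even homomorphism $B\otimes_{\As_+}\As\to C\otimes_{\As_+}\As$ is determined by its even part $B\to C$.
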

\begin{proof} This is a local question, so we can consider a homomorphism of local superalgebras $f\colon\As\to \Bs$ instead.
We have the induced homomorphisms $f_+\colon\As_+\to \Bs_+$ and 
$$
\alpha\colon \Bs':=\Bs_+\otimes_{\As_+} \As\to \Bs\,.
$$
We would like to show that $\alpha$ is an isomorphism.

Let $\mf_\As\subset \As$ (resp., $\mf_\Bs\subset \Bs$) denote the maximal ideal.
First, we note that $\Bs/\mf_\As \Bs$ is smooth of even dimension over the field $k=\As/\mf_\As$, so it is purely even (see Example \ref{ex:smooth-even}).
Since $\As_-\subset \mf_\As$, this implies that $\Bs_-=\As_-\cdot\Bs_+$. Hence, the homomorphism $\alpha$ is surjective.
Let $I=\ker\alpha$. Then we have an exact sequence
\begin{equation}\label{eq:J-B-A-ex-seq}
0\to I/I^2\to \Bs\otimes_{\Bs'}\Omega_{\Bs'/\As}\to \Omega_{\Bs/\As}\to 0
\end{equation}
(see Proposition \ref{prop:loc-free-smooth} for  the exactness on the left due to smoothness of $\Bs$ over $\As$).
But $\Omega_{\Bs'/\As}\simeq \As\otimes_{\As_+} \Omega_{\Bs_+/\As_+}$. Hence, 
$\Omega_{\Bs'/\As}\otimes_\As k\simeq \Omega_{\Bs_+/\As_+}\otimes_{A_+} k$, so
tensoring  the composed map 
\begin{equation}\label{eq:Om-B-B'-A-comp}
\Omega_{\Bs'/\As}\to \Bs\otimes_{\Bs'}\Omega_{\Bs'/\As}\to \Omega_{\Bs/\As}
\end{equation}
by  $k$ over $\As$ one has 
$$
\As\otimes_{\As_+} \Omega_{\Bs_+/\As_+}\to \Omega_{\Bs/\As}\,,
$$
which is an isomorphism as $\Bs_+\otimes_{\As_+} k\to \Bs\otimes_\As k$.
Since both maps in Equation \eqref{eq:Om-B-B'-A-comp} are surjective,  they become isomorphisms upon tensoring with $k$ over $\As$.
Thus, tensoring the sequence \ref{eq:J-B-A-ex-seq} by $k$ and using the fact that $\Omega_{\Bs/\As}$ is flat over $\As$,
we deduce that $I/I^2\otimes_\As k=0$. Since $\mf_\As\subset \mf_\Bs$, this implies that $I/I^2=0$, so $I=0$. This proves the  claim that $\alpha$ is an isomorphism.

Now let us check that $\Bs_+$ is formally smooth over $\As_+$ (in the category of purely even commutative rings).
Given a square-zero extension $S_+\to R_+$ fitting into a commutative square
$$
\xymatrix{
\As_+\ar[r]\ar[d] & S_+\ar[d]
\\
\Bs_+\ar[r] & R_+
}\,,
$$
let us form the corresponding commutative square
$$
\xymatrix{
\As\ar[r]\ar[d] & S\ar[d]
\\
\Bs\ar[r] & R
}
$$
where $S:=\As\otimes_{\As_+}S_+$, $R=\Bs\otimes_{\Bs_+}R_+$. Note that taking even components we recover the original square.
By the formal smoothness of $\Bs$ over $\As$  there exists a lifting homomorphism $f\colon\Bs\to S$. Now the even component of $f$ is the required
lifting $\Bs_+\to S_+$.

For the last statement, it suffices to check that for any schemes $X_0$ and $Y_0$, smooth over $\Sc/\Gamma$, and for any morphism
$$
f\colon\Sc\times_{\Sc/\Gamma} X_0\to S\times_{\Sc/\Gamma} Y_0\,,
$$
one has $f=\Sc\times_{\Sc/\Gamma} (f/\Gamma)$. Again, this is a local statement, so we can assume that $\Sc=\SSpec(\As)$, $\Sc/\Gamma=\Spec(\As_+)$, and
$X_0$ and $Y_0$ correspond to some $\As_+$-algebras $B$ and $C$.
Then our statement is that any (even) homomorphism of $\Z_2$-graded $\As$-algebras
$$
\phi\colon B\otimes_{\As_+}A\to C\otimes_{\As_+}\As
$$
is determined by the corresponding homomorphism of $\As_+$-algebras
$\phi_+\colon B\to C$,
and this is clearly true.
\end{proof}

\begin{corol}\label{cor:even} 
If $\Xcal$ is smooth of dimension $(m,0)$ over $\Sc$ and $\Sc$ is even, then $\Xcal$ is even.
\end{corol}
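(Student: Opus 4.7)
The plan is to deduce this as an immediate application of Proposition \ref{prop:even-smooth}. Since $f\colon\Xcal\to\Sc$ is smooth of relative dimension $(m,0)$, that proposition gives a canonical isomorphism
$$
\Xcal\simeq \Sc\times_{\Sc/\Gamma}(\Xcal/\Gamma)\,,
$$
where $f/\Gamma\colon \Xcal/\Gamma\to\Sc/\Gamma$ is a smooth morphism of ordinary schemes of relative dimension $m$.

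Now use the assumption that $\Sc$ is (purely) even. Then the natural morphism $\Sc\to\Sc/\Gamma$ is an isomorphism, because the bosonic quotient is the identity on a purely even superscheme. Substituting into the displayed isomorphism above yields
$$
\Xcal\simeq \Sc\times_{\Sc/\Gamma}(\Xcal/\Gamma)\simeq \Xcal/\Gamma\,,
$$
and $\Xcal/\Gamma$ is an ordinary scheme by construction. Hence $\Xcal$ is even, as claimed.

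There is no real obstacle here: the work has already been done in Proposition \ref{prop:even-smooth}, whose proof established (via Example \ref{ex:smooth-even} applied fibre-by-fibre and a formal smoothness argument) that purely bosonic relative dimension forces the structure sheaf of $\Xcal$ to be pulled back from $\Sc/\Gamma$. The present corollary is simply the specialization of that statement to the case where $\Sc/\Gamma=\Sc$.
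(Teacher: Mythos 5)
Your proof is correct and is exactly the argument the paper intends: the corollary is stated without proof as an immediate consequence of Proposition \ref{prop:even-smooth}, and your deduction (evenness of $\Sc$ makes $\Sc\to\Sc/\Gamma$ an isomorphism, so the base-change description $\Xcal\simeq\Sc\times_{\Sc/\Gamma}(\Xcal/\Gamma)$ collapses to $\Xcal\simeq\Xcal/\Gamma$, an ordinary scheme) is the same route. Nothing is missing.
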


\subsection{Faithfully flat descent for superschemes}

Grothendieck's   proof of the faithfully flat descent  can be straightforwardly extended to superschemes, 
indeed the key Lemma 1.4   in \cite[Exp.~ VIII]{SGA1}  is also true for superrings and $\Z_2$-graded modules. In particular, we have an analogue of  Grothendieck's   Theorem 1.1 (faithfully flat descent for quasi-coherent sheaves on superschemes), which has two parts, the analogues of Corollaries 1.2 and 1.3, respectively. We state here the results for reference.

\begin{prop}[Descent for homomorphisms]\label{prop:morphdescent} Let $p\colon \Tc\to \Sc$ be a faithfully flat quasi-compact morphism of superschemes, $\Rcal=\Tc\times_\Sc\Tc$ and $(p_1,p_2)\colon \Rcal\rra\Tc$ the projections. Let $\M$, $\Nc$ be (graded) quasi-coherent sheaves on $\Sc$, $\M'=p^\ast\M$, $\Nc'=p_\ast\Nc$ and $\M''=p_1^\ast\M'=p_2^\ast\M'$, $\Nc''=p_1^\ast\Nc'=p_2^\ast\Nc'$. The sequence 
$$
\Hom_\Sc(\M,\Nc) \to \Hom_\Tc(\M',\Nc') \rra \Hom_\Rcal(\M'',\Nc'')
$$
of (homogeneous) homomorphisms induced by the projections is exact, that is, it establishes a one-to-one correspondence between $\Hom_\Sc(\M,\Nc)$ and the coincidence locus  of the pair of arrows.
\qed \end{prop}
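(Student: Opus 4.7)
The plan is to imitate Grothendieck's argument in \cite[Exp.\ VIII]{SGA1}, replacing ordinary rings and modules throughout with $\Z_2$-graded superrings and $\Z_2$-graded modules, and checking at each step that working with homogeneous morphisms is compatible with the reductions. First I would observe that both statements (injectivity of $\Hom_\Sc(\M,\Nc)\to \Hom_\Tc(\M',\Nc')$ and exactness at $\Hom_\Tc(\M',\Nc')$) are local on $\Sc$, so after covering $\Sc$ by affine open sub-superschemes and (by quasi-compactness of $p$) choosing a quasi-compact refinement, I may assume that $\Sc=\SSpec\As$ and $\Tc$ admits a finite affine open cover, whose disjoint union $\Tc'=\SSpec\Bs$ is still faithfully flat and quasi-compact over $\Sc$. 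Replacing $\Tc$ by $\Tc'$ reduces everything to a faithfully flat homomorphism of superrings $\As\to\Bs$.

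In that affine setting $\M$ and $\Nc$ correspond to $\Z_2$-graded $\As$-modules $M$ and $N$, and the statement to be proved becomes exactness of the sequence of graded abelian groups
$$
\Hom_\As(M,N)\to \Hom_\Bs(M\otimes_\As\Bs,\,N\otimes_\As\Bs)\rightrightarrows \Hom_{\Bs\otimes_\As\Bs}(M\otimes_\As\Bs\otimes_\As\Bs,\,N\otimes_\As\Bs\otimes_\As\Bs).
$$
By standard adjunction this is equivalent to the exactness, for every $\Z_2$-graded $\As$-module $P$ (which I will take to be $\Homsh_\As(M,N)$, viewed in each homogeneous degree), of
\begin{equation}\label{eq:descent-key}
0\to P\to P\otimes_\As\Bs\rightrightarrows P\otimes_\As\Bs\otimes_\As\Bs,
\end{equation}
with the two arrows $p\otimes b\mapsto p\otimes b\otimes 1$ and $p\otimes b\mapsto p\otimes 1\otimes b$.

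The heart of the proof is therefore establishing exactness of \eqref{eq:descent-key} for every $\Z_2$-graded $\As$-module $P$ under the faithfully flat assumption on $\As\to\Bs$. The standard trick is faithfully flat descent for the complex itself: after the base change $\As\to\Bs$ the map $\As\to\Bs$ acquires a section (multiplication $\Bs\otimes_\As\Bs\to\Bs$), and any augmented cosimplicial object of an abelian category with a splitting after a single base change becomes contractible, hence exact; faithful flatness then descends exactness back to the original sequence. This argument is purely categorical and uses only that the $\Z_2$-graded tensor product is exact on the flat side, so it transports verbatim to $\Z_2$-graded $\As$-modules—one just needs to keep track of parity, which the splitting respects. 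The main obstacle is really just this bookkeeping: checking that the contracting cosimplicial homotopy respects the $\Z_2$-grading and that the equalizer $P\to P\otimes_\As\Bs\rightrightarrows P\otimes_\As\Bs\otimes_\As\Bs$ computed in $\Z_2$-graded modules agrees with the ungraded one in each parity component. Once \eqref{eq:descent-key} is known, the proposition follows by applying it to $P=\Homsh_\As(M,N)_+$ and $P=\Homsh_\As(M,N)_-$ and recombining to get the statement for homogeneous morphisms.
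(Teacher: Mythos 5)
Your core lemma and its proof are precisely what the paper relies on: the paper's own ``proof'' is a citation of SGA1, Exp.~VIII, with the remark that the key Lemma 1.4 there --- exactness of $0\to P\to P\otimes_\As\Bs\rra P\otimes_\As\Bs\otimes_\As\Bs$ for faithfully flat $\As\to\Bs$ --- remains valid for $\Z_2$-graded modules; your affine reduction and your contracting-homotopy-after-base-change argument for that lemma are the standard ones and are fine. The genuine gap is in how you pass from the lemma back to the Proposition. You propose to apply the key sequence with $P=\Homsh_\As(M,N)_{\pm}$; this implicitly identifies the middle term $\Hom_\As(M,N)\otimes_\As\Bs$ with $\Hom_\Bs(M\otimes_\As\Bs,\,N\otimes_\As\Bs)$, which is false in general. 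The natural map $\Hom_\As(M,N)\otimes_\As\Bs\to\Hom_\Bs(M\otimes_\As\Bs,\,N\otimes_\As\Bs)$ is an isomorphism only under finiteness hypotheses (e.g.\ $M$ finitely presented, $\Bs$ flat), and the Proposition assumes none: $\M$ and $\Nc$ are arbitrary quasi-coherent sheaves. So your claimed ``equivalence'' of the Proposition with the exactness of the key sequence for all $P$, and in particular your concluding step, do not establish the statement.

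The repair is short and is the route SGA1 actually takes: apply the key exact sequence to $N$ itself, obtaining $0\to N\to N\otimes_\As\Bs\rra N\otimes_\As\Bs\otimes_\As\Bs$, then apply the left-exact functor $\Hom_\As(M,-)$, which preserves the equalizer, and use the adjunction $\Hom_\Bs(M\otimes_\As\Bs,\,Q)\iso\Hom_\As(M,Q)$ for $\Bs$-modules $Q$ (and its analogue over $\Bs\otimes_\As\Bs$) to identify the resulting sequence with
$$
\Hom_\As(M,N)\to \Hom_\Bs(M\otimes_\As\Bs,\,N\otimes_\As\Bs)\rra \Hom_{\Bs\otimes_\As\Bs}(M\otimes_\As\Bs\otimes_\As\Bs,\,N\otimes_\As\Bs\otimes_\As\Bs).
$$
One then checks that the two pairs of arrows match under these identifications, with the parity bookkeeping exactly as you describe (the adjunction and the equalizer are computed degreewise in each parity). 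With this substitution --- using the Amitsur sequence for $N$ plus left exactness of $\Hom_\As(M,-)$, rather than for $\Homsh_\As(M,N)$ --- your argument coincides with the proof the paper intends.
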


\begin{prop}[Descent for modules]\label{prop:moddescent} 
 Let $p\colon \Tc\to \Sc$ be a faithfully flat quasi-compact morphism of superschemes, and let  $\Rcal=\Tc\times_\Sc\Tc$ and $(p_1,p_2)\colon \Rcal\rra\Tc$ be  the projections. Let $\M$ be a graded quasi-coherent sheaf on $\Tc$ with  descent data, that is, an isomorphism  $\phi\colon p_1^\ast\M\iso p_2^\ast \M$ such that,
if $\U = \Tc\times_\Sc\Tc\times_\Sc\Tc$, and $p_{12}$, $p_{23}$, $p_{13}$ are the three projections onto $\Rcal$, the condition
$p_{23}^\ast \phi \circ p_{12}^\ast \phi = p_{13}^\ast\phi$
holds. Then, there is a graded quasi-coherent sheaf $\Nc$ on $\Sc$ such that $\M\simeq p^\ast\Nc$.
\qed \end{prop}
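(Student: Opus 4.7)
The plan is to follow Grothendieck's classical argument in \cite[Exp.~VIII]{SGA1}, verifying that the grading is preserved throughout and that the key exactness lemma remains valid in the super setting.

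First I would reduce to the affine case. Cover $\Sc$ by affine open sub-superschemes $\Ucal_i=\SSpec\As_i$. Because $p$ is quasi-compact, each $p^{-1}(\Ucal_i)$ admits a finite cover by affine open sub-superschemes, and replacing $\Tc$ by the disjoint union of these gives a new faithfully flat quasi-compact morphism $\Tc'\to\Sc$ that factors through affine pieces over each $\Ucal_i$. Descent for homomorphisms (Proposition \ref{prop:morphdescent}), which has already been stated, ensures uniqueness of the descended sheaf up to unique isomorphism, so local constructions automatically glue. It therefore suffices to handle the case $\Sc=\SSpec\As$, $\Tc=\SSpec\Bs$, with $\As\to\Bs$ a faithfully flat homomorphism of superrings. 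In this case $\M$ corresponds to a $\Z_2$-graded $\Bs$-module $M$, and the descent datum is an even $\Bs\otimes_\As\Bs$-linear isomorphism $\phi\colon M\otimes_\As\Bs\iso \Bs\otimes_\As M$ satisfying the cocycle condition on $\Bs\otimes_\As\Bs\otimes_\As\Bs$.

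Next, I would define the candidate for the descended module as the $\Z_2$-graded $\As$-submodule
\[
N=\{\,m\in M : \phi(m\otimes 1)=1\otimes m\,\}\subseteq M,
\]
which inherits its grading from $M$ since $\phi$ is homogeneous of degree $0$. The goal is to show that the natural $\Bs$-linear map $N\otimes_\As\Bs\to M$ is an isomorphism compatible with $\phi$. This reduces to the following key lemma, the super analogue of \cite[Exp.~VIII, Lemme~1.4]{SGA1}: for every $\Z_2$-graded $\As$-module $P$, the augmented Amitsur complex
\[
0\to P\to P\otimes_\As\Bs\xrightarrow{d^0} P\otimes_\As\Bs\otimes_\As\Bs
\]
is exact, where $d^0(x\otimes b)=x\otimes 1\otimes b - x\otimes b\otimes 1$. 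I would prove this lemma by the standard faithfully flat trick: after tensoring with $\Bs$ over $\As$ (which is faithful and exact on $\Z_2$-graded modules) one is reduced to the case where $\As\to\Bs$ admits a retraction, in which case the augmented complex is split-contractible via the usual explicit homotopy $b_0\otimes\cdots\otimes b_n\mapsto \sigma(b_0)b_1\otimes\cdots\otimes b_n$ (where $\sigma$ is the retraction). Since the homotopy operators are even maps of degree zero and the tensor-product decomposition respects the $\Z_2$-grading, the argument goes through verbatim.

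With the key lemma in hand, the standard manipulation identifies $N$ as the equalizer of the pair of morphisms $M\rightrightarrows M\otimes_\As\Bs$ built from $\phi$, and the cocycle condition together with the lemma applied to $P=N$ yields the desired isomorphism $N\otimes_\As\Bs\iso M$. The main obstacle is really just the verification that the key exactness lemma holds for $\Z_2$-graded modules; once this is granted, no new ideas beyond Grothendieck's original proof are required, and all the maps involved are naturally homogeneous, so the $\Z_2$-grading propagates automatically at every step.
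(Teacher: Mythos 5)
Your proposal is correct and matches the paper's own treatment: the paper proves this proposition simply by observing that Grothendieck's argument in \cite[Exp.~VIII]{SGA1} extends verbatim to superschemes because the key exactness lemma (Lemme 1.4, the exactness of the augmented Amitsur complex) remains valid for superrings and $\Z_2$-graded modules, which is exactly the reduction, equalizer construction, and faithfully flat splitting trick you carry out. The only point worth noting is that you correctly make explicit the cocycle condition that the paper's statement of ``descent data'' leaves implicit.
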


One also has a  statement corresponding  to Grothendieck's  Corollary 1.9. For every superscheme $\Xcal$ let us denote by $H(\Xcal)$ the set of all   closed sub-superschemes of $\Xcal$.
\begin{prop}[Descent for closed sub-superschemes]\label{prop:subdescent}
 Let $p\colon \Tc\to \Sc$ be a faithfully flat quasi-compact morphism of superschemes, The   sequence of sets
 $$
 H(\Sc)\to H(\Tc) \rra H(\Rcal)
 $$
 is exact.
\qed\end{prop}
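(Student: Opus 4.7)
The plan is to reduce the descent for closed sub-superschemes to the already-established descent for quasi-coherent sheaves (Proposition \ref{prop:moddescent}) and for homomorphisms (Proposition \ref{prop:morphdescent}). The key dictionary is that a closed sub-superscheme $\Zc\hookrightarrow\Xcal$ is completely determined by either its quasi-coherent ideal sheaf $\Ic_\Zc\subset\Oc_\Xcal$ or, equivalently, by the surjection of $\Oc_\Xcal$-algebras $\Oc_\Xcal\twoheadrightarrow\Oc_\Zc$.

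For the injectivity of $H(\Sc)\to H(\Tc)$, let $\Zc_1,\Zc_2\hookrightarrow\Sc$ be closed sub-superschemes whose pullbacks to $\Tc$ agree. Their ideal sheaves $\Ic_1,\Ic_2\subset \Oc_\Sc$ then satisfy $p^\ast\Ic_1=p^\ast\Ic_2$ as subsheaves of $p^\ast\Oc_\Sc=\Oc_\Tc$. Applying Proposition \ref{prop:morphdescent} to the inclusions $\Ic_i\hookrightarrow\Oc_\Sc$ (whose pullbacks to $\Rcal$ coincide) shows that they define the same subsheaf of $\Oc_\Sc$, hence $\Zc_1=\Zc_2$.

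For exactness at $H(\Tc)$, suppose $\Zc\hookrightarrow\Tc$ is a closed sub-superscheme with $p_1^{-1}(\Zc)=p_2^{-1}(\Zc)$ as closed sub-superschemes of $\Rcal$. This last equality supplies a canonical isomorphism $p_1^\ast\Oc_\Zc\simeq p_2^\ast\Oc_\Zc$ (both are $\Oc_{\Rcal}/\Jc$ for the common ideal $\Jc$ there), i.e.\ a descent datum on $\Oc_\Zc$; one checks that the usual cocycle condition holds because it does for $\Oc_\Tc$. Proposition \ref{prop:moddescent} then produces a quasi-coherent sheaf $\Qc$ on $\Sc$ with $p^\ast\Qc\simeq\Oc_\Zc$. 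The surjection $\Oc_\Tc\twoheadrightarrow\Oc_\Zc$ is compatible with the descent data on both sides, so by Proposition \ref{prop:morphdescent} it descends to a homomorphism $\phi\colon\Oc_\Sc\to\Qc$; similarly the multiplication $\Oc_\Zc\otimes\Oc_\Zc\to\Oc_\Zc$ descends to give $\Qc$ the structure of a quasi-coherent sheaf of graded-commutative $\Oc_\Sc$-algebras making $\phi$ an algebra map.

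It remains to verify that $\phi$ is surjective, since then $\Zc':=\SSpec_{\Sc}\Qc$ is the desired closed sub-superscheme, with $p^{-1}(\Zc')=\Zc$ by construction. Surjectivity can be checked after the faithfully flat quasi-compact pullback $p$: indeed, for any morphism of quasi-coherent sheaves $\phi\colon\F\to\Gc$ on $\Sc$, one has $\coker\phi=0$ iff $p^\ast\coker\phi=\coker(p^\ast\phi)=0$, by flatness of $p$ and faithful flatness applied to the quasi-coherent cokernel. Since $p^\ast\phi$ is identified with the given surjection $\Oc_\Tc\twoheadrightarrow\Oc_\Zc$, we conclude. The main (minor) obstacle is the bookkeeping that the descent data for $\Oc_\Zc$, the algebra structure, and the quotient map $\Oc_\Tc\to\Oc_\Zc$ are all mutually compatible so that the descended data on $\Sc$ still form a sheaf of $\Oc_\Sc$-algebra quotients; all these checks are formal consequences of Propositions \ref{prop:morphdescent} and \ref{prop:moddescent} applied to the relevant morphisms of quasi-coherent sheaves.
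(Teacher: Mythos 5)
Your proof is correct, and it is essentially the argument the paper itself invokes: the paper states Proposition \ref{prop:subdescent} without a written proof, appealing to the straightforward extension of \cite[Exp.~VIII, Cor.~1.9]{SGA1}, whose classical proof is exactly your reduction of closed sub-superschemes to their quasi-coherent ideal sheaves and quotient algebras, handled by Propositions \ref{prop:moddescent} and \ref{prop:morphdescent}. The only cosmetic slip is in the last paragraph: flatness of $p$ is what makes \emph{kernels} commute with pullback (needed to see $p^{-1}(\Zc')=\Zc$, i.e.\ $p^\ast\ker\phi=\ker p^\ast\phi=\Ic_\Zc$), while cokernels commute with any pullback, so your surjectivity check uses only faithful flatness, as you in fact argue.
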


\subsection{Local representability of a group functor}

Let $\Sc$ be a superscheme and $\iota\colon\Ss\Gc\hookrightarrow\Ss\F$ an  {\emph{open morphism of \'etale sheaves} on $\Sc$-superschemes}. We also say that $\Ss\Gc\hookrightarrow\Ss\F$ is \emph{representable by open immersions}. This means that for every $\Sc$-superscheme $\Tc\to\Sc$ and every sheaf morphism $\lambda\colon \Tc\to\Ss\F$ (or, equivalently, every $\Tc$-valued point $\lambda$ of $\Ss\F$, or every $\lambda\in\Ss\F(\Tc))$, the fibre product $\Ss\Gc\times_{\Ss\F,\lambda}\Tc$ is representable by a superscheme $\Tc_\lambda$ and the induced morphism $\Tc_{\Ss\Gc,\lambda}\to \Tc$ is an open immersion. We visualize this by the cartesian diagram
$$
\xymatrix{\Ss\Gc \ar@{^(->}[r]^\iota & \Ss\F \\
\Tc_{\Ss\Gc,\lambda}:=\Ss\Gc\times_{\Ss\F,\lambda}\Tc\ar[u]^{\lambda_\Tc}\ar@{^(->}[r]^(.7){\iota_\Tc} & \Tc\ar[u]^\lambda
}
$$

Assume that $\Ss\F$ is a \emph{sheaf in groups}.  For every $\Sc$-valued point $\xi\colon \Tc\to\Ss\F$, one  defines the translated open subfunctor $\iota_\xi\colon\Ss\Gc\cdot\xi\hookrightarrow\Ss\F$ by setting $\Ss\Gc\cdot\xi(\Tc)$ as the image of the composition $\Ss\Gc(\Tc)\xrightarrow{\Id\times\xi_\Tc}\Ss\Gc(\Tc)\times\Ss\F(\Tc) \xrightarrow{\cdot}\Ss\F(\Tc)$.
Notice that $\iota_\xi\colon\Ss\Gc\cdot\xi\hookrightarrow\Ss\F$ is also representable by open immersions, as for every $\Sc$-superscheme $\Tc\to\Sc$ and every sheaf morphism $\lambda\colon \Tc\to\Ss\F$ one has  
$$
\Tc_{\Ss\Gc\cdot\xi,\lambda}\simeq \Tc_{\Ss\Gc,\lambda\xi_\Tc^{-1}}
$$

Proceeding as in \cite[Chap. 0, Prop. 4.5.4]{EGA-I}, one obtains:
\begin{prop}\label{prop:opengoup} Under the above hypotheses, suppose  also that:
\begin{enumerate}
\item there exists a set $I$ of $\Sc$-valued points $\xi_i\in\Ss\F(\Sc)$ of $\Ss\F$ such that the open subfunctor $\bigcup_{i\in I}\Ss\Gc\cdot\xi_i\hookrightarrow\Ss\F$ equals $\F$. This is equivalent to saying that for every $\Sc$-superscheme $\Tc\to\Sc$ and every sheaf morphism $\lambda\colon \Tc\to\Ss\F$ the open sub-superschemes $\Tc_{\Ss\Gc\cdot\xi_i,\lambda}\hookrightarrow \Tc$, for $i\in I$, form an open covering of $\Tc$;
\item $\Ss\Gc$ is representable.
\end{enumerate}
Then $\Ss\F$ is representable as well.
\end{prop}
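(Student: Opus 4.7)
The plan is to imitate the classical argument \cite[Chap.\ 0, Prop.\ 4.5.4]{EGA-I}, constructing the representing superscheme by gluing translates of $\Ss\Gc$. Let $\Gc$ be the superscheme representing $\Ss\Gc$. For every $i\in I$, the translation by $\xi_i$ gives an isomorphism of functors $\Ss\Gc\iso \Ss\Gc\cdot\xi_i$, so that each open subfunctor $\iota_{\xi_i}\colon \Ss\Gc\cdot\xi_i\hookrightarrow \Ss\F$ is representable (by a copy $\Gc_i$ of $\Gc$) via an open immersion in the sense above. This already yields a family of representable open subfunctors covering $\Ss\F$ in the sense of hypothesis~(1).

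Next I would build the gluing data. For every pair $(i,j)\in I\times I$, consider the fibre product
$$
\Gc_{ij}:=\Gc_i\times_{\Ss\F}\Gc_j.
$$
Because $\iota_{\xi_j}\colon\Ss\Gc\cdot\xi_j\hookrightarrow \Ss\F$ is representable by open immersions, applied to the point $\Gc_i\to \Ss\F$, the superscheme $\Gc_{ij}$ exists and sits as an open sub-superscheme of $\Gc_i$. Symmetrically, $\Gc_{ij}$ is an open sub-superscheme of $\Gc_j$, giving canonical transition isomorphisms
$$
\varphi_{ji}\colon \Gc_{ij}\iso \Gc_{ji}
$$
of open sub-superschemes. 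The cocycle condition $\varphi_{ki}=\varphi_{kj}\circ\varphi_{ji}$ on triple overlaps is automatic, since all these isomorphisms are identities once viewed inside $\Ss\F$. Thus the data $(\Gc_i,\Gc_{ij},\varphi_{ji})$ form a legitimate gluing datum in the category of superschemes (the gluing of superschemes along open subschemes works exactly as for ordinary schemes, being a purely local question on the underlying topological space and on structure sheaves). Let $\Xcal$ denote the resulting superscheme, equipped with the open immersions $j_i\colon \Gc_i\hookrightarrow \Xcal$.

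By construction the morphisms $\Gc_i^\bullet\to \Ss\F$ induced by $\iota_{\xi_i}$ coincide on the overlaps $\Gc_{ij}$, so by the sheaf condition on $\Ss\F$ they glue to a morphism of functors $\Psi\colon \Xcal^\bullet\to \Ss\F$. I would then check that $\Psi$ is an isomorphism: the key point is that $\Xcal^\bullet$ is itself an \'etale sheaf, and it is covered (in the \'etale, even Zariski, sense) by the open subfunctors $(j_i)^\bullet(\Gc_i^\bullet)$, which correspond under $\Psi$ to the open subfunctors $\Ss\Gc\cdot\xi_i\hookrightarrow \Ss\F$ covering $\Ss\F$ by hypothesis~(1). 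Locality of $\Psi$ and the fact that it restricts to an isomorphism on each piece of the cover then force $\Psi$ to be an isomorphism.

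The only delicate step is the verification that the gluing data are well posed, namely that the two canonical embeddings $\Gc_{ij}\hookrightarrow\Gc_i$ and $\Gc_{ij}\hookrightarrow\Gc_j$ identify the \emph{same} open sub-superscheme up to the isomorphism $\varphi_{ji}$, and that the cocycle condition indeed holds on triple intersections. Both reduce to the equality of fibre products $\Gc_i\times_{\Ss\F}\Gc_j\simeq \Gc_j\times_{\Ss\F}\Gc_i$ and $(\Gc_i\times_{\Ss\F}\Gc_j)\times_{\Ss\F}\Gc_k\simeq \Gc_i\times_{\Ss\F}\Gc_j\times_{\Ss\F}\Gc_k$, which are formal from the universal property of fibre products inside the category of functors, and hence can be transferred to the representing superschemes via Yoneda. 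Once this is in place, the representability of $\Ss\F$ by $\Xcal$ follows.
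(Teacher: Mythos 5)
Your proposal is correct and takes essentially the same route as the paper's proof: both represent each translate $\Ss\Gc\cdot\xi_i$ by a copy of the superscheme representing $\Ss\Gc$ and glue these copies using the covering hypothesis, the paper merely compressing into one sentence the gluing data, cocycle verification, and the final check (via the sheaf property of $\Ss\F$ and of the glued superscheme's functor of points) that the result represents $\Ss\F$, all of which you spell out correctly.
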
 
\begin{proof} Let us denote by $\Gsc$ the $\Sc$-superscheme that represents $\Ss\Gc$. 
Since the translated functors $\Ss\Gc\cdot\xi_i$ are isomorphic to $\Ss\Gc$, they are representable by $\Sc$-superschemes, which  we denote by $\Gsc\cdot\xi_i$. Moreover, the condition $\bigcup_{i\in I}\Ss\Gc\cdot\xi_i\hookrightarrow\Ss\F$ yields glueing conditions for the superschemes $\Gsc\cdot\xi_i$, which therefore glue to yield an $\Sc$-superscheme $\bigcup_{i\in I)}\Gsc\cdot\xi_i$ whose functor of   points is $\bigcup_{i\in I}\Ss\Gc\cdot\xi_i=\Ss\F$.
\end{proof}


\begin{thebibliography}{10}

\bibitem{AlKl70}
{\sc A.~Altman and S.~Kleiman}, {\em Introduction to {G}rothendieck duality
  theory}, Lecture Notes in Mathematics, Vol. 146, Springer-Verlag, Berlin-New
  York, 1970.

\bibitem{AlKl80}
\leavevmode\vrule height 2pt depth -1.6pt width 23pt, {\em Compactifying the
  {P}icard scheme}, Adv. in Math., 35 (1980), pp.~50--112.

\bibitem{AtMcD69}
{\sc M.~F. Atiyah and I.~G. Macdonald}, {\em Introduction to commutative
  algebra}, Addison-Wesley Publishing Co., Reading, Mass.-London-Don Mills,
  Ont., 1969.

\bibitem{BaSchw87}
{\sc M.~A. Baranov and A.~S. Schwarz}, {\em On the multiloop contribution to
  the string theory}, Internat. J. Modern Phys. A, 2 (1987), pp.~1773--1796.

\bibitem{BBH91}
{\sc C.~Bartocci, U.~Bruzzo, and D.~Hern{\'a}ndez~Ruip{\'e}rez}, {\em The
  geometry of supermanifolds}, vol.~71 of Mathematics and its Applications,
  Kluwer Academic Publishers Group, Dordrecht, 1991.

\bibitem{BL75}
{\sc F.~A. Berezin and D.~A. Le{\u\i}tes}, {\em Supermanifolds}, Dokl. Akad.
  Nauk SSSR, 224 (1975), pp.~505--508.

\bibitem{Bir11}
{\sc C.~Birkar}, {\em Topics in algebraic geometry}.
\newblock {\tt arXiv:1104.5035 [math.AG]}.

\bibitem{BN93}
{\sc M.~B\"{o}kstedt and A.~Neeman}, {\em Homotopy limits in triangulated
  categories}, Compositio Math., 86 (1993), pp.~209--234.

\bibitem{BrHR19}
{\sc U.~Bruzzo and D.~Hern\'{a}ndez~Ruip\'{e}rez}, {\em The supermoduli of
  {SUSY} curves with {R}amond punctures}, Rev. R. Acad. Cienc. Exactas
  F\'{\i}s. Nat. Ser. A Mat. RACSAM, 115 (2021), p.~144.
\newblock also {\tt arXiv:1910.12236 [math.AG]}.

\bibitem{CaNo18}
{\sc S.~L. Cacciatori and S.~Noja}, {\em Projective superspaces in practice},
  J. Geom. Phys., 130 (2018), pp.~40--62.

\bibitem{CarCaFi11}
{\sc C.~Carmeli, L.~Caston, and R.~Fioresi}, {\em Mathematical foundations of
  supersymmetry}, EMS Series of Lectures in Mathematics, European Mathematical
  Society (EMS), Z\"{u}rich, 2011.

\bibitem{Cod14}
{\sc G.~Codogni}, {\em Pluri-canonical models of supersymmetric curves}.
\newblock {\tt arXiv:1402.4999v4 [math.AG]}.

\bibitem{CodViv17}
{\sc G.~Codogni and F.~Viviani}, {\em Moduli and periods of supersymmetric
  curves}, Adv. Theor. Math. Phys., 23 (2019), pp.~345--402.

\bibitem{CraRab88}
{\sc L.~Crane and J.~M. Rabin}, {\em Super {R}iemann surfaces: uniformization
  and {T}eichm\"uller theory}, Comm. Math. Phys., 113 (1988), pp.~601--623.

\bibitem{Del87}
{\sc P.~Deligne}, {\em Letter to {Y}.{I}. {M}anin}.
\newblock
  https://publications.ias.edu/sites/default/files/lettre-a-manin-1987-09-25.pdf,
  1987.

\bibitem{Dem}
{\sc J.-P. Demailly}, {\em Complex analytic and differential geometry}.
\newblock Pdf file available from {\tt https://
  www-fourier.ujf-grenoble.fr/$\sim$demailly/documents.html}.

\bibitem{D'Hoker-Phong}
{\sc E.~D'Hoker and D.~H. Phong}, {\em Conformal scalar fields and chiral
  splitting on super {R}iemann surfaces}, Comm. Math. Phys., 125 (1989),
  pp.~469--513.

\bibitem{Dir19}
{\sc D.~J. Diroff}, {\em On the super {M}umford form in the presence of
  {R}amond and {N}eveu-{S}chwarz punctures}, J. Geom. Phys., 144 (2019),
  pp.~273--293.

\bibitem{DoHeSa93}
{\sc J.~A. Dom{\'{\i}}nguez~P{\'e}rez, D.~Hern{\'a}ndez~Ruip{\'e}rez, and
  C.~Sancho~de Salas}, {\em The variety of positive superdivisors of a
  supercurve (supervortices)}, J. Geom. Phys., 12 (1993), pp.~183 -- 203.

\bibitem{DoHeSa97}
\leavevmode\vrule height 2pt depth -1.6pt width 23pt, {\em Global structures
  for the moduli of (punctured) super {R}iemann surfaces}, J. Geom. Phys., 21
  (1997), pp.~199--217.

\bibitem{DoW13}
{\sc R.~Donagi and E.~Witten}, {\em Super {A}tiyah classes and obstructions to
  splitting of supermoduli space}, Pure Appl. Math. Q., 9 (2013), pp.~739--788.

\bibitem{DoW15}
\leavevmode\vrule height 2pt depth -1.6pt width 23pt, {\em Supermoduli space is
  not projected}, in String-{M}ath 2012, vol.~90 of Proc. Sympos. Pure Math.,
  Amer. Math. Soc., Providence, RI, 2015, pp.~19--71.

\bibitem{CF90}
{\sc G.~Faltings and C.-L. Chai}, {\em Degeneration of abelian varieties},
  vol.~22 of Ergebnisse der Mathematik und ihrer Grenzgebiete (3) [Results in
  Mathematics and Related Areas (3)], Springer-Verlag, Berlin, 1990.
\newblock With an appendix by David Mumford.

\bibitem{FGA05}
{\sc B.~Fantechi, L.~G\"{o}ttsche, L.~Illusie, S.~L. Kleiman, N.~Nitsure, and
  A.~Vistoli}, {\em Fundamental algebraic geometry}, vol.~123 of Mathematical
  Surveys and Monographs, American Mathematical Society, Providence, RI, 2005.

\bibitem{FKP19}
{\sc G.~Felder, D.~Kazhdan, and A.~Polishchuk}, {\em Regularity of the
  superstring supermeasure and the superperiod map},
   Selecta Math. (N.S.), 28 (2022), no. 1, Paper No. 17, 64 pp. 

\bibitem{FKP20}
\leavevmode\vrule height 2pt depth -1.6pt width 23pt, {\em The moduli space of
  stable supercurves and its canonical line bundle}.
\newblock {\tt arXiv:2006.13271v1 [math.AG]}, 2020.

\bibitem{Fi08}
{\sc R.~Fioresi}, {\em Smoothness of algebraic supervarieties and supergroups},
  Pacific J. Math., 234 (2008), pp.~295--310.

\bibitem{Fr86}
{\sc D.~Friedan}, {\em Notes on string theory and two-dimensional conformal
  field theory}, in Workshop on unified string theories ({S}anta {B}arbara,
  {C}alif., 1985), World Sci. Publishing, Singapore, 1986, pp.~162--213.

\bibitem{GhoSheYau}
{\sc A.~Gholampour, A.~Sheshmani, and S.-T. Yau}, {\em Nested {H}ilbert schemes
  on surfaces: virtual fundamental class}, Adv. Math., 365 (2020), p.~107046.

\bibitem{Tohoku}
{\sc A.~Grothendieck}, {\em Sur quelques points d'alg\`ebre homologique},
  T\^ohoku Math. J.,  (1957), pp.~119--221.

\bibitem{FGA}
\leavevmode\vrule height 2pt depth -1.6pt width 23pt, {\em Fondements de la
  g\'eom\'etrie alg\'ebrique. [{E}xtraits du {S}\'eminaire {B}ourbaki,
  1957--1962.]}, Secr\'etariat math\'ematique, Paris, 1962.

\bibitem{EGAIII-II}
\leavevmode\vrule height 2pt depth -1.6pt width 23pt, {\em \'{E}l\'{e}ments de
  g\'{e}om\'{e}trie alg\'{e}brique. {III}. \'{E}tude cohomologique des
  faisceaux coh\'{e}rents. {II}}, Inst. Hautes \'{E}tudes Sci. Publ. Math.,
  (1963), p.~91.

\bibitem{EGAIV-IV}
\leavevmode\vrule height 2pt depth -1.6pt width 23pt, {\em \'{E}l\'{e}ments de
  g\'{e}om\'{e}trie alg\'{e}brique. {IV}. \'{E}tude locale des sch\'{e}mas et
  des morphismes de sch\'{e}mas {IV}}, Inst. Hautes \'{E}tudes Sci. Publ.
  Math.,  (1967), p.~361.

\bibitem{SGA1}
\leavevmode\vrule height 2pt depth -1.6pt width 23pt, {\em Rev\^{e}tements
  \'{e}tales et groupe fondamental}, Lecture Notes in Mathematics, vol. 224,
  Springer-Verlag, Berlin-Heildelberg-New York, 1971.
\newblock Also available as {\tt arXiv:0206203v2 [math.AG]}.

\bibitem{EGA-I}
{\sc A.~Grothendieck and J.~A. Dieudonn\'{e}}, {\em \'{E}l\'{e}ments de
  g\'{e}om\'{e}trie alg\'{e}brique. {I}}, vol.~166 of Grundlehren der
  Mathematischen Wissenschaften, Springer-Verlag, Berlin, 1971.

\bibitem{Hart77}
{\sc R.~Hartshorne}, {\em Algebraic geometry}, Graduate Texts in Mathematics,
  vol. 52, Springer-Verlag, New York, 1977.

\bibitem{HRLMS09}
{\sc D.~Hern\'{a}ndez~Ruip\'{e}rez, A.~C. L\'{o}pez~Mart\'{\i}n, and
  F.~Sancho~de Salas}, {\em Relative integral functors for singular fibrations
  and singular partners}, J. Eur. Math. Soc. (JEMS), 11 (2009), pp.~597--625.

\bibitem{Jang20}
{\sc M.~Y. Jang}, {\em Families of 0-dimensional subspaces on supercurves of
  dimension {$1\vert 1$}}, J. Pure Appl. Algebra, 224 (2020), pp.~106251, 16.

\bibitem{Kle05}
{\sc S.~Kleiman}, {\em The {P}icard scheme}, in Fundamental algebraic geometry,
  vol.~123 of Mathematical Surveys and Monographs, American Mathematical
  Society, Providence, RI, 2005, pp.~x+339.

\bibitem{Knut71}
{\sc D.~Knutson}, {\em Algebraic spaces}, Lecture Notes in Mathematics, Vol.
  203, Springer-Verlag, Berlin, 1971.

\bibitem{Kost75}
{\sc B.~Kostant}, {\em Graded manifolds, graded {L}ie theory, and
  prequantization}, in Differential geometrical methods in mathematical physics
  ({P}roc. {S}ympos., {U}niv. {B}onn, {B}onn, 1975), Springer, Berlin, 1977,
  pp.~177--306. Lecture Notes in Math., Vol. 570.

\bibitem{Lam99}
{\sc T.~Y. Lam}, {\em Lectures on modules and rings}, vol.~189 of Graduate
  Texts in Mathematics, Springer-Verlag, New York, 1999.

\bibitem{LePoWel90}
{\sc C.~LeBrun, Y.~S. Poon, and R.~O. Wells, Jr.}, {\em Projective embeddings
  of complex supermanifolds}, Comm. Math. Phys., 126 (1990), pp.~433--452.

\bibitem{LeRoth88}
{\sc C.~LeBrun and M.~Rothstein}, {\em Moduli of super {R}iemann surfaces},
  Comm. Math. Phys., 117 (1988), pp.~159--176.

\bibitem{Lip09}
{\sc J.~Lipman}, {\em Notes on derived functors and {G}rothendieck duality}, in
  Foundations of {G}rothendieck duality for diagrams of schemes, vol.~1960 of
  Lecture Notes in Math., Springer, Berlin, 2009, pp.~1--259.

\bibitem{Ma86}
{\sc Y.~I. Manin}, {\em Critical dimensions of string theories and a dualizing
  sheaf on the moduli space of (super) curves}, Funktsional. Anal. i
  Prilozhen., 20 (1986), pp.~88--89.

\bibitem{Ma87}
\leavevmode\vrule height 2pt depth -1.6pt width 23pt, {\em Quantum strings and
  algebraic curves}, in Proceedings of the {I}nternational {C}ongress of
  {M}athematicians, {V}ol. 1, 2 ({B}erkeley, {C}alif., 1986), Providence, RI,
  1987, Amer. Math. Soc., pp.~1286--1295.

\bibitem{Ma88}
\leavevmode\vrule height 2pt depth -1.6pt width 23pt, {\em Gauge field theory
  and complex geometry}, vol.~289 of Grundlehren der Mathematischen
  Wissenschaften, Springer-Verlag, Berlin, 1988.
\newblock Translated from the Russian by N. Koblitz and J. R. King, second
  edition 1997, with an appendix by S. Merkulov.

\bibitem{Ma88-2}
\leavevmode\vrule height 2pt depth -1.6pt width 23pt, {\em Neveu-{S}chwarz
  sheaves and differential equations for {M}umford superforms}, J. Geom. Phys.,
  5 (1988), pp.~161--181.

\bibitem{Mat89}
{\sc H.~Matsumura}, {\em Commutative ring theory}, vol.~8 of Cambridge Studies
  in Advanced Mathematics, Cambridge University Press, Cambridge, second~ed.,
  1989.
\newblock Translated from the Japanese by M. Reid.

\bibitem{MoZh19}
{\sc S.~F. Moosavian and Y.~Zhou}, {\em On the existence of heterotic-string
  and type-{II}-superstring field theory vertices}.
\newblock {\tt arXiv:1911.04343 [hep.th]}.

\bibitem{Mum86}
{\sc D.~Mumford}, {\em Lectures on curves on an algebraic surface}, With a
  section by G. M. Bergman. Annals of Mathematics Studies, No. 59, Princeton
  University Press, Princeton, N.J., 1966.

\bibitem{MF-GIT}
{\sc D.~Mumford, J.~Fogarty, and F.~Kirwan}, {\em Geometric invariant theory},
  vol.~34 of Ergebnisse der Mathematik und ihrer Grenzgebiete (2) [Results in
  Mathematics and Related Areas (2)], Springer-Verlag, Berlin, third~ed., 1994.

\bibitem{Nee96}
{\sc A.~Neeman}, {\em The {G}rothendieck duality theorem via {B}ousfield's
  techniques and {B}rown representability}, J. Amer. Math. Soc., 9 (1996),
  pp.~205--236.

\bibitem{Ni07}
{\sc N.~Nitsure}, {\em Construction of {H}ilbert and {Q}uot schemes}, in
  Fundamental algebraic geometry, vol.~123 of Math. Surveys Monogr., Amer.
  Math. Soc., Providence, RI, 2005, pp.~105--137.

\bibitem{Noja18}
{\sc S.~Noja}, {\em Supergeometry of {$\Pi$}-projective spaces}, J. Geom.
  Phys., 124 (2018), pp.~286--299.

\bibitem{OgPe84}
{\sc O.~V. Ogievetski\u{\i} and I.~B. Penkov}, {\em Serre duality for
  projective supermanifolds}, Funktsional. Anal. i Prilozhen., 18 (1984),
  pp.~78--79.

\bibitem{OgBerg72}
{\sc A.~Ogus and G.~Bergman}, {\em Nakayama's lemma for half-exact functors},
  Proc. Amer. Math. Soc., 31 (1972), pp.~67--74.

\bibitem{Penk83}
{\sc I.~B. Penkov}, {\em {${\mathcal D}$}-modules on supermanifolds}, Invent.
  Math., 71 (1983), pp.~501--512.

\bibitem{PenSkor85}
{\sc I.~B. Penkov and I.~A. Skornyakov}, {\em Projectivity and {${\spacecal
  D}$}-affineness of flag supermanifolds}, Uspekhi Mat. Nauk, 40 (1985),
  pp.~211--212.

\bibitem{SS}
{\sc A.~Salam and J.~Strathdee}, {\em Super-gauge transformations}, Nuclear
  Phys., B76 (1974), pp.~477--482.

\bibitem{Schwede}
{\sc K.~Schwede}, {\em Gluing schemes and a scheme without closed points}, in
  Recent progress in arithmetic and algebraic geometry, vol.~386 of Contemp.
  Math., Amer. Math. Soc., Providence, RI, 2005, pp.~157--172.

\bibitem{Spal88}
{\sc N.~Spaltenstein}, {\em Resolutions of unbounded complexes}, Compositio
  Math., 65 (1988), pp.~121--154.

\bibitem{Va88}
{\sc A.~Y. Va{\u\i}ntrob}, {\em Deformations of complex superspaces and of the
  coherent sheaves on them}, in Current problems in mathematics. {N}ewest
  results, {V}ol.\ 32, Itogi Nauki i Tekhniki, Akad. Nauk SSSR, Vsesoyuz. Inst.
  Nauchn. i Tekhn. Inform., Moscow, 1988, pp.~125--211.
\newblock J. Soviet Math. {{\bf{5}}1} (1990), no. 1, 2140--2188.

\bibitem{Stacks}
{\sc {Various authors}}, {\em The {S}tacks {P}roject}.
\newblock {\tt https://stacks.math.columbia.edu}.

\bibitem{We09}
{\sc D.~B. Westra}, {\em {S}uperrings and {S}upergroups}.
\newblock Dissertation, Dr.rer.nat., Universit\"at Wien, 2009.

\bibitem{Witten19}
{\sc E.~Witten}, {\em Notes on super {R}iemann surfaces and their moduli}, Pure
  Appl. Math. Q., 15 (2019), pp.~57--211.

\end{thebibliography}

\def\cprime{$'$}

\end{document}